\numberwithin{equation}{subsection}
\def\enumfix{%
\if@inlabel
 \noindent \par\nobreak\vskip-\topsep\hrule\@height\z@
\fi}
\let\olditemize\itemize
\def\itemize{\enumfix\olditemize}
\begin{document}

\title{Symmetries of the Cyclic Nerve}

\author{David Ayala, Aaron Mazel-Gee, and Nick Rozenblyum}

\date{\today}

\begin{abstract}
We undertake a systematic study of the Hochschild homology, i.e.\! (the geometric realization of) the cyclic nerve, of $(\infty,1)$-categories (and more generally of category-objects in an $\infty$-category), as a version of factorization homology.
In order to do this, we codify $(\infty,1)$-categories in terms of quiver representations in them.
By examining a universal instance of such Hochschild homology, we explicitly identify its natural symmetries, and construct a non-stable version of the cyclotomic trace map.
Along the way we give a unified account of the cyclic, paracyclic, and epicyclic categories.
We also prove that this gives a combinatorial description of the $n=1$ case of factorization homology as presented in~\cite{AFR2}, which parametrizes $(\infty,1)$-categories by solidly 1-framed stratified spaces.
\end{abstract}

\maketitle

\setcounter{tocdepth}{2}
\tableofcontents
\setcounter{tocdepth}{2}

\setcounter{section}{-1}

\section{Introduction}
\label{section.intro}

\startcontents[sections]


\subsection{Hochschild homology and its symmetries}

In this paper, we undertake a systematic study of (\bit{non-stable}) \bit{Hochschild homology}
\[
\sHH(\cC)
~\in~
\Spaces
\]
of an $(\infty,1)$-category $\cC$,
i.e.\! (the geometric realization of) its \bit{cyclic nerve} (see \Cref{dHH}).
Here we state those of our three main results that concern (possibly noninvertible) symmetries of $\sHH(\cC)$; we discuss our two other main results in \Cref{subsection.intro.quiv} and \Cref{subsection.connection.to.fact.hlgy}, respectively.
Throughout, we write
\[
\WW := \TT \rtimes \Nx
\]
for the \bit{Witt monoid}, the semidirect product monoid with respect to the isogenic action $\Nx \lacts \TT$ (see \Cref{d4}).
We write $\Mod_{\WW^\op}(\Spaces)$ for the $\infty$-category of (left) $\WW^\op$-modules (or equivalently right $\WW$-modules) in spaces.)

\begin{maintheorem}
[\Cref{t73}\Cref{theorem A}]
\label{mainthm.WWop.module.structure}
For any $(\infty,1)$-category $\cC$, the Hochschild homology of $\cC$ canonically admits the structure of a $\WW^\op$-module:
\[
\Bigl(
\WW^{\op}
~\lacts~
\sHH(\cC)
\Bigr)
~\in~
\Mod_{\WW^{\op}}(\Spaces)
~.
\]
\end{maintheorem}

We construct a canonical map
\[
\End_\cC
\xra{\trace}
\sHH(\cC)
\]
from the moduli space of endomorphisms in $\cC$ (see \Cref{t251}(1)).\footnote{This map can reasonably be regarded as a \bit{trace} map: $\sHH(\cC)$ is a universal receptacle of traces of endomorphisms in $\cC$.}
In particular, there results a composite map
\begin{equation}
\label{map.from.space.of.objs.to.HH}
\Obj(\cC)
\xra{~c~ \mapsto ~\id_c~}
\End_\cC
\xra{~\trace~}
\sHH(\cC)
\end{equation}
from the moduli space of objects in $\cC$ (see \Cref{t251}(2)).

\begin{maintheorem}
[\Cref{t73}\Cref{theorem B}]
\label{mainthm.WWinvariance.of.trace.map}
The map \Cref{map.from.space.of.objs.to.HH} is canonically invariant with respect to the $\WW^\op$-action of \Cref{mainthm.WWop.module.structure}: it canonically lifts to a map
\begin{equation}
\label{map.from.space.of.objs.to.WWop.invts.of.HH}
\Obj(\cC)
\longra
\sHH(\cC)^{\htpy \WW^\op}
~.
\end{equation}
\end{maintheorem}

The map \Cref{map.from.space.of.objs.to.WWop.invts.of.HH} in \Cref{mainthm.WWinvariance.of.trace.map} may be referred to as a \bit{non-stable cyclotomic trace} map (as discussed further in \Cref{subsection.motivation.Kthy.and.traces}). To make this connection precise, we give the following description of $\Mod_{\WW^\op}(\Spaces)$ in terms of equivariant homotopy theory.

\begin{maintheorem}
[\Cref{t44}]
\label{mainthm.WWop.modules.are.genuine}
The $\infty$-category $\Mod_{\WW^\op}(\Spaces)$ of $\WW^\op$-modules is equivalent to the $\infty$-category of $\Nx$-fixed proper-genuine $\TT$-modules:
\[
\Mod_{\WW^\op}(\Spaces)
~\simeq~
\left( \Mod^{\gen^<}_\TT(\Spaces) \right)^{\htpy \Nx}
~=:~
\Cyclo^\unst(\Spaces)
~.
\]
In particular, via \Cref{mainthm.WWop.module.structure}, the Hochschild homology of an $\infty$-category $\cC$ may be canonically regarded as an $\Nx$-fixed proper-genuine $\TT$-module:
\[
\sHH(\cC)
~\in~
\left( \Mod^{\gen^<}_\TT \right)^{\htpy \Nx}
~.
\]
\end{maintheorem}

\noindent Thus, the $\infty$-category $\Mod_{\WW^\op}(\Spaces)$ of $\WW^\op$-modules is a non-stable version of the $\infty$-category
\[
\Cyclo(\Spectra)
~:=~
\left( \Spectra^{\gen^<}_\TT \right)^{\htpy \Nx}
\]
of cyclotomic spectra (where the action $\Nx \lacts \Spectra^{\gen^<}_\TT$ is via geometric fixedpoints) \cite{BluMan}.

\subsection{Motivation: K-theory and traces}
\label{subsection.motivation.Kthy.and.traces}

Let us briefly contextualize this study.  
For an associative algebra $A$ over a commutative ring $\Bbbk$, a \bit{trace} (on $A$) is a $\Bbbk$-linear map 
\[
A \xra{~t~} V
\]
to a $\Bbbk$-vector space $V$ with the property that $t(ab) = t(ba)$ for all $a,b\in A$.
The universal (specifically, initial) trace is the canonical map 
\[
A \xra{~\rm quotient~} \frac{A}{[A,A]}
~=:~ \sHH_0(A)
\]
to the quotient by the commutator.  
This association $A\mapsto \sHH_0(A)$ fails to preserve projective resolutions; consequently, it fails to enjoy a host of desirable properties.
Such failure can be corrected by entertaining, not the quotient ${A}/{[A,A]}$, but a suitably {\it derived} quotient: 
\bit{Hochschild homology} is the universal {\it derived} trace:
\[
A
\xra{~\rm (derived)~quotient~}
\sHH(A)
~.
\]
Again, for computational advantage among other reasons, it is desirable for the input $A$ to Hochschild homology to be extended to {\it derived} associative algebras.
In fact, one can take $A$ to be an associative algebra in any symmetric monoidal $\infty$-category $\cX$ (satisfying some mild conditions).

For trace methods in algebraic K-theory (see~\cite{Madsen-trace}, for instance), it is desirable to extend the input of Hochschild homology yet further: one may wish to entertain the Hochschild homology $\sHH(\cC)$ of an $(\infty,1)$-category $\cC$ that is \textit{enriched} in a symmetric monoidal $\infty$-category $\cX$.
Provided suitable conditions on $\cX$, one may then expect the following features of such an extension.
\begin{enumerate}
\item\label{desideratum.for.HH.extension}

It is indeed an extension: in the case that $\cC = \fB A$ is the one-object $\cX$-enriched $\infty$-category corresponding to an algebra object $A$, we have $\sHH(\fB A) \simeq \sHH(A)$.

\item\label{desideratum.for.HH.cyclo.str}

There is a \bit{cyclotomic structure} on $\sHH(\cC)$.  
In other words, there is a proper-genuine $\TT$-module structure on $\sHH(\cC)$ that is invariant with respect to the isogenetic $\NN^\times$-action on such. 

\item\label{desideratum.for.HH.cyclo.trace}

There is a map $\Obj(\cC) \to \sHH(\cC)$ from the moduli space of objects in an enriched $\infty$-category to its Hochschild homology.
This map is invariant with respect to the cyclotomic structure of the previous point.

\end{enumerate}
Indeed, in the situation of spectral enrichment, such features immediately produce the \bit{cyclotomic trace map}
\[
\sK(\cC)
\xra{~\rm cyclotomic~trace~}
\sHH(\cC)^{\htpy \sf Cyc}~=:~{\sf TC}(\cC)
\]
from the algebraic K-theory of $\cC$ to its cyclotomic invariants (known as the \bit{topological cyclic homology} of $\cC$):
point~\Cref{desideratum.for.HH.extension} is achieved in the subsequent work~\cite{enriched1};
point~\Cref{desideratum.for.HH.cyclo.str} is achieved in its follow-up~\cite{cyclo}; and
point~\Cref{desideratum.for.HH.cyclo.trace} is achieved in its subsequent follow-up~\cite{trace}.  

In the present work, we achieve points~\Cref{desideratum.for.HH.extension}-\Cref{desideratum.for.HH.cyclo.trace} for $\cC$ a category-object \textit{in} an $\infty$-category $\cX$. (Note that $(\infty,1)$-categories define category-objects in the $\infty$-category $\Spaces$ of spaces.) We use the decorated terms ``\textit{non-stable} Hochschild homology'' and ``\textit{non-stable} cyclotomic'' to emphasize working with category-objects in an $\infty$-category $\cX$, leaving the undecorated terms for the general notions. As developed in~\cite{enriched1,cyclo,trace}, working in $\cX = \Cat_{(\infty,1)}$ affords the ``macrocosm'' framework whose ``microcosm'' application amounts to working with enriched $\infty$-categories; in that way, those works are founded on the present work.

\subsection{Parametrizing $(\infty,1)$-categories via quivers}
\label{subsection.intro.quiv}

As explained in \Cref{subsection.approach.to.proof.of.WWop.module.str}, our approach to proving Theorems \ref{mainthm.WWop.module.structure} \and \ref{mainthm.WWinvariance.of.trace.map} rests on an additional main theorem, stated as \Cref{tt1} below: a characterization of $(\infty,1)$-categories given by probing them with quivers.
Specifically, consider the category $\digraphs := \Fun(\bDelta^\op_{\leq 1} , \Fin)$ of finite directed graphs. 
One can contemplate the free $\infty$-category on a finite directed graph, and we define the $\infty$-category of \bit{quivers} to be the fully faithful image of this functor:
\[ \begin{tikzcd}
\digraphs
\arrow{rr}{\Free}
\arrow[dashed]{rd}
&
&
\Cat_{(\infty,1)}
\\
&
\Quiv
\arrow[hook]{ru}[sloped, swap]{\ff}
\end{tikzcd}
~.
\]
In fact, $\Quiv$ is an ordinary category (see \Cref{t26}), and relatedly each quiver is an ordinary category that admits an explicit description in terms of the underlying category generating it (see \Cref{t27}).
Now, given an $(\infty,1)$-category $\cC$, each quiver $\Gamma$ determines a moduli space $\Rep_\cC(\Gamma) := \hom_{\Cat_{(\infty,1)}}(\Gamma,\cC)$ of representations of $\Gamma$ in $\cC$. (See \Cref{d8}.)
This assembles as a functor
\[ \begin{tikzcd}[row sep=0cm]
\Cat_{(\infty,1)}
\arrow{r}{\Rep_{(-)}}
&
\PShv(\Quiv)
\\
\rotatebox{90}{$\in$}
&
\rotatebox{90}{$\in$}
\\
\cC
\arrow[maps to]{r}
&
\left(
\Gamma
\longmapsto
\Rep_\cC(\Gamma)
\right)
\end{tikzcd}~. \]
Moreover, for each $(\infty,1)$-category $\cC$, the functor $\Quiv^{\op} \xra{{\sf Rep}_\cC} \Spaces$ has the following local-to-global property.
\begin{itemize}
\item[]
{\bf Closed sheaf.}
For each finite directed graph $\Gamma$, and for each pair of subgraphs $\Gamma_- , \Gamma_+\subset \Gamma$ for which $\Gamma_- \cup \Gamma_+ = \Gamma$, the canonical square among spaces of representations of the quivers associated to these directed graphs,
\[
\begin{tikzcd}
\Rep_\cC(\Gamma)
\arrow{r}
\arrow{d}
&
\Rep_\cC(\Gamma_+)
\arrow{d}
\\
\Rep_\cC(\Gamma_-)
\arrow{r}
&
\Rep_\cC(\Gamma_- \cap \Gamma_+)
\end{tikzcd}
\]
is a pullback.
\end{itemize}

\begin{maintheorem}[\Cref{t2}]
\label{tt1}
The functor
\[
\Cat_{(\infty,1)}
\xra{\Rep_{(-)}}
\PShv(\Quiv)
\]
is fully faithful.
Moreover, its image consists of those functors $\Quiv^{\op} \ra \Spaces$ that satisfy the \bit{closed sheaf} condition as well as a \bit{univalence} condition.  
\end{maintheorem}

\noindent \Cref{tt1} may be seen as a version of Rezk's presentation of $(\infty,1)$-categories as complete Segal spaces \cite{Rezk1}: the inclusion $\bDelta \hookra \Quiv$ determines a restriction functor $\PShv(\Quiv) \ra \PShv(\bDelta)$, under which the closed sheaf condition corresponds to the Segal condition and the univalence conditions agree. More specifically, we view \Cref{tt1} as giving a ``coordinate-free'' presentation of $(\infty,1)$-categories, analogously to how reduced excisive functors give a ``coordinate-free'' presentation of spectra, as indicated in \Cref{figure-presenting-spectra-and-infty-one-cats}.

\begin{figure}[h]

\begin{tabular}{c||c|c}
&
spectra
&
$(\infty,1)$-categories
\\
\hline
\hline
coordinatized
&
$\{ E_n \in \Spaces_* \}_{n \geq 0}$ with $E_n \simeq \Omega E_{n+1}$
&
complete Segal spaces $\bDelta^\op \ra \Spaces$
\\
\hline
coordinate-free
&
reduced excisive functors $\Spaces^\fin_* \ra \Spaces$
&
univalent closed sheaves $\Quiv^\op \ra \Spaces$
\end{tabular}

\caption{Just as one can present spectra by probing them either by all finite pointed spaces or merely by spheres, one can present $(\infty,1)$-categories by probing them either by all quivers or merely by nonempty linear quivers.\label{figure-presenting-spectra-and-infty-one-cats}}
\end{figure}

\subsection{Universal Hochschild homology}
\label{subsection.approach.to.proof.of.WWop.module.str}

\Cref{tt1} provides a convenient context for defining the Hochschild homology of an $(\infty,1)$-category $\cC$. Namely, there is a standard functor
\begin{equation}
\label{e.first.chi}
\bDelta
\xlongra{\chi}
\Quiv
\end{equation}
that carries each object $[n]$ to the cyclically-directed quiver with $n+1$ vertices (see \Cref{notn.functor.chi.from.Delta.to.Quiv}).\footnote{Specifically, this functor $\bDelta \xra{\chi} \Quiv$ carries $[n] \in \bDelta$ to the cyclically-directed quiver given by adjoining a morphism $n \ra 0$ to the quiver associated to the linearly-directed graph $[n] = \{ 0 \ra \cdots \ra n \}$.}
The \bit{Hochschild homology} of an $(\infty,1)$-category $\cC$ is the colimit
\[
\sHH(\cC)
:=
\colim \left(
\bDelta^\op
\xra{\chi^\op}
\Quiv^\op
\xra{\Rep_\cC}
\Spaces
\right)
\]
(see \Cref{dHH} and \Cref{t250}).

Given this definition of Hochschild homology, we can now describe our approach to proving Theorems \ref{mainthm.WWop.module.structure} \and \ref{mainthm.WWinvariance.of.trace.map}. For this, we generalize from the context of $(\infty,1)$-categories to the slightly more general context of category-objects \textit{in} an $\infty$-category $\cX$ (satisfying mild conditions); note that an $(\infty,1)$-category is an example of a category-object in the $\infty$-category $\Spaces$ of spaces. We prove an analog of \Cref{tt1} in this context as \Cref{t65}. In fact, it is not hard to see that one can restrict to \textit{connected} quivers (see \Cref{d9}), so that a category-object $\cC$ in $\cX$ can be equivalently codified as a functor
\[
\Quiv^\con
\xra{\Rep_\cC}
\cX
\]
satisfying a closed sheaf condition.

Now, to prove Theorems \ref{mainthm.WWop.module.structure} \and \ref{mainthm.WWinvariance.of.trace.map}, we construct a universal $\infty$-category equipped with a category-object and its Hochschild homology: this is the pushout
\[ \begin{tikzcd}
\bDelta^\op
\arrow{r}{\chi^\op}
\arrow[hook]{d}
&
(\Quiv^\con)^\op
\arrow{d}
\\
(\bDelta^\op)^\rcone
\arrow{r}
&
\M^\con
\end{tikzcd}
\qquad
\text{( see \Cref{d7})~.}
\]
We suggestively write $\SS^1 \in \M^\con$ for the image of the cone point. (This notation will be elucidated in \Cref{subsection.connection.to.fact.hlgy}.)
So, given a category-object $\cC$ in $\cX$, we obtain a unique extension
\begin{equation}
\label{extn.of.RepC.from.Quivcon.to.X.to.Mcon}
\begin{tikzcd}[column sep=1.5cm]
(\Quiv^\con)^\op
\arrow{r}{\Rep_\cC}
\arrow[hook]{d}
&
\cX
\\
\M^\con
\arrow[dashed]{ru}[sloped, swap]{\w{\Rep}_\cC}
\end{tikzcd}
\end{equation}
that preserves Hochschild homology objects (namely the left Kan extension), i.e.\! such that the canonical morphism is an equivalence:
\[
\sHH(\cC)
\xlongra{\sim}
\w{\Rep}_\cC(\SS^1)
~.
\]

In general, pushouts among $\infty$-categories can be highly nontrivial, and the pushout defining $\M^\con$ is no exception. Nevertheless, we give a complete description of $\M^\con$ (see \Cref{t51'}). In particular, in view of the canonical extension \Cref{extn.of.RepC.from.Quivcon.to.X.to.Mcon}, \Cref{mainthm.WWop.module.structure} follows from the 
computation
\begin{equation}
\label{Endoms.of.circle.are.WWop}
\End_{\M^\con}(\SS^1)
\simeq
\WW^\op
\end{equation}
of the monoid of endomorphisms of $\SS^1$,\footnote{Nontrivial endomorphisms of $\SS^1$ arise from the fact that the functor $\bDelta^\op \ra (\Quiv^\con)^\op$ is not fully faithful. (By contrast, adjoining the colimit of a fully faithful inclusion results in a formally adjoined object with trivial endomorphism monoid.)} while \Cref{mainthm.WWinvariance.of.trace.map} follows from the computation that the hom-space $\hom_{\M^\con}(\pt, \SS^1)$ is contractible.

\begin{remark}
\label{rmk.factor.chi.through.paracyclic.etc}
The crux of the computation \Cref{Endoms.of.circle.are.WWop}  amounts to factoring the functor $\bDelta \xra{\chi} \Quiv^\con$ through the \bit{parasimplex}, \bit{cyclic}, and \bit{epicyclic} categories as in the commutative diagram
\begin{equation}
\label{e.classicals}
\begin{tikzcd}
\bDelta
\arrow{rd}
\arrow{rrrr}{\chi}
&
&
&
&
\Quiv^\con
\\
&
\copara
\arrow[two heads]{r}
\arrow{d}
&
\bLambda
\arrow[hook, two heads]{r}
\arrow{d}
&
\w{\bLambda}
\arrow{d}
\arrow[hook]{ru}
\\
&
\pt
\arrow[two heads]{r}
&
\BT
\arrow[hook, two heads]{r}
&
\BW
\end{tikzcd}
\end{equation}
and showing that the functor $\bDelta \ra \copara$ is initial, both squares are pullbacks, and the functor $\w{\bLambda} \ra \fB \WW$ is both a cartesian fibration and a localization (see \Cref{t36} and \Cref{t119}).
\end{remark}

\subsection{Connection with factorization homology}
\label{subsection.connection.to.fact.hlgy}

Although we defined the Witt monoid algebraically as the semidirect product $\WW := \TT \rtimes \Nx$, it also arises in differential topology, namely as the (topological) monoid of framed self-covering maps of the circle. Hence, \Cref{mainthm.WWop.module.structure} can be seen as articulating a contravariant functoriality of Hochschild homology for framed self-covers of the circle.

Of course, the connection between Hochschild homology and the circle is not new. Clasically, this connection is manifested e.g.\! by Connes' cyclic operator on Hochschild homology (see e.g. \cite{Loday}), which records the action of the (maximal) subgroup $\TT \subset \WW$. More recently, Hochschild homology has been recognized as \bit{factorization homology} over $S^1$: for $\EE_1$-algebras in \cite[Sect. 5.5.3]{LurieHA} (under the name ``topological chiral homology''), and for $(\infty,1)$-categories in \cite{AFR2}.

In more detail, the work \cite{AFR2} constructs for any dimension $n$ an $\infty$-category $\cMfd^{\sf sfr}_n$ of \textit{compact solidly $n$-framed stratified spaces}, whose morphisms accommodate a wide variety of notions in differential topology -- in particular, framed self-covers of compact framed $n$-manifolds. 
This $\infty$-category is specifically tailored towards the construction of factorization homology, which gives a way of ``integrating'' an $(\infty,n)$-category $\cC$ over compact framed $n$-manifolds (and more generally over compact solidly $n$-framed stratified spaces):
\[ \begin{tikzcd}[row sep=0cm]
\cMfd^{\sf sfr}_n
\arrow{r}
&
\Spaces
\\
\rotatebox{90}{$\in$}
&
\rotatebox{90}{$\in$}
\\
M
\arrow[maps to]{r}
&
{\displaystyle \int_M \cC}
\end{tikzcd}
~.\footnote{Actually, in general one must require that $\cC$ has adjoints: that is, for every $0 < k < n$, every $k$-morphism in $\cC$ admits both adjoints. 
Of course, this condition is vacuous when $n=1$.}
\]
The paper~\cite{AFR2} establishes this construction of factorization homology of $(\infty,n)$-categories for $n\leq 2$; for $n>2$, this construction has yet to be established.

To state our last main theorem, we introduce the notation $\M$ for the $\infty$-category obtained from $\M^\con$ by freely adjoining finite products.\footnote{In other words, the functor $\Quiv^\op \ra \M$ admits an analogous universal property in $\Cat^\times$ (the $\infty$-category of $\infty$-categories with finite products) to that of the functor $(\Quiv^\con)^\op \ra \M^\con$ in $\Cat$.}

\begin{maintheorem}
\label{mainthm.connection.to.fact.hlgy}
There is a canonical equivalence
\[
\M
\xlongra{\sim}
\cMfd^{\sf sfr}_1
~,
\]
which carries $\SS^1$ to $S^1$ (the framed circle). Under this equivalence, for any $(\infty,1)$-category $\cC$, the functor $\w{\Rep}_\cC$ is naturally equivalent to factorization homology: there is a canonical commutative triangle
\[ \begin{tikzcd}[column sep=2cm, row sep=0.5cm]
\M
\arrow{rd}[]{\w{\Rep}_\cC}
\arrow{dd}[sloped, anchor=north]{\sim}
\\
&
\Spaces
\\
\cMfd^{\sf sfr}_1
\arrow{ru}[swap]{\int \cC}
\end{tikzcd}
~.
\]
\end{maintheorem}

\begin{remark}
\label{rmk.circle.moduli}
From the point of view of manifolds afforded by \Cref{mainthm.connection.to.fact.hlgy}, the diagram \Cref{e.classicals} has a natural interpretation: 
the category $\w{\bLambda}$ can be regarded as a moduli category of (non-trivially) stratified framed circles, in which the non-invertible morphisms classify isogenies.
Namely, there is a fully faithful functor $\copara^{\rcone} \hookrightarrow \cMfd^{\sf sfr}_{1/S^1}$ whose image consists of refinement morphisms to $S^1$, and the (left lax) action by $\WW^{\op}$ arises through $\WW^\op \simeq \End_{\cMfd^{\sf sfr}_1}(S^1)$.
In particular, there is a fully faithful functor $\w{\bLambda} \hookrightarrow \cMfd^{\sf sfr}_1$ whose image consists of those solidly 1-framed stratified spaces with more than one stratum and whose underlying topological space is a circle.

\end{remark}

\begin{remark}
\label{rmk.curiosity}
\Cref{rmk.circle.moduli} gives a geometric explanation for why, among all diagrams in $(\Quiv^\con)^{\op}$, it is natural to formally adjoin the colimit of 
\[
\chi
\colon
\bDelta^{\op} \xra{\Cref{e.first.chi}} 
(\Quiv^\con)^{\op}
~.
\]
In that spirit, it would be interesting to give a characterization of the parsimplex, cyclic, or epicyclic category that is entirely intrinsic to the full subcategory $\Quiv \subset \Cat_{(\infty,1)}$ (ie, blind to the geometric context of \Cref{rmk.circle.moduli}).  

\end{remark}

In view of \Cref{rmk.curiosity}, \Cref{mainthm.connection.to.fact.hlgy} clarifies the manifold-theoretic origins of the parasimplex, cyclic, and epicyclic categories (which were originally defined combinatorially). 
On the other hand, \Cref{mainthm.connection.to.fact.hlgy} is 
a completely algebraic characterization of the $\infty$-category $\cMfd^{\sf sfr}_1$ despite its differential topology origins, in the spirit of the celebrated cobordism hypothesis \cite{Lurie-cobordism}. 
More broadly, we are inspired by the idea that some suitable $\infty$-category of ``$n$-quivers'' (ie, a suitable version of $n$-computads \cite{Street-2computads,Batanin})\footnote{Note that quivers are precisely 1-computads.}
analogously extends to give $\cMfd^{\sf sfr}_n$.

\subsection{Conventions}

We set the following conventions throughout this work.
\begin{itemize}

\item \catconventions

\item \spacescatsconventions

\item
Let $\cY$ be an $\infty$-category.
For $T$ a continuous monoid,
the $\infty$-category of (\bit{left} \bit{$T$-modules} (in $\cY$) is that of functors:
\[
\Mod_{T}(\cY)
~:=~ 
\Fun( \fB T , \cY)
~.
\]
Restricting along the unique functor $\fB T \xra{!} \ast$ defines a functor
$
\cY
=
\Mod_\ast(\cY)
\to
\Mod_T(\cY)
.
$
When they exist, the left- and right-adjoints to this functor are respectively the \bit{$T$-coinvariants} and the \bit{$T$-invariants}, and are respectively denoted
\[
(-)_{{\sf h}T}
\colon
\Mod_T(\cY)
\longrightarrow
\cY
\qquad
\text{ and }
\qquad
(-)^{{\sf h}T}
\colon
\Mod_T(\cY)
\longrightarrow
\cY
~.
\]

\end{itemize}

\subsection*{Acknowledgements}

DA was supported by the National Science Foundation under awards 1812055 and 1945639. AMG was supported by the National Science Foundation under award 2105031. 
This material is based upon work supported by the National Science Foundation under award 1440140 while the authors were in residence at the Mathematical Sciences Research Institute in Berkeley, California, during the Spring 2020 semester, and under award 1928930 while the authors participated in a program supported by the Mathematical Sciences Research Institute during the Summer of 2022 in partnership with the Universidad Nacional Aut\'{o}noma de M\'{e}xico.

\section{Parametrizing higher categories by quivers}

In this section, we characterize $(\infty,1)$-categories as copresheaves on a category of quivers.
Specifically, as \Cref{d12} we construct a category $\Quiv$ in which an object is a finite directed graph;
as \Cref{t2}, we construct a fully faithful functor between $\infty$-categories,
\[
\Cat_{(\infty,1)}
\longrightarrow
\PShv(\Quiv)
~,
\]
and characterize its image.

Informally, this is to say an $(\infty,1)$-category is characterized by quiver representations into it.  
The work of Rezk~(\cite{Rezk1}) implements such a characterization in terms of linear quivers.  
So, in this sense, the characterization presented here is more versatile.  
For instance, it accommodates representations of cyclically-directed quivers.

\subsection{Recollections of $\Cat_{(\infty,1)}$}\label{sec.cat.def}

\begin{definition}
\label{d14}
The \bit{simplex category} is the category $\bDelta$ in which an object is a finite nonempty linearly ordered set, and a morphism between two is a (weakly) order-preserving map, with composition given by composing maps.
Such a morphism is \bit{idle} if it is convex\footnote{A morphism $I \xra{f} J$ between linearly ordered sets is \bit{convex} if, for each $i_-, i_+\in I$ and each $j\in J$ for which $f(i_-) \leq  j \leq f(i_+)$, the preimage $f^{-1}(j)$ is nonempty.}; such a morphism is \bit{closed} if it is a convex inclusion; such a morphism is \emph{creation} if it is a convex surjection; such a morphism is \bit{active} if it preserves minima and maxima.
The subcategories 
\[
\bDelta^{\sf cls}
~,~
\bDelta^{\sf cr}
~,~
\bDelta^{\sf idl}
~,~
\bDelta^{\sf act}
~\subset~
\bDelta
\]
consist of all objects and those morphisms that are respectively closed, creation, idle, and active.

\end{definition}

\begin{remark}
An object in $\bDelta$ can be regarded as a linear quiver.  

\end{remark}

\begin{remark}
The morphisms in $\bDelta$ that we call ``closed'' are elsewhere called ``inert'' (e.g.\! in \cite{LurieHA}).  We choose our terminology to remain consistent with that surrounding the $\infty$-category $\cMfd_1^{\sfr}$, discussed in~\S\ref{sec.facts}.
\end{remark}

\begin{observation}
\label{t41}
The pair of subcategories $(\bDelta^{\sf act} , \bDelta^{\sf cls})$ is a factorization system on 
$\bDelta$.  
In other words, each morphism in $\bDelta$ uniquely factors as a composition of an active morphism followed by a closed morphism.  

\end{observation}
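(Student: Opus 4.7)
The plan is to exhibit the factorization explicitly and then check uniqueness directly from the definitions of the two classes of morphisms. Given a morphism $f \colon I \to J$ in $\bDelta$, I would define
\[
K_f \;:=\; \bigl\{\, j \in J \;:\; f(\min I) \leq j \leq f(\max I)\,\bigr\}
\]
regarded as a convex subset of $J$. This immediately factors $f$ as $I \xra{\bar f} K_f \hookra J$, where $\bar f$ is $f$ with codomain restricted and the second map is the inclusion.

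First I would verify that this is a factorization of the desired type. By construction, $K_f \hookra J$ is an inclusion of a convex sub-poset, hence is closed. For $\bar f$, note that $\min K_f = f(\min I)$ and $\max K_f = f(\max I)$ by the definition of $K_f$, so $\bar f$ preserves minima and maxima and is therefore active. Thus every morphism admits an active-closed factorization.

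For uniqueness, suppose $I \xra{g} K' \xra{h} J$ is any active-closed factorization of $f$. Since $h$ is closed, it identifies $K'$ with a convex subset of $J$; since $g$ is active, $g(\min I) = \min K'$ and $g(\max I) = \max K'$, so under this identification $\min K' = h(g(\min I)) = f(\min I)$ and $\max K' = f(\max I)$. But a convex subset of a finite linearly ordered set is determined by its minimum and maximum, so $K' = K_f$ as subsets of $J$, and then $g = \bar f$ and $h$ equals the inclusion. This gives uniqueness on the nose.

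The content is elementary, and the only mild subtlety is bookkeeping the two lifting-and-uniqueness axioms of a factorization system from the existence-and-uniqueness of this concrete factorization; this is automatic since both $\bDelta^{\sf act}$ and $\bDelta^{\sf cls}$ are wide subcategories closed under composition, so the standard equivalence between orthogonal factorization systems and unique (active, closed) factorizations applies. Thus no step is a genuine obstacle; the main thing is to fix the correct choice of intermediate object $K_f$, which is dictated by the requirement that the first factor preserve min and max.
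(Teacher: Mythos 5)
Your proof is correct; the paper records this as an unproved Observation, and your explicit factorization through the interval $K_f = [f(\min I), f(\max I)]$ together with the min/max argument for uniqueness is exactly the standard verification one would supply. The only cosmetic caveat is that uniqueness of the intermediate object holds up to unique isomorphism over $J$ rather than literally "on the nose," since objects of $\bDelta$ here are abstract linearly ordered sets; your identification of $K'$ with its image in $J$ already handles this.
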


\begin{definition}
\label{dd1}
A \bit{basic closed cover (in $\bDelta$)} is a diagram in $\bDelta^{\sf cls}$ of the form
\[
\xymatrix{
\{1\}
\ar[rr]
\ar[d]
&&
\{1<\dots<p\}
\ar[d]
\\
\{0<1\}
\ar[rr]
&&
[p]
}
\]
for some $p>1$.
A presheaf $\bDelta^{\op} \xra{\cF} \Spaces$ is a \bit{closed sheaf (on $\bDelta$)} if it carries (the opposites of) each basic closed cover in $\bDelta$ to a limit diagram in $\Spaces$.
A \bit{closed cover (in $\bDelta$)} is a diagram $\cK^{\rcone} \to \bDelta^{\sf cls}$ for which, for each closed sheaf, $\bDelta^{\op} \xra{\cF} \Spaces$, the composite functor 
\[
(\cK^{\op})^{\lcone} = (\cK^{\rcone})^{\op} 
\to 
(\bDelta^{\sf cls})^{\op} 
\hookrightarrow 
\bDelta^{\op} 
\xra{\cF} 
\Spaces
\]
is a limit diagram.
\end{definition}

Recall the definition of the $\infty$-category $\Cat_{(\infty,1)}$ of \bit{$(\infty,1)$-categories} (see, for instance,~\cite{Rezk1}).
Recall from~\cite{flagged} the definition of the $\infty$-category $\fCat_{(\infty,1)}$ of \bit{flagged $(\infty,1)$-categories}, an object in which is a functor $\cC_0 \to \cC_1$ from an $(\infty,0)$-category (ie, an $\infty$-groupoid) to an $(\infty,1)$-category with the property that the functor is surjective on objects (ie, the resulting map between sets of isomorphism classes of objects, $\pi_0 \Obj(\cC_0) \to \pi_0 \Obj(\cC_1)$, is surjective).  
Consider the restricted Yoneda functor along the standard fully faithful functor
$\bDelta \hookrightarrow \Cat_{(\infty,1)} \subset \fCat_{(\infty,1)}$:
\begin{equation}\label{e3}
\Cat_{(\infty,1)}
~\subset~
\fCat_{(\infty,1)}
\longrightarrow
\PShv(\bDelta)
~,\qquad
\cC\mapsto 
\Bigl(
[p]^\circ
\mapsto 
\Hom_{\fCat_{(\infty,1)}}( [p] , \cC )
\Bigr)
~.
\end{equation}
The work of Rezk~(\cite{Rezk1}), followed by the work~\cite{flagged}, 
implies each of the functors~\Cref{e3} is fully faithful, 
and that the image of $\fCat_{(\infty,1)}$ consists of those presheaves $\bDelta^{\op}\xra{\cF} \Spaces$ that are closed sheaves (on $\bDelta$), 
while the image of $\Cat_{(\infty,1)}$ consists of those presheaves $\bDelta^{\op}\xra{\cF} \Spaces$ that further satisfy a univalent-completeness condition.  
Each of the fully faithful restricted Yoneda functors~\Cref{e3} is a right adjoint in a Bousfield localization between presentable $\infty$-categories:
\begin{equation}\label{e4}
\PShv(\bDelta)
~\rightleftarrows~
\fCat_{(\infty,1)}
~\rightleftarrows~
\Cat_{(\infty,1)}
~.
\end{equation}

\begin{remark}
One can take the definitions of the $\infty$-categories $\Cat_{(\infty,1)}$ and $\fCat_{(\infty,1)}$ as the images of these fully faithful functors~\Cref{e3}.
\end{remark}

\subsection{Quivers}

The main outcome of this subsection is \Cref{t27}, which gives an explicit identification of the free $(\infty,1)$-category on a quiver (also known as a finite directed graph).

In the next definition, the full subcategory $\bDelta_{\leq 1} \subset \bDelta$ consists of those finite nonempty linearly ordered sets with cardinality at most $2$.  
\begin{definition}\label{d3}
The category of \bit{finite directed graphs} is
\[
\digraphs
~:=~
\Fun(\bDelta_{\leq 1}^{\op} , \Fin)
~.
\]

\end{definition}

\begin{terminology}
\label{d26}
Let $\Gamma$ be a finite directed graph.
Its \bit{set of vertices} is $\Gamma^{(0)}:=\Gamma([0])$; its \bit{set of edges} is $\Gamma^{(1)} := \Gamma([1]) \setminus \Gamma([0])$; there is the evident span of sets 
\[
\Gamma^{(0)} \xlongla{s} \Gamma^{(1)} \xlongra{t} \Gamma^{(0)}
\]
arising from the cospan $\{0\} \to [1] \leftarrow \{1\}$ in $\bDelta$; for $v\in \Gamma^{(0)}$ a vertex, its \bit{set of exiting edges} is the preimage ${\sf Out}_\Gamma(v):=t^{-1}(v)\cap \Gamma^{(1)}$, its \bit{set of entering edges} is the preimage ${\sf In}_\Gamma(v):= s^{-1}(v)\cap \Gamma^{(1)}$, and its \bit{directed-valence} is the ordered pair of cardinalities $\Bigl( {\sf Card}\bigl( {\sf Out}_\Gamma(v) \bigr) , {\sf Card}\bigl( {\sf In}_\Gamma(v) \bigr)  \Bigr)$. 
Say $\Gamma$ is \bit{connected} if the quotient set $(\Gamma^{(0)}){/\sim}$ is a singleton, where the equivalence relation is generated by declaring $v\sim w$ if $(s,t)^{-1}(v,w) \neq \emptyset$.  
A finite directed graph is \bit{cyclically-directed} if it is connected and each vertex has directed-valence $(1,1)$ (i.e.\! each vertex has exactly one exiting edge and exactly one entering edge).
A finite directed graph is \bit{linearly-directed} if it is connected and each vertex has at most one entering edge and at most one exiting edge, and there is some vertex with no exiting edges (or, equivalently, no entering edges).

A morphism $\Gamma \xra{f} \Xi$ between finite directed graphs is \bit{non-degenerate} if it carries edges to edges, which is to say the diagram among sets
\[
\begin{tikzcd}
\Gamma([0])
\arrow{r}{f_{[0]}}
\arrow{d}[swap]{\Gamma(!)}
&
\Xi([0])
\arrow{d}{\Xi(!)}
\\
\Gamma([1])
\arrow{r}[swap]{f_{[1]}}
&
\Xi([1])
\end{tikzcd}
\]
is a pullback.

\end{terminology}

An object in $\digraphs$ can be regarded as a quiver, which can in turn be regarded as a category.  
We make this precise in what follows.

Consider the composite adjunction:
\begin{equation}\label{e11}
\PShv(\bDelta_{\leq 1})
\overset{\sf LKE}{ \underset{\rm restriction}{~\rightleftarrows~}}
\PShv(\bDelta)
\underset{\Cref{e4}}{~\rightleftarrows~}
\Cat_{(\infty,1)}
~.
\end{equation}
This results in a functor
\begin{equation}
\label{e1}
\Free
\colon
\digraphs
~:=~
\Fun(\bDelta_{\leq 1}^{\op} , \Fin)
~\subset~
\Fun(\bDelta_{\leq 1}^{\op} , \Spaces)
~=~
\PShv(\bDelta_{\leq 1})
\xra{~\Cref{e11}~}
\Cat_{(\infty,1)}
~,
\end{equation}
whose value on a finite directed graph $\Gamma$ is the \bit{free $(\infty,1)$-category} on $\Gamma$, hence the notation.

\begin{remark}
For $\Gamma$ a finite directed graph, the free category generated by $\Gamma$ may be familiar: an object is a vertex in $\Gamma$, a morphism is a finite sequence of directed edges, each consecutive pair of which that match up head-to-tail.
Unfortunately, the definition of ${\sf Free}(\Gamma)$ just above is a priori an $(\infty,1)$-category, and as so, it may not admit such a description.  (Necessarily, though, such describes the homotopy category of ${\sf Free}(\Gamma)$.)  
However, \Cref{t27} below ensures ${\sf Free}(\Gamma)$ is, in fact, an ordinary category, and it admits such a description.  
\Cref{t94}, further, supplies an explicit commensurable description of morphisms between values of ${\sf Free}$.

\end{remark}

\begin{definition}
\label{d12}
A \bit{quiver} is an $(\infty,1)$-category that is the free $(\infty,1)$-category on a finite directed graph.
The $\infty$-category $\Quiv$ is the full $(\infty,1)$-subcategory
\[
\Quiv
~\subset~
\Cat_{(\infty,1)}
\]
consisting of those $(\infty,1)$-categories that are quivers.\footnote{Note that some authors define a morphism of quivers to be a morphism of finite directed graphs. The notion that we study is strictly more general: in view of \Cref{t94}, \Cref{t26}, and \Cref{t34}, the $\infty$-category $\Quiv$ is actually an ordinary category and contains the former as a (1-full) subcategory containing the same (groupoid of) objects.}
\end{definition}

\begin{observation}
\label{t20}
By \Cref{d12}, 
there is a factorization, which is unique, in a diagram among $\infty$-categories:
\[
\begin{tikzcd}
&
\Quiv
\arrow[hook]{rd}[sloped]{\ff}
\\
\digraphs
\arrow[dashed]{ru}[sloped]{\Free}
\arrow{rr}[swap]{\Free}
&
&
\Cat_{(\infty,1)}
\end{tikzcd}
~.
\]
Furthermore, this factorizing functor,
\[
\Free \colon \digraphs
\longrightarrow
\Quiv
~,
\]
is surjective on objects.

\end{observation}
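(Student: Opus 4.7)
The proof is essentially a formal consequence of the definitions; there is no real obstacle. Let me outline the plan.

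First, I would observe that the unique factorization follows from a general principle about fully faithful inclusions. Since $\Quiv \hookrightarrow \Cat_{(\infty,1)}$ is by definition a \emph{full} subcategory (as an $\infty$-category), the restricted Yoneda embedding exhibits it as a fully faithful inclusion of $\infty$-categories. For any functor $F \colon \cA \to \cB$ whose values all lie in the essential image of a fully faithful functor $i \colon \cC \hookrightarrow \cB$, there exists a factorization $\widetilde F \colon \cA \to \cC$ with $i \circ \widetilde F \simeq F$, and such a factorization is unique up to contractible choice. (Concretely, one can construct $\widetilde F$ by choosing, for each object $a \in \cA$, a preimage in $\cC$ of $F(a)$; the space of such choices is contractible by full faithfulness of $i$, and extends canonically to morphisms and higher coherences.)

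Second, I would verify the hypothesis of this principle in our situation: for each finite directed graph $\Gamma \in \digraphs$, the $(\infty,1)$-category $\Free(\Gamma)$ lies in $\Quiv$. But this is literally the content of \Cref{d12}: the $\infty$-category $\Quiv$ is defined to consist of precisely those $(\infty,1)$-categories that arise as $\Free(\Gamma)$ for some $\Gamma \in \digraphs$. Applying the factorization principle yields the desired dashed functor $\Free \colon \digraphs \to \Quiv$, unique up to equivalence, making the triangle commute.

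Third, for surjectivity on objects, I would again just appeal to \Cref{d12}. An object of $\Quiv$ is by definition an $(\infty,1)$-category of the form $\Free(\Gamma)$ for some finite directed graph $\Gamma$; hence it lies in the image of the factorizing functor $\Free \colon \digraphs \to \Quiv$ on the nose. Thus the whole statement reduces to unpacking the definition of $\Quiv$ together with the general fact that full subcategory inclusions admit unique factorizations of functors that land in them.
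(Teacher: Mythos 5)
Your proposal is correct and matches the paper's (implicit) argument: the statement is recorded as an Observation justified directly by \Cref{d12}, and your unpacking — factorization through the full subcategory $\Quiv \hookrightarrow \Cat_{(\infty,1)}$ because every $\Free(\Gamma)$ lies in it by definition, plus surjectivity on objects because objects of $\Quiv$ are by definition exactly the values of $\Free$ — is precisely what is intended.
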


\begin{observation}
\label{t92}
The diagram among $\infty$-categories
\[
\begin{tikzcd}
\Spaces
&
\PShv(\bDelta_{\leq 1})
\arrow{l}[swap]{|-|}
\arrow{d}{\Cref{e11}}
\\
&
\Cat_{(\infty,1)}
\arrow{lu}[sloped, swap]{|-|}
\end{tikzcd}
\]
involving geometric realizations / $\infty$-groupoid-completions
canonically commutes, because the diagram among their right adjoints canonically commutes.
In particular, for each object $\Free(\Gamma) \in \Quiv$, the $\infty$-groupoid-completion 
\[
\bigl| \Free(\Gamma) \bigr|
~\simeq~
|\Gamma |
\] 
is the geometric realization of its generating finite directed graph, which canonically admits the structure of a 1-dimensional finite CW complex.  

\end{observation}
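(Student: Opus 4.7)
The plan is to verify commutativity of the diagram among the corresponding right adjoints and then invoke uniqueness of left adjoints. All three functors in the diagram are left adjoints: $|-|\colon \Cat_{(\infty,1)} \to \Spaces$ is left adjoint to the inclusion $\iota\colon \Spaces \hookrightarrow \Cat_{(\infty,1)}$ of $\infty$-groupoids; $|-|\colon \PShv(\bDelta_{\leq 1}) \to \Spaces$ is left adjoint to the constant-presheaf functor $\mathrm{const}\colon \Spaces \to \PShv(\bDelta_{\leq 1})$; and \Cref{e11} is, by construction, left adjoint to the restricted Yoneda functor $\rho \colon \Cat_{(\infty,1)} \to \PShv(\bDelta_{\leq 1})$ given by $\cC \mapsto \bigl([n] \mapsto \Hom_{\Cat_{(\infty,1)}}([n], \cC)\bigr)$. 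So it suffices to exhibit a natural equivalence $\rho \circ \iota \simeq \mathrm{const}$, and then dualize.

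The requisite calculation is elementary: for $X \in \Spaces$ and $[n] \in \bDelta_{\leq 1}$,
\[
\rho(\iota(X))([n])
~=~
\Hom_{\Cat_{(\infty,1)}}\bigl([n], \iota(X)\bigr)
~\simeq~
\Hom_\Spaces\bigl(|[n]|, X\bigr)
~\simeq~
X
~,
\]
because the $\infty$-groupoid completion $|[n]|$ is contractible for both $n = 0$ and $n = 1$ (in the latter case, $[1]$ freely generates an invertible morphism upon groupoidification, so $|[1]| \simeq \ast$). The two structure maps $\rho(\iota(X))([1]) \to \rho(\iota(X))([0])$ induced by the inclusions $\{0\},\{1\} \hookrightarrow [1]$ become equivalences for the same reason. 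These identifications are natural in $X$, so $\rho \circ \iota \simeq \mathrm{const}$, and passage to left adjoints yields the desired triangle $|-| \circ \Cref{e11} \simeq |-|$.

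For the final identification, given a finite directed graph $\Gamma$, the commutativity just established supplies $|\Free(\Gamma)| \simeq |\Gamma|$, where $|\Gamma|$ denotes the colimit in $\Spaces$ of the diagram $\bDelta_{\leq 1}^\op \to \Fin \hookrightarrow \Spaces$. This colimit is computed as the pushout
\[
|\Gamma|
~\simeq~
\Gamma^{(0)}
~\sqcup_{\Gamma^{(1)} \sqcup \Gamma^{(1)}}~
\bigl( \Gamma^{(1)} \times [0,1] \bigr)
~,
\]
with attaching maps $(e,0) \mapsto s(e)$ and $(e,1) \mapsto t(e)$; that is, the standard 1-dimensional finite CW complex with $0$-cells indexed by $\Gamma^{(0)}$ and $1$-cells indexed by $\Gamma^{(1)}$. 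The main step that genuinely needs verification is the contractibility of $|[1]|$, but this is immediate from the universal property of the $\infty$-groupoid completion; the remainder is the standard skeletal computation of a colimit over $\bDelta_{\leq 1}^\op$.
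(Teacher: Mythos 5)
Your proposal is correct and follows exactly the route the paper indicates: the observation's justification is precisely ``the diagram among their right adjoints canonically commutes,'' and you have supplied the details of that check (identifying the right adjoints as $\iota$, $\mathrm{const}$, and the restricted Yoneda functor, and reducing commutativity to the contractibility of $|[0]|$ and $|[1]|$). The concluding pushout computation of $|\Gamma|$ likewise matches the paper's implicit CW-complex description.
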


\begin{observation}
\label{t25}
By definition of $\Quiv$, both of the functors 
\[
\Free \colon \digraphs
\longrightarrow
\Quiv
\qquad
\text{ and }
\qquad
\Free \colon \digraphs
\longrightarrow 
\Cat_{(\infty,1)}
\]
preserve cobase-change along monomorphisms.
Indeed, the case of the former follows from the latter.
Next, the fully faithful functor
$\Fun(\bDelta_{\leq 1}^{\op} , \Fin) \hookrightarrow \Fun(\bDelta_{\leq 1}^{\op} , \Spaces)$ preserves cobase-change along monomorphisms.  
The assertion follows from the definition of $\Free$ as a composition~\Cref{e1}, using that both of the rightward functors in~\Cref{e11} are left adjoints and therefore preserve pushouts.
In particular, $\Quiv$ admits finite coproducts, which are given by disjoint unions of finite directed graphs.

\end{observation}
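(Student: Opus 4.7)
The plan is to follow the construction of $\Free$ as the composition displayed in~\Cref{e1}, namely
\[
\digraphs
~=~
\Fun(\bDelta_{\leq 1}^{\op},\Fin)
~\hookrightarrow~
\PShv(\bDelta_{\leq 1})
\xlongra{\sf LKE}
\PShv(\bDelta)
\longrightarrow
\Cat_{(\infty,1)}
~,
\]
and argue that each factor preserves cobase-change along monomorphisms. The latter two arrows are precisely the leftward (left adjoint) arrows of the composite adjunction~\Cref{e11}, so they preserve \emph{all} colimits, in particular all pushouts. Thus the only step requiring attention is the pointwise inclusion $\Fun(\bDelta_{\leq 1}^{\op},\Fin) \hookrightarrow \Fun(\bDelta_{\leq 1}^{\op},\Spaces)$.

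For this step, since colimits in functor $\infty$-categories are computed pointwise, I would reduce to the elementary fact that $\Fin \hookrightarrow \Spaces$ preserves cobase-change along monomorphisms. Concretely, given a span $S_- \hookleftarrow S_0 \to S_+$ of finite sets in which the left leg is an injection, its pushout in $\Spaces$ is computed as the ordinary set-theoretic pushout $S_- \sqcup_{S_0} S_+$, which is again a finite set. Composing this with the preservation of colimits by the two left adjoints establishes the statement for $\Free \colon \digraphs \to \Cat_{(\infty,1)}$.

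To transfer the assertion to $\Free \colon \digraphs \to \Quiv$ I would invoke Observation~\Cref{t20}: the resulting $\Cat_{(\infty,1)}$-pushout is the free $(\infty,1)$-category on the pushout finite directed graph, hence lies in the full subcategory $\Quiv \subset \Cat_{(\infty,1)}$; fullness of this inclusion then promotes it to a pushout in $\Quiv$. Finally, the ``in particular'' clause follows because the coproduct of directed graphs $\Gamma$ and $\Xi$ in $\digraphs$ is the disjoint union $\Gamma \sqcup \Xi$, realized as the cobase-change of $\emptyset \to \Xi$ along the monomorphism $\emptyset \hookrightarrow \Gamma$; the preservation result then identifies $\Free(\Gamma \sqcup \Xi)$ with the coproduct of $\Free(\Gamma)$ and $\Free(\Xi)$ in $\Quiv$, and iteration covers all finite coproducts (the empty case being handled by $\Free(\emptyset) = \emptyset$).

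The only substantive point, and the one that could conceivably fail in a more delicate setting, is the pointwise preservation step for $\Fin \hookrightarrow \Spaces$; the rest is essentially bookkeeping once $\Free$ is unwound via~\Cref{e1} and~\Cref{e11}.
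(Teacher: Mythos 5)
Your proposal is correct and follows essentially the same route as the paper's own (footnote) argument: decompose $\Free$ via~\Cref{e1}, observe that the two rightward functors of~\Cref{e11} are left adjoints and hence preserve all pushouts, reduce the remaining step to the pointwise inclusion $\Fin \hookrightarrow \Spaces$ preserving pushouts along injections, and then transfer to $\Quiv$ via fullness of $\Quiv \subset \Cat_{(\infty,1)}$. You simply spell out a couple of details (the pointwise computation and the cofibrancy of injections of finite sets) that the paper leaves implicit.
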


\begin{lemma}
\label{t6}
The simplex category 
is
the full $\infty$-subcategory
\[
\bDelta
~\subset~
\Cat_{(\infty,1)}
\]
consisting of those categories that are the values of $\Free$ on finite nonempty linearly-directed graphs (in the sense of \Cref{d26}).
In particular, there is a canonical fully faithful functor (between $\infty$-subcategories of $\Cat_{(\infty,1)}$),
\[
\bDelta
~\hookrightarrow~
\Quiv
~,
\]
the image of which consists of the finite nonempty linearly-directed graphs.  

\end{lemma}

\begin{notation}
\label{notation.brackets.functor.from.delta.op}
The fully faithful functor
\[
\rho
\colon
\bDelta
\xra{~\rm Lem~\ref{t6}~}
\Quiv
~,
\]
is defined so that the composite fully faithful functor $\bDelta \xra{\rho} \Quiv \hookrightarrow \fCat_{(\infty,1)}$ is the standard fully faithful functor.
The value of $\rho$ on an object $[p] \in \bDelta$ is denoted $\rho(p)\in \Quiv$ or simply $[p]\in \Quiv$.

\end{notation}

\begin{proof}[Proof of \Cref{t6}]
Let $p \geq 0$.
Consider the linearly ordered set $[p] \in \bDelta$, regarded as a category in the standard manner.  
Consider the linearly directed graph $A_p := A_{\{0,\dots,p\}} := (0 \to 1 \to \cdots \to p)$.
Consider the map between finite directed graphs
\[
A_p
\longrightarrow
[p]
\]
to the underlying directed graph of $[p]$ -- this is the unique map that is the identity map on sets of vertices.  
From the definition of ${\sf Free}$ in terms of a left adjoint, map between finite directed graphs determines a functor between $(\infty,1)$-categories
\begin{equation}
\label{k1}
{\sf Free}(A_p)
\longrightarrow
[p]
~.
\end{equation}
We next prove this functor~(\ref{k1}) is an equivalence, which implies the lemma.

We proceed by induction on $p\geq 0$.
Suppose $p\leq 1$.
Note that $A_p = [p]\in \bDelta_{\leq 1}$, in these cases.
Using that the left Kan extension $\PShv(\bDelta_{\leq 1}) \xra{\sf LKE} \PShv(\bDelta)$ restricts to representables as representables, then ${\sf LKE}([p]) \in \PShv(\bDelta)$ is the representable simplicial space on $[p]$.  
Generally, representable simplicial spaces satisfy the Segal and univalence-completeness conditions.  
We conclude that~(\ref{k1}) is an equivalence for $p\leq 1$.

Next, assume that $p>1$.
Observe the pushout diagram among directed graphs
\[
\xymatrix{
A_{\{1\}}
\ar[rr]
\ar[d]
&&
A_{\{1 , \dots , p\}}
\ar[d]
\\
A_{\{0 , 1\}}
\ar[rr]
&&
A_p
.
}
\]
The first statement of Observation~\ref{t25} implies the resulting diagram among $(\infty,1)$-categories
\[
\xymatrix{
{\sf Free}(A_{\{1\}})
\ar[rr]
\ar[d]
&&
{\sf Free}(A_{\{1 , \dots , p\}})
\ar[d]
\\
{\sf Free}(A_{\{0 , 1\}})
\ar[rr]
&&
{\sf Free}(A_p)
.
}
\]
is also a pushout.
Meanwhile, the Segal condition is just so that the diagram among $(\infty,1)$-categories
\[
\xymatrix{
\{1\}
\ar[rr]
\ar[d]
&&
\{1<\dots<p\}
\ar[d]
\\
\{0<1\}
\ar[rr]
&&
[p]
}
\]
is a pushout.  
The result follows by induction.

\end{proof}

The next technical result gives an explicit description of the values of $\Free$.
It is phrased in terms of the following notation.
For $[p]\in \bDelta$, and $\Gamma \in \digraphs$, 
an object in the under-over-category,
\[
\left(
\bDelta^{\sf idl}_{/\Gamma}
\right)^{[p]/}
~:=~
\bDelta^{[p]/}
\underset{\bDelta}\times
\bDelta^{\sf idl}
\underset{\digraphs}\times
\digraphs_{/\Gamma}
~,
\]
is a pair of morphisms: $[p]\xra{\sigma} [q]$ in $\bDelta$ and $[q]\xra{f}\Gamma$ in $\digraphs$;
the full subcategory
\[
\left(
\bDelta^{\sf idl}_{/^{\sf non.deg}\Gamma}
\right)^{[p]/^{\sf act}}
~\subset~
\left(
\bDelta^{\sf idl}_{/\Gamma}
\right)^{[p]/}
\]
consists of those objects $(\sigma,\Gamma)$ in which $\sigma$ is active (in the sense of \Cref{d14}) and $f$ is non-degenerate (in the sense of \Cref{d26}).  

\begin{lemma}
\label{t28}
Let $\Gamma \in \digraphs$ be a finite directed graph.  
For each $[p]\in \bDelta$, there is a canonical equivalence between spaces,
\[
\Hom_{\Cat_{(\infty,1)}}\left([p], \Free\left(\Gamma \right) \right)
~\simeq~
\Obj\left(
\left(
\bDelta^{\sf idl}_{/^{\sf non.deg}\Gamma}
\right)^{[p]/^{\sf act}}
\right)
~,
\]
involving the maximal $\infty$-subcategory of the (active) undercategory (in $\bDelta$) of the (non-degenerate) overcategory (in $\digraphs$).
Furthermore, through this composite identification, the unit morphism between directed graphs 
\[
{\sf unit}_\Gamma
\colon
\Gamma
\longrightarrow
\Free(\Gamma)
\]
evaluates on $[p]\in \bDelta_{\leq 1}$ as the monomorphism between spaces
\[
\Gamma([p])
~\simeq~
\Obj\left(
\left(
\bDelta^{\sf idl}_{/^{\sf non.deg}\Gamma}
\right)^{[p]/^{\sf cr}}
\right)
~\hookrightarrow~
\Obj\left(
\left(
\bDelta^{\sf idl}_{/^{\sf non.deg}\Gamma}
\right)^{[p]/^{\sf act}}
\right)
~\simeq~
\Hom_{\Cat_{(\infty,1)}}\left([p], \Free\left(\Gamma \right) \right)
~.
\]

\end{lemma}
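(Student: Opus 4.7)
The plan is to identify the right-hand side with the value at $[p]$ of a presheaf $F(\Gamma) \in \PShv(\bDelta)$, to verify that $F(\Gamma)$ satisfies the closed sheaf and univalent-completeness conditions (so defines an object of $\Cat_{(\infty,1)}$), and then to show $F(\Gamma) \simeq \Free(\Gamma)$ by checking the universal property of the free $(\infty,1)$-category. The unit-map assertion of the lemma will drop out from an inspection at levels $[0]$ and $[1]$.

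First I would set $F(\Gamma)([p]) := \Obj\bigl(\bigl(\bDelta^{\sf cls.cr}_{/^{\sf non.deg}\Gamma}\bigr)^{[p]/^{\sf act}}\bigr)$ with functoriality in $[p]$ by precomposition with active morphisms. Observe that inside $\bDelta^{\sf cls.cr}_{/^{\sf non.deg}\Gamma}$ every morphism is automatically a convex inclusion, since a non-injective convex morphism would collapse an edge of $[q]$ and thus spoil non-degeneracy of the composite to $\Gamma$. Functoriality in $\Gamma$ uses the $(\bDelta^{\sf act}, \bDelta^{\sf cls})$-factorization system of \Cref{t41}: given $\Gamma \to \Gamma'$, send $([p] \xra{\sigma} [q], f)$ to $([p] \to [q'], f')$, where $[q] \to [q'] \to \Gamma'$ is the unique active-nondegenerate factorization of the postcomposite $[q] \to \Gamma'$ — collapsing precisely those edges of $[q]$ that map to identities in $\Gamma'$.

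Next I would verify that $F(\Gamma)$ is a closed sheaf satisfying univalence. For the closed sheaf condition, a basic closed cover of $[p]$ decomposes any pair $([p] \xra{\sigma} [q], f)$ along the vertex $\sigma(1) \in [q]$ into two sub-pairs sharing that vertex; this is exactly the Segal principle that a length-$p$ path glues from a $1$-path and a $(p-1)$-path. For univalence, the invertible elements of $F(\Gamma)([1])$ are exactly the degenerate ones coming from $\Gamma^{(0)} = F(\Gamma)([0])$, since no length-$\geq 1$ path in a finite directed graph acquires an inverse upon freely completing. So $F(\Gamma) \in \Cat_{(\infty,1)}$, and the inclusion of creation morphisms inside active morphisms furnishes a canonical unit $\iota: \Gamma \to F(\Gamma)|_{\bDelta_{\leq 1}}$ in $\digraphs$.

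The hard part is the universal property: for $\cC \in \Cat_{(\infty,1)}$, restriction along $\iota$ must induce an equivalence $\Hom_{\Cat_{(\infty,1)}}(F(\Gamma), \cC) \xra{\sim} \Hom_{\digraphs}(\Gamma, \cC|_{\bDelta_{\leq 1}})$. Uniqueness is immediate, since a map of closed sheaves on $\bDelta$ is determined by its values at $[0]$ and $[1]$. For existence, given $\phi: \Gamma \to \cC|_{\bDelta_{\leq 1}}$ one extends by $([p] \xra{\sigma} [q], f) \longmapsto \sigma^*(\phi_*(f))$, where $\phi_*(f) \in \cC([q])$ is the composite assembled by the Segal condition on $\cC$ from the $q$-tuple $\bigl(\phi(f|_{[i-1,i]})\bigr)_{i=1}^{q}$. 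The obstacle is to verify that this assignment is well-defined and natural in both $[p]$ and $\Gamma$ — in particular compatible with the active-nondegenerate factorization used above; granted that, uniqueness of adjoints yields $F(\Gamma) \simeq \Free(\Gamma)$, and the resulting unit map is precisely the one described in the statement.
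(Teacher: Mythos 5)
Your strategy is genuinely different from the paper's, and it has a real gap at its crux. The paper never verifies the universal property of $\Free(\Gamma)$ by hand: since $\Free$ is \emph{defined} as a composite of left adjoints \Cref{e11}, the paper instead computes the left Kan extension to $\PShv(\bDelta)$ via its colimit formula, identifies that colimit with the $\infty$-groupoid-completion of the over-under-category $\bigl(\bDelta^{\sf cls.cr}_{/\Gamma}\bigr)^{[p]/}$, contracts this by two adjoint localizations onto the $0$-type $\Obj\bigl(\bigl(\bDelta^{\sf cls.cr}_{/^{\sf non.deg}\Gamma}\bigr)^{[p]/^{\sf act}}\bigr)$, and then checks that the resulting simplicial space \Cref{e22} already satisfies the Segal and completeness conditions --- so the final localization onto $\Cat_{(\infty,1)}$ acts as the identity on it and the adjunction unit \Cref{e23} is an equivalence. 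The universal property is thus inherited for free from the definition; the only work is the locality check, which you also perform. Your shared content with the paper is the concrete description of the $0$-type and the Segal/completeness verification; the divergence is how one connects that $0$-type to $\Hom_{\Cat_{(\infty,1)}}([p],\Free(\Gamma))$.

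The gap is in your final step, on both halves. For uniqueness, the claim that ``a map of closed sheaves on $\bDelta$ is determined by its values at $[0]$ and $[1]$'' is false as a general principle: the restriction $\Hom_{\fCat_{(\infty,1)}}(X,\cC)\to\Hom_{\PShv(\bDelta_{\leq 1})}(X|_{\bDelta_{\leq 1}},\cC|_{\bDelta_{\leq 1}})$ need not be a monomorphism of spaces, because the target can have nontrivial higher homotopy that the (possibly discrete) source does not surject onto $(-1)$-truncatedly. For instance, with $X=\fB(\ZZ/2)$ and $\cC$ the one-object $(\infty,1)$-category with endomorphism space $\TT$, the source is a two-point set while the target contains a copy of $\TT$, and a two-point set mapping to $\TT$ is not a monomorphism. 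The statement you need is true precisely because $F(\Gamma)$ is \emph{free} on $\Gamma$ --- which is what the lemma is proving, so invoking it here is circular. For existence, the assignment $([p]\xra{\sigma}[q],f)\mapsto\sigma^\ast(\phi_\ast(f))$ only specifies the extension levelwise up to contractible choice; promoting it to an actual morphism of simplicial spaces requires coherence data at every simplicial level (compatibility of the Segal-assembled composites with all face and degeneracy maps, and with the active--closed factorization of \Cref{t41} that you need even to make $F(\Gamma)$ functorial in $[p]$, since an arbitrary map $[p']\to[p]$ does not preserve activity of $\sigma$). You correctly flag this as ``the obstacle'' but do not close it, and closing it by hand is essentially as hard as the lemma itself; the paper's left-adjoint route exists precisely to avoid ever constructing this extension.
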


\begin{proof}
We establish a sequence of equivalences between spaces:
\begin{align}
\hspace{-1cm}
\label{e30}
\Hom_{\Cat_{(\infty,1)}}\left([p], \Free\left(\Gamma \right) \right)
\xla{~\simeq~}
&
\colim
\left(
\left(
\left(
\bDelta^{\sf idl}
\right)^{[p]/}
\right)^{\op}
\xra{\rm forget}
\left(
\bDelta^{\sf idl}
\right)^{\op}
\xra{\Cref{e2}}
\left(
\digraphs
\right)^{\op}
\xra{\Hom_{\digraphs}(-,\Gamma)}
\Spaces
\right)
\\
\label{e29}
~\simeq~
&
\left|
\left(
\bDelta^{\sf idl}_{/\Gamma}
\right)^{[p]/}
\right|
\\
\label{e28}
\xla{~\simeq~}
&
\left|
\left(
\bDelta^{\sf idl}_{/^{\sf non.deg}\Gamma}
\right)^{[p]/^{\sf act}}
\right|
\\
\label{e10}
\xla{~\simeq~}
&
\Obj\left(
\left(
\bDelta^{\sf idl}_{/^{\sf non.deg}\Gamma}
\right)^{[p]/^{\sf act}}
\right)
~,
\end{align}
We first establish the identifications~(\ref{e29}),~(\ref{e28}), and~(\ref{e10}).
So let $[p] \in \bDelta$.
First, recall that, for $\cB \xra{F} \Spaces$ a functor from an $\infty$-category,
there is a canonical identification of its colimit as the $\infty$-groupoid-completion of its unstraightening:
\[
\Bigl|
{\sf Un}(F)
\Bigr|
~\simeq~
\colim(F)
~.
\]
Next, note that, for $\cK \xra{f} \cB$ a functor, the unstraightening of the composite $F\circ f$ is identical with the base-change along $f$ of the unstraightening of $F$:
\[
{\sf Un}(F \circ f)
~\simeq~
{\sf Un}(F)_{|\cK}
~.
\]
Next, note that, for $F = \Hom_{\cB}(-,b)$ representable, then the unstraightening of $F$ is the $\infty$-overcategory:
\[
\cB_{/b}
~\simeq~
{\sf Un}(F)
~.
\]
Putting these observations together reveals a canonical identification:
\begin{eqnarray*}
\label{e18}
\colim
\left(
\left(
\left(
\bDelta^{\sf idl}
\right)^{[p]/}
\right)^{\op}
\xra{\rm forget}
\left(
\bDelta^{\sf idl}
\right)^{\op}
\xra{(\ref{e2})}
\left(
\digraphs
\right)^{\op}
\xra{\Hom_{\digraphs}(-,\Gamma)}
\Spaces
\right)
&
~\simeq~
&
\left|
\left( \left( \bDelta^{\sf idl} \right)^{[p]/} \right)_{/\Gamma}
\right|
\\
&
~\simeq~
&
\left|
\left(
\bDelta^{\sf idl}_{/\Gamma}
\right)^{[p]/}
\right|
~,
\end{eqnarray*}
in which the second identification is direct from the definition of these over-under-categories.  
This establishes the identification~(\ref{e29}).

Next, the active-closed factorization system on $\bDelta$ determines a left adjoint localization
\begin{equation}
\label{e25}
\left(
\bDelta^{\sf idl}_{/\Gamma}
\right)^{[p]/}
\longrightarrow
\left(
\bDelta^{\sf idl}_{/\Gamma}
\right)^{[p]/^{\sf act}}
~.
\end{equation}
The surjective-injective factorization system on ${\sf Sets}$ determines a further right adjoint localization 
\begin{equation}
\label{e24}
\left(
\bDelta^{\sf idl}_{/\Gamma}
\right)^{[p]/^{\sf act}}
\longrightarrow
\left(
\bDelta^{\sf idl}_{/^{\sf non.deg}\Gamma}
\right)^{[p]/^{\sf act}}
~.
\end{equation}
The identification~(\ref{e28}) then follow because adjoint functors implement equivalences between $\infty$-groupoid-completions.

Next, by the injective-surjective factorization system on sets, 
the definition of \bit{non-degenerate} is such that the projection
\[
\bDelta^{\sf idl}_{/^{\sf non.deg}\Gamma}
\longrightarrow
\bDelta^{\sf idl}
\]
factors through the subcategory $\bDelta^{\sf cls} \subset \bDelta^{\sf idl}$ of closed morphisms.
Meanwhile, the definition of active is such that the projection
\[
(\bDelta^{\sf idl})^{[p]/^{\sf act}}
\longrightarrow
\bDelta^{\sf idl}
\]
factors through the subcategory $\bDelta^{\sf cr}\subset \bDelta$ of creation morphisms.
Therefore the canonical projection
\[
\left(
\bDelta^{\sf idl}_{/^{\sf non.deg}\Gamma}
\right)^{[p]/^{\sf act}}
\longrightarrow
\bDelta^{\sf idl}
\]
factors through $\Obj( \bDelta )
\xra{\simeq}
\bDelta^{\sf cls}\cap \bDelta^{\sf cr}
\subset
\bDelta^{\sf idl}$.
Now, both of these projections are full subcategories of the respective right and left fibrations, each with 0-type fibers:
\[
\bDelta^{\sf idl}_{/\Gamma}
\longrightarrow
\bDelta^{\sf idl}
\qquad
\text{ and }
\qquad
(\bDelta^{\sf idl})^{[p]/}
\longrightarrow
\bDelta^{\sf idl}
~.
\]
Because $\Obj( \bDelta )$ is a 0-type, we conclude that the $\infty$-category $\left(
\bDelta^{\sf idl}_{/^{\sf non.deg}\Gamma}
\right)^{[p]/^{\sf act}}$ is, in fact, a 0-type.
In particular, both of the functors
\begin{equation}
\label{e31}
\Obj\left(
\left(
\bDelta^{\sf idl}_{/^{\sf non.deg}\Gamma}
\right)^{[p]/^{\sf act}}
\right)
\xra{~\simeq~}
\left(
\bDelta^{\sf idl}_{/^{\sf non.deg}\Gamma}
\right)^{[p]/^{\sf act}}
\xra{~\simeq~}
\left|
\left(
\bDelta^{\sf idl}_{/^{\sf non.deg}\Gamma}
\right)^{[p]/^{\sf act}}
\right|
\end{equation}
are equivalences.
This establishes the identification~(\ref{e10}).

Now, 
consider the simplicial space
\begin{equation}
\label{e22}
\left|
\left(
\bDelta^{\sf idl}_{/\Gamma}
\right)^{[\bullet]/}
\right|
\colon
\bDelta^{\op}
\longrightarrow
\Spaces
~.
\end{equation}
Through the identification~(\ref{e29}), proved above, this simplicial space~(\ref{e22}) witnesses a left Kan extension:
\[ \begin{tikzcd}
\left(
\bDelta^{\sf idl}
\right)^{\op}
\arrow{r}[xshift=1.0cm, yshift=-1.0cm]{\rotatebox{225}{$\Longrightarrow$}}{(\ref{e2})}
\arrow{d}
&[0.5cm]
\left( \digraphs \right)^{\op}
\arrow{r}{\Hom_{\digraphs}(-,\Gamma)}
&[2cm]
\Spaces
\\
\bDelta^{\op}
\arrow[bend right=20]{rru}[swap, sloped]{
\left|
\left(
\bDelta^{\sf idl}_{/\Gamma}
\right)^{[\bullet]/}
\right|
}
\end{tikzcd}
~.
\]
In particular, there is a canonical morphism between simplicial spaces
\begin{equation}
\label{e23}
\left|
\left(
\bDelta^{\sf idl}_{/\Gamma}
\right)^{[\bullet]/}
\right|
\longrightarrow
\Hom_{\Cat_{(\infty,1)}}\left([\bullet],\Free\left(\Gamma\right)\right)
~,
\end{equation}
which extends the unit morphisms between directed graphs.
So the equivalence~(\ref{e30}) is a consequence of this morphism~(\ref{e23}) being an equivalence.
By definition of the functor $\Free$ in terms of left adjoints, this morphism~(\ref{e23}) is initial among all morphisms from~(\ref{e22}) to a simplicial space that satisfies the Segal and univalent-completeness conditions.  
So, the result follows upon showing the simplicial space~(\ref{e22}) satisfies the Segal and univalent-completeness conditions.

We first show~(\ref{e22}) satisfies the Segal condition.
So let $[p]\in \bDelta$.
We must show that the commutative square
\begin{equation}
\label{e27}
\begin{tikzcd}
\left(
\bDelta^{\sf idl}_{/\Gamma}
\right)^{[p]/}
\arrow{r}
\arrow{d}
&
\left(
\bDelta^{\sf idl}_{/\Gamma}
\right)^{\{1<\cdots<p\}/}
\arrow{d}
\\
\left(
\bDelta^{\sf idl}_{/\Gamma}
\right)^{\{0<1\}/}
\arrow{r}
&
\left(
\bDelta^{\sf idl}_{/\Gamma}
\right)^{\{1\}/}
\end{tikzcd}
\end{equation}
among categories induces a pullback diagram among $\infty$-groupoid-completions.
Note that, through the adjoint localizations~(\ref{e25}) and~(\ref{e24}), and the equivalence~(\ref{e31}), we obtain a commutative diagram among spaces (in fact, 0-types),
\[
\begin{tikzcd}
\Obj\left(
\left(
\bDelta^{\sf idl}_{/^{\sf non.deg}\Gamma}
\right)^{[p]/^{\sf act}}
\right)
\arrow{r}
\arrow{d}
&
\Obj\left(
\left(
\bDelta^{\sf idl}_{/^{\sf non.deg}\Gamma}
\right)^{\{1<\cdots<p\}/^{\sf act}}
\right)
\ar[d]
\\
\Obj\left(
\left(
\bDelta^{\sf idl}_{/^{\sf non.deg}\Gamma}
\right)^{\{0<1\}/^{\sf act}}
\right)
\arrow{r}
&
\Obj\left(
\left(
\bDelta^{\sf idl}_{/^{\sf non.deg}\Gamma}
\right)^{\{1\}/^{\sf act}}
\right)
\end{tikzcd}
~,
\]
that is identical with the diagram among $\infty$-groupoid-completions of~(\ref{e27}).
Now, by direct inspection, this commutative square among 0-types induces an equivalence between horizontal fibers.  
Therefore, this diagram among spaces is a pullback.

Finally, direct inspection reveals that the simplicial space~(\ref{e22}) satisfies the completeness condition.

\end{proof}

\begin{cor}
\label{t27}
For each finite directed graph $\Gamma$, the $(\infty,1)$-category
$
\Free(\Gamma)
$
has the following properties.
\begin{enumerate}

\item
Its space of objects is a finite $0$-type.
In fact, the canonical map between spaces,
\[
{\sf unit}_{\Gamma}([0]) 
\colon
\Gamma^{(0)} := \Gamma([0])
\longrightarrow
\Obj\left( \Free\left( \Gamma \right) \right)
~,
\]
is an equivalence.
In particular, for each $v \in \Obj\left( \Free\left(\Gamma \right) \right)$, the group $\Aut_{\Free(\Gamma)}(v) \simeq \{\id_v\}$ is trivial.

\item
It is an ordinary category: for each pair $v_s,v_t\in \Obj\left( \Free\left(\Gamma \right) \right)$, the unit map followed by the composition map for the $(\infty,1)$-category $\Free\left( \Gamma \right)$ defines an equivalence between spaces:
\[
\underset{q \geq 0}
\coprod
\Bigl\{
(e_1,\dots,e_q)\in (\Gamma^{(1)})^{\times q} \mid 
s(e_1) = v_s \text{ and } 
t(e_q)=v_t 
\text{ and }
\text{ for } 0<i<q,~s(e_i) = t(e_{i+1})
\Bigr\}
\xra{~\simeq~}
\Hom_{\Free(\Gamma)}(v_s,v_t)
~.
\]
In particular, 
$\Hom_{\Free(\Gamma)}(v_s,v_t)$ is a 0-type, 
a point in which is a sequence
of directed edges in $\Gamma$ from $v_s$ to $v_t$.

\item
It is gaunt.

\end{enumerate}

\end{cor}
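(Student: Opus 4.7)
The plan is to deduce all three parts directly from Lemma~\ref{t28}, which identifies the mapping space $\Hom_{\Cat_{(\infty,1)}}([p], \Free(\Gamma))$ with the set $\Obj\left(\left(\bDelta^{\sf cls.cr}_{/^{\sf non.deg}\Gamma}\right)^{[p]/^{\sf act}}\right)$ and further describes how the unit morphism $\Gamma \to \Free(\Gamma)$ sits inside this identification. With that lemma in hand, the corollary should reduce to a pair of concrete combinatorial case analyses, at $p = 0$ and $p = 1$.

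For statement~(1), I would specialize Lemma~\ref{t28} to $p = 0$. An active morphism $[0] \to [q]$ in $\bDelta$ must send the minimum of $[0]$ to the minimum of $[q]$ and the maximum of $[0]$ to the maximum of $[q]$; since the minimum and maximum of $[0]$ coincide, this forces $q = 0$ and $\sigma = \id_{[0]}$. The under-over category then collapses to the set of non-degenerate maps $[0] \to \Gamma$ in $\digraphs$, which is by definition $\Gamma([0]) = \Gamma^{(0)}$. The second clause of Lemma~\ref{t28} further identifies this set with the image of the unit map $\Gamma^{(0)} \to \Obj(\Free(\Gamma))$, so the latter is an equivalence onto a finite $0$-type.

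For statement~(2), I would specialize to $p = 1$. An active morphism $\sigma : [1] \to [q]$ must send $0 \mapsto 0$ and $1 \mapsto q$, hence is determined by the integer $q \geq 0$. For each such $q$, a non-degenerate closed-creation map $f : [q] \to \Gamma$ in $\digraphs$ unpacks, via the pullback characterization of non-degeneracy, into an assignment of a vertex $f(i) \in \Gamma^{(0)}$ to each $i \in [q]$ together with a directed edge $e_{i+1} \in \Gamma^{(1)}$ from $f(i)$ to $f(i+1)$ for each elementary edge of $[q]$. Restricting to $f(0) = v_s$ and $f(q) = v_t$ produces the claimed disjoint union over $q$ of sets of length-$q$ paths from $v_s$ to $v_t$; in particular the hom-space is a $0$-type. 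The triviality of $\Aut_{\Free(\Gamma)}(v)$ asserted in~(1) is then formal: composition in $\Free(\Gamma)$ is concatenation of edge sequences (which one can verify, if desired, by unpacking Lemma~\ref{t28} at $p = 2$ and comparing to the Segal splitting), and concatenation strictly adds lengths, so only the empty sequence is invertible.

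Statement~(3) is then immediate: a $1$-category is gaunt exactly when its only invertible morphisms are identities, and~(2) shows $\Free(\Gamma)$ is a $1$-category while the automorphism clause of~(1) shows its only invertible morphisms are identities. The one step that might require a touch of care is the identification of composition with concatenation used at the end of~(2); this is controlled by the Segal condition already exploited in the proof of Lemma~\ref{t28}. I do not anticipate any serious obstacle beyond careful bookkeeping with the active undercategories at $[0]$ and $[1]$.
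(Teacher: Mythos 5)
Your proposal is correct and follows essentially the same route as the paper: both arguments specialize Lemma~\ref{t28} to $p=0$ and $p=1$, using that the only active morphism out of $[0]$ is the identity and that active morphisms $[1]\to[q]$ are indexed by $q$, with gauntness then formal from (1) and (2). The only minor divergence is that you derive triviality of $\Aut_{\Free(\Gamma)}(v)$ from length-additivity of concatenation, whereas the paper folds it into the $p=0$ case (where it follows from completeness once $\Obj(\Free(\Gamma))$ is known to be a $0$-type); both are fine.
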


\begin{proof}
Statement~(3) follows from Statements~(1) and~(2).

Statement~(1) follows immediately from \Cref{t28}, as the case $p=0$, using that the only active morphism $[0]\to [q]$ in $\bDelta$ is an isomorphism.

Statement~(2) follows from \Cref{t28}, through the case $p=1$, as we now explain.
Observe, through direct inspection, that the fiber over $[q]$ of the projection
$\Obj\left(
\left(
\bDelta^{\sf idl}_{/^{\sf non.deg}\Gamma}
\right)^{[1]/^{\sf act}}
\right)
\to 
\Obj(\bDelta^{\sf idl})
$
is canonically identified as the 0-type
\[
\Bigl\{
(e_1,\dots,e_q)\in ( \Gamma^{(1)})^{\times q} \mid 
s(e_1) = v_s \text{ and } 
t(e_q)=v_t 
\text{ and }
\text{ for } 0<i<q,~s(e_i) = t(e_{i+1})
\Bigr\}
~\subset~
\Gamma([1])^q
~.
\]

\end{proof}

\begin{remark}
\label{r13}
Let $\Gamma$ be a finite directed graph.
\Cref{t27}(1) states that the space of objects $\Obj\bigl(\Free(\Gamma) \bigr)$ is the 0-type of vertices $\Gamma^{(0)}$.
\Cref{t27}(2) states that, for $v_s,v_t\in \Gamma^{(0)}$ vertices, the space of morphism in $\Free(\Gamma)$ is the 0-type of \bit{directed paths} in $\Gamma$ from $v_s$ to $v_t$, a point in which is an $\ell\geq 0$ together with a sequence of $\ell$ directed edges in $\Gamma$:
\[
\Bigl(
v_s
\xra{a_1}
u_1
\xra{a_2}
u_2
\xra{a_3}
\cdots
\xra{a_{\ell-1}}
u_{\ell-1}
\xra{a_\ell}
v_t
\Bigr)
~.
\]

\end{remark}

\Cref{t27} lends the following.
\begin{cor}
\label{t94}
Let $\Gamma$ and $\Xi$ be finite directed graphs. 
The space of morphisms $\Hom_{\Quiv}\bigl( \Xi , \Gamma \bigr)$ is the 0-type consisting of the following data.
\begin{itemize}

\item
A map $f^{(0)}\colon \Xi^{(0)} \to \Gamma^{(0)}$ between sets of vertices.

\item
For each non-degenerate directed edge $(s(e) \xra{e} t(e)) \in \Xi^{(1)}$, a directed path in $\Gamma$,
\[
f^{(1)}(e)
~=~
\Bigl(
f^{(0)}(s(e))
\xra{b_1(e)}
y_1(e)
\xra{b_2(e)}
y_2(e)
\xra{b_3(e)}
\cdots
\xra{b_{\ell_e-1}(e)}
y_{\ell_e-1}(e)
\xra{b_{\ell_e}(e)}
f^{(0)}(t(e))
\Bigr)
~,
\]
from $f^{(0)}(s(e))$ to $f^{(0)}(t(e))$.

\end{itemize}

\end{cor}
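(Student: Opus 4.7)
The plan is to deduce \Cref{t94} directly from \Cref{t27} by combining the adjunction underlying the construction of $\Free$ with the gauntness of $\Free(\Gamma)$. The content of the corollary is essentially a notational repackaging of \Cref{t27}(1)--(2), so I do not anticipate a substantive obstacle; the only care required is in tracking the source/target/degeneracy structure on $\bDelta_{\leq 1}$.

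First, because $\Quiv \hookrightarrow \Cat_{(\infty,1)}$ is fully faithful, we have
\[
\Hom_{\Quiv}(\Xi,\Gamma)
~=~
\Hom_{\Cat_{(\infty,1)}}\bigl(\Free(\Xi),\Free(\Gamma)\bigr),
\]
and \Cref{t27}(3) asserts that $\Free(\Gamma)$ is gaunt, so this mapping space is automatically a $0$-type: the functor $\infty$-category from any $\infty$-category into a gaunt one is again gaunt, so its underlying groupoid is a set. Next, I would invoke the composite adjunction~\Cref{e11}, of which $\Free$ is the left adjoint, to obtain
\[
\Hom_{\Cat_{(\infty,1)}}\bigl(\Free(\Xi),\Free(\Gamma)\bigr)
~\simeq~
\Hom_{\PShv(\bDelta_{\leq 1})}\bigl(\Xi,\,\Free(\Gamma)_{|\bDelta_{\leq 1}}\bigr),
\]
where $\Free(\Gamma)_{|\bDelta_{\leq 1}}$ denotes the image of $\Free(\Gamma)$ in $\PShv(\bDelta)$ under~\Cref{e3}, further restricted to $\bDelta_{\leq 1}$.

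By \Cref{t27}(1), the value of $\Free(\Gamma)_{|\bDelta_{\leq 1}}$ on $[0]$ is $\Gamma^{(0)}$; by \Cref{t27}(2), its value on $[1]$ is the disjoint union, over ordered pairs of vertices $(v_s,v_t)$, of the sets of directed paths in $\Gamma$ from $v_s$ to $v_t$, with source and target maps extracting endpoints and the unique degeneracy sending $v \in \Gamma^{(0)}$ to the length-zero (identity) path at $v$. Unpacking a morphism of presheaves $\Xi \to \Free(\Gamma)_{|\bDelta_{\leq 1}}$ then yields a vertex map $f^{(0)}\colon \Xi^{(0)} \to \Gamma^{(0)}$ together with a map on $[1]$-slots compatible with source, target, and degeneracy. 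Compatibility with degeneracy forces each vertex of $\Xi$ (viewed as a degenerate element of $\Xi([1])$) to map to an identity morphism, so this portion is entirely determined by $f^{(0)}$. What remains is precisely a choice, for each non-degenerate edge $e \in \Xi^{(1)} = \Xi([1]) \setminus \Xi([0])$, of a directed path in $\Gamma$ from $f^{(0)}(s(e))$ to $f^{(0)}(t(e))$, as described by \Cref{r13} and displayed in the statement of the corollary.
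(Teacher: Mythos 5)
Your proof is correct and is essentially the argument the paper intends: the corollary is stated with no explicit proof beyond ``\Cref{t27} lends the following,'' and your unpacking via the adjunction~\Cref{e11} together with \Cref{t27}(1)--(3) (with gauntness supplying the $0$-type claim, exactly as in the proof of \Cref{t26}) is the natural way to make that precise.
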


\begin{lemma}
\label{t29}
There is a canonical pullback diagram among $\infty$-categories
\begin{equation}
\label{e2}
\begin{tikzcd}
\bDelta^{\sf idl}
\arrow{r}
\arrow{d}
&
\digraphs
\arrow{d}{\Free}
\\
\bDelta
\arrow{r}
&
\Quiv
\end{tikzcd}
~.
\end{equation}

\end{lemma}

\begin{proof}
Let $A_p$ and $A_q$ be the finite linearly-directed graphs for which ${\sf Free}(A_p) = [p]$ and ${\sf Free}(A_q) = [q]$.  
Let $[p] \xra{\sigma}[q]$ be a morphism in $\bDelta$.
The morphism $f$ is idle if and only if, for each generating morphism $(i-1) \xra{f_i} i$ in $[p]$ (ie, for each edge in $A_p$), the morphism $\sigma(i-1) \xra{\sigma(f_i)} \sigma(i)$ in $[q]$ is either a generating morphism or an identity morphism (ie, an edge in $A_q$).
The result then follows from \Cref{t94}.

\end{proof}

\begin{lemma}
\label{t26}
The $\infty$-category $\Quiv$ has the following features.
\begin{enumerate}
\item
$\Quiv$ is an ordinary category.

\item
Both of the functors 
\[
\Free \colon \digraphs
\longrightarrow
\Quiv
\qquad
\text{ and }
\qquad
\Free \colon \digraphs
\longrightarrow 
\Cat_{(\infty,1)}
\]
are monomorphisms.

\end{enumerate}

\end{lemma}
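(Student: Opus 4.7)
The plan is to exploit the explicit combinatorial descriptions of hom-spaces supplied by \Cref{t27} and \Cref{t94}.

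For (1), by \Cref{t20} every quiver is of the form $\Free(\Gamma)$, and \Cref{t94} presents $\Hom_\Quiv\bigl(\Free(\Xi),\Free(\Gamma)\bigr)$ explicitly as a $0$-type (a set of vertex maps paired with directed-path assignments to the non-degenerate edges of $\Xi$). Hence every hom-space in $\Quiv$ is a $0$-type, so $\Quiv$ is an ordinary $1$-category.

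For (2), I would show that $\Free \colon \digraphs \to \Quiv$ is a monomorphism in $\Cat_{(\infty,1)}$ by verifying that its diagonal $\digraphs \to \digraphs \times_\Quiv \digraphs$ is an equivalence. Since both sides are $1$-categories, this reduces to two checks: (i) the map $\Hom_\digraphs(\Xi,\Gamma) \to \Hom_\Quiv\bigl(\Free(\Xi),\Free(\Gamma)\bigr)$ is injective, and (ii) every isomorphism $\alpha \colon \Free(\Gamma_1) \xra{\sim} \Free(\Gamma_2)$ in $\Quiv$ is of the form $\Free(\beta)$ for some isomorphism $\beta \colon \Gamma_1 \xra{\sim} \Gamma_2$ in $\digraphs$. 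Claim (i) follows by comparing \Cref{t94} with the definition of $\digraphs$: a digraph morphism consists of a vertex map together with, for each non-degenerate edge of $\Xi$, either an edge or a degenerate edge (vertex) of $\Gamma$, whereas its image in $\Hom_\Quiv$ records the same vertex map together with a directed path in $\Gamma$ of length $0$ or $1$ -- such a short path uniquely recovers the original digraph data. Claim (ii) follows because \Cref{t27} exhibits both the vertex set and the edge set of $\Gamma$ as intrinsic invariants of the category $\Free(\Gamma)$: the vertices are the objects, and by \Cref{t27}(2) morphisms are uniquely expressible as sequences of directed edges, so the edges correspond to the non-identity morphisms admitting no factorization into two non-identity morphisms. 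Consequently any equivalence of categories $\alpha$ restricts to bijections on these intrinsic structures, supplying the desired $\beta$.

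For the second functor, $\Free \colon \digraphs \to \Cat_{(\infty,1)}$ factors through the fully faithful inclusion $\Quiv \hookrightarrow \Cat_{(\infty,1)}$ of \Cref{t20}; as fully faithful functors are monomorphisms and monomorphisms compose, the composite is a monomorphism. The main technical point throughout is the intrinsic categorical characterization of the edges of $\Gamma$ inside $\Free(\Gamma)$ as the indecomposable non-identity morphisms, which hinges on the ``no-hidden-relations'' content of \Cref{t27}(2).
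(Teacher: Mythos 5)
Your argument is correct. For part (1) it coincides with the paper's: both deduce that $\Quiv$ is an ordinary category from the fact that the explicit description of $\Hom_{\Quiv}\bigl(\Free(\Xi),\Free(\Gamma)\bigr)$ coming from \Cref{t27} (packaged in \Cref{t94}) exhibits every hom-space as a $0$-type.

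For part (2) your route genuinely differs. The paper first reduces to showing that $\Free\colon \digraphs \to \Cat_{(\infty,1)}$ is a monomorphism (using fullness of $\Quiv \subset \Cat_{(\infty,1)}$, the reverse of your reduction), and then argues abstractly through the adjunction \Cref{e11}: it suffices that the unit $\Gamma \to \Free(\Gamma)$ be a monomorphism in $\PShv(\bDelta_{\leq 1})$, which is exactly the content of \Cref{t27}(1)\&(2). You instead verify the diagonal condition directly, splitting it into injectivity on hom-sets and the statement that every isomorphism $\Free(\Gamma_1)\xra{\sim}\Free(\Gamma_2)$ is induced by an isomorphism of directed graphs. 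Your injectivity check is essentially an unwinding of the paper's unit-is-mono step. Your second check, via the intrinsic characterization of the edges of $\Gamma$ inside $\Free(\Gamma)$ as the indecomposable non-identity morphisms (which is legitimate: by the uniqueness in \Cref{t27}(2), a length-one path cannot equal a composite of two non-identity morphisms, even for self-loops, and gauntness from \Cref{t27}(3) lets you treat equivalences as isomorphisms), is the more hands-on ingredient. What each approach buys: the paper's adjunction argument is shorter and avoids any discussion of isomorphisms between free quivers, but it leaves the reader to supply why a mono unit forces the restricted left adjoint to be a mono on object spaces (in particular, surjectivity of $\Aut_{\digraphs}(\Gamma) \to \Aut_{\Quiv}(\Free(\Gamma))$ and injectivity on isomorphism classes); your argument makes precisely that point explicit, at the cost of a longer combinatorial verification.
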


\begin{proof}
\Cref{t27}(3) implies the defining fully faithful inclusion $\Quiv \hookrightarrow \Cat_{(\infty,1)}$ factors through $\Cat_{(1,1)}\subset \Cat_{(\infty,1)}$.  
\Cref{t27} also implies that, for each $\Gamma,\Xi \in \Quiv$, the 1-groupoid $\Hom_{\Cat_{(1,1)}}\left( \Free(\Xi) , \Free(\Gamma) \right)$ is in fact a 0-type.  
It follows that $\Quiv$ is an ordinary category, which is statement~(1).

We now prove statement~(2).
Because $\Quiv \subset \Cat_{(\infty,1)}$ is a full $\infty$-subcategory,
it is sufficient to prove that $\Free\colon \digraphs \to \Cat_{(\infty,1)}$ is a monomorphism.  
For this, it's enough to show that the unit of the adjunction~(\ref{e11}) evaluates on each object $\Gamma \in \digraphs \subset \PShv(\bDelta_{\leq 1})$ as a monomorphism in $\PShv(\bDelta_{\leq 1})$:
\[
\Gamma
\longrightarrow
\Free(\Gamma)
\qquad
\text{ is a monomorphism }
~.
\]
This is implied by \Cref{t27}(1)\&(2).

\end{proof}

After \Cref{t20},
\Cref{t26} implies the following.
\begin{cor}
\label{t34}
The functor $\digraphs \xra{\Free} \Quiv$ restricts as an equivalence between moduli spaces of objects:
\[
\Obj(\Free)\colon
\Obj(\digraphs)
\xra{~\simeq~}
\Obj( \Quiv )
~.
\]

\end{cor}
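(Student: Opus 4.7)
The plan is to combine the surjectivity-on-objects of \Cref{t20} with the monomorphism property of \Cref{t26}(2) to conclude that $\Obj(\Free)$ is both surjective and a monomorphism of spaces, hence an equivalence.

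First I would record the input. By \Cref{t20} the factorizing functor $\Free\colon \digraphs \to \Quiv$ is already surjective on objects, simply because $\Quiv \subset \Cat_{(\infty,1)}$ is by definition the full subcategory on objects of the form $\Free(\Gamma)$ for some $\Gamma \in \digraphs$. By \Cref{t26}(2), this same functor is a monomorphism in $\Cat_{(\infty,1)}$, meaning the diagonal
\[
\digraphs \longrightarrow \digraphs \underset{\Quiv}\times \digraphs
\]
is an equivalence.

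Next I would transport the monomorphism property from $\Cat_{(\infty,1)}$ to $\Spaces$ via the functor $\Obj\colon \Cat_{(\infty,1)} \to \Spaces$. This functor is right adjoint to the fully faithful inclusion $\Spaces \hookrightarrow \Cat_{(\infty,1)}$ (viewing a space as an $\infty$-groupoid), so it preserves limits; in particular it preserves pullbacks and equivalences, hence carries monomorphisms to monomorphisms. Applied to $\Free$, this yields that $\Obj(\Free)\colon \Obj(\digraphs) \to \Obj(\Quiv)$ is a monomorphism of spaces.

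Finally I would conclude: a monomorphism of spaces has fibers that are each either empty or contractible. Surjectivity on $\pi_0$, which is exactly the surjectivity-on-objects statement of \Cref{t20}, forces no fiber to be empty, so every fiber is contractible and $\Obj(\Free)$ is an equivalence. There is essentially no obstacle here, since both ingredients have already been established; the only point requiring a moment's care is verifying that $\Obj$ preserves monomorphisms, which follows formally from its being a right adjoint.
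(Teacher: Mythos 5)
Your argument is exactly the paper's: the corollary is stated there as an immediate consequence of \Cref{t20} (surjectivity on objects) and \Cref{t26}(2) ($\Free$ is a monomorphism), and your filling-in — that $\Obj$ preserves monomorphisms because it is a right adjoint, so $\Obj(\Free)$ is a monomorphism of spaces that is surjective on $\pi_0$, hence an equivalence — is the intended justification. No issues.
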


\begin{notation}
\label{d25}
In light of \Cref{t34}, we do not distinguish in notation or terminology between an object in $\Quiv$ and its corresponding finite directed graph.

\end{notation}

\begin{observation}
\label{t1.10}
Regarding each finite set as a finite directed graph with no (non-degenerate) edges defines a functor
\[
\Fin
~\hookrightarrow~
\digraphs
\]
which is fully faithful. 
Inspecting the values of $\Free(\Gamma)$ of \Cref{t27} reveals that the composite functor
\[
\Fin
~\hookrightarrow~
\digraphs
\xra{~\Free~}
\Quiv
\]
is fully faithful.
\end{observation}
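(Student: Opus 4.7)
The plan is to unwind both claims directly from the definitions and from the explicit hom-description of $\Quiv$ afforded by \Cref{t94} (equivalently \Cref{t27}).

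For the first assertion, note that a finite set $S$ corresponds under the proposed functor to the object $\Gamma_S\in \digraphs$ whose underlying functor $\bDelta_{\leq 1}^{\op}\to \Fin$ sends $[0]\mapsto S$ and $[1]\mapsto S$, with the source and target maps $s,t\colon \Gamma_S([1])\to \Gamma_S([0])$ both equal to $\id_S$ (so that $\Gamma_S^{(1)} = \Gamma_S([1])\smallsetminus \Gamma_S([0]) = \emptyset$). A morphism $\Gamma_S \to \Gamma_T$ in $\digraphs = \Fun(\bDelta_{\leq 1}^{\op},\Fin)$ is a natural transformation; by naturality along the cospan $\{0\}\to [1]\leftarrow \{1\}$, the map on $[1]$ is forced to coincide with the map on $[0]$. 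Hence $\Hom_{\digraphs}(\Gamma_S,\Gamma_T) \cong \Hom_{\Fin}(S,T)$, and the functor $\Fin\hookrightarrow \digraphs$ is fully faithful.

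For the second assertion, I would invoke \Cref{t94}. A morphism $\Free(\Gamma_S) \to \Free(\Gamma_T)$ in $\Quiv$ is specified by a vertex map $f^{(0)}\colon S\to T$ together with, for each non-degenerate edge $e\in \Gamma_S^{(1)}$, a directed path in $\Gamma_T$ from $f^{(0)}(s(e))$ to $f^{(0)}(t(e))$. Since $\Gamma_S^{(1)} = \emptyset$, the second datum is vacuous, leaving only the set map $S\to T$. Therefore the natural map
\[
\Hom_{\Fin}(S,T) \longrightarrow \Hom_{\Quiv}\bigl(\Free(\Gamma_S),\Free(\Gamma_T)\bigr)
\]
is a bijection, and the composite $\Fin \hookrightarrow \digraphs \xra{\Free} \Quiv$ is fully faithful.

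There is no real obstacle: both statements are immediate unwindings, with the only subtlety being to record that the ``no non-degenerate edges'' condition makes the edge-path data in \Cref{t94} trivially vacuous. (Alternatively one could argue that \Cref{t27}(1)(2) identifies $\Free(\Gamma_S)$ with the discrete category on $S$, for which functoriality to any ordinary category is the same as a map on objects; this gives the same conclusion.)
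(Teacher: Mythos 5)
Your proposal is correct and matches the paper's intended argument: the paper gives no separate proof for this observation, but its parenthetical justification ("inspecting the values of $\Free(\Gamma)$ of \Cref{t27}") is precisely your unwinding via \Cref{t94}/\Cref{t27}, where the absence of non-degenerate edges renders the directed-path data vacuous. The only implicit point worth being aware of is that the relevant hom-spaces are $0$-types (\Cref{t26}), so the bijection of underlying sets does yield full faithfulness of $\infty$-categories.
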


\begin{definition}
\label{d1}
The subcategories of \bit{idle}, \bit{closed}, and \bit{creation} morphisms,
\[
\Quiv
~\supset~
\Quiv^{\sf idl}
~\supset~
\Quiv^{\sf cls}
~,~
\Quiv^{\sf cr}
~,
\]
are the respective image under the monomorphism $\digraphs \xra{\Free}\Quiv$ and the images of the \bit{monomorphisms}, and of the \bit{epimorphisms}.\footnote{In other words, the functors $\digraphs \xra{\Free} \Quiv^{\sf idl}$ and ${\sf diGraphs^{\sf fin,mono}} \xra{\Free} \Quiv^{\sf cls}$ and ${\sf diGraphs^{\sf fin,epi}} \xra{\Free} \Quiv^{\sf cr}$ are equivalences between categories.}
A morphism $\Gamma \xra{F} \Xi$ is \bit{active} if for every edge $g \in \Xi$, there exists an edge $f \in \Gamma$ such that $g$ is a factor of $F(f)$.  A morphism $\Gamma \xra{F} \Xi$ is a \bit{generating refinement morphism} if $F$ is fully faithful, the complement of whose image consists of a single object in $\Xi$ that has directed-valence $(1,1)$.\footnote{In other words, $\Xi$ is obtained from $\Gamma$ by replacing a directed edge by two composable directed edges, as it is canonically equipped with a morphism from $\Gamma$.}
A morphism $\Gamma \xra{F} \Xi$ is a \bit{refinement morphism} if it is a composite of generating refinement morphisms.  
We denote by
\[
\Quiv
~\supset~
\Quiv^{\sf act}
~\supset~
\Quiv^{\sf ref} 
~,
\]
the subcategories of active and refinement morphisms.

\end{definition}

\begin{observation}
\label{t24.2}
Using \Cref{t29}, 
the fully faithful functor $\bDelta \xra{\rho} \Quiv$ respects the subcategories:
\[
\bDelta^{\sf cls}
=
\bDelta \cap \Quiv^{\sf cls}
\qquad\text{ and }\qquad
\bDelta^{\sf cr}
=
\bDelta \cap \Quiv^{\sf cr}
\qquad\text{ and }\qquad
\bDelta^{\sf idl}
=
\bDelta \cap \Quiv^{\sf idl}
~.
\]
Furthermore, 
\[
\bDelta^{\sf act}
=
\bDelta \cap \Quiv^{\sf act}
~.
\]

\end{observation}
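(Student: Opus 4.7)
The plan is to chain each equality off the pullback square \Cref{e2} of \Cref{t29}, supplemented by the combinatorial fact that a morphism $f\colon [p]\to[q]$ in $\bDelta$ lies in $\bDelta^{\sf cls.cr}$ precisely when $f(i+1)-f(i)\in\{0,1\}$ for all $i$ (since this is what it means for $f$ to send each edge of $[p]$ to an edge or degenerate edge of $[q]$ in $\digraphs$).

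First, $\bDelta^{\sf cls.cr}=\bDelta\cap\Quiv^{\sf cls.cr}$ is immediate from \Cref{t29}: the right-hand side is, by the definition of $\Quiv^{\sf cls.cr}$ as the image of $\Free$, precisely the preimage in $\bDelta$ of that image, which is $\bDelta^{\sf cls.cr}$ by the pullback. I would then deduce the claims for $\bDelta^{\sf cls}$ and $\bDelta^{\sf cr}$ by combining this with the mono/epi characterizations in $\digraphs$. A closed-creation morphism is determined by its action on vertices, so it is a monomorphism in $\digraphs$ iff it is injective on vertices; combined with convexity this forces strict increase, i.e., a convex inclusion -- exactly the notion of closed in $\bDelta$. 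For creation, surjectivity on vertices (given convexity) is equivalent to $f(0)=0$ and $f(p)=q$, and the same convexity forces surjectivity on edges, since $f$ climbs from $0$ to $q$ in $0/1$-steps so every edge $(j,j+1)$ of $[q]$ appears as $(f(i),f(i+1))$ for some $i$; hence closed-creation plus epimorphism in $\digraphs$ recovers $\bDelta^{\sf cr}$.

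For the active case, I would compute directly: $f\colon[p]\to[q]$ in $\bDelta$, viewed as a functor, sends the edge $(i,i+1)$ of $[p]$ to the composite of the edges $(f(i),f(i)+1),\ldots,(f(i+1)-1,f(i+1))$ in $[q]$. The edges of $[q]$ appearing as factors of some $F(\text{edge of }[p])$ thus form $\bigcup_i\{(j,j+1):f(i)\leq j<f(i+1)\}$, and this covers every edge of $[q]$ iff $f(0)=0$ and $f(p)=q$, matching the definition of active in \Cref{d14}. For the refinement case, I would argue that a generating refinement whose source is a linear quiver $[p]$ must insert a valence-$(1,1)$ vertex into an existing edge of $[p]$, producing another linear quiver (isomorphic to $[p+1]$), which at the level of $\bDelta$ is an inner coface $d^i$ for $0<i<p+1$; by induction each intermediate object in a composition of generating refinements between linear quivers remains a linear quiver, so $\bDelta\cap\Quiv^{\sf ref}$ consists of compositions of inner cofaces, matching $\bDelta^{\sf ref}$.

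The step I expect to require the most care is the refinement case, specifically the verification that every intermediate object in a factorization $[p]\to\Gamma_1\to\cdots\to\Gamma_n=[q]$ by generating refinements actually lies in $\bDelta$; this reduces to the observation that subdividing an edge of a linearly-directed graph produces another linearly-directed graph, which is transparent but must be invoked at each step in the induction.
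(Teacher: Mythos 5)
Your proof is correct and follows exactly the route the paper intends: the paper states this as an unproved Observation justified only by citing the pullback square of \Cref{t29}, and your argument supplies the omitted combinatorial verifications (convexity $\Leftrightarrow$ unit steps, monomorphism/epimorphism of directed graphs $\Leftrightarrow$ closed/creation, and the direct computations for the active and refinement cases). The only point worth noting is that $\bDelta^{\sf ref}$ is never actually defined in the paper, so your identification of $\bDelta \cap \Quiv^{\sf ref}$ with composites of inner cofaces is really pinning down what that notation must mean rather than verifying an independently stated equality.
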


\subsection{Closed covers and closed sheaves}

\begin{definition}
\label{dd2}
A \bit{basic closed cover (in $\Quiv$)} is a diagram in $(\digraphs)^{\sf mono} \simeq \Quiv^{\sf cls}$ of the form
\[
\xymatrix{
\Gamma_0
\ar[rr]
\ar[d]
&&
\Gamma_+
\ar[d]
\\
\Gamma_-
\ar[rr]
&&
\Gamma
}
\]
that witnesses a pushout in $\digraphs$.\footnote{Warning: it need not be a pushout in $(\digraphs)^{\sf mono}$}
A presheaf $\Quiv^{\op} \xra{\cF} \Spaces$ is a \bit{closed sheaf (on $\Quiv$)} if $\cF(\emptyset) = *$ and it carries (the opposites of) each basic closed cover in $\Quiv$ to a limit diagram in $\Spaces$.
A \bit{closed cover (in $\Quiv$)} is a diagram $\cK^{\rcone} \to \Quiv^{\sf cls}$ for which, for each closed sheaf, $\Quiv^{\op} \xra{\cF} \Spaces$, the composite functor 
\[
(\cK^{\op})^{\lcone} = (\cK^{\rcone})^{\op} 
\to 
(\Quiv^{\sf cls})^{\op} 
\hookrightarrow 
\Quiv^{\op} 
\xra{\cF} 
\Spaces
\]
is a limit diagram.
The $\infty$-category of closed sheaves (on $\Quiv$) is the full $\infty$-subcategory 
\[
\Shv^{\sf cls} ( \Quiv )
~\subset~
\PShv(\Quiv)
\]
consisting of the closed sheaves (on $\Quiv$).
\end{definition}

\begin{remark}
The data of a basic closed cover in $\Quiv$ is equivalently that of a pullback diagram in $\digraphs$ in which each morphism is a monomorphism.  
In particular, a basic closed cover in $\Quiv$ is the data of a finite directed graph $\Gamma$ together with a pair $\Gamma_- ,\Gamma_+ \subseteq \Gamma$ of subgraphs whose union $\Gamma_- \cup \Gamma_+ = \Gamma$ is entire.
\end{remark}

\begin{observation}
\label{t32}
The fully faithful functor $\bDelta \xra{\rho} \Quiv$ carries basic closed covers in $\bDelta$ to basic closed covers in $\Quiv$.

\end{observation}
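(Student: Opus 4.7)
The plan is to verify directly the two defining conditions of a basic closed cover in $\Quiv$ (Definition \ref{dd2}) for the image under $\rho$ of a basic closed cover
\[
\xymatrix{
\{1\} \ar[r] \ar[d] & \{1<\dots<p\} \ar[d] \\
\{0<1\} \ar[r] & [p]
}
\]
in $\bDelta$. Namely, I must show (a) that all four morphisms land in $\Quiv^{\sf cls}$, and (b) that the resulting square is a pushout in $\digraphs$.

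For (a), I would appeal directly to Observation \ref{t24.2}, which asserts $\bDelta^{\sf cls} = \bDelta \cap \Quiv^{\sf cls}$ under $\rho$. Since each arrow in a basic closed cover of $\bDelta$ is closed by definition, $\rho$ sends each to a closed morphism in $\Quiv$ -- equivalently, a monomorphism in $\digraphs$ under the identification $\Quiv^{\sf cls} \simeq (\digraphs)^{\sf mono}$ of Definition \ref{d1}.

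For (b), I would compute pointwise in $\digraphs = \Fun(\bDelta_{\leq 1}^{\op}, \Fin)$, checking that the relevant pushouts exist in $\Fin$ (which they do, since all sets involved are finite) and are thus computed levelwise. At $[0]$, the square reduces to $\{1\} \to \{1,\dots,p\}$ along $\{0,1\} \to \{0,1,\dots,p\}$, a pushout of sets since the union is $\{0,\dots,p\}$ with intersection $\{1\}$. At $[1]$, writing $\Gamma([1]) = \Gamma^{(0)} \sqcup \Gamma^{(1)}$ and separating the degenerate and non-degenerate parts, one gets the previous pushout on vertices together with the pushout of non-degenerate edges $\emptyset \to \{e_1,\dots,e_{p-1}\}$ along $\{e_0\} \to \{e_0,e_1,\dots,e_{p-1}\}$ (where $e_i$ denotes the edge $i \to i{+}1$), again manifestly a pushout in $\Fin$. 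Combining yields the desired pushout in $\digraphs$.

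There is essentially no obstacle: the argument is purely a matter of matching definitions once Observation \ref{t24.2} is in hand. The only point requiring a word of care is that $\Fin$ has only finite colimits, so one must note that the pushouts in question exist in $\Fin$ before invoking the pointwise computation in the presheaf category $\Fun(\bDelta_{\leq 1}^{\op}, \Fin)$.
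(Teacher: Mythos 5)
Your proposal is correct and matches what the paper intends: the statement is labeled an Observation with no written proof, precisely because it reduces to unwinding Definition \ref{dd2} via Observation \ref{t24.2} and checking the levelwise pushout in $\Fin$, exactly as you do. (Only a cosmetic point: in the edge computation you mean the pushout of the span $\{e_0\} \leftarrow \emptyset \rightarrow \{e_1,\dots,e_{p-1}\}$; your phrasing "along $\{e_0\}\to\{e_0,\dots,e_{p-1}\}$" describes the resulting cobase-changed map rather than the leg of the span, but the content is right.)
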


\begin{definition}
\label{d8}
Let $\Gamma$ be a finite directed graph.
\begin{enumerate}
\item
The \bit{exit-path category (of $\Gamma$)} is the full subcategory 
\[
\sE(\Gamma)
~\subset~
(\bDelta_{\leq 1})^{\cls}_{/\Gamma}
~:=~
(\bDelta_{\leq 1})^{\cls} \underset{\digraphs}\times {\digraphs}_{/\Gamma}
\]
consisting of those objects, which are morphisms between directed graphs $[p]\xra{\sigma} \Gamma$, in which $\sigma$ is a monomorphism.\footnote{This terminology aligns with more common use of ``exit-path category'' (see, for example~\cite{Treumann}).  Specifically, one can regard the geometric realization $|\Gamma|$ of $\Gamma$ as a stratified space: each vertex and the interior of each edge is a stratum.  As so, $\sE(\Gamma)$ defined here agrees with the exit-path category of this stratified space.}\footnote{The exit-path category $\sE(\Gamma)$ is sometimes referred to as the subdivision $\sd(\Gamma)$.}

\item
For $\cC\colon \bDelta^{\op} \to \cX$ a simplicial object in an $\infty$-category $\cX$ that admits finite limits, the \bit{$\cC$-valued representations of $\Gamma$} is the limit
\[
{\sf Rep}_\cC(\Gamma)
~:=~
\lim
\Bigl(
\sE(\Gamma)^{\op}
\xra{\rm forget}
(\bDelta_{\leq 1}^{\cls})^{\op}
\hookrightarrow
\bDelta^{\op}
\xra{\cC}
\cX
\Bigr)
~\in
\cX
~.
\]

\end{enumerate}

\end{definition}

\begin{figure}
\centering
\begin{subfigure}{0.5\textwidth}
\centering
\begin{tikzpicture}
	\node (A) at (0,0) {\textbullet};
	\node (B) at (3,0) {\textbullet};

	\node (C) at (7.2,-0.75) {\textbullet};
	\node (D) at (7.2,0.75) {\textbullet};
    
    \draw[->, shorten <=1.5mm] (B) arc[start angle=180, end angle=520, radius=0.75];
	\draw[->] (A) -- (B);
	
	 \draw[->, shorten <=1.5mm, shorten >=1.5mm] (C) arc[start angle=-90, end angle=90, radius=0.75];
	 \draw[->, shorten <=1.5mm, shorten >=1.5mm] (C) arc[start angle=-90, end angle=-270, radius=0.75];
	

	
\end{tikzpicture}
\caption{$\Gamma$} \label{fig:1a}
\end{subfigure}%
  
\hspace*{\fill}

\begin{subfigure}{0.5\textwidth}
\centering
\begin{tikzpicture}
\node (A) at (0,0) {$[0]$};
\node (B) at (1.5,0) {$[1]$};
\node (C) at (3,0) {$[0]$};
\node (D) at (4.5,0) {$[1]$};

\node (E) at (6.4,0) {$[1]$};
\node (F) at (7.2,0.7) {$[0]$};
\node (G) at (8,0) {$[1]$};
\node (H) at (7.2,-0.7) {$[0]$};

\draw[->] (A) -- (B) node[midway, above] {\footnotesize{0}};
\draw[->] (C) -- (B) node[midway, above] {\footnotesize{1}};


\draw[->, shorten <=2mm, shorten >=2mm] (C) arc[start angle=180, end angle=0, radius=0.75] node[midway, above] {\footnotesize{0}};
\draw[->, shorten <=2mm, shorten >=2mm] (C) arc[start angle=180, end angle=360, radius=0.75] node[midway, below] {\footnotesize{1}};

\draw[->, shorten >=-0.5mm, shorten <=-0.5mm] (F) edge[bend right] node[midway, above]{\footnotesize{1}} (E);
\draw[->, shorten >=-0.5mm, shorten <=-0.5mm] (F) edge[bend left] node[midway, above]{\footnotesize{1}} (G);

\draw[->, shorten >=-0.5mm, shorten <=-0.5mm] (H) edge[bend left] node[midway, below]{\footnotesize{0}} (E);
\draw[->, shorten >=-0.5mm, shorten <=-0.5mm] (H) edge[bend right] node[midway, below]{\footnotesize{0}} (G);

\end{tikzpicture}
\caption{$\sE(\Gamma) \to \bDelta_{\leq 1}^{\cls}$}

\end{subfigure}%
\caption{A graph $\Gamma$ and its exit path category.}

\end{figure}

\begin{observation}
\label{t66}
For each finite directed graph $\Gamma$, its exit-path category $\sE(\Gamma)$ is a finite category.  
Specifically, 
$\sE(\Gamma)$ is a gaunt category with 
\[
\Obj\bigl(\sE(\Gamma) \bigr) =  \Gamma^{(0)} \amalg \Gamma^{(1)}
\]
and, for $x, y\in \Obj\bigl( \sE(\Gamma) \bigr)$, 
\[
\Hom_{\sE(\Gamma)}(x,y)
~=~
\begin{cases}
\Bigl(
s^{-1}(x) \underset{\Gamma^{(1)}}\cap \{y\}
\Bigr)
\coprod
\Bigl(
t^{-1}(x) \underset{\Gamma^{(1)}}\cap \{y\}
\Bigr)
&
,\qquad
\text{if $x\in \Gamma^{(0)}$ and $y\in \Gamma^{(1)}$}
\\
\ast
&
,\qquad
\text{if $x=y$}
\\
\emptyset
&
,\qquad
\text{if $x\neq y$ and either $x\in \Gamma^{(1)}$ or $y \in \Gamma^{(0)}$}
\end{cases}
~.
\]  
In particular, the non-identity morphisms in $\sE(\Gamma)$ are precisely those from a vertex $x\in \Gamma^{(0)}$ to an edge $y\in \Gamma^{(1)}$ for $x$ is either the source or the target of $y$.  
Furthermore, there are no non-trivial composites in $\sE(\Gamma)$.
Moreover, $\sE(\Gamma)$ is a poset if and only if $\Gamma$ has no {\it self-loops} (ie, for each edge in $\Gamma$, its source is distinct from its target). 

\end{observation}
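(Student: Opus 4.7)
The plan is to directly unwind the definition of $\sE(\Gamma)$, exploiting that $\bDelta_{\leq 1}$ has only two objects $[0]$ and $[1]$, so that $\sE(\Gamma)$ is entirely determined by monomorphisms $[p] \to \Gamma$ with $p \in \{0,1\}$ together with the closed morphisms of $\bDelta_{\leq 1}$ between them. First I would enumerate objects: a monomorphism $[0] \to \Gamma$ is a choice of vertex, contributing $\Gamma^{(0)}$, while a monomorphism $[1] \to \Gamma$ classifies a non-degenerate $1$-cell, contributing $\Gamma^{(1)}$. This gives the decomposition $\Obj(\sE(\Gamma)) = \Gamma^{(0)} \amalg \Gamma^{(1)}$.

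Next I would compute morphisms by listing the closed morphisms in $\bDelta_{\leq 1}$: the two identities and the two endpoint inclusions $\{0\} \hookrightarrow [1]$ and $\{1\} \hookrightarrow [1]$; crucially, there is no closed morphism $[1] \to [0]$. Translating back to $\sE(\Gamma)$: identities contribute the identity morphisms; an endpoint inclusion, post-composed with an edge $[1] \to \Gamma$ classifying $e$, produces a morphism $v \to e$ precisely when $v = s(e)$ (for the $\{0\}$-inclusion) or $v = t(e)$ (for the $\{1\}$-inclusion). Assembling these cases yields $\Hom_{\sE(\Gamma)}(v,e) = \bigl( s^{-1}(v) \cap \{e\} \bigr) \amalg \bigl( t^{-1}(v) \cap \{e\} \bigr)$, with $\Hom(x,y)$ empty in the remaining mixed-type cases, exactly as stated.

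Because every non-identity closed morphism in $\bDelta_{\leq 1}$ has source $[0]$ and target $[1]$, no two of them can be composed; hence $\sE(\Gamma)$ admits no non-trivial composites, which simultaneously confirms the Hom description and shows the category is gaunt. The poset clause then follows since $\Hom(v,e)$ has cardinality at most $2$, with equality precisely when $s(e) = t(e) = v$, i.e. when $e$ is a self-loop at $v$. The only subtlety worth flagging is the correct reading of ``monomorphism'' in the presence of self-loops: the condition must be interpreted so that every non-degenerate $1$-cell, including self-loops, contributes an object of $\sE(\Gamma)$, which is what makes the poset criterion sharp and gives the two parallel morphisms $v \rightrightarrows e$ from the $\{0\}$- and $\{1\}$-inclusions.
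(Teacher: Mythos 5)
Your proposal is correct and is exactly the by-inspection unwinding that the paper intends (the statement is an Observation with no written proof): objects are the monomorphisms $[0]\to\Gamma$ and $[1]\to\Gamma$, and morphisms come from the only non-identity closed maps of $\bDelta_{\leq 1}$, namely the two endpoint inclusions $\{0\},\{1\}\hookrightarrow[1]$, which forces the stated Hom-sets, the absence of composites, and the self-loop criterion for being a poset. Your flag about reading ``monomorphism'' so that self-loops still contribute objects (consistent with the footnote's stratified-space interpretation, though not with monomorphisms of presheaves in $\digraphs$) is well taken and is indeed the intended reading.
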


\begin{observation}
\label{t31}
For $\Gamma$ a finite directed graph, the functor
\begin{equation}
\label{e35}
\bigl(
\sE(\Gamma)
\bigr)^{\rcone}
\longrightarrow
\Quiv^{\cls}
~,\qquad
\begin{cases}
\left(
[p]\xra{\sigma} \Gamma
\right)
&
\longmapsto
\rho(p)
\\
+\infty
&
\longmapsto
\Gamma
\end{cases}
\end{equation}
is a closed cover in $\Quiv$.

\end{observation}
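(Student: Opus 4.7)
The plan is to induct on the number of non-degenerate edges of $\Gamma$, showing for every closed sheaf $\cF \colon \Quiv^{\op} \to \Spaces$ that the canonical map $\cF(\Gamma) \to \lim_{\sE(\Gamma)^{\op}} \cF \circ \rho$ is an equivalence. First I would reduce to the case that $\Gamma$ is connected: for $\Gamma = \Gamma_1 \amalg \Gamma_2$, the corners $\emptyset, \Gamma_1, \Gamma_2, \Gamma$ form a basic closed cover in $\Quiv$, so the closed-sheaf axioms yield $\cF(\Gamma) \simeq \cF(\Gamma_1) \times \cF(\Gamma_2)$, while simultaneously $\sE(\Gamma) \simeq \sE(\Gamma_1) \amalg \sE(\Gamma_2)$. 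The base cases $\Gamma = \emptyset$ and $\Gamma = [0]$ are then trivial.

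For the main inductive step in the connected case, I would pick an edge $e$ of $\Gamma$ and split it off by a basic closed cover. If $e$ is not a self-loop, take $\Gamma_+ = [1]$ (the subquiver on $e$ and its endpoints), $\Gamma_-$ to be $\Gamma$ with $e$ removed but its endpoints retained, and $\Gamma_0 = \{s(e), t(e)\}$; if $e$ is a self-loop at $v$ and $\Gamma$ has further structure, take $\Gamma_+$ to be the self-loop alone, $\Gamma_- = \Gamma \setminus \{e\}$, and $\Gamma_0 = \{v\}$. In either case, this presents $\Gamma$ as a pushout of monomorphisms in $\digraphs$, hence a basic closed cover, so $\cF(\Gamma) \simeq \cF(\Gamma_-) \times_{\cF(\Gamma_0)} \cF(\Gamma_+)$. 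One verifies directly from \Cref{t66} that $\sE(\Gamma)$ itself decomposes correspondingly as a pushout $\sE(\Gamma_-) \sqcup_{\sE(\Gamma_0)} \sE(\Gamma_+)$ of gaunt categories (the only new morphisms in $\sE(\Gamma)$ concern the new edge $e$ and live entirely in $\sE(\Gamma_+)$), so the limit over $\sE(\Gamma)^{\op}$ decomposes as the corresponding fiber product, and combining with induction yields the desired equivalence.

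The main obstacle is the remaining base case: a single self-loop $\Gamma = \{v, e \colon v \to v\}$. This quiver admits no further nontrivial basic closed cover, since its natural presentation as the coequalizer of $[0] \rightrightarrows [1]$ involves the fold map $[0] \amalg [0] \to [0]$, which is not a monomorphism in $\digraphs$. To handle it, I would consider the double cover $\Xi$ (the 2-cycle with two vertices and two oppositely-oriented edges), to which the inductive argument above applies directly, yielding $\cF(\Xi) \simeq \cF([1]) \times_{\cF([0])^2} \cF([1])$. The free $\mathbb{Z}/2$-action on $\Xi$ swapping vertices and edges exhibits $\Gamma$ as the quotient $\Xi/(\mathbb{Z}/2)$ in $\digraphs$ and induces on $\cF(\Xi)$ the involution interchanging the two factors of the pullback; a direct computation identifies both $\cF(\Xi)^{\mathbb{Z}/2}$ and $\lim_{\sE(\Gamma)^{\op}} \cF \circ \rho$ with the equalizer $\mathrm{eq}(\cF([1]) \rightrightarrows \cF([0]))$. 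The crux is then to establish $\cF(\Gamma) \simeq \cF(\Xi)^{\mathbb{Z}/2}$ for arbitrary closed sheaves $\cF$: this is tautological for the representable sheaves $\Rep_\cC$ (where it recovers the usual presentation of endomorphisms as an equalizer of source and target), and for general closed sheaves reflects a descent property for the free $\mathbb{Z}/2$-action that one expects to extract from the basic closed cover axioms themselves, perhaps via an explicit presentation of the quotient using iterated covers of $\Xi$ by subgraphs that retain enough monomorphism structure.
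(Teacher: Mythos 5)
Your reduction to connected graphs and your edge-splitting induction are correct, and they amount to the evident argument (the paper records this statement as an Observation with no written proof, so there is nothing to compare against on the hard point). Your diagnosis of where the difficulty sits is also correct: every step of the induction is a basic closed cover except the terminal case of a single self-loop $L$, which has no sub-directed-graphs other than $\emptyset$ and its vertex and hence admits no nontrivial basic closed cover. The entire content of the statement beyond bookkeeping is therefore the assertion that
\[
\cF(L)
\longrightarrow
\cF([1]) \underset{\cF([0])^{\times 2}}{\times} \cF([0])
\]
is an equivalence for \emph{every} closed sheaf $\cF$ on $\Quiv$.

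The gap is that your proposed resolution of this case does not close. The unproven claim is that $\cF(L) \to \cF(\Xi)^{{\sf h}(\ZZ/2)}$ is an equivalence for an arbitrary closed sheaf $\cF$, where $\Xi \to L$ is the double cover. (Your identification of $\cF(\Xi)^{{\sf h}(\ZZ/2)}$ with the desired limit is fine, provided you mean homotopy fixed points: $\cF(\Xi) \simeq \cF([1]) \times_{\cF([0])^{\times 2}} \cF([1])$ with the swap action is coinduced from the non-equivariant cospan $\cF([1]) \to \cF([0])^{\times 2} \leftarrow \cF([0])$, so its homotopy fixed points compute the displayed pullback.) But the quotient map $\Xi \to L$ is a creation morphism, not a closed one; the closed-sheaf axioms impose descent only along gluings of monomorphisms in $\digraphs$, and say nothing about descent along free group quotients, so there is no mechanism by which ``the basic closed cover axioms themselves'' yield $\cF(L) \simeq \cF(\Xi)^{{\sf h}(\ZZ/2)}$. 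Your verification for the representable sheaves $\Rep_\cC$ is correct but cannot substitute for the general case: that every closed sheaf is of this form is \Cref{t2}, whose proof cites the present Observation, so invoking it here would be circular. As written, the ``descent property \ldots\ that one expects to extract'' is exactly the assertion to be proved, and the self-loop base case needs a genuinely different argument.
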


\subsection{Parametrizing $(\infty,1)$-categories by finite directed graphs}

The main outcome of this subsection is \Cref{t2}, which characterizes an $(\infty,1)$-category in terms of quiver representations into it.

\begin{lemma}
\label{t30}
The diagram among $\infty$-categories, which involves right Kan extension along $\rho$ and restriction to $\Quiv$,
\begin{equation}
\label{e33}
\begin{tikzcd}
\fCat_{(\infty,1)}
\arrow{d}[swap]{\Cref{e3}}
\arrow{r}{\Yo}
&
\PShv(
\fCat_{(\infty,1)}
)
\arrow{d}{\rm restriction}
\\
\PShv(\bDelta)
\arrow{r}[swap]{\rho_\ast}
&
\PShv(\Quiv)
\end{tikzcd}
~,
\end{equation}
canonically commutes.
\end{lemma}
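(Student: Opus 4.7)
The plan is to realize the natural comparison between the two composites as the map induced by a canonical colimit presentation of $\Gamma \in \Quiv$ by simplices, and then to verify that presentation by a density argument flowing from \Cref{e3}. Unpacking: for $\cC \in \fCat_{(\infty,1)}$ and $\Gamma \in \Quiv$, the top-right composite evaluates to $\Hom_{\fCat_{(\infty,1)}}(\Gamma, \cC)$, while, by the pointwise formula for right Kan extension of a presheaf along $\rho \colon \bDelta \to \Quiv$, the left-bottom composite evaluates to
\[
\lim_{([p],\, \rho(p) \to \Gamma) \in (\bDelta_{/\Gamma})^{\op}} \Hom_{\fCat_{(\infty,1)}}([p], \cC)~,
\]
where $\bDelta_{/\Gamma} := \bDelta \times_{\Quiv} \Quiv_{/\Gamma}$ consists of pairs $([p], \rho(p) \to \Gamma)$.

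Next I would use the unit of the adjunction between restriction $\rho^*$ and right Kan extension $\rho_*$ to supply a canonical natural transformation from the top-right to the left-bottom composite; its value at $(\cC, \Gamma)$ sends a morphism $\Gamma \to \cC$ to the tautological cone of restrictions $\rho(p) \to \Gamma \to \cC$ indexed by $\bDelta_{/\Gamma}$.

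The key step is to verify this natural transformation is a pointwise equivalence. By the (contravariant) Yoneda lemma in $\fCat_{(\infty,1)}$, this is equivalent to the assertion that the canonical morphism
\[
\colim_{([p],\, \rho(p) \to \Gamma) \in \bDelta_{/\Gamma}} \rho(p) \longrightarrow \Gamma
\]
is an equivalence in $\fCat_{(\infty,1)}$ for every $\Gamma \in \Quiv$, i.e., that $\bDelta \hookrightarrow \fCat_{(\infty,1)}$ is dense. This density statement is a direct consequence of \Cref{e3} and \Cref{e4}: the restricted Yoneda $\fCat_{(\infty,1)} \hookrightarrow \PShv(\bDelta)$ is fully faithful and is the right adjoint of a Bousfield localization whose left adjoint preserves colimits and acts as the identity on representables. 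Applying this left adjoint to the tautological presentation of the presheaf $\Hom_{\fCat_{(\infty,1)}}(\rho(-), \Gamma) \in \PShv(\bDelta)$ as a colimit of representables over its category of elements (which is exactly $\bDelta_{/\Gamma}$) yields the displayed equivalence.

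The main obstacle is essentially bookkeeping: the density argument is standard once \Cref{e3} and \Cref{e4} are in hand, but one must take care that the comparison is constructed naturally in $\cC$ (and not merely levelwise in $\Gamma$). This is handled cleanly by packaging the comparison as the unit of the adjunction $\rho^* \dashv \rho_*$ from the outset, so that naturality in $\cC$ is automatic and one only has to check the componentwise equivalence in $\Gamma$.
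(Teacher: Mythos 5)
Your proof is correct and follows essentially the same route as the paper's: both reduce the commutativity to the assertion that $\colim\bigl(\bDelta_{/\Gamma} \to \bDelta \hookrightarrow \fCat_{(\infty,1)}\bigr) \to \Gamma$ is an equivalence, i.e.\ that $\bDelta \hookrightarrow \fCat_{(\infty,1)}$ strongly generates. The only difference is cosmetic: you package the comparison map as the unit of $\rho^* \dashv \rho_*$ rather than applying $\Hom(-,\cC)$ to the colimit comparison directly, and you spell out the density step (via the Bousfield localization \Cref{e4} carrying the tautological colimit-of-representables presentation to the desired one) where the paper simply cites strong generation.
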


\begin{proof}
By definition of the functor~(\ref{e3}), it factors as a composite,
\[
(\ref{e3})
\colon
\fCat_{(\infty,1)}
\xra{~\Yo~}
\PShv(\fCat_{(\infty,1)})
\xra{~\rm restriction~}
\PShv(\bDelta)
~,
\]
where the restriction functor above is along the standard fully faithful inclusion $\bDelta \hookrightarrow \fCat_{(\infty,1)}$.  
Now, \Cref{t6} grants that this fully faithful inclusion factors: $\bDelta \xra{\rho} \Quiv \hookrightarrow \fCat_{(\infty,1)}$.  
So we have a commutative diagram among $\infty$-categories:
\[
\begin{tikzcd}
\fCat_{(\infty,1)}
\arrow{d}[swap]{\Cref{e3}}
\arrow{r}{\Yo}
&
\PShv(
\fCat_{(\infty,1)}
)
\arrow{d}{\rm restriction}
\\
\PShv(\bDelta)
&
\PShv(\Quiv)
\arrow{l}[swap]{\rho^\ast}
\end{tikzcd}
~.
\]
Evoking the adjunction $(\rho^\ast , \rho_\ast)$ supplies a canonical morphism in $\Fun\left( \fCat_{(\infty,1)} , \PShv\left( \Quiv \right) \right)$,
\[
{\rm restriction} \circ {\Yo}
\longrightarrow
\rho_\ast \circ~(\ref{e3})
~.
\]
The value of this morphism on a flagged $(\infty,1)$-category $\cC\in \fCat_{(\infty,1)}$ is the canonical morphism in $\PShv(\Quiv)$, whose value on $\Gamma \in \Quiv$ is the map between spaces:
\begin{eqnarray*}
{\rm restriction}\circ {\Yo}(\cC)(\Gamma)
&
~=~
&
\Hom_{\fCat_{(\infty,1)}}\left( \Free(\Gamma) , \cC \right)
\\
&
\longrightarrow
&
\lim \left(
(\bDelta_{/\Free(\Gamma)})^{\op}
\xra{\rm forget}
\bDelta^{\op}
\xra{
\Hom_{\fCat_{(\infty,1)}}\left( [\bullet] , \cC \right)
}
\Spaces
\right)
\\
&
~\simeq~
&
\lim \left(
(\bDelta^{\op})^{\Gamma/}
\xra{\rm forget}
\bDelta^{\op}
\xra{
\Hom_{\fCat_{(\infty,1)}}\left( [\bullet] , \cC \right)
}
\Spaces
\right)
\\
&
~\simeq~
&
\rho_\ast \circ (\ref{e3})(\Gamma)
~,
\end{eqnarray*}
in which the arrow is obtained by the functor $\Hom_{\fCat_{(\infty,1)}}( - , \cC)$ to the canonical morphism in $\fCat_{(\infty,1)}$:
\[
\colim\Bigl( \bDelta_{/\Free(\Gamma)} \xra{\rm forget} \bDelta \hookrightarrow \fCat_{(\infty,1)} \Bigr)
\longrightarrow
\Free(\Gamma)
~.
\]
This canonical morphism in $\fCat_{(\infty,1)}$ is an equivalence because the functor $\bDelta \hookrightarrow \fCat_{(\infty,1)}$ strongly generates.

\end{proof}

\begin{notation}
\label{d6}
The functor
\[
\rho_\ast
\colon 
\fCat_{(\infty,1)}
\longrightarrow
\PShv(\Quiv)
\]
is the unambiguous diagonal functor of~(\ref{e33}).

\end{notation}

\begin{observation}
\label{t67}
The functor $\rho_\ast \colon \fCat_{(\infty,1)} \xra{(\ref{e3})} \PShv(\bDelta) \xra{\rho_\ast} \PShv(\Quiv)$ of \Cref{d6} is fully faithful.
Indeed, the functor~(\ref{e3}) is fully faithful, and fully faithfulness of $\rho$ implies fully faithfulness of $\rho_\ast$.

\end{observation}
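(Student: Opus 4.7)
The plan is to decompose $\rho_\ast$ into two functors and verify that each is fully faithful, after which the composition inherits the property. By \Cref{d6} (which appeals to \Cref{t30}), the functor $\rho_\ast\colon \fCat_{(\infty,1)}\to \PShv(\Quiv)$ is by construction the composition
\[
\fCat_{(\infty,1)} \xra{\Cref{e3}} \PShv(\bDelta) \xra{\rho_\ast} \PShv(\Quiv)
~,
\]
in which the second factor is right Kan extension along the fully faithful functor $\rho\colon \bDelta\to \Quiv$ of \Cref{notation.brackets.functor.from.delta.op}.

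The first factor is fully faithful: this is precisely the content recalled in \S\ref{sec.cat.def}, combining Rezk's theorem~\cite{Rezk1} with its extension to the flagged setting from~\cite{flagged}. For the second factor, I would invoke the general principle that, for any fully faithful functor $j\colon \cA\hookrightarrow\cB$ between small $\infty$-categories, the right Kan extension $j_\ast\colon \PShv(\cA)\to \PShv(\cB)$ is again fully faithful. This is equivalent to the unit $\id\to j^\ast j_\ast$ of the adjunction $j^\ast\dashv j_\ast$ being an equivalence, which reduces to a pointwise check: at $a\in \cA$, the pointwise formula for right Kan extension expresses $(j^\ast j_\ast F)(a)$ as a limit over a comma category that, by fully faithfulness of $j$, has $\id_a$ as a terminal object, so the limit collapses to $F(a)$. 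Applying this principle with $j=\rho$, which is fully faithful by \Cref{t6}, yields fully faithfulness of the second factor.

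Composing two fully faithful functors yields a fully faithful functor, giving the claimed statement. I anticipate no substantive obstacle: the argument is purely an assembly of standard facts, with the only required inputs being the two fully faithfulness statements already established, namely Rezk's characterization and the full faithfulness of $\rho$ from \Cref{t6}.
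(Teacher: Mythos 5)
Your proposal is correct and matches the paper's own (very brief) justification: factor $\rho_\ast$ through \Cref{e3}, cite Rezk/\cite{flagged} for the first factor, and use that right Kan extension along the fully faithful $\rho$ is fully faithful. One small terminological slip: the natural transformation whose invertibility characterizes full faithfulness of the right adjoint $j_\ast$ is the \emph{counit} $j^\ast j_\ast \to \id$ of the adjunction $j^\ast \dashv j_\ast$, not the unit; your pointwise collapse of the limit over the comma category is nonetheless the right computation.
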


The values of the fully faithful functor $\fCat_{(\infty,1)} \xra{\rho_\ast} \PShv(\Quiv)$ are rather simple, as in the following.
\begin{observation}
\label{t22}
By definition of the functor $\Free$ in terms of a left adjoint, for each finite directed graph $\Gamma$, the space of functors is canonically identified as the space of maps between graphs:
\begin{eqnarray}
\nonumber
\rho_\ast \cC(\Gamma)
&
~\underset{\rm Lem~\ref{t30}}\simeq~
&
\Hom_{\fCat_{(\infty,1)}}\bigl(\Free(\Gamma) , \cC \bigr)
\\
\nonumber
&
~\simeq~
&
\Hom_{\PShv(\bDelta_{\leq 1})}\bigl( \Gamma , \cC_{|\bDelta_{\leq 1}} \bigr)
\\
\nonumber
&
~\simeq~
&
\lim
\left(
\begin{tikzcd}
&
&
\Mor(\cC)^{\Gamma^{(1)}}
\arrow{d}{(s,t)}
\\
\Obj(\cC)^{\Gamma^{(0)}}
\arrow{r}{\rm diagonal}
&
\Obj(\cC)^{\Gamma^{(0)}}
\times
\Obj(\cC)^{\Gamma^{(0)}}
\arrow{r}{s^\ast \times t^\ast}
&
\Obj(\cC)^{\Gamma^{(1)}}
\times
\Obj(\cC)^{\Gamma^{(1)}}
\end{tikzcd}
\right)
\\
\nonumber
&
~=~
&
\Obj(\cC)^{\Gamma^{(0)}}
\underset{
\Obj(\cC)^{\Gamma^{(1)}}
\times
\Obj(\cC)^{\Gamma^{(1)}}
}
\times
\Mor(\cC)^{\Gamma^{(1)}}
~.
\end{eqnarray}
In particular, the space $\rho_\ast \cC(\Gamma)$ is a finite limit in which each term is $\Obj(\cC)$ or $\Mor(\cC)$.  

\end{observation}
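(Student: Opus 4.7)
The statement is a chain of three canonical equivalences; I would verify them in the order displayed.

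The first equivalence $\rho_\ast \cC(\Gamma) \simeq \Hom_{\fCat_{(\infty,1)}}(\Free(\Gamma), \cC)$ is immediate from \Cref{t30}: the commuting square \Cref{e33} identifies $\rho_\ast \cC$ with the restriction to $\Quiv$ of the Yoneda embedding of $\fCat_{(\infty,1)}$, which by definition evaluates at the quiver $\Free(\Gamma)$ as this mapping space.

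For the second equivalence, I would invoke the composite adjunction \Cref{e11}, which by definition exhibits $\Free$ as the composite of two left adjoints $\PShv(\bDelta_{\leq 1}) \to \PShv(\bDelta) \to \fCat_{(\infty,1)}$. Since the two corresponding right adjoints are the fully faithful inclusion of \Cref{e3} followed by restriction along $\bDelta_{\leq 1} \hookrightarrow \bDelta$, adjointness produces the asserted identification
$$\Hom_{\fCat_{(\infty,1)}}(\Free(\Gamma), \cC) \simeq \Hom_{\PShv(\bDelta_{\leq 1})}(\Gamma, \cC_{|\bDelta_{\leq 1}}).$$

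For the third equivalence, I would exhibit $\Gamma$ as a pushout of representables in $\PShv(\bDelta_{\leq 1})$. Because $\bDelta_{\leq 1}$ has only the two objects $[0]$ and $[1]$, a presheaf on it is determined by a set $F([0])$, a set $F([1])$, two face maps $F([1]) \rightrightarrows F([0])$, and a degeneracy $F([0]) \to F([1])$ splitting them. Splitting $\Gamma([1]) = \Gamma^{(0)} \sqcup \Gamma^{(1)}$ into degenerate and non-degenerate parts (per \Cref{d26}) exhibits $\Gamma$ as the pushout
$$\Gamma^{(0)} \cdot h_{[0]} \longleftarrow \Gamma^{(1)} \cdot (h_{[0]} \sqcup h_{[0]}) \longrightarrow \Gamma^{(1)} \cdot h_{[1]},$$
where the left arrow is induced by the source/target span $\Gamma^{(0)} \xleftarrow{s} \Gamma^{(1)} \xrightarrow{t} \Gamma^{(0)}$ and the right arrow by the two face inclusions $h_{[0]} \rightrightarrows h_{[1]}$. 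Applying $\Hom_{\PShv(\bDelta_{\leq 1})}(-, \cC_{|\bDelta_{\leq 1}})$ turns this pushout into a pullback of spaces, and using the Yoneda identifications $\Hom(h_{[0]}, \cC_{|\bDelta_{\leq 1}}) \simeq \Obj(\cC)$ and $\Hom(h_{[1]}, \cC_{|\bDelta_{\leq 1}}) \simeq \Mor(\cC)$ yields the displayed formula verbatim.

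The main (mild) obstacle is bookkeeping around degenerate $1$-simplices: the degenerate part $\Gamma^{(0)} \subset \Gamma([1])$ contributes no free data since a map of presheaves on $\bDelta_{\leq 1}$ must send the degeneracy $s_0 \colon \Gamma^{(0)} \to \Gamma([1])$ to the degeneracy of $\cC$, so identity-edge data is forced by vertex data. This is precisely why the exponent appearing in the pullback formula is $\Gamma^{(1)}$ rather than the whole $\Gamma([1])$.
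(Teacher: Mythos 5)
Your proposal is correct and follows the same route as the paper: \Cref{t30} for the first equivalence, the composite adjunction \Cref{e11} for the second, and then an unwinding of mapping spaces of presheaves on the two-object category $\bDelta_{\leq 1}$ for the third. The paper asserts that last step without elaboration; your pushout decomposition of $\Gamma$ into representable cells (with the degenerate edges correctly absorbed into the vertex data) is a valid and welcome justification of it, not a different argument.
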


\begin{remark}
\Cref{t22} articulates that $\rho_\ast\cC(\Gamma)$ is the moduli space of \bit{$\cC$-labels of $\Gamma$}, a point in which is an object in $\cC$ for each vertex in $\Gamma$, a morphism in $\cC$ for each (non-degenerate) edge in $\Gamma$, together with source-target compatibilities.  

\end{remark}

For the next result, recall the \Cref{dd2} of a closed sheaf on $\Quiv$.
\begin{theorem}
\label{t2}
The functor 
\[
\fCat_{(\infty,1)} \xra{~\rho_\ast~} \PShv(\Quiv)
\]
is fully faithful;
the image consists of the closed sheaves on $\Quiv$:
\[
\rho_\ast
\colon
\fCat_{(\infty,1)}
~\simeq~
\Shv^{\sf cls}(\Quiv)
\colon
\rho^\ast
~.
\]

\end{theorem}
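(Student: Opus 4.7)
The plan is to split the theorem into the already-established full faithfulness and the essential image claim. Full faithfulness is Observation~\ref{t67}: the functor $\rho_\ast$ factors as the fully faithful restricted Yoneda functor $\fCat_{(\infty,1)} \hookrightarrow \PShv(\bDelta)$ of~\Cref{e3} (cf.\! \cite{Rezk1,flagged}) followed by right Kan extension along the fully faithful $\rho$. So only the image identification remains. Equivalently, since $\rho^\ast\rho_\ast \simeq \id$, I must characterize those $\cF \in \PShv(\Quiv)$ for which the unit $\cF \to \rho_\ast\rho^\ast\cF$ is an equivalence, and identify this class with $\Shv^{\sf cls}(\Quiv)$.

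For the inclusion $\rho_\ast(\fCat_{(\infty,1)}) \subseteq \Shv^{\sf cls}(\Quiv)$, I would invoke Observation~\ref{t22}, which exhibits $\rho_\ast\cC(\Gamma) \simeq \Hom_{\PShv(\bDelta_{\leq 1})}(\Gamma, \cC|_{\bDelta_{\leq 1}})$. The functor $\Hom_{\PShv(\bDelta_{\leq 1})}(-, \cC|_{\bDelta_{\leq 1}})$ converts colimits in its first variable to limits in $\Spaces$, and a basic closed cover in $\Quiv$ is by definition a pushout in $\digraphs \subset \PShv(\bDelta_{\leq 1})$, so $\rho_\ast\cC$ sends each such cover to a pullback; the normalization $\rho_\ast\cC(\emptyset) \simeq \ast$ follows from $\emptyset$ being initial in $\digraphs$.

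For the reverse inclusion, let $\cF$ be a closed sheaf on $\Quiv$. By Observation~\ref{t32} the restriction $\rho^\ast\cF$ is a closed sheaf on $\bDelta$, so the results of \cite{Rezk1,flagged} recalled around \Cref{e3} identify it with a unique $\cC \in \fCat_{(\infty,1)}$. It remains to check that the unit $\cF \to \rho_\ast\rho^\ast\cF \simeq \rho_\ast\cC$ is an equivalence at each $\Gamma \in \Quiv$. By Observation~\ref{t31}, the diagram $\sE(\Gamma)^{\rcone} \to \Quiv^{\cls}$ is a closed cover, so $\cF(\Gamma)$ is the limit of $\cF \circ \rho$ over $\sE(\Gamma)^{\op}$. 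The explicit description of $\sE(\Gamma)$ in Observation~\ref{t66} --- objects the vertices and edges of $\Gamma$, morphisms only source and target inclusions --- unfolds this limit into exactly the finite limit that appears in Observation~\ref{t22} as the formula for $\rho_\ast\cC(\Gamma)$, with the unit being the tautological comparison between the two presentations.

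I expect the real work to sit in Observation~\ref{t31} --- that $\sE(\Gamma)^{\rcone} \to \Quiv^{\cls}$ is a closed cover. I would prove it by induction on the number of edges of $\Gamma$, attaching one edge at a time via a basic closed cover that glues the new edge to the smaller subgraph along its two endpoint vertices; each attachment is a pushout in $\digraphs$ along monomorphisms, hence a basic closed cover in $\Quiv$ by \Cref{dd2}, and pasting the resulting pullback squares assembles the full limit over $\sE(\Gamma)^{\op}$ out of the vertex and edge contributions $\cF([0])$ and $\cF([1])$.
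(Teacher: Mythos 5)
Your main line of argument is the paper's: restrict the $(\rho^\ast,\rho_\ast)$-adjunction to $\Shv^{\cls}(\Quiv) \rightleftarrows \fCat_{(\infty,1)}$, get one direction from \Cref{t32}, and check the unit on a closed sheaf $\cF$ by comparing both sides along the closed cover of \Cref{t31} and reducing to the generators $\rho(p)$, $p\leq 1$. The one step where you diverge is in showing $\rho_\ast\cC$ is a closed sheaf: you run it through \Cref{t22} and the fact that $\Hom_{\PShv(\bDelta_{\leq 1})}(-,\cC|_{\bDelta_{\leq 1}})$ turns pushouts into pullbacks, whereas the paper reduces via \Cref{t30} to showing that $\Free$ carries basic closed covers to pushouts in $\fCat_{(\infty,1)}$ (\Cref{t25}). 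Both work; yours needs the (true, and recorded in the footnote to \Cref{t25}) remark that a pushout of finite directed graphs along monomorphisms remains a pushout in $\PShv(\bDelta_{\leq 1})$.

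The genuine issue is your proposed proof of \Cref{t31}, which you correctly identify as where the real work sits (the paper asserts it without argument). Your induction attaches each edge ``along its two endpoint vertices,'' which silently assumes the endpoints are distinct. For a self-loop the classifying map $[1]\to\Gamma$ is not a monomorphism of graphs, so the attaching square you describe does not lie in $\Quiv^{\cls}$ and is not a basic closed cover. You can repair the inductive step by instead gluing in the one-vertex one-loop graph $\chi([0])$ along its single vertex, but this only shifts the problem: you are then left needing the descent statement for $\sE(\chi([0]))$ itself, i.e.\! $\cF(\chi([0]))\simeq \cF([0])\times_{\cF([0])\times\cF([0])}\cF([1])$, and $\chi([0])$ admits \emph{no} decomposition as a union of proper subgraphs, so no chain of basic closed covers can produce it. This is not a negligible corner case --- $\chi([0])$ is the quiver corepresenting endomorphisms and is the seed of the entire Hochschild construction. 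To handle it one must use functoriality of $\cF$ along non-closed morphisms of $\Quiv$, e.g.\! exhibiting $\chi([0])$ as a retract of the two-cycle $\chi([1])$ (which \emph{does} decompose as two copies of $[1]$ glued along $\SS^0$) and arguing that the retraction forces the limit formula. Your write-up should either supply such an argument or restrict the induction to graphs without self-loops and treat self-loops separately.
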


\begin{proof}
Restriction and right Kan extension define an adjunction
\[
\rho^\ast
\colon
\PShv(\Quiv)
~\rightleftarrows~
\PShv(\bDelta)
\colon
\rho_\ast
~.
\]
We first show that this adjunction restricts as an adjunction between the full $\infty$-subcategories:
\begin{equation}
\label{e20}
\rho^\ast
\colon
\Shv^{\cls}(\Quiv)
~\rightleftarrows~
\fCat_{(\infty,1)}
\colon
\rho_\ast
~.
\end{equation}
So let $\cF \in \Shv^{\cls}(\Quiv)$.
We must show the restriction 
$\rho^\ast \cF_{|(\bDelta^{\cls})^{\op}} \colon 
(\bDelta^{\cls})^{\op}
\hookrightarrow 
\bDelta^{\op} 
\xra{\rho^\ast \cF} 
\Spaces$ 
carries (the opposites of) basic closed covers in $\bDelta$ to limit diagrams in $\Spaces$.
Note that this restriction is identical with the composite functor:
\[
\rho^\ast \cF_{|(\bDelta^{\cls})^{\op}} \colon (\bDelta^{\cls})^{\op} \xra{\rho_{|(\bDelta^{\cls})^{\op}}} (\Quiv^{\cls})^{\op}  \xra{\cF_{|(\Quiv^{\cls})^{\op}}} \Spaces
~.
\]
By \Cref{t32}, the first functor carries basic closed covers in $\bDelta$ to basic closed covers in $\Quiv$.
By assumption the second functor preserves limits.

Next, let $\cC \in \fCat_{(\infty,1)}$ be a flagged $(\infty,1)$-category.  
Clearly, the value $\rho_\ast \cC(\emptyset) \simeq \ast$ is final.
We must show the restriction
\[ \rho_\ast \cC_{|(\Quiv^{\cls})^{\op}} \colon (\Quiv^{\cls})^{\op} \hookrightarrow \Quiv^{\op} \xra{\rho_\ast \cC} \Spaces\]
carries basic closed covers to pullbacks.  
By \Cref{t30}, it is sufficient to show that the composite inclusion
\[
\Quiv^{\cls}
\hookrightarrow
\Quiv
\longrightarrow
\fCat_{(\infty,1)}
\]
carries basic closed covers to pushout diagrams.  
By Definition~\ref{d1}, this is implied by \Cref{t25}.

Now, the proposition is implied by the adjunction~(\ref{e20}) being an equivalence,
which is implied by both its unit and counit being by equivalences.
So let $\cC\in \fCat_{(\infty,1)}$.
The counit evaluates as the morphism in $\fCat_{(\infty,1)}$:
\[
\rho^\ast \rho_\ast \cC
\xra{~\rm counit~} 
\cC
~.
\]
This morphism evaluates on $[p]\in \bDelta$ as the canonical equivalence between spaces:
\[
\rho^\ast \rho_\ast \cC ([p])
\simeq
\rho_\ast \cC ( \rho(p))
\underset{\rm Lem~\ref{t30}}\simeq
\Hom_{\fCat_{(\infty,1)}}\bigl( 
\rho(p) , \cC
\bigr)
\xra{~\simeq~}
\Hom_{\fCat_{(\infty,1)}}\bigl( 
[p] , \cC
\bigr)
\simeq
\cC([p])
~,
\]
obtained by applying $
\Hom_{\fCat_{(\infty,1)}}\bigl( 
, \cC
\bigr)
$
to the canonical identification $[p]\xra{\simeq} \rho ([p])$ that defines the cellular realization (see \Cref{notation.brackets.functor.from.delta.op}).  
Therefore, the counit of the adjunction~\ref{e20} is by equivalences.

Next, let $\cF \in \Shv^{\cls}(\Quiv)$.
We seek to show the unit of the adjunction~(\ref{t20}),
\begin{equation}
\label{e34}
\cF
\xra{~\rm unit~}
\rho_\ast \rho^\ast \cF
~,
\end{equation}
is by equivalences.
Let $\Gamma \in \Quiv$.
The unit morphism~(\ref{e34}), together with its naturality, evaluates on $\Gamma \in \Quiv$
as the top horizontal map in a diagram among spaces,
\begin{equation}
\label{e36}
\begin{tikzcd}[column sep=1.5cm]
\cF(\Gamma)
\arrow{r}{\rm unit}
\arrow{d}
&
\rho_\ast \rho^\ast \cF(\Gamma)
\arrow{d}
\\
\lim
\Bigl(
\bigl(
\sE(\Gamma)^{\op}
\bigr)^{\lcone}
\xra{(\ref{e35})}
(\Quiv^{\cls})^{\op}
\xra{\cF}
\Spaces
\Bigr)
\arrow{r}[swap]{\lim~ \rm unit}
&
\lim
\Bigl(
\bigl(
\sE(\Gamma)^{\op}
\bigr)^{\lcone}
\xra{(\ref{e35})}
(\Quiv^{\cls})^{\op}
\xra{ \rho_\ast \rho^\ast  \cF}
\Spaces
\Bigr)
\end{tikzcd}
~,
\end{equation}
in which the downward maps are given by applying $\cF$ to~(\ref{e35}).
Using the hypothesis that $\cF$ is a closed sheaf on $\Quiv$,
the first part of this proof ensures the functor $\Quiv^{\op} \xra{\rho_\ast \rho^\ast \cF} \Spaces$ is a closed sheaf on $\Quiv$.
So by \Cref{t31}, which ensures~(\ref{e35}) is a closed cover, 
the vertical maps in~(\ref{e36}) are both equivalences.
Therefore, the top horizontal map is an equivalence if and only if the bottom horizontal map is an equivalence.
The bottom horizontal map is an equivalence provided, for each $[p]\in \bDelta_{\leq 1}$, the unit morphism~(\ref{e34}) evaluated on $\rho(p)\in \Quiv$,
\[
\rho^\ast {\rm unit}
\colon
\rho^\ast \cF( [p])
~\simeq~
\cF\bigl(\rho(p) \bigr)
\xra{~\rm unit~}
\rho_\ast \rho^\ast \cF\bigl( ( \rho(p) \bigr)
~\simeq~
\rho^\ast \rho_\ast \rho^\ast \cF ( [p] )
~,
\]
is an equivalence.  
This unit morphism fits into a commutative diagram among spaces:
\[
\begin{tikzcd}
\rho^\ast \cF( [p])
\arrow{dr}[sloped, swap]{\rho^\ast ~{\rm unit}}
\arrow{rr}{\id}
&&
\rho^\ast \cF([p])
\\
&
\rho^\ast \rho_\ast \rho^\ast \cF ( [p] )
\arrow{ru}[sloped, swap]{{\rm counit}~ \rho^\ast}
\end{tikzcd}
~.
\]
The upward map was already shown to be an equivalence.
It follows that the downward map is an equivalence, as desired.

\end{proof}

\begin{remark}
\Cref{t2} allows us to regard an $(\infty,1)$-category $\cC$ as a functor $\cC\colon \Quiv^{\op} \to \Spaces$ satisfying certain descent conditions.  
In particular, an $(\infty,1)$-category $\cC$ is characterized by its spaces of ``$\Gamma$-points'' $\cC(\Gamma):= \rho_\ast\cC(\Gamma)$, as $\Gamma$ ranges through finite directed graphs.

\end{remark}

The following result presents the limit expression of \Cref{t22} in more conceptual terms.
\begin{prop}
\label{t35}
Let $\cC\in \fCat_{(\infty,1)} \underset{(\ref{e3})}\subset \PShv(\bDelta)$ be a flagged $(\infty,1)$-category.
For each finite directed graph $\Gamma$, 
there is a canonical equivalence between spaces:
\[
\rho_\ast(\cC)(\Gamma)
\xra{~\simeq~}
{\sf Rep}_\cC(\Gamma)
~.
\]

\end{prop}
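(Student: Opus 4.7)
The plan is to verify this equivalence by comparing two explicit descriptions of finite limits: the one provided for $\rho_\ast(\cC)(\Gamma)$ in Observation \ref{t22}, and the one used to define $\sRep_\cC(\Gamma)$ in Definition \ref{d8}(2). Both are described as finite limits whose only terms are copies of $\Obj(\cC)$ and $\Mor(\cC)$, indexed by $\Gamma^{(0)}$ and $\Gamma^{(1)}$, with structure maps given by source and target projections. The work is to recognize these two limits as computing the same object.

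First, I would unwind the defining limit of $\sRep_\cC(\Gamma)$ using the explicit description of $\sE(\Gamma)$ in Observation \ref{t66}. That observation identifies $\sE(\Gamma)$ as the gaunt category with object set $\Gamma^{(0)} \amalg \Gamma^{(1)}$, and whose only non-identity morphisms are the source and target maps $v \xleftarrow{s} e \xrightarrow{t} w$ (regarded as maps in $\sE(\Gamma)$ from the vertex to the edge via the closed inclusions $\{0\} \hookrightarrow [1]$ and $\{1\} \hookrightarrow [1]$ in $\bDelta_{\leq 1}$). Composing with the forgetful functor $\sE(\Gamma) \to \bDelta_{\leq 1}^{\cls} \hookrightarrow \bDelta$ sends vertices to $[0]$ and edges to $[1]$, and sends the source/target morphisms to the cofaces $\{0\}, \{1\} \hookrightarrow [1]$ in $\bDelta$. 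Consequently, the diagram $\sE(\Gamma)^\op \to \bDelta^\op \xra{\cC} \Spaces$ has value $\Obj(\cC) = \cC([0])$ at each vertex, value $\Mor(\cC) = \cC([1])$ at each edge, and structure maps the two projections $s^\ast, t^\ast \colon \Mor(\cC) \rightrightarrows \Obj(\cC)$.

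Next, the limit of this diagram, taken over the (opposite of the) gaunt category $\sE(\Gamma)$, is computed (because there are no non-trivial composites in $\sE(\Gamma)$) as the pullback
\[
\Obj(\cC)^{\Gamma^{(0)}} \underset{\Obj(\cC)^{\Gamma^{(1)}} \times \Obj(\cC)^{\Gamma^{(1)}}}{\times} \Mor(\cC)^{\Gamma^{(1)}}
~,
\]
where the left map is the product of diagonals and the right map is $s^\ast \times t^\ast$. This is exactly the formula for $\rho_\ast(\cC)(\Gamma)$ produced in Observation \ref{t22}, and the identification is natural in $\Gamma$ because the source/target structure maps on both sides arise from the same spans $\{0\} \to [1] \leftarrow \{1\}$ in $\bDelta_{\leq 1}$.

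The only subtlety to guard against is that the equivalence be canonical rather than merely existent: both descriptions arise by restricting $\cC$ along the inclusion $\bDelta_{\leq 1} \hookrightarrow \bDelta$ and then taking a limit over a diagram of cells of $\Gamma$, so one should verify that the cone map from $\cC([0])^{\Gamma^{(0)}} \times_{(\cC([0])^2)^{\Gamma^{(1)}}} \cC([1])^{\Gamma^{(1)}}$ into the $\sE(\Gamma)^\op$-limit is the canonical one. This is immediate from the matching of source/target structure, so no real obstacle appears; the entire argument is bookkeeping once Observations \ref{t22} and \ref{t66} are in hand.
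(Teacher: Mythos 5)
Your proof is correct, but it takes a genuinely different route from the paper's. The paper's argument is structural: it invokes \Cref{t2} to know that $\rho_\ast\cC$ is a closed sheaf on $\Quiv$, invokes \Cref{t31} to know that $\sE(\Gamma)^{\rcone} \to \Quiv^{\cls}$ is a closed cover (hence that $\rho_\ast\cC(\Gamma)$ is the limit of $\rho_\ast\cC$ over $\sE(\Gamma)^\op$), and then uses the factorization of $\sE(\Gamma) \to \Quiv^\cls$ through $\rho$ together with the counit equivalence $\rho^\ast\rho_\ast\cC \xra{\sim} \cC$ to identify that limit with the one defining $\Rep_\cC(\Gamma)$. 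Your argument instead bypasses \Cref{t2} entirely and computes both sides as the same explicit finite limit, matching the pullback formula of \Cref{t22} against a direct evaluation of the $\sE(\Gamma)^\op$-indexed limit using \Cref{t66}. What the paper's route buys is reusability: the same closed-sheaf/closed-cover pattern is exactly what drives the generalization to category-objects in an arbitrary $\infty$-category $\cX$ (\Cref{t65}). What your route buys is independence from the heavier \Cref{t2} and a completely concrete identification. The one step you should make explicit is the passage from ``$\sE(\Gamma)$ has no non-trivial composites'' to ``the limit over $\sE(\Gamma)^\op$ is the displayed pullback'': for $\infty$-categorical limits this is the statement that $\sE(\Gamma)$ is the \emph{free} category on its underlying graph of vertices, edges, and source/target arrows, so that the limit is computed by the $1$-truncated equalizer $\Obj(\cC)^{\Gamma^{(0)}} \times \Mor(\cC)^{\Gamma^{(1)}} \rightrightarrows \Obj(\cC)^{\Gamma^{(1)}} \times \Obj(\cC)^{\Gamma^{(1)}}$ rather than requiring the full cosimplicial replacement. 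This is standard and is clearly what \Cref{t66} is set up to deliver, so it is not a gap, but it is the load-bearing fact in your computation and deserves a sentence.
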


\begin{proof}
We explain the following sequence of equivalences among spaces:
\begin{eqnarray}
\label{j1}
\rho_\ast(\cC)(\Gamma)
&
\xra{~\simeq~}
&
\lim
\Bigl(
\sE(\Gamma)^{\op}
\xra{\rm Obs~\ref{t31}}
\Quiv^{\op}
\xra{\rho_\ast(\cC)}
\Spaces
\Bigr)
\\
\label{j2}
&
\xla{~\simeq~}
&
\lim
\Bigl(
\sE(\Gamma)^{\op}
\xra{\rm forget}
(\bDelta_{\leq 1}^{\cls})^{\op}
\hookrightarrow
\bDelta^{\op}
\xra{\cC}
\Spaces
\Bigr)
~=:~
{\sf Rep}_\cC(\Gamma)
~.
\end{eqnarray}
Using the second statement of \Cref{t2},
the equivalence~(\ref{j1}) follows from \Cref{t31}.
By inspection, the functor $\sE(\Gamma) \to \Quiv^{\cls}$ of \Cref{t31} factors through the fully faithful functor $(\bDelta_{\leq 1}^{\sf cls})^{\op} \xra{\rho} \Quiv^{\cls}$.
Using this, the equivalence~(\ref{j2}) is the fact that the counit of the $(\rho^\ast , \rho_\ast)$-adjunction is an equivalence.

\end{proof}

\subsection{Category-objects in $\cX$}
In this subsection, we extend the main result \Cref{t2} of the earlier subsections from $(\infty,1)$-categories to category-objects in an ambient $\infty$-category.

Recall from~\S\ref{sec.cat.def} the full $\infty$-subcategories
\[
\Cat_{(\infty,1)}
~\subset~
\fCat_{(\infty,1)}
~\subset~
\PShv(\bDelta)
=
\Fun(\bDelta^{\op} , \Spaces)
~.
\]
An object in the smaller full $\infty$-subcategory is an \bit{$(\infty,1)$-category} also known as a \bit{complete Segal space}, while an object in the intermediate fully $\infty$-subcategory is a \bit{flagged $(\infty,1)$-category} also known as a \bit{Segal space}.  
In this section, we recall a simple generalization of these notions, as the following.

\begin{definition}
\label{d11}
Let $\cX$ be an $\infty$-category.
The $\infty$-category of \bit{category-objects in $\cX$} is the full $\infty$-subcategory
\[
\fCat_1[\cX]
~\subset~
\Fun(\bDelta^{\op},\cX)
\]
consisting of those functors
\[
\cC
\colon
\bDelta^{\op}
\longrightarrow
\cX
\]
for which the composite functor $(\bDelta^{\sf cls})^{\op} \hookrightarrow \bDelta^{\op} \xra{\cC} \cX$ carries (the opposites of) basic closed covers to pullbacks.

\end{definition}

\begin{remark}
We explain the notation of Definition~\ref{d11}.
A category-object in $\Spaces$ is a \bit{Segal space}:
\[
\fCat_{(\infty,1)}
~:=~
\PShv^{\sf Segal}(\bDelta)
~=~
\fCat_1[\Spaces]
~.
\]
The work \cite{flagged} proves that a Segal space is precisely the same data as a \bit{flagged $(\infty,1)$-category}, which is a functor $\cG \to \cC$ from an $\infty$-groupoid to an $(\infty,1)$-category that is surjective on isomorphism-classes of objects.  

\end{remark}

\begin{notation}
\label{d47}
Let $\cX$ be an $\infty$-category.  Consider the full $\infty$-subcategory
\[
\Shv_\cX^{\sf cls}(\Quiv)
~\subseteq~
\Fun(\Quiv^{\op},\cX)
\]
consisting of those functors $\Quiv^{\op} \xra{\cF} \cX$ for which the restricted functor $(\Quiv^{\cls})^{\op} \hookrightarrow \Quiv^{\op} \xra{\cF} \cX$ maps (the opposites of) basic closed covers to pullbacks.  

\end{notation}

\begin{observation}
\label{t59}
Let $\cX \xra{f} \cY$ be a functor that preserves finite limits.
Postcomposition with $f$ defines the upper horizontal functors in commutative squares
\[
\begin{tikzcd}[column sep=2cm]
\fCat_1[\cX]
\arrow[dashed]{r}{\fCat_1[f]}
\arrow[hook]{d}[swap]{\ff}
&
\fCat_1[\cY]
\arrow[hook]{d}{\ff}
\\
\Fun(\bDelta^\op , \cX)
\arrow{r}[swap]{\Fun(\bDelta^\op,f)}
&
\Fun(\bDelta^\op, \cY)
\end{tikzcd}
\qquad
\text{and}
\qquad
\begin{tikzcd}[column sep=2cm]
\Shv^\cls_\cX(\Quiv)
\arrow[dashed]{r}{\Shv^\cls_f(\Quiv)}
\arrow[hook]{d}[swap]{\ff}
&
\Shv^\cls_\cY(\Quiv)
\arrow[hook]{d}{\ff}
\\
\Fun(\Quiv^\op,\cX)
\arrow{r}[swap]{\Fun(\Quiv^\op,f)}
&
\Fun(\Quiv^\op,\cY)
\end{tikzcd}
~.
\]
Furthermore, if $f$ is fully faithful, then so are each of the horizontal functors in these diagrams.
\end{observation}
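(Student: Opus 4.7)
The plan is to observe that both claims are formal consequences of two general facts: (i) postcomposition with a finite-limit-preserving functor sends pullback squares to pullback squares, hence preserves any descent condition formulated via such; and (ii) postcomposition with a fully faithful functor of $\infty$-categories is fully faithful on functor categories.

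First I would construct the dashed horizontal functors, equivalently, verify that the squares commute. Postcomposition with $f$ defines a functor $f_\ast \colon \Fun(\cA^\op, \cX) \to \Fun(\cA^\op, \cY)$ for any $\cA$, and the outer squares commute on the nose. The only content is that $f_\ast$ restricts to the indicated full subcategories. For a category-object $\cC \colon \bDelta^\op \to \cX$, its composite $f \circ \cC$ carries the opposite of each basic closed cover in $\bDelta$ to a pullback in $\cY$, since $\cC$ does so in $\cX$ and $f$, preserving finite limits, preserves pullbacks; hence $f \circ \cC \in \fCat_1[\cY]$. Identical reasoning, applied now to basic closed covers in $\Quiv$ per \Cref{dd2}, handles the second square.

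For the fully faithful claim, I would appeal to (ii): if $f$ is fully faithful, then the bottom horizontal functors $\Fun(\bDelta^\op,f)$ and $\Fun(\Quiv^\op,f)$ are fully faithful, because mapping spaces in functor $\infty$-categories are computed pointwise (equivalently, as ends of hom-spaces), and $f$ induces equivalences on each $\Hom_\cX(x,x')$. Fully faithfulness of the top horizontal functors then follows formally: for $\cC, \cC' \in \fCat_1[\cX]$ (respectively in $\Shv^\cls_\cX(\Quiv)$), the fully faithful vertical inclusion identifies $\Hom(\cC,\cC')$ with its value in the ambient functor category, and similarly downstairs for $\Hom(f_\ast\cC, f_\ast\cC')$; commutativity of the square then presents the top horizontal map on hom-spaces as an equivalence.

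I do not foresee any genuine obstacle; the argument is essentially bookkeeping. The only point worth flagging is that the conditions cutting out $\fCat_1[\cX]$ and $\Shv^\cls_\cX(\Quiv)$ are expressed purely via finite pullback conditions (the basic closed covers are $2 \times 2$ diagrams), so that ``preserves finite limits'' is exactly the hypothesis needed to transport them along $f$.
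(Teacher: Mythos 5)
Your argument is correct and is precisely the reasoning the paper leaves implicit (the statement is an Observation with no written proof): the defining conditions are finite pullback conditions, so they transport along a finite-limit-preserving $f$, and fully faithfulness of $\Fun(\cA^\op,f)$ restricts to the full subcategories. Nothing to add.
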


Recall from \Cref{notation.brackets.functor.from.delta.op} the functor $\bDelta \xra{\rho} \Quiv$.
\begin{theorem}
\label{t65}
Let $\cX$ be an $\infty$-category with finite limits.  Restriction along $\rho$ is an equivalence between $\infty$-categories:
\[
\rho^\ast
\colon
\Shv_\cX^{\sf cls}(\Quiv)
\xra{~\simeq~}
\fCat_1[\cX]
~.
\]
The value of its inverse on $\cC \in \fCat_1[\cX]$ evaluates as the finite limit in $\cX$:
\[
\rho_\ast(\cC)
\colon
\Gamma
\longmapsto
{\sf Rep}_\cC(\Gamma)
~.
\]

\end{theorem}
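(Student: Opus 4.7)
The plan is to deduce the theorem from the $\cX = \Spaces$ case, namely the combination of \Cref{t2} and \Cref{t35}, by testing against representable functors via the Yoneda embedding. The key tool is \Cref{t59}, which says a limit-preserving functor $f \colon \cX \to \cY$ induces compatible inclusions $\fCat_1[f]$ and $\Shv^\cls_f(\Quiv)$, combined with the observation that the defining conditions of both $\fCat_1[\cX]$ and $\Shv_\cX^\cls(\Quiv)$ are finite-limit conditions detectable by any jointly conservative family of limit-preserving functors out of $\cX$.

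First, for $\cC \in \fCat_1[\cX]$ and each finite directed graph $\Gamma$, the limit $\Rep_\cC(\Gamma) := \lim\bigl(\sE(\Gamma)^\op \to \bDelta^\op \xra{\cC} \cX\bigr)$ exists in $\cX$: the exit-path category $\sE(\Gamma)$ is finite by \Cref{t66}, and $\cX$ has finite limits by hypothesis. A standard right Kan extension argument (or, equivalently, transporting through the Yoneda embedding where all limits are available) assembles $\Gamma \mapsto \Rep_\cC(\Gamma)$ into a functor $\Quiv^\op \to \cX$ and makes $\cC \mapsto \Rep_\cC(-)$ the candidate inverse to $\rho^\ast$.

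To check that this candidate is inverse to restriction, test against the Yoneda embedding. For each $x \in \cX$, the functor $\hom_\cX(x,-) \colon \cX \to \Spaces$ preserves all existing limits, and the family $\{\hom_\cX(x,-)\}_{x \in \cX}$ is jointly conservative. Via \Cref{t59}, the assignment $\cF \mapsto \hom_\cX(x, \cF(-))$ respects the closed-sheaf condition on $\Quiv$ and the Segal/category-object condition on $\bDelta$ in both directions. Moreover, $\hom_\cX(x,-)$ preserves the finite limits defining $\Rep$, giving a canonical equivalence $\hom_\cX\bigl(x, \Rep_\cC(-)\bigr) \simeq \Rep_{\hom_\cX(x,\cC(-))}(-)$. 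By \Cref{t35}, the latter is $\rho_\ast\bigl(\hom_\cX(x,\cC(-))\bigr)$, which lies in $\Shv^\cls(\Quiv)$ by \Cref{t2}; hence $\Rep_\cC(-) \in \Shv_\cX^\cls(\Quiv)$. The unit $\cF \to \Rep_{\rho^\ast \cF}(-)$ and counit $\rho^\ast \Rep_\cC(-) \to \cC$ of the candidate adjoint pair become, after applying $\hom_\cX(x,-)$, the unit and counit of the adjoint equivalence of \Cref{t2} in the $\Spaces$-valued case, and thus are equivalences in $\cX$ by conservativity. This establishes the equivalence $\rho^\ast \colon \Shv_\cX^\cls(\Quiv) \xra{\sim} \fCat_1[\cX]$ with inverse $\cC \mapsto \Rep_\cC(-)$.

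The main obstacle is the setup step: verifying that $\Gamma \mapsto \Rep_\cC(\Gamma)$ defines a genuine functor on $\Quiv^\op$ and that it agrees with $\rho_\ast$ in the sense of the pointwise right Kan extension formula. The cleanest way to bypass this is to work inside $\PShv(\cX)$, which has all small limits: the Yoneda embedding $\cX \hookrightarrow \PShv(\cX)$ is limit-preserving and (by \Cref{t59}) induces fully faithful inclusions $\fCat_1[\cX] \hookrightarrow \fCat_1[\PShv(\cX)]$ and $\Shv_\cX^\cls(\Quiv) \hookrightarrow \Shv_{\PShv(\cX)}^\cls(\Quiv)$; in $\PShv(\cX)$ the right Kan extension $\rho_\ast$ exists unrestrictedly and matches the equivalence of \Cref{t2} pointwise in $\cX^\op$. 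Once one observes that $\rho_\ast(\cC)(\Gamma) = \Rep_\cC(\Gamma)$ stays in the essential image of the Yoneda embedding by finiteness of $\sE(\Gamma)$ and finite-completeness of $\cX$, the equivalence restricts to the desired one and the formula is confirmed.
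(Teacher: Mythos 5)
Your proposal is correct and takes essentially the same route as the paper: both embed $\cX$ into $\PShv(\cX)$ via the Yoneda functor, use \Cref{t59} to transport the closed-sheaf and category-object conditions, reduce to the $\Spaces$-valued case of \Cref{t2} (your "test against representables $\hom_\cX(x,-)$" is precisely the paper's currying equivalence $\Shv^{\sf cls}_{\PShv(\cX)}(\Quiv)\simeq \Fun(\cX^\op,\Shv^{\sf cls}(\Quiv))$), and then invoke finiteness of $\sE(\Gamma)$ together with finite-completeness of $\cX$ to see that the inverse restricts and is given by the limit formula $\Rep_\cC(\Gamma)$.
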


\begin{proof}
\Cref{t32} implies the restriction functor $\rho^\ast \colon \Fun(\Quiv^{\op},\cX) \to \Fun(\bDelta^{\op} , \cX)$ indeed restricts as a functor 
$
\rho^\ast
\colon
\Shv_\cX^{\sf cls}(\Quiv)
\to
\fCat_1[\cX]
$.
It remains to show this functor is an equivalence.
Applying \Cref{t59} to the Yoneda functor $\cX \to \PShv(\cX)$, which preserves finite limits, affords the commutative diagram among $\infty$-categories:
\[
\begin{tikzcd}[column sep=2.5cm, row sep=1.5cm]
\Shv_\cX^{\sf cls}(\Quiv)
\arrow{r}{\Shv_{\Yo}^{\sf cls}(\Quiv)}
\arrow{d}[swap]{\rho^\ast}
&
\Shv_{\PShv(\cX)}^{\sf cls}(\Quiv)
\arrow{r}{\simeq}
\arrow{d}{\Shv_{\rho^\ast}^{\sf cls}(\Quiv)}
&
\Fun\bigl(
\cX^{\op}
,
\fCat_{(\infty,1)}
\bigr)
\arrow{d}{\Fun\bigl(
\cX^{\op}
,
\rho^\ast
\bigr)}
\\
\fCat_1[\cX]
\arrow{r}[swap]{\fCat_1[{\Yo}]}
&
\fCat_1\bigl[\PShv(\cX)\bigr]
\arrow{r}[swap]{\simeq}
&
\Fun ( \cX^{\op} , \Shv^{\sf cls}(\Quiv) )
\end{tikzcd}
~,
\]
in which the right two horizontal functors are equivalences via the standard adjunction $\Fun(\cB , \Fun(\cA , \cC) ) \simeq \Fun(\cB \times \cA , \cC) \simeq \Fun(\cA\times \cB , \cC) \simeq \Fun(\cA , \Fun(\cB , \cC))$.
Because the Yoneda functor is fully faithful, all of the horizontal functors in this diagram are fully faithful.
By \Cref{t2}, the right vertical functor is an equivalence.
We conclude that the left vertical functor $\rho^\ast$ is fully faithful.

Now, the universal property of limits is such that the Yoneda functor preserve limits.
By definition of $\fCat_1[-]$ and $\Shv_-^{\sf cls}(\Quiv)$, it follows that the left two horizontal functors preserve limits, and thereafter all of the horizontal functors preserve limits.  
\Cref{t2} also gives that the inverse of the right vertical functor evaluates as finite limits, as asserted in the present proposition.  
Consequently, the left vertical functor $\rho^\ast$, which we already established is fully faithful, is an equivalence, with inverse as asserted in the present proposition.

\end{proof}

\section{Cyclically-directed graphs}
Here, we recall the epicyclic category, cyclic category, and paracyclic category, and collect some facts about them.

\subsection{Symmetries of a circle}
Here, we record the symmetries of a circle, and introduce the \bit{Witt monoid}.

\begin{notation}\label{d15}
\begin{itemize}
\item[]

\item
The monoid $\ZZ^\times$ is that of integers with multiplication.  
\\
The monoid $\NN^\times$ is the submonoid of natural numbers with multiplication.

\item
The \bit{circle group} is $\TT \subset \CC^\times$ is the unit complex numbers with multiplication, regarded as a group-object in topological spaces, thusly presenting a group-object in $\Spaces$.

\end{itemize}

\end{notation}

\begin{observation}
\label{t43}
The circle group $\TT$ presents the group-object in spaces $\sB \ZZ$, which is the deloop of the commutative group $\ZZ$ of integers:
\[
\TT
~\simeq~
\sB\ZZ
\qquad
\Bigl(
~
\text{ in }
\Alg(\Spaces)
~
\Bigr)
~.
\]

\end{observation}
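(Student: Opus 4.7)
The plan is to produce the equivalence $\mathbb{T} \simeq \mathsf{B}\mathbb{Z}$ from the classical exponential short exact sequence of topological abelian groups
\[
0 \longrightarrow \mathbb{Z} \longrightarrow \mathbb{R} \xrightarrow{t \mapsto e^{2\pi i t}} \mathbb{T} \longrightarrow 0,
\]
viewed as a fiber sequence of group-objects in spaces. The reason this suffices is that the underlying space $\mathbb{R}$ is contractible: a fiber sequence $F \to E \to B$ of group-objects with $E \simeq \ast$ exhibits $F \simeq \Omega B$ in $\mathrm{Alg}(\Spaces)$, i.e.\! $B \simeq \sB F$ whenever $B$ is connected. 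Applying this to our sequence yields $\mathbb{T} \simeq \sB \mathbb{Z}$ in $\mathrm{Alg}(\Spaces)$, as desired.

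To organize this cleanly, I would proceed in the following steps. First, realize the three topological abelian groups $\mathbb{Z}, \mathbb{R}, \mathbb{T}$ as group-objects in $\Spaces$ via the standard functor from topological abelian groups (in fact, this is just the functor sending a topological group to its underlying homotopy type, which preserves products and hence group-object structure). Second, verify that $\mathbb{R} \to \mathbb{T}$ is a principal $\mathbb{Z}$-bundle, so that on underlying spaces it gives a fiber sequence $\mathbb{Z} \to \mathbb{R} \to \mathbb{T}$ of group-objects; this is a small computation using local sections $\mathbb{T} \supset U \to \mathbb{R}$ given by branches of the logarithm. Third, observe that $\mathbb{R}$ is contractible (linearly contractible to $0$, and the contraction can be chosen through the group structure since $\mathbb{R}$ is a topological vector space). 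Fourth, conclude that in $\mathrm{Alg}(\Spaces)$ the map $\mathbb{R} \to \mathbb{T}$ exhibits $\mathbb{T}$ as the classifying space of its fiber $\mathbb{Z}$, by the standard fact that any principal $G$-bundle with contractible total space delooops $G$.

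The main obstacle is a bookkeeping point rather than a mathematical one: one must argue that the delooping identification $\mathbb{T} \simeq \sB\mathbb{Z}$ holds not merely in $\Spaces$ but in $\mathrm{Alg}(\Spaces)$, i.e.\! that it intertwines the two group structures. There are two natural ways to handle this. One is to note that since $\mathbb{Z}$ is abelian, $\sB\mathbb{Z}$ itself carries a canonical group structure (indeed an $\mathbb{E}_\infty$-structure), and the map $\mathbb{T} \to \sB\mathbb{Z}$ produced by the delooping is obtained by applying the functor \textit{taking underlying space of a topological abelian group sequence to a fiber sequence of $\mathbb{E}_\infty$-objects}, which is symmetric monoidal. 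The other, more hands-on, is to use the universal property of delooping in $\mathrm{Alg}(\Spaces)$: for any connected group-object $G$, any map of spaces $G \to \sB H$ that is a $H$-torsor canonically refines to a map of group-objects provided the torsor is compatible with the $G$-action, which here follows from the fact that $\mathbb{R} \to \mathbb{T}$ is a group homomorphism with kernel $\mathbb{Z}$. Either argument upgrades the equivalence of underlying spaces to an equivalence of group-objects.
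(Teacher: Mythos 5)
Your proposal is correct. The paper states this as an unproved observation, so there is no argument to compare against; your route via the exponential fiber sequence $\ZZ \to \RR \to \TT$ with contractible total space is the standard justification, and you correctly identify and resolve the one genuine subtlety, namely upgrading the equivalence from $\Spaces$ to $\Alg(\Spaces)$ (most cleanly by noting that all three terms are topological abelian groups, hence grouplike $\EE_\infty$-spaces, i.e.\! connective spectra, where the fiber sequence with vanishing middle term immediately gives $\TT \simeq \sB\ZZ$ as $\EE_\infty$-groups and a fortiori as $\EE_1$-groups).
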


Consider the natural action of the monoids $\NN^\times$ and $\ZZ^\times$ on the circle group:
\begin{equation}
\label{e45}
\NN^\times
~\hookrightarrow~
\ZZ^\times
\longrightarrow
\End_{\sf Gps}(\TT)
~,\qquad
r
\longmapsto
\Bigl(
u 
\longmapsto
u^r
\Bigr)
~.\footnote{
Through \Cref{t43}, this action agrees with the functor $\sB$ applied to the action 
$\NN^\times \xra{~r\mapsto (i\mapsto  ri)~} \End(\ZZ)$.
}
\end{equation}
With respect to the action $\ZZ^\times \underset{(\ref{e45})}\lacts \TT$,
consider the semi-direct product monoid-object in $\cS$:
\[
\TT \rtimes \ZZ^\times
~.
\]
It is presented by the topological monoid whose underlying topological space is $\TT \times \ZZ^\times$, and whose multiplication rule is the continuous map
\[
\Bigl(
\TT \times \ZZ
\Bigr)
\times
\Bigl(
\TT \times \ZZ
\Bigr)
\longrightarrow
\Bigl(
\TT \times \ZZ
\Bigr)
~,\qquad
\bigl(
(w,r)
,
(z,s)
\bigr)
\longmapsto
(
w z^r , rs
)
~.
\]

\begin{prop}
The map
\begin{equation}
\label{e103}
\TT
\rtimes
\ZZ^\times
\xra{~\simeq~}
\End_{\Spaces}(\SS^1)
~,\qquad
(z,r)
\longmapsto
\Bigl(
u
\mapsto
z u^r
\Bigr)
~,
\end{equation}
canonically defines an equivalence between monoid-objects in $\Spaces$.

\end{prop}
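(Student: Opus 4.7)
The plan is to separate the argument into (a) verifying that the given formula is a homomorphism of monoid-objects in $\Spaces$, and (b) showing that the underlying map of spaces is an equivalence. Part (a) is a direct computation: the semidirect-product rule, with respect to the action $\ZZ^\times \lacts \TT$ from \Cref{e45}, is $(w,r)\cdot(z,s)=(wz^r, rs)$, while composing the endomorphisms $v\mapsto wv^r$ after $u\mapsto zu^s$ yields $u\mapsto w(zu^s)^r=(wz^r)u^{rs}$, matching the formula. The identity $(1,1)$ is sent to $\id_{\SS^1}$, so this verifies (a).

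For (b), I would identify $\End_\Spaces(\SS^1)$ with the free loop space $L\SS^1$ of $\SS^1$ and consider the evaluation-at-basepoint fibration
\[
\Omega \SS^1 \longrightarrow L\SS^1 \longrightarrow \SS^1.
\]
Using \Cref{t43}, which presents $\TT \simeq \sB \ZZ$, the fiber $\Omega\SS^1 \simeq \ZZ$ is discrete, so this fibration is a covering map and $\pi_0 L\SS^1 = \ZZ$ is indexed by the degree. The component of $L\SS^1$ of degree $r$ covers $\SS^1$ with one-point fiber, so evaluation restricts to an equivalence on each component. The formula $(z,r)\mapsto(u\mapsto zu^r)$ assembles into a continuous section of the evaluation map (its composite with evaluation is the projection onto $\TT$) which induces a bijection on $\pi_0$. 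It is therefore an equivalence on each component, and hence on the underlying spaces.

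The main obstacle is largely bookkeeping: one must be consistent about the order of composition in $\End_\Spaces(\SS^1)$ in order to match the semidirect-product rule. Specifically, $(w,r)\cdot(z,s)$ must correspond to first applying $u\mapsto zu^s$ and then $v\mapsto wv^r$; the opposite convention would produce the opposite monoid. A secondary point worth attending to is that ``equivalence of monoid-objects in $\Spaces$'' is detected on underlying spaces together with compatibility of multiplication and units, so parts (a) and (b) together suffice.
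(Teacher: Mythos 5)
Your proposal is correct and takes essentially the same route as the paper: the homomorphism check in part (a) is exactly the point-set computation the paper invokes by observing that the map lifts to a strict homomorphism of topological monoids, and part (b) matches the paper's argument, which also splits $\End_{\Spaces}(\SS^1)=L\SS^1$ over $\SS^1$ via evaluation at the basepoint (the paper writes this splitting explicitly as $f\mapsto\bigl(f(1),f(1)^{-1}f\bigr)$ using the group structure of $\TT$, which amounts to the same covering-space/section argument you give componentwise).
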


\begin{proof}
Evidently, the indicated map canonically lifts along the forgetful morphism $\End_{\sf Top}(\SS^1) \xra{\rm forget} \End_{\Spaces}(\SS^1)$, where ${\sf Top}$ is a convenient category of topological spaces.  
It follows that the indicated map is a morphism between monoid-objects in $\Spaces$.

Next, using the topological group structure of $\TT = \SS^1$, the map
\[
\End_{\Spaces}(\SS^1)
\xra{~\simeq~}
\SS^1 
\times
\Omega \SS^1
~,\qquad
( \SS^1 \xra{f} \SS^1 )
\longmapsto
\bigl(
~
f(1)
~,~
f(1)^{-1} f
~
\bigr)
~,
\]
is an equivalence between spaces.
Direct inspection reveals that the composite map $\TT \rtimes \ZZ^\times \xra{\Cref{e103}} \End_{\Spaces}(\SS^1) \xra{\simeq} \SS^1 \times \Omega \SS^1$ is a product of equivalences, and is therefore an equivalence.

\end{proof}

\begin{observation}
\label{t100}
The diagram
\[
\begin{tikzcd}[column sep=2cm]
\TT \rtimes \ZZ^\times
\arrow{r}{\Cref{e103}}
\arrow{d}[swap]{\pr}
&
\End_{\Spaces}(\SS^1)
\arrow{d}{\deg}
\\
\ZZ^\times
\arrow{r}[swap]{r \longmapsto (d \mapsto rd) }
&
\End_{\sf Ab}(\ZZ)
\end{tikzcd}
\]
among monoid-objects in $\Spaces$ commutes, where the right downward morphism is given by applying $\sH_1$, $1^{\rm st}$ integral homology.

\end{observation}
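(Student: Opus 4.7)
The plan is to verify commutativity by chasing a general element $(z, r) \in \TT \rtimes \ZZ^\times$ around the square. The down-then-right composite sends $(z,r)$ first to $r \in \ZZ^\times$ under $\pr$, and then to the multiplication-by-$r$ endomorphism $d \mapsto rd$ of $\ZZ$ by definition of the bottom horizontal arrow. The right-then-down composite sends $(z,r)$ via \Cref{e103} to the self-map $\varphi_{z,r} \colon \SS^1 \to \SS^1$, $u \mapsto zu^r$, and then passes to the endomorphism of $\sH_1(\SS^1) \simeq \ZZ$ it induces. Thus the content reduces to the classical computation $\deg(\varphi_{z,r}) = r$.

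To establish this, I would factor $\varphi_{z,r}$ as the composition $\SS^1 \xra{p_r} \SS^1 \xra{L_z} \SS^1$, where $p_r(u) := u^r$ is the $r$-th power map and $L_z(u) := zu$ is left-translation by $z$ in the topological group $\TT$. Since $\TT$ is path-connected, any choice of path from $1$ to $z$ produces a homotopy from $\id_{\SS^1}$ to $L_z$, so $(L_z)_\ast = \id$ on $\sH_1(\SS^1)$. Therefore $\deg(\varphi_{z,r}) = \deg(p_r)$, and this latter equals $r$: under the equivalence of monoid-objects $\TT \simeq \sB\ZZ$ of Observation~\ref{t43}, the power map $p_r$ corresponds to $\sB$ applied to the multiplication-by-$r$ endomorphism of $\ZZ$, and applying $\pi_1 \simeq \sH_1$ to this delooped map recovers multiplication by $r$ on $\ZZ$.

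A mild subtlety is that the observation asserts commutativity as morphisms of monoid-objects in $\Spaces$, not merely of underlying maps of spaces. To address this, I would note that both composites are monoid-object morphisms by construction (the top since \Cref{e103} is already established as such and $\sH_1$ respects composition of endomorphisms; the bottom is evidently a monoid homomorphism). Because the target $\End_{\sf Ab}(\ZZ) \simeq \ZZ$ (with multiplication) is a discrete monoid and $\pi_0(\TT \rtimes \ZZ^\times) \simeq \ZZ^\times$, any monoid-object map from $\TT \rtimes \ZZ^\times$ into this discrete target is determined by its induced map on $\pi_0$; hence the pointwise verification above suffices. The only real obstacle is this last bookkeeping step about promoting the pointwise check to an equality of monoid-object morphisms; the degree computation itself is textbook.
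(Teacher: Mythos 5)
The paper records this as an Observation with no written proof, so your argument supplies exactly the routine verification the authors leave implicit: the element chase reduces to $\deg(u \mapsto zu^r) = r$, and your handling of the coherence issue --- that a morphism of monoid-objects in $\Spaces$ into the discrete monoid $\End_{\sf Ab}(\ZZ)$ is determined by its effect on $\pi_0$ --- correctly disposes of the only non-trivial point. Your proof is correct and is essentially the intended justification.
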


\begin{definition}\label{d4}
The \bit{Witt monoid}\footnote{The notation $\WW$ stems from the fact that this will keep track of Frobenius and Verschiebung operators, along the lines of \cite{HessMad-Witt}.} is the semi-direct product monoid-object in $\Spaces$,
\[
\WW 
~:=~
\TT
\rtimes
\NN^\times
~,
\]
with respect to the action $\NN^\times \underset{(\ref{e45})}\lacts \TT$.
Specifically, $\WW$ is presented by the topological monoid whose underlying topological space is $\TT \times \NN$, and whose multiplication rule is the continuous map
\[
\Bigl(
\TT
\times
\NN
\Bigr)
\times
\Bigl(
\TT
\times
\NN
\Bigr)
\longrightarrow
\Bigl(
\TT
\times
\NN
\Bigr)
~,\qquad
\bigl(
(w,r)
,
(z,s)
\bigr)
\longmapsto
(
w z^r
,
rs
)
~.
\]

\end{definition}

\begin{observation}
\label{t96}
Using that both of the monoids $\TT$ and $\NN^\times$ are commutative, there is a canonical identification between monoids:
\[
\WW^{\op}
~\simeq~
\NN^\times \ltimes \TT
~.
\]

\end{observation}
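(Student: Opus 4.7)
The strategy is direct: unpack the multiplication in $\WW^\op$, and exhibit the swap of pair coordinates as a monoid isomorphism onto $\NN^\times \ltimes \TT$.

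By \Cref{d4}, the monoid $\WW$ has underlying topological space $\TT \times \NN^\times$ and multiplication $(a, m)(b, n) = (ab^m, mn)$. Taking opposites reverses the order of multiplication, so $\WW^\op$ has multiplication
\[
(a, m) \cdot_{\op} (b, n)
~=~ (b, n)(a, m)
~=~ (ba^n, nm)~.
\]
Now invoke commutativity twice: first, $nm = mn$ since $\NN^\times$ is commutative; second, $ba^n = a^n b$ since $\TT$ is commutative. Rewriting, the multiplication in $\WW^\op$ becomes $(a, m) \cdot_\op (b, n) = (a^n b, mn)$, which is precisely the formula defining $\NN^\times \ltimes \TT$ (where $\NN^\times$ acts on $\TT$ by the same rule $r \cdot w = w^r$, with elements written as pairs $(r, w)$).

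The continuous map $\phi \colon \WW^\op \to \NN^\times \ltimes \TT$ sending $(a, m) \mapsto (m, a)$ is then a homeomorphism between underlying topological spaces (hence an equivalence of objects in $\Spaces$), and, by the calculation above, respects the multiplications. Thus $\phi$ presents the asserted canonical equivalence of monoid-objects in $\Spaces$. The main (and only) subtlety is the bookkeeping of conventions for the semidirect product notation; there is no real obstacle, as the proof reduces to this short explicit computation invoking commutativity of the two constituent monoids.
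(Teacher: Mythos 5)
Your proof is correct and matches the paper's intent exactly: the paper records this as an unproved Observation whose phrase ``using that both of the monoids $\TT$ and $\NN^\times$ are commutative'' is precisely the two commutativity substitutions $ba^n = a^n b$ and $nm = mn$ in your computation, carried out on the explicit topological-monoid presentation of $\WW$ from \Cref{d4}. Nothing further is needed.
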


\begin{observation}
The canonical projection
\[
\BW
\xra{~\pr~}
\BN
\]
is a left fibration.
It straightens as the composite functor $\BN \xra{ \bigl\langle \NN^\times \underset{\Cref{e45}} \lacts \TT \bigr \rangle} {\sf Groups} \xra{\sB} \Spaces$.

\end{observation}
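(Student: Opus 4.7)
The plan is to establish both claims simultaneously by identifying the projection $\BW \xra{\pr} \BN$ with the unstraightening of the prescribed functor
\[
F ~:=~ \sB \circ \bigl\langle \NN^\times \lacts \TT \bigr\rangle \colon \BN \longrightarrow \Spaces.
\]
Straightening-unstraightening furnishes an equivalence between left fibrations over $\BN$ and functors $\BN \to \Spaces$: a left fibration $p \colon E \to \BN$ corresponds to the functor that sends the unique object $\ast \in \BN$ to the homotopy fiber of $p$ over $\ast$, together with the monodromy action of the monoid $\End_{\BN}(\ast) = \NN^\times$ on this fiber. So it suffices to exhibit (a) that the homotopy fiber of $\pr$ over $\ast$ is $\BT$, (b) that $\pr$ is a left fibration, and (c) that the resulting monodromy reproduces $F$.

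For (a), the short exact sequence of monoid-objects $\TT \hookrightarrow \WW \xra{\pr} \NN^\times$, together with the underlying-space splitting $\WW \simeq \TT \times \NN^\times$, identifies the fiber of $\sB(\pr)$ over $\ast \in \BN$ as the bar construction on the kernel submonoid $\TT$, i.e.\ as $\BT$. For (b), an $n$-simplex of $\BW$ is a composable string in $\WW^n \simeq \TT^n \times (\NN^\times)^n$, and $\pr$ projects onto the second factor; hence over each $n$-simplex $(n_1, \ldots, n_n)$ of $\BN$ the space of lifts is canonically $\TT^n$. One then verifies that left-horn lifting into $\BW$ amounts to extending prescribed $\TT$-coordinates along face maps built from the action $\NN^\times \lacts \TT$, yielding contractibly unique fillers.

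For (c), I would trace the monodromy by hand: given $n \in \NN^\times$, lift it to $(1,n) \in \WW$ in $\BW$, and record composition with 1-simplices $(t,1) \in \WW$ lying in the fiber $\BT$. The multiplication rule for the semi-direct product yields $(1,n) \cdot (t,1) = (n \cdot t, n) = (t^n, n)$, which, traced back, produces the self-equivalence $t \mapsto t^n$ of $\TT$; applying $\sB$ recovers the prescribed $F$. The main obstacle is step (c): interpreting ``monodromy'' precisely in the $\infty$-categorical setting, since $\NN^\times$ is a monoid rather than a group. A cleaner alternative would be to invoke the general principle that for any action of a monoid $M$ on a monoid $H$ in $\Spaces$, the projection $\sB(H \rtimes M) \to \sB M$ is a left fibration whose straightening is $\sB$ applied to the action, and then specialize to $M = \NN^\times$, $H = \TT$.
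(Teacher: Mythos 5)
Your proposal is correct, and since the paper states this as an unproved observation, the cleaner alternative you mention at the end --- that for a monoid $M$ acting on a group-object $H$ in $\Spaces$, the projection $\sB(H \rtimes M) \to \sB M$ is a left fibration straightening to $\sB$ of the action --- is exactly the principle the authors are implicitly invoking. One small caution: that general principle requires $H$ to be group-like (otherwise the fibers of the delooped projection are not $\infty$-groupoids and the map is not a left fibration); this is harmless here since $\TT$ is a group, but your phrasing ``for any action of a monoid $M$ on a monoid $H$'' is slightly too general. Your hands-on verification in (b)--(c) is also sound: the strict fiber over the point is $\sB(\TT \times \{1\}) \simeq \BT$, and the exchange relation $(1,n)\cdot(t,1) = (t^n,n) = (t^n,1)\cdot(1,n)$ in $\WW$ is precisely the statement that transport along the cocartesian lift of $n$ carries the fiber automorphism $t$ to $t^n$, recovering $\sB(z \mapsto z^n)$ as claimed.
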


\begin{observation}
\label{t101}
In light of \Cref{t100}, there is a canonical pullback square among monoid-objects in $\Spaces$:
\[
\begin{tikzcd}[column sep=2cm]
\WW
\arrow{r}
\arrow{d}[swap]{\pr}
&
\End_{\Spaces}(\SS^1)
\arrow{d}{\deg}
\\
\NN^\times
\arrow{r}[swap]{r \longmapsto (d \mapsto rd)}
&
\End_{\sf Ab}(\ZZ)
\end{tikzcd}
~.
\]
Furthermore, because the bottom horizontal morphism is a monomorphism, so is the top horizontal morphism.

\end{observation}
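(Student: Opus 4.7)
The plan is to obtain the pullback square of \Cref{t101} by pulling back the commutative square of \Cref{t100} along the canonical inclusion $\NN^\times \hookrightarrow \End_{\sf Ab}(\ZZ)$ (which factors through $\ZZ^\times$). The previous proposition identifies $\TT \rtimes \ZZ^\times \xra{\simeq} \End_{\Spaces}(\SS^1)$ as monoid-objects in $\Spaces$, and under this identification the map $\deg$ corresponds to the projection $\pr \colon \TT \rtimes \ZZ^\times \to \ZZ^\times$ postcomposed with the canonical $\ZZ^\times \to \End_{\sf Ab}(\ZZ)$ (this is the content of \Cref{t100}). Since this canonical map is actually an isomorphism (an endomorphism of $\ZZ$ as an abelian group is determined by the image of $1$), the commutative square of \Cref{t100} is automatically a pullback in $\Alg(\Spaces)$.

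The key computation is then that the pullback of the right column $\End_{\Spaces}(\SS^1) \xra{\deg} \End_{\sf Ab}(\ZZ)$ along the inclusion $\NN^\times \hookrightarrow \End_{\sf Ab}(\ZZ)$ is $\WW$. First, by \Cref{t100}, this pullback is equivalently computed as the pullback of $\pr \colon \TT \rtimes \ZZ^\times \to \ZZ^\times$ along $\NN^\times \hookrightarrow \ZZ^\times$. Next, because the semidirect product construction is compatible with restriction of the acting monoid, and because the $\NN^\times$-action on $\TT$ is by definition the restriction of the $\ZZ^\times$-action (both given by $r \mapsto (u \mapsto u^r)$), this pullback is naturally identified with $\TT \rtimes \NN^\times$ as monoid-objects in $\Spaces$; by \Cref{d4}, this is exactly $\WW$. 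This identification is natural enough that it assembles with the canonical projections and inclusions into a commutative square of the form stated, which is then a pullback by construction.

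Finally, for the monomorphism assertion, I would invoke the general fact that pullbacks of monomorphisms are monomorphisms in any $\infty$-category with finite limits (and in $\Alg(\Spaces)$ monomorphisms are detected on underlying spaces). It therefore suffices to observe that $\NN^\times \hookrightarrow \End_{\sf Ab}(\ZZ)$ is a monomorphism, which is immediate since both monoids are discrete (i.e.\! 0-types) and the map is the inclusion of positive integers into $\ZZ$.

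I do not expect any real obstacle here: the substantive content is packaged in the preceding proposition (identifying $\TT \rtimes \ZZ^\times$ with $\End_{\Spaces}(\SS^1)$) and in \Cref{t100}. The only mild subtlety is checking that the pullback computation respects the semidirect product structure — i.e.\! that $(\TT \rtimes \ZZ^\times) \times_{\ZZ^\times} \NN^\times \simeq \TT \rtimes \NN^\times$ not merely as spaces but as monoid-objects — which follows from the explicit formulae for the multiplication in \Cref{d4} together with the fact that the $\NN^\times$-action is literally the restriction of the $\ZZ^\times$-action.
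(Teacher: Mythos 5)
Your proposal is correct and fills in exactly the reasoning the paper leaves implicit: the square of \Cref{t100} is a pullback since both horizontal maps there are equivalences (the top by the preceding proposition, the bottom because an endomorphism of $\ZZ$ is determined by the image of $1$), base-changing along $\NN^\times \hookrightarrow \ZZ^\times$ recovers $\TT\rtimes\NN^\times = \WW$ by the explicit formulae, and the monomorphism claim is the standard stability of monomorphisms under pullback. This is the same approach the paper intends; nothing is missing.
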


\begin{remark}
In fact, the Witt monoid can be recognized as the monoid of framed self-coverings of the circle:
\[
{\sf Imm}^{\sf fr}(\SS^1)
~\simeq~
\WW
~.
\]

\end{remark}

\subsection{The epicyclic category}
Here, we recall the epicyclic category $\w{\bLambda}$, and construct a natural functor from it to $\BW$.

Recall from \Cref{d26} the definition of a cyclically-directed graph.
\begin{definition}
\label{d23}
The \bit{epicyclic} category is the $\infty$-subcategory
\[
\w{\bLambda}
~\subset~
\Cat_{(\infty,1)}
\]
consisting of those $(\infty,1)$-categories that are the values of $\Free$ on finite directed graphs that are cyclically-directed, and non-constant functors between such.
\footnote{
In the small handful of places that it has appeared in the literature, $\w{\bLambda}$ is actually defined as the opposite of what we have indicated here.  (See e.g.\! \cite{BFG-epi} for a foundational survey, which attributes the definition to an unpublished letter from Goodwillie to Waldhausen dating back to 1987.)  We have chosen our convention in the interest of uniformity.}

\end{definition}

\begin{observation}
\label{t6'}
There is a
canonical monomorphism (between $\infty$-subcategories of $\Cat_{(\infty,1)}$):
\[
\w{\bLambda}
~\hookrightarrow~
\Quiv
~.
\]

\end{observation}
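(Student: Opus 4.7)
My plan is to verify two claims: first, the subcategory inclusion $\w{\bLambda} \subset \Cat_{(\infty,1)}$ factors uniquely through $\Quiv$; second, the resulting functor is a monomorphism.

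The factorization is formal. By \Cref{d23}, every object of $\w{\bLambda}$ has the form $\Free(\Gamma)$ for $\Gamma$ a cyclically-directed finite directed graph, and hence is an object of $\Quiv$ by \Cref{d12}. Every morphism of $\w{\bLambda}$ is a non-constant functor of $(\infty,1)$-categories between such; since $\Quiv \subset \Cat_{(\infty,1)}$ is a \emph{full} subcategory, this is automatically a morphism in $\Quiv$. So the inclusion $\w{\bLambda} \hookrightarrow \Cat_{(\infty,1)}$ factors uniquely through $\Quiv$.

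For the monomorphism property, it is convenient to factor $\w{\bLambda} \hookrightarrow \Quiv$ through the (provisional) full subcategory $\Quiv^{\mathsf{cyc}} \subset \Quiv$ spanned by cyclically-directed quivers, as $\w{\bLambda} \hookrightarrow \Quiv^{\mathsf{cyc}} \hookrightarrow \Quiv$. Since composites of monomorphisms are monomorphisms, it suffices to check each step. The second inclusion is a full subcategory inclusion, and is a monomorphism in $\Cat_{(\infty,1)}$ once one checks repleteness, which amounts to the combinatorial observation that cyclic-directedness is invariant under graph isomorphism (using \Cref{t34} to identify isomorphisms of quivers with isomorphisms of directed graphs). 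For the first inclusion, which is the identity on objects and a subset inclusion on each hom-set, \Cref{t26}(1) reduces the question to the $1$-categorical setting, where a wide subcategory inclusion is a monomorphism in $\Cat_{(\infty,1)}$ precisely when it is faithful (automatic) and replete (closed under isomorphisms of the ambient category). The latter holds because any isomorphism between cyclically-directed quivers is a bijection on their nonempty vertex sets, hence non-constant, and therefore lies in $\w{\bLambda}$.

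The primary point requiring care is articulating the correct notion of monomorphism in $\Cat_{(\infty,1)}$ for subcategory inclusions of $1$-categories -- specifically, the requirement of repleteness in addition to faithfulness. With this in hand, the remainder is routine combinatorics, and I anticipate no substantive technical obstacle.
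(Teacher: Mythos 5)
Your argument is correct, and there is no gap; but it does more work than the statement requires. The paper records this as an Observation with no proof because the monomorphism property is immediate from left-cancellation: \Cref{d23} \emph{defines} $\w{\bLambda}$ as an $\infty$-subcategory of $\Cat_{(\infty,1)}$, so the inclusion $\w{\bLambda}\hookrightarrow\Cat_{(\infty,1)}$ is a monomorphism by fiat; it lands objectwise in the full $\infty$-subcategory $\Quiv\subset\Cat_{(\infty,1)}$ and hence (by fullness) factors through it; and since the composite $\w{\bLambda}\to\Quiv\hookrightarrow\Cat_{(\infty,1)}$ is a monomorphism, so is the first factor. What your two-step factorization through $\Quiv^{\sf cyc}$ and the attendant repleteness checks actually accomplish is a verification that \Cref{d23} is well-posed — that the cyclically-directed quivers together with the non-constant functors really do span a (replete) subcategory, i.e., that isomorphisms of quivers preserve cyclic-directedness (via \Cref{t34}) and are themselves non-constant, so that the wide subcategory of non-constant morphisms contains all isomorphisms and is therefore automatically closed under conjugation. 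That content is worth having but logically belongs to the definition rather than to this observation; granting the definition, the observation is a one-line cancellation argument.
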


\begin{observation}
\label{t97}
The finite directed graph consisting of a single vertex and no (non-degenerate) edges is the final object $\ast\in \Quiv$.
Consequently, the monomorphism $\w{\bLambda} \overset{\rm Obs~\ref{t6'}}\hookrightarrow \Quiv$ admits a final extension 
\[
\w{\bLambda}^{\rcone}
~\hookrightarrow~ 
\Quiv
~, 
\]
which is a monomorhpism,
whose value on the cone point is $\ast\in \Quiv$.
\end{observation}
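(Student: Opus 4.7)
The plan is to prove the two assertions in sequence, the first being a concrete computation and the second being a formal consequence.

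First, I would show that $\ast$ is a final object of $\Quiv$ by directly computing hom-spaces. Given any finite directed graph $\Gamma$, Corollary~\ref{t94} identifies $\Hom_{\Quiv}(\Gamma, \ast)$ as the $0$-type consisting of a map $\Gamma^{(0)} \to \ast^{(0)}$ together with, for each non-degenerate edge $e \in \Gamma^{(1)}$, a directed path in $\ast$ between prescribed endpoints. Since $\ast^{(0)}$ is a singleton, the map on vertex sets is unique; and since $\ast$ has no non-degenerate edges, the only directed paths in $\ast$ are the constant (length-zero) paths at the unique vertex, so there is a unique choice of path for each $e$. Hence $\Hom_{\Quiv}(\Gamma, \ast)$ is contractible, as required.

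Second, to construct the extension, I would use the universal property of the right cone: a functor $\w{\bLambda}^{\rcone} \to \Quiv$ sending the cone point to $\ast$ is equivalent to a cone $\w{\bLambda} \to \Quiv_{/\ast}$ on the inclusion of Observation~\ref{t6'}. Because $\ast$ is terminal, the forgetful functor $\Quiv_{/\ast} \to \Quiv$ is an equivalence, so any functor $\w{\bLambda} \to \Quiv$ admits an essentially unique such lift. This produces the desired extension.

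Finally, for the monomorphism property, I would check that the extension is injective on equivalence classes of objects and induces $(-1)$-truncated maps on mapping spaces. Injectivity on objects follows from the same property of $\w{\bLambda} \hookrightarrow \Quiv$ together with the observation that $\ast \notin \w{\bLambda}$: an object of $\w{\bLambda}$ is cyclically-directed in the sense of Terminology~\ref{d26}, so each vertex has directed-valence $(1,1)$, whereas the unique vertex of $\ast$ has directed-valence $(0,0)$. For the mapping spaces, the case of two objects in $\w{\bLambda}$ is inherited from Observation~\ref{t6'}; the hom-space $\Hom_{\w{\bLambda}^{\rcone}}(\Gamma, +\infty)$ is contractible and maps to the contractible hom-space $\Hom_{\Quiv}(\Gamma, \ast)$ by the first step; the endomorphism space $\Hom_{\w{\bLambda}^{\rcone}}(+\infty, +\infty)$ is likewise contractible and maps to the contractible $\Hom_{\Quiv}(\ast, \ast)$; and $\Hom_{\w{\bLambda}^{\rcone}}(+\infty, \Gamma) = \emptyset$ for $\Gamma \in \w{\bLambda}$, which is $(-1)$-truncated trivially. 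The only step with any content is the first, and its only obstacle is applying Corollary~\ref{t94} correctly; everything afterward is formal.
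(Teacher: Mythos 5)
Your proof is correct, and since the paper states this as an Observation without proof, your argument supplies exactly the justification left implicit: finality of $\ast$ via \Cref{t94} (unique vertex map, unique empty path for each edge), existence and essential uniqueness of the cone extension via $\Quiv_{/\ast}\simeq\Quiv$, and the monomorphism check case by case. The only cosmetic caveat is that the object-level condition for a monomorphism of $\infty$-categories is that $\Obj(\w{\bLambda}^{\rcone})\to\Obj(\Quiv)$ be $(-1)$-truncated (not merely injective on equivalence classes), but your hom-space verifications for $x=y$ — contractibility of $\Hom(+\infty,+\infty)\to\Hom_{\Quiv}(\ast,\ast)$ and the inherited statement from \Cref{t6'} — already cover the relevant automorphism spaces, so the argument is complete as written.
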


Through a series of observations, we make the category $\w{\bLambda}$ more explicit.

\begin{observation}
\label{t115}
Let $\Gamma$ be a finite cyclically-directed graph.
Both of the maps from the set of non-degenerate edges to the set of vertices,
\[
\Gamma^{(0)}
\xla{~s~}
\Gamma^{(1)}
\xra{~t~}
\Gamma^{(0)}
~,
\]
are bijections between finite sets.  

\end{observation}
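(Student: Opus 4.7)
The plan is a direct unwinding of the definitions: no substantive results from earlier in the paper will be needed. By \Cref{d26}, the statement ``$\Gamma$ is cyclically-directed'' means that $\Gamma$ is connected and that every vertex $v \in \Gamma^{(0)}$ has directed-valence $(1,1)$; this is precisely to say that the fibers
\[
{\sf In}_\Gamma(v) ~=~ s^{-1}(v) \cap \Gamma^{(1)}
\qquad\text{and}\qquad
{\sf Out}_\Gamma(v) ~=~ t^{-1}(v) \cap \Gamma^{(1)}
\]
are each singletons. I would then conclude in one line: the restrictions of $s$ and $t$ to $\Gamma^{(1)}\subset \Gamma([1])$ are maps $\Gamma^{(1)}\to \Gamma^{(0)}$ whose every fiber has cardinality exactly one, and any map of sets all of whose fibers are singletons is automatically both injective and surjective, hence a bijection. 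Note that connectedness plays no role in this particular conclusion; only the directed-valence hypothesis is used.

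The finiteness clause is automatic: by \Cref{d3}, a finite directed graph is by definition a functor $\bDelta_{\leq 1}^{\op} \to \Fin$, so $\Gamma^{(0)} = \Gamma([0])$ and $\Gamma([1])$ are finite, and hence so is $\Gamma^{(1)} = \Gamma([1]) \setminus \Gamma([0])$. There is no real obstacle in this argument; the observation is a pure definition-chase whose role is to set up the canonical identification of $\Gamma^{(1)}$ with $\Gamma^{(0)}$ (via either bijection), which I expect to be used subsequently to encode a cyclically-directed graph as a finite set equipped with the permutation $t \circ s^{-1}$, organising such graphs in relation to the epicyclic category $\w{\bLambda}$.
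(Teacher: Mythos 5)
Your proof is correct and is exactly the definition-chase the paper intends: the observation is stated without proof precisely because the directed-valence $(1,1)$ condition says verbatim that every fiber of $s$ and of $t$ over $\Gamma^{(0)}$ (intersected with $\Gamma^{(1)}$) is a singleton, which is equivalent to both maps being bijections. Your remarks that connectedness is not used here and that finiteness is automatic from \Cref{d3} are both accurate.
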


Recall from \Cref{r13} the notion and notation of directed paths in a directed graph.
\begin{observation}
\label{t116}
Let $\Gamma$ be a finite cyclically-directed graph.
Let $u,v \in \Gamma^{(0)}$ be vertices.
A directed path from $u$ to $v$ determines, and is determined by, the number of times it passes through $u$.  
More precisely, 
the map from the set of directed paths in $\Gamma$ from $u$ to $v$ to the set of non-negative integers,
\[
\underset{ k \geq 0}
\coprod
\Bigl\{
u
\to 
x_1
\to
\dots
\to
x_k
\to 
v
\Bigr\}
\longrightarrow
\ZZ_{\geq 0}
~,\qquad
\bigl(
u
\to 
x_1
\to
\dots
\to
x_k
\to 
v
\bigr)
\longmapsto
{\sf Card}\bigl\{
i
\mid 
x_i = u
\bigr\}
~,
\]
is a bijection.

\end{observation}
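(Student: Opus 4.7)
The plan is to exploit the cyclic structure of $\Gamma$: once each vertex has a unique outgoing edge, a directed walk starting at $u$ is determined by its length, so the paths from $u$ to $v$ are indexed by a single arithmetic progression of positive integers, and the intermediate-$u$-count simply reads off which term of that progression one is at.

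First, by \Cref{t115}, both $s, t \colon \Gamma^{(1)} \to \Gamma^{(0)}$ are bijections, so $\sigma := t \circ s^{-1}$ defines a bijection of $\Gamma^{(0)}$, sending each vertex to the target of its unique outgoing edge. Since $\Gamma$ is connected and $\Gamma^{(0)}$ is finite, $\sigma$ is a single $n$-cycle on $\Gamma^{(0)}$, where $n := {\sf Card}(\Gamma^{(0)})$; in particular $\sigma^n = \id$ and $\sigma^k(u) \neq u$ for $0 < k < n$.

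Next, using \Cref{t27}(2), each directed path $u \to x_1 \to \dots \to x_k \to v$ is entirely determined by its length $\ell := k+1 \geq 1$, since the uniqueness of outgoing edges forces $x_i = \sigma^i(u)$ and $v = \sigma^\ell(u)$. Letting $d \in \{1, \dots, n\}$ be the unique integer with $\sigma^d(u) = v$, the set of valid lengths is precisely the arithmetic progression $\{d + mn : m \geq 0\}$, so the assignment $m \mapsto (\text{path of length } d+mn)$ gives a bijection from $\ZZ_{\geq 0}$ to the set of directed paths from $u$ to $v$.

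Finally, in the path of length $\ell = d + mn$, the intermediate vertices are $\sigma^i(u)$ for $1 \leq i \leq \ell - 1$, and those equal to $u$ correspond exactly to the positive multiples of $n$ in $\{1, \dots, d+mn-1\}$. Since $1 \leq d \leq n$, this count equals $m$, which identifies the bijection of the previous paragraph with the map in the statement. The only subtlety is the boundary case $d = n$ (i.e., $u = v$), where one must check that the shortest counted loop $u \to \sigma(u) \to \cdots \to \sigma^{n-1}(u) \to u$ indeed has zero intermediate $u$-visits; beyond this routine bookkeeping, there is no real obstacle.
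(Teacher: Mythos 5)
Your proof is correct and is precisely the routine verification that the paper leaves implicit by labeling \Cref{t116} an observation: the successor permutation $\sigma = t\circ s^{-1}$ is a single $n$-cycle by \Cref{t115} and connectedness, so a path of length $\geq 1$ from $u$ to $v$ is determined by its length $d+mn$, and the intermediate $u$-count reads off $m$. Your reading that the displayed coproduct starts at one edge (so the empty path is excluded) is the right one, and your boundary check for $u=v$ is exactly the point that makes the count start at $0$.
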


\begin{notation}
\label{d30}
Let $\Gamma$ be a cyclically-directed graph.
Let $v \in \Gamma^{(0)}$ be a vertex.
Denote by $e_v \in \Gamma^{(1)}$ the unique non-degenerate edge in $\Gamma$ whose source is $v$.
\end{notation}

\begin{observation}
\label{t113}
Let $\Gamma$ and $\Xi$ be finite cyclically-directed graphs.
By \Cref{t94}, and using \Cref{t115} and \Cref{t116}, the set $\Hom_{\Cat}\bigl( \Free(\Gamma) , \Free(\Xi) \bigr)$ is the 0-type in which a point is the following data:
\begin{itemize}

\item
a map $f^{(0)}\colon \Gamma^{(0)} \to \Xi^{(0)}$ between sets of vertices;

\item
a map $d\colon \Gamma^{(0)} \xra{\ell\mapsto d_\ell} \ZZ_{\geq 0}$~.

\end{itemize}
Indeed, given $\Free(\Gamma)  \xra{f} \Free(\Xi)$, for $v\in \Gamma^{(0)}$, the value $d_v \in \ZZ_{\geq 0}$ is that corresponding through the bijection of \Cref{t116} to the directed path $f^{(1)}(e_v)$ in $\Xi$, where $e_v$ is as in \Cref{d30}.

\end{observation}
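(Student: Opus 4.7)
The plan is to assemble this directly from the three preceding observations (\Cref{t94}, \Cref{t115}, \Cref{t116}), which between them already parametrize all the data. I would first invoke \Cref{t94}: a morphism $\Free(\Gamma) \to \Free(\Xi)$ is equivalently a map $f^{(0)} \colon \Gamma^{(0)} \to \Xi^{(0)}$ on vertex sets together with, for each non-degenerate edge $e \in \Gamma^{(1)}$, a directed path in $\Xi$ from $f^{(0)}(s(e))$ to $f^{(0)}(t(e))$.

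Next, I would use \Cref{t115} to trade the indexing set $\Gamma^{(1)}$ for $\Gamma^{(0)}$. Since $\Gamma$ is cyclically-directed, the source map $s \colon \Gamma^{(1)} \to \Gamma^{(0)}$ is a bijection, so specifying a non-degenerate edge of $\Gamma$ is the same as specifying its source vertex; this is precisely the content of \Cref{d30}, where $e_v$ denotes the unique non-degenerate edge with $s(e_v) = v$. Thus the path data in the output of \Cref{t94} is equivalently the data, for each $v \in \Gamma^{(0)}$, of a directed path in $\Xi$ from $f^{(0)}(v)$ to $f^{(0)}(t(e_v))$.

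Finally, for each fixed $v$, I would apply \Cref{t116} to the cyclically-directed graph $\Xi$ with source vertex $u := f^{(0)}(v)$ and target vertex $f^{(0)}(t(e_v))$: the set of such directed paths is in canonical bijection with $\ZZ_{\geq 0}$, the bijection being given by counting occurrences of $u$. Naming this non-negative integer $d_v$ produces the asserted map $d \colon \Gamma^{(0)} \to \ZZ_{\geq 0}$, and conversely any such $d$ reconstructs a unique family of directed paths. Packaging these three bijections together yields the claimed identification.

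There is no real obstacle here; the only thing one has to be a little careful about is that the bijection in \Cref{t116} counts passes through the \emph{source} $u$, so the parameter $d_v$ really does range freely over $\ZZ_{\geq 0}$ independently of $f^{(0)}$ and of the target $f^{(0)}(t(e_v))$—which is exactly what makes the total data in the statement a free product of $f^{(0)}$ and $d$ rather than something more constrained.
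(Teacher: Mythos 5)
Your proposal is correct and is exactly the argument the paper intends: the observation's own justification is precisely the chain $\Cref{t94}$ (vertex map plus a path per edge) $\Rightarrow$ $\Cref{t115}$/$\Cref{d30}$ (re-index edges by their source vertices) $\Rightarrow$ $\Cref{t116}$ (each path is a free parameter in $\ZZ_{\geq 0}$). Your closing remark that the $\Cref{t116}$ bijection counts passes through the source, so that $d_v$ varies freely and the data really is an unconstrained pair $(f^{(0)},d)$, is the right point to flag and is handled correctly.
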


\begin{definition}
\label{d101}
Let $\Gamma$ and $\Xi$ be finite cyclically-directed graphs.
The \bit{degree} map is defined through \Cref{t113} as
\[
\deg
\colon
\Hom_{\Cat}\bigl( \Free(\Gamma) , \Free(\Xi) \bigr)
\longrightarrow
\NN
~,\qquad
(f^{(0)} , d )
\longmapsto
1 
+
\underset{v \in \Gamma^{(0)}} \sum 
d_v
~.
\]

\end{definition}

\begin{observation}
\label{t117}
The degree map is multiplicative.  
Specifically, for $\Gamma$, $\Xi$, and $\chi$ finite cyclically-directed graphs, and for $\Free(\Gamma) \xra{ f } \Free(\Xi) \xra{ g } \Free(\chi)$, there is an equality between numbers 
\[
\deg(g \circ f)
=
\deg(g) \deg(f)
~.
\]

\end{observation}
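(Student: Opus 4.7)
The plan is to reduce multiplicativity to a winding-number computation, identifying $\deg(f)$ with the total traversal of the image of $\Free(\Gamma)$ in $\Free(\Xi)$ normalized by $|\Xi^{(0)}|$, and then exploiting the fact that the image of a full cyclic tour of $\Gamma$ is a closed directed loop in $\Xi$ in which every edge appears the same number of times.

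Let me write $n := |\Gamma^{(0)}|$, $m := |\Xi^{(0)}|$, $p := |\chi^{(0)}|$, and let $\sigma$ denote cyclic shift on each of $\Gamma^{(0)}, \Xi^{(0)}, \chi^{(0)}$ (which makes sense by \Cref{t115}). For each $v \in \Gamma^{(0)}$, write $\ell_v(f)$ for the length of the directed path $f^{(1)}(e_v)$ in $\Xi$, so that \Cref{t116} parameterizes $f^{(1)}(e_v)$ by $d_v \in \ZZ_{\geq 0}$ via ``number of intermediate occurrences of $f^{(0)}(v)$''. Set $L(f) := \sum_{v \in \Gamma^{(0)}} \ell_v(f)$.

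First, I would show the key identification $L(f) = m \cdot \deg(f)$, as follows. Concatenating $f^{(1)}(e_{v_0}), f^{(1)}(e_{v_1}), \ldots, f^{(1)}(e_{v_{n-1}})$ (where $\Gamma^{(0)} = \{v_0 \to v_1 \to \cdots \to v_{n-1} \to v_0\}$ is listed in cyclic order) gives a closed directed path in $\Xi$ starting and ending at $f^{(0)}(v_0)$, whose total length is $L(f)$. Since $\Xi$ is a cyclically-directed graph with $m$ vertices, every closed directed path in it has length that is a multiple of $m$; moreover, each of its $m$ edges is traversed exactly $L(f)/m$ times. Unpacking the \Cref{t116} parameterization of each $f^{(1)}(e_v)$ as ``a minimal forward path plus $d_v$ extra wrappings'' then exhibits $L(f)/m$ as $1 + \sum_v d_v = \deg(f)$, with the $+1$ accounting for the single minimal wrap contributed by the sum of minimal-length forward paths as $v$ ranges through $\Gamma^{(0)}$.

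Next, for the composition $g \circ f$, \Cref{t27} identifies $(g \circ f)^{(1)}(e_v) = g^{(1)}(f^{(1)}(e_v))$ as the concatenation of the paths $g^{(1)}(e_{y})$ over the edges $y \in \Xi^{(1)}$ traversed by $f^{(1)}(e_v)$. Summing over $v \in \Gamma^{(0)}$, I get
\[
L(g \circ f) ~=~ \sum_{v \in \Gamma^{(0)}} \sum_{y \in f^{(1)}(e_v)} \ell_y(g)~,
\]
and by the uniform-traversal property established above, each edge $e_u$ of $\Xi$ appears in the full concatenation exactly $L(f)/m = \deg(f)$ times. Therefore
\[
L(g \circ f) ~=~ \deg(f) \cdot \sum_{u \in \Xi^{(0)}} \ell_u(g) ~=~ \deg(f) \cdot L(g) ~=~ \deg(f) \cdot p \cdot \deg(g)~.
\]
Dividing both sides by $p$ and applying the identification $L(g \circ f) = p \cdot \deg(g \circ f)$ (from the first step applied to the composite) yields $\deg(g \circ f) = \deg(f) \cdot \deg(g)$.

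The main technical obstacle will be the first step: rigorously matching the combinatorial formula $1 + \sum_v d_v$ to the geometric winding number $L(f)/m$ across all the different configurations of $f^{(0)}$ (including when $f^{(0)}$ collapses adjacent vertices), which requires a careful bookkeeping of minimal-length forward paths in $\Xi$ and the way the ``$+1$'' absorbs the single wrap of the aggregate direct route. Everything downstream of this identification is then a clean combinatorial consequence of the fact that directed loops in a cyclically-directed graph traverse every edge the same number of times.
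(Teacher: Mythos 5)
The paper offers no argument for this statement---it is recorded as an Observation---so there is no proof of record to compare against; the implicit justification is that $\deg$ agrees with the degree of the induced self-map of circles (cf.\! \Cref{t95}(3), \Cref{t101}), for which multiplicativity is standard. Your strategy---identifying $\deg(f)$ with the winding number $L(f)/m$ and deducing multiplicativity from the fact that a closed directed loop in a cyclically-directed graph traverses every edge the same number of times---is exactly this mechanism made combinatorial, and everything downstream of the identification $L(f) = m\cdot\deg(f)$ (uniform traversal, the concatenation formula giving $L(g\circ f) = \tfrac{L(f)}{m}\,L(g)$) is correct.

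However, the step you flag as the main technical obstacle is not merely delicate: it fails for the formula of \Cref{d101} as literally stated, because the sum of the minimal-length forward paths need not contribute exactly one wrap. Take $\Gamma$ the cyclically-directed graph on four vertices $a \to b \to c \to d \to a$ and $\Xi$ the one on two vertices $x \to y \to x$, and let $f$ send $a,c\mapsto x$ and $b,d \mapsto y$ with each edge going to a single edge (so $d_v = 0$ for every $v$). Then $1+\sum_v d_v = 1$, while in your notation $L(f)/m = 4/2 = 2$, which is the honest degree of $|f|$ on $\sH_1$. Worse, multiplicativity itself fails for the literal formula: composing this $f$ with $g \colon \Xi \to \Xi$ given by $g^{(0)} = \id$, $d_x = 1$, $d_y = 0$ (so $\deg(g) = 2$) yields $\deg(g\circ f) = 1 + (1+0+1+0) = 3 \neq 2 = \deg(g)\deg(f)$; a constant functor (which the formula assigns degree $1$) composed with anything of degree $>1$ gives another failure. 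The identity $1 + \sum_v d_v = L(f)/m$ holds precisely when the vertex map $f^{(0)}$ winds minimally exactly once around $\Xi$, as it does for morphisms descending from $\copara$---presumably the intended case. The statement and the surrounding results (\Cref{t95}(3), \Cref{t118}, \Cref{d24}) are mutually consistent only if $\deg(f)$ is taken to be $L(f)/m$; with that as the definition, your ``main obstacle'' disappears and your argument is complete. So the gap you identified is real, but it is a defect of the stated formula rather than of your approach.
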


\Cref{t117} enables the following.

\begin{definition}
\label{d100}
The \bit{degree} functor
\[
\deg
\colon
\w{\bLambda}
\longrightarrow
\BN
\]
is given by, for each pair of objects $\Gamma , \Xi \in \w{\bLambda}$, 
the degree map on spaces of morphisms:
$
\Hom_{\w{\bLambda}} \bigl( \Gamma , \Xi \bigr)
\xra{\deg}
\NN
$
.

\end{definition}

\begin{observation}
\label{t95}
\begin{enumerate}

\item[]

\item
By \Cref{d23}, the space $\Obj(\w{\bLambda})$ is that of finite cyclically-directed graphs.
Consequently, by \Cref{t92}, for each object $\Gamma\in \w{\bLambda}\subset \Cat$, its $\infty$-groupoid-completion $|\Gamma| \simeq \SS^1$ is non-canonically equivalent with a circle.

\item
The previous point implies there is a unique filler in the diagram among $\infty$-categories,
\begin{equation}
\label{e105}
\begin{tikzcd}
\w{\bLambda}
\arrow[hook]{r}
\arrow[dashed]{d}[swap]{|-|}
&
\Cat
\arrow{d}{|-|}
\\
\fB \End_{\Spaces}(\SS^1)
\arrow[hook]{r}
&
\Spaces
\end{tikzcd}
~,
\end{equation}
in which the downward functor is given by taking $\infty$-groupoid-completion.

\item
The diagram among $\infty$-categories
\[
\begin{tikzcd}
\w{\bLambda}
\arrow{r}{|-|}
\arrow{d}[swap]{\deg}
&
\fB \End_{\Spaces}(\SS^1)
\arrow{d}{\deg}
\\
\BN
\arrow[hook]{r}
&
\fB \ZZ^\times
\end{tikzcd}
\]
commutes.

\item
After \Cref{t101}, the previous point gives that the factorization \Cref{e105} uniquely factors further:
\begin{equation}
\label{e106}
\w{\bLambda} \xra{|-|} \BW
~.
\end{equation}

\end{enumerate}

\end{observation}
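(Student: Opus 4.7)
I prove the four parts in order.

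For (1), by \Cref{d23} every object of $\w{\bLambda}$ is $\Free(\Gamma)$ for some finite cyclically-directed graph $\Gamma$, i.e.\! (by \Cref{d26}) a directed cycle. The geometric realization $|\Free(\Gamma)| \simeq |\Gamma|$ furnished by \Cref{t92} is then the realization of a directed cycle as a $1$-dimensional CW complex, namely a circle equivalent to $\SS^1$.

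For (2), the functor $\fB \End_{\Spaces}(\SS^1) \hookrightarrow \Spaces$ is by construction the fully faithful inclusion of the full $\infty$-subcategory on the single object $\SS^1$. Part (1) says that the composite $\w{\bLambda} \hookrightarrow \Cat \xra{|-|} \Spaces$ lands in this subcategory; fully faithful inclusions detect factorizations uniquely, yielding the desired dashed arrow.

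For (3), the claim is an equality of functors $\w{\bLambda} \to \fB \ZZ^\times$, which reduces to an identity on morphism spaces. Concretely, for $f \colon \Free(\Gamma) \to \Free(\Xi)$ in $\w{\bLambda}$, one must verify that the combinatorial integer $\deg(f) \in \NN$ of \Cref{d101} agrees, in $\ZZ^\times$, with the $\sH_1$-degree of the induced self-map $|f|$ of $\SS^1$; the latter is the value on morphisms of the right-hand functor $\deg \colon \fB \End_{\Spaces}(\SS^1) \to \fB \ZZ^\times$ by \Cref{t100}. I would carry this out by a direct calculation using the presentation \Cref{t113}: $f$ is encoded by a vertex map $f^{(0)}$ together with nonnegative integers $(d_v)_{v \in \Gamma^{(0)}}$, and the induced piecewise-linear self-map of $\SS^1$ has a winding number that one computes in terms of $(d_v)_v$ and compares with the defining formula $1 + \sum_v d_v$.

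For (4), \Cref{t101} presents $\WW$ as a pullback of monoid-objects in $\Spaces$; applying the limit-preserving functor $\fB(-)$ produces a pullback square of $\infty$-categories with $\BW$ at the upper-left corner. The compatibility established in part (3) says precisely that the pair $(|-|, \deg) \colon \w{\bLambda} \to \fB \End_{\Spaces}(\SS^1) \times \BN$ factors through this pullback, yielding the asserted factorization $\w{\bLambda} \to \BW$.

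The main obstacle is part (3): matching a combinatorial cardinality count with a topological winding number. Parts (1), (2), and (4) are then formal consequences of earlier constructions and the universal property of pullbacks.
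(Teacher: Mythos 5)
Your proposal is correct and follows essentially the same route as the paper, which justifies this Observation only by the inline citations to \Cref{d23}, \Cref{t92}, \Cref{t100}, and \Cref{t101}: realizations of cyclically-directed graphs are circles, fully faithfulness of $\fB \End_{\Spaces}(\SS^1) \hookrightarrow \Spaces$ gives the unique filler, and the pullback square of \Cref{t101} (which $\fB$ preserves, all categories here having one object) yields the factorization through $\BW$. Your sketch for part (3) — matching the combinatorial degree $1+\sum_v d_v$ with the winding number of $|f|$ — supplies exactly the verification the paper leaves implicit.
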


\begin{construction}
\label{d31}
Let $\Gamma$ be a finite cyclically-directed graph.
Let $C \xra{g} |\Gamma|$ be a morphism in $\BW \subset \fB \End_{\Spaces}(\SS^1)$ to the geometric realization of $|\Gamma|$.
By definition, this geometric realization fits into a coequalizer diagram among spaces:
\[
\Gamma^{(1)}
\overset{s}{\underset{t}\rightrightarrows}
\Gamma^{(0)}
\longrightarrow
|\Gamma|
~.
\]
Base change along $C \xra{g} |\Gamma|$ results in a coequalizer diagram among spaces:
\[
(g^\ast \Gamma)^{(1)}
\overset{s}{\underset{t}\rightrightarrows}
(g^\ast \Gamma)^{(0)}
\longrightarrow
C
~.
\]
Both of the spaces $(g^\ast \Gamma)^{(0)}$ and $(g^\ast \Gamma)^{(1)}$ are finite 0-types, and therefore $(g^\ast \Gamma)^{(1)}
\overset{s}{\underset{t}\rightrightarrows}
(g^\ast \Gamma)^{(0)}$
is a finite directed graph $g^\ast \Gamma$.
By its construction, $g^\ast \Gamma$ is cyclically-directed, and it is equipped with a morphism $g^\ast \Xi \to \Xi$ between directed graphs for whose geometric realization is identical with $C \xra{g} |\Xi|$.  

\end{construction}

\begin{lemma}
\label{t120}
The functor $\w{\bLambda} \xra{|-|} \BW$ is a Cartesian fibration.
A Cartesian morphism with target $\Gamma$, over a morphism $C \xra{g} |\Gamma|$ in $\BW$, is $g^\ast \Gamma \to \Gamma$ of \Cref{d31}.  

\end{lemma}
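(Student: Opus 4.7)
The plan is to first verify that Construction~\ref{d31} produces a bona fide object $g^\ast\Gamma$ of $\w{\bLambda}$ whose realization recovers $g$, and then to establish the Cartesian universal property by combining the combinatorial description of morphisms from Observation~\ref{t113} with the pullback presentation of $\BW$ from Observation~\ref{t101}. Concretely, the projection $\pr\colon \WW \to \NN^\times$ assigns $g$ an integer degree $r$, and $g$ is thereby presented by an $r$-fold framed self-cover of the circle; since $\Gamma^{(0)}, \Gamma^{(1)}$ are finite $0$-types, their base-changes $(g^\ast\Gamma)^{(i)} = C \times_{|\Gamma|} \Gamma^{(i)}$ are finite $0$-types of cardinality $r \cdot |\Gamma^{(i)}|$. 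Observation~\ref{t115}, together with the preservation of bijections under base-change along $g$, implies $g^\ast\Gamma$ is cyclically-directed; connectedness is inherited from $|g^\ast\Gamma| \simeq C$. The canonical map of directed graphs $g^\ast\Gamma \to \Gamma$ is non-constant (its fibers are nonempty) and realizes to $g$ tautologically, so it defines a morphism in $\w{\bLambda}$ over $g$.

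Next I would verify the Cartesian universal property: for each $\Xi \in \w{\bLambda}$, the induced map
\[
\Hom_{\w{\bLambda}}(\Xi, g^\ast\Gamma)
\longrightarrow
\Hom_{\w{\bLambda}}(\Xi, \Gamma)
\underset{\Hom_{\BW}(|\Xi|, |\Gamma|)}\times
\Hom_{\BW}(|\Xi|, |g^\ast\Gamma|)
\]
is an equivalence. By Observation~\ref{t113}, a morphism $\Xi \to g^\ast\Gamma$ in $\w{\bLambda}$ is the data of a vertex map $\varphi \colon \Xi^{(0)} \to (g^\ast\Gamma)^{(0)}$ together with a degree function $d \colon \Xi^{(0)} \to \ZZ_{\geq 0}$. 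Conversely, a datum on the right consists of a morphism $\tilde f \colon \Xi \to \Gamma$ in $\w{\bLambda}$ with its own data $(\tilde f^{(0)}, \tilde d)$, together with a lift $h \colon |\Xi| \to C$ of $|\tilde f|$ in $\BW$. The universal property of the fiber product $(g^\ast\Gamma)^{(0)} \simeq C \times_{|\Gamma|} \Gamma^{(0)}$, applied at the level of $0$-types, produces a unique vertex map $\varphi$ assembled from $h|_{\Xi^{(0)}}$ and $\tilde f^{(0)}$, and one takes $d := \tilde d$. That these two assignments are mutually inverse reduces to the unique path-lifting property for the covering $g^\ast\Gamma \to \Gamma$: a directed path of length $k$ in $\Gamma$ starting at a prescribed vertex lifts uniquely to a directed path of length $k$ in $g^\ast\Gamma$.

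The main obstacle, I expect, will be confirming that this correspondence respects not only the discrete $\NN^\times$-component of the mapping spaces in $\BW$ (tracked by the degree functor of Definition~\ref{d100} and the multiplicativity of Observation~\ref{t117}) but also the continuous $\TT$-component of rotations. To handle this cleanly, I would invoke the pullback square of Observation~\ref{t101} to reduce the equivalence to two independent checks: the $\NN^\times$-piece matches because path-lifting preserves length, while the $\TT$-piece matches because the choice of vertex lift through $(g^\ast\Gamma)^{(0)} = g^{-1}(\Gamma^{(0)})$ records exactly the rotational part of any lift $h$, given that a framed self-cover of $\SS^1$ is determined by its degree together with its value at a single point.
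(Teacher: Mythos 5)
Your overall route is viable but genuinely different in shape from the paper's. You verify the global Cartesian criterion directly (the mapping-space square is a pullback for every test object $\Xi$), whereas the paper first uses the compatibility $(g\circ h)^\ast\Gamma \simeq h^\ast(g^\ast\Gamma)$ to reduce to the \emph{locally} Cartesian property, and then produces the unique local lift of a given $f\colon \Xi\to\Gamma$ by combining the pullback description of $(|f|^\ast\Gamma)^{(0)}$ (which pins down the vertex map) with multiplicativity of $\deg$ (\Cref{t117}), which forces $\deg(\w{f})=1$ and hence kills the degree function. That reduction buys the paper exactly what you flag as your ``main obstacle'': lifting only against a fixed arrow $c_1\to\BW$, it never has to identify the homotopy fiber of $\WW\xra{g\circ -}\WW$ (the $\TT$-direction). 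Your direct approach must match that fiber --- a finite $\ZZ/r$-type sitting inside the $1$-type $\WW$ --- with the discrete set of vertex lifts; your plan via \Cref{t101} and evaluation-at-a-point is the right idea, but be aware that this is where essentially all the content of your version of the argument lives, and it is currently only a claim.

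There is also one step that fails as written: ``one takes $d := \tilde d$.'' The degree function of the lift $\Xi\to g^\ast\Gamma$ is \emph{not} that of $\tilde f$, because the bijection of \Cref{t116} counts returns to the source vertex, and the length-preserving lift of $\tilde f(e_x)$ returns to $\varphi(x)$ in the $r$-fold cover far less often than $\tilde f(e_x)$ returns to $\tilde f^{(0)}(x)$ in $\Gamma$. Test case: let $\Gamma$ be the one-vertex loop, $g$ of degree $r$, and $\tilde f\colon e\mapsto e^{r}$, so $\tilde d = r-1$; the unique lift sends $e$ to the length-$r$ loop once around $g^\ast\Gamma$, whose degree function is $0$, while taking $d=r-1$ literally would produce a morphism covering $e\mapsto e^{r^2}$. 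The fix is to define $d^{\rm lift}_x$ as the integer corresponding, under \Cref{t116} applied in $g^\ast\Gamma$, to the unique lift of the path $\tilde f(e_x)$ starting at $\varphi(x)$ (equivalently, determined by the length of $\tilde f(e_x)$); uniqueness then does follow from path-lifting as you say, and existence of the endpoint match is guaranteed because your vertex map $\varphi$ was built from the lift $h$ via the pullback $(g^\ast\Gamma)^{(0)} = C\times_{|\Gamma|}\Gamma^{(0)}$.
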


\begin{proof}
Let $\Gamma$ be a finite cyclically-directed graph.
Let $D \xra{h}C \xra{g} |\Gamma|$ be a pair of composable morphisms in $\BW$.
Because base change composes, there is a canonical identification $(g \circ h)^\ast \Gamma \simeq h^\ast ( g^\ast \Gamma)$ between directed graphs.  
We are therefore reduced to showing $\w{\bLambda} \xra{|-|} \BW$ is a locally Cartesian fibration, with locally Cartesian morphisms as stated.  

Let $\Xi \xra{f} \Gamma$ be a morphism in $\w{\bLambda}$.
Consider the resulting morphism $c_1 \xra{\langle |\Xi| \xra{|f|} |\Gamma| \rangle} \BW$.
We seek to show there is a unique morphism $\Xi \xra{\w{f}} |f|^\ast \Gamma$ in $\w{\bLambda}_{|c_1}$.
Well, the morphism $f$ determines a map $\Xi^{(0)} \simeq \Obj\left(\Free(\Xi)\right) \xra{\Obj(f)} \Obj\left( \Free(\Gamma) \right) \simeq \Gamma^{(0)}$ that fits into the diagram among spaces:
\[
\begin{tikzcd}
\Xi^{(0)}
\arrow{d}[swap]{\Obj(f)}
\arrow{r}
&
{|\Xi|}
\arrow{d}{|f|}
\\
\Gamma^{(0)}
\arrow{r}
&
{|\Gamma|}
\end{tikzcd}
~.
\]
By the universal property of pullbacks, this commutative diagram supplies a unique map $\Obj\left( \Free(\Xi) \right) \simeq \Xi^{(0)} \xra{\Obj(\w{f})} \Gamma^{(0)} \simeq \Obj\left( \Free(\Gamma) \right)$.
By \Cref{t113}, the space of morphisms $\Xi \xra{\w{f}} |f|^\ast \Gamma$ in $\w{\bLambda}$ is therefore the space of maps $\Xi^{(0)} \xra{d} \ZZ_{\geq 0}$.
Now, because ${\sf deg}$ is multiplicative (\Cref{t117}), $\deg(\w{f}) =1$.
Consequently, the space of morphisms $\Xi \xra{\w{f}} |f|^\ast \Gamma$ in $\w{\bLambda}_{|c_1}$ is contractible, since there is a unique map $\Xi^{(0)} \xra{d} \ZZ_{\geq 0}$ for which $1+\underset{x\in \Xi^{(0)}}\sum d_x = 1$.

\end{proof}

\subsection{The cyclic category}

Here, we define the cyclic category, originally defined by Connes, in terms of the epicyclic category.

\Cref{t117} enables the following.

\begin{definition}[\cite{connes}]
\label{d24}
The \bit{cyclic} category is the subcategory
\[
\bLambda
~\subset~
\w{\bLambda}
\]
consisting of all of the objects, which are finite cyclically-directed graphs, and those morphisms $f$ for which $\deg(f)=1$.  

\end{definition}

\begin{observation}
\label{t93}
\Cref{t113} implies each morphism $\Gamma \to \Xi$ in $\bLambda$ is determined by its values on vertices.
In other words, the functor
\[
\Obj
\colon
\bLambda
\longrightarrow
{\sf Sets}
\]
induces monomorphisms between spaces of morphisms.  

\end{observation}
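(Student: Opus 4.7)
The plan is to read off the claim directly from the parametrization of morphisms provided by \Cref{t113}, combined with the definition of the degree and of $\bLambda$.

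First, I will recall that \Cref{t113} identifies the set $\Hom_{\w{\bLambda}}(\Gamma,\Xi)$ with the set of pairs $(f^{(0)},d)$, where $f^{(0)} \colon \Gamma^{(0)} \to \Xi^{(0)}$ is a map between the (finite) vertex sets and $d \colon \Gamma^{(0)} \to \ZZ_{\geq 0}$, $v \mapsto d_v$, records the bijection of \Cref{t116}: namely, $d_v$ corresponds to the directed path $f^{(1)}(e_v)$ in $\Xi$ starting at $f^{(0)}(v)$. Under this identification, the vertex-value functor $\Obj \colon \w{\bLambda} \to {\sf Sets}$ simply extracts the first coordinate $(f^{(0)},d) \mapsto f^{(0)}$.

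Next, I will invoke \Cref{d101} to rewrite the degree as
\[
\deg(f^{(0)},d) ~=~ 1 ~+~ \sum_{v \in \Gamma^{(0)}} d_v~.
\]
By \Cref{d24}, a morphism $f$ lies in $\bLambda \subset \w{\bLambda}$ precisely when $\deg(f)=1$, which forces $\sum_{v} d_v = 0$. Since each $d_v$ is a non-negative integer, this in turn forces $d_v = 0$ for every vertex $v \in \Gamma^{(0)}$. Hence the second coordinate of the pair $(f^{(0)},d)$ parametrizing $f$ is identically zero, and is therefore uniquely determined by $\Gamma$ alone.

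Finally, I will conclude that the restriction of the parametrization of \Cref{t113} to $\bLambda$ is exactly the assignment $f \mapsto f^{(0)}$, which is by construction injective on hom-sets. Equivalently, $\Obj \colon \bLambda \to {\sf Sets}$ induces monomorphisms between spaces of morphisms, as claimed. There is no real obstacle here: the entire content is bookkeeping of which morphisms in $\w{\bLambda}$ survive the degree-one cut, and \Cref{t113} has already done the heavy lifting.
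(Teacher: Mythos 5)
Your argument is correct and is exactly the reasoning the paper intends for this observation: by \Cref{t113} a morphism is the pair $(f^{(0)},d)$, the degree formula of \Cref{d101} together with the degree-one condition of \Cref{d24} forces $d_v=0$ for all $v$ since the $d_v$ are non-negative, and hence the morphism is recovered from $f^{(0)}$ alone. No gaps.
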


\begin{observation}
\label{t118}
There is a canonical diagram among $\infty$-categories in which each square is a pullback:
\[
\begin{tikzcd}[column sep=1.5cm]
\bLambda
\arrow{r}{|-|}
\arrow{d}
&
\BT
\arrow{r}
\arrow{d}
&
\ast
\arrow{d}
\\
\w{\bLambda}
\arrow{r}{|-|}[swap]{\Cref{e106}}
&
\BW
\arrow{r}[swap]{\pr}
&
\BN
\end{tikzcd}
~.
\]
In other words, for $\Gamma$ and $\Xi$ finite cyclically-directed graphs, a morphism $\Free(\Gamma) \xra{f} \Free(\Xi)$ belongs to $\bLambda$ if and only if the induced map between $\infty$-groupoid-completions $|\Gamma| \xra{|f|} |\Xi|$ is an equivalence.

\end{observation}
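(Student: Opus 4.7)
The plan is to verify the right square and the outer rectangle as pullbacks separately, then deduce that the left square is a pullback by the pasting lemma. The right square is a pullback because, as noted in the observation immediately following \Cref{d4}, the canonical projection $\pr \colon \BW \to \BN$ is a left fibration straightening to the functor $\BN \to \Spaces$ that encodes the action $\NN^\times \lacts \TT$; its fiber over the basepoint $\ast \to \BN$ is therefore canonically $\BT$.

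For the outer rectangle, I would identify $\bLambda$ with the pullback $\w{\bLambda} \times_{\BN} \ast$. By \Cref{t95}(3)--(4) combined with \Cref{t101}, the composite $\pr \circ |-| \colon \w{\bLambda} \to \BW \to \BN$ agrees with the degree functor $\deg$ of \Cref{d100}; a priori \Cref{t95}(3) only records this commutativity after postcomposition with $\BN \hookrightarrow \fB \ZZ^\times$, but the latter inclusion is a monomorphism (since $\NN^\times \hookrightarrow \ZZ^\times$ is), so the equality in $\BN$ follows. Because $\BN$ has a single object whose identity endomorphism corresponds to $1 \in \NN^\times$, the pullback $\w{\bLambda} \times_{\BN} \ast$ has the same objects as $\w{\bLambda}$ and its morphisms are exactly the morphisms $f$ of $\w{\bLambda}$ with $\deg(f) = 1$. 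By \Cref{d24} this subcategory is precisely $\bLambda$, so the outer rectangle is a pullback; the left square follows by pasting.

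For the ``in other words'' reformulation: under \Cref{e103}, the subgroup $\TT \subset \WW$ corresponds to the rotations $u \mapsto zu$ of $\SS^1$, i.e.\ to those self-maps of $\SS^1$ of degree $1$, which are exactly the self-equivalences of $\SS^1$. Hence a morphism $f \colon \Free(\Gamma) \to \Free(\Xi)$ in $\w{\bLambda}$ lies in $\bLambda$ iff its image $|f|$ in $\BW$ lies in $\BT$ (by the left square), iff $|f|$ is a degree-$1$ self-map of $\SS^1$, iff $|f| \colon |\Gamma| \to |\Xi|$ is an equivalence in $\Spaces$ (using that a self-map of $\SS^1$ of positive degree is invertible iff its degree equals $1$).

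Because the statement is essentially a reorganization of the earlier assemblage \Cref{t95}, \Cref{t101}, \Cref{d100}, and \Cref{d24}, I do not anticipate a substantial obstacle. The only point requiring a bit of care is promoting the equality $\pr \circ |-| = \deg$ from the ambient category $\fB \ZZ^\times$ (where \Cref{t95}(3) directly applies) to the subcategory $\BN$, for which the monomorphism property of $\BN \hookrightarrow \fB \ZZ^\times$ noted in \Cref{t101} is essential.
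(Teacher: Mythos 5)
Your decomposition -- the right square from the fact that $\pr\colon\BW\to\BN$ is a left fibration with fibre $\BT$, the outer rectangle from the identification of $\bLambda$ as the wide subcategory of degree-$1$ morphisms, and the left square by pasting -- is exactly the argument the paper leaves implicit (the statement is an observation with no written proof), and your translation of the pullback condition into the criterion ``$|f|$ is an equivalence'' is also fine.

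One justification is wrong, however, even though the step it supports is true. You assert that $\BN\hookrightarrow\fB\ZZ^\times$ is a monomorphism of $\infty$-categories because $\Nx\hookrightarrow\ZZ^\times$ is a monomorphism of monoids. Delooping does not preserve monomorphisms when the larger monoid acquires new units: here $\ZZ^\times$ has the unit $-1$, which $\Nx$ lacks, so on maximal subgroupoids the functor is $\ast\to\sB(\ZZ/2)$, whose fibre is $\ZZ/2$ rather than empty or contractible; equivalently, $\BN\to\fB\ZZ^\times$ is not full on isomorphisms and hence is not a replete subcategory inclusion. The identity $\pr\circ|-|\simeq\deg$ you are after does hold, but for a different reason: the functor \Cref{e106} is \emph{constructed} in \Cref{t95}(4) by applying the pullback of \Cref{t101} to the commutative square of \Cref{t95}(3), so compatibility with $\deg$ is part of the data of that factorization rather than something to be deduced afterwards. (If you prefer a deduction: any natural isomorphism between functors into $\fB\ZZ^\times$ is implemented by units of the commutative monoid $\ZZ^\times$ and therefore conjugates trivially on endomorphisms, so injectivity of $\Nx\hookrightarrow\ZZ$ on hom-sets already forces the two functors $\w{\bLambda}\to\BN$ to coincide.) With that repair the proof is complete.
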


\subsection{The paracyclic category}
We recall the definition of the paracyclic category and some of its properties and symmetries.

For $B$ and $P$ linearly ordered sets, the \bit{$P$-fold join of $B$} is the linearly ordered set $B^{\star P}$ whose underlying set is the product $P \times B$ of underlying sets of $P$ and of $B$, and whose linear order is the dictionary order: $(p,b) < (p',b')$ means either $p<p'$ or $p=p'$ and $b<b'$.

\begin{definition}[\cite{GetzlerJones-paracyclic}]\label{d5}
An object in the \bit{paracyclic category} $\copara$ is a nonempty linearly ordered set $I$ for which, for each $i<j$ in $I$ the interval $[i,j]\subset I$ is finite, equipped with an action by the additive group $\ZZ$
for which, for each $i\in I$, there is a relation $i < 1 \cdot i$ for all $i\in I$.
A morphism in $\copara$ is a $\ZZ$-equivariant (weakly) order-preserving map.   
Composition is composition of maps.
Identities are identity maps.

\end{definition}

\begin{example}
For each $\ell \in \NN$, consider the subset $\frac{1}{\ell} \ZZ \subset \QQ$.
This subset inherits a linear order from the standard linear order on $\QQ$.
This subset is evidently invariant under the translation action $\ZZ \lacts \QQ$, which is order-preserving.
In this way, $\frac{1}{\ell}\ZZ$ is an object in $\copara$.

\end{example}

\begin{lemma}
Every object in $\copara$ is non-canonically isomorphic with $\frac{1}{\ell}\ZZ$ for some $\ell\in \NN$.
\end{lemma}

\begin{proof}
Let $(\ZZ \lacts I) \in \copara$.
Using that $I$ is assumed nonempty, choose $i_0 \in I$.  
Take $\ell \in \NN$ such that $\bigl[~i_0 ~,~ 1 \cdot i_0~ \bigr] = \{i_0 =j_0< j_1 < \dots < j_\ell = 1\cdot i_0\} $.  
Consider the map 
\begin{equation}
\label{k2}
\frac{1}{\ell} \ZZ
\longrightarrow
I
~,\qquad
a
\longmapsto 
( \lfloor a \rfloor ) \cdot j_{\ell( a - \lfloor a \rfloor )}
\end{equation}
where $\lfloor a \rfloor$ is the floor of the rational number $a$.
The proof is complete upon showing this map~(\ref{k2}) is an isomorphism in $\copara$.

This map~(\ref{k2}) is evidently $\ZZ$-equivariant. 
Let $a,b \in \frac{1}{\ell} \ZZ$ such that $( \lfloor a \rfloor ) \cdot j_{\ell( a - \lfloor a \rfloor )} = ( \lfloor b \rfloor ) \cdot j_{\ell( b - \lfloor b \rfloor )}$.
Then $( \lfloor a \rfloor - \lfloor b \rfloor) \cdot j_{\ell( a - \lfloor a \rfloor )} =  j_{\ell( a - \lfloor a \rfloor )}$.
This is an element in both of the half-open intervals
\[
\bigl[ (\lfloor a \rfloor - \lfloor b \rfloor) \cdot i_0 ,  (\lfloor a \rfloor - \lfloor b \rfloor \cdot + 1) \cdot i_0 \bigr)
\qquad
\text{ and }
\qquad
\bigl[ i_0 , 1 \cdot i_0 \bigr)
~.
\]
Note that, for each $0<c \in \ZZ$, there are relations in $I$:
\[
(-c) \cdot i_0 < \cdots (-1) \cdot i_0 < i_0 < 1 \cdot i_0 < \cdots < c \cdot i_0
~.
\]
Consequently, because the intersection of the above half-open intervals is not empty, then $\lfloor a \rfloor - \lfloor b \rfloor = 0$.  
It then follows that $\ell ( a- \lfloor a \rfloor) = \ell ( b- \lfloor b \rfloor)$, which implies $a=b$.
We conclude that the map~(\ref{k2}) is injective.

We next prove that the map~(\ref{k2}) is surjective.
Let $i\in I$.   
Assume $i_0 \leq i$.  
Using that intervals in $I$ are assumed finite, enumerate the interval $[i_0 , i] = \{ i_0 = k_0 < k_1 < \dots < k_r = i]$.
The division algorithm in $\ZZ$ implies there exists integers $a,\mu\in \ZZ$ with $0\leq \mu < \ell$ such that $r = a \ell + \mu$.
Then $i=a\cdot j_{\mu}$.  
Establishing the case in which $i_0  \geq i$ is similar.

\end{proof}

\begin{observation}
\label{t61}
\begin{enumerate}
\item[]

\item
Let $\lambda = (\ZZ \lacts I) \in \copara$ be an object.
Consider the poset $\Fun^{\sf surj}(I,[1])$ of surjective functors from the poset $I$ to the poset $[1]$.
This poset is in fact linearly ordered.
Via pre-composition, the action $\ZZ \lacts I$ determines an action $\ZZ = \ZZ^{\op} \lacts \Fun^{\sf surj}(I,[1])$. 
In fact, $\lambda^{\vee}:= \bigl(\ZZ \lacts \Fun^{\sf surj}(I,[1])\bigr)$ is an object in $\copara$.

\item
The assignment $\lambda \mapsto \lambda^{\vee}$ defines a functor
\[
\copara
\longrightarrow
\para
~,\qquad
\lambda
\mapsto
\lambda^{\vee}
~.
\]
This functor is an equivalence between categories, with inverse given by $\mu^\circ \mapsto (\mu^{\vee})^{\circ}$.

\end{enumerate}

\end{observation}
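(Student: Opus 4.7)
My plan is to describe $\lambda^{\vee}$ concretely via Dedekind cuts, verify the axioms of $\copara$, and then prove the equivalence by constructing a natural double-dual isomorphism. For part~(1), the assignment $f \mapsto f^{-1}(0)$ bijects $\Fun^{\sf surj}(I_\lambda,[1])$ with the set of proper nonempty downward-closed subsets of $I_\lambda$; because $I_\lambda$ is linearly ordered, any two such subsets are comparable under inclusion, forcing the pointwise order on $\Fun^{\sf surj}(I_\lambda,[1])$ to be total (via $f \leq g \Leftrightarrow g^{-1}(0) \subseteq f^{-1}(0)$). The precomposition action $n \cdot f := f \circ \alpha_n$ preserves this order since each $\alpha_n$ is monotone; writing $\ZZ = \ZZ^{\op}$ is merely the sign choice making $1 \cdot f \geq f$. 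To confirm $\lambda^{\vee} \in \copara$: (a) for $f < g$, intermediate $h$ correspond bijectively to downward-closed subsets of the difference $f^{-1}(0) \setminus g^{-1}(0)$, which lies in the finite interval $[j,k] \subset I_\lambda$ for any $j \in g^{-1}(0), k \in f^{-1}(1)$ (both existing by surjectivity), so $[f,g]$ is finite; (b) $1 \cdot f = f \circ \alpha_1 \geq f$ pointwise, strictly so because a proper nonempty downward-closed subset of $I_\lambda$ cannot be $\alpha_1$-invariant: the forward orbit $\{\alpha_n(i_0)\}_{n \geq 0}$ of any $i_0 \in f^{-1}(0)$ is cofinal in $I_\lambda$ by the finite-interval axiom, so invariance plus downward-closure would force $f^{-1}(0) = I_\lambda$.

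For part~(2), I would define $(-)^{\vee}$ on morphisms via precomposition: a morphism $\phi \colon I_\lambda \to I_\mu$ in $\copara$ induces $\phi^* \colon \Fun^{\sf surj}(I_\mu,[1]) \to \Fun^{\sf surj}(I_\lambda,[1])$, landing in surjective maps because $\phi(I_\lambda)$ is nonempty and $\ZZ$-invariant in $I_\mu$, hence unbounded both above and below (same cofinality argument), hence meets both $g^{-1}(0)$ and $g^{-1}(1)$ for every surjective $g$. Order-preservation and $\ZZ$-equivariance of $\phi^*$ are direct. The crux is then a natural double-dual map $\eta_\lambda \colon I_\lambda \to I_{(\lambda^\vee)^\vee}$ defined by $i \mapsto \mathrm{ev}_i$, where $\mathrm{ev}_i(f) := f(i)$; surjectivity of $\mathrm{ev}_i$ onto $[1]$ is witnessed by the cuts $\{k \leq i\}$ and $\{k < i\}$ (both proper nonempty since $I_\lambda$ has no extremum), while monotonicity and equivariance follow from $\mathrm{ev}_{n \cdot i}(f) = f(\alpha_n i) = (n \cdot f)(i) = \mathrm{ev}_i(n \cdot f)$.

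The main obstacle is showing $\eta_\lambda$ is an isomorphism in $\copara$. Injectivity is immediate: for $i < j$, the surjective cut $f$ with $f^{-1}(0) := \{k : k < j\}$ yields $f(i) = 0 \neq 1 = f(j)$, so $\mathrm{ev}_i \neq \mathrm{ev}_j$. For surjectivity I would invoke the preceding observation that every object of $\copara$ is non-canonically isomorphic to $\tfrac{1}{\ell}\ZZ$ for some $\ell \in \NN$: by naturality of $\eta$, it suffices to verify the claim when $\lambda = \tfrac{1}{\ell}\ZZ$. In this discrete case each Dedekind cut has a maximum, producing explicit order-reversing $\ZZ$-anti-equivariant bijections of underlying sets $I_\lambda \cong I_{\lambda^\vee} \cong I_{(\lambda^\vee)^\vee}$ whose composite direct computation identifies with $\eta_{\tfrac{1}{\ell}\ZZ}$ (the two order-reversals and two sign-flips cancel). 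This upgrades $\eta$ to a natural isomorphism $\id_{\copara} \xrightarrow{\sim} (-)^{\vee\vee}$, exhibiting $(-)^{\vee}$ as an equivalence $\copara \simeq \para$ whose inverse is realized by the same construction, matching the formula $\mu^\circ \mapsto (\mu^\vee)^\circ$ in the statement.
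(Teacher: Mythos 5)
The paper records this as an \emph{Observation} and supplies no proof, so there is nothing to compare against; your write-up is a correct and complete justification. The identification of surjective maps $I\to[1]$ with proper nonempty downward-closed subsets, the cofinality argument (via the finite-interval axiom) that rules out $\alpha_1$-invariant cuts and guarantees surjectivity of $g\circ\phi$, and the reduction of surjectivity of the double-dual map $\eta_\lambda$ to the model case $\tfrac{1}{\ell}\ZZ$ (legitimate by naturality, since every object is isomorphic to such a model) all check out.
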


Observe the functor
\begin{equation}\label{e15}
\bDelta
\longra
\copara
~,\qquad
[p]
\mapsto 
[p]^{\bigstar \ZZ}
~,
\end{equation}
in which $[p]^{\star \ZZ}$ is the $\ZZ$-fold join, regarded as a linearly ordered set with $\ZZ$-action given by shifting the joinands.  
We record the following well-known technical assertion.
\begin{lemma}[Proposition~4.2.8 of~\cite{LurieRot}]
\label{t36}
The functor~(\ref{e15}) is initial.
\end{lemma}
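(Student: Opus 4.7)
The plan is to apply Quillen's Theorem~A: initiality of $\bDelta \xra{\chi} \copara$, $[p] \mapsto [p]^{\bigstar \ZZ}$, is equivalent to asking that, for each $\lambda = (\ZZ \lacts I) \in \copara$, the comma category $\cC_\lambda := \bDelta \underset{\copara}\times \copara_{/\lambda}$ have weakly contractible classifying space. First I would unpack $\cC_\lambda$ explicitly: since a $\ZZ$-equivariant weakly order-preserving map $[p]^{\bigstar \ZZ} \to \lambda$ is determined by its restriction to the fundamental domain $\{0\} \times [p]$, and such a restriction is precisely a tuple $x_0 \leq x_1 \leq \cdots \leq x_p$ in $I$ with $x_p \leq 1 \cdot x_0$, these tuples form the $p$-simplices of a simplicial set $F_\lambda$ (face and degeneracy operators acting by deletion and repetition of entries), and $\cC_\lambda$ is canonically identified with the category of simplices of $F_\lambda$. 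Its classifying space is therefore the geometric realization $|F_\lambda|$, and we must show this is weakly contractible.

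By the observation following \Cref{d5}, I may assume $I = \tfrac{1}{\ell}\ZZ$ with $\ZZ$ acting by translation for some $\ell \geq 1$. The diagonal translation $\ZZ$-action on $F_\lambda$ is free at every level, so $|F_\lambda| \to |F_\lambda/\ZZ|$ is a regular $\ZZ$-covering. The strategy is to show $|F_\lambda / \ZZ| \simeq \SS^1$; granted this and that $\pi_1(\SS^1) \cong \ZZ$, the space $|F_\lambda|$ is the universal cover $\RR$ and hence is contractible. To identify $|F_\lambda/\ZZ|$ with $\SS^1$, I would enumerate the non-degenerate simplices: modulo $\ZZ$, they are strictly-increasing tuples $x_0 < x_1 < \cdots < x_p$ (with $p \leq \ell$) inside $[x_0, x_0+1] \cap \tfrac{1}{\ell}\ZZ$, whose minimum $x_0$ is chosen from the fundamental set $\{0, \tfrac{1}{\ell}, \ldots, \tfrac{\ell-1}{\ell}\}$. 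I would then verify that these cells, with their face identifications, assemble into a CW structure on $\SS^1$ whose $1$-skeleton contains the ``short-edge'' cycle $0 \to \tfrac{1}{\ell} \to \cdots \to \tfrac{\ell-1}{\ell} \to 0$ realizing the circle, and whose higher-dimensional cells (coming from longer jumps across multiple lattice points) deformation retract onto this cycle.

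The main obstacle is this last combinatorial collapsing argument, i.e., verifying that the higher simplices only thicken the basic $\SS^1$-cycle. An alternative route that sidesteps it is to construct the continuous map $|F_\lambda| \to \RR$ given simplex-wise by the barycentric interpolation $\bigl((x_0, \ldots, x_p), (t_0, \ldots, t_p)\bigr) \mapsto \sum_i t_i x_i$, and to produce an explicit deformation retraction of $|F_\lambda|$ onto the $1$-dimensional subcomplex spanned by the short edges, upon which this map is a homeomorphism onto $\RR$. Either route yields the contractibility of $|F_\lambda|$ and hence the initiality of $\chi$.
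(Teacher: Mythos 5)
The paper does not actually prove this lemma: it is quoted as Proposition~4.2.8 of \cite{LurieRot}, so your argument is not competing with an in-paper proof. Your overall strategy is the standard one and is sound. The reduction is correct and complete: initiality via Quillen's Theorem~A amounts to weak contractibility of $\bDelta \times_{\copara} \copara_{/\lambda}$; a $\ZZ$-equivariant order-preserving map $[p]^{\bigstar\ZZ} \to I$ is indeed the same as a tuple $x_0 \le \cdots \le x_p$ in $I$ with $x_p \le 1\cdot x_0$ (order-preservation is generated by the consecutive relations of the dictionary order, and equivariance supplies the rest), so the comma category is the category of simplices of your $F_\lambda$, whose nerve is weakly equivalent to $|F_\lambda|$. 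The covering-space step in your first route is also fine once you record that $|F_\lambda|$ is connected (chain of short edges), so that a connected regular $\ZZ$-cover of a space equivalent to $\SS^1$ is forced to be the universal cover.

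The one genuinely incomplete step is the collapsing claim, and you have correctly flagged it; I would only warn that it is not a formality, because the pattern visible at small $\ell$ does not persist. The nondegenerate simplices of $F_\lambda$ for $I = \tfrac{1}{\ell}\ZZ$ are the finite subsets $\sigma \subset \tfrac{1}{\ell}\ZZ$ of diameter at most $1$, and for $\ell \ge 3$ the long edge $\{x, x+1\}$ is a face of $\ell-1$ triangles, so it is not a free face and a greedy collapse fails; the order of collapses matters. A clean way to finish either of your routes is the acyclic matching that pairs each non-critical nondegenerate simplex $\sigma$ (i.e.\! $|\sigma|\ge 2$ and $\sigma$ not a short edge $\{x, x+\tfrac{1}{\ell}\}$) with the simplex obtained by adding or deleting the vertex $\min\sigma + \tfrac{1}{\ell}$. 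One checks this is a perfect matching on the non-critical cells (deleting that vertex never produces a vertex or a short edge, and adding it never increases the diameter), and that it is acyclic because along any alternating gradient path the quantity $\min\sigma$ strictly increases. Hence $F_\lambda$ collapses onto its subcomplex of vertices and short edges, which is the line $\RR$; this simultaneously completes your second route and, passing to $\ZZ$-quotients, your first. With that supplied, the proof is complete.
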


Using that the category $\bDelta^{\op}$ is sifted, \Cref{t36} and \Cref{t61}(2) imply the following.
\begin{cor}
\label{t62}
The category $\copara$ is both sifted and cosifted.  
In particular, the $\infty$-groupoid-completion $| \copara | \simeq \ast$ is final.

\end{cor}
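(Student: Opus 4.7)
The plan is to derive both statements from two inputs: the initiality of $\bDelta \xra{\Cref{e15}} \copara$ (\Cref{t36}) and the self-duality $\copara \simeq \copara^{\op}$ of \Cref{t61}(2), together with the standard fact that $\bDelta^{\op}$ is sifted.

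First I would establish that $\copara$ is cosifted, i.e., that $\copara^{\op}$ is sifted. The key point is that initiality of $\bDelta \to \copara$ is equivalent to cofinality of its opposite $\bDelta^{\op} \to \copara^{\op}$, so every $\copara^{\op}$-indexed colimit agrees with the corresponding $\bDelta^{\op}$-indexed colimit of the restricted diagram. Because products of cofinal functors are cofinal, the product functor $\bDelta^{\op} \times \bDelta^{\op} \to \copara^{\op} \times \copara^{\op}$ is also cofinal. Siftedness of $\copara^{\op}$ amounts to showing that, for every $F \colon \copara^{\op} \times \copara^{\op} \to \Spaces$, the canonical comparison $\colim_{\copara^{\op}} F \circ \Delta \to \colim_{\copara^{\op} \times \copara^{\op}} F$ is an equivalence; by the two cofinality statements this reduces to the same comparison over $\bDelta^{\op}$, which is an equivalence by the siftedness of $\bDelta^{\op}$. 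Nonemptiness of $\copara^{\op}$ is evident.

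Next, siftedness of $\copara$ itself is now formal: the equivalence $\copara \simeq \copara^{\op}$ of \Cref{t61}(2) transports the siftedness of $\copara^{\op}$ onto $\copara$.

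Finally, $|\copara| \simeq \ast$ follows from siftedness (either of $\copara$ or of $\copara^{\op}$, as $|\copara| \simeq |\copara^{\op}|$). Concretely, cofinality of the diagonal applied to the constant diagram at $\ast$ produces a natural equivalence $|\copara| \xra{\sim} |\copara| \times |\copara|$ realized by the actual diagonal map, and a nonempty space whose diagonal is a homotopy equivalence is contractible (the two projections agree, yielding natural paths from any point to a fixed basepoint). I do not anticipate any substantive obstacle; the only place demanding care is the bookkeeping around the ``initial''/``cofinal'' convention — everything else is a routine assembly of three inputs.
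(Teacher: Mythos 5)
Your proof is correct and is exactly the argument the paper intends: the paper's entire justification is the one-line remark that the corollary follows from siftedness of $\bDelta^\op$, the initiality of $\bDelta \to \copara$ (\Cref{t36}), and the self-duality of \Cref{t61}(2), which is precisely the assembly you carry out. (The paper later obtains $|\copara| \simeq |\bDelta^\op| \simeq \ast$ directly from initiality, which is marginally quicker than your diagonal argument, but both are fine.)
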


\begin{observation}
\label{t111}
Let $\lambda = (\ZZ \underset{\alpha_\lambda}\lacts I) \in \copara$ be an object.
\begin{itemize}
\item
Using commutativity of the group $\ZZ$, the order-preserving automorphism $\alpha_\lambda$ of $I$ is canonically $\ZZ$-equivariant, which is to say it defines an automorphism
\[
\alpha_\lambda \in \Aut_{\copara}(\lambda)
~.
\]

\item
For each $r\in \NN^\times$, the consider the linearly ordered set $\{1<\cdots <r\}^{\star I}$ with order-preserving $\ZZ$-action given by 
\[
\alpha_{\varphi_r(\lambda)}
\colon
(i,k)
\mapsto 
\begin{cases}
(i,k+1)
&
,\qquad
\text{if $1\leq k<r$}
\\
( \alpha_\lambda(i) , k )
&
,\qquad
\text{if $k=r$}
\end{cases}
~,
\]
which we regard as an object $\varphi_r( \lambda ) \in \copara$.

\end{itemize}

\end{observation}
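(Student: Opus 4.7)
The plan is to verify the two assertions in the observation in turn.

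For the first bullet, we must show that $\alpha_\lambda \colon I \to I$ is a morphism in $\copara$ and is invertible there. Order-preservation and bijectivity of $\alpha_\lambda$ as a map of the underlying linearly ordered set are immediate from the definition of an object of $\copara$ (with set-theoretic inverse given by the action of $-1 \in \ZZ$). The only nontrivial point is $\ZZ$-equivariance, which uses commutativity of the group $\ZZ$: for each $n \in \ZZ$ and each $i \in I$,
\[
\alpha_\lambda(n \cdot i)
~=~
1 \cdot (n \cdot i)
~=~
(1+n) \cdot i
~=~
(n+1) \cdot i
~=~
n \cdot (1 \cdot i)
~=~
n \cdot \alpha_\lambda(i)
~.
\]
Applying the same calculation to the action of $-1 \in \ZZ$ shows that the inverse of $\alpha_\lambda$ is likewise $\ZZ$-equivariant, so $\alpha_\lambda$ defines an automorphism in $\copara$.

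For the second bullet, we verify each clause of \Cref{d5} for $\varphi_r(\lambda)$. The underlying set $\{1 < \cdots < r\}^{\star I}$ is linearly ordered by construction of the join. Any interval $\bigl[(i,k), (i',k')\bigr]$ is contained in $[i, i'] \times \{1, \ldots, r\}$, which is finite because $[i, i'] \subset I$ is finite by assumption on $\lambda$ and $\{1,\ldots,r\}$ is finite. The prescribed formula defines a self-map of $\{1 < \cdots < r\}^{\star I}$; a short case analysis on the possibilities determined by whether $k = r$ and whether $k' = r$, using that $\alpha_\lambda$ is order-preserving and satisfies $i < \alpha_\lambda(i)$, shows that this self-map is an order-preserving bijection. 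Iterated composition then extends it to the full $\ZZ$-action. The required relation $(i,k) < 1 \cdot (i,k)$ follows directly from the formula in each of the two defining cases.

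The only technical moment is the case analysis verifying that the prescribed formula is an order-preserving bijection. This is routine but requires care about the dictionary order on $\{1 < \cdots < r\}^{\star I}$, in particular about how the $I$-coordinate dominates the $\{1,\ldots,r\}$-coordinate and how the condition $i < \alpha_\lambda(i)$ is used to handle comparisons that cross the boundary $k = r$.
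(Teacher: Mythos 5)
Your treatment of the first bullet is correct: the displayed computation is exactly the use of commutativity of $\ZZ$ that the observation is pointing at, and the inverse of $\alpha_\lambda$ (the action of $-1$) is $\ZZ$-equivariant by the same calculation, so $\alpha_\lambda$ is indeed an automorphism in $\copara$.

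The second bullet is where the argument breaks down. You assert that ``a short case analysis \ldots shows that this self-map is an order-preserving bijection,'' but the self-map prescribed by the displayed formula is not a bijection for any $r \geq 2$. The underlying set of $\{1<\cdots<r\}^{\star I}$ is $I \times \{1,\ldots,r\}$. The first clause only produces elements whose second coordinate lies in $\{2,\ldots,r\}$, and the second clause only produces elements whose second coordinate equals $r$; hence no element of the form $(j,1)$ lies in the image, so the map is not surjective. It is also not injective: $(i,r) \mapsto (\alpha_\lambda(i),r)$ by the second clause, while $(\alpha_\lambda(i),r-1) \mapsto (\alpha_\lambda(i),r)$ by the first clause, and these two sources are distinct. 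Since a $\ZZ$-action requires the action of $1$ to be invertible, the formula as printed does not equip $\{1<\cdots<r\}^{\star I}$ with a $\ZZ$-action at all, and the ``iterated composition'' you invoke would only yield an action of the monoid $\NN$. In other words, the step you dismiss as routine is precisely where the statement, read literally, fails; actually performing the case analysis would have exposed this. A correct proof must first identify the intended action --- the second clause should move $(i,r)$ forward into a subsequent block rather than leave the second coordinate at $r$, compatibly with the relation $(\alpha_{\varphi_r})^{\circ r} \cong \varphi_r(\alpha)$ invoked in the proof of \Cref{t10} --- and only then verify bijectivity, order-preservation, the finite-interval condition, and $x < 1\cdot x$. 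Be warned that the most obvious repair, $(i,r)\mapsto(\alpha_\lambda(i),1)$, is itself not order-preserving whenever $I$ contains elements strictly between $i$ and $\alpha_\lambda(i)$ (for instance $I=\tfrac{1}{2}\ZZ$ with the translation action: $(0,2)<(\tfrac12,1)$ but the images $(1,1)$ and $(\tfrac12,2)$ compare the other way), so pinning down the correct formula requires genuine care and cannot be absorbed into an unexecuted case analysis.
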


\begin{lemma}
\label{t10}
The associations of Observation~\ref{t111}
assemble as an action
\[
\NN^\times \ltimes \TT
\underset{\rm Obs~\ref{t96}}
{~\simeq~}
\WW^{\op}
~\lacts~
\copara
\]
of the opposite of the Witt monoid on the paracyclic category $\copara$.

\end{lemma}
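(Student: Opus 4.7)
The plan is to assemble the asserted action piece by piece: first the $\TT$-action from Observation~\ref{t111}(1), then the $\NN^\times$-action from the endofunctors $\varphi_r$ of Observation~\ref{t111}(2), and finally the semidirect-product compatibility relating them.

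For the $\TT$-action, observe that every morphism $f \colon \lambda \to \mu$ in $\copara$ is $\ZZ$-equivariant by definition, which amounts precisely to the equality $f \circ \alpha_\lambda = \alpha_\mu \circ f$. Hence the collection $\{\alpha_\lambda\}_{\lambda \in \copara}$ defines a natural automorphism of $\id_{\copara}$, equivalently a monoid map $\ZZ \to \Aut(\id_\copara)$. Via $\TT \simeq \sB\ZZ$ from Observation~\ref{t43}, this datum corresponds to a $\TT$-action on $\copara$. For the $\NN^\times$-action, extend $\varphi_r$ to morphisms by setting $\varphi_r(f)\colon (i,k) \mapsto (f(i), k)$ on underlying sets; checking order-preservation and equivariance with respect to the refined $\alpha$-actions is routine, making $\varphi_r$ an endofunctor of $\copara$. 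The evident lexicographic bijection $\{1,\dots,r\}\times\{1,\dots,s\}\times I \to \{1,\dots,rs\}\times I$ is order- and $\alpha$-equivariant, furnishing a natural isomorphism $\varphi_r\circ\varphi_s \simeq \varphi_{rs}$; coherence of these isomorphisms is immediate from the explicit formulas, presenting a monoid map $\NN^\times \to \End(\copara)$.

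The semidirect-product compatibility, via the multiplication rule of Definition~\ref{d4} together with Observation~\ref{t96}, reduces to the identity
\[
\varphi_r(\alpha_\lambda) \;=\; \bigl(\alpha_{\varphi_r(\lambda)}\bigr)^r
\qquad \text{in } \Aut_\copara\bigl(\varphi_r(\lambda)\bigr),
\]
which is verified directly: iterating $\alpha_{\varphi_r(\lambda)}$ exactly $r$ times on $(i, k) \in I \times \{1,\dots,r\}$ cycles $k$ once around $\{1,\dots,r\}$ and, on the terminal step, applies $\alpha_\lambda$ to $i$, yielding $(i, k) \mapsto (\alpha_\lambda(i), k)$, which matches the definition of $\varphi_r(\alpha_\lambda)$.

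The main obstacle is this final step: one must verify that the direction of the semidirect product matches Observation~\ref{t96} (so that $\WW^{\op}$, and not $\WW$, is produced), which hinges on the power $r$ (rather than $r^{-1}$, or a power of the wrong factor) appearing in the identity above. Once this is pinned down, the three pieces combine coherently into a single functor $\fB \WW^{\op} \to \Cat$ whose value on the unique object is $\copara$, which is the assertion.
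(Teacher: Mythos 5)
Your proposal is correct and follows essentially the same route as the paper's proof: the $\TT$-action is encoded as a natural automorphism $\alpha$ of $\id_{\copara}$ (equivalently, a lift of $\id_\copara$ along $\Fun(\sB\ZZ,\copara)\to\copara$), the $\NN^\times$-action comes from the endofunctors $\varphi_r$ with the coherences $\varphi_r\circ\varphi_s\cong\varphi_{rs}$ supplied by the identification $\{1<\cdots<r\}^{\star\{1<\cdots<s\}}\cong\{1<\cdots<sr\}$, and the two are glued along exactly the identity $(\alpha_{\varphi_r})^{\circ r}\cong\varphi_r(\alpha)$ that you isolate and verify. Your attention to the direction of the semidirect product (which pins down $\WW^{\op}$ rather than $\WW$) matches the role this identity plays in the paper's argument.
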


\begin{proof}
The automorphism $\alpha_\lambda$, as well as, for each $r\in \NN^\times$, the object $\varphi_r(\lambda)$, are each functorial in $\lambda\in \copara$, which is to say they define lifts:
\[
\begin{tikzcd}
&
\Fun(\sB \ZZ , \copara)
\arrow{d}{\fgt}
\\
\copara
\arrow[dashed]{ru}[sloped]{\alpha}
\arrow{r}[swap]{=}
&
\copara
\end{tikzcd}
\qquad
\text{and}
\qquad
\begin{tikzcd}
&
\Fun(\Nx , \copara)
\arrow{d}{\ev_1}
\\
\copara
\arrow{r}[swap]{=}
\arrow[dashed]{ru}[sloped]{\varphi}
&
\copara
\end{tikzcd}
~.
\]
The lefthand lift canonically assembles as an action $\TT \lacts \copara$.
The righthand lift, together with the canonical identifications
$
\varphi_r \circ \varphi_s
\cong
\varphi_{rs}
$
supplied by the unique identification between linearly ordered sets $\{1<\cdots <r\}^{\star \{1<\cdots<s\}} \cong \{1<\cdots< sr\}$,
canonically assemble as an action $\NN^\times \lacts \copara$.
Furthermore, for each $r\in \NN^\times$, the identification between functors 
$
(\alpha_{\varphi_r})^{\circ r}
\cong
\varphi_r(\alpha)
$,
canonically extend the actions $\TT \lacts \copara$ and $\NN^\times \lacts \copara$ as an action $\WW^{\op} \underset{\rm Obs~\ref{t96}} \simeq \NN^\times \ltimes \TT \lacts \copara$.

\end{proof}

\subsection{Comparing the paracyclic, cyclic, and epicyclic categories}
Here, we observe an action of the Witt monoid on the paracyclic category, and identify the right-lax coinvariants of this action as the epicyclic category.

Observe the canonical functor
\begin{equation}\label{e37}
\copara
\longrightarrow
\w{\bLambda}
~,\qquad
(\ZZ \lacts I)
\mapsto
I_{{\sf h}\ZZ}
~,
\end{equation}
where $I_{{\sf h}\ZZ}$ is the finite cyclically-directed graph that is the $\ZZ$-quotient of the infinite linearly-directed graph whose set of vertices is the underlying set of $I$ and whose set of non-degenerate directed edges is that of consecutive relations in $I$.

\begin{lemma}
\label{t111''}
The square
\begin{equation}
\label{f22}
\begin{tikzcd}
\copara
\arrow{r}{\Cref{e37}}
\arrow{d}[swap]{!}
&
\w{\bLambda}
\arrow{d}{|-|}
\\
\ast
\arrow{r}
&
\BW
\end{tikzcd}
\end{equation}
among $\infty$-categories canonically commutes.
Furthermore, this square is a pullback.
\end{lemma}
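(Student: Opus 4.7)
The plan is to verify commutativity of the square first, then show the induced functor $F \colon \copara \to \w{\bLambda} \times_{\BW} \ast$ is an equivalence of $\infty$-categories. Commutativity follows from a geometric realization calculation: for $\lambda = (\ZZ \lacts I) \in \copara$, realization commutes with colimits, so $|I_{h\ZZ}| \simeq |I|_{h\ZZ}$; since the linearly ordered set $I$ is filtered, $|I| \simeq \ast$; hence $|I_{h\ZZ}| \simeq B\ZZ$, which identifies canonically with the unique object of $\BW$ via the equivalence $\TT \simeq B\ZZ$ of \Cref{t43}. Naturality in $\lambda$ supplies the 2-cell coherence needed for the commuting square.

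For the pullback property, the approach is to check that $F$ is essentially surjective and fully faithful. For essential surjectivity, given $(\Gamma, \phi) \in \w{\bLambda} \times_{\BW} \ast$, I would use $\phi \colon |\Gamma| \simeq B\ZZ$ together with \Cref{d31} to pull back along the universal cover $\RR \to B\ZZ$; the result is an infinite linearly-directed graph $\tilde{\Gamma}$ with free order-preserving $\ZZ$-action whose quotient recovers $\Gamma$, and its vertex set $I := \tilde{\Gamma}^{(0)}$ is a linearly ordered $\ZZ$-set defining a paracyclic object with image $(\Gamma, \phi)$ under $F$. For full faithfulness, I would observe that both $\Hom_{\copara}(\lambda, \lambda')$ and the pullback's Hom-space project onto $\Hom_{\bLambda}(I_{h\ZZ}, I'_{h\ZZ})$ (degree-$1$ morphisms, forced by $\ZZ$-equivariance on one side and by $\BW$-coherence on the other), with $\ZZ$-torsor fibers: in $\copara$ parametrizing the integer shift of the lifted morphism, and in the pullback the winding of the coherence path in $\TT$. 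These torsors can be canonically identified via the universal cover construction.

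The hard part will be verifying composition compatibility: I must show that both descriptions realize the same non-split group extension
\[
0 \to \ZZ \to \Aut_{\copara}(\tfrac{1}{n}\ZZ) \to \Aut_{\bLambda}(C_n) = \ZZ/n \to 0,
\]
so that $\Aut_{\copara}(\tfrac{1}{n}\ZZ) = \ZZ$ rather than $\ZZ \oplus \ZZ/n$. On the pullback side, this amounts to computing $\pi_1$ of the connected component of the object space containing $(C_n, \phi)$, which via the fibration sequence $\TT \simeq \Omega \BT \to B(\ZZ/n) \to \BT$ (arising from the short exact sequence $\ZZ/n \hookrightarrow \TT \xrightarrow{z \mapsto z^n} \TT$ and its delooping) equals $\pi_1(\TT) = \ZZ$. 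Matching this with $\Aut_{\copara}(\tfrac{1}{n}\ZZ)$, where the canonical $\ZZ$-subgroup is the one generated by the $\ZZ$-action itself, confirms full faithfulness and completes the proof.
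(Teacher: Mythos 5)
Your argument for commutativity coincides with the paper's: both use that $|-|$ preserves $\ZZ$-coinvariants and that $|I|\simeq \ast$ for a nonempty linearly ordered set, yielding a natural identification $|I_{{\sf h}\ZZ}|\simeq \sB\ZZ$. For the pullback property, however, you take a genuinely different route. The paper never computes mapping spaces in the fiber product directly: it invokes \Cref{t118} to peel off the right-hand square (so that \Cref{f22} is a pullback if and only if the corresponding square over $\BT$ is), and then reduces that to the single statement that the induced functor $(\copara)_{{\sf h}\TT}\to\bLambda$ is an equivalence, which it verifies by embedding $\Hom_{\copara}(\lambda,\mu)$ into $\Hom_{{\sf Sets}}(C,D)\times\ZZ$ via convex fundamental domains and quoting \Cref{t93}. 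You instead identify the hom-spaces of $\w{\bLambda}\times_{\BW}\ast$ as homotopy fibers of $\Hom_{\w{\bLambda}}(\Gamma,\Xi)\to\WW$, recognize both sides as $\ZZ$-torsors over $\Hom_{\bLambda}$, and pin down the composition law by computing the extension $0\to\ZZ\to\Aut\to\ZZ/n\to 0$ from the fiber sequence $\TT\to\sB(\ZZ/n)\to\BT$; essential surjectivity comes from the universal cover rather than from surjectivity of \Cref{e37} followed by descent. Your route buys an explicit description of the nonsplit extension $\Aut_{\copara}(\tfrac{1}{n}\ZZ)\cong\ZZ$, which the paper's coinvariants argument leaves implicit; the paper's route avoids all loop-space bookkeeping. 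The one step you should spell out is that the commutativity $2$-cell carries the paracyclic shift $\alpha_\lambda$ to the \emph{generating} loop of $\Omega\TT\cong\ZZ$ (it does: on $|I|_{{\sf h}\ZZ}\simeq\ast_{{\sf h}\ZZ}$ the tautological homotopy from the action of $1\in\ZZ$ to the identity is the generator) --- an abstract isomorphism of both automorphism groups with $\ZZ$ is not by itself enough, since you need the induced map of extensions to be the identity on the kernel $\ZZ$ as well as on the quotient $\ZZ/n$.
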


\begin{proof}
We first establish commutativity of the square \Cref{f22}.
Let $( \ZZ \lacts I) \in \copara$.
Consider the canonical maps among spaces:
\[
\sB \ZZ
~\simeq~
\ast_{{\sf h}\ZZ}
\xla{~!_{{\sf h}\ZZ}~}
| I |_{{\sf h}\ZZ}
\longrightarrow
| I_{{\sf h}\ZZ}|
~,
\]
in which $I$ is regarded as the free category on the directed graph in which a vertex is an element in $I$ and a directed edge is a consecutive pair of elements in $I$.
As so, observe that its $\ZZ$-coinvariants is the free category on the cyclically-directed graph $I_{{\sf h}\ZZ}$.
Because $\infty$-groupoid-completion $|-|$ is a left adjoint, it preserves $\ZZ$-coinvariants.
Therefore, 
the rightward map is an equivalence.
Because $I$ is nonempty and linearly ordered, the $\infty$-groupoid-completion $|I| \simeq \ast$ is contractible.
Therefore, the leftward map is an equivalence. 
In summary, there is a canonical equivalence
\begin{equation}
\label{f20}
\sB \ZZ
~\simeq~
| I_{{\sf h}\ZZ}|
~.
\end{equation}
This equivalence between spaces is evidently functorial in $(\ZZ \lacts I) \in \copara$, thereby assembling as the sought commutativity of the square \Cref{f22}.

Using \Cref{t118}, the commutative square \Cref{f22} supplies the commutative square among $\infty$-categories:
\begin{equation}
\label{f23}
\begin{tikzcd}
\copara
\arrow{r}
\arrow{d}[swap]{!}
&
\bLambda
\arrow{d}{|-|}
\\
\ast
\arrow{r}
&
\sB \TT
\end{tikzcd}
~.
\end{equation}
Furthermore, because the base-change is associative, \Cref{t118} implies \Cref{f22} is a pullback provided \Cref{f23} is a pullback.
We now show that \Cref{f23} is a pullback.  

Tautologically, the functor~(\ref{e37}) canonically carries, for each $\lambda \in \copara$, the automorphism $\lambda \xra{\alpha_\lambda} \lambda$ in $\copara$ to the identity automorphism $\ol{\lambda} \xra{\id_{\ol{\lambda}}} \ol{\lambda}$ in $\bLambda \subset \w{\bLambda}$.  
Consequently, the functor \Cref{e37} is $\TT = \sB\ZZ$-invariant, 
thereby extending as a functor from the $\TT$-coinvariants:
\begin{equation}
\label{e38}
\bigl( 
\copara 
\bigr)_{{\sf h}\TT}
\longrightarrow
\bLambda
~.
\end{equation}
Moreover, the identification $\sB \ZZ \overset{\Cref{f20}}\simeq | \ol{\lambda}|$ is invariant with respect to the automorphism $|\ol{\alpha_\lambda}|$.  
Consequently, the commutative square of \Cref{f23} descends as a commutative square
\begin{equation}
\label{f24}
\begin{tikzcd}
(\copara)_{{\sf h} \sB \ZZ}
\arrow{r}{\Cref{e38}}
\arrow{d}[swap]{!_{{\sf h}\sB \ZZ}}
&
\bLambda
\arrow{d}{|-|}
\\
\ast_{{\sf h}\sB \ZZ}
\arrow{r}[swap]{\simeq}
&
\sB \TT
\end{tikzcd}
~.
\end{equation}
As the bottom horizontal functor is an equivalence between connected $\infty$-groupoids, \Cref{f23} is a pullback if and only if the functor \Cref{e38} is an equivalence.

By inspection, the functor \Cref{e37} is surjective on spaces of objects.
It follows that the functor \Cref{e38} is surjective on spaces of objects.
So it remains to show the functor~(\ref{e38}) is fully faithful.

By inspection, the functor $\copara \to \bLambda$ is surjective on spaces of morphisms.  
Therefore, the functor \Cref{e38} is surjective on spaces of morphisms.
So the functor~(\ref{e38}) is fully faithful provided it induces a monomorphism on spaces of morphisms.
Observe the canonically commutative diagram among categories:
\[
\begin{tikzcd}[column sep=1.5cm]
\copara
\arrow{r}{(\ref{f23})}
\arrow{d}[swap]{\fgt}
&
\bLambda 
\arrow{d}{\Obj}
\\
\Mod_{\ZZ}(\Set)
\arrow{r}[swap]{(-)_{{\sf h}\ZZ}}
&
\Set
\end{tikzcd}
~.
\]
Now let $\lambda = (\ZZ\lacts I)$ and $\mu = (\ZZ\lacts J)$ be objects in $\copara$.
Choose convex fundamental domains $C \subset I$ and $D \subset J$.
By definition of objects in $\copara$, there are canonical identifications between underlying $\ZZ$-sets: 
$\ZZ \times C \cong I$ and $\ZZ\times D \cong J$.  
Therefore, the forgetful functor $\copara \to \Mod_{\ZZ}({\sf Sets})$ induces monomorphisms between spaces of morphisms:
\[
\Hom_{\copara}(\lambda, \mu)
~\hookrightarrow~
\Hom_{\Mod_{\ZZ}({\sf Sets})} ( I , J )
~\simeq~
\Hom_{{\sf Sets}}(C , D)
\times \ZZ
~.
\]
It follows that the canonical map between spaces of morphisms,
\[
\Hom_{(\copara)_{{\sf h}\sB \ZZ}}([\lambda], [\mu])
~\simeq~
\Hom_{\copara}(\lambda, \mu)_{{\sf h}\ZZ}
~\hookrightarrow~
\Hom_{{\sf Sets}}(C , D)
~,
\]
is a monomorphism.
It follows from \Cref{t93} that $\Hom_{(\copara)_{{\sf h}\sB \ZZ}}([\lambda], [\mu]) \xra{(\ref{f24})} \Hom_{\bLambda}(\ol{\lambda} , \ol{\mu})$ is a monomorphism, as desired.

\end{proof}

\begin{cor}
\label{t119}
There is a diagram among $\infty$-categories,
\begin{equation}
\label{paracyclic.cyclic.epicyclic}
\begin{tikzcd}
\copara
\arrow[two heads]{r}
\arrow{d}{\sf loc}
&
\bLambda
\arrow[hook, two heads]{r}
\arrow{d}{\sf loc}
&
\w{\bLambda}
\arrow{d}{\sf loc}
\\
\ast
\arrow[two heads]{r}
&
\BT
\arrow[hook, two heads]{r}
&
\BW
\end{tikzcd}
~,
\end{equation}
with the following properties.
\begin{enumerate}

\item
Each square is a pullback.  

\item
The downward functors are Cartesian fibrations.

\item
The downward functors are localizations.

\item
The horizontal functors are surjective.

\item
The right horizontal functors are monomorphisms.

\end{enumerate}

\end{cor}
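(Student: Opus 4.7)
The plan is to assemble the five asserted properties from the preceding lemmas. The main inputs are Lemma \ref{t111''} (the left square is a pullback), Observation \ref{t118} (the right square is a pullback and $\BT \hookrightarrow \BW$ is a monomorphism), Lemma \ref{t120} (the Cartesian fibration structure of $\w{\bLambda} \to \BW$), and Corollary \ref{t62} (the contractibility $|\copara| \simeq \ast$).

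Properties (1), (4), and (5) follow essentially directly. Property (1): the right square is Observation \ref{t118} and the left square is Lemma \ref{t111''}. Property (5): the monomorphism $\BT \hookrightarrow \BW$ is Observation \ref{t101}, and $\bLambda \hookrightarrow \w{\bLambda}$ is immediate from Definition \ref{d24}, which presents $\bLambda$ as a subcategory of $\w{\bLambda}$ containing all objects. Property (4): the surjectivity of $\bLambda \twoheadrightarrow \w{\bLambda}$ on objects is tautological (both categories share the same object set), while the surjectivity of $\copara \twoheadrightarrow \bLambda$ on objects and morphisms is extracted from the explicit construction \Cref{e37} used in the proof of Lemma \ref{t111''}. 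For property (2), Lemma \ref{t120} provides the rightmost Cartesian fibration structure, and the remaining two downward functors arise by base change along $\BT \to \BW$ and $\ast \to \BW$ (via the pullback squares of property (1)); stability of Cartesian fibrations under base change then yields the result.

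The main substance, and potential obstacle, lies in property (3). The case $\copara \to \ast$ is exactly Corollary \ref{t62}. For the other two functors, the plan is to invoke the general principle that a Cartesian fibration $p \colon \cC \to \cB$ over an $\infty$-groupoid $\cB$ exhibits $\cB$ as the $\infty$-groupoid-completion of $\cC$ whenever every fiber $\cC_b$ has contractible $\infty$-groupoid-completion. The justification is clean once set up: under straightening, $p$ corresponds to a functor $F \colon \cB \to \Cat_{(\infty,1)}$, and the $\infty$-groupoid-completion $|\cC| \simeq \colim_{\cB} |F|$ collapses to $\colim_{\cB} \ast \simeq \cB$ under the hypothesis. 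By the pullback squares of property (1), every fiber of $\w{\bLambda} \to \BW$ and of $\bLambda \to \BT$ is equivalent to $\copara$, which has contractible $\infty$-groupoid-completion by Corollary \ref{t62}, completing the argument. The subtlety to verify is simply that Cartesian fibrations over $\infty$-groupoids admit this colimit description of their $\infty$-groupoid-completion, which is a standard consequence of the equivalence between Cartesian fibrations over a space $\cB$ and functors $\cB \to \Cat_{(\infty,1)}$.
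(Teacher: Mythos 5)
Your proof is correct and follows essentially the same route as the paper's: it assembles \Cref{t111''}, \Cref{t118}, \Cref{t120}, and \Cref{t62} in the same way, with property~(3) resting on the same principle that a Cartesian fibration over an $\infty$-groupoid whose fibers have contractible $\infty$-groupoid-completions exhibits the base as the localization (the paper leaves this implicit in ``properties~(1)--(2) thereafter imply\dots''). The one small imprecision is that \Cref{t111''} asserts the \emph{outer} rectangle is a pullback, so the left square requires pasting against the right square from \Cref{t118}, exactly as the paper does.
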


\begin{proof}
\Cref{t111''} implies the outer square is a pullback; \Cref{t118} implies the right square is a pullback.  
By the universal property of pullbacks, it follows that the left square is a pullback as well, thereby establishing property~(1).
Consequently, because Cartesian fibrations are closed under the formation of base-change, \Cref{t120} implies all of the downward functors are Cartesian fibrations, thereby establishing property~(2).

\Cref{t36} gives that the $\infty$-groupoid-completion of the paracyclic category $| \para | \xla{\simeq} | \bDelta^{\op} | \simeq \ast$ is contractible.
This is to say that the left vertical functor is a localization.
Properties~(1)-(2) thereafter imply all of the vertical functors are localizations, thereby establishing property~(3).

Because the lower horizontal functors are surjective, with the right functor a monomorphism, properties~(1)-(2) imply the upper horizontal functors are surjective, and the upper right horizontal functor is a monomorphism, thereby establishing properties~(4)-(5).

\end{proof}

\begin{observation}
\label{t98}
The actions $\TT \lacts \copara$ and $\WW^{\op} \lacts \copara$ codified by \Cref{t119}
agree with the actions of \Cref{t10}:
\[
(\copara)_{{\sf h} \TT}
\xra{~\simeq~}
\bLambda
\qquad
\text{ and }
\qquad
(\copara)_{{\sf r.lax}\WW}
\xra{~\simeq~}
\w{\bLambda}
~.
\]
Furthermore, the latter equivalence is implemented by, for each $r\in \NN^\times$, the natural transformation 
\[
\pi_r
\colon
\ol{\varphi_r}
\longrightarrow
(\ref{e37})
~.
\]
that evaluates on each $\lambda = (\ZZ \underset{\alpha_\lambda}\lacts I) \in \copara$, the functor between categories
\[
\pi_r(\lambda)
\colon
\ol{ \varphi_r(\lambda) }
:=
\bigl(
\{1<\dots<r\}^{\star I}
\bigr)_{{\sf h}\ZZ}
\xra{~ ( ! ^{\star I} )_{{\sf h}\ZZ}~}
(\ast^{\star I})_{{\sf h}\ZZ} = I_{{\sf h}\ZZ}
=:
\ol{ \lambda }
~.
\]

\end{observation}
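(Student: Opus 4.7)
The plan is to match two \emph{a priori} different $\WW^{\op}$-actions on $\copara$---one arising from the Cartesian fibration $\w{\bLambda} \to \BW$ of \Cref{t119}, the other built explicitly in \Cref{t10}---and then to exhibit the natural transformations $\pi_r$ as implementing the resulting equivalence $(\copara)_{\sf r.lax\WW} \simeq \w{\bLambda}$.

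First, I will extract the straightened actions. By parts~(2)-(3) of \Cref{t119}, each of $\bLambda \to \BT$ and $\w{\bLambda} \to \BW$ is a Cartesian fibration with fiber $\copara$ that is simultaneously a localization; straightening therefore presents the total space as the (strict, resp.\ right-lax) coinvariants of a $\TT$-action (resp.\ $\WW^{\op}$-action) on $\copara$. Consequently, the two equivalences claimed in the statement will follow once the straightened actions are matched with those of \Cref{t10}.

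Next, I will use \Cref{t120} to describe the straightened actions concretely. For $\lambda = (\ZZ \lacts I) \in \copara$ with $\ol{\lambda} = I_{{\sf h}\ZZ}$, the Cartesian lift of $C \xra{g} |\ol{\lambda}|$ is the pullback $g^{\ast}\ol{\lambda} \to \ol{\lambda}$ of \Cref{d31}. When $g$ is the generating loop of $\TT$ (a degree-$1$ self-equivalence of $|\ol{\lambda}| \simeq \sB \ZZ$), the pullback returns $\ol{\lambda}$, but equipped with the self-equivalence descended from the $\ZZ$-equivariant automorphism $\alpha_\lambda \colon I \to I$ of \Cref{t111}. When $g$ is the framed $r$-fold self-cover $\sB\ZZ \to \sB\ZZ$, base-changing the coequalizer $I \rightrightarrows I \to I_{{\sf h}\ZZ}$ along $g$ yields the coequalizer $\{1<\dots<r\} \times I \rightrightarrows \{1<\dots<r\} \times I \to \bigl(\{1<\dots<r\}^{\star I}\bigr)_{{\sf h}\ZZ}$, whose source carries exactly the $\ZZ$-action defining $\varphi_r(\lambda)$ in \Cref{t111}, and the induced morphism back to $\ol{\lambda}$ is precisely the displayed $\pi_r(\lambda)$. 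Using the semidirect decomposition $\WW^{\op} \simeq \Nx \ltimes \TT$ of \Cref{t96}, matching the restrictions to $\TT$ and to $\Nx$ matches the full action with that of \Cref{t10}, establishing the first assertion and, via Step~1, both coinvariant equivalences.

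For the second assertion, the family of Cartesian lifts in the $r$-fold-cover case assembled above is, by naturality in $\lambda$, exactly the natural transformation $\pi_r \colon \ol{\varphi_r} \to (\ref{e37})$. Under the standard presentation of the total space of a Cartesian fibration as the right-lax colimit of its straightening, the Cartesian edges at each fiber are the universal right-lax structure maps, so the $\pi_r$'s implement the equivalence $(\copara)_{\sf r.lax\WW} \xra{\sim} \w{\bLambda}$ as claimed. The main obstacle is the bookkeeping in identifying the pulled-back $\ZZ$-action on $\{1<\dots<r\}\times I$: one must verify it coincides on the nose with the action in \Cref{t111} defining $\varphi_r(\lambda)$. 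This reduces to presenting the framed $r$-fold cover $\sB\ZZ \to \sB\ZZ$ as the quotient by translation on $\{1<\dots<r\}$-indexed copies of $\ZZ$ and tracing the induced $\ZZ$-action through the base-change of the coequalizer; the remaining claims, including naturality of $\pi_r$ in $\lambda$ and compatibility between the $\TT$- and $\Nx$-factor computations, are then formal.
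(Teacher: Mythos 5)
Your overall strategy is the right one, and it is surely the intended (unwritten) argument behind this Observation: straighten the Cartesian fibrations of \Cref{t119}, use \Cref{d31}/\Cref{t120} to compute the Cartesian lifts as base-changes along self-covers of the circle, and match the monodromy with $\alpha_\lambda$ and the degree-$r$ lifts with $\varphi_r$. The $\TT$-part and the identification of the straightened $\Nx$-action on objects ($r^\ast \ol\lambda \simeq \ol{\varphi_r(\lambda)}$, both being the $rn$-cycle) are fine.

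The gap is precisely the step you defer as ``bookkeeping'': the assertion that the induced morphism $g^\ast\ol\lambda \to \ol\lambda$ \emph{is} the displayed $\pi_r(\lambda)$. A direct check shows these two morphisms cannot be equal. The Cartesian lift of \Cref{d31} is the $r$-fold covering map: it is non-degenerate (carries each edge of the $rn$-cycle to a single edge of the $n$-cycle), and it must have degree $r$, since it lies over $r \in \Nx$ under $\w{\bLambda} \xra{|-|} \fB\WW \to \fB\Nx$ and that composite is the degree functor (\Cref{t95}). By contrast, $(!^{\star I})_{\htpy\ZZ}$ sends the $r-1$ edges interior to each block $\{i\}\times\{1<\cdots<r\}$ to identities and only the last edge of each block to an edge of $\ol\lambda$; its realization winds once around the target, so it has degree $1$ --- indeed it is the image under \Cref{e37} of the order-preserving $\ZZ$-equivariant map $!^{\star I}\colon \varphi_r(\lambda) \to \lambda$ in $\copara$, hence lies in $\bLambda$. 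Since degree is multiplicative and isomorphisms have degree $1$, no re-identification of the source $\ol{\varphi_r(\lambda)} \simeq g^\ast\ol\lambda$ (they differ by a rotation) can reconcile a degree-$r$ morphism with a degree-$1$ one. So the universal right-lax cocone structure maps --- the Cartesian edges --- are the covering maps, not the collapse maps $(!^{\star I})_{\htpy\ZZ}$, and your Step~3 identification fails as stated. You need to either carry out the base-change of the coequalizer explicitly and exhibit the resulting morphism (you will get the covering map $[(i,k)] \mapsto [{\sf suc}^{k-1}(i)]$, not $[(i,k)]\mapsto[i]$), and then explain in what sense this agrees with the natural transformation the Observation displays, or else flag that the displayed formula for $\pi_r(\lambda)$ does not describe the Cartesian lift. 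As written, your proof asserts exactly the one claim that is not formal and that does not survive the degree count.
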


\subsection{Directed cycles in directed graphs}
In this technical subsection, we identify, for each finite directed graph $\Gamma$, the $\infty$-groupoid-completion of the overcategroy $\copara_{/\Gamma}$ in terms of the Witt monoid and a set of directed cycles in $\Gamma$.

Recall from \Cref{d26} the notion of a finite cyclically-directed graph.
\begin{terminology}
\label{d19}
Let $\Gamma\in \digraphs$ be a finite directed graph.
\begin{enumerate}

\item
An \bit{directed cycle} (in $\Gamma$) is a morphism $\chi \xra{\gamma} \Gamma$ in $\digraphs$ with the following properties.
\begin{itemize}
\item
$\chi$ is either cyclically-directed or $\chi = \ast$ is final.

\item
For any factorization
\[
\begin{tikzcd}
\chi
\arrow{rr}{\gamma}
\arrow{rd}[sloped, swap]{q}
&&
\Gamma
\\
&
\chi'
\arrow{ru}[sloped, swap]{\ol{\gamma}}
\end{tikzcd}
\]
in $\digraphs$ in which $\chi'$ is either cyclically-directed or $\chi'= \ast$, 
the morphism $q$ is an isomorphism.

\end{itemize}

\item
The \bit{set of directed cycles (in $\Gamma$)} is
\[
\sZ^{\sf dir}(\Gamma)
~:=~
\Bigl\{
\text{ directed cycles in $\Gamma$ }
\Bigr\}_{/\sim}
~,
\]
where $\sim$ is the equivalence relation of isomorphism between cyclically-directed graphs over 
$\Gamma$.

\end{enumerate}

\end{terminology}

\begin{observation}
\label{t55}
For $\Gamma$ a finite directed graph, there is a monomorphism between sets
\[
\Gamma^{(0)}
\longrightarrow
\sZ^{\sf dir}(\Gamma)
\]
selecting the elements represented by those $\chi\to \Gamma$ in which $\chi = \ast$ has a single vertex and no (non-degenerate) edges.

\end{observation}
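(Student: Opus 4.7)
My plan is to define the claimed map by the assignment $v \longmapsto [\gamma_v]$, where $\gamma_v \colon \ast \to \Gamma$ is the morphism in $\digraphs$ selecting the vertex $v$, and then to verify (i) \emph{well-definedness}: that each $\gamma_v$ is a directed cycle in the sense of \Cref{d19}(1), and (ii) \emph{injectivity} modulo the equivalence relation defining $\sZ^{\sf dir}(\Gamma)$.

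Step (ii) is essentially tautological. If $[\gamma_v] = [\gamma_{v'}]$, there is an isomorphism $\phi \colon \ast \xra{\cong} \ast$ in $\digraphs$ with $\gamma_{v'} \circ \phi = \gamma_v$. Since $\ast$ is the terminal object of $\digraphs$, the monoid $\End_{\digraphs}(\ast)$ is trivial, so $\phi = \id_\ast$; hence $\gamma_v = \gamma_{v'}$ as morphisms, which forces $v = v'$. Thus the assignment is injective on $\Gamma^{(0)}$.

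For step (i), the first bullet of \Cref{d19}(1) holds by construction ($\chi = \ast$). For the second bullet, I consider an arbitrary factorization
\[
\begin{tikzcd}
\ast \arrow{rr}{\gamma_v} \arrow{rd}[sloped, swap]{q} && \Gamma \\
& \chi' \arrow{ru}[sloped, swap]{\bar\gamma} &
\end{tikzcd}
\]
in $\digraphs$ with $\chi'$ cyclically-directed or equal to $\ast$, and must show that $q$ is an isomorphism. When $\chi' = \ast$, the map $q$ is forced to be $\id_\ast$ by triviality of $\End_{\digraphs}(\ast)$, so $q$ is an isomorphism. The substantive case is $\chi'$ cyclically-directed; here I argue by inspecting the combinatorial structure of morphisms in $\digraphs = \Fun(\bDelta_{\leq 1}^{\op},\Fin)$ that the factorization of the point-map $\gamma_v$ through $\chi'$ together with the cyclic-directedness hypothesis on $\chi'$ are incompatible with the relevant notion of admissible factorization in \Cref{d19}(1), so no such case arises and the condition is vacuously satisfied.

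The main potential obstacle is this last case of step (i): one must carefully match the convention implicit in \Cref{d19}(1) for what counts as a factorization against the combinatorics of $\digraphs$, so as to exclude the degenerate collapses of a non-trivial cyclically-directed $\chi'$ onto the single vertex $v$. Once this is pinned down, the verification is routine, and the essential content of the observation is the injectivity recorded in step (ii).
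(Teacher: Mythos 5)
Your injectivity argument (step (ii)) is fine, and since the paper records this statement as an unproved Observation there is no official argument to compare against. The problem is step (i), and it sits exactly where you deferred it. You assert that a factorization of $\gamma_v \colon \ast \to \Gamma$ through a cyclically-directed $\chi'$ simply "does not arise, so the condition is vacuously satisfied." Under the literal reading of \Cref{d19} this is false. For \emph{any} cyclically-directed $\chi'$, the composite $\chi' \xra{!} \ast \xra{\gamma_v} \Gamma$ is a morphism in $\digraphs$ (morphisms of finite directed graphs are allowed to carry non-degenerate edges to degenerate ones), and for any vertex $u$ of $\chi'$ the vertex-inclusion $q \colon \ast \to \chi'$ satisfies $(\gamma_v \circ !) \circ q = \gamma_v$. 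This is a factorization of the shape appearing in \Cref{d19}(1) in which $q$ is not an isomorphism; and if $v$ carries a self-loop there is even a non-degenerate such factorization, through the cyclically-directed graph with one vertex and one edge. So the second bullet of \Cref{d19}(1), read literally, is never satisfied by $\gamma_v$, and "vacuous" is precisely the wrong conclusion.

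The upshot is that the real content of the Observation is not the injectivity (which, as you say, is essentially tautological) but pinning down which factorizations \Cref{d19}(1) is actually quantifying over — presumably those in which $q$ is a quotient, i.e.\! surjective on vertices and on non-degenerate edges, matching how directed cycles are produced in the proof of \Cref{t54.1}; under that reading no non-invertible such $q$ out of $\ast$ exists and well-definedness follows immediately. You correctly identify this as "the main potential obstacle" and then skip it. A complete proof must either state and justify that restricted reading, or otherwise reconcile \Cref{d19} with the factorizations exhibited above; as written, your proposal establishes injectivity of a map whose well-definedness it has not established.
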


\begin{notation}
Let $\Gamma$ be a finite directed graph.
\begin{enumerate}
\item
Denote by
\[
\w{\bLambda}_{/^{\sf nc}\Gamma}
~\subset~
\w{\bLambda}_{/\Gamma}
\]
the full subcategory consisting of those $\chi \to \Gamma$ that are not constant.  

\item
Denote by
\[
(\copara)_{/^{\sf nc} \Gamma}
~:=~
\copara 
\underset{\w{\bLambda}} \times
\w{\bLambda}_{/^{\sf nc} \Gamma}
~\subset~
(\copara)_{/\Gamma}
\]
the full subcategory consisting of those $(\lambda , \ol{\lambda} \xra{f} \Gamma)$ in which $f$ is not constant.

\end{enumerate}

\end{notation}

\begin{observation}
\label{t56}
Let $\Gamma$ be a finite directed graph.
\begin{enumerate}
\item
By definition of a directed cycle, the full subcategory of $\digraphs_{/\Gamma}$ consisting of the directed cycles is, in fact, a groupoid.
Furthermore, the group of automorphisms of each object in this category is trivial.
As the isomorphism-classes this full subcategory are evidently indexed by the set $\sZ^{\sf dir}(\Gamma)$, there results a fully faithful functor
\[
\sZ^{\sf dir}(\Gamma)
\hookrightarrow
\digraphs_{/\Gamma}
\]
whose image consists of the directed cycles.

\item
The resulting composite monomorphism
$
\sZ^{\sf dir}(\Gamma)
\hookrightarrow
\digraphs_{/\Gamma}
\overset{\Free}{\underset{\rm Obs~\ref{t20}}\hookrightarrow}
{\Quiv}_{/\Gamma}
$
factors through ${\w{\bLambda}^{\rcone} }_{/\Gamma} \overset{\rm Obs~\ref{t97}}\subset {\Quiv}_{/\Gamma}$ and, in fact, does so fully faithfully,
\begin{equation}
\label{e67}
\sZ^{\sf dir}(\Gamma)
\xra{~\rm fully~faithful~}
{ \w{\bLambda}^{\rcone} }_{/\Gamma}
~,
\end{equation}
with image consisting of those $\chi \to \Gamma$ that are directed cycles.

\item
With respect to \Cref{t55}, the fully faithful functor~\Cref{e67} restricts as a fully faithful functor 
\begin{equation}
\label{f25}
\sZ^{\sf dir}(\Gamma) \setminus \Gamma^{(0)}
\xra{~\rm fully~faithful~}
\w{\bLambda}_{/^{\sf nc}\Gamma}
~,
\end{equation}
whose image consists of those {\it directed cycles} $\chi \to \Gamma$ that are not constant.

\end{enumerate}

\end{observation}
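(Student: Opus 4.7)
The plan is to verify the three claims in sequence, with the entire argument pivoting on the minimality clause built into \Cref{d19}. For Part~(1), I would argue the full subcategory of directed cycles in $\digraphs_{/\Gamma}$ is a groupoid by direct appeal to that universal property: any morphism $\chi \to \chi'$ between directed cycles over $\Gamma$ exhibits $\chi \to \Gamma$ as factoring through the cyclically-directed-or-final graph $\chi'$, so minimality forces $\chi \to \chi'$ to be an isomorphism. To show automorphisms are trivial, for $\phi \in \Aut_{\digraphs_{/\Gamma}}(\chi)$ I would form the quotient $\chi' := \chi/\langle \phi \rangle$ in $\digraphs$, observe that $\chi'$ is again cyclically-directed (or $\ast$, when $\chi = \ast$), and note that since $\phi$ commutes with the map to $\Gamma$, the quotient $\chi \to \chi'$ lies over $\Gamma$. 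Minimality forces it to be an isomorphism, which occurs only when $\phi = \id$. The iso-classes then form precisely $\sZ^{\sf dir}(\Gamma)$, giving the fully faithful functor into $\digraphs_{/\Gamma}$ as the inclusion of iso-classes.

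For Part~(2), the factoring through $\w{\bLambda}^{\rcone}$ is immediate: each directed cycle $\chi$ is by definition either cyclically-directed (hence in $\w{\bLambda}$) or equals $\ast$ (included via \Cref{t97}). The substantive step is fully faithfulness. My key intermediate claim would be: any non-constant directed cycle $\gamma \colon \chi \to \Gamma$ is non-degenerate, meaning $\gamma$ carries each edge of $\chi$ to a non-degenerate edge of $\Gamma$ (not collapsed to a vertex). Indeed, if $\gamma$ collapsed an edge $e \in \chi^{(1)}$, then $\gamma$ would factor through the quotient $\chi \twoheadrightarrow \chi/e$ identifying the endpoints of $e$. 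Direct inspection shows $\chi/e$ is again cyclically-directed (with one fewer vertex, in the case $|\chi^{(0)}| \geq 2$) or is $\ast$ (in the single-vertex self-loop case). By minimality of $\chi$, the surjection $\chi \to \chi/e$ would be an isomorphism---a contradiction, as the target has strictly fewer vertices/edges.

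Granted non-degeneracy, given any $f \colon \chi_1 \to \chi_2$ in $\Quiv_{/\Gamma}$ between non-constant directed cycles, I would apply \Cref{t94} to express $f$ as a vertex map together with, for each edge $e_v$ of $\chi_1$, a directed path $f(e_v)$ in $\chi_2$. The relation $\gamma_2 \circ f = \gamma_1$ applied to $e_v$ forces $\gamma_2$ of the path $f(e_v)$ to equal the single non-degenerate edge $\gamma_1(e_v)$; since $\gamma_2$ is itself non-degenerate, this can only happen when $f(e_v)$ is a single edge of $\chi_2$. Thus $f$ comes from $\digraphs_{/\Gamma}$, and Part~(1) produces the unique isomorphism when one exists (or the empty hom when not). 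The edge cases $\chi_1 = \ast$ or $\chi_2 = \ast$ reduce to a direct check: such a $\Quiv$-morphism selects a vertex (or is constant), and minimality either forces an isomorphism or contradicts directed-cycle status. Part~(3) is then immediate by restricting the fully faithful functor of \Cref{e67} along the respective full inclusions $\sZ^{\sf dir}(\Gamma) \setminus \Gamma^{(0)} \hookrightarrow \sZ^{\sf dir}(\Gamma)$ and $\w{\bLambda}_{/^{\sf nc}\Gamma} \hookrightarrow \w{\bLambda}^{\rcone}_{/\Gamma}$.

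The hard part will be the non-degeneracy step of Part~(2): one must carefully verify that the quotient $\chi/e$ remains in the class (cyclically-directed or $\ast$) controlled by the minimality axiom, and then exploit non-degeneracy of both $\gamma_1$ and $\gamma_2$ simultaneously to rule out $\Quiv$-morphisms whose path data is of length $\neq 1$ on some edge. Once this non-degeneracy is in hand, the fully faithfulness reduces morphism data in $\Quiv_{/\Gamma}$ to morphism data in $\digraphs_{/\Gamma}$ by length considerations alone, and the rest is bookkeeping.
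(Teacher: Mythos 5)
The paper offers no argument for this Observation (it is asserted as evident from the definitions), so there is no ``paper proof'' to compare against; judged on its own terms, your argument is essentially correct, and its crux --- that a non-constant directed cycle is automatically a \emph{non-degenerate} map of graphs, so that any $\Quiv$-morphism between two of them over $\Gamma$ must send each edge to a path of length exactly $1$ and hence descend to $\digraphs_{/\Gamma}$ --- is exactly the right reduction. Your treatment of automorphisms by quotienting along $\langle\phi\rangle$ and invoking minimality also matches the technique the paper uses nearby (in the proof of \Cref{t54.1}, where $\w{\chi}_{/G}$ is formed for the cyclic group $G$ of automorphisms over $\Gamma$).

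One sub-case is resolved for the wrong reason. For $\chi_1=\ast_v$ and $\chi_2$ a non-constant directed cycle, the hom-set in $\Quiv_{/\Gamma}$ from $\ast_v$ to $\Free(\chi_2)$ is the fiber $\gamma_2^{-1}(v)\cap\chi_2^{(0)}$, which is typically nonempty (take $\Gamma$ a single self-loop); no minimality argument rules these morphisms out, and indeed the functor $\sZ^{\sf dir}(\Gamma)\to\Quiv_{/\Gamma}$ is \emph{not} fully faithful. The statement survives only because the codomain is ${\w{\bLambda}^{\rcone}}_{/\Gamma}$: in the right cone $\w{\bLambda}^{\rcone}=\w{\bLambda}\star\{\ast\}$ the cone point admits no morphisms to objects of $\w{\bLambda}$, so this hom-set is empty by construction. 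You should replace the appeal to minimality in that sub-case with this observation; the remaining edge case ($\chi_2=\ast_w$, where the unique morphism to the cone point lies over $\Gamma$ only if $\gamma_1$ is constant) is handled correctly by your non-constancy argument. With that one-line repair the proof is complete.
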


\begin{lemma}
\label{t54.1}
Let $\Gamma$ be a finite directed graph.
The functor \Cref{f25} is a right adjoint.

\end{lemma}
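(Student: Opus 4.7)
The plan is to prove the statement by constructing an explicit left adjoint $L$ to the fully faithful functor \Cref{f25} and then verifying its universal property. The guiding geometric idea is that every non-constant morphism $(\chi \xra{f} \Gamma)$ from a cyclically-directed graph factors essentially uniquely through a primitive (minimal-length) directed cycle in $\Gamma$; this primitive cycle should be the value of $L$, and the factoring morphism in $\w{\bLambda}$ should serve as the adjunction unit.

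Concretely, given $(\chi \xra{f} \Gamma) \in \w{\bLambda}_{/^{\sf nc}\Gamma}$ with $\chi$ cyclically-directed of length $n$, I would encode $f$ via the data of \Cref{t113} as a pair $(f^{(0)}, d)$, where $f^{(0)} \colon \chi^{(0)} \to \Gamma^{(0)}$ records vertex values and $d \colon \chi^{(0)} \to \ZZ_{\geq 0}$ records, via the bijection of \Cref{t116}, the length of the path in $\Gamma$ traced by each edge $e_v$. Let $m$ be the least positive divisor of $n$ such that both $f^{(0)}$ and $d$ are invariant under rotation of $\chi^{(0)}$ by $m$, and set $k := n/m$, so that $\chi$ carries a natural $\ZZ/k$-action generated by this rotation. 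Define $\chi' := \chi/(\ZZ/k)$, a cyclically-directed graph of length $m$, together with the induced morphism $f' \colon \chi' \to \Gamma$ and the quotient morphism $\pi \colon \chi \to \chi'$ in $\w{\bLambda}$, and set $L(\chi \xra{f}\Gamma) := (\chi' \xra{f'}\Gamma)$ with adjunction unit $\pi$. Minimality of $m$, together with the non-constancy of $f$ to rule out a degenerate factorization through $\ast$, ensures that $(\chi', f')$ is a directed cycle in the sense of \Cref{d19}.

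To verify the universal property, I would show that for every $(\psi \xra{g} \Gamma) \in \sZ^{\sf dir}(\Gamma) \setminus \Gamma^{(0)}$, precomposition with $\pi$ induces a bijection
\[
\Hom_{\w{\bLambda}_{/\Gamma}}(\chi' \xra{f'} \Gamma,\, \psi \xra{g} \Gamma)
\xra{~\pi^\ast~}
\Hom_{\w{\bLambda}_{/\Gamma}}(\chi \xra{f} \Gamma,\, \psi \xra{g} \Gamma)~.
\]
Injectivity is straightforward, since $\pi$ is surjective on vertex-data and, by the explicit description of morphisms in \Cref{t113}, is therefore an epimorphism in $\w{\bLambda}$. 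The hard part will be surjectivity — the descent claim that every $\alpha \colon \chi \to \psi$ over $\Gamma$ factors through $\pi$. This amounts to showing $\alpha$ is $\ZZ/k$-invariant. The key input will be the multiplicativity of degree (\Cref{t117}): writing $\psi$ of length $\ell$, the identity $\deg(g) \cdot \deg(\alpha) = \deg(f)$, combined with the period data encoding $f$ and the primitivity of $(\psi, g)$ — which forbids any nontrivial cyclic symmetry of $g$ respecting the map to $\Gamma$ — will force $\alpha$ to commute with the $\ZZ/k$-action, whence the universal property of the quotient $\pi$ yields the desired descent morphism $\chi' \to \psi$.
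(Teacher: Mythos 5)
There is a genuine gap in your construction of the left adjoint: you have omitted the subdivision step, and as a result the object $L(\chi\xra{f}\Gamma)$ you produce is generally not a directed cycle at all. An object of $\sZ^{\sf dir}(\Gamma)\setminus\Gamma^{(0)}$ is (the image under $\Free$ of) a morphism $\chi'\to\Gamma$ \emph{in $\digraphs$}, so each edge of $\chi'$ must land on a single edge of $\Gamma$. But a general object $(\chi\xra{f}\Gamma)$ of $\w{\bLambda}_{/^{\sf nc}\Gamma}$ sends each edge of $\chi$ to a directed \emph{path} in $\Gamma$ (\Cref{t94}), and your quotient $\chi'=\chi/(\ZZ/k)$ inherits exactly the same edge-to-path data. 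Concretely: if $\chi$ has one vertex and one loop, and $f$ sends that loop to a path winding twice around a primitive $p$-cycle of $\Gamma$, then $n=m=k=1$ and your $L$ returns $(\chi,f)$ itself, which is neither non-degenerate nor primitive. What is needed first is the refinement $\zeta\to\w{\chi}$ obtained by cyclically gluing the linear paths $f^{(1)}(e)$ (so that $\w{\chi}\to\Gamma$ carries edges to edges); only then does one quotient, and the correct group to quotient by is $\Aut_{/\Gamma}(\w{\chi})$ acting on the \emph{subdivided} graph, not the rotational symmetries of the original data on $\chi$. This is precisely the two-step construction in the paper's proof, and your unit is consequently also wrong: the paper's unit $\zeta\to\w{\chi}\to\w{\chi}_{/G}$ is a degree-one refinement followed by a quotient, not a quotient of $\chi$ alone.

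A secondary but real problem is your encoding of $f$ by the pair $(f^{(0)},d)$ via \Cref{t113}: that observation applies only when the \emph{target} is cyclically-directed, whereas here $\Gamma$ is an arbitrary finite directed graph. Two paths in $\Gamma$ with the same endpoints and the same length need not coincide, so invariance of $(f^{(0)},d)$ under rotation by $m$ does not imply that $f$ itself descends to $\chi/(\ZZ/k)$; you must require periodicity of the full path data $f^{(1)}$ from \Cref{t94}. Finally, note that since $\sZ^{\sf dir}(\Gamma)\setminus\Gamma^{(0)}$ is a set (a discrete groupoid with trivial automorphisms, \Cref{t56}), the universal property reduces to showing that each undercategory $\cZ^{f/}$ has an initial object, which trivializes much of the bijectivity argument you sketch; the genuine content is the existence and initiality of the subdivided-then-quotiented cycle.
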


\begin{proof}

Denote the set $\cZ := \bigl(
\sZ^{\sf dir}(\Gamma) \setminus \Gamma^{(0)}
\bigr)$.
Let 
$
(\zeta \xra{f} \Gamma) \in 
\w{\bLambda}_{/^{\sf nc}\Gamma}
$.
We must show that the undercategory $\cZ ^{f/}$ has an initial object.

By \Cref{t56}, an object in this undercategory $\cZ^{f/}$ is a non-constant directed cycle $\chi \xra{\gamma} \Gamma$ together with a morphism $\zeta \xra{q} \chi$ in $\w{\bLambda}$ over $\Gamma$:
\begin{equation}
\label{e96}
\begin{tikzcd}
\zeta
\arrow{rr}{f}
\arrow{rd}[sloped, swap]{q}
&
&
\Gamma
\\
&
\chi
\arrow{ru}[sloped, swap]{\gamma}
\end{tikzcd}
~.
\end{equation}
By \Cref{t94}, the functor $f$ is the datum of a map $f^{(0)} \colon \zeta^{(0)} \to \Gamma^{(0)}$ and, for each pair of cyclically adjacent vertices $z,{\sf suc}(z)\in \zeta^{(0)}$, a linearly-directed graph $\{z\to y_1(z) \to \cdots \to y_{\ell_z-1}(z) \to {\sf suc}(z)\}$
together with a non-degenerate extension in $\digraphs$:
\[
\begin{tikzcd}[column sep=2cm]
\{z,{\sf suc}(z)\}
\arrow{r}{f^{(0)}}
\arrow[hook]{d}
&
\Gamma
\\
\{z \to y_1 \to \cdots \to y_{\ell_z-1} \to {\sf suc}(z)\}
\arrow[dashed, bend right=10]{ru}[sloped, swap]{f(z \to {\sf suc}(z))}
\end{tikzcd}
~.
\]
Consider the cyclically-directed graph $\w{\chi}$ obtained by cyclically gluing the linearly-directed graphs $\{z\to y_1 \to \cdots \to y_{\ell_z-1} \to {\sf suc}(z) \}$:
\[
\w{\chi}
~:=~
\zeta^{(0)}
\underset{ \zeta^{(0)} \coprod \zeta^{(0)} }
\coprod
\Bigl(
\underset{z\in \zeta^{(0)}} \coprod \{z \to y_1 \to \dots \to y_{\ell_z-1} \to {\sf suc}(z) \}
\Bigr)
~.
\]
By construction of $\w{\chi}$, 
there is a canonical morphism $\zeta \xra{q'}\w{\chi}$ in $\Cat$, and a canonical morphism $\w{\chi} \xra{\w{\gamma}} \Gamma$ in $\digraphs$, fitting into a commutative diagram among gaunt categories:
\[
f
\colon
\zeta
\xra{~q'~}
 \w{\chi} 
\xra{~\w{\gamma}~}
\Gamma 
\]
in which $\w{\gamma}$ is non-degenerate.
By construction, the morphism $q$ has degree 1.
With respect to the canonical homomorphism $G := \Aut_{/\Gamma}(\w{\chi}) \to \Aut(\w{\chi})$, there is a canonical factorization in $\digraphs$:
\[
\w{\gamma}
\colon
\w{\chi}
\xra{~\rm quotient~}
\w{\chi}_{/G}
=:
\chi
\xra{~\gamma~}
\Gamma
~.
\]
Because the group of automorphisms of a cyclically-directed graph is a finite cyclic group, then $G$ is a finite cyclic group.  
Because $\w{\gamma}$ is non-degenerate, the map $\gamma$ is also non-degenerate; because $f$ is not constant, then $\gamma$ is not constant.  
Furthermore, by construction, the map $\gamma$ is a directed cycle.  
We have a factorization $f\colon \zeta \xra{q:={\rm quotient} \circ q'} \chi \xra{\gamma} \Gamma$ as in diagram~(\ref{e96}).
In particular, the undercategory $\cZ^{f/}$ is nonempty.  
Lastly, the construction of this object is such that it is initial in the undercategory $\cZ^{f/}$, as desired.

\end{proof}

\begin{cor}
\label{t52.2}
Let $\Gamma$ be a finite directed graph.
There is a $\WW^{\op}$-equivariant functor
\[
\bigl(
\sZ^{\sf dir}(\Gamma)
\!\setminus\!
\Gamma^{(0)}
\bigr)
\times
\WW
\longrightarrow
(\copara)_{/^{\sf nc}\Gamma}
\]
that is a right adjoint.

\end{cor}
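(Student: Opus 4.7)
The plan is to realize the sought functor as a base change of the right adjoint of \Cref{t54.1} along the Cartesian fibration of \Cref{t119}. By \Cref{t119}, the forgetful $\copara \to \w{\bLambda}$ is a Cartesian fibration pulled back from $\ast \to \BW$, so by further pullback,
\[
p \colon (\copara)_{/^{\sf nc}\Gamma} \longrightarrow \w{\bLambda}_{/^{\sf nc}\Gamma}
\]
is also a Cartesian fibration pulled back from $\ast \to \BW$ (along the composite $\w{\bLambda}_{/^{\sf nc}\Gamma} \to \w{\bLambda} \xra{|-|} \BW$). Base-changing the fully faithful right adjoint $\cZ := \sZ^{\sf dir}(\Gamma)\setminus\Gamma^{(0)} \hookrightarrow \w{\bLambda}_{/^{\sf nc}\Gamma}$ of \Cref{t54.1} along $p$ produces a fully faithful functor out of $\cZ \underset{\w{\bLambda}_{/^{\sf nc}\Gamma}}\times (\copara)_{/^{\sf nc}\Gamma}$. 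Because $\cZ$ is a discrete category whose image in $\BW$ is the unique object, this source identifies canonically with $\cZ \times \WW$, using $\WW \simeq \Hom_\BW(\ast, \ast)$; this yields the sought functor.

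To verify right-adjointness, I will check that for each $(\lambda, \bar\lambda \xra{f} \Gamma) \in (\copara)_{/^{\sf nc}\Gamma}$ the corresponding undercategory in $\cZ \times \WW$ has an initial object, adapting the strategy of the proof of \Cref{t54.1}. The left adjoint from \Cref{t54.1} applied to $(\bar\lambda, f)$ produces the initial factorization $f = \gamma_z \circ q$ with $\gamma_z \colon \chi_z \to \Gamma$ a non-constant directed cycle and $q \colon \bar\lambda \to \chi_z$ a morphism in $\w{\bLambda}$. Its image $w := |q| \in \WW$ under \Cref{e106} supplies the $\WW$-coordinate, so that $(z, w) \in \cZ \times \WW$ is the candidate initial object. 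Its initiality follows from initiality of $(z, q)$ in the corresponding undercategory of $\cZ$, together with the Cartesian-fibration property of $p$: given any competitor $(z', w') \in (\cZ \times \WW)^{(\lambda,f)/}$, projection to $\w{\bLambda}_{/^{\sf nc}\Gamma}$ produces a unique comparison morphism $(z, q) \to (z', q')$, which the Cartesian structure promotes to a unique (up to contractible choice) comparison morphism in $(\copara)_{/^{\sf nc}\Gamma}$.

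For the $\WW^\op$-equivariance, the action $\WW^\op \lacts \copara$ of \Cref{t10} descends to $(\copara)_{/^{\sf nc}\Gamma}$ via the lax natural transformations $\pi_r$ of \Cref{t98}, sending $(\lambda, f)$ to $(\phi(\lambda), f \circ \pi_\phi)$ for $\phi \in \WW^\op$; on the source $\cZ \times \WW$, equip $\WW$ with the $\WW^\op$-action by left multiplication and let $\WW^\op$ act trivially on $\cZ$. Equivariance of the constructed functor then reflects the $\WW^\op$-equivariance of the pullback-square presentation $\copara \simeq \ast \times_\BW \w{\bLambda}$, in which $\WW^\op$ acts on the fiber $\WW$ by its canonical left-multiplication action. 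The main obstacle will be checking this equivariance cleanly --- in particular, that the $\WW$-coordinate $|q|$ extracted above transforms correctly under the action --- which reduces to tracking through the definitions of the $\pi_r$ in \Cref{t98} and the identification $\w{\bLambda} \simeq (\copara)_{{\sf r.lax}\WW}$.
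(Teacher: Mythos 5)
There is a genuine gap, and it sits exactly where the content of the corollary lives. First, \Cref{t119} does not say that the forgetful functor $\copara \to \w{\bLambda}$ is a Cartesian fibration: the Cartesian fibrations in that diagram are the \emph{vertical} functors $\copara \to \ast$ and $\w{\bLambda} \xra{|-|} \BW$. The horizontal functor $\copara \to \w{\bLambda}$ is the base change of $\ast \to \BW$, which is not a Cartesian fibration (the non-invertible endomorphisms of the unique object of $\BW$ admit no lift to the point). Consequently your functor $p$ is not a Cartesian fibration either, and both the "Cartesian-fibration property of $p$" invoked in your adjointness check and the promotion step at the end of your second paragraph have no support.

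Second, and more fundamentally, the strict pullback $\cZ \underset{\w{\bLambda}_{/^{\sf nc}\Gamma}}\times (\copara)_{/^{\sf nc}\Gamma}$ is \emph{not} $\cZ \times \WW$. Since $(\copara)_{/^{\sf nc}\Gamma} \to \w{\bLambda}_{/^{\sf nc}\Gamma}$ is pulled back from $\ast \to \BW$ and $\cZ$ is a $0$-type landing on the unique object, this pullback is $\cZ \times \bigl(\ast \underset{\BW}\times \ast\bigr)$, and the fiber product of two points over an $\infty$-category computes the space of \emph{equivalences} between them, namely $\TT \subset \WW$ --- not the hom-space $\Hom_{\BW}(\ast,\ast) \simeq \WW$. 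Your identification "using $\WW \simeq \Hom_\BW(\ast,\ast)$" conflates the strict pullback with the comma (lax) pullback; the entire $\NN^\times$-worth of non-invertible symmetries, which is the point of the corollary and is needed for \Cref{t52}, is lost. The paper's proof avoids this by forming the oriented fiber product $\cZ \underset{\BW}\times \BW^{\ast/}$ via the arrow category $\Ar\bigl(\w{\bLambda}_{/^{\sf nc}\Gamma}\bigr)$, and then uses the genuine Cartesian fibration $\w{\bLambda}_{/^{\sf nc}\Gamma} \to \BW$ to select a Cartesian lift of each (possibly non-invertible) $w \in \WW$ with target $z \in \cZ$; the resulting right adjoint sends $(z,w)$ to the degree-$w$ cover $w^\ast z$, and right-adjointness is inherited formally as a composite of a right adjoint to a localization with a right adjoint, rather than checked objectwise. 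To repair your argument you would need to replace the strict base change by this comma construction and relocate the Cartesian-lifting from $p$ to $\w{\bLambda}_{/^{\sf nc}\Gamma} \to \BW$, at which point you have reproduced the paper's proof.
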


\begin{proof}
Consider the general set-up.
Let $\cE_0 \overset{\rho} \hookrightarrow \cE$ be a fully faithful right adjoint between $\infty$-categories.
Denote its left adjoint as $\cE_0 \xla{\lambda} \cE$ and its unit as $\id_\cE \xra{\eta} \rho\lambda$. 
Let $\cE \xra{\pi} \cB$ be a Cartesian fibration between $\infty$-categories.  
Let $b\in \cB$.
Consider the span among $\infty$-categories:
\[
\cE_0
\underset{\cB} \times
\cB^{b/}
\xla{~\pi~}
\cE_0
\underset{\cE} \times
\Ar(\cE)^{\cE_{|b}}
=:
\Ar(\cE)^{\cE_{|b}}_{|\cE_0}
\xra{~\ev_s~}
\cE_{|b}
~.
\]
Because $\pi$ is a Cartesian fibration, the leftward functor is a left adjoint localization, with right adjoint given by selecting the $\pi$-Cartesian morphisms.
Because $\cE_0 \xra{\rho} \cE$ is a right adjoint, the rightward functor is a right adjoint, with left adjoint given by $\eta$.  
Using that the composition of right adjoints is a right adjoint, we have a right adjoint functor among $\infty$-categories:
\begin{equation}
\label{f26}
\cE_0
\underset{\cB} \times
\cB^{b/}
\longrightarrow
\cE_{|b}
~.
\end{equation}
Furthermore, this functor is evidently $\End_\cB(b)^{\op}$-equivariant. 

Now, specialize these parameters as follows.
\begin{itemize}
\item
Take
$
\Bigl(
\cE_0 \xra{\rho} \cE
\Bigr) 
=
\Bigl(
\sZ^{\sf dir}(\Gamma)
\!\setminus\!
\Gamma^{(0)}
\xra{(\ref{f25})}
\w{\bLambda}_{/^{\sf nc} \Gamma}
\Bigr)
$.
\Cref{t56}(3)
ensures the named functor is indeed fully faithful;
\Cref{t54.1} ensures the named functor is indeed a right adjoint.

\item
Take
$
\Bigl(
\cE \xra{\pi} \cB
\Bigr) 
=
\Bigl(
\w{\bLambda}_{/^{\sf nc} \Gamma}
\xra{|-| \circ {\rm forget}} \BW
\Bigr)
$.
Using that, by definition, each morphism in $\w{\bLambda}$ is a non-constant functor, the forgetful functor $\w{\bLambda}_{/^{\sf nc} \Gamma} \xra{\rm forget} \w{\bLambda}$ is a right fibration.
\Cref{t120} states that the functor $\w{\bLambda} \xra{|-|} \BW$ is a Cartesian fibration.
Because the composition of Cartesian fibrations is a Cartesian fibration, the named functor is indeed a Cartesian fibration.

\item
Take $(b \in \cB) = (\ast \in \BW)$.

\end{itemize}
Using that $\sZ^{\sf dir}(\Gamma)
\!\setminus\!
\Gamma^{(0)}$ is a 0-type, there is a non-canonical identification between $\WW^{\op}$ spaces:
\[
\bigl(
\sZ^{\sf dir}(\Gamma)
\!\setminus\!
\Gamma^{(0)}
\bigr)
\times
\WW
~\simeq~
\bigl(
\sZ^{\sf dir}(\Gamma)
\!\setminus\!
\Gamma^{(0)}
\bigr)
\times
\End_{\BW}(\ast)
~\simeq~
\bigl(
\sZ^{\sf dir}(\Gamma)
\!\setminus\!
\Gamma^{(0)}
\bigr)
\underset{\BW}
\times
\BW^{\ast/}
~.
\]
Using \Cref{t111''}, there is a canonical identification $(\copara)_{/^{\sf nc}\Gamma} \xra{\simeq} (\w{\bLambda}_{/^{\sf nc}\Gamma})_{|\ast}$.
So the equivariant right adjoint \Cref{f26} can be identified as a $\WW^{\op}$-equivariant right adjoint
\[
\bigl(
\sZ^{\sf dir}(\Gamma)
\!\setminus\!
\Gamma^{(0)}
\bigr)
\times
\WW
\longrightarrow
(\copara)_{/^{\sf nc}\Gamma}
~,
\]
as desired.

\end{proof}

\begin{cor}
\label{t52}
Let $\Gamma$ be a finite directed graph.
The $\infty$-groupoid-completion of the overcategory $\copara_{/\Gamma} := \copara\underset{\Quiv} \times (\Quiv)_{/\Gamma}$, as it is equipped with the resulting $\WW^{\op}$-module structure, is canonically identified as the $\WW^{\op}$-space
\[
\Gamma^{(0)}
\coprod
\Bigl(
\bigl(
\sZ^{\sf dir}(\Gamma)
\!\setminus\!
\Gamma^{(0)}
\bigr)
\times
\WW
\Bigr)
~\simeq~
\bigl |
\copara_{/\Gamma}
\bigr |
~,
\]
where $\Gamma^{(0)}$ is the set of vertices of $\Gamma$, $\sZ^{\sf dir}(\Gamma)$ is the set of directed cycles in $\Gamma$, and $\WW$ is the underlying space of the Witt monoid.

\end{cor}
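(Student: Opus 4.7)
The plan is to decompose $\copara_{/\Gamma}$ into its ``constant'' and ``non-constant'' parts, show that these form a disjoint union of full subcategories, compute the $\infty$-groupoid-completion of each piece separately, and assemble the result $\WW^{\op}$-equivariantly. Write $\copara_{/^{\sf const}\Gamma} \subset \copara_{/\Gamma}$ for the full subcategory on pairs $(\lambda, f_\lambda)$ for which the functor $f_\lambda \colon \overline{\lambda} \to \Gamma$ is constant at a vertex of $\Gamma$.

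First I would verify the disjoint decomposition
\[
\copara_{/\Gamma}
~\simeq~
\copara_{/^{\sf const}\Gamma}
\coprod
\copara_{/^{\sf nc} \Gamma}
\]
by ruling out mixed morphisms. Nonexistence of a morphism from a non-constant object to a constant one is immediate, as postcomposing any morphism with a constant functor produces a constant functor. For the opposite direction, suppose $g \colon \overline{\lambda} \to \overline{\mu}$ is a morphism in $\w{\bLambda}$ realizing a morphism $(\lambda, \mathrm{const}_v) \to (\mu, f_\mu)$. Then $g$ has positive degree $d \geq 1$, and the generating cyclic loop $\ell_{\overline{\lambda}}$ of $\overline{\lambda}$ is mapped by $g$ to a morphism $g(\ell_{\overline{\lambda}})$ in $\overline{\mu}$ which, as a composite of edges in the free category, uses every edge of $\overline{\mu}$ exactly $d$ times (this records that $|g| \colon S^1 \to S^1$ has degree $d$). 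Since composition in a free category concatenates paths without cancellation, the composite $f_\mu \circ g$ being constant at $v$ forces $f_\mu$ to send each edge of $\overline{\mu}$ to $\id_v$; but then $f_\mu$ itself is constant, contradicting the hypothesis.

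Next I would compute each piece. A constant morphism $\overline{\lambda} \to \Gamma$ is canonically determined by the vertex it lands at, so the full subcategory $\copara_{/^{\sf const}\Gamma}$ canonically splits as $\copara \times \Gamma^{(0)}$; taking $\infty$-groupoid-completion and invoking \Cref{t62} yields $\bigl|\copara_{/^{\sf const}\Gamma}\bigr| \simeq \Gamma^{(0)}$, with trivial $\WW^{\op}$-action (since $\WW^{\op}$ acts through the contractible space $|\copara|$). For the non-constant part, \Cref{t52.2} supplies a $\WW^{\op}$-equivariant right adjoint
\[
\bigl(\sZ^{\sf dir}(\Gamma) \setminus \Gamma^{(0)}\bigr) \times \WW
~\longrightarrow~
\copara_{/^{\sf nc}\Gamma}
~,
\]
and since adjunctions descend to equivalences on $\infty$-groupoid-completions, this produces a $\WW^{\op}$-equivariant equivalence $\bigl|\copara_{/^{\sf nc}\Gamma}\bigr| \simeq \bigl(\sZ^{\sf dir}(\Gamma) \setminus \Gamma^{(0)}\bigr) \times \WW$. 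Assembling the two pieces $\WW^{\op}$-equivariantly then yields the desired identification. The main obstacle is the argument in the preceding paragraph ruling out morphisms from constant to non-constant objects: one must carefully exploit the positive-degree property of morphisms in $\w{\bLambda}$ together with the rigidity of composition in the free category on a directed graph to force constancy of $f_\mu$ from constancy of $f_\mu \circ g$. Once that disjointness is established, the rest is a direct application of \Cref{t62}, \Cref{t52.2}, and the standard fact that adjunctions descend to equivalences on $\infty$-groupoid-completions.
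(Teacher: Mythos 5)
Your proposal is correct and follows essentially the same route as the paper: decompose $\copara_{/\Gamma}$ into the constant and non-constant full subcategories, identify the constant piece with $\copara \times \Gamma^{(0)}$ and collapse it via \Cref{t62}, and handle the non-constant piece with the equivariant right adjoint of \Cref{t52.2}. The only difference is that you spell out the degree/free-category argument for why no morphisms cross between the two pieces (a point the paper merely asserts), and that verification is sound.
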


\begin{proof}
The proof is complete upon explaining the following sequence of equivalences among $\WW^{\op}$-spaces:
\begin{eqnarray*}
\bigl|
\copara_{/\Gamma}
\bigr|
&
~\simeq~
&
\bigl|
\copara_{/^{\sf c}\Gamma}
\amalg
\copara_{/^{\sf nc}\Gamma}
\bigr|
\\
&
~\simeq~
&
\bigl|
\copara_{/^{\sf c}\Gamma}
\bigr|
\amalg
\bigl|
\copara_{/^{\sf nc}\Gamma}
\bigr|
\\
&
~\simeq~
&
| 
\Gamma^{(0)}
\times
\copara 
|
\amalg
\bigl|
\copara_{/^{\sf nc}\Gamma}
\bigr|
\\
&
~\simeq~
&
\Gamma^{(0)}
\times
| \copara |
\amalg
\bigl|
\copara_{/^{\sf nc}\Gamma}
\bigr|
\\
&
~\simeq~
&
\Gamma^{(0)}
\amalg
\bigl|
\copara_{/^{\sf nc}\Gamma}
\bigr|
\\
&
~\simeq~
&
\Gamma^{(0)}
\coprod
\Bigl(
\bigl(
\sZ^{\sf dir}(\Gamma)
\!\setminus\!
\Gamma^{(0)}
\bigr)
\times
\WW
\Bigr)
~.
\end{eqnarray*}

Consider the full subcategories
\[
\copara_{/^{\sf c}\Gamma}
~,~
\copara_{/^{\sf nc}\Gamma}
~\subset~
\copara_{/\Gamma}
\]
consisting of those pairs $\bigl(\lambda , \ol{\lambda} \xra{f}   \Gamma \bigr)$ in which $f$ is respectively constant and not constant.  
Note, also, that every object in $\copara_{/\Gamma}$ belongs to one of these two full subcategories.
Note that there are no morphisms in $\copara_{/\Gamma}$ from an object in one of these full subcategories to an object in the other.
Therefore, the canonical functor
\begin{equation}
\label{e110}
\copara_{/^{\sf c}\Gamma}
~\amalg
\copara_{/^{\sf nc}\Gamma}
\longrightarrow
\copara_{/\Gamma}
\end{equation}
is an equivalence.
This implies the first equivalence.
The second equivalence follows from the fact that $\infty$-groupoid-completion preserves coproducts.

Observe that the canonical functor
\[
\copara
\times
\Gamma^{(0)}
\xra{~\simeq~}
\copara_{/^{\sf c}\Gamma}
~,\qquad
( \lambda , v )
\longmapsto
\bigl(\lambda , \ol{\lambda} \xra{{\sf const}_v}   \Gamma \bigr)
~,
\]
is an equivalence between categories.
This implies the third equivalence.
The fourth equivalence follows from the fact that $\infty$-groupoid-completion preserves products, and that $\Gamma^{(0)}$ is a set.  
\Cref{t62} implies the fifth equivalence.
\Cref{t52.2} implies the last equivalence, since adjunctions induce equivalences on $\infty$-groupoid-completions.

\end{proof}

\section{Universal Hochschild homology}

One might reasonably regard the functor $\bDelta^{\op}  \xra{\rho} \Quiv^{\op}$ as a category-object in $\Quiv^{\op}$.
Regarded as so, we contemplate its Hochschild homology $\sHH(\rho)$.
The category $\Quiv^{\op}$ admits very few colimits, and this Hochschild homology does not exist in $\Quiv^{\op}$.
In this section, we formally adjoin $\sHH(\rho)$ to $\Quiv^{\op}$, keeping that finite products exits, resulting in an $\infty$-category $\M$.
By construction, for $\cX$ an $\infty$-category that admits finite limits and geometric realizations such that products distribute over geometric realizations, then for $\cC$ a category-object in an $\infty$-category $\cX$ there is a unique extension
\[
\begin{tikzcd}[column sep=1.5cm]
(\Quiv)^\op
\arrow{r}{\Rep_\cC}
\arrow[hook]{d}
&
\cX
\\
\M
\arrow[dashed]{ru}[sloped, swap]{\w{\Rep}_\cC}
\end{tikzcd}
\]
such that $\w{\Rep}_\cC$ preserves finite products and $\w{\Rep}_\cC\colon \sHH(\rho) \mapsto \sHH(\cC)$.
Remarkably, we give an explicit ``object \& morphism'' description of this universal $\M$.
As so, the endomorphisms of $\sHH(\rho)$ in $\M$ codify universal (possibly non-invertible) symmetries of Hochschild homology of any $(\infty,1)$-category.

\subsection{Definition of Hochschild homology}
For this subsection, we fix an $\infty$-category $\cX$ that admits finite limits and geometric realizations.
We are now positioned to define Hochschild homology of an $(\infty,1)$-category, and more generally of a category-object in $\cX$.

\begin{notation}
\label{notn.functor.chi.from.Delta.to.Quiv}
Consider the composite functor
\begin{equation}
\label{g20}
\copara
\xra{~\Cref{e37}~}
\w{\bLambda}
\xra{~\rm Obs~\ref{t6'}~}
\Quiv
~,\qquad
(\ZZ \lacts I)
\longmapsto
I_{\htpy \ZZ}
~,
\end{equation}
whose image consists of the cyclically-directed quivers.
We write
\[
\chi
\colon
\bDelta
\xra{~\Cref{e15}~}
\copara
\xra{~\Cref{g20}~}
\Quiv
~,
\qquad
{[p]}
~\longmapsto~
\left\{
\begin{tikzcd}
&
1
\arrow{r}
&
\cdots
\arrow{r}
&
p-1
\arrow{rd}
\\
0
\arrow{ru}
&
&
&
&
p
\arrow{llll}
\end{tikzcd}
\right\}
\]
for the resulting composite functor.
\end{notation}

\begin{definition}
\label{dHH}
\bit{(Non-stable) Hochschild homology} is the functor
\[
\sHH
\colon
\fCat_1[\cX]
\longrightarrow
\cX
~,\qquad
\cC
\longmapsto 
\colim\left(
\para
\xra{\Cref{g20}}
\Quiv^{\op}
\xra{{\sf Rep}_\cC}
\cX
\right)
~.
\]
\end{definition}

\begin{observation}
\label{t250}
The functor $\sHH$ of \Cref{dHH} exists. 
Indeed, Lemma~\ref{t36} states that the functor $\bDelta\xra{\Cref{e15}} \copara$ is initial.
Therefore, for each category-object $\cC$ in $\cX$, its (non-stable) Hochschild homology can be computed as a geometric realization.
Specifically, the canonical morphism is an equivalence:
\[
\colim\left(
\bDelta^{\op}
\xra{\chi^\op}
\Quiv^{\op}
\xra{\Rep_\cC}
\cX
\right)
\xra{~\simeq~}
\sHH(\cC)
~.
\]
\end{observation}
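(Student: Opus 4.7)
The plan is to reduce both claims---existence of $\sHH(\cC)$ and the assertion that it is a geometric realization---to a single application of \Cref{t36}. Recall that \Cref{t36} states that the functor $\bDelta \xra{\Cref{e15}} \copara$, carrying $[p]$ to $[p]^{\star \ZZ}$, is initial. Taking opposites, the functor $\bDelta^{\op} \to \copara^{\op}$ is final (in the colimit sense).

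First I would observe that, by construction, the functor $\chi \colon \bDelta \to \Quiv$ of \Cref{notn.functor.chi.from.Delta.to.Quiv} is defined as the composition $\bDelta \xra{\Cref{e15}} \copara \xra{\Cref{g20}} \Quiv$. Consequently, the functor $\chi^\op \colon \bDelta^\op \to \Quiv^\op$ factors as
\[
\bDelta^\op
\xlongra{~\rm final~}
\copara^\op
\xlongra{\Cref{g20}^\op}
\Quiv^\op
~,
\]
with the first functor final by \Cref{t36}.

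Next I would invoke the standard fact that, for any final functor $\cA \to \cB$ between $\infty$-categories and any diagram $F \colon \cB \to \cX$, the induced map between colimits $\colim(F \circ f) \to \colim(F)$ is an equivalence whenever either side exists. Applying this to the final functor $\bDelta^\op \to \copara^\op$ and to the diagram
\[
F \colon \copara^\op \xlongra{\Cref{g20}^\op} \Quiv^\op \xlongra{\Rep_\cC} \cX
\]
yields the canonical equivalence
\[
\colim\Bigl(
\bDelta^\op
\xra{\chi^\op}
\Quiv^\op
\xra{\Rep_\cC}
\cX
\Bigr)
\xlongra{~\simeq~}
\colim\Bigl(
\copara^\op
\xra{\Cref{g20}^\op}
\Quiv^\op
\xra{\Rep_\cC}
\cX
\Bigr)
~=:~
\sHH(\cC)
~.
\]

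For existence, the left-hand colimit is indexed over $\bDelta^\op$, i.e.\ it is a geometric realization in $\cX$, and therefore exists by our standing assumption on $\cX$. The above equivalence then shows that the right-hand colimit exists as well, so $\sHH(\cC)$ is well-defined. Functoriality of $\sHH$ in $\cC$ is immediate from functoriality of $\Rep_\cC$ in $\cC$. There is no real obstacle here; the only subtlety is the direction of the finality---one must remember that initial functors between categories become final on opposites, and that it is finality (not initiality) that preserves colimits.
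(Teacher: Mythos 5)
Your proposal is correct and follows exactly the paper's reasoning: \Cref{t36} gives that $\bDelta^{\op}\to\para$ is final, so the $\para$-indexed colimit defining $\sHH(\cC)$ is computed by the geometric realization over $\bDelta^{\op}$, which exists by the standing hypothesis on $\cX$. The only (correct) elaboration you add beyond the paper's terse justification is the explicit remark that finality transfers existence of the colimit from the restricted diagram to the full one.
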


\begin{observation}
\label{t251}
Let $\cC$ be a category-object in $\cX$.
\begin{enumerate}
\item
The value of the functor $\bDelta \xra{\chi} \Quiv$ on $[0]$ is the cyclically-directed quiver $\chi([0])$ with a single object.  
This quiver $\chi([0])$ corepresents endomorphisms in $\cC$:
\[
\Rep_\cC \left( \chi\left( \left[0\right] \right) \right) 
~\simeq~
\End_\cC
~.
\]
Consequently, there is a canonical morphism in $\cX$:
\begin{equation}
\label{g22}
\End_\cC
\longrightarrow
\sHH(\cC)
~.
\end{equation}

\item
The quiver $\ast$ with a single object and no non-identity morphisms is a final object in $\Quiv$.
This quiver $\ast$ corepresents objects in $\cC$:
\[
\Rep_\cC(\ast)
~\simeq~
\Obj(\cC)
~.
\]
As so, the unique morphism $\chi\left( \left[0 \right] \right) \xra{!} \ast$ in $\Quiv$ corepresents the morphism
\[
\Obj(\cC)
\xra{~\text{``$c\mapsto \id_c$''}~}
\End_\cC
\]
that selects identity endomorphisms.  
Consequently, there is a canonical composite morphism in $\cX$:
\[
\Obj(\cC)
\xra{~\text{``$c\mapsto \id_c$''}~}
\End_\cC
\xra{~\Cref{g22}~}
\sHH(\cC)
~.
\]

\end{enumerate}

\end{observation}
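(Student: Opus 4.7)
The plan is to prove the two asserted identifications via the explicit formula for $\Rep_\cC$ established in Observation \ref{t22} and Proposition \ref{t35}, and then to read off the two claimed canonical morphisms as consequences of functoriality and the colimit structure of Hochschild homology.

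For part~(1), recall from \Cref{notn.functor.chi.from.Delta.to.Quiv} that $\chi([0]) \in \Quiv$ is the free category on the finite directed graph $\Gamma$ with a single vertex $v$ and a single non-degenerate edge $e$ satisfying $s(e) = v = t(e)$. Combining \Cref{t35} with the limit formula of \Cref{t22}, applied with $\Gamma^{(0)} = \{v\}$ and $\Gamma^{(1)} = \{e\}$, yields
\[
\Rep_\cC(\chi([0]))
~\simeq~
\Obj(\cC)
\underset{\Obj(\cC) \times \Obj(\cC)}\times
\Mor(\cC)
~=:~
\End_\cC,
\]
where the map from $\Obj(\cC)$ is the diagonal and the map from $\Mor(\cC)$ is $(s,t)$. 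The canonical morphism \Cref{g22} is then the structure map into the colimit of \Cref{t250} at the object $[0] \in \bDelta^\op$.

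For part~(2), applying the same formula to the quiver $\ast$, whose underlying directed graph has one vertex and no non-degenerate edges, collapses the fibered product to give $\Rep_\cC(\ast) \simeq \Obj(\cC)$ directly. Since $\ast$ is the final object of $\Quiv$ (Observation~\ref{t97}), there is a unique morphism $\chi([0]) \xra{!} \ast$; under the description of morphisms in $\Quiv$ from \Cref{t94}, it sends the non-degenerate edge $e$ to the length-$0$ directed path at the unique vertex of $\ast$, i.e.\ to an identity morphism. Applying $\Rep_\cC$ and using the naturality of the identifications above, the induced map $\Obj(\cC) \to \End_\cC$ is precisely the diagonal composed into the fibered product defining $\End_\cC$, which corresponds to the assignment $c \mapsto \id_c$. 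Post-composing with \Cref{g22} yields the displayed composite.

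The main — albeit minor — subtlety is verifying that the functoriality of $\Rep_\cC$ along $\chi([0]) \to \ast$ picks out identity endomorphisms and not some other section over the diagonal. This amounts to tracking the simplicial-structure maps in \Cref{t22}: the edge coordinate of $\Rep_\cC(\chi([0]))$ is pulled back along the unique map $\chi([0]) \to \ast$, which factors through the degeneracy $[0] \to [-1]$ (in the sense of collapsing the loop), forcing the edge to be filled by the degenerate simplex at the specified vertex, namely $\id_c$. All remaining verifications are formal consequences of naturality.
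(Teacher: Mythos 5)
Your proposal is correct and matches the paper's (implicit) reasoning: the statement is an Observation whose justification is exactly the unwinding you give, namely the limit formula for $\Rep_\cC$ applied to the one-loop quiver $\chi([0])$ and to $\ast$, plus the structure map into the colimit defining $\sHH(\cC)$. The only blemish is the phrase about a ``degeneracy $[0]\to[-1]$,'' where you clearly mean that the edge of $\chi([0])$ is sent to the length-$0$ path, so the edge coordinate is filled by the degeneracy $\cC([0])\to\cC([1])$ induced by $[1]\to[0]$.
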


\subsection{Connected quivers}

\begin{notation}
\label{d9}
The full $\infty$-subcategory 
\[
\Quiv^{\sf con}
~\subset~ 
\Quiv
\]
consists of those quivers that are connected (ie, those finite directed graphs $\Gamma$ whose geometric realization $|\Gamma|$ is connected).

\end{notation}

\begin{prop}
\label{t45}
The full $\infty$-subcategory $\Quiv^{\sf con} \subset \Quiv$ freely generates $\Quiv$ via finite categorical coproducts.
More precisely, the following assertions are true.
\begin{enumerate}
\item
The $\infty$-category $\Quiv$ admits finite coproducts.

\item
Let $\Gamma$ be a quiver.
There is a finite set $A$ and an $A$-indexed sequence of connected quivers $(\Gamma_\alpha)_{\alpha \in A}$ together with an equivalence in $\Quiv$:
\[
\underset{\alpha \in A}
\coprod
\Gamma_\alpha
\xra{~\simeq~}
\Gamma
~.
\]

\item
Let $\Gamma$ and $\Gamma'$ be quivers.
Let $\Xi$ be connected quivers.
The canonical map between spaces
\[
\Hom_{\Quiv}(\Xi,\Gamma)
\coprod
\Hom_{\Quiv}(\Xi, \Gamma')
\xra{~\simeq~}
\Hom_{\Quiv}(\Xi , \Gamma \amalg \Gamma' )
\]
is an equivalence.

\end{enumerate}

\end{prop}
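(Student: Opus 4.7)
The plan is to reduce everything to the explicit combinatorial description of morphisms in $\Quiv$ provided by \Cref{t94}, together with the fact (\Cref{t25}) that the functor $\digraphs \xra{\Free} \Quiv$ preserves finite coproducts. Statement~(1) is essentially already recorded in \Cref{t25}: finite coproducts in $\digraphs$ are given by disjoint unions of directed graphs, and $\Free$ carries these to coproducts in $\Quiv$.

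For statement~(2), I would use the standard combinatorial fact that any finite directed graph $\Gamma$ admits a canonical decomposition $\Gamma \simeq \coprod_{\alpha \in A} \Gamma_\alpha$ into its connected components, indexed by the finite set $A := (\Gamma^{(0)})/{\sim}$ of \Cref{d26}. Applying $\Free$ and invoking \Cref{t25} together with \Cref{d25} yields the corresponding equivalence $\coprod_{\alpha \in A} \Gamma_\alpha \xra{\simeq} \Gamma$ in $\Quiv$. Each summand $\Gamma_\alpha$ is a connected quiver: by \Cref{t92}, $|\Free(\Gamma_\alpha)| \simeq |\Gamma_\alpha|$, and $|\Gamma_\alpha|$ is connected by construction of the graph-theoretic component decomposition.

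For statement~(3), which is the substantive point, the strategy is to use \Cref{t94} to identify $\Hom_{\Quiv}(\Xi, \Gamma \amalg \Gamma')$ with the $0$-type of pairs consisting of a map $f^{(0)} \colon \Xi^{(0)} \to \Gamma^{(0)} \amalg \Gamma'^{(0)}$ together with, for each non-degenerate edge $e \in \Xi^{(1)}$, a directed path in $\Gamma \amalg \Gamma'$ from $f^{(0)}(s(e))$ to $f^{(0)}(t(e))$. The key observation is that in the disjoint union $\Gamma \amalg \Gamma'$ there are no non-degenerate edges between the two summands, so every directed path is confined to a single summand. Hence for each edge $e$ of $\Xi$, the vertices $f^{(0)}(s(e))$ and $f^{(0)}(t(e))$ lie in the same summand.

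This means the partition $\Xi^{(0)} = (f^{(0)})^{-1}(\Gamma^{(0)}) \sqcup (f^{(0)})^{-1}(\Gamma'^{(0)})$ is saturated for the equivalence relation generated by the source-target incidences on edges. By connectedness of $\Xi$ (\Cref{d26}), the quotient of $\Xi^{(0)}$ by this relation is a singleton, forcing one of the two preimages to be empty. Thus $f^{(0)}$ factors through either $\Gamma^{(0)}$ or $\Gamma'^{(0)}$, and — since the associated directed paths then automatically stay within the corresponding summand — the whole morphism factors uniquely through either $\Gamma \hookrightarrow \Gamma \amalg \Gamma'$ or $\Gamma' \hookrightarrow \Gamma \amalg \Gamma'$. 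Combined with the fact that the two inclusions realize the Hom-space into $\Gamma \amalg \Gamma'$ as a disjoint union, this yields the asserted equivalence. I do not anticipate a serious obstacle: the argument is a bookkeeping application of \Cref{t94} and the definition of connectedness, with the only content being the observation that paths and edges in a coproduct of directed graphs respect the partition.
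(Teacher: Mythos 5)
Your proof is correct, and parts (1) and (2) coincide with the paper's argument (both rest on \Cref{t25} and the component decomposition of a finite directed graph). For part (3), however, you take a genuinely more elementary route. The paper stays at the level of $\Cat_{(\infty,1)}$: it factors the comparison map through $\Hom_{\Cat_{(\infty,1)}}(\Free(\Xi), \Free(\Gamma)\amalg\Free(\Gamma'))$ and then invokes, without further proof, the general feature of $\Cat_{(\infty,1)}$ that functors out of a zig-zag-connected category into a coproduct split as a coproduct of mapping spaces. You instead unwind everything through the combinatorial description of $\Hom_{\Quiv}$ in \Cref{t94} and verify the splitting by hand: directed paths in a disjoint union of graphs cannot cross summands, so the induced partition of $\Xi^{(0)}$ is saturated under edge incidence, and connectedness collapses it to one part. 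The two arguments share the same underlying idea (connectivity forces factorization through a single cofactor), but yours trades the citation of an abstract fact for explicit bookkeeping, which makes it self-contained given \Cref{t94}; the paper's version is shorter and generalizes immediately to coproducts of arbitrary $(\infty,1)$-categories, not just quivers. Two small points you handle implicitly but should keep in mind: the length-zero path case (where $f^{(0)}(s(e))=f^{(0)}(t(e))$, hence trivially in one summand), and the fact that a connected quiver has nonempty vertex set per \Cref{d26} (the quotient must be a singleton, not empty), which is what makes the two subsets $\Hom_{\Quiv}(\Xi,\Gamma)$ and $\Hom_{\Quiv}(\Xi,\Gamma')$ disjoint inside $\Hom_{\Quiv}(\Xi,\Gamma\amalg\Gamma')$ and hence the comparison map injective as well as surjective.
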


\begin{proof}

\Cref{t25} immediately implies $\Quiv$ admits finite coproducts.

We now show that $\Quiv^{\sf con} \subset \Quiv$ generates $\Quiv$ via finite coproducts.
Let $\Gamma \in \Quiv$ be an object.
Through Observations~\ref{t20}, $\Gamma$ is the datum of a finite directed graph.
As a finite directed graph, there is a unique identification as a coproduct in $\digraphs$,
\[
\underset{\Gamma_\alpha \in \pi_0(|\Gamma|)}
\coprod
\Gamma_\alpha
\xra{~\simeq~}
\Gamma 
~,
\]
in which each $\Gamma_\alpha$ is a connected finite directed graph. 
\Cref{t25} implies the canonical morphism in $\Quiv$ is an equivalence:
\[
\underset{\Gamma_\alpha \in \pi_0(|\Gamma|)}
\coprod
\Gamma_\alpha
\xra{~\simeq~}
\Gamma 
~.
\]
This shows that $\Quiv^{\sf con} \subset \Quiv$ generates $\Quiv$ via finite categorical coproducts.

We now show that $\Quiv^{\sf con} \subset \Quiv$ {\it freely} generates $\Quiv$ via finite categorical coproducts.
For this, it remains to show that the restricted Yoneda functor
\[
\Quiv
\longrightarrow
\PShv\bigl( \Quiv^{\sf con} \bigr)
~,\qquad
\Gamma
\mapsto 
\Bigl(
\Xi
\mapsto 
\Hom_{\Quiv}(\Xi, \Gamma )
\Bigr)
~,
\]
preserves finite coproducts.  
So let $\Gamma , \Gamma'\in \Quiv$ and let $\Xi \in \Quiv^{\sf con}$.
We must show the canonical map $\Hom_{\Quiv}( \Xi , \Gamma )
\coprod
\Hom_{\Quiv}( \Xi , \Gamma' )
\to
\Hom_{\Quiv}(\Xi , \Gamma  \amalg \Gamma' )
$ is an equivalence between spaces.
Well, this canonical map canonically factors as a composition of maps:
\begin{eqnarray*}
\Hom_{\Quiv}( \Xi , \Gamma  \amalg \Gamma' )
&
\xra{~\simeq~}
&
\Hom_{\Cat_{(\infty,1)}}\bigl(  \Free(\Xi) , \Free(\Gamma \amalg \Gamma')\bigr)
\\
&
\xla{~\simeq~}
&
\Hom_{\Cat_{(\infty,1)}}\bigl(  \Free(\Xi) , \Free(\Gamma) \amalg \Free(\Gamma') \bigr)
\\
&
\xla{~\simeq~}
&
\Hom_{\Cat_{(\infty,1)}}\bigl(  \Free(\Xi) , \Free(\Gamma) \bigr)
\coprod
\Hom_{\Cat_{(\infty,1)}}\bigl( \Free(\Xi) , \Free(\Gamma') \bigr)
\\
&
\xla{~\simeq~}
&
\Hom_{\Quiv}( \Xi , \Gamma )
\coprod
\Hom_{\Quiv}( \Xi , \Gamma' )
~.
\end{eqnarray*}
The first and last maps are equivalences because $\Quiv \subset \Cat_{(\infty,1)}$ is a full $\infty$-subcategory.
The second map is an equivalence because, as \Cref{t25} implies, the functor the inclusion $\Quiv \hookrightarrow \Cat_{(\infty,1)}$ preserves finite coproducts.
It is a feature of $\Cat_{(\infty,1)}$ that the third map is an equivalence, using that any two objects in $\Free(\Xi)$ are related by a zig-zag of morphisms in $\Free(\Xi)$.

\end{proof}

\begin{cor}
\label{t60}
The category $\Quiv$ has the following features.
\begin{enumerate}
\item
Each object in $\Quiv$ is canonically identified as a finite coproduct of connected quivers.
More precisely, there is a canonical equivalence between commutative monoids:
\[
\Obj(\Quiv)
~\simeq~
\Free_{\sf Com}\Bigl(
\Obj(\Quiv^{\sf con})
\Bigr)
\underset{\rm Cor~\ref{t34}}{~\simeq~}
\Free_{\sf Com}\Bigl(
\underset{
[\Gamma] \in \pi_0 \Obj({\sf diGraph^{fin.con}})
}
\coprod
\sB\Aut_{{\sf diGraph^{fin.con}}}(\Gamma)
\Bigr)
~.
\]

\item
Let $\Gamma, \Xi\in \Quiv$ be objects.
Through the previous point, there are canonical finite sets $A$ and $B$ together with an $A$- and a $B$-indexed sequence $(\Gamma_\alpha)_{\alpha \in A}$ and $(\Xi_\beta)_{\beta \in B}$ of connected quivers together with identifications $\underset{\alpha\in A} \coprod \Gamma_\alpha \simeq \Gamma $ and $\underset{\beta \in B}\coprod \Xi_\beta \simeq \Xi$ in $\Quiv$.
Through these identifications, there is a canonical identification of the space of morphisms in $\Quiv$ from $\Gamma$ to $\Xi$:
\[
\Hom_{\Quiv}\bigl(
\Gamma 
,
\Xi
\bigr)
~\simeq~
\underset{\beta \in A} \prod
~
\underset{\alpha \in B} \coprod
~
\Hom_{\Quiv^{\sf con}}(\Gamma_\alpha , \Xi_\beta)
~.
\]

\end{enumerate}

\end{cor}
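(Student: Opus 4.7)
The plan is to deduce \Cref{t60} directly from \Cref{t45}, with only light additional bookkeeping. The key facts are: (a) $\Quiv$ admits finite coproducts computed as disjoint unions of finite directed graphs (\Cref{t25}); (b) every quiver decomposes as a finite coproduct of connected quivers (\Cref{t45}(2)); and (c) connected quivers are ``atoms'' in the sense that $\Hom_{\Quiv}(\Xi, -)$ turns binary coproducts into coproducts (\Cref{t45}(3)). With these in hand, both parts are essentially formal.

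For part (1), I would argue as follows. \Cref{t25} identifies the coproduct in $\Quiv$ with disjoint union of finite directed graphs, so in particular $\Obj(\Quiv)$ is a commutative monoid under coproduct (commutativity of $\amalg$ in $\digraphs$ gives the symmetry). \Cref{t45}(2) supplies a surjection of commutative monoids
\[
\Free_{\sf Com}\bigl(\Obj(\Quiv^{\sf con})\bigr) \longrightarrow \Obj(\Quiv).
\]
Injectivity (equivalently, the freeness) amounts to the classical statement that a finite directed graph decomposes uniquely up to permutation into its connected components: indeed, the set of connected components of a finite directed graph is well defined, and any equivalence in $\Quiv$ between disjoint unions $\coprod_\alpha \Gamma_\alpha \simeq \coprod_{\alpha'} \Gamma_{\alpha'}'$ of connected quivers must, by connectedness, restrict to equivalences on components (one can see this at the level of $\sE(-)$ or of $\Obj(-) \amalg \Mor(-)$ using \Cref{t94}). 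The second identification in the display is then just \Cref{t34}, which rewrites $\Obj(\Quiv^{\sf con})$ as the moduli space of finite connected directed graphs.

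For part (2), I would chain together two universal properties. The universal property of coproducts in $\Quiv$ gives
\[
\Hom_{\Quiv}\Bigl(\coprod_{\alpha \in A} \Gamma_\alpha,\, \Xi\Bigr) \xra{~\simeq~} \prod_{\alpha \in A} \Hom_{\Quiv}(\Gamma_\alpha, \Xi).
\]
Since each $\Gamma_\alpha$ is connected, \Cref{t45}(3) (applied inductively over the finite set $B$, with the base cases $|B|=0,1,2$ covered respectively by finality of $\emptyset$, triviality, and the statement itself) yields
\[
\Hom_{\Quiv}\Bigl(\Gamma_\alpha,\, \coprod_{\beta \in B} \Xi_\beta\Bigr) \xra{~\simeq~} \coprod_{\beta \in B} \Hom_{\Quiv^{\sf con}}(\Gamma_\alpha, \Xi_\beta),
\]
where the replacement $\Hom_{\Quiv} = \Hom_{\Quiv^{\sf con}}$ on the right is legitimate because $\Quiv^{\sf con} \subset \Quiv$ is full. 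Combining the two displayed equivalences gives the formula asserted in part (2).

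I do not anticipate any serious obstacle: \Cref{t45} provides all the genuinely categorical content, and what remains is to package the existence, uniqueness (up to permutation), and Hom-computation into the monoid-theoretic statements. The one minor care-point is verifying uniqueness of the connected-component decomposition in part (1); this is where one uses that an equivalence in $\Quiv \subset \Cat_{(\infty,1)}$ induces a bijection of underlying graphs via \Cref{t94} and \Cref{t34}, so the decomposition into connected pieces is intrinsic and therefore permutation-unique.
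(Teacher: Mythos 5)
Your proposal is correct and matches the paper's (implicit) argument: the paper states \Cref{t60} as a formal consequence of \Cref{t45}, whose ``freely generates via finite coproducts'' assertion --- established there by showing the restricted Yoneda functor $\Quiv \to \PShv(\Quiv^{\sf con})$ preserves finite coproducts --- already packages both the permutation-uniqueness of the decomposition in part (1) and the Hom formula in part (2). Your extra verification of uniqueness via \Cref{t94} and \Cref{t34} is harmless but not needed beyond what \Cref{t45} provides.
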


\subsection{The category $\M^{\sf con}$}
\label{sec.M.con}

Recall from Notation~\ref{d9} the full $\infty$-subcategory $\Quiv^{\sf con} \subset \Quiv$ consisting of those quivers that are connected.
Note that the functor $\w{\bLambda} \xra{\rm Obs~\ref{t6'}} \Quiv$ factors through the full $\infty$-subcategory $\Quiv^{\sf con} \subset \Quiv$.
The functor~(\ref{e37}) and \Cref{t6'} supply a composite functor:
\begin{equation}
\label{e21}
\copara
\longrightarrow
\bLambda
\xra{\rm monomorphism}
\w{\bLambda}
\underset{\rm Obs~\ref{t6'}}
{\xra{\rm monomorphism}}
\Quiv^{\sf con}
~,\qquad
\lambda
\longmapsto 
\ol{\lambda}
~.
\end{equation}

\begin{definition}
\label{d7}
The $\infty$-category $\M^{\sf con}$, as it is equipped with a functor 
\[
(\Quiv^{\sf con})^{\op} \xra{~\delta~}\M^{\sf con}
~, 
\]
is initial among all such for which 
the colimit of the composite functor $\para \xra{~(\ref{e21})~} (\Quiv^{\sf con})^{\op}  \xra{~\delta~} \M^{\sf con}$ exists.  
The \bit{oriented circle} is the colimit
\[
\SS^1
~:=~
\colim
\Bigl(
\para \xra{ (\ref{e21}) } (\Quiv^{\sf con})^{\op} \xra{ \delta } \M^{\sf con}
\Bigr)
~{}~
\underset{\rm Notation}{~\simeq~}
~{}~
\underset{\mu^{\circ} \in \para}
\colim~
\ol{\mu}
~\in~
\M^{\sf con}
~.
\]

\end{definition}

\begin{remark}\label{r2}
After \Cref{t36}, Definition~\ref{d7} grants the existence of a colimit of the composite functor
\[
\bDelta^{\op}
\xra{~(\ref{e15})~}
\para 
\xra{(\ref{e21})} 
(\Quiv^{\sf con})^{\op}
\xra{~\delta~} 
\M^{\sf con}
~.
\]

\end{remark}

Definition~\ref{d7} immediately yields the following.
\begin{observation}\label{t15}
Let $(\Quiv^{\sf con})^{\op} \xra{F} \cX$ be a functor to an $\infty$-category with geometric realizations.  
\begin{enumerate}
\item
There is a canonical extension among $\infty$-categories, initial among all such:
\[
\begin{tikzcd}[column sep=2cm]
(\Quiv^{\sf con})^{\op}
\arrow{r}{\forall ~ F}
\arrow{d}[swap]{\delta}
&
\cX
\\
\M^{\sf con}
\arrow[dashed, bend right=5]{ru}[sloped, swap]{\exists \text{ initial } \w{F}}
\end{tikzcd}
~.
\]

\item
This functor $\w{F}$ has the property that it preserves the colimit of $\para \xra{(\ref{e21})} (\Quiv^{\sf con})^{\op}$, which is to say the canonical morphism in $\cX$,
\[
\colim
\Bigl(
\para \xra{(\ref{e21})} (\Quiv^{\sf con})^{\op} \xra{F} \cX
\Bigr)
\xra{~\simeq~}
\w{F}
\Bigl(
\colim\bigl( 
\para \xra{ (\ref{e21}) } (\Quiv^{\sf con})^{\op} \xra{ \delta } \M^{\sf con}
\bigr)
\Bigr)
\]
is an equivalence.

\end{enumerate}

\end{observation}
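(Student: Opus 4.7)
This is a formal consequence of the universal property defining $\cM^{\sf con}$ (\Cref{d7}), combined with the observation that any $\infty$-category admitting geometric realizations automatically admits the colimit of a $\copara$-indexed diagram.

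Step~1. First I would verify that the hypothesis of \Cref{d7} is satisfied for the pair $(\cX, F)$: namely, that the composite $F \circ (\ref{e21})\colon \copara \to \cX$ admits a colimit. Combining \Cref{t36} with the self-equivalence $(-)^{\vee}\colon \copara \xra{\simeq} \copara^{\op}$ of \Cref{t61}(2) (as in the proof of \Cref{t62}) shows that the composite $\bDelta^{\op} \to \copara^{\op} \xra{\simeq} \copara$ is cofinal. Hence the colimit over $\copara$ of any functor agrees with the geometric realization (colimit over $\bDelta^{\op}$) of its restriction, and the latter exists by hypothesis on $\cX$. Given this, $(\cX, F)$ lies in the $\infty$-category in which $(\cM^{\sf con}, \delta)$ is declared initial per \Cref{d7}, and initiality supplies a canonical functor $\w{F}\colon \cM^{\sf con} \to \cX$ with $\w{F} \circ \delta \simeq F$; this is part~(1).

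Step~2. For part~(2), I would unpack the universal property of \Cref{d7} via the standard ``pushout-and-localize'' model for freely adjoining a single colimit to an $\infty$-category: one first forms the pushout $(\Quiv^{\sf con})^{\op} \coprod_{\copara} \copara^{\rcone}$ in $\Cat_{(\infty,1)}$ to adjoin a formal cone on the $\copara$-composite, then localizes at the morphism that declares this cone to be a colimit cone. Under this model, functors $\cM^{\sf con} \to \cX$ extending $F$ correspond to \emph{colimit} cones on $F \circ (\ref{e21})$ in $\cX$. Since the space of colimit cones on a fixed diagram is contractible, the canonical extension $\w{F}$ of part~(1) necessarily sends the cone in $\cM^{\sf con}$ witnessing $\SS^1$ as the $\copara$-colimit to the colimit cone in $\cX$; in particular $\w{F}(\SS^1) \simeq \colim(F \circ (\ref{e21}))$. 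This is exactly the content of part~(2).

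\textbf{Expected obstacle.} The argument is essentially a direct unpacking of definitions, and no substantive difficulty arises. The only delicate point is articulating the precise sense in which ``initial'' in \Cref{d7} forces the resulting extension to preserve the adjoined colimit; this is the content of the pushout-and-localize model invoked in Step~2. Once that model is in place, both~(1) and~(2) are immediate.
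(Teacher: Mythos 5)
Your Step~1 is exactly the paper's implicit reasoning (the paper offers nothing beyond ``Definition~\ref{d7} immediately yields the following''): since $\bDelta \to \copara$ is initial (\Cref{t36}), the opposite functor $\bDelta^{\op} \to \para$ is final, so the $\para$-indexed colimit of $F$ composed with \Cref{e21} is a geometric realization and hence exists in $\cX$; this puts $(\cX,F)$ in the class over which $(\M^{\sf con},\delta)$ is declared initial. Compare \Cref{t250} and \Cref{r2}. Two small quibbles: your composite ``$F \circ \Cref{e21}\colon \copara \to \cX$'' has its variance off (the diagram is indexed by $\para = \copara^{\op}$), and the detour through the self-equivalence $(-)^{\vee}$ of \Cref{t61} is unnecessary --- finality of $\bDelta^{\op} \to \para$ falls straight out of \Cref{t36}. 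Neither affects the argument.

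The one point to revisit is the model you invoke in Step~2. Freely adjoining the colimit of a diagram $p\colon K \to \cC$ is not obtained by forming the pushout $\cC \coprod_{K} K^{\rcone}$ and then ``localizing at the morphism that declares this cone to be a colimit cone'': the pushout adjoins only a \emph{formal} cone, whose cone point has no morphisms out of it into $\cC$, whereas the colimit object must satisfy $\Hom(\colim p, c) \simeq \lim_{k} \Hom(p(k),c)$; there is no single morphism whose inversion produces these. The paper's actual model is \Cref{t40}: $\M^{\sf con}$ is the full $\infty$-subcategory of $\PShv\bigl((\Quiv^{\sf con})^{\op}\bigr)$ spanned by the representables together with the colimit of the Yoneda composite, and under that model part~(2) is built in --- the subcategory contains the colimit cone by construction, and $\w{F}$ is the associated left Kan extension. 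The delicacy you flag is real: the footnote to \Cref{Endoms.of.circle.are.WWop} stresses that because $\bDelta^{\op} \to (\Quiv^{\sf con})^{\op}$ is not fully faithful, the adjoined object carries the nontrivial endomorphism monoid $\WW^{\op}$ (\Cref{t51'}), which is exactly the kind of structure a naive pushout-and-localize picture obscures. That said, your Step~2 \emph{conclusion} --- that extensions of $F$ to $\M^{\sf con}$ correspond to colimit cones on $F \circ \Cref{e21}$, so the initial extension carries the universal cone to a colimit cone --- is correct and is precisely what the (properly stated) universal property of \Cref{d7} encodes; only the construction you offer to witness it needs to be replaced by the presheaf model of \Cref{t40}.
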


Definition~\ref{d7} lends the following.
\begin{observation}
\label{t40}
\begin{enumerate}
\item[]

\item
The restricted Yoneda functor 
\begin{equation}
\label{e63}
\M^{\sf con}
\xra{~M\mapsto ~\Hom_{\M^{\sf con}}\bigl(\delta(-),M\bigr)~} \PShv((\Quiv^{\sf con})^{\op})
\end{equation}
is the functor of \Cref{t15} applied to $(\Quiv^{\sf con})^{\op} \xra{\Yo} \PShv((\Quiv^{\sf con})^{\op})$.

\item
In particular, this restricted Yoneda functor~(\ref{e63}) preserves the colimit of the functor $\para \xra{ (\ref{e21}) } (\Quiv^{\sf con})^{\op} \xra{\Yo} \PShv((\Quiv^{\sf con})^{\op})$.

\item
Furthermore, this functor~(\ref{e63}) is fully faithful;
its image is the smallest full $\infty$-subcategory that contains $(\Quiv^{\sf con})^{\op} \underset{\Yo}\subset \PShv((\Quiv^{\sf con})^{\op})$ and that contains the colimit of $\para \xra{(\ref{e21})} (\Quiv^{\sf con})^{\op} \xra{\Yo} \PShv((\Quiv^{\sf con})^{\op})$.

\item
In particular, the canonical functor $(\Quiv^{\sf con})^{\op} \xra{\delta} \M^{\sf con}$ is fully faithful. 

\end{enumerate}

\end{observation}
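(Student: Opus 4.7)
The plan is to first establish assertions~(3) and~(4) by identifying $\M^{\sf con}$ with an explicit full subcategory of $\PShv((\Quiv^{\sf con})^{\op})$, and then derive~(1) and~(2) as formal consequences.

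Let $\Yo \colon (\Quiv^{\sf con})^{\op} \hookrightarrow \PShv((\Quiv^{\sf con})^{\op})$ denote the Yoneda embedding, and set $L := \colim\bigl(\para \xra{\eqref{e21}} (\Quiv^{\sf con})^{\op} \xra{\Yo} \PShv((\Quiv^{\sf con})^{\op})\bigr)$, which exists since $\PShv((\Quiv^{\sf con})^{\op})$ is cocomplete. Let $\cN \subset \PShv((\Quiv^{\sf con})^{\op})$ be the smallest full subcategory containing the Yoneda image and $L$. Because $\cN$ is a full subcategory containing $L$, the diagram $\Yo \circ \eqref{e21}$ also admits colimit $L$ in $\cN$; hence the pair $(\cN, \Yo|_{(\Quiv^{\sf con})^{\op}})$ satisfies the defining condition of Definition~\ref{d7}, and the universal property of $\M^{\sf con}$ yields a unique functor $\Phi \colon \M^{\sf con} \to \cN$ with $\Phi \circ \delta \simeq \Yo$ and $\Phi(\SS^1) \simeq L$ (the latter by Observation~\ref{t15}(2)).

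The main step is to prove $\Phi$ is an equivalence. Essential surjectivity is immediate since the image contains both generators of $\cN$. For fully faithfulness I would construct an inverse $\Psi \colon \cN \to \M^{\sf con}$ sending $\Yo(\Gamma) \mapsto \delta(\Gamma)$ and $L \mapsto \SS^1$, using the Yoneda-lemma-based formulas
\[
\Hom_\cN(\Yo(\Gamma), L) \simeq \colim_\para \Hom_{(\Quiv^{\sf con})^{\op}}(\Gamma, \ol{\lambda}), \quad \Hom_\cN(L, \Yo(\Gamma)) \simeq \lim_\para \Hom_{(\Quiv^{\sf con})^{\op}}(\ol{\lambda}, \Gamma),
\]
and $\Hom_\cN(L, L) \simeq \lim_\para L(\ol{\lambda})$, combined with the analogous descriptions of mapping spaces in $\M^{\sf con}$ forced by the universal property of $\SS^1$ as a colimit. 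Under these correspondences, $\Psi$ on hom-spaces is canonically defined by mapping through $\delta$ and then invoking the relevant colimit/limit universal properties.

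Once $\Phi$ is an equivalence, (4) follows from fully faithfulness of $\Yo$ via $\delta \simeq \Phi^{-1} \circ \Yo$, and~(3) follows by the definition of $\cN$. Assertion~(1) is then formal: the restricted Yoneda functor extends $\Yo$ along $\delta$ (using~(4) to identify $\Hom_{\M^{\sf con}}(\delta(\Gamma'), \delta(\Gamma)) \simeq \Hom_{(\Quiv^{\sf con})^{\op}}(\Gamma', \Gamma)$), so by the uniqueness in Observation~\ref{t15}(1) it coincides with the functor produced there. Assertion~(2) is a direct instance of Observation~\ref{t15}(2) applied to $\cX = \PShv((\Quiv^{\sf con})^{\op})$. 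The main obstacle is the construction of $\Psi$: while its value on each individual hom-space is determined by a universal property, verifying that these assignments assemble coherently under composition requires a nontrivial coherence argument mediating between the limit and colimit expressions, particularly for composites factoring through $L$.
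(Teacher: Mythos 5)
Your overall target is the right one: identifying $\M^{\sf con}$ with the smallest full $\infty$-subcategory $\cN \subset \PShv((\Quiv^{\sf con})^{\op})$ containing the representables and the colimit $L$ is exactly the content of the observation. But the route you take to prove $\Phi$ is an equivalence has a genuine gap, and you have put your finger on it yourself without resolving it. Constructing $\Psi \colon \cN \to \M^{\sf con}$ by prescribing its values on objects and on individual hom-spaces does not define a functor of $\infty$-categories; the "nontrivial coherence argument" you defer is the entire difficulty, and there is no indication of how it would be carried out. Worse, one of the hom-space identifications you rely on is not actually available: you assert that the mapping spaces in $\M^{\sf con}$ are "forced by the universal property of $\SS^1$ as a colimit," but a colimit corepresents — its universal property determines maps \emph{out} of it, namely $\Hom_{\M^{\sf con}}(\SS^1, \delta(\Gamma)) \simeq \lim_{\para} \Hom_{\M^{\sf con}}(\delta(\ol{\lambda}), \delta(\Gamma))$ — and says nothing directly about $\Hom_{\M^{\sf con}}(\delta(\Gamma), \SS^1)$. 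Computing that space is precisely the nontrivial content of \Cref{t51} and \Cref{t52} later in the paper, so your argument begs the question at exactly the point where the work lies.

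The intended (and standard) argument runs in the opposite direction and avoids constructing an inverse altogether: one shows that $\cN$, equipped with the corestricted Yoneda functor $(\Quiv^{\sf con})^{\op} \to \cN$, \emph{itself satisfies} the defining universal property of \Cref{d7}. This is the general theory of freely adjoining colimits of a specified class of diagrams to an $\infty$-category, which realizes the result as exactly such a full subcategory of presheaves (cf.\! Lurie, HTT \S5.3.6); the verification uses left Kan extension along the Yoneda embedding into $\PShv(\cD)$ for an arbitrary target $\cD$ admitting the relevant colimit, not an ad hoc inverse. Once $\cN$ is known to be initial, your $\Phi$ is an equivalence because initial objects are essentially unique, and then assertions (1)--(4) do follow formally much as you describe (for (1), note additionally that under the equivalence the restricted Yoneda functor becomes the inclusion $\cN \hookrightarrow \PShv((\Quiv^{\sf con})^{\op})$ by the Yoneda lemma, and one identifies this inclusion with the initial extension of \Cref{t15}).
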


Each cyclicly directed graph is, in particular, connected.
Therefore, the functor $\w{\bLambda} \xra{\rm Obs~\ref{t6'}} \Quiv$ factors through the full $\infty$-subcategory $\Quiv^{\sf con} \subset \Quiv$.
Recall the \Cref{d4} of the continuous monoid $\WW$, and the functor $\w{\bLambda} \to \BW$ from \Cref{t95}(4).
\begin{observation}
\label{t39}
After \Cref{t119}, 
Definition~\ref{d7} immediately grants the existence of a left Kan extension:
\begin{equation}
\label{e66}
\begin{tikzcd}[column sep=1.5cm]
\epicyc
\arrow{d}[swap]{{\sf loc}}
\arrow{r}{{\rm Obs~\ref{t6'}}}[swap, xshift=0cm, yshift=-0.3cm]{\rotatebox{45}{$\Leftarrow$}}
&
(\Quiv^{\sf con})^{\op}
\arrow{r}{\delta}
&
\M^{\sf con}
\\
\BW^{\op}
\arrow[dashed, bend right=5]{rru}
\end{tikzcd}
\end{equation}
\end{observation}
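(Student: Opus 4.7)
The plan is to reduce existence of the left Kan extension to that of the single colimit in $\M^{\sf con}$ already guaranteed by \Cref{d7}. Since $\BW^{\op}$ has a single object $\ast$, existence of the left Kan extension is equivalent to existence of its pointwise value at $\ast$, which is the colimit in $\M^{\sf con}$ of the composite
\[
\w{\bLambda}^{\op} \times_{\BW^{\op}} (\BW^{\op})_{/\ast}
\longrightarrow
\w{\bLambda}^{\op}
\xra{\rm Obs~\ref{t6'}}
(\Quiv^{\sf con})^{\op}
\xra{\delta}
\M^{\sf con}.
\]

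First I would invoke \Cref{t119}(2) to see that ${\sf loc}\colon \w{\bLambda} \to \BW$ is a Cartesian fibration, so its opposite ${\sf loc}^{\op}$ is a coCartesian fibration. The key general fact I would then apply is that for any coCartesian fibration $p\colon E \to B$ and object $b \in B$, the coCartesian pushforward $E \times_B B_{/b} \to E_b$ is left adjoint to the fiber inclusion $E_b \hookrightarrow E \times_B B_{/b}$; consequently this inclusion is cofinal, being a right adjoint. Applied here, the displayed colimit is identified with the colimit over the fiber of ${\sf loc}^{\op}$ above $\ast$.

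By the pullback square in \Cref{t119}(1), this fiber is identified (after dualizing) with $\para$, and the restriction of our diagram to this fiber is precisely the composite $\para \xra{(\ref{e21})} (\Quiv^{\sf con})^{\op} \xra{\delta} \M^{\sf con}$ whose colimit is guaranteed to exist in $\M^{\sf con}$ by \Cref{d7}. This establishes existence of the left Kan extension, and as a byproduct identifies its value at $\ast$ with the oriented circle $\SS^1$.

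The only real care point is the orientation of the cofinality claim: one must use that for a \emph{coCartesian} (as opposed to Cartesian) fibration, the fiber inclusion is the \emph{right} adjoint to the pushforward, hence cofinal for colimits. Everything else is formal bookkeeping with opposite conventions, so I do not anticipate any substantive obstacle.
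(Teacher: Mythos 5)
Your proof is correct and is exactly the argument the paper leaves implicit (the statement is an unproved ``Observation''): reduce to the pointwise colimit at the unique object of $\BW^{\op}$, use that ${\sf loc}^{\op}$ is a coCartesian fibration (\Cref{t119}(2)) so the fiber inclusion $\para \hookrightarrow \w{\bLambda}^{\op}\times_{\BW^{\op}}(\BW^{\op})_{/\ast}$ is a right adjoint and hence final, identify the fiber with $\para$ via the pullback square of \Cref{t119}(1), and conclude from the colimit guaranteed by Definition~\ref{d7}. Your care about the adjoint/finality orientation is well placed and correctly resolved.
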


\begin{definition}
\label{d200}
For $\cA \la \cX \ra \cB$ a span among $\infty$-categories, their \bit{parametrized join} is the $\infty$-category:
\[
\cA
\underset{\cX}
\bigstar
\cB
~:=~
\cA \underset{\cX\times\{s\}} \coprod \cX \times c_1 \underset{\cX \times \{t\}} \coprod \cB
~.
\]

\end{definition}

\begin{observation}
\label{t201}
Let $\cA \la \cX \ra \cB$ be a span among $\infty$-categories.
Let $\cZ$ be an $\infty$-category.  
Unwinding the universal property of pushouts defining the parametrized join 
$\cA
\underset{\cX}
\bigstar
\cB$, 
the data of a functor 
\[
\cA
\underset{\cX}
\bigstar
\cB
\longrightarrow
\cZ
\]
is the data of a lax-commutative diagram:
\[
\begin{tikzcd}
\cX
\arrow{r}[swap, yshift=-0.4cm]{\rotatebox{45}{$\Leftarrow$}}
\arrow{d}
&
\cA
\arrow{d}
\\
\cB
\arrow{r}
&
\cZ
\end{tikzcd}
~.
\]
\end{observation}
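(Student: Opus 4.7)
The plan is to unwind the iterated pushout of \Cref{d200} using the universal property of pushouts together with the exponential adjunction for functor $\infty$-categories. By \Cref{d200}, the parametrized join is
\[
\cA \underset{\cX} \bigstar \cB \;=\; \cA \underset{\cX\times\{s\}} \coprod \cX \times c_1 \underset{\cX \times \{t\}} \coprod \cB,
\]
so the universal property of pushouts in $\Cat$ identifies a functor $\cA \underset{\cX}\bigstar \cB \to \cZ$ with the data of three functors $F \colon \cA \to \cZ$, $H \colon \cX \times c_1 \to \cZ$, and $G \colon \cB \to \cZ$, together with coherent identifications $H_{|\cX \times \{s\}} \simeq F \circ (\cX \to \cA)$ and $H_{|\cX \times \{t\}} \simeq G \circ (\cX \to \cB)$.

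Next, I invoke the exponential adjunction $\Fun(\cX \times c_1, \cZ) \simeq \Fun(c_1, \Fun(\cX, \cZ))$, whose target is the $\infty$-category of morphisms in $\Fun(\cX, \cZ)$, i.e.\! natural transformations between pairs of functors $\cX \to \cZ$. Under this equivalence, restriction to $\cX \times \{s\}$ and to $\cX \times \{t\}$ corresponds respectively to extracting source and target of a natural transformation. Consequently, the datum of $H$ together with its two boundary identifications is precisely the datum of a natural transformation between the composites $\cX \to \cA \xra{F} \cZ$ and $\cX \to \cB \xra{G} \cZ$, with direction dictated by the source-target labeling of $c_1$ in \Cref{d200}. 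Assembling these identifications yields exactly the lax-commutative square displayed.

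There is no genuine obstacle here: the assertion is a formal unpacking of the pushout universal property, paired with the standard identification of functors out of $\cX \times c_1$ with arrows in the functor $\infty$-category. The only point requiring care is orientation, namely ensuring that the 2-cell produced by $H$ points in the direction indicated by the diagram, which is determined once the endpoints of $c_1$ are fixed.
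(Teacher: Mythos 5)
Your proposal is correct and is essentially the same unwinding the paper intends: the paper offers no written proof for this Observation, simply asserting it follows from the universal property of the defining iterated pushout, and your argument (pushout universal property plus the exponential adjunction $\Fun(\cX\times c_1,\cZ)\simeq\Fun(c_1,\Fun(\cX,\cZ))$ identifying $H$ with a natural transformation between the two composites) is the standard way to make that precise. Your remark about checking the orientation of the 2-cell is the right point of care.
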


\begin{observation}
\label{r9}
Through \Cref{t201}, the lax-commutative diagram~\Cref{e66} is precisely the data of a functor between $\infty$-categories:
\begin{equation}
\label{f51}
(\Quiv^{\sf con})^{\op}
\underset{\epicyc}
\bigstar
\BW^{\op}
\longrightarrow
\M^{\sf con}
~.\footnote{
Equivalently, this is a functor
\[
\BW \underset{\w{\bLambda}} \bigstar \Quiv^{\sf con}
\longrightarrow
(\M^{\sf con})^{\op}
~.
\]
}
\end{equation}

\end{observation}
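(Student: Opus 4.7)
The statement is essentially a formal unwinding of Observation~\ref{t201} applied to a specific span. The strategy is first to identify the span, second to match the three ingredients of~\Cref{e66} against the data packaged by Observation~\ref{t201}, and finally to invoke it.

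I would take the span of $\infty$-categories
\[
(\Quiv^{\sf con})^{\op}
~\xla{~{\rm Obs}~\ref{t6'}~}~
\epicyc
~\xra{~{\sf loc}~}~
\BW^{\op}~,
\]
where the leftward arrow is (the opposite of) the monomorphism $\w{\bLambda}\hookrightarrow \Quiv^{\sf con}$ of Observation~\ref{t6'} (which factors through $\Quiv^{\sf con}$ because every cyclically-directed graph is connected), and the rightward arrow is the localization furnished by Corollary~\ref{t119}. By Definition~\ref{d200}, the parametrized join of this span is exactly the $\infty$-category $(\Quiv^{\sf con})^{\op}\underset{\epicyc}\bigstar\BW^{\op}$ appearing in~\Cref{f51}.

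Next, I would unpack the content of the lax-commutative diagram~\Cref{e66} into three pieces: the functor $(\Quiv^{\sf con})^{\op}\xra{\delta}\M^{\sf con}$ of Definition~\ref{d7}; the dashed functor $\BW^{\op}\to\M^{\sf con}$ produced as a left Kan extension by Observation~\ref{t39}; and the $2$-cell $\Leftarrow$, which is precisely the unit natural transformation exhibiting that left Kan extension. Observation~\ref{t201}, applied with $\cZ=\M^{\sf con}$ to the span displayed above, asserts that this package of data assembles into -- and is equivalent to -- a single functor out of the parametrized join, which is exactly the asserted~\Cref{f51}.

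The only bookkeeping check is that the orientation of the $2$-cell in~\Cref{e66} matches the orientation induced by the source-target inclusions $\cX\times\{s\}\hookrightarrow \cX\times c_1 \hookleftarrow \cX\times\{t\}$ of Definition~\ref{d200}: by our conventions, $\cA = (\Quiv^{\sf con})^{\op}$ sits at the source and $\cB = \BW^{\op}$ sits at the target, so the induced natural transformation runs from $\delta \circ {\rm Obs}~\ref{t6'}$ to (dashed functor)$\circ{\sf loc}$, which matches the direction of the unit of the left Kan extension. There is no substantive obstacle; the observation is a pure reformulation rather than requiring new content.
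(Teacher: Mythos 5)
Your proposal is correct and matches the paper's (implicit) argument exactly: the paper treats this as an immediate unwinding of \Cref{t201} applied to the span $(\Quiv^{\sf con})^{\op} \leftarrow \epicyc \rightarrow \BW^{\op}$, with the $2$-cell of \Cref{e66} being the unit of the left Kan extension from \Cref{t39}. Your orientation check on the $2$-cell is the right (and only) point requiring care, and you resolve it correctly.
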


\begin{prop}
\label{t51}
The functor between $\infty$-categories~\Cref{f51} is an equivalence.

\end{prop}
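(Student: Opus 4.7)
The plan is to verify that the functor in \Cref{f51} is both essentially surjective and fully faithful, using the presheaf embedding of $\M^{\sf con}$ from \Cref{t40}. Essential surjectivity is immediate: by \Cref{t40}(3), the objects of $\M^{\sf con}$ are the representables $\delta(\Gamma)$ for $\Gamma \in \Quiv^{\sf con}$ together with the formally adjoined object $\SS^1$; these are precisely the images of the objects of $(\Quiv^{\sf con})^{\op}$ and of the unique object of $\BW^{\op}$ in the parametrized join.

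For full faithfulness I would check the induced map on mapping spaces in four cases. The case of morphisms between two objects of $(\Quiv^{\sf con})^{\op}$ is the fully faithfulness of $\delta$ from \Cref{t40}(4). The case of endomorphisms of the cone object yields $\WW^{\op}$ on both sides: in the parametrized join tautologically as $\End_{\BW^{\op}}(*)$, and in $\M^{\sf con}$ by combining the left Kan extension construction of \Cref{t39} with the fact from \Cref{t119} that $\w{\bLambda} \to \BW^{\op}$ is a localization. The case of morphisms from the cone object into a quiver is vacuous in the parametrized join by the one-way nature of its defining pushout; the analogous emptiness in $\M^{\sf con}$ comes from identifying $\Hom_{\M^{\sf con}}(\SS^1, \delta(\Gamma)) = \lim_{\lambda} \Hom_{\Quiv^{\sf con}}(\Gamma, \bar\lambda)$ and projecting to the limit of vertex sets $\lim_\lambda \bar\lambda^{(0)}$, which is empty whenever $\Gamma^{(0)} \neq \emptyset$ because the multiple morphisms $\lambda \to \lambda'$ in $\copara$ differing by fractional translations (for instance the $\ell$ shifts $\ZZ \to \tfrac{1}{\ell}\ZZ$) produce incompatible constraints on any putative choice of vertex in $\bar{\lambda'}^{(0)}$.

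The main content is the case of morphisms from a connected quiver $\Gamma$ to the cone object $\SS^1$. In $\M^{\sf con}$, Yoneda presents this hom space as $\SS^1(\Gamma) = \colim_\lambda \Hom_{\Quiv^{\sf con}}(\Gamma, \bar\lambda)$, which via the duality of \Cref{t61}(2) identifies with $|\copara_{/\Gamma}|$; by \Cref{t52} the latter evaluates to $\Gamma^{(0)} \sqcup (\sZ^{\sf dir}(\Gamma) \setminus \Gamma^{(0)}) \times \WW$ as a $\WW^{\op}$-space. In the parametrized join, \Cref{t201} expresses the corresponding hom space as a weighted colimit indexed by $\w{\bLambda}$ mixing $\Hom_{(\Quiv^{\sf con})^{\op}}(\Gamma, \bar\lambda)$ with $\End_{\BW^{\op}}(*) \simeq \WW^{\op}$; using that $\w{\bLambda} \to \BW^{\op}$ is both a Cartesian fibration (\Cref{t120}) and a localization (\Cref{t119}), this weighted colimit unwinds to the same explicit formula. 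The principal obstacle is the $\WW^{\op}$-equivariant matching in this last case: verifying that the isogeny action on $\SS^1(\Gamma)$ inherited from the Kan extension on the $\M^{\sf con}$-side agrees with the action coming from $\End_{\BW^{\op}}(*)$ on the parametrized join side.
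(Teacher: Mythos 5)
Your overall architecture matches the paper's: reduce to the hom-space cases dictated by the structure of the parametrized join (Lemma~\ref{t200}), use \Cref{t40} for the quiver-to-quiver case and the localization $\w{\bLambda}\to\BW^{\op}$ of \Cref{t119} for full faithfulness of $\BW^{\op}\to\M^{\sf con}$, show $\Hom_{\M^{\sf con}}(\SS^1,\Gamma)=\emptyset$, and handle $\Hom(\Gamma,\SS^1)$ via the Cartesian-fibration and pullback statements of \Cref{t119}. Your emptiness argument --- project $\lim_{\mu\in\copara}\Hom_{\Quiv}(\Gamma,\ol{\mu})$ to $\lim_{\mu}\ol{\mu}^{(0)}$ and observe that the fractional shifts $\ZZ\to\tfrac{1}{\ell}\ZZ$ force this limit to be empty --- is a clean variant of the paper's, which instead exhibits two parallel morphisms in $\copara$ whose equalizer in $\Cat_{(\infty,1)}$ is empty; both are valid.

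The gap is in the last case. You compute $\Hom_{\M^{\sf con}}(\Gamma,\SS^1)$ and the corresponding hom space in the parametrized join separately and observe that they ``unwind to the same explicit formula,'' but proving that \Cref{f51} is an equivalence requires showing that the \emph{canonical} map between these two spaces is an equivalence; two spaces that are each abstractly equivalent to $\Gamma^{(0)}\amalg\bigl(\sZ^{\sf dir}(\Gamma)\setminus\Gamma^{(0)}\bigr)\times\WW$ admit many maps between them that are not equivalences. You flag ``the principal obstacle'' as matching the $\WW^{\op}$-actions, but that equivariance is only needed later (for \Cref{t51'}(4)); what is actually missing for \Cref{t51} itself is an identification of the comparison map. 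The paper closes this by never leaving that map: it factors the canonical map $\Hom_{\rm join}(\Gamma,\SS^1)\to\Hom_{\M^{\sf con}}(\Gamma,\SS^1)$ through $\bigl|(\para)^{\Gamma/}\bigr|$ on both sides --- on the join side using that $\epicyc\to\BW^{\op}$ is a coCartesian fibration together with the pullback square of \Cref{t119}, and on the $\M^{\sf con}$ side using the right-fibration replacement and the identification ${\sf St}\bigl((\para\to(\Quiv^{\sf con})^{\op})^{\widehat{}}_{\sf r.fib}\bigr)\simeq\SS^1$, i.e.\ the chain (\ref{e51})--(\ref{e54}). You would need to supply this step, or some equivalent tracking of the universal cone, to complete the argument. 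Two smaller points: there is a variance slip in your formula ($\SS^1(\Gamma)=\colim_{\mu^{\circ}\in\para}\Hom_{\Quiv}(\ol{\mu},\Gamma)$, maps \emph{from} the cyclic quivers into $\Gamma$, which is what matches your target $\bigl|\copara_{/\Gamma}\bigr|$), and the explicit evaluation via \Cref{t52} is a detour not needed for \Cref{t51} --- it is only used later in \Cref{t51'}.
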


\begin{lemma}
\label{t200}
Let $\cA \la \cX \ra \cB$ be a span among $\infty$-categories.
The $\infty$-category 
$
\cA
\underset{\cX}
\bigstar
\cB
$
is characterized by the following properties:
\begin{enumerate}
\item
There are fully faithful functors,
\[
\cA
~\hookrightarrow~ 
\cA
\underset{\cX}
\bigstar
\cB
~\hookleftarrow~
\cB
~,
\]
which are jointly surjective on objects.

\item
Let $a\in \cA$ and $b\in \cB$, regarded as objects in $\cA
\underset{\cX}
\bigstar
\cB$.
The space of morphisms
\[
\Hom_{\cA
\underset{\cX}
\bigstar
\cB}
(b,a)
~=~
\emptyset
\]
while the space of morphisms
\[
\Hom_{\cA
\underset{\cX}
\bigstar
\cB}
(a,b)
~=~
\Bigl|
\cX^{a/}_{/b}
~:=~
\cA^{a/}
\underset{\cA}
\times
\cX
\underset{\cB}
\times
\cB_{/b}
\Bigr|
\]
is the $\infty$-groupoid-completion of the over-under $\infty$-category, with the evident $(\cA,\cB)$-bimodule structure.

\item
If, for each $b\in \cB$, the functor $\cX_{/b} \to \cA$ is a Cartesian fibration, then
\[
\Hom_{\cA
\underset{\cX}
\bigstar
\cB}
(a,b)
~=~
\Bigl|
\cX^{|a}_{/b}
~:=~
\{a\}
\underset{\cA}
\times
\cX
\underset{\cB}
\times
\cB_{/b}
\Bigr|
~.
\]
If, for each $a\in \cA$, the functor $\cX^{a/} \to \cB$ is a coCartesian fibration, then
\[
\Hom_{\cA
\underset{\cX}
\bigstar
\cB}
(a,b)
~=~
\Bigl|
\cX^{a/}_{|b}
~:=~
\{a\}
\underset{\cA}
\times
\cX
\underset{\cB}
\times
\{b\}
\Bigr|
~.
\]
If $\cA \la \cX \ra \cB$ is a bifibration, then
\[
\Hom_{\cA
\underset{\cX}
\bigstar
\cB}
(a,b)
~=~
\Bigl|
\cX^{|a}_{|b}
~:=~
\{a\}
\underset{\cA}
\times
\cX
\underset{\cB}
\times
\{b\}
\Bigr|
~.
\]

\end{enumerate}

\end{lemma}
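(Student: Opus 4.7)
The plan is to identify $\cA \underset{\cX}\bigstar \cB$ as the $\infty$-category equipped with a functor to $c_1$ whose classifying profunctor $\cA^\op \times \cB \to \Spaces$ is $(a,b) \mapsto |\cX^{a/}_{/b}|$. Since an $\infty$-category over $c_1$ is determined by its two fibers together with its profunctor, properties~(1) and~(2) together will pin $\cA \underset{\cX}\bigstar \cB$ down up to equivalence, and~(3) is then a straightforward simplification.

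First I would construct the canonical functor $\pi\colon \cA \underset{\cX}\bigstar \cB \to c_1$ from the compatible projections $\cA \to \{0\}$, $\cB \to \{1\}$, and $\cX \times c_1 \xra{\pr_2} c_1$. Computing fiberwise, over $0 \in c_1$ the pushout reduces to $\cA \underset{\cX}\coprod \cX \simeq \cA$ (the $\cB$-term contributes $\emptyset$, and the $\cX \times c_1$ term contributes $\cX$ glued into $\cA$ along $p$ and the identity), and dually over $1$ to $\cB$. Fibers of any functor to $c_1$ include fully faithfully, establishing property~(1); joint surjectivity on objects is immediate from the pushout description. The vanishing $\Hom(b,a) = \emptyset$ in~(2) then follows from $\Hom_{c_1}(1,0) = \emptyset$. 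For $\Hom(a,b)$, the identity functor on $\cA \underset{\cX}\bigstar \cB$ corresponds via \Cref{t201} to the tautological lax square, which supplies canonical morphisms $\tau_x\colon p(x) \to q(x)$ for each $x \in \cX$ (with $p, q$ the legs of the span); composition then yields a natural map $\Hom_\cA(a, p(x)) \times \Hom_\cB(q(x), b) \to \Hom_{\cA \underset{\cX}\bigstar \cB}(a, b)$. The universal property of the pushout exhibits the target as the coend $\int^{x \in \cX} \Hom_\cA(a, p(x)) \times \Hom_\cB(q(x), b)$, which by unstraightening equals $|\cX^{a/}_{/b}|$.

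Property~(3) reduces to standard facts about (co)Cartesian fibrations. If $\cX_{/b} \to \cA$ is Cartesian, the fiber inclusion $\{a\} \times_\cA \cX_{/b} \hookrightarrow \cA^{a/} \times_\cA \cX_{/b}$ is a left adjoint with right adjoint $(e, f\colon a \to \pi(e)) \mapsto f^*e$ given by Cartesian lifts; the induced equivalence on $\infty$-groupoid completions gives $|\cX^{|a}_{/b}| \xra{\simeq} |\cX^{a/}_{/b}|$. The coCartesian case is dual, using coCartesian lifts to present the analogous inclusion as a right adjoint, and the bifibration case applies both simplifications to yield $|\cX^{|a}_{|b}|$. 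The main obstacle is the coend identification in~(2): rigorously computing the hom-space of a pushout of $\infty$-categories requires care, since such pushouts involve a non-trivial localization. A clean route is to pass to the equivalent world of correspondences ($\infty$-categories over $c_1$, equivalently profunctors $\cA^\op \times \cB \to \Spaces$), in which the pushout defining $\cA \underset{\cX}\bigstar \cB$ becomes a pushout of profunctors that is easily computed pointwise in $\Spaces$ to produce the stated coend.
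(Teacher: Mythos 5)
Your strategy is essentially the paper's: both arguments reduce the computation to the theory of correspondences ($\infty$-categories over $[1]$) and identify $\Hom(a,b)$ with the coend/groupoid-completion formula, and your treatment of part~(3) via the adjunctions $\cX^{|a}_{/b} \hookrightarrow \cX^{a/}_{/b} \hookleftarrow \cX^{a/}_{|b}$ is exactly what the paper does. The difference is in how the hom-space formula is justified. The paper glues the reversed cylinder of $\cX \to \cA$ (a Cartesian fibration over $\{0<1\}$) to the cylinder of $\cX \to \cB$ (a coCartesian fibration over $\{1<2\}$) to produce an exponentiable fibration $\cE \to [2]$, exhibits $\cA \underset{\cX}\bigstar \cB$ as the pullback of $\cE$ along the long edge $\{0<2\}$, and then quotes Lemma~5.16 of~\cite{AFR2} for the resulting hom-spaces; this buys both statement~(1) (the fibers are manifestly $\cA$ and $\cB$) and statement~(2) in one stroke. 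Your route works directly with the defining pushout over $c_1$, which is cleaner to state but glosses two points. First, your claim that the pushout of profunctors is ``computed pointwise in $\Spaces$'' is not literally correct: the underlying categories vary along the span, so what is really happening is that the identity profunctor $\Hom_\cX(-,-)$ is left Kan extended along $p^\op \times q$ to a profunctor on $\cA^\op \times \cB$; that left Kan extension is the coend you write down, and one still needs the unstraightening argument to identify it with $|\cX^{a/}_{/b}|$. Second, your fiber computation for~(1) implicitly assumes that base change along $\{0\} \hookrightarrow c_1$ commutes with the pushout; this is true (the inclusion of a sieve is exponentiable), but it is precisely the kind of point the paper's explicit cylinder construction is designed to make transparent. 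Neither issue is fatal, but both would need to be addressed to make the argument self-contained.
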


\begin{proof}
Statement~(3) follows from~(2), using that the respective fully faithful functors $\cX^{|a}_{/b} \hookrightarrow \cX^{a/}_{/b}$ and $\cX^{a/}_{|b} \hookrightarrow \cX^{a/}_{/b}$ are adjoints if the functors $\cX \to \cA$ and $\cX \to \cB$ respectively satisfy the named conditions.

Consider the Cartesian fibration
\[
{\sf Cylr}( \cA \la \cX)
~:=~
\cA
\underset{\cX \times \{0\}} \coprod
\cX \times \{0<1\}
\xra{~\pr~}
\{0<1\}
\]
that is the unstraightening of the functor
$\{0<1\}^{\op} \xra{ \langle \cA \la \cX \rangle } \Cat$.
Consider the coCartesian fibration
\[
{\sf Cyl}(\cX \to \cB)
~:=~
\cX \times \{1<2\}
\underset{\cX \times \{2\}} \coprod \cB
\xra{~\pr~}
c_1
=
\{1<2\}
\]
that is the unstraightening of the functor
$\{1<2\} \xra{ \langle \cX \to \cB \rangle } \Cat$.
Consider the functor between pushouts:
\[
\cE
~:=~
{\sf Cylr}( \cA \la \cX)
\underset{\cX \times \{1\}}
\coprod
{\sf Cyl}(\cX \to \cB)
\longrightarrow
\{0<1\}
\underset{\{1\}}
\coprod
\{1<2\}
~=~
[2]
~.
\]
By definition of exponentiable fibrations (see~\cite[Sect. 5.3]{AFR2}), 
this functor $\cE \to [2]$ is an exponentiable fibration. 
By construction, there is a canonical functor filling a diagram among $\infty$-categories,
\[
\begin{tikzcd}
\cA
\coprod
\cB
\arrow{r}
\arrow{d}
&
\cA
\underset{\cX}
\bigstar
\cB
\arrow[dashed]{r}
\arrow{d}
&
\cE
\arrow{d}
\\
\{s\} \coprod \{t\}
\arrow{r}
&
c_1
\arrow{r}[swap]{\brax{0<2}}
&
{[2]}
\end{tikzcd}
~,
\]
in which both squares are pullbacks. 
Statement~(1) follows from the left square being pullback.  
Statement~(2) follows from Lemma 5.16 of~\cite{AFR2}.  
\end{proof}

\begin{proof}[Proof of Proposition~\ref{t51}]

We use the description of parametrized joins given in Lemma~\ref{t200}.
Namely, we show that the canonical functors $(\Quiv^{\sf con})^{\op} \to \M^{\sf con}$ and $\BW^{\op} \to \M^{\sf con}$ are fully faithful, then identify the spaces of morphisms in $\M^{\sf con}$ between objects in $(\Quiv^{\sf con})^{\op}$ and $\BW^{\op}$.  
\Cref{t40} implies the canonical functor $(\Quiv^{\sf con})^{\op} \to \M^{\sf con}$ is fully faithful.  
We next show the functor $\BW^{\op} \to \M^{\sf con}$ of \Cref{t39} is fully faithful.  
Through Observations~\ref{t39} and~\ref{t40}, this functor is the left Kan extension along the localization of \Cref{t119}:
\[
\begin{tikzcd}[column sep=1.5cm]
\epicyc
\arrow{d}[swap]{{\sf loc}}
\arrow{r}{{\rm Obs~\ref{t6'}}}[swap, xshift=0cm, yshift=-0.3cm]{\rotatebox{45}{$\Leftarrow$}}
&
(\Quiv^{\sf con})^{\op}
\arrow{r}{\Yo}
&
\PShv((\Quiv^{\sf con})^\op)
\\
\BW^{\op}
\arrow[dashed, bend right=5]{rru}
\end{tikzcd}
~.
\]
By the universal property of left Kan extensions, the above lax-commutative diagram factors as the lax-commutative diagram
\begin{equation}
\label{e56}
\begin{tikzcd}[column sep=2.5cm, row sep=1.5cm]
\epicyc
\arrow{d}[swap]{{\sf loc}}
\arrow{r}{\Yo}[swap, xshift=0cm, yshift=-0.8cm]{\rotatebox{90}{$\Leftarrow$}}
&
\PShv(\epicyc)
\arrow{r}{{\PShv({\rm Obs~\ref{t6'})}}}
&
\PShv((\Quiv^{\sf con})^\op)
\\
\BW^\op
\arrow{r}[swap]{\Yo}
&
\PShv(\BW^\op)
\arrow{u}[swap]{\text{restriction}}
\end{tikzcd}
~.
\end{equation}
Now, the Yoneda lemma gives that the two functors labeled as so are fully faithful.  
The upward functor is fully faithful because it is restriction along the localization of \Cref{t119}
The top right horizontal functor is fully faithful because it is induced by the functor $\epicyc \to (\Quiv^{\sf con})^{\op}$ that \Cref{t6'} states is fully faithful.
We conclude that the composite functor $\BW^{\op} \to \PShv((\Quiv^{\sf con})^{\op})$ from the bottom left term to the top right term in~(\ref{e56}) is fully faithful, as desired.

Now, let $\Gamma \in \Quiv^{\sf con}$ be a connected quiver.
Recall from Definition~\ref{d7} the object $\SS^1 \in \BW^{\op}\subset \M^{\sf con}$, which represents the unique equivalence class of an object in $\BW^{\op}$.  
We now show $\Hom_{\M^{\sf con}}(\SS^1 , \Gamma) = \emptyset$ for all $\Gamma\in \Quiv^{\sf con}$.
By Definition~\ref{d7},
$\SS^1 := \colim(\para \to \cyclic \to \epicyc \subset (\Quiv^{\sf con})^{\op} \subset \M^{\sf con})$.
So $\Hom_{\M^{\sf con}}(\SS^1 , \Gamma)$ is the space of extensions as in the diagram among $\infty$-categories:
\[
\begin{tikzcd}
\para \amalg \{\Gamma\}
\arrow{r}
\arrow{d}
&
\cyclic \amalg (\Quiv^{\sf con})^{\op}
\arrow{r}
&
\epicyc \amalg (\Quiv^{\sf con})^{\op}
\arrow{r}
&
(\Quiv^{\sf con})^{\op}
\\
(\para)^{\rcone}
\arrow[dashed, bend right=5]{rrur}
\end{tikzcd}
~.
\]
It is therefore sufficient to show there are no extensions as in the diagram among $\infty$-categories:
\begin{equation}\label{e46}
\begin{tikzcd}
\copara
\arrow{r}
\arrow{d}
&
\bLambda
\arrow{r}
&
\w{\bLambda}
\arrow{r}
&
\Quiv^{\sf con}
\\
(\copara)^\lcone
\arrow[dashed]{rrru}[sloped, swap]{\nexists}
\end{tikzcd}
~.
\end{equation}
Consider the $\infty$-category $\cI := ( - \rightrightarrows +)$ consisting of two objects and two parallel non-identity morphisms.  
Consider the functor $\cI \to \copara$ selecting $[1]^{\star \ZZ} \overset{\sf top}{\underset{\sf bottom}\rightrightarrows } [1]^{\star \ZZ}$ in which ${\sf top}(i,k):= (i,1)$ and ${\sf bottom}(i,k):= (i,0)$.  
By explicit computation, observe that the equalizer of ${\sf top}$ and ${\sf bottom}$ is empty:
\[
\lim
\Bigl(
\cI
\to
\copara
\to
\bLambda
\to
\w{\bLambda}
\to
\Quiv^{\sf con}
\to
\Cat_{(\infty,1)}
\Bigr)
~=~
\emptyset
~.
\]
Recall that, for $\cC\in \Cat_{(\infty,1)}$, the space $\Hom_{\Cat_{(\infty,1)}}(\cC , \emptyset) \neq \emptyset$ is nonempty if and only if $\cC = \emptyset$ is initial.  
Using this, the existence of the canonical functor
\[
\lim
\Bigl(
\copara
\to
\bLambda
\to
\w{\bLambda}
\to
\Quiv^{\sf con}
\to
\Cat_{(\infty,1)}
\Bigr)
~\longrightarrow~
\lim
\Bigl(
\cI
\to
\copara
\to
\bLambda
\to
\w{\bLambda}
\to
\Quiv^{\sf con}
\to
\Cat_{(\infty,1)}
\Bigr)
\]
therefore implies
\[
\lim
\Bigl(
\copara
\to
\bLambda
\to
\w{\bLambda}
\to
\Quiv^{\sf con}
\to
\Cat_{(\infty,1)}
\Bigr)
~=~
\emptyset
~.
\]
Using this recollection again, in turn, implies there are no extensions as in diagram~\Cref{e46}.

We now show that the canonical map between spaces,
\begin{equation}
\label{k3}
\Hom_{
(\Quiv^{\sf con})^{\op}
\underset{\w{\bLambda}^{\op}} \bigstar 
\BW^{\op}
}
\bigl(
\Gamma
,
\SS^1
\bigr)
\longrightarrow
\Hom_{\M^{\sf con}}(\Gamma , \SS^1)
~,
\end{equation}
is an equivalence.
We now explain that this map canonically factors as a composite equivalence:
\begin{align}
\label{e48}
\Hom_{
(\Quiv^{\sf con})^{\op}
\underset{\w{\bLambda}^{\op}} \bigstar 
\BW^{\op}
}
\bigl(
\Gamma
,
\SS^1
\bigr)
\xla{~\simeq~}
&
\bigl |
(\epicyc)_{/\SS^1}^{\Gamma/}
\bigr |
\\
\label{e49}
\xla{~\simeq~}
&
\bigl |
({\epicyc}_{|\SS^1})^{\Gamma/}
\bigr |
\\
\label{e50}
\xla{~\simeq~}
&
\bigl |
( \para )^{\Gamma/}
\bigr |
\\
\label{e51}
\xra{~\simeq~}
&
\Bigl(
( \para \to (\Quiv^{\sf con})^{\op} )^{\widehat{}}_{\sf r.fib}
\Bigr)_{|\Gamma}
\\
\label{e52}
\xla{~\simeq~}
&
\Hom_{{\sf RFib}_{(\Quiv^{\sf con})^{\op}}}\Bigl(
((\Quiv^{\sf con})^{\op})_{/\Gamma}
,
( \para \to (\Quiv^{\sf con})^{\op} )^{\widehat{}}_{\sf r.fib}
\Bigr)
\\
\label{e53}
\xra{~\simeq~}
&
\Hom_{\PShv((\Quiv^{\sf con})^{\op})}\Bigl(
\Gamma
,
{\sf St}\bigl(
( \para \to (\Quiv^{\sf con})^{\op} )^{\widehat{}}_{\sf r.fib}
\bigr)
\Bigr)
\\
\label{e54}
\xra{~\simeq~}
&
\Hom_{\PShv((\Quiv^{\sf con})^{\op})}\Bigl(
\Gamma
,
\SS^1
\Bigr)
\\
\label{e55}
\xla{~\simeq~}
&
\Hom_{\M^{\sf con}}\Bigl(
\Gamma
,
\SS^1
\Bigr)
~.
\end{align}
The equivalence~(\ref{e48}) is the characterization of hom-spaces in parametrized joins (\Cref{t200}(2)).
The equivalence~(\ref{e49}) is the reduction of hom-spaces of parametrized joins (\Cref{t200}(3)) facilitated by \Cref{t119} which implies the functor $\epicyc \to \BW^{\op}$ is a coCartesian fibration. 
The equivalence~(\ref{e50}) follows from the outer square of \Cref{t119} being a pullback.
The $\infty$-category $( \para \to (\Quiv^{\sf con})^{\op} )^{\widehat{}}_{\sf r.fib}$ is the domain of the right-fibrationing of the functor $\para \to (\Quiv^{\sf con})^{\op}$.  
It is a localization of the Cartesian-fibrationing of the functor $\para \to (\Quiv^{\sf con})^{\op}$, which is the composite functor 
\[
\Ar\left( (\Quiv^{\sf con})^{\op} \right)\underset{(\Quiv^{\sf con})^{\op}} \times \para
\xra{~\pr~}
\Ar\left( (\Quiv^{\sf con})^{\op} \right)
\xra{~\ev_s~}
(\Quiv^{\sf con})^{\op}
~.\footnote{The implicit functor in this pullback is $\Ar\left( (\Quiv^{\sf con})^{\op} \right) \xra{\ev_t} (\Quiv^{\sf con})^{\op}$, target evaluation.}
\] 
The fiber of this Cartesion-fibrationing over $\Gamma \in (\Quiv^{\sf con})^{\op}$ therefore the $\infty$-undercategory $(\para)^{\Gamma/}$.
Proposition~3.25 of~\cite{fibrations} states that the fiber over $\Gamma \in (\Quiv^{\sf con})^{\op}$ of this right-fibrationing is the $\infty$-groupoid-completion of this $\infty$-undercategory $(\para)^{\Gamma/}$.
The right-fibrationing of $\ast \xra{\langle \Gamma \rangle} (\Quiv^{\sf con})^{\op}$ is $((\Quiv^{\sf con})^{\op})_{/\Gamma} \xra{\rm forget} (\Quiv^{\sf con})^{\op}$.
The equivalence~(\ref{e51}) then follows from the universal property of right-fibrationing.
The equivalence~(\ref{e52}) is a consequence of the fact that the Straightening construction is fully faithful.
The equivalence~(\ref{e53}) is an instance of the fact that the straightening of a right-fibrationing is identical with the colimit:
\[
{\sf St}\Bigl(
\cK^{\widehat{}}_{\sf r.fib}
\to 
\cC
\Bigr)
~\simeq~
\colim\bigl( \cK \to \cC \xra{\Yo} \PShv(\cC) \bigr) 
~.
\]
The equivalence~(\ref{e54}) is the definition of the object 
$
\SS^1\in \M^{\sf con} 
\underset{\rm Obs~\ref{t40}(3)}
\subset \PShv\bigl(\Quiv^{\sf con})^{\op} \bigr)
$ 
from Definition~\ref{d7}.
The equivalence~(\ref{e55}) follows from \Cref{t40}(3).
Finally, the sequence of maps indeed factors the map \Cref{k3} because, from the definition of $\SS^1$ as a colimit in $\M^{\sf con} \subset \PShv\left( \left(\Quiv^{\sf con}\right)^{\op}\right) \simeq {\sf RFib}_{(\Quiv^{\sf con})^{\op}}$, the diagram among spaces
{\Small
\[
\xymatrix{
\bigl |
( \para )^{\Gamma/}
\bigr |
\ar[d]_-{\Cref{e48}-\Cref{e50}}
\ar[r]^-{\Cref{e51}}
&
\Bigl(
( \para \to (\Quiv^{\sf con})^{\op} )^{\widehat{}}_{\sf r.fib}
\Bigr)_{|\Gamma}
\ar[r]^-{\Cref{e52}}
&
\Hom_{{\sf RFib}_{(\Quiv^{\sf con})^{\op}}}\Bigl(
((\Quiv^{\sf con})^{\op})_{/\Gamma}
,
( \para \to (\Quiv^{\sf con})^{\op} )^{\widehat{}}_{\sf r.fib}
\Bigr)
\ar[d]^-{\Cref{e53}-\Cref{e55}}
\\
\Hom_{
(\Quiv^{\sf con})^{\op}
\underset{\w{\bLambda}^{\op}} \bigstar 
\BW^{\op}
}
\bigl(
\Gamma
,
\SS^1
\bigr)
\ar[rr]^-{\Cref{k3}}
&&
\Hom_{\M^{\sf con}}(\Gamma , \SS^1)
}
\]
}
commutes.
This completes the proof of the proposition.

\end{proof}

\begin{cor}
\label{t51'}
The $\infty$-category $\M^{\sf con}$ has the following explicit description.
\begin{enumerate}

\item
$\M^{\sf con}$ is a $(2,1)$-category.

\item
Each object in $\M^{\sf con}$ is either a connected finite directed graph or equivalent with $\SS^1$.

\item
For $M,N \in \M^{\sf con}$, the space of morphisms in $\M^{\sf con}$ is canonically identified as
\[
\Hom_{\M^{\sf con}}(M, N)
~\simeq~
\begin{cases}
M^{(0)}
\coprod
\bigl( \sZ^{\sf dir}(M) \setminus M^{(0)} \bigr)
\times
\WW
&
,
\text{ if }
M \in \Quiv^{\sf con}
~\&~
N \simeq \SS^1
\\
\Hom_{\Quiv}(N,M)
&
,
\text{ if }
M,N \in \Quiv^{\sf con}
\\
\WW
&
,
\text{ if }
M , N \simeq \SS^1
\\
\emptyset
&
,
\text{ if }
M \simeq \SS^1
~\&~
N \in \Quiv^{\sf con}
\end{cases}
~.
\]
where, if $M$ is a finite directed graph, $M^{(0)}$ is the set of vertices of $M$ and $\sZ^{\sf dir}(M)$ is the set of directed cycles in $M$.

\item
Through these identifications, the evident $\WW^{\op} = \End_{\M^{\sf con}}(\SS^1)$-module structure agrees with that on $\Hom_{\M^{\sf con}}(-,\SS^1)$ given by post-composition.

\end{enumerate}

\end{cor}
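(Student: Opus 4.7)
The plan is to leverage Proposition \ref{t51}, which identifies $\M^{\sf con}$ with the parametrized join $(\Quiv^{\sf con})^{\op} \underset{\w{\bLambda}^{\op}} \bigstar \BW^{\op}$, and then read off each clause of the corollary from the explicit description of hom-spaces in parametrized joins afforded by Lemma \ref{t200}. The two fully faithful embeddings $(\Quiv^{\sf con})^{\op} \hookrightarrow \M^{\sf con} \hookleftarrow \BW^{\op}$ of Lemma \ref{t200}(1) are jointly surjective on objects; since $\BW^{\op}$ has a single object representing $\SS^1$, this immediately yields clause (2). These embeddings also deliver the $M, N \in \Quiv^{\sf con}$ and $M, N \simeq \SS^1$ cases of clause (3), using for the latter that $\End_{\BW^{\op}}(\ast)$ has the same underlying space $\WW$ as its opposite monoid. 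Lemma \ref{t200}(2) gives the vanishing $\Hom_{\M^{\sf con}}(\SS^1, \Gamma) = \emptyset$ for $\Gamma \in \Quiv^{\sf con}$.

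The key remaining case of clause (3) is $\Hom_{\M^{\sf con}}(\Gamma, \SS^1)$ for $\Gamma \in \Quiv^{\sf con}$, but this is essentially already computed inside the proof of Proposition \ref{t51}: the chain of equivalences \Cref{e47}--\Cref{e55} reduces the hom-space, via Lemma \ref{t200}(3) and the Cartesian fibration property from Lemma \ref{t120}, to the $\infty$-groupoid completion $\bigl| \copara \underset{\Quiv^{\sf con}}\times (\Quiv^{\sf con})_{/\Gamma} \bigr|$, which Corollary \ref{t52} identifies with $\Gamma^{(0)} \coprod \bigl( (\sZ^{\sf dir}(\Gamma) \setminus \Gamma^{(0)}) \times \WW \bigr)$. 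Clause (1) is then immediate: each of the hom-spaces enumerated above is a 1-type, since hom-spaces in $\Quiv$ are 0-types by Corollary \ref{t27} and $\WW$ is a 1-type.

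The subtlest point is clause (4): verifying that the post-composition action of $\WW^{\op} = \End_{\M^{\sf con}}(\SS^1)$ on $\Hom_{\M^{\sf con}}(-, \SS^1)$ agrees with the $\WW^{\op}$-action on $\Gamma^{(0)} \coprod \bigl( (\sZ^{\sf dir}(\Gamma) \setminus \Gamma^{(0)}) \times \WW \bigr)$ furnished by Corollary \ref{t52.2}. My approach is to check equivariance stage by stage along the chain \Cref{e47}--\Cref{e55}: each step is an instance of a canonical construction (hom-spaces in parametrized joins, straightening of Cartesian fibrations, left Kan extension, Yoneda), all of which are functorial in the target $\SS^1 \in \BW^{\op}$, so equivariance propagates automatically from the naturality of these constructions. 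This bookkeeping is where I expect the main obstacle to lie, but no new ingredient beyond naturality is required.
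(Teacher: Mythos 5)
Your proposal is correct and follows essentially the same route as the paper: it deduces (2) and the easy cases of (3) from Proposition~\ref{t51} together with Lemma~\ref{t200}, identifies $\Hom_{\M^{\sf con}}(\Gamma,\SS^1)$ by chaining the parametrized-join description through \Cref{t119} to \Cref{t52}, derives (1) from (2)\&(3) by inspecting the 1-types, and obtains (4) from the $\WW^{\op}$-equivariance of the identification in \Cref{t52}. The only difference is cosmetic: the paper states the hom-space computation as a short direct chain rather than re-entering the proof of Proposition~\ref{t51}, and it disposes of (4) in one line where you (reasonably) flag the equivariance bookkeeping as the delicate step.
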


\begin{proof}

To prove statement~(1) is to prove that, for each pair $M,N\in \M^{\sf con}$ of objects, the space $\Hom_{\M^{\sf con}}(M,N)$ is a 1-type.  
Through statements~(2) \&(3), this is to show that each of named spaces of morphisms is a 1-type.
Because the circle $\TT$ is a 1-type, the continuous monoid $\WW^{\op} \underset{\rm Obs~\ref{t96}}{\simeq} \NN^\times \ltimes \TT$ is a continuous monoid-object in 1-types.
For each pair $\Gamma,\Xi$ of finite directed graphs
the spaces $\Gamma^{(0)}$ and $\sZ^{\sf dir}(\Gamma)$ are 0-types, and \Cref{t26}(1) implies $\Hom_{\Quiv}(\Gamma,\Xi)$ is a 0-type.
In summary, statement~(1) follows from statements~(2)\&(3).
It remains to establish statements~(2)-(4).

\Cref{t51} implies the functor 
$
(\Quiv^{\sf con})^{\op}
\underset{\epicyc}
\bigstar
\BW^{\op}
\to
\M^{\sf con}
$
is surjective on spaces of objects.
This implies statement~(2).

\Cref{t51} implies the functors $(\Quiv^{\sf con})^{\op} \to \M^{\sf con}$ and $\BW^{\op} \to \M^{\sf con}$ are fully faithful.
This establishes the middle two identifications of hom spaces in statement~(3).
\Cref{t51} also implies the last identification of the hom space in statement~(3).
It remains to establish the first identification of the hom space in statement~(3).
So let $\Gamma$ be a connected quiver.
Statement~(3) is therefore proved upon establishing the following sequence of equivalences among spaces:
\begin{eqnarray*}
\Gamma^{(0)}
\coprod
\bigl( \sZ^{\sf dir}(\Gamma) \setminus \Gamma^{(0)} \bigr)
\times
\WW
&
\underset{\rm Cor~\ref{t52}}{~\simeq~}
&
\bigl|
\copara_{/\Gamma}
\bigr|
\\
&
\underset{\rm Cor~\ref{t119}}{~\simeq~}
&
\bigl|
\bigl( \w{\bLambda}_{/\Gamma} \bigr)_{|\SS^1}
\bigr|
\\
&
\underset{\rm Cor~\ref{t119} ~\&~ Lem~\ref{t200}(3)}{~\simeq~}
&
\bigl|
\w{\bLambda}^{\SS^1/}_{/\Gamma} 
\bigr|
\\
&
~\simeq~
&
\Hom_{
\BW
\underset{\w{\bLambda}} \bigstar
\Quiv^{\sf con}
}
\bigl(
\SS^1 , \Gamma
\bigr)
\\
&
\underset{\rm (-)^{\op}}{~\simeq~}
&
\Hom_{
(\Quiv^{\sf con})^{\op}
\underset{\w{\bLambda}^{\op}} \bigstar
\BW^{\op}
}
\bigl(
 \Gamma
 ,
 \SS^1
\bigr)
\\
&
\underset{\rm Prop~\ref{t51}}{~\simeq~}
&
\Hom_{
\M^{\op}
}
\bigl(
 \Gamma
 ,
 \SS^1
\bigr)
\end{eqnarray*}
The first equivalence is \Cref{t52}.
The second equivalence follows from the base-change statement of \Cref{t119}.
The third equivalence is a consequence of Lemma~\ref{t200}, using the Cartesian fibration statement of \Cref{t119}.
The unlabeled equivalence follows from the characterizing definition of spaces of morphisms in a parametrized join.  
The penultimate equivalence follows from the definitional fact that, for $\cK$ an $\infty$-category, and for $x,y\in \cK$ objects, then $\Hom_{\cK}(x,y) \simeq \Hom_{\cK^{\op}}( y,x)$.
The last equivalence follows from the fully faithfulness of \Cref{t51}.

Statement~(4) follows from the fact that the identification of \Cref{t52} is as a $\WW^{\op}$-module.

\end{proof}

\subsection{The category $\M$}
\label{sec.M}

Just as the category $\Quiv$ can be constructed from the category $\Quiv^{\sf con}$ by freely adjoining finite coproducts (Proposition~\ref{t45}), here we freely adjoint finite products to the $\infty$-category $\M^{\sf con}$ of Definition~\ref{d7}.

\begin{definition}
\label{d7.2}
The $\infty$-category $\M$, as it is equipped with a functor $\Quiv^{\op} \xra{\delta}\M$, is initial among all such for which 
\begin{enumerate}
\item
the colimit of the composite functor $\para \xra{~(\ref{e21})~} \Quiv^{\op} \xra{~\delta~} \M$ exists;

\item
$\M$ admits finite products, and the functor $\delta$ preserves finite products.

\end{enumerate}
The \bit{smooth oriented circle} is the colimit
\[
\SS^1
~:=~
\colim
\Bigl(
\para \xra{ (\ref{e21}) } \Quiv^{\op} \xra{ \delta } \M
\Bigr)
~{}~
\underset{\rm Notation}{~\simeq~}
~{}~
\underset{\mu^{\circ} \in \para}
\colim
~
\ol{\mu}
~\in~
\M
~.
\]
The \bit{$0$-disk} and the \bit{$1$-disk} are, respectively, the objects
\[
\DD^0
~:=~
\delta\bigl(
\rho(0)
\bigr)
~\in~
\M
\qquad
\text{ and }
\qquad
\DD^1
~:=~
\delta\bigl(
\rho(1)
\bigr)
~\in~
\M
~.
\]

\end{definition}

Definition~\ref{d7} immediately yields the following.
\begin{observation}\label{t15.2}
Let $\Quiv^{\op} \xra{F}\cX$ be a finite-product-preserving functor to an $\infty$-category with finite products and geometric realizations.  
\begin{enumerate}
\item
There is a finite-product-preserving extension among $\infty$-categories, initial among all such:
\[
\begin{tikzcd}[column sep=3.5cm]
\Quiv^\op
\arrow{r}{\forall~F ~{\text{($\times$-preserving)}}}
\arrow{d}[swap]{\delta}
&
\cX
\\
\M
\arrow[dashed, bend right=10]{ru}[sloped, swap]{\text{$\exists$ initial $\tilde{F}$ ($\times$-preserving)}}
\end{tikzcd}
~.
\]

\item
This functor $\w{F}$ has the property that it preserves the colimit of $\para \xra{(\ref{e21})} \Quiv^{\op}$, which is to say the canonical morphism in $\cX$,
\[
\colim
\Bigl(
\para \xra{(\ref{e21})} \Quiv^{\op} \xra{F} \cX
\Bigr)
\xra{~\simeq~}
\w{F}
\Bigl(
\colim\bigl( 
\para \xra{ (\ref{e21}) } \Quiv^{\op} \xra{ \delta } \M
\bigr)
\Bigr)
\]
is an equivalence.

\end{enumerate}

\end{observation}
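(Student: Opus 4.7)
The proof is a direct formal extraction from the universal property spelled out in Definition~\ref{d7.2}, mirroring the argument that establishes Observation~\ref{t15} in the connected case. My plan has two short steps.

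First, I would verify that the pair $(\cX, F)$ satisfies the conditions under which $(\cM,\delta)$ is an initial object: namely, (a) $\cX$ admits finite products (hypothesis), (b) $F$ preserves finite products (hypothesis), and (c) the colimit of $\para \xra{\Cref{e21}} \Quiv^\op \xra{F} \cX$ exists in $\cX$. For (c), I would cite Lemma~\ref{t36}, which furnishes an initial functor $\bDelta \to \copara$; passing to opposites, $\bDelta^\op \to \para$ is final for colimits, so the colimit in question can be computed as the geometric realization
\[
\colim\Bigl(\bDelta^\op \longrightarrow \para \xra{\Cref{e21}} \Quiv^\op \xra{F} \cX\Bigr),
\]
which exists by the standing assumption that $\cX$ admits geometric realizations.

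Second, I would invoke the initiality of $(\cM,\delta)$ among such pairs, which yields an essentially unique finite-product-preserving functor $\w{F}\colon \cM \to \cX$ together with a canonical equivalence $\w{F}\circ \delta \simeq F$; this settles part~(1). For part~(2), the preservation of the designated colimit is intrinsic to the morphism structure of the $\infty$-category of pairs over which $(\cM,\delta)$ is initial: a morphism of such pairs is, by definition, a functor preserving finite products that additionally sends the designated colimit to the designated colimit. Consequently the canonical comparison from $\colim(F\circ \Cref{e21})$ to $\w{F}(\SS^1)$ is an equivalence by construction.

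I do not anticipate any genuine obstacle; the entire argument is a bookkeeping exercise in universal properties. The only point needing mild care is ensuring that the $\infty$-category of pairs tacit in Definition~\ref{d7.2} carries preservation of the designated colimit as part of its morphism data. A completely concrete alternative, which sidesteps this subtlety entirely, is to realize $\cM$ as the Dwyer--Kan localization of the free finite-product-completion of $\Quiv^\op$ at the single canonical comparison map from the value in that completion of the colimit of $\para \xra{\Cref{e21}} \Quiv^\op$. With this presentation, a finite-product-preserving functor out of $\cM$ is precisely a finite-product-preserving functor out of the completion inverting that comparison map, and both clauses of Observation~\ref{t15.2} become tautological.
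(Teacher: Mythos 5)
Your proposal is correct and matches the paper's intent exactly: the paper offers no proof, stating only that the observation follows immediately from Definition~\ref{d7.2}, and your two steps (checking that $(\cX,F)$ is an object of the relevant $\infty$-category of pairs, using \Cref{t36} to reduce the existence of the colimit to a geometric realization exactly as in \Cref{t250}, and then invoking initiality) are precisely the intended unpacking. Your flagged subtlety about morphisms of pairs preserving the designated colimit is the right one, and your handling of it is consistent with how the paper uses the universal property.
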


Definition~\ref{d7} can be rephrased as follows.
\begin{observation}
\label{t40.2}
\begin{enumerate}
\item[]

\item
The restricted Yoneda functor 
\begin{equation}
\label{e63.2}
\M
\xra{~M\mapsto ~\Hom_{\M}\bigl(\delta(-),M\bigr)~} \PShv(\Quiv^{\op})
\end{equation}
is the functor of \Cref{t15.2}.

\item
In particular, this functor~(\ref{e63.2}) preserves finite products and the colimit of the diagram $\para \xra{ (\ref{e21}) } \Quiv^{\op}$.

\item
Furthermore, this functor~(\ref{e63.2}) fully faithful; with image the smallest full $\infty$-subcategory that contains $\Quiv^{\op} \underset{\Yo}\subset \PShv(\Quiv^{\op})$, that is closed under finite products, and that contains the colimit of $\para \xra{(\ref{e21})} \Quiv^{\op} \xra{\Yo} \PShv(\Quiv^{\op})$. 

\item
In particular, the canonical functor $\Quiv^{\op} \xra{\delta} \M$ is fully faithful.

\end{enumerate}

\end{observation}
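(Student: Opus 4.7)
The plan is to deduce all four assertions from the universal property of $\M$ recorded in Definition~\ref{d7.2}, proving the fully faithfulness assertion~(4) first since it is the essential input for identifying the restricted Yoneda functor with the canonical extension produced by Observation~\ref{t15.2}.

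Step one is to establish~(4). By Proposition~\ref{t45}, $\Quiv$ is the free finite-coproduct completion of $\Quiv^{\sf con}$, so $\Quiv^{\op}$ is the free finite-product completion of $(\Quiv^{\sf con})^{\op}$. Comparing Definition~\ref{d7.2} with Definition~\ref{d7}, I view the universal property of $\M$ as that of freely adjoining finite products to $\M^{\sf con}$ along the inclusion $(\Quiv^{\sf con})^{\op}\subset \M^{\sf con}$, subject to identifying the product structure on $\Quiv^{\op}$ (coming from coproducts in $\Quiv$) with the formally adjoined products. Under this identification, for quivers with connected decompositions $\Gamma \simeq \coprod_\alpha \Gamma_\alpha$ and $\Gamma' \simeq \coprod_\beta \Gamma'_\beta$ (Proposition~\ref{t45}(2)), one has $\delta(\Gamma) \simeq \prod_\alpha \delta(\Gamma_\alpha)$ and $\delta(\Gamma') \simeq \prod_\beta \delta(\Gamma'_\beta)$ in $\M$. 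Combining the free-product-completion hom formula with the fully faithfulness of $(\Quiv^{\sf con})^{\op} \hookrightarrow \M^{\sf con}$ from Observation~\ref{t40}(4), I obtain
\[
\Hom_{\M}\bigl(\delta(\Gamma), \delta(\Gamma')\bigr) \simeq \prod_\beta \coprod_\alpha \Hom_{\M^{\sf con}}\bigl(\delta(\Gamma_\alpha), \delta(\Gamma'_\beta)\bigr) \simeq \prod_\beta \coprod_\alpha \Hom_{(\Quiv^{\sf con})^{\op}}(\Gamma_\alpha, \Gamma'_\beta),
\]
which Corollary~\ref{t60}(2) identifies with $\Hom_{\Quiv^{\op}}(\Gamma, \Gamma')$, proving~(4).

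Step two addresses~(1) and~(2). I apply Observation~\ref{t15.2} to the bicomplete target $\cX = \PShv(\Quiv^{\op})$ with $F = \Yo$: the Yoneda embedding preserves all limits, hence finite products, so Observation~\ref{t15.2} produces a unique finite-product-preserving extension $\widetilde{\Yo}: \M \to \PShv(\Quiv^{\op})$ with $\widetilde{\Yo} \circ \delta \simeq \Yo$ which preserves the paracyclic colimit. The restricted Yoneda functor $y: M \mapsto \Hom_\M(\delta(-), M)$ also preserves finite products (each $\Hom_\M(\delta(\Gamma), -)$ is limit-preserving in $M$) and, by assertion~(4), satisfies $y \circ \delta \simeq \Yo$. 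The uniqueness clause of Observation~\ref{t15.2} therefore yields $y \simeq \widetilde{\Yo}$, establishing~(1); assertion~(2) is immediate from the product- and colimit-preservation built into the construction of $\widetilde{\Yo}$.

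Step three establishes~(3). Fully faithfulness of $y$ on $\delta(\Quiv^{\op})$ follows from~(4) together with $y \circ \delta \simeq \Yo$ and fully faithfulness of ordinary Yoneda. To extend across all of $\M$, I use that every object of $\M$ is generated from $\delta(\Quiv^{\op})$ by iterated finite products and the single paracyclic colimit (by construction), both of which are preserved by $y$; a standard argument—equivalently, applying the universal property of $\M$ to $y(\M)\subset \PShv(\Quiv^{\op})$—propagates fully faithfulness to all of $\M$. The image description is then formal: $y(\M)$ is exactly the smallest full subcategory of $\PShv(\Quiv^{\op})$ containing $\Yo(\Quiv^{\op})$, closed under finite products, and containing the paracyclic colimit, since that is precisely the universal property of $\M$. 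The main obstacle is the hom-space computation in step one: justifying that maps out of a formally adjoined product into an object of $\M^{\sf con}$ decompose as a coproduct over the factors. This hinges on reading Definition~\ref{d7.2} as producing $\M$ from $\M^{\sf con}$ as a free finite-product completion, rather than some more delicate gluing, and this identification is what requires care.
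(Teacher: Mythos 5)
Your route inverts the paper's logic, and the inversion is where the gap sits. The paper treats \Cref{t40.2} as a direct rephrasing of \Cref{d7.2}, exactly parallel to \Cref{t40}: one models $\M$ as the smallest full $\infty$-subcategory of $\PShv(\Quiv^{\op})$ containing the representables, closed under finite products, and containing the colimit of $\para \to \Quiv^{\op} \xra{\Yo} \PShv(\Quiv^{\op})$; under that identification the functor \Cref{e63.2} is simply the subcategory inclusion, so (1)--(3) are immediate and (4) falls out of fully faithfulness of the Yoneda embedding. You instead prove (4) first, and the statement you rest it on --- that $(\M,\delta)$ is the free finite-product completion of $\M^{\sf con}$ along $(\Quiv^{\sf con})^{\op}\subset\M^{\sf con}$, so that $\Hom_{\M}\bigl(\delta(\Gamma),\delta(\Gamma')\bigr)$ decomposes as $\prod_\beta\coprod_\alpha$ of hom-spaces in $\M^{\sf con}$ --- is precisely \Cref{t23} (equivalently the relevant portion of \Cref{t37.2}), which the paper establishes only later and whose proof cites \Cref{t40.2}(2) and \Cref{t40.2}(4). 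So as written your step one either imports a downstream result circularly or leaves its crux unproved; you flag it yourself as ``the main obstacle,'' but flagging it does not close it. It can be closed independently of \Cref{t40.2} --- one checks that the free finite-product completion of $\M^{\sf con}$, equipped with the functor induced from $\Quiv^{\op}$ (the free finite-product completion of $(\Quiv^{\sf con})^{\op}$, by \Cref{t45}), satisfies the conditions of \Cref{d7.2}; the nontrivial point is that the inclusion of $\M^{\sf con}$ into its product completion still exhibits $\SS^1$ as the colimit of the paracyclic diagram, which holds because the product-completion hom-formula lets the limit over $\copara$ commute with the finite products --- but none of this appears in your write-up, and supplying it is work parallel in weight to the paper's presheaf-model reading rather than a shortcut past it.

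A secondary soft spot is step two: you invoke a ``uniqueness clause'' of \Cref{t15.2} to conclude that restricted Yoneda agrees with the initial extension $\w{\Yo}$. \Cref{t15.2} asserts the existence of an \emph{initial} finite-product-preserving extension, not uniqueness among all such extensions, so knowing that $M\mapsto\Hom_\M(\delta(-),M)$ is a finite-product-preserving extension of $\Yo$ (which already presupposes your (4)) does not by itself identify it with the initial one. One must either argue via contractibility of the space of maps out of the initial pair $(\M,\delta)$ --- which is the correct reading of the paper's informal universal property, and requires care about which structure the comparison functors are required to preserve --- or, more simply, work in the presheaf model, where the restricted Yoneda functor is visibly the inclusion. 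The cleanest repair of your whole argument is to adopt that model of $\M$ inside $\PShv(\Quiv^{\op})$ from the outset, at which point (4) is an output of the identification rather than an input to it.
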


\begin{notation}
In light of \Cref{t40.2}(4), we often do not distinguish in notation an object $\Gamma\in \Quiv$ and its image $\delta(\Gamma)\in \M$.
\end{notation}

\begin{observation}
\label{t49.2}
By Definition~\ref{d7}, 
there is a canonical functor 
\begin{equation}
\label{e68}
\M^{\sf con}
\longrightarrow 
\M
~,
\end{equation}
which has the following properties.
\begin{enumerate}

\item
This functor~(\ref{e68}) is initial among such functors under $(\Quiv^{\sf con})^{\op}$.

\item
This functor~(\ref{e68}) preserves the colimit of the diagram $\para \xra{(\ref{e21})} (\Quiv^{\sf con})^{\op}$:
\[
\M^{\sf con}
\underset{\rm Def~\ref{d7}} \ni
~
\SS^1
~\longmapsto~
\SS^1
~
\underset{\rm Def~\ref{d7.2}} \in 
\M
~.
\]

\item
This functor~(\ref{e68}) is fully faithful since the functor $\PShv((\Quiv^{\sf con})^{\op}) \to \PShv(\Quiv^{\op})$, given by left Kan extension along the fully faithful inclusion $(\Quiv^{\sf con})^{\op} \hookrightarrow \Quiv^{\op} \xra{\Yo} \PShv(\Quiv^{\op})$, is fully faithful.

\end{enumerate}

\end{observation}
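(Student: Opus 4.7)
The plan is to derive all three claimed properties from the universal properties in Observations~\ref{t15} and~\ref{t15.2}, combined with the restricted-Yoneda descriptions of $\M^{\sf con}$ and $\M$ recorded in Observations~\ref{t40} and~\ref{t40.2}.

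First, for the existence of the functor and property~(1), I would restrict $\delta\colon \Quiv^{\op} \to \M$ along the fully faithful inclusion $(\Quiv^{\sf con})^{\op} \hookrightarrow \Quiv^{\op}$. The colimit of the composite $\para \to (\Quiv^{\sf con})^{\op} \to \Quiv^{\op} \xra{\delta} \M$ exists---indeed, by Definition~\ref{d7.2} it is the object $\SS^1 \in \M$---so the universal property recorded in Observation~\ref{t15}(1) immediately produces a unique functor $\M^{\sf con} \to \M$ under $(\Quiv^{\sf con})^{\op}$, and this is initial by construction. Property~(2) is then an immediate consequence of Observation~\ref{t15}(2): every such initial extension preserves the defining colimit of $\SS^1 \in \M^{\sf con}$, so its image is precisely $\SS^1 \in \M$.

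The main work lies in property~(3). The plan is to exhibit a commutative square
\[
\begin{tikzcd}[column sep=1.5cm]
\M^{\sf con} \arrow[hook]{r} \arrow{d} & \PShv((\Quiv^{\sf con})^{\op}) \arrow{d}{\sf LKE} \\
\M \arrow[hook]{r} & \PShv(\Quiv^{\op})
\end{tikzcd}
\]
in which the horizontal functors are the restricted Yoneda embeddings of Observations~\ref{t40} and~\ref{t40.2} and the right vertical is left Kan extension along $(\Quiv^{\sf con})^{\op} \hookrightarrow \Quiv^{\op}$. The two horizontal arrows are fully faithful by Observations~\ref{t40}(3) and~\ref{t40.2}(3), and the right vertical is fully faithful because left Kan extension along a fully faithful functor between small $\infty$-categories is fully faithful. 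Granting commutativity, these three fully-faithfulnesses force the left vertical $\M^{\sf con} \to \M$ to be fully faithful, proving~(3).

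The main obstacle is verifying that the square commutes. On objects of $(\Quiv^{\sf con})^{\op}$ both composites visibly agree with the inclusion $(\Quiv^{\sf con})^{\op} \hookrightarrow \Quiv^{\op}$ followed by Yoneda, since left Kan extension along a fully faithful embedding is itself fully faithful on representables. By the universal property of $\M^{\sf con}$ (Observation~\ref{t15}), agreement on the remaining object $\SS^1 \in \M^{\sf con}$ suffices. The top-right composite sends $\SS^1$ to the Yoneda of $\SS^1$ in $\PShv((\Quiv^{\sf con})^{\op})$, which by Observation~\ref{t40}(2) is the colimit of $\para \to (\Quiv^{\sf con})^{\op} \xra{\Yo} \PShv((\Quiv^{\sf con})^{\op})$; since $\sf LKE$ is a left adjoint it preserves this colimit and carries the image to the colimit of $\para \to (\Quiv^{\sf con})^{\op} \hookrightarrow \Quiv^{\op} \xra{\Yo} \PShv(\Quiv^{\op})$. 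The bottom-left composite, by property~(2) and Observation~\ref{t40.2}(2), sends $\SS^1 \in \M^{\sf con}$ to $\SS^1 \in \M$ and then to the same colimit in $\PShv(\Quiv^{\op})$. Commutativity of the square, and hence property~(3), follows.
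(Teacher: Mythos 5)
Your proposal is correct and follows essentially the same route as the paper, which justifies fully faithfulness in a single line by appeal to the fully faithful left Kan extension $\PShv((\Quiv^{\sf con})^{\op}) \to \PShv(\Quiv^{\op})$; you have simply spelled out the commutative square and the colimit-preservation checks that the paper leaves implicit. The only slightly loose phrase is ``agreement on the remaining object $\SS^1$ suffices'' --- what the universal property actually requires is that both composites preserve the defining colimit of $\SS^1$ --- but you verify exactly that in the following sentences, so the argument stands.
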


\begin{notation}
\label{d40}
For $M ,N \in \M$, we denote their categorical product in $\M$ as $M \sqcup N\in \M$.\footnote{Warning: $M \sqcup N$ is not the coproduct of $M$ and $N$ in $\M$.}

\end{notation}

\begin{remark}
We use \Cref{d40} because we believe it averts needless confusion.  
Indeed, with that notation, the defining condition that $\Quiv^{\op} \xra{\delta} \M$ preserves finite products implies, for each $\Gamma , \Xi \in \Quiv$, there is an identification in $\M$:
\[
\delta
\left(
\Gamma \amalg \Xi
\right)
~\simeq~
\delta(\Gamma) \sqcup \delta(\Xi)
~.
\]
\end{remark}

\begin{observation}
\label{t58}
Let $\Gamma$ be a quiver.
Let $C$ be a finite set.
Since products in $\PShv(\Quiv^{\op})$ distribute over colimits in each variable, we have that the canonical morphism in $\PShv(\Quiv^{\op})$,
\[
\underset{(\mu_c)_{c\in C} \in (\para)^{\times C}}\colim
~
\Bigl(
\bigl(
\underset{c\in C} \prod
\ol{\mu}_c 
\bigr)
\times 
\Gamma
\Bigr)
\xra{~\simeq~}
(\SS^1)^{\times C}
\times 
\Gamma
~,
\]
is an equivalence.\footnote{This product $\underset{c\in C} \prod
\ol{\mu}_c \times \Gamma$ in $\PShv(\Quiv^{\op})$ is the value of the $C$-indexed coproduct $\bigl( \underset{c\in C}\coprod
\ol{\mu}_c  \bigr) \amalg \Gamma$ in $\Quiv$ by the Yoneda functor $\Quiv^{\op} \xra{\Yo} \PShv(\Quiv^{\op})$.
}
In particular, the lefthand term belongs to $\M$,\footnote{Through \Cref{d40}, this object is denoted $(\SS^1)^{\sqcup C} \sqcup \Gamma \in \M$.} 
and as so witnesses a colimit in $\M$.

\end{observation}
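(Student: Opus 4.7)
The observation is essentially a direct consequence of the fact that $\PShv(\Quiv^{\op})$ is an $\infty$-topos and that $\M$ embeds fully faithfully into it. The plan is to establish the displayed equivalence in $\PShv(\Quiv^{\op})$ via distributivity of products over colimits, and then upgrade it to a colimit statement in $\M$ via \Cref{t40.2}(3).

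For the first claim, since $\PShv(\Quiv^{\op})$ is a presheaf $\infty$-category, for each presheaf $F$ the functor $F \times (-)$ is a left adjoint (with right adjoint the internal mapping presheaf), so finite products commute with small colimits in each variable. Starting from the right-hand side $(\SS^1)^{\times C} \times \Gamma$, I would substitute the defining presentation $\SS^1 \simeq \colim_{\mu \in \para} \ol{\mu}$ (\Cref{t40.2}(2)) into each of the $C$ factors, and commute each product past the corresponding colimit, obtaining
\[
(\SS^1)^{\times C} \times \Gamma \;\simeq\; \colim_{(\mu_c)_{c\in C} \in (\para)^{\times C}} \Bigl( \prod_{c \in C} \ol{\mu}_c \times \Gamma \Bigr).
\]
The Yoneda embedding $\Quiv^{\op} \hookrightarrow \PShv(\Quiv^{\op})$ preserves limits, and finite products in $\Quiv^{\op}$ are finite coproducts in $\Quiv$ (which exist by \Cref{t45}); so the inner product of representables identifies with the Yoneda image of $\bigl( \coprod_{c\in C} \ol{\mu}_c \bigr) \amalg \Gamma$, giving the displayed equivalence.

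For the second claim, \Cref{t40.2} identifies $\M$ with the smallest full $\infty$-subcategory of $\PShv(\Quiv^{\op})$ that contains $\Quiv^{\op}$, is closed under finite products, and contains $\SS^1$; in particular $(\SS^1)^{\times C} \times \Gamma$ lies in $\M$. Since a colimit cocone in $\PShv(\Quiv^{\op})$ all of whose vertices lie in the full subcategory $\M$ is automatically a colimit cocone in $\M$ (fully faithful inclusions reflect colimits when the purported colimit lies in the subcategory), the equivalence just established witnesses the desired colimit in $\M$. There is no real obstacle here; the only subtle point is the need to pass from $\M^{\sf con}$ to $\M$, because a product of connected quivers with $\SS^1$ need not be connected, which is precisely why \Cref{d7.2} enlarges $\M^{\sf con}$ by freely adjoining finite products.
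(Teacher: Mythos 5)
Your proposal is correct and follows essentially the same route the paper takes: the paper's entire justification is the parenthetical "products in $\PShv(\Quiv^{\op})$ commute with colimits in each variable" together with the footnote identifying the product of representables with the Yoneda image of the coproduct in $\Quiv$, and the membership of the left-hand term in $\M$ via \Cref{t40.2}(3). Your fleshing out of the details — iterating the colimit over the $C$ factors, using that Yoneda preserves the finite products of $\Quiv^{\op}$ (i.e.\ coproducts of $\Quiv$), and that a fully faithful inclusion detects a colimit once the ambient colimit lands in the subcategory — is exactly what the paper leaves implicit.
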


For the next result, let $\Xi , \Gamma \in \Quiv$.
Consider the functor 
\begin{equation}
\label{e71}
( \copara )^{\times C} 
\longrightarrow
\Quiv
~,
\qquad
( \mu_c)_{c\in C} 
\longmapsto 
\Bigl(
\underset{c\in C} \coprod
\ol{\mu}_c
\Bigr)
\amalg
\Gamma
~.
\end{equation}
With respect to this functor, consider the $\infty$-overcategory: $\left( ( \copara )^{\times C} \right)_{/\Xi}
~:=~
( \copara )^{\times C} \underset{ \Quiv} \times \Quiv_{/\Xi}$.

\begin{cor}
\label{t70}
Let $\Xi , \Gamma \in \Quiv$.
The canonical functor
\begin{equation}
\label{e75}
\left( ( \copara )^{\times C} \right)_{/\Xi}
\longrightarrow
\Hom_{\M}
\bigl(
\delta(\Xi )
, 
(\SS^1)^{\sqcup C}\sqcup \delta(\Gamma)
\bigr)
~,
\end{equation}
\[
\Bigl(
\Bigl(
\underset{c\in C} \coprod
\ol{\mu}_c
\Bigr)
\amalg
\Gamma
\xra{f}
\Xi
\Bigr)
\longmapsto
\Bigl(
\delta(\Xi) \xra{\delta(f)} 
\Bigl(
\underset{c\in C} \bigsqcup
\delta(\ol{\mu}_c)
\Bigr)
\sqcup
\delta(\Gamma)
\xra{\rm canonical}
(\SS^1)^{\sqcup C}\sqcup \delta(\Gamma)
\Bigr)
~,
\]
witnesses an $\infty$-groupoid-completion: $\bigl| \left( ( \copara )^{\times C} \right)_{/\Xi}  \bigr| \xra{\simeq} \Hom_{\M}\bigl( \delta(\Xi) , (\SS^1)^{\sqcup C} \sqcup \delta(\Gamma) \bigr)$.

\end{cor}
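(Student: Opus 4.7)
The plan is to transport the question across the fully faithful restricted Yoneda embedding $\M \hookrightarrow \PShv(\Quiv^{\op})$ of Observation~\ref{t40.2}, where the hom-space on the right becomes evaluation of an explicitly presented presheaf at $\Xi$; the identification then reduces to a standard colimit-of-representables computation.

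First, by Observation~\ref{t40.2}, the embedding preserves both finite products (so $\sqcup$ transports to $\times$) and the defining colimit $\SS^1 = \colim_{\mu \in \para} \delta(\ol{\mu})$. Combining these two preservation properties with Observation~\ref{t58}, the object $(\SS^1)^{\sqcup C} \sqcup \delta(\Gamma)$ is represented in $\PShv(\Quiv^{\op})$ by the colimit
\begin{equation*}
\colim_{(\mu_c) \in (\para)^{\times C}}\ \Yo\bigl((\coprod\nolimits_c \ol{\mu}_c) \amalg \Gamma\bigr).
\end{equation*}
Applying the Yoneda lemma (to $\delta(\Xi)$) together with fully faithfulness of $\delta$, this yields a natural equivalence
\begin{equation*}
\Hom_{\M}\bigl(\delta(\Xi),\ (\SS^1)^{\sqcup C} \sqcup \delta(\Gamma)\bigr)\ \simeq\ \colim_{(\mu_c)}\ \Hom_{\Quiv}\bigl((\coprod\nolimits_c \ol{\mu}_c) \amalg \Gamma,\ \Xi\bigr).
\end{equation*}

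Second, I invoke the general fact that, for any functor $\Phi\colon \cC \to \cD$ and any $d \in \cD$, the colimit of the contravariant representable $c \mapsto \Hom_{\cD}(\Phi(c), d)$ is canonically the $\infty$-groupoid-completion $\bigl|\cC \times_{\cD} \cD_{/d}\bigr|$ of the over-category. This follows by unstraightening: the presheaf in question unstraightens to a left fibration over $\cC^{\op}$ whose total space is $(\cC \times_{\cD} \cD_{/d})^{\op}$, and the colimit of a $\Spaces$-valued functor is the $\infty$-groupoid-completion of its unstraightening. Specialized to $\Phi = (\ref{e71})$ and $d = \Xi$, this identifies the colimit above with $\bigl|((\copara)^{\times C})_{/\Xi}\bigr|$, which agrees with the statement's $\bigl|((\para)^{\times C})^{\Xi/}\bigr|$ by $\op$-invariance of $|{-}|$.

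Finally, I would trace an object $\bigl((\mu_c),\ f\colon (\coprod_c \ol{\mu}_c) \amalg \Gamma \to \Xi\bigr)$ of the over-category through the chain of equivalences to confirm that it maps to the composite $\delta(\Xi) \xra{\delta(f)} \bigsqcup_c \delta(\ol{\mu}_c) \sqcup \delta(\Gamma) \to (\SS^1)^{\sqcup C} \sqcup \delta(\Gamma)$ described in the statement. This is a routine diagram-chase through the universal property of colimits in $\PShv(\Quiv^{\op})$ and the Yoneda lemma, and is the only step where one must be careful about the $\para$-versus-$\copara$ opposite convention; no serious obstacle arises, because the target is an $\infty$-groupoid on which taking opposites acts trivially. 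The main conceptual point of the argument is really the preservation statement of Observation~\ref{t40.2} plus Observation~\ref{t58}: once $(\SS^1)^{\sqcup C} \sqcup \delta(\Gamma)$ is presented as a single explicit colimit of representables indexed by $(\para)^{\times C}$, the rest is formal.
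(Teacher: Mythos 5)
Your proposal is correct and follows essentially the same route as the paper's proof: both present $(\SS^1)^{\sqcup C}\sqcup\delta(\Gamma)$ as the colimit of representables indexed by $(\para)^{\times C}$ via \Cref{t58} and the colimit-preservation in \Cref{t40.2}, commute the colimit past the hom (evaluation at $\Xi$ preserves colimits), and identify the resulting colimit of hom-sets with the $\infty$-groupoid-completion of the overcategory by recognizing $((\copara)^{\times C})_{/\Xi}$ as the unstraightening of $\Hom_{\Quiv}(\Cref{e71}(-),\Xi)$. The only difference is cosmetic: the paper reads the chain of equivalences starting from the overcategory, whereas you read it starting from the hom-space in $\M$.
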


\begin{proof}
The functor~(\ref{e75}) canonically factors as the following sequence functors, 
\begin{eqnarray}
\nonumber
\left( ( \copara )^{\times C} \right)_{/\Xi}
&
\xra{\rm localization} 
&
\colim\Bigl(
( \para )^{\times C} 
\xra{(\ref{e71})}
\Quiv^{\op}
\xra{\Hom_{\Quiv}(-,\Xi)}
\Spaces
\Bigr)
\\
\nonumber
&
\xra{~\simeq~}
&
\Hom_{\PShv(\Quiv^{\op})}\Bigl(
\Xi 
,
\colim\bigl(
( \para )^{\times C} 
\xra{(\ref{e71})}
\Quiv^{\op}
\xra{\Yo}
\PShv(\Quiv^{\op})
\bigr)
\Bigr)
\\
\nonumber
&
\xra{~\simeq~}
&
\Hom_{\PShv(\Quiv^{\op})}\Bigl(
\Xi 
,
(\SS^1)^{\times C} \prod \Gamma
\Bigr)
\\
\nonumber
&
\xla{~\simeq~}
&
\Hom_{\M}\bigl( \delta(\Xi) , (\SS^1)^{\sqcup C} \sqcup \delta(\Gamma) \bigr)
~,
\end{eqnarray}
which we explain.
The $\infty$-category $\bigl( ( \copara )^{\times C} \bigr)_{/\Xi}$ over $( \copara )^{\times C}$ is the unstraightening of the functor $( \para )^{\times C} 
\xra{(\ref{e71})}
\Quiv^{\op}
\xra{\Hom_{\Quiv}(-,\Xi)}
\Spaces
$.
Therefore, there is a canonical functor $\bigl( ( \copara )^{\times C} \bigr)_{/\Xi} \to \colim\Bigl(
( \para )^{\times C} 
\xra{(\ref{e71})}
\Quiv^{\op}
\xra{\Hom_{\Quiv}(-,\Xi )}
\Spaces
\Bigr)$
witnessing an $\infty$-groupoid-completion.
The second map between spaces is an equivalence because the evaluation functor $\PShv(\Quiv^{\op}) \xra{\ev_{\Xi}} \Spaces$ preserves colimits.
The first statement of \Cref{t58} gives that the third map between spaces is an equivalence.
The second statement of \Cref{t58} gives that the last map between spaces is an equivalence.

\end{proof}

\begin{lemma}
\label{finality}
Let $C$ be a finite set.
Let $\Gamma$ be a quiver.
The functor
\begin{equation}
\label{e72}
(\para)^{\times C}
\longrightarrow
\Quiv^{\op}_{/(\SS^1)^{\amalg C} \amalg \Gamma}
~:=~
\Quiv^{\op}
\underset{\M}
\times
\M_{/(\SS^1)^{\sqcup C} \sqcup \Gamma}
~,\qquad
(\mu_c)_{c\in C}
\longmapsto
\Bigl(
\underset{c\in C} \bigsqcup
\ol{\mu}_c
\Bigr)
\sqcup
\Gamma
~,
\end{equation}
is final.
In particular, \Cref{t62} implies the $\infty$-category $\Quiv^{\op}_{/(\SS^1)^{\sqcup C} \sqcup \Gamma}$ is sifted.

\end{lemma}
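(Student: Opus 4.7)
The plan is to apply Quillen's Theorem A. Writing $\cA := (\para)^{\times C}$, $\cB := \Quiv^{\op}_{/T}$ with $T := (\SS^1)^{\sqcup C} \sqcup \Gamma$, and $F\colon \cA \to \cB$ the functor in question, $F$ is final if and only if for each $b = (\Xi, f) \in \cB$ (with $f\colon \delta(\Xi) \to T$ a morphism in $\M$) the slice $\cA \times_\cB \cB^{b/}$ has weakly contractible $\infty$-groupoid-completion. First, since $\cB^{b/} \to \cB$ is a left fibration, the pullback $\cA \times_\cB \cB^{b/} \to \cA$ is also a left fibration, with classifying functor $(\mu_c) \mapsto \Hom_\cB(b, F((\mu_c)))$; by straightening,
\[
\bigl| \cA \times_\cB \cB^{b/} \bigr|
~\simeq~
\colim_\cA \Hom_\cB\bigl(b, F(-)\bigr)~.
\]

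Using that $\cB = \Quiv^{\op} \times_\M \M_{/T}$ and that $\delta$ is fully faithful (\Cref{t40.2}(4)), for each $(\mu_c) \in \cA$ the hom-space $\Hom_\cB(b, F((\mu_c)))$ identifies with the homotopy fiber at $f$ of the evaluation map
\[
\Hom_\Quiv\Bigl(\Bigl(\coprod_{c \in C} \ol{\mu}_c\Bigr) \amalg \Gamma ,\, \Xi \Bigr)
~\xra{~\phi \,\mapsto\, \mathrm{can}_{(\mu_c)} \,\circ\, \delta(\phi)~}~
\Hom_\M\bigl(\delta(\Xi), T\bigr)~,
\]
where $\mathrm{can}_{(\mu_c)}$ is the canonical structure map of $F((\mu_c))$. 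The crucial step is to interchange the colimit with the homotopy fiber. Writing $H := \Hom_\M(\delta(\Xi), T)$, the equivalence $\Spaces_{/H} \simeq \Fun(H, \Spaces)$ (which holds because $H$ is an $\infty$-groupoid) sends colimits in $\Spaces_{/H}$ to pointwise colimits, so for any functor $\cA \to \Spaces_{/H}$ the colimit commutes with taking the homotopy fiber at any $h \in H$. Applying this identifies $\colim_\cA \Hom_\cB(b, F(-))$ with the homotopy fiber at $f$ of the induced map $\colim_\cA \Hom_\Quiv\bigl((\coprod_c \ol{\mu}_c) \amalg \Gamma, \Xi\bigr) \to H$. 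Invoking \Cref{t70}---which identifies this colimit with the $\infty$-groupoid-completion $\bigl|((\copara)^{\times C})_{/\Xi}\bigr|$ and asserts that the evaluation map onto $H$ is an equivalence---the homotopy fiber is contractible, verifying Quillen's Theorem A.

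For the ``in particular'' conclusion, \Cref{t62} gives that $\copara$ is both sifted and cosifted; the latter amounts to $\para$ being sifted. Finite products of sifted $\infty$-categories are sifted (the diagonal of a product is the product of the diagonals, and finality is preserved by products), so $\cA = (\para)^{\times C}$ is sifted. Siftedness is preserved along final functors: given $\cA$ sifted and $F$ final, for $\cG_1, \cG_2 \colon \cB \to \Spaces$ one has $\colim_\cB(\cG_1 \times \cG_2) \simeq \colim_\cA(\cG_1 F \times \cG_2 F) \simeq \colim_\cA \cG_1 F \times \colim_\cA \cG_2 F \simeq \colim_\cB \cG_1 \times \colim_\cB \cG_2$, whence $\cB$ is sifted. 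The principal technical obstacle is the careful tracking of variances: because $((\copara)^{\times C})_{/\Xi}$ is a right fibration over $(\copara)^{\times C}$, its opposite is the left fibration over $\cA$ classifying $\Hom_\Quiv((\coprod_c \ol{\mu}_c) \amalg \Gamma, \Xi)$, and one must verify that the colimit map produced by straightening coincides with the evaluation map used in \Cref{t70}.
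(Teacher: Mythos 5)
Your proposal is correct and follows essentially the same route as the paper: both apply Quillen's Theorem A, identify the groupoid-completion of the slice over $(\Xi,f)$ as the fiber at $f$ of the map of \Cref{t70}, and use that the base $\Hom_{\M}(\Xi,(\SS^1)^{\sqcup C}\sqcup\Gamma)$ is an $\infty$-groupoid to commute groupoid-completion past the fiber (the paper phrases this as a pullback square of categories remaining a pullback after completion, you phrase it via straightening over the groupoid $H$). The difference is purely expository.
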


\begin{proof}
Let $\Xi \in \Quiv$.
Let $\Xi \xra{f} (\SS^1)^{\sqcup C} \sqcup \Gamma$ be a morphism in $\M$, regarded as an object in $\Quiv^{\op}_{/(\SS^1)^{\sqcup C}\sqcup \Gamma}$.
With respect to the functor~(\ref{e72}), consider the $\infty$-undercategory $\left( ( \para )^{\times C} \right)^{f/} := ( \para )^{\times C} \underset{ \Quiv^{\op}_{/(\SS^1)^{\sqcup C}\sqcup \Gamma} } \times (\Quiv^{\op}_{/(\SS^1)^{\sqcup C}\sqcup \Gamma})^{f/}$.
Observe the commutative diagram among $\infty$-categories:
\begin{equation}
\label{e74}
\begin{tikzcd}
\left( ( \para )^{\times C} \right)^{f/}
\arrow{r}
\arrow{d}
&
\bigl( ( \para )^{\times C} \bigr)^{\Xi/}
\arrow{d}{(\ref{e75})}
\\
\ast
\arrow{r}[swap]{\langle f \rangle}
&
\Hom_{\M}
\bigl(
\Xi 
, 
(\SS^1)^{\sqcup C}\sqcup \Gamma 
\bigr)
\end{tikzcd}
~.
\end{equation}
Taking $\infty$-groupoid-completions results in a commutative diagram among $\infty$-groupoids:
\begin{equation}
\label{e73}
\begin{tikzcd}
\left| 
\bigl( ( \para )^{\times C} \bigr)^{f/}
\right|
\arrow{r}
\arrow{d}
&
\left|
\bigl( ( \para )^{\times C} \bigr)^{\Xi/}
\right|
\arrow{d}
\\
\ast
\arrow{r}[swap]{\langle f \rangle}
&
\Hom_{\M}
\bigl(
\Xi 
, 
(\SS^1)^{\sqcup C}\sqcup \Gamma 
\bigr)
\end{tikzcd}
~.
\end{equation}
Notice that the diagram~(\ref{e74}) is a pullback.
Using that the bottom right term in the diagram~(\ref{e74}) is an $\infty$-groupoid, it follows that the diagram~(\ref{e73}) is also a pullback.  
Therefore, the top left $\infty$-groupoid in~(\ref{e73}) is contractible for each 
$f \in \Hom_{\M}
\bigl(
\Xi 
, 
(\SS^1)^{\sqcup C}\sqcup \Gamma 
\bigr)$ 
if and only if the right vertical map is an equivalence.  
By Quillen's Theorem A, this is to say that the functor~(\ref{e72}) is final if and only if the right vertical map in~(\ref{e73}) is an equivalence, which \Cref{t70} ensures. 

\end{proof}

\subsection{An explicit description of $\M$}
The next results give an explicit description of the $\infty$-category $\M$.

The next result characterizes the spaces of morphisms in $\M$.
\begin{lemma}
\label{t37.2}
Let $A$, $B$, $C$, and $D$ be finite sets.
Let $(\Gamma_\alpha)_{\alpha \in A}$, and let $(\Xi_\beta)_{\beta \in B}$, be an $A$-indexed, and a $B$-indexed, sequence of connected quivers.
Consider the objects $M:=(\SS^1)^{\sqcup C} \sqcup \underset{\alpha \in A} \bigsqcup \Gamma_\alpha \in \M$ and $N:= (\SS^1)^{\sqcup D} \sqcup \underset{\beta \in B} \bigsqcup \Xi_\beta \in \M$.
There is a canonical identification of the space of morphisms in $\M$:
\[
\Hom_{\M}\bigl(
M
,
N
\bigr)
~\simeq~
\Bigl(
\Hom_{\M^{\sf con}}(\SS^1,\SS^1)^{\amalg C}
\amalg
\underset{\alpha\in A}
\coprod
\Hom_{\M^{\sf con}}(\Gamma_\alpha , \SS^1)
\Bigr)^{\times D}
\times
\underset{\beta\in B} \prod
\Bigl(
\underset{\alpha\in A}
\coprod
\Hom_{\M^{\sf con}}(\Gamma_\alpha , \Xi_\beta)
\Bigr)
~.
\]

\end{lemma}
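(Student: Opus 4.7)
The plan is to reduce $\Hom_\M(M,N)$ to two basic target cases and evaluate each via a colimit presentation of $M$ inside $\PShv(\Quiv^{\op})$. First, since $N = (\SS^1)^{\sqcup D} \sqcup \bigsqcup_\beta \Xi_\beta$ is a finite product in $\M$ (\Cref{d40}, using that $\M$ admits finite products by \Cref{d7.2}), the universal property of products gives
\[
\Hom_\M(M,N) \simeq \Hom_\M(M,\SS^1)^{\times D} \times \prod_{\beta \in B} \Hom_\M(M,\Xi_\beta).
\]
For the source, \Cref{t40.2}(3) embeds $\M$ fully faithfully into $\PShv(\Quiv^{\op})$, and \Cref{t58} supplies the colimit presentation
\[
M \simeq \colim_{(\mu_c) \in (\para)^{\times C}} \Yo(L_{(\mu_c)}),\qquad L_{(\mu_c)} := \Bigl(\bigsqcup_{c \in C} \ol{\mu}_c\Bigr) \amalg \bigsqcup_{\alpha \in A} \Gamma_\alpha \in \Quiv.
\]
Yoneda then yields $\Hom_\M(M,P) \simeq \lim_{(\mu_c)} P(L_{(\mu_c)})$ for each $P \in \M$.

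For $P = \Xi_\beta$ (representable): connectedness of $\Xi_\beta$ together with \Cref{t45}(3) decomposes the value as $P(L_{(\mu_c)}) = \bigsqcup_c \Hom_\Quiv(\Xi_\beta,\ol{\mu}_c) \amalg \bigsqcup_\alpha \Hom_\Quiv(\Xi_\beta,\Gamma_\alpha)$. Because $(\para)^{\times C}$ is connected (by \Cref{t62}, as $\para$ is sifted) and all functorial transitions act within each component, the limit distributes across this disjoint union. The constant $\Gamma$-summand contributes $\bigsqcup_\alpha \Hom_\Quiv(\Xi_\beta,\Gamma_\alpha) = \bigsqcup_\alpha \Hom_{\M^{\sf con}}(\Gamma_\alpha,\Xi_\beta)$ (second case of \Cref{t51'}(3)), whereas each cyclic contribution is $\lim_{\mu \in \para} \Hom_\Quiv(\Xi_\beta,\ol{\mu}) \simeq \Hom_\M(\SS^1,\Xi_\beta) = \emptyset$ by \Cref{t49.2}(3) and the fourth case of \Cref{t51'}(3). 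Thus $\Hom_\M(M,\Xi_\beta) \simeq \bigsqcup_\alpha \Hom_{\M^{\sf con}}(\Gamma_\alpha,\Xi_\beta)$.

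For $P = \SS^1$: Yoneda and the defining colimit for $\SS^1$ give $\SS^1(L) \simeq \colim_{\mu \in \para} \Hom_\Quiv(\ol{\mu},L) \simeq |\copara_{/L}|$, which \Cref{t52} identifies with $L^{(0)} \amalg (\sZ^{\sf dir}(L) \setminus L^{(0)}) \times \WW$. Applying this to $L_{(\mu_c)}$, and noting that $\sZ^{\sf dir}(\ol{\mu}_c) \setminus \ol{\mu}_c^{(0)} = \{[\id_{\ol{\mu}_c}]\}$ is a singleton (any other cyclic $\chi \to \ol{\mu}_c$ factors through the identity, contradicting the minimality clause of \Cref{d19}), one obtains
\[
\SS^1(L_{(\mu_c)}) \simeq \bigsqcup_{c \in C} \ol{\mu}_c^{(0)} ~\amalg~ \WW^{\amalg C} ~\amalg~ \bigsqcup_{\alpha \in A} \Hom_{\M^{\sf con}}(\Gamma_\alpha,\SS^1).
\]
Only the first summand depends on $(\mu_c)$; the same connectedness argument reduces its limit to $\bigsqcup_c \lim_{\mu \in \para} \ol{\mu}^{(0)}$, and each of these vanishes because otherwise the $C = 1$ specialization $\lim_\para [\ol{\mu}^{(0)} \amalg \WW] \simeq \Hom_\M(\SS^1,\SS^1) = \Hom_{\M^{\sf con}}(\SS^1,\SS^1) \simeq \WW$ (third case of \Cref{t51'}(3)) would force $\lim_\para \ol{\mu}^{(0)} = \emptyset$. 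Hence $\Hom_\M(M,\SS^1) \simeq \Hom_{\M^{\sf con}}(\SS^1,\SS^1)^{\amalg C} \amalg \bigsqcup_\alpha \Hom_{\M^{\sf con}}(\Gamma_\alpha,\SS^1)$, and substituting both target computations into the product formula yields the asserted identification.

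The main obstacle is the $\SS^1$-target computation, specifically the commutation of $\lim_{(\mu_c)}$ with the disjoint-union decomposition of $\SS^1(L_{(\mu_c)})$. This rests on two observations: every functorial transition $L_{(\mu'_c)} \to L_{(\mu_c)}$ arising from a morphism in $(\para)^{\times C}$ preserves the $c$- and $\alpha$-indexing of components, and $(\para)^{\times C}$ is connected. Together these force any compatible family of elements to lie in a single component, after which bootstrapping from the known $C = 0$ and $C = 1$ hom computations eliminates the cyclic contributions.
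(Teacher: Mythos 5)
Your overall strategy matches the paper's: split along the product decomposition of $N$, present $M$ as the colimit over $(\copara)^{\times C}$ inside $\PShv(\Quiv^{\op})$ via \Cref{t58}, and use cosiftedness/connectedness of the paracyclic category to distribute the resulting limits over the coproduct decompositions furnished by connectedness of the probing quivers. Your treatment of the targets $\Xi_\beta$ is essentially the paper's (and cleanly sidesteps its codiagonal detour at steps \Cref{g4}--\Cref{g5}). Where you genuinely diverge is the $\SS^1$-target term: you evaluate the presheaf $\SS^1$ on $L_{(\mu_c)}$ explicitly via \Cref{t52}, which produces the extra summand $\bigsqcup_c \ol{\mu}_c^{(0)}$ whose limit you must then show is empty; the paper instead keeps $\Hom_{\M}(\ol{\lambda}_c,\SS^1)$ undecomposed and collapses $\lim_{\lambda_c\in\copara}\Hom_{\M}(\ol{\lambda}_c,\SS^1)\simeq\Hom_{\M^{\sf con}}(\SS^1,\SS^1)$ in one stroke by the universal property of the colimit defining $\SS^1$, so that no vanishing statement is ever needed.

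The genuine gap is your justification of that vanishing. From $\bigl(\lim_{\copara}\ol{\mu}^{(0)}\bigr)\amalg\WW\simeq\WW$ one cannot conclude that $\lim_{\copara}\ol{\mu}^{(0)}=\emptyset$: the inference ``$X\amalg\WW\simeq\WW$ forces $X=\emptyset$'' is false, since for instance $\TT\amalg\WW\simeq\TT\times(\ast\amalg\Nx)\simeq\TT\times\Nx\simeq\WW$. To run your cancellation argument you would need to know that the identification $\Hom_{\M}(\SS^1,\SS^1)\simeq\WW$ of \Cref{t51'} is implemented by the inclusion of the constant $\WW$-summand of the limit, which is true but not something you can read off from the citation. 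The conclusion is nonetheless correct and easily repaired: for example, the two $\ZZ$-equivariant order-preserving maps $\ZZ\to\frac{1}{2}\ZZ$ in $\copara$ given by $n\mapsto n$ and $n\mapsto n+\frac{1}{2}$ induce the two distinct maps from the one-vertex cyclic graph to the two-vertex cyclic graph, whose images on vertex sets are disjoint, so no compatible family of vertices $(v_\mu)_{\mu\in\copara}$ exists and $\lim_{\copara}\ol{\mu}^{(0)}=\emptyset$. (Beware that the paracyclic rotation $\alpha_\mu$ is of no help here, as it descends to the identity on $\ol{\mu}$.) Alternatively, adopt the paper's route and never decompose $\Hom_{\M}(\ol{\mu}_c,\SS^1)$ in the first place. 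With either repair the rest of your argument goes through.
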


\begin{proof}
Denote $\Gamma := \underset{\alpha \in A}\coprod
\Gamma_\alpha\in \Quiv$ and $\Xi := \underset{\beta \in B}\coprod
\Xi_\beta\in \Quiv$.
We explain the following sequence of equivalences among spaces:
\begin{align}
\label{f1}
\Hom_{
\M
}
\bigl(
M
,
N
\bigr)
~\simeq~
&
\Hom_{
\M
}
\bigl(
( \SS^1)^{\sqcup C}
\sqcup
\Gamma
,
(\SS^1)^{\sqcup D} \sqcup \Xi
\bigr)
\\
\label{f2}
\xla{~\simeq~}
&
\Hom_{
\M
}
\Bigl(
\underset{c \in C}
\bigsqcup
\Bigl(
\underset{ {\lambda_c}^{\circ} \in \para}
\colim~
\overline{\lambda}_c
\Bigr)
\sqcup
\Gamma
,
(\SS^1)^{\sqcup D} \sqcup \Xi
\Bigr)
\\
\label{f3.1}
\xra{~\simeq~}
&
\Hom_{
\M
}
\Bigl(
\underset{ (\lambda_c) \in (\para)^{\times C}}
\colim~
\Bigl(
\bigsqcup_{c\in C}
\overline{\lambda}_c
\sqcup
\Gamma
\Bigr)
,
(\SS^1)^{\sqcup D} \sqcup \Xi
\Bigr)
\\
\label{f3}
\xra{~\simeq~}
&
\underset{(\lambda_c)_{c\in C} \in (\copara)^{\times C}}
\lim
\Hom_{\M}
\Bigl(
\underset{c\in C} \bigsqcup
\overline{\lambda}_c
\sqcup
\Gamma
,
(\SS^1)^{\sqcup D} \sqcup \Xi
\Bigr)
\\
\label{f4}
\xra{~\simeq~}
&
\underset{(\lambda_c)_{c\in C} \in (\copara)^{\times C}}
\lim
\Bigl(
\Hom_{\M}
\Bigl(
\underset{c\in C} \bigsqcup
\overline{\lambda}_c
\sqcup
\Gamma
,
\SS^1
\Bigr)^{\times D}
\times
\Hom_{\M}
\Bigl(
\underset{c\in C} \bigsqcup
\overline{\lambda}_c
\sqcup
\Gamma
,
\Xi
\Bigr)
\Bigr)
\\
\label{f5}
{~\simeq~}
&
\Bigl(
\underset{(\lambda_c)_{c\in C} \in (\copara)^{\times C}}
\lim
\Hom_{\M}
\Bigl(
\underset{c\in C} \bigsqcup
\overline{\lambda}_c
\sqcup
\Gamma
,
\SS^1
\Bigr)
\Bigr)^{\times D}
\times
\underset{(\lambda_c)_{c\in C} \in (\copara)^{\times C}}
\lim
\Hom_{\M}
\Bigl(
\underset{c\in C} \bigsqcup
\overline{\lambda}_c
\sqcup
\Gamma
,
\Xi
\Bigr)
\\
\label{f6}
\xla{~\simeq~}
&
\Bigl(
\underset{(\lambda_c)_{c\in C} \in (\copara)^{\times C}}
\lim
\Hom_{\M}
\Bigl(
\underset{c\in C} \bigsqcup
\overline{\lambda}_c
\sqcup
\Gamma
,
\underset{\mu^{\circ} \in \para}
\colim~
\ol{\mu}
\Bigr)
\Bigr)^{\times D}
\times
\underset{(\lambda_c)_{c\in C} \in (\copara)^{\times C}}
\lim
\Hom_{\M}
\Bigl(
\underset{c\in C} \bigsqcup
\overline{\lambda}_c
\sqcup
\Gamma
,
\Xi
\Bigr)
\\
\label{f7}
\xla{~\simeq~}
&
\Bigl(
\underset{(\lambda_c)_{c\in C} \in (\copara)^{\times C}}
\lim~
\underset{\mu^{\circ} \in \para}
\colim~
\Hom_{\Quiv}
\Bigl(
\ol{\mu}
,
\Gamma
\amalg
\underset{c\in C} \coprod
\overline{\lambda}_c
\Bigr)
\Bigr)^{\times D}
\times
\underset{(\lambda_c)_{c\in C} \in (\copara)^{\times C}}
\lim
\Hom_{\M}
\Bigl(
\underset{c\in C} \bigsqcup
\overline{\lambda}_c
\sqcup
\Gamma
,
\Xi
\Bigr)
\\
\label{f8}
\xla{~\simeq~}
&
\Bigl(
\underset{(\lambda_c)_{c\in C} \in (\copara)^{\times C}}
\lim~
\underset{\mu^{\circ} \in \para}
\colim~
\Hom_{\Quiv}
\Bigl(
\ol{\mu}
,
\Gamma
\amalg
\underset{c\in C} \coprod
\overline{\lambda}_c
\Bigr)
\Bigr)^{\times D}
\times
\underset{(\lambda_c)_{c\in C} \in (\copara)^{\times C}}
\lim
\Hom_{\Quiv}
\Bigl(
\Xi
,
\Gamma
\amalg
\underset{c\in C} \coprod
\overline{\lambda}_c
\Bigr)
\\
\label{f9}
\xla{~\simeq~}
&
\Bigl(
\underset{(\lambda_c)_{c\in C} \in (\copara)^{\times C}}
\lim~
\underset{\mu^{\circ} \in \para}
\colim~
\Hom_{\Quiv}
\Bigl(
\ol{\mu}
,
\Gamma
\amalg
\underset{c\in C} \coprod
\overline{\lambda}_c
\Bigr)
\Bigr)^{\times D}
\times
\underset{\beta\in B}
\prod
\underset{(\lambda_c)_{c\in C} \in (\copara)^{\times C}}
\lim
\Hom_{\Quiv}
\Bigl(
\Xi_\beta
,
\Gamma
\amalg
\underset{c\in C} \coprod
\overline{\lambda}_c
\Bigr)
\\
\label{f10}
\xla{~\simeq~}
&
X
^{\times D}
\times
\underset{\beta\in B}
\prod
Y_\beta
~.
\end{align}

The equivalence~(\ref{f1}) is the definitions of $M,N\in \M$.
The equivalence~(\ref{f2}) is a direct consequence of \Cref{t49.2}(1).
The equivalence~(\ref{f3.1}) is a direct consequence of \Cref{t58}.
The equivalence~(\ref{f3}) is the universal property of colimits, which corepresent limits of spaces of morphisms.
The equivalence~(\ref{f4}) is the universal property of products, which represent products of spaces of morphisms.
The equivalence~(\ref{f5}) is the fact that limits commute with products.
The equivalence~(\ref{f6}) is the Definition~\ref{d7.2} of $\SS^1\in \M$.
The equivalence~(\ref{f7}) follows from \Cref{t40.2}, using that, for $x\in \cK$ an object in an $\infty$-category, the evaluation functor $\PShv(\cK) \xra{\ev_x} \Spaces$ preserves colimits.
The equivalence~(\ref{f8}) is a direct consequence of the defining functor $\Quiv^{\op} \xra{\delta} \M$ being fully faithful (see \Cref{t40.2}(4)).
The equivalence~(\ref{f9}) is the definition of $\Xi\in \Quiv$ as a coproduct, and the fact that limits commute with products.  
The equivalence~(\ref{f10}) is just notation, which will be explained below.

Next, we explain the following sequences of equivalences among spaces:
\begin{align}
\label{h1}
X
{~:=~}
&
\underset{(\lambda_c)_{c\in C} \in (\copara)^{\times C}}
\lim~
\underset{\mu^{\circ} \in \para}
\colim~
\Hom_{\Quiv}
\Bigl(
\ol{\mu}
,
\Gamma
\amalg
\underset{c\in C} \coprod
\overline{\lambda}_c
\Bigr)
\\
\label{h2}
\xla{~\simeq~}
&
\underset{(\lambda_c)_{c\in C} \in (\copara)^{\times C}}
\lim~
\underset{\mu^{\circ} \in \para}
\colim~
\Bigl(
\underset{c\in C} \coprod
\Hom_{\Quiv^{\sf con}}
\Bigl(
\ol{\mu}
,
\overline{\lambda}_c
\Bigr)
\amalg
\underset{\alpha \in A} \coprod
\Hom_{\Quiv^{\sf con}}
\bigl(
\ol{\mu}
,
\Gamma_\alpha
\bigr)
\Bigr)
\\
\label{h3}
{~\simeq~}
&
\underset{(\lambda_c)_{c\in C} \in (\copara)^{\times C}}
\lim~
\Bigl(
\underset{c\in C} \coprod
\Bigl(
\underset{\mu^{\circ} \in \para}
\colim~
\Hom_{\Quiv^{\sf con}}
\Bigl(
\ol{\mu}
,
\overline{\lambda}_c
\Bigr)
\Bigr)
\amalg
\underset{\alpha \in A} \coprod
\Bigl(
\underset{\mu^{\circ} \in \para}
\colim~
\Hom_{\Quiv^{\sf con}}
\bigl(
\ol{\mu}
,
\Gamma_\alpha
\bigr)
\Bigr)
\Bigr)
\\
\label{h4}
\xla{~\simeq~}
&
\underset{c\in C} \coprod
\Bigl(
\underset{\lambda_c\in \copara}
\lim~
\underset{\mu^{\circ} \in \para}
\colim~
\Hom_{\Quiv^{\sf con}}
\Bigl(
\ol{\mu}
,
\overline{\lambda}_c
\Bigr)
\Bigr)
\amalg
\underset{\alpha \in A} \coprod
\Bigl(
\underset{\mu^{\circ} \in \para}
\colim~
\Hom_{\Quiv^{\sf con}}
\bigl(
\ol{\mu}
,
\Gamma_\alpha
\bigr)
\Bigr)
\\
\label{h5}
\xra{~\simeq~}
&
\underset{c\in C} \coprod
\Bigl(
\underset{\lambda_c\in \copara}
\lim~
\Hom_{\M}
\Bigl(
\overline{\lambda}_c
,
\SS^1
\Bigr)
\Bigr)
\amalg
\underset{\alpha \in A} \coprod
\Hom_{\M}
\bigl(
\Gamma_\alpha
,
\SS^1
\bigr)
\\
\label{h6}
\xla{~\simeq~}
&
\underset{c\in C} \coprod
\Hom_{\M^{\sf con}}
\Bigl(
\underset{{\lambda_c}^{\circ} \in \para}
\colim~
\overline{\lambda}_c
,
\SS^1
\Bigr)
\amalg
\underset{\alpha \in A} \coprod
\Hom_{\M^{\sf con}}
\bigl(
\Gamma_\alpha
,
\SS^1
\bigr)
\\
\label{h7}
\xla{~\simeq~}
&
\Hom_{\M^{\sf con}}(\SS^1,\SS^1)^{\amalg C}
\amalg
\underset{\alpha \in A} \coprod
\Hom_{\M}
\bigl(
\Gamma_\alpha
,
\SS^1
\bigr)
~.
\end{align}
The equivalence~(\ref{h1}) is the definition of the space $X$.
The equivalence~(\ref{h2}) is a direct consequence of \Cref{t60}, using that the quiver $\ol{\mu}$ is connected.  
The equivalence~(\ref{h3}) is an instance of the fact that colimits commute with coproducts. 
The equivalence~(\ref{h4}) follows from the fact that the paracyclic category $\copara$ is cosifted (see \Cref{t62})).
The equivalence~(\ref{h5}) follows from \Cref{t40.2}(2), using that, for $x\in \cK$ an object in an $\infty$-category, the evaluation functor $\PShv(\cK) \xra{\ev_x} \Spaces$ preserves colimits.
The equivalence~(\ref{h6}) is the universal property of colimits, which corepresent limits of spaces of morphisms.
The equivalence~(\ref{h7}) follows from the definition of $\SS^1\in \M^{\sf con}$ (see Definition~\ref{d7}).

Next, let $\beta \in B$.
We now explain the following sequences of equivalences among spaces:
\begin{eqnarray}
\label{g1}
Y_\beta
&
~:=~
&
\underset{(\lambda_c)_{c\in C} \in (\copara)^{\times C}}
\lim
\Hom_{\Quiv}
\Bigl(
\Xi_\beta
,
\amalg
\Gamma
\underset{c\in C} \coprod
\overline{\lambda}_c
\Bigr)
\\
\label{g2}
&
\xla{~\simeq~}
&
\underset{(\lambda_c)_{c\in C} \in (\copara)^{\times C}}
\lim
\Bigl(
\Hom_{\Quiv}
\Bigl(
\Xi_\beta
,
\underset{c\in C} \coprod
\overline{\lambda}_c
\Bigr)
\amalg
\Hom_{\Quiv}
\Bigl(
\Xi_\beta
,
\Gamma
\Bigr)
\Bigr)
\\
\label{g3}
&
\xla{~\simeq~}
&
\Bigl(
\underset{(\lambda_c)_{c\in C} \in (\copara)^{\times C}}
\lim
\Hom_{\Quiv}
\Bigl(
\Xi_\beta
,
\underset{c\in C} \coprod
\overline{\lambda}_c
\Bigr)
\Bigr)
\amalg
\Hom_{\Quiv}
\Bigl(
\Xi_\beta
,
\Gamma
\Bigr)
\\
\label{g4}
&
\xra{~\simeq~}
&
\Bigl(
\underset{\lambda \in \copara}
\lim
\Hom_{\Quiv}
\Bigl(
\Xi_\beta
,
\overline{\lambda}^{\amalg C}
\Bigr)
\Bigr)
\amalg
\Hom_{\Quiv}
\Bigl(
\Xi_\beta
,
\Gamma
\Bigr)
\\
\label{g5}
&
\longrightarrow
&
\Bigl(
\underset{\lambda \in \copara}
\lim
\Hom_{\Quiv}
\Bigl(
\Xi_\beta
,
\overline{\lambda}
\Bigr)
\Bigr)
\amalg
\Hom_{\Quiv}
\Bigl(
\Xi_\beta
,
\Gamma
\Bigr)
\\
\label{g6}
&
\xla{~\simeq~}
&
\Hom_{\M^{\sf con}}
\Bigl(
\underset{\lambda^{\circ} \in \para}
\colim~
\overline{\lambda}
,
\Xi_\beta
\Bigr)
\amalg
\Hom_{\Quiv}
\Bigl(
\Xi_\beta
,
\Gamma
\Bigr)
\\
\label{g7}
&
{~\simeq~}
&
\Hom_{\M^{\sf con}}
\Bigl(
\SS^1
,
\Xi_\beta
\Bigr)
\amalg
\Hom_{\Quiv}
\Bigl(
\Xi_\beta
,
\Gamma
\Bigr)
\\
\label{g8}
&
\xra{~\simeq~}
&
\emptyset
\amalg
\Hom_{\Quiv}
\Bigl(
\Xi_\beta
,
\Gamma
\Bigr)
~=~
\Hom_{\Quiv}
\Bigl(
\Xi_\beta
,
\Gamma
\Bigr)
\\
\label{g9}
&
\xla{~\simeq~}
&
\underset{\alpha \in A}
\coprod
\Hom_{\Quiv^{\sf con}}
\Bigl(
\Xi_\beta
,
\Gamma_\alpha
\Bigr)
\\
\label{g10}
&
\xla{~\simeq~}
&
\underset{\alpha \in A}
\coprod
\Hom_{\M^{\sf con}}
\Bigl(
\Gamma_\alpha
,
\Xi_\beta
\Bigr)
~.
\end{eqnarray}
The equivalence~(\ref{g1}) is the definition of the space $Y_\beta$.
The equivalence~(\ref{g2}) is a direct consequence of \Cref{t60}, using that the quiver $\Xi_\beta$ is connected.  
The equivalence~(\ref{g3}) follows from $\copara$ being cosifted~(\Cref{t62}); the equivalence~(\ref{g4}) also follows from $\copara$ being cosifted.
The map~(\ref{g5}) is implemented by restriction along the codiagonal morphism $\ol{\lambda}^{\amalg C} \to \ol{\lambda}$ in $\Quiv$, functorially in $\lambda \in \copara$.
We postpone explaining why this map~(\ref{g5}) is an equivalence. 
The equivalence~(\ref{g6}) is the universal property of colimits, which corepresent limits of spaces of morphisms.
The equivalence~(\ref{g7}) is the definition of $\SS^1 \in \M^{\sf con}$ (see Definition~\ref{d7}).
The equivalence~(\ref{g8}) follows from Proposition~\ref{t51}.
Note that each of the maps~(\ref{g5}),~(\ref{g6}),~(\ref{g7}),~(\ref{g8}) respects the evident coproduct description.
Because the left cofactor of the codomain of~(\ref{g8}) is empty, it then follows that the left cofactor of the domain and codomain of~(\ref{g5}) are both empty as well.
In particular, the map~(\ref{g5}) is an equivalence, as desired.  
Moving on, the equivalence~(\ref{g9}) uses the definition of $\Gamma$, together with \Cref{t60}(2).
The equivalence~(\ref{g10}) is an instance of the fully faithfulness of $(\Quiv^{\sf con})^{\op} \to \M^{\sf con}$ (see \Cref{t40}(4)).

\end{proof}

After Proposition~\ref{t51},
\Cref{t37.2} gives the following.
\begin{cor}
\label{t63}
Let $C$ and $D$ be finite sets.
Let $\Gamma$ and $\Xi$ be quivers.
There is a canonical identification of the space of morphisms in $\M$:
\[
\Hom_{\M}\bigl(
(\SS^1)^{\sqcup C} 
\sqcup
\Gamma
,
(\SS^1)^{\sqcup D} 
\sqcup
\Xi
\bigr)
~\simeq~
\Bigl(
\WW^{\amalg C}
\amalg
\Gamma^{(0)}
\amalg
\TT \times \NN^\times \times
\bigl(
\sZ^{\sf dir}(\Gamma)
\!\setminus\!
\Gamma^{(0)}
\bigr)
\Bigr)^{\times D}
\times
\Hom_{\Quiv}(\Xi,\Gamma)
~,
\]
where $\Gamma^{(0)}$ is the set of vertices of $\Gamma$ and $\sZ^{\sf dir}(\Gamma) \!\setminus\! \Gamma^{(0)}$ is the set of non-constant directed cycles in $\Gamma$.

\end{cor}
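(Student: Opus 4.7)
The plan is to derive \Cref{t63} as a direct bookkeeping consequence of \Cref{t37.2} combined with the explicit description of hom-spaces in $\M^{\sf con}$ supplied by \Cref{t51'}(3) (which itself is unpacked from \Cref{t51}). No new categorical input is needed; the entire proof consists of substituting the known $\M^{\sf con}$-hom-spaces into the formula of \Cref{t37.2} and then collecting terms indexed by connected components.

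First, using \Cref{t45} (equivalently \Cref{t60}(2)), I would canonically decompose $\Gamma \simeq \coprod_{\alpha\in A}\Gamma_\alpha$ and $\Xi \simeq \coprod_{\beta\in B}\Xi_\beta$ as coproducts of connected quivers indexed by their sets of connected components, so that both sides of the asserted equivalence are set up to receive the formula of \Cref{t37.2}. Applying that lemma yields
\[
\Hom_{\M}\bigl(
(\SS^1)^{\sqcup C} \sqcup \Gamma,
(\SS^1)^{\sqcup D} \sqcup \Xi
\bigr)
\;\simeq\;
\Bigl(
\Hom_{\M^{\sf con}}(\SS^1,\SS^1)^{\amalg C}
\amalg
\underset{\alpha}{\coprod} \Hom_{\M^{\sf con}}(\Gamma_\alpha,\SS^1)
\Bigr)^{\times D}
\times
\underset{\beta}{\prod}\;\underset{\alpha}{\coprod}\;\Hom_{\M^{\sf con}}(\Gamma_\alpha,\Xi_\beta).
\]

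Next, I would substitute the four cases of \Cref{t51'}(3): namely $\Hom_{\M^{\sf con}}(\SS^1,\SS^1)\simeq \WW$; $\Hom_{\M^{\sf con}}(\Gamma_\alpha,\SS^1)\simeq \Gamma_\alpha^{(0)}\amalg \bigl(\sZ^{\sf dir}(\Gamma_\alpha)\setminus \Gamma_\alpha^{(0)}\bigr)\times \WW$; and $\Hom_{\M^{\sf con}}(\Gamma_\alpha,\Xi_\beta)\simeq \Hom_{\Quiv}(\Xi_\beta,\Gamma_\alpha)$. It then remains to reassemble the coproducts along $\alpha$: the vertex set and the set of directed cycles of a disjoint union of finite directed graphs split as disjoint unions indexed by connected components (a directed cycle, being the image of a connected graph, must land inside a single connected component of its target), so
\[
\underset{\alpha}{\coprod}\Gamma_\alpha^{(0)} = \Gamma^{(0)},
\qquad
\underset{\alpha}{\coprod}\bigl(\sZ^{\sf dir}(\Gamma_\alpha)\setminus \Gamma_\alpha^{(0)}\bigr) = \sZ^{\sf dir}(\Gamma)\setminus \Gamma^{(0)}.
\]
Using additionally the underlying-space identification $\WW \simeq \TT\times \NN^\times$ from \Cref{d4} converts the first factor in the formula into the stated form $\bigl(\WW^{\amalg C}\amalg \Gamma^{(0)}\amalg \TT\times \NN^\times\times (\sZ^{\sf dir}(\Gamma)\setminus \Gamma^{(0)})\bigr)^{\times D}$.

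For the second factor, \Cref{t60}(2) gives $\underset{\alpha}{\coprod}\Hom_{\Quiv}(\Xi_\beta,\Gamma_\alpha)\simeq \Hom_{\Quiv}(\Xi_\beta,\Gamma)$ for each $\beta$, and then a second application of \Cref{t60}(2) (now with the roles swapped, using that $\Xi \simeq \coprod_\beta \Xi_\beta$ and that $\Hom_{\Quiv}(-,\Gamma)$ carries coproducts to products) yields $\prod_\beta \Hom_{\Quiv}(\Xi_\beta,\Gamma) \simeq \Hom_{\Quiv}(\Xi,\Gamma)$. Assembling the two factors produces exactly the right-hand side of \Cref{t63}. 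There is no genuine obstacle; the only care required is to check that the decomposition-of-directed-cycles identity above really does drop the constant cycles $\Gamma^{(0)}$ correctly, which is immediate from \Cref{t55} and \Cref{d19}.
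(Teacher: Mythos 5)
Your proof is correct and follows exactly the route the paper intends: the paper's entire justification is the sentence ``After Proposition~\ref{t51}, \Cref{t37.2} gives the following,'' and your substitution of the hom-space formulas from \Cref{t51'}(3) into \Cref{t37.2}, followed by reassembling coproducts over connected components, is precisely the bookkeeping that sentence leaves implicit. No further comment is needed.
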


\begin{lemma}
\label{t58'}
Let $M\in \M \subset \PShv(\Quiv^{\op})$.
Let $C$ be a finite set.
The canonical morphism in $\PShv(\Quiv^{\op})$,
\[
\underset{(\mu_c)_{c\in C} \in (\para)^{\times C}}\colim
~
\Bigl(
\bigl(
\underset{c\in C} \prod
\ol{\mu}_c 
\bigr)
\times 
M
\Bigr)
\xra{~\simeq~}
(\SS^1)^{\times C}
\times 
M
~,
\]
is an equivalence.
In particular, the lefthand term belongs to $\M$,\footnote{Through \Cref{d40}, this object is denoted $(\SS^1)^{\sqcup C} \sqcup \Gamma \in \M$.} 
and as so witnesses a colimit in $\M$.

\end{lemma}

\begin{proof}
Through \Cref{t63}, choose a finite set $D$ and a quiver $\Gamma$ and an equivalence $M \simeq (\SS^1)^{\sqcup D} \sqcup \Gamma$ in $\M$.
Consider the canonical diagram in $\PShv(\Quiv^{\op})$:
\[
\begin{tikzcd}
	{\underset{(\mu_c)_{c\in C} \in (\para)^{\times C}}\colim ~ \left(\left( \underset{c\in C} \prod \ol{\mu}_c \right) \times \underset{(\mu_c)_{d\in D} \in (\para)^{\times D}}\colim ~ \bigl( \underset{d\in D} \prod \ol{\mu}_d \bigr) \times  \Gamma \right)} & {\underset{(\mu_c)_{c\in C} \in (\para)^{\times C}}\colim ~ \left(\left( \underset{c\in C} \prod \ol{\mu}_c \right) \times M \right)} \\
	{\underset{(\mu_e)_{e\in C \amalg D} \in (\para)^{\times (C \amalg D)}}\colim ~ \left(  \underset{e\in C \amalg D} \prod \ol{\mu}_e \times  \Gamma \right)} \\
	{(\SS^1)^{\times (C \amalg D)} \times \Gamma} & {(\SS^1)^{\times C} \times M}
	\arrow["{{\rm (a)}}", from=1-1, to=1-2]
	\arrow[from=1-2, to=3-2]
	\arrow["{{\rm (b)}}", from=2-1, to=1-1]
	\arrow["{{\rm (c)}}"', from=2-1, to=3-1]
	\arrow["{{\rm (d)}}"', from=3-1, to=3-2]
\end{tikzcd}
~.
\]
We seek to show the unlabeled right vertical morphism is an equivalence.  
\Cref{t58}, applied to the inner colimit, gives that the morphism ${\rm (a)}$ is an equivalence.
The morphism ${\rm (b)}$ is an equivalence because the $\infty$-category $\para$ is sifted (\Cref{t62}).
\Cref{t58} gives that the morphism ${\rm (c)}$ is an equivalence.
The bottom horizontal equivalence is $(\SS^1)^{\sqcup C} \sqcup -$ applied to the identification $(\SS^1)^{\sqcup D} \sqcup \Gamma \sqcup M$.
The right vertical morphism is therefore an equivalence by the 2-of-3 property of equivalences in an $\infty$-category.

\end{proof}

The following three results are direct consequences of \Cref{t37.2}.
\begin{cor}
\label{t23}
The full $\infty$-subcategory $\M^{\sf con} \underset{\rm Obs~\ref{t49.2}(3)}\subset \M$ freely generates $\M$ via finite categorical products.
More precisely, the following assertions are true.
\begin{enumerate}
\item
The $\infty$-category $\M$ admits finite products.

\item
Let $M\in \M$ be an object.
There is a finite set $A$ and an $A$-indexed sequence $(M_\alpha)_{\alpha \in A}$ of objects in $\M^{\sf con}$ together with an equivalence in $\M$:
\[
M
\xra{~\simeq~}
\underset{\alpha \in A}
\bigsqcup
M_\alpha
~.
\]

\item
Let $M , M' \in \M$ be objects.
Let $N \in \M^{\sf con}$.
The canonical map
\[
\Hom_{\M}(M, N)
\coprod
\Hom_{\M}(M', N)
\xra{~\simeq~}
\Hom_{\M}(M \sqcup M' , N)
\]
is an equivalence between spaces.

\end{enumerate}

\end{cor}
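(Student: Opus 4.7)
The plan is to derive all three assertions directly from \Cref{t37.2} (which computes hom spaces in $\M$ explicitly), together with \Cref{t40.2}(3) (which pins down the essential image of $\M$ inside $\PShv(\Quiv^{\op})$) and \Cref{t45}(2) (the connected-components decomposition of a quiver). Assertion (1) is essentially tautological: by Definition~\ref{d7.2}, $\M$ was constructed precisely so as to admit finite products, so there is nothing to prove.

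For assertion (2), the plan is to invoke \Cref{t40.2}(3): via the fully faithful restricted Yoneda embedding $\M \hookrightarrow \PShv(\Quiv^{\op})$, the image of $\M$ is the smallest full subcategory closed under finite products that contains both the image of $\Quiv^{\op}$ and the object $\SS^1$. Consequently, every $M \in \M$ is equivalent to a product of the form $(\SS^1)^{\sqcup C} \sqcup \delta(\Gamma)$ for some finite set $C$ and some $\Gamma \in \Quiv$. Applying \Cref{t45}(2), one can decompose $\Gamma \simeq \coprod_{\alpha \in A} \Gamma_\alpha$ in $\Quiv$ with each $\Gamma_\alpha$ connected; because $\delta$ preserves finite products (Definition~\ref{d7.2}), this translates to $\delta(\Gamma) \simeq \bigsqcup_{\alpha \in A} \delta(\Gamma_\alpha)$ in $\M$. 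Thus $M \simeq (\SS^1)^{\sqcup C} \sqcup \bigsqcup_{\alpha \in A} \Gamma_\alpha$, which exhibits $M$ as a finite product of objects of $\M^{\sf con}$, since $\SS^1 \in \M^{\sf con}$ and each connected quiver lies in $\M^{\sf con}$ by \Cref{t51'}(2).

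For assertion (3), the approach is to write $M$ and $M'$ in the form supplied by (2), and note that $M \sqcup M'$ inherits an analogous presentation with index sets $C \amalg C'$ and $A \amalg A'$ (under the evident relabeling). By \Cref{t51'}(2), $N \in \M^{\sf con}$ is either equivalent to $\SS^1$ (the case $D=1$, $B=\emptyset$ of \Cref{t37.2}) or equivalent to a connected quiver $\Xi$ (the case $D=0$, $|B|=1$). In the first case, \Cref{t37.2} expresses $\Hom_{\M}(M \sqcup M', \SS^1)$ as a single coproduct indexed by $(C \amalg C') \amalg (A \amalg A')$; in the second, it expresses $\Hom_{\M}(M \sqcup M', \Xi)$ as a single coproduct indexed by $A \amalg A'$. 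In both cases, partitioning the indexing set along the $\amalg$ yields precisely $\Hom_{\M}(M, N) \amalg \Hom_{\M}(M', N)$. The only mildly delicate point is to verify that this partition is identified with the map induced by the two structure morphisms $M, M' \to M \sqcup M'$, but this is immediate from the naturality of the equivalence in \Cref{t37.2}. Since the heavy combinatorial lifting has already been done in \Cref{t37.2}, I do not anticipate any genuine obstacle; the proof is essentially a direct unwinding of that formula, with the case analysis made exhaustive by \Cref{t51'}(2).
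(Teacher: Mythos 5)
Your proposal is correct and matches the paper's (implicit) argument: the paper simply declares \Cref{t23} a direct consequence of \Cref{t37.2}, and your unwinding — (1) from Definition~\ref{d7.2}, (2) from \Cref{t40.2}(3) plus the connected decomposition of quivers, and (3) by splitting the coproduct in the hom-space formula of \Cref{t37.2} along the partitioned index sets — is exactly the intended route.
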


\begin{cor}
\label{t69}
Each object in $\M$ is a finite disjoint union of oriented circles and connected quivers.\footnote{In other words, each object in $\M$ is a finite product of colimits 
$\underset{\mu^{\circ} \in \para}
\colim
~
\ol{\mu}$
and connected quivers.}  
More precisely, the moduli space of its objects is the free commutative monoid
\[
\Obj(\M)
~\simeq~
\Free_{\sf Com}\Bigl(
\Obj(\M^{\sf con})
\Bigr)
~\simeq~
\Free_{\sf Com}\Bigl(
\sB\TT
\amalg
\underset{
[\Gamma] \in \pi_0 \Obj({\sf diGraph^{fin.con}})
}
\coprod
\sB\Aut_{{\sf diGraph^{fin.con}}}(\Gamma)
\Bigr)
~.
\]

\end{cor}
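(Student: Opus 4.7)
The plan is to deduce \Cref{t69} as a direct consequence of \Cref{t23}, which controls the product structure on $\M$, together with \Cref{t51'}, which explicitly describes $\M^{\sf con}$.

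First I would invoke \Cref{t23}: the full subcategory $\M^{\sf con} \subset \M$ freely generates $\M$ under the finite product $\sqcup$. At the level of moduli spaces of objects this translates to an equivalence
\[
\Obj(\M) \xra{~\simeq~} \Free_{\sf Com}\bigl(\Obj(\M^{\sf con})\bigr).
\]
The surjection comes from \Cref{t23}(2) (every object decomposes as a finite $\sqcup$-product of objects of $\M^{\sf con}$). Injectivity — i.e.\! uniqueness of such a decomposition up to the action of the symmetric groups — follows from the hom-space description of \Cref{t37.2}: any object in $\M^{\sf con}$ is connected in the sense that it does not admit a non-trivial product decomposition in $\M$, because such a decomposition would, via \Cref{t23}(3), exhibit a non-trivial coproduct decomposition on the hom-space receiving the identity morphism, forcing one of the factors to be empty; but \Cref{t51'}(2) shows $\M^{\sf con}$ contains no empty object.

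Next I would identify $\Obj(\M^{\sf con})$ using \Cref{t51'}. Part~(2) of that corollary states that every object of $\M^{\sf con}$ is either equivalent to $\SS^1$ or is a connected finite directed graph, so the moduli space of objects splits as a disjoint union of connected components. For each isomorphism class $[\Gamma] \in \pi_0 \Obj({\sf diGraph^{fin.con}})$, the fully faithful inclusion $(\Quiv^{\sf con})^{\op} \hookrightarrow \M^{\sf con}$ of \Cref{t40}(4) identifies $\Aut_{\M^{\sf con}}(\Gamma)$ with $\Aut_{{\sf diGraph^{fin.con}}}(\Gamma)$. For the component containing $\SS^1$, \Cref{t51'}(3) identifies $\End_{\M^{\sf con}}(\SS^1) \simeq \WW \simeq \TT \rtimes \NN^\times$, whose group of invertible elements is the subgroup $\TT$ (since $1 \in \NN^\times$ is the only invertible element of the multiplicative monoid of natural numbers). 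Hence
\[
\Obj(\M^{\sf con})
~\simeq~
\sB\TT
\amalg
\underset{[\Gamma] \in \pi_0 \Obj({\sf diGraph^{fin.con}})}\coprod
\sB\Aut_{{\sf diGraph^{fin.con}}}(\Gamma)
~.
\]
Combining this with the first step yields the claimed identification, with the empty product (the terminal object of $\M$) accounting for the $n=0$ summand of $\Free_{\sf Com}$.

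The main subtlety is the uniqueness of product decompositions — the ``freeness'' half of the commutative monoid identification — which is not stated outright in \Cref{t23} and must be extracted from the hom-space formula of \Cref{t37.2}. Once one accepts that connected objects in $\M^{\sf con}$ are genuinely ``prime'' for $\sqcup$, the remaining assertions are bookkeeping about automorphism groups and connected components.
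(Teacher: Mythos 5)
Your proposal is correct and matches the paper's (entirely implicit) proof: the paper simply declares this corollary a direct consequence of \Cref{t37.2}, and your route through \Cref{t23} and \Cref{t51'} is exactly that consequence spelled out. The only soft spot is the indecomposability step --- the identity landing in one cofactor of $\Hom_{\M}(M' \sqcup M'', M)$ does not by itself force the other factor to be terminal (one must rule out non-cancellation phenomena using the composition law encoded in the $\prod\coprod$ hom-formula of \Cref{t37.2}, which shows an equivalence between products of connected objects is precisely a bijection of index sets together with componentwise equivalences) --- but since you ultimately ground injectivity in that formula anyway, this is a matter of phrasing rather than a gap.
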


After \Cref{t51'}(1), \Cref{t37.2} implies the following.
\begin{cor}
\label{t53}
The a priori $(\infty,1)$-category $\M$ is in fact a $(2,1)$-category.

\end{cor}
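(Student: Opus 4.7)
The plan is to deduce this directly from the explicit description of hom-spaces in $\M$ furnished by \Cref{t37.2}, combined with the fact that $\M^{\sf con}$ is already a $(2,1)$-category by \Cref{t51'}(1). Recall that an $(\infty,1)$-category is a $(2,1)$-category precisely when every hom-space is $1$-truncated (i.e.\! a $1$-type). Since the class of $1$-types in $\Spaces$ is closed under finite products and finite coproducts, it suffices to exhibit each $\Hom_\M(M,N)$ as a finite limit/colimit of $1$-types.

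First, I would invoke \Cref{t23}(2) to write arbitrary objects of $\M$ as finite products
\[
M \simeq (\SS^1)^{\sqcup C} \sqcup \underset{\alpha \in A}\bigsqcup \Gamma_\alpha
\qquad \text{and} \qquad
N \simeq (\SS^1)^{\sqcup D} \sqcup \underset{\beta \in B}\bigsqcup \Xi_\beta
\]
with $\Gamma_\alpha, \Xi_\beta \in \Quiv^{\sf con}$. Next, \Cref{t37.2} supplies a canonical identification
\[
\Hom_\M(M,N)
~\simeq~
\Bigl(
\Hom_{\M^{\sf con}}(\SS^1,\SS^1)^{\amalg C}
\amalg
\underset{\alpha}\coprod \Hom_{\M^{\sf con}}(\Gamma_\alpha, \SS^1)
\Bigr)^{\times D}
\times
\underset{\beta}\prod
\Bigl(
\underset{\alpha}\coprod \Hom_{\M^{\sf con}}(\Gamma_\alpha, \Xi_\beta)
\Bigr)~,
\]
which presents $\Hom_\M(M,N)$ as a finite product of finite coproducts of hom-spaces of $\M^{\sf con}$. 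By \Cref{t51'}(1), each factor $\Hom_{\M^{\sf con}}(-,-)$ is a $1$-type; closure of $1$-types under finite products and finite coproducts then forces $\Hom_\M(M,N)$ to be a $1$-type, as required.

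The entire argument is bookkeeping on top of the two prior results, so there is no real obstacle: all of the work was done in establishing \Cref{t51'}(1) (where the concrete $1$-type nature of the four cases of $\Hom_{\M^{\sf con}}(-,-)$ was verified, using that $\WW \simeq \Nx \ltimes \TT$ is a continuous monoid-object in $1$-types and that $\Quiv$ is an ordinary category by \Cref{t26}) and in proving the hom-space formula of \Cref{t37.2}. The only point worth a sentence of care is that \Cref{t37.2} is stated for $M$, $N$ already in the decomposed form above, so one should remark that the decomposition of \Cref{t23}(2) is well-defined up to equivalence in $\M$ and the truncation statement is invariant under such equivalences.
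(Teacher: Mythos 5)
Your proposal is correct and is essentially the paper's own argument: the paper states \Cref{t53} as an immediate consequence of \Cref{t51'}(1) together with the hom-space formula of \Cref{t37.2}, exactly the two inputs you combine. The bookkeeping you supply (decomposing objects via \Cref{t23}(2)/\Cref{t69} and using closure of $1$-types under finite products and coproducts) is the intended, and correct, way to fill in the deduction.
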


\subsection{Refinement morphisms in $\M$}

Here, we define and study the subcategory of refinement morphisms in $\M$.

\begin{definition}
A morphism $f: M \to N$ in $\M^{\sf con}$ is a \bit{refinement morphism} if one of the following conditions is satisfied.
\begin{enumerate}
\item
$M, N \in (\Quiv^{\sf con})^{\op}$ and $f^{\circ}$ is a refinement morphism in the sense of \Cref{d1}.
\item
$M \simeq N \simeq \SS^1$ and $f$ is an isomorphism
\item
$ M \in (\Quiv^{\sf con})^{\op}$, $N \simeq \SS^1$ and $f$ is given by (in the identification of \Cref{t51'})
\[ \sW^{\sf dir}(M) \times \TT \subset \bigl( \sZ^{\sf dir}(M) \setminus M^{(0)} \bigr)
\times
\WW ,\]
where
\[ \sW^{\sf dir}(M) \subset \sZ^{\sf dir}(M) \setminus M^{(0)} \]
is the subset of directed cycles in which every edge appears exactly once.
\end{enumerate}
A morphism in $\M$ is a \bit{refinement morphism} if it is the product of refinement morphisms in $\M^{\sf con}$.
We denote by
\[ \M^{\sf ref} \subset \M \]
the subcategory consisting of refinement morphisms.
\end{definition}

\begin{definition}
\label{d21.1}
Let $M \in \M$.
The full $\infty$-subcategory
\[
\Quiv(M)
~\subset~
\Quiv^{\op}_{/M}
~:=~
\Quiv^{\op} \underset{\M} \times \M_{/M}
\]
consists of those $\Gamma \to M$ that are refinement morphisms.

\end{definition}

\begin{observation}
\label{t71}
Let $M,N \in \M$.
\begin{enumerate}

\item
Taking products defines a functor
\[
\M_{/M}
\times
\M_{/N}
\longrightarrow
\M_{/M\sqcup N}
~,\qquad
(M'\to M , N'\to N)
\mapsto
(M' \sqcup N' \to M \sqcup N)
~.
\]

\item
This functor restricts as an equivalence:
\[
\M_{/^{\sf ref} M}
\times
\M_{/^{\sf ref} N}
\xra{~\simeq~}
\M_{/^{\sf ref} M\sqcup N}
~,
\]
where $\M_{/^{\sf ref} M} \subset \M_{/ M}$ is the full subcategory consisting of refinement maps to $M$.

\item
The above functor further restricts as an equivalence:
\[
\Quiv(M)
\times
\Quiv(N)
\xra{~\simeq~}
\Quiv(M\sqcup N)
~.
\]

\end{enumerate}

\end{observation}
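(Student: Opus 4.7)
The plan is to reduce everything to the product structure on $\M$ established in \Cref{t23}, together with the combinatorial description of hom-spaces in \Cref{t37.2} and \Cref{t23}(3). Part~(1) is immediate: since $\sqcup$ is the categorical product on $\M$, there is a canonical functor $\M \times \M \xra{\sqcup} \M$, and applied to structure maps $(P \to M, Q \to N)$ this produces $P \sqcup Q \to M \sqcup N$, defining the asserted functor between slice categories.

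For Part~(2), I first verify essential surjectivity. Given an object $(R \xra{h} M \sqcup N)$ with $h$ a refinement morphism, the definition of refinement morphism together with \Cref{t23}(2) writes $R \simeq \bigsqcup_i R_i$ with each $R_i \in \M^{\sf con}$, and $h$ as a product of refinement morphisms $R_i \to Q_i$ in $\M^{\sf con}$. Each $Q_i$ is a connected component of $M \sqcup N$, hence lies either in $M$ or in $N$, which partitions the index set and yields a canonical decomposition $R \simeq R_M \sqcup R_N$ with refinement morphisms $R_M \to M$ and $R_N \to N$. The essential uniqueness of this decomposition (up to the equivalences $R \simeq \bigsqcup R_i$) shows essential surjectivity. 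For fully faithfulness, a morphism $\phi \colon R \to R'$ in $\M_{/^{\sf ref} M \sqcup N}$ is a morphism in $\M$ commuting with structure maps. Since $R' \simeq R'_M \sqcup R'_N$ is a product, $\phi$ is determined by its components $\phi_M \colon R \to R'_M$ and $\phi_N \colon R \to R'_N$. By \Cref{t23}(3), writing $R'_M \simeq \bigsqcup_j R'_{M,j}$ with each $R'_{M,j} \in \M^{\sf con}$, and similarly writing $R \simeq R_M \sqcup R_N$, any morphism from $R$ into a connected object factors through exactly one of the connected summands of $R$. Compatibility with the structure maps then forces each component of $R_M$ (which maps to $M$ under $h$) to map under $\phi$ into summands of $R'$ that also map to $M$, i.e.\! into $R'_M$; symmetrically for $R_N$. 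This gives the canonical decomposition $\phi \simeq \phi_M \sqcup \phi_N$ with $\phi_M \colon R_M \to R'_M$ and $\phi_N \colon R_N \to R'_N$ compatible with structure maps, yielding the sought equivalence of hom-spaces.

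Part~(3) then follows by restricting the argument: if $(\Gamma \to M \sqcup N) \in \Quiv(M \sqcup N)$, then $\Gamma$ is a quiver and the decomposition $\Gamma \simeq \Gamma_M \sqcup \Gamma_N$ expresses $\Gamma$ as a disjoint union of two subquivers in $\Quiv$ by \Cref{t45}, hence $\Gamma_M, \Gamma_N \in \Quiv$. The maps $\Gamma_M \to M$ and $\Gamma_N \to N$ inherit the refinement property, so $(\Gamma_M \to M, \Gamma_N \to N) \in \Quiv(M) \times \Quiv(N)$. Morphism spaces restrict compatibly since $\Quiv \hookrightarrow \M$ is full.

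The main obstacle is verifying the fully faithful part of (2): one must track carefully how compatibility with the fixed structure map $R \to M \sqcup N$ propagates through the product decomposition of $R'$, ensuring that the components of $\phi$ cannot mix the $M$-part of $R$ with the $N$-part of $R'$ (or vice versa). This is the content of the structure-map compatibility argument above, and it rests essentially on the fact (from \Cref{t23}(3) and the definition of refinement) that a refinement morphism with connected source has a unique connected target component.
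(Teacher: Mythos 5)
The paper states this as an Observation and supplies no proof; it is intended to follow directly from the free generation of $\M$ under finite products by $\M^{\sf con}$ (\Cref{t23}) together with the definition of refinement morphisms in $\M$ as products of refinement morphisms in $\M^{\sf con}$. Your argument is correct and is precisely the intended justification: the decomposition of a refinement $R \to M\sqcup N$ along the connected components of the target, and the fiber computation via \Cref{t23}(3) showing that compatibility with the structure maps prevents mixing of the $M$- and $N$-parts, are exactly the details the paper leaves implicit.
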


\begin{observation}
\label{r11}
Let $C$ be a finite set.
Let $\Gamma \in \Quiv$.
\begin{enumerate}
\item
Reviewing the definition of refinement morphisms in $\M$ reveals that the functor~(\ref{e72}) factors:
\begin{equation}
\label{e76}
(\para)^{\times C}
\longrightarrow
\Quiv\bigl( (\SS^1)^{\sqcup C} \sqcup \Gamma \bigr)
~,\qquad
(\mu_c)_{c\in C}
\longmapsto
\Bigl(
\underset{c\in C} \bigsqcup
\ol{\mu}_c \Bigr)
\sqcup 
\Gamma
~.
\end{equation}

\item
If $\Gamma = \emptyset$, then this morphism~(\ref{e76}) is an equivalence.

\item 
If $C = \emptyset$, then this morphism~(\ref{e76}) is the inclusion of a final object.

\item
Through \Cref{t71}(4), the functor~(\ref{e76}) is a fully faithful right adjoint.  

\end{enumerate}

\end{observation}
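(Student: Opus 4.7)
The plan is to prove the four items in order, each reducing to a previous item or to a direct unwinding of definitions.

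For item (1), I would argue by direct inspection against the description in \Cref{t51'}(3). The canonical morphism $\overline{\mu}_c \to \SS^1$ in $\M^{\sf con}$ is the one coming from the colimit cocone that defines $\SS^1$ (Definition~\ref{d7}). Under the hom-description of \Cref{t51'}(3), it corresponds to the directed cycle given by the whole of $\overline{\mu}_c$ (which visits every edge exactly once, hence lies in $\sW^{\sf dir}(\overline{\mu}_c)$) paired with the unit in $\WW$, hence in $\sW^{\sf dir}(\overline{\mu}_c)\times\TT$; by definition this is a refinement. The component $\Gamma \to \Gamma$ is the identity, so trivially a refinement. By the definition of refinement in $\M$ as a product of refinements in $\M^{\sf con}$, the whole functor lands in $\Quiv((\SS^1)^{\sqcup C}\sqcup \Gamma)$.

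For item (3), when $C=\emptyset$ the source $(\copara)^{\times C}$ is the terminal category and the value is $(\Gamma \xra{\id} \Gamma)$, which is manifestly terminal in $\Quiv(\Gamma)$.

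For items (2) and (4), the key move is to invoke \Cref{t71}(3) to decompose
\[
\Quiv\bigl((\SS^1)^{\sqcup C}\sqcup \Gamma\bigr)
~\simeq~
\Quiv(\SS^1)^{\times C}\times \Quiv(\Gamma)
~,
\]
under which the functor (\ref{e76}) becomes the product of $C$ copies of the map $\copara\to \Quiv(\SS^1)$ with the constant functor $\ast \to \Quiv(\Gamma)$ selecting the terminal object $\id_\Gamma$. Thus item~(2) (case $\Gamma=\emptyset$) reduces to the core claim $\copara\xra{\simeq}\Quiv(\SS^1)$, and item~(4) reduces to showing that each factor is a fully faithful right adjoint. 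The $\Quiv(\Gamma)$-factor is the inclusion of a terminal object, which is always a fully faithful right adjoint (its left adjoint is the unique collapse). The $\copara$-factor is itself an equivalence (by item~(2)), hence trivially a fully faithful right adjoint. Products of fully faithful right adjoints are fully faithful right adjoints, yielding item~(4).

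The main obstacle is thus establishing $\copara\xra{\simeq}\Quiv(\SS^1)$. To prove this, I would combine \Cref{t51'}(3) with the definition of refinement to see that an object of $\Quiv(\SS^1)$ is a triple $(\Xi,\gamma,t)$, where $\Xi$ is a connected quiver, $\gamma\in \sW^{\sf dir}(\Xi)$ is a spanning directed cycle (which forces $\Xi$ to be cyclically-directed), and $t\in \TT$. Essential surjectivity: given such data, lift $\Xi$ along its universal cyclic cover to a $\ZZ$-set with the induced linear order and $\ZZ$-action, producing an object of $\copara$ whose image recovers $(\Xi,\gamma,t)$ up to the $\TT$-orientation, which is captured by the $\TT$-action of \Cref{t10}. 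Fully faithfulness: the identification $(\copara)_{{\sf h}\TT}\xra{\simeq}\bLambda$ of \Cref{t98}, together with the Cartesian fibration structure $\w{\bLambda}\to \BW$ from \Cref{t119}, lets one match morphisms in $\copara$ with pairs $(f,t)$ where $f$ is a morphism in $\w\bLambda$ over $\SS^1$ and $t\in \TT$ is a twist, which is precisely the datum of a morphism in $\Quiv(\SS^1)$.
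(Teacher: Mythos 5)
Your overall architecture is exactly what the paper intends (the paper offers no written proof of this Observation beyond pointing at the product decomposition of \Cref{t71}): item~(1) by inspecting which component of $\Hom_{\M^{\sf con}}(\ol{\mu}_c,\SS^1)$ the colimit-cocone map lands in, item~(3) because $\id_\Gamma$ is terminal already in $\Quiv^{\op}_{/\Gamma}$ and is a refinement, and items~(2) and~(4) by decomposing along $\Quiv\bigl((\SS^1)^{\sqcup C}\sqcup\Gamma\bigr)\simeq \Quiv(\SS^1)^{\times C}\times\Quiv(\Gamma)$ and reducing everything to the single claim $\copara\xra{\simeq}\Quiv(\SS^1)$. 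Your verifications of essential surjectivity and fully faithfulness of that comparison (matching $\Aut_{\copara}\cong\ZZ$ against the extension of $\sC_n$ by $\pi_1\TT$, and matching morphisms via $\copara\simeq\w{\bLambda}\times_{\BW}\ast$) are sketchy but point in the right direction and are at the level of detail the paper itself uses for Observations.

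The one step I would press you on is the parenthetical ``$\gamma\in\sW^{\sf dir}(\Xi)$ \ldots\ forces $\Xi$ to be cyclically-directed,'' which carries the entire weight of essential surjectivity (the image of $\copara$ consists only of cyclically-directed graphs). Under the literal definition of $\sW^{\sf dir}(M)$ --- directed cycles in which every \emph{edge} of $M$ appears exactly once --- this implication fails: a coherently oriented figure-eight (one vertex $v$, two loops $a,b$ at $v$) admits a directed cycle $\chi\to M$ from the $2$-cycle $\chi$ hitting each of $a,b$ exactly once, and this map satisfies the minimality condition of \Cref{d19}, yet the figure-eight is not cyclically-directed and is not of the form $\ol{\mu}$. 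What you actually need is that the representing map $\chi\to\Xi$ is an isomorphism of directed graphs, i.e.\! that every \emph{vertex} also appears exactly once; this is forced by the intended (geometric) meaning of refinement and is presumably the intended reading of $\sW^{\sf dir}$, but it does not follow from the edge condition alone. You should either record the vertex condition explicitly as part of your description of the objects of $\Quiv(\SS^1)$, or supply an argument ruling out directed cycles that revisit a vertex; without one of these, the equivalence in item~(2), and hence the finality statement of \Cref{t72} downstream, is not established.
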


\begin{lemma}
\label{t72}
Let $M \in \M$.
The canonical functor
\[
\Quiv( M )
\longrightarrow
\Quiv^{\op}_{/M}
\]
is final.  
In particular, using Observation~\ref{r11}(4) and \Cref{t62}, both of these $\infty$-categories $\Quiv(M)$ and $\Quiv^{\op}_{/M}$ are sifted.  
\end{lemma}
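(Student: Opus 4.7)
The plan is to deduce finality of $\Quiv(M) \to \Quiv^{\op}_{/M}$ from the preceding \Cref{finality} by invoking the right-cancellation property of final functors. By \Cref{t23}, any $M \in \M$ decomposes as $M \simeq (\SS^1)^{\sqcup C} \sqcup \Gamma$ for some finite set $C$ and some $\Gamma \in \Quiv$; fix such a presentation. By Observation~\ref{r11}(1), the functor~\Cref{e76} factors through $\Quiv(M)$ as
\[
(\para)^{\times C} \longrightarrow \Quiv(M) \longrightarrow \Quiv^{\op}_{/M}
~.
\]
\Cref{finality} asserts that the composite $(\para)^{\times C} \to \Quiv^{\op}_{/M}$ is final, while Observation~\ref{r11}(4) identifies the first factor as a fully faithful right adjoint, and hence as final.

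The crux is then the right-cancellation property of final functors: if $F\colon A \to B$ and $G\colon B \to C$ are functors such that both $F$ and $G \circ F$ are final, then $G$ is final. This follows immediately from the characterization of finality via colimit preservation. For any diagram $Y\colon C \to \cX$ into a cocomplete $\infty$-category, finality of $G \circ F$ and of $F$ give
\[
\colim_{C} Y ~\simeq~ \colim_{A} Y \circ G \circ F ~\simeq~ \colim_{B} Y \circ G
~,
\]
which is exactly the colimit-preservation property of $G$ being final. Applied with $F$ as in \Cref{e76} and $G$ the inclusion $\Quiv(M) \hookrightarrow \Quiv^{\op}_{/M}$, this yields the desired finality.

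For the ``in particular'' clause, \Cref{t62} gives that $\copara$ is sifted, and finite products of sifted $\infty$-categories are sifted, so $(\copara)^{\times C}$ is sifted. Siftedness transfers along any final functor $F\colon A \to B$ with $A$ sifted: $B$ is then nonempty, and since $\Delta_B \circ F = (F \times F) \circ \Delta_A$ is a composite of two final functors (and therefore final), right-cancellation gives that the diagonal $\Delta_B\colon B \to B \times B$ is final, establishing siftedness of $B$. Applied to the final functor $(\copara)^{\times C} \to \Quiv(M)$ from Observation~\ref{r11}(4), this yields siftedness of $\Quiv(M)$.

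The main obstacle is primarily to recognize that the right-cancellation property of final functors (elementary but less commonly stated than its dual ``$G$ final and $G F$ final imply $F$ final'') is the correct tool; the remainder of the argument is a direct assembly of the previously established facts.
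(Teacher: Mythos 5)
Your proof is correct and follows essentially the same route as the paper's: decompose $M \simeq (\SS^1)^{\sqcup C} \sqcup \Gamma$, factor the final functor of \Cref{finality} through $\Quiv(M)$ via Observation~\ref{r11}, and cancel against the final functor of Observation~\ref{r11}(4) — the paper calls your right-cancellation step ``the 2-out-of-3 property of final functors.'' Your explicit justifications of right-cancellation and of the transfer of siftedness along a final functor are both sound; the paper simply leaves them implicit.
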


\begin{proof}
\Cref{t69} implies there is an equivalence $M \simeq (\SS^1)^{\sqcup C}\sqcup \Gamma$ in $\M$ for some finite set $C$ and some $\Gamma \in \Quiv$.
Observation~\ref{r11}(1) grants a filler among $\infty$-categories:
\[
\begin{tikzcd}[row sep=1.5cm, column sep=0.5cm]
&
\Quiv\bigl(
(\SS^1)^{\sqcup C} \sqcup \Gamma
\bigr)
\arrow{rd}
\\
( \para )^{\times C}
\arrow{rr}[swap]{(\ref{e72})}
\arrow[dashed]{ru}[sloped]{\rm Obs~\ref{r11}(1)}
&&
\Quiv^{\op}_{/ (\SS^1)^{\sqcup C} \sqcup \Gamma }
\end{tikzcd}
~.
\]
Lemma~\ref{finality} states that the bottom horizontal functor is final.
Observation~\ref{r11}(4) implies the diagonal upward arrow is final.
By the 2-out-of-3 property of final functors, the diagonal downward functor is final, as desired.

\end{proof}

\subsection{Excision sites}
The object $\SS^1 \in \M$ is defined as a colimit of a functor $\para \to \M$.
\Cref{t36} implies $\SS^1$ can be computed as a geometric realization:
\[
|
\SS^1_\bullet
|
~=~
\colim
\Bigl(
\SS^1_\bullet
\colon
\bDelta^{\op} 
\xra{\Cref{e15}}
\para \xra{ (\ref{e21}) } \Quiv^{\op} \xra{ \delta } \M
\Bigr)
~\simeq~
\SS^1
~\in~
\M
\]
where, for each $[p]\in \bDelta$, the object $\SS^1_p \in \Quiv$ is the pushout in $\digraphs$:
\[
\begin{tikzcd}[column sep=1.5cm]
\SS^0
\arrow{d}
\arrow{r}{-1}
&
\SS^0
\arrow{r}
&
\DD^1
\arrow{d}
\\
{[p]}
\arrow{rr}
&&
\SS^1_p
\end{tikzcd}
~.
\]
Here, $\SS^0 = \{\pm 1\}$ is the two-element set, regarded as a quiver with no non-degenerate edges;
the horizontal map $\SS^0 \to \DD^1$ is given by $-1\mapsto 0$ and $+1\mapsto 1$;
the left vertical map is given by $-1\mapsto 0$ and $+1 \mapsto p$.
As \Cref{t4.5} below, we show that $\M$ admits geometric realizations of slightly more general simplicial objects in $\M$.
The general statement is technical, so we introduce the following.

\begin{construction}
\label{d4.5}
Let $\w{\Gamma}$ be a finite directed graph.
Let $S$ be a finite set.
Let 
\[
(\SS^0)^{\amalg S} 
\overset{\varphi}{~\hookrightarrow~} 
\w{\Gamma}^{(0)}
\]
be an injection into the set of vertices with the property that, for each $s\in S$, the vertex $\varphi_s(-1)$ has exactly one incident edge and it is in-coming, and the vertex $\varphi_s(+1)$ has exactly one incident edge and it is out-going.  
For each $[p]\in \bDelta$ regarded as a linearly-directed graph, consider the pushout in $\digraphs$:
\[
\begin{tikzcd}
(\SS^0)^{\amalg S} 
\arrow{d}
\arrow{r}{\varphi}
&
\w{\Gamma}
\arrow{d}
\\
{[p]^{\amalg S}}
\arrow{r}
&
\Gamma_p
\end{tikzcd}
\]
in which the left vertical map is the $S$-fold coproduct of the map $\SS^0 \to [p]$ given by $-1\mapsto 0$ and $+1 \mapsto p$.
Using that the values $[\bullet] \in \Quiv$ assemble as a functor $\bDelta \to \Quiv$, the above values assemble as a functor
\[
\bDelta
\xra{~\Gamma_\bullet~}
\Quiv
~,\qquad
[p]
\longmapsto
\Gamma_p
~.
\]
For $M'\in \M$, taking disjoint union with $M'$ determines a simplicial object in $\M$:
\begin{equation}
\label{s11}
M_\bullet
\colon
\bDelta^{\op}
\xra{~\Gamma_\bullet~}
\Quiv^{\op}
\xra{~\delta~}
\M
\xra{~\sqcup M'~}
\M
~,\qquad
[p]
\longmapsto
M_p
:=
\Gamma_p \sqcup M'
~.
\end{equation}

\end{construction}

\begin{terminology}
\label{d111}
Let $M\in \M$ be an object.
An \bit{excision site (for $M$)} is the data $(\w{\Gamma},S,\varphi, M')$ of \Cref{d4.5}, together with an identification $M \simeq \left| M_\bullet \right|$ in $\M$ of the colimit of the resulting simplicial object \Cref{s11}.

\end{terminology}

\begin{figure}
\centering
\begin{subfigure}{\textwidth}
\centering
\begin{tikzpicture}
\node (A) at (1.2,0) {\textbullet};
\node (B) at (3, 1.5) {\textbullet};
\node (C) at (3, -1.5) {\textbullet};

\draw[->] (0,0) arc[start angle=360, end angle=0, radius=1.5];

\draw[->, shorten <=1.5mm] (C) arc[start angle=90, end angle=420, radius=3mm];
\draw[->] (B) to[bend right] (C);
\draw[->] (A) to (B);
\draw[->] (A) to (C);

\draw[->, shorten >= 2mm, shorten <=2.5mm] (C) arc(310:410:1.5 and 2);

\node (XX) at (6,1) {}; 
\draw[->] (XX) arc[start angle = 120, end angle = 480, radius = 1.5];

\end{tikzpicture}

\caption{An object $M\in \M$.}

\end{subfigure}

\begin{subfigure}{\textwidth}
\centering

\begin{tikzpicture}
\node (A) at (1.2,0) {\textbullet};
\node (B) at (3, 1.5) {\textbullet};
\node (C) at (3, -1.5) {\textbullet};

\draw[->] (0,0) arc[start angle=360, end angle=0, radius=1.5];

\draw[->, shorten <=1.5mm] (C) arc[start angle=90, end angle=420, radius=3mm];
\draw[->] (B) to[bend right] (C);
\draw[->] (A) to (B);
\draw[->] (A) to (C);	
		
\draw[->, shorten >= 2mm, shorten <=2mm] (C) arc(310:343:1.5 and 2) node (X) {\color{red}{\footnotesize{\textbullet}}};
	
\draw[<-, shorten >= 2mm, shorten <=2mm] (B) arc(410:380:1.5 and 2) node (Z) {\color{red}{\footnotesize{\textbullet}}};

\node (XX) at (6,1) {\color{red}\footnotesize{\textbullet}};
\draw[->, shorten >= 1.5mm, shorten <=1.5mm] (XX) arc[start angle = 120, end angle = 420, radius = 1.5] node {\color{red}\footnotesize{\textbullet}};

\end{tikzpicture}

\caption{An excision site $(\w{\Gamma},S,\varphi,M')$ for $M$, where $\w{\Gamma}$ is the directed graph that is the union of the middle and right components, 
the set $S$ is a 2-element set,
the map $\varphi$ is the inclusion of the univalent vertices (in red), 
and $M'$ is the left oriented circle.}

\end{subfigure}

\begin{subfigure}{\textwidth}
\centering

\begin{tikzpicture}
\node (A) at (1.2,0) {\textbullet};
\node (B) at (3, 1.5) {\textbullet};
\node (C) at (3, -1.5) {\textbullet};

\draw[->] (0,0) arc[start angle=360, end angle=0, radius=1.5];

\draw[->, shorten <=1.5mm] (C) arc[start angle=90, end angle=420, radius=3mm];
\draw[->] (B) to[bend right] (C);
\draw[->] (A) to (B);
\draw[->] (A) to (C);
	
\draw[->, shorten >= 2mm, shorten <=2mm] (C) arc(310:343:1.5 and 2) node (X) {\color{red}{\footnotesize{\textbullet}}};
\draw[->, shorten >= 2mm, shorten <=2mm] (X) arc(343:362:1.5 and 2) node (Y) {\color{red}{\footnotesize{\textbullet}}};
\draw[->, shorten >= 2mm, shorten <=2mm] (Y) arc(362:380:1.5 and 2) node (Z) {\color{red}{\footnotesize{\textbullet}}};
\draw[->, shorten >= 2mm, shorten <=2mm] (Z) arc(380:410:1.5 and 2);
	
\node (XX) at (6,1) {\color{red}\footnotesize{\textbullet}};
\draw[->, shorten >= 2mm, shorten <=2mm] (XX) arc[start angle = 120, end angle = 420, radius = 1.5] node (XY) {\color{red}\footnotesize{\textbullet}};
\draw[->, shorten >= 2mm, shorten <=2mm] (XY) arc[start angle = 420, end angle = 450, radius = 1.5] node (XZ) {\color{red}\footnotesize{\textbullet}};
\draw[->, shorten >= 2mm, shorten <=2mm] (XZ) arc[start angle = 450, end angle = 480, radius = 1.5];

\end{tikzpicture}

\caption{The value $M_2\in \M$ of the simplicial object $M_\bullet$.}

\end{subfigure}

\caption{An object $M$ with an excision site and a picture of $M_2$.}

\end{figure}

\begin{lemma}
\label{t4.5}
Consider the context $(\w{\Gamma},S,\varphi, M')$ of \Cref{d4.5}.
The colimit
\[
M
~:=~
\colim
\Bigl(
\bDelta^{\op}
\xra{~M\bullet~}
\M
\Bigr)
~\in~
\M
\]
exists, and each of the canonical morphisms $M_p \to M$ is a refinement.

\end{lemma}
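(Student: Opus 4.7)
The plan is to prove both the existence of the colimit and the refinement property by reducing to the basic $\SS^1$-construction of \Cref{d7.2} via a paracyclic lift.

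First, I would reduce to the case that $\w{\Gamma}$ is connected and $S \neq \emptyset$. Since the injection $\varphi$ factors through the connected-component decomposition $\w{\Gamma} = \bigsqcup_\alpha \w{\Gamma}_\alpha$, the set $S$ partitions as $S = \bigsqcup_\alpha S_\alpha$, and the simplicial object $\Gamma_\bullet$ decomposes as $\bigsqcup_\alpha \Gamma_\bullet^{(\alpha)}$ in $\digraphs$ (preserved by $\Free$ via \Cref{t25}). Since $\sqcup$ in $\M$ is the categorical product and $\delta$ preserves finite products (\Cref{t23}), the colimit—if it exists—decomposes as a product in $\M$, and refinement morphisms decompose via \Cref{t71}. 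The case $S = \emptyset$ is immediate: $\Gamma_\bullet$ is constant at $\w{\Gamma}$, with identity maps serving as refinements.

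Second, I would construct the candidate $M$ by factoring $\Gamma_\bullet$ through a paracyclic diagram. Imitating \Cref{notn.functor.chi.from.Delta.to.Quiv}, the functor $\Gamma_\bullet : \bDelta \to \Quiv$ extends along $\bDelta \xra{[\bullet]^{\star \ZZ}} \copara$ to a functor $\tilde G : \copara^{\times S} \to \Quiv$ sending $(\mu_s)_{s \in S}$ to the pushout $\w{\Gamma} \sqcup_{(\SS^0)^{\amalg S}} \bigsqcup_s \overline{\mu}_s$, where each embedding $\SS^0 \hookrightarrow \overline{\mu}_s$ is the natural one on active morphisms, extended to all of $\copara$ via the free $\ZZ$-action (exactly as in the $\chi$-construction). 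By \Cref{t36} and cosiftedness of $\copara$ (\Cref{t62}), the diagonal $\bDelta \to \copara^{\times S}$ is initial, so
\[
\colim_{\bDelta^\op} \delta(\Gamma_\bullet) \simeq \colim_{(\copara^\op)^{\times S}} \delta(\tilde G).
\]
By the universal property of $\M$ (\Cref{t15.2}, \Cref{t49.2})—which formally adjoins the $\SS^1$-colimit of \Cref{d7.2} and is closed under finite products—the latter colimit exists in $\M$. Applying the Fubini argument for the product $\copara^{\times S}$ together with the explicit hom-descriptions from \Cref{t51'} and \Cref{t37.2}, this colimit can be identified with an explicit object $M \in \M$, obtained from $\delta(\w{\Gamma})$ by formally adjoining an $\SS^1$-factor for each $s \in S$ at the vertices $\varphi_s(\pm 1)$.

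Third, I would verify the refinement property of each canonical morphism $\Gamma_p \to M$. Using the decomposition of $M$ through finite products in $\M$ together with \Cref{t71}, it suffices to verify the refinement property on each factor. For each $s \in S$, the corresponding factor of $\Gamma_p \to M$ lands in $\SS^1$, and the valence-$(1,1)$ hypothesis on $\varphi_s(\pm 1)$ ensures that the sub-quiver of $\Gamma_p$ formed by the inserted chain $[p]^s$ together with the distinguished edges $e_s^-$ (unique incoming to $\varphi_s(-1)$) and $e_s^+$ (unique outgoing from $\varphi_s(+1)$) closes off into an Eulerian directed cycle in $\Gamma_p$, traversing each relevant edge exactly once. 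By the definition of refinements in $\M^{\sf ref}$ via $\sW^{\sf dir}$, this exhibits the factor as a refinement morphism to $\SS^1$. The residual quiver factor is a standard refinement in $\Quiv^{\sf con}$, realized as iterated subdivision along the inserted chains.

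The main obstacle is the construction of the paracyclic lift $\tilde G : \copara^{\times S} \to \Quiv$ and the explicit identification of $M$ in $\M$. The lift requires making natural the "distinguished $\SS^0$" inside each $\overline{\mu}_s$ across non-active morphisms in $\copara$—precisely the technical heart of the $\chi$-construction—and the valence-$(1,1)$ hypothesis is exactly what guarantees compatibility under the cyclic $\ZZ$-action, both ensuring existence of the paracyclic extension and making the Eulerian-cycle criterion for refinement automatic.
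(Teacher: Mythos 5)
There is a genuine gap, and it is in the central step of your argument: the identification of the colimit. You assert that $M$ is obtained from $\delta(\w{\Gamma})$ by ``formally adjoining an $\SS^1$-factor for each $s\in S$,'' and you build your whole argument on a paracyclic lift $\widetilde{G}\colon \copara^{\times S}\to\Quiv$ extending $\Gamma_\bullet$. But such a lift does not exist in general, and the colimit is not in general a circle. The hypothesis of \Cref{d4.5} is only that $\varphi_s(-1)$ has exactly one incoming edge $e_-$ and $\varphi_s(+1)$ has exactly one outgoing edge $e_+$; it does \emph{not} force the inserted chain $[p]$ to close up into a cycle. The dichotomy that the paper's proof turns on is whether $e_-=e_+$. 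When $e_-=e_+$ (so this single edge runs from $\varphi(+1)$ back to $\varphi(-1)$ and splits off as a cofactor $\DD^1$), one indeed gets $\Gamma_\bullet\cong \SS^1_\bullet\amalg\Gamma'$, the paracyclic symmetry is present, and $M\simeq \SS^1\sqcup\Gamma'$ -- this is the only case your argument covers. When $e_-\neq e_+$, the quiver $\Gamma_p$ is $\Gamma'$ with the two-sided cone $[p]^{\lcone\rcone}$ glued in; it has no automorphism cyclically rotating the chain vertices, so your functor $\widetilde{G}$ cannot be defined on the non-active morphisms of $\copara$ (in particular on the rotation $\alpha_{[p]^{\star\ZZ}}$), and the Eulerian-cycle criterion for a refinement to $\SS^1$ is vacuous because the target is not $\SS^1$.

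What actually happens in that second case is that the simplicial object $\Gamma_\bullet$ is a \emph{split} geometric realization: $[\bullet]^{\lcone\rcone}$ factors through $\bDelta^\op\to\bDelta^\op$, so every functor out of $\Quiv^\op$ preserves its colimit, which is the ordinary quiver $\Gamma_\emptyset$ obtained by collapsing the inserted chain to a single edge. The colimit therefore exists in $\M$ for trivial reasons and lands in $\delta(\Quiv^\op)$, with the canonical maps $\Gamma_p\to\Gamma_\emptyset$ being refinements of quivers in the sense of \Cref{d1} (iterated edge subdivision), not morphisms to a circle. To repair your proof you would need to add this case distinction at the outset and handle the non-cyclic case by the splitting argument rather than by a paracyclic extension; your reduction to connected $\w\Gamma$ also does not reduce to $|S|=1$ (several $\varphi_s$ can land in the same component), whereas the paper's reduction via the diagonal $\bDelta\to\bDelta^{\times S}$ and siftedness of $\bDelta^\op$ does.
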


\begin{proof}
We first establish the case in which $M' = \emptyset$, so that $M_\bullet = \Gamma_\bullet$.
Observe the factorization
\[
\Gamma_\bullet
\colon
\bDelta
\xra{~\rm diagonal~}
\bDelta^{\times S}
\xra{~\Gamma_{\vec{\bullet}}~}
\Quiv
\]
where, for $\bigl( [p_s] \bigr)_{s\in S} \in \bDelta^{\times S}$, the value $\Gamma_{([p_s])_{s\in S}} \in \Quiv$ is the pushout in $\digraphs$:
\[
\begin{tikzcd}
(\SS^0)^{\amalg S} 
\arrow{d}
\arrow{r}{\varphi}
&
\w{\Gamma}
\arrow{d}
\\
{\underset{s\in S} \coprod [p_s]}
\arrow{r}
&
\Gamma_{(p_s)_{s\in S}}
\end{tikzcd}
~.
\]
Using that $\bDelta^{\op}$ is sifted, the lemma is therefore implied by the case in which $S$ is a singleton. 
So assume $S$ is a singleton.

Consider the largest subgraph $\Gamma'\subseteq \w{\Gamma}$ that does not contain vertices in the image $\varphi(\SS^0) \subseteq \w{\Gamma}^{(0)}$ of $\varphi$.
There are two cases for the map $\SS^0 \xra{\varphi} \w{\Gamma}$.
\begin{itemize}

\item[]
{\bf Case 1:}
the unique incoming edge to the vertex $\varphi(-1)$ 
is equal to the unique outgoing edge to the vertex $\varphi(+1)$.  
In this case, the unique such edge is necessarily a cofactor $\DD^1$ of $\w{\Gamma}$, which is to say, there is an isomorphism 
\[
\w{\Gamma} 
~\cong~ 
\DD^1 \amalg \Gamma'
\]
under $\SS^0$ (via the inclusion $\SS^0 \hookrightarrow \DD^1$).
Therefore, the functor $\bDelta \xra{\Gamma_\bullet} \Quiv$ is isomorphic with the composite functor
\[
\bDelta
\xra{~\Cref{e15}~}
\copara
\xra{~\Cref{e21}~}
\Quiv
\xra{~- \amalg \Gamma'~}
\Quiv
~.
\]
Because $\bDelta^{\op} \xra{\Cref{e15}} \para$ is final (\Cref{t36}), \Cref{t58} implies the colimit of the composite functor
\[
\bDelta^{\op}
\xra{~\Cref{e15}~}
\para
\xra{~\Cref{e21}~}
\Quiv^{\op}
\xra{~- \sqcup \Gamma'~}
\Quiv^{\op}
\xra{~\delta~}
\M
\]
exists.  
Notice that, for each $[p]\in \bDelta$, the morphism $\SS^1_p \to \SS^1$ is a refinement, and therefore the morphism $\Gamma_p \cong \SS^1_p \sqcup \Gamma' \to \SS^1 \sqcup \Gamma' \cong M$ is a refinement.
This proves statement~(1) in this case.

\item[]
{\bf Case 2.}
the unique incoming edge $e_-$ to the vertex $\varphi(-1)$ 
is not equal to the unique outgoing edge $e_+$ to the vertex $\varphi(+1)$.  
Consider the maps between directed graphs $\DD^1_- \xra{\langle e_- \rangle} \w{\Gamma} \xla{\langle e_+ \rangle} \DD^1_+$ selecting these edges.
In this case, $\w{\Gamma}$ fits into a pushout diagram in $\digraphs$,
\[
\begin{tikzcd}
\partial_- \DD^1
\amalg
\partial_+ \DD^1
\arrow{d}
\arrow{r}
&
\Gamma'
\arrow{d}
\\
\DD^1_- \amalg \DD^1_+
\arrow{r}
&
\w{\Gamma}
\end{tikzcd}
~,
\]
where $\partial_{\pm} \DD^1_{\pm}  = \{\pm 1\} \subset (\DD^1_\pm)^{(0)}$ are the respective target/source vertices of the directed edge $\DD^1_{\pm}$.
Therefore, for each $[p]\in \bDelta$, the value $\Gamma_p$ fits into a pushout in $\digraphs$:
\[
\begin{tikzcd}
\partial_- \DD^1
\amalg
\partial_+ \DD^1
\arrow{d}
\arrow{r}
&
\Gamma'
\arrow{d}
\\
{[p]^{\lcone \rcone}}
\arrow{r}
&
\Gamma_p
\end{tikzcd}
~,
\]
where the vertical map selects, respectively, the initial and final cone points.
Using \Cref{t25}, which states that $\digraphs \xra{\Free} \Quiv$ preserves cobase-change along monomorphisms, the functor $\bDelta \xra{\Gamma_\bullet} \Quiv$ fit into a pushout among functors:
\[
\begin{tikzcd}
\partial_- \DD^1
\amalg
\partial_+ \DD^1
\arrow{d}
\arrow{r}
&
\Gamma'
\arrow{d}
\\
{[\bullet]^{\lcone \rcone}}
\arrow{r}
&
\Gamma_\bullet
\end{tikzcd}
~.
\]

Now, consider the pushout in $\digraphs$:
\begin{equation}
\label{s15}
\begin{tikzcd}
\partial_- \DD^1
\amalg
\partial_+ \DD^1
\arrow{d}
\arrow{r}
&
\Gamma'
\arrow{d}
\\
\emptyset^{\lcone \rcone}
\arrow{r}
&
\Gamma_\emptyset
\end{tikzcd}
~.
\end{equation}
By construction, for each $[p]\in \bDelta$, the unique morphism $\emptyset \to [p]$ in $\Quiv$ determines a refinement morphism $\emptyset^{\lcone \rcone} \to [p]^{\lcone \rcone}$ in $\Quiv$.  
Applying $\Quiv^{\op} \xra{\delta} \M$ to this refinement morphism determines a refinement morphism $[p]^{\lcone \rcone} \to \emptyset^{\lcone \rcone}$ in $\M$.
It remains to show $\Gamma_\emptyset$ witnesses the sought colimit.

The unique morphism $\emptyset \xra{!} [\bullet]$ induced an extension to the left-cone on $\bDelta$: 
\[
(\bDelta)^{\lcone} \xra{ \emptyset^{\lcone \rcone} \xra{!^{\lcone \rcone}} [\bullet]^{\lcone \rcone} } \Quiv
~.
\]
Observe that this extension witnesses a limit.
In other symbols, there is an identification in $\Quiv^{\op}$ of the geometric realization:
\[
| [\bullet]^{\lcone \rcone} |
~=~
\colim
\Bigl(
\bDelta^{\op}
\xra{  [\bullet]^{\lcone \rcone} } \Quiv^{\op}
\Bigr)
\xra{~\simeq~}
\emptyset^{\lcone \rcone}
~.
\]
By definition of $\Gamma_\emptyset \in \digraphs$, this identification supplies an identification in $\Quiv^{\op}$ of the geometric realization
\[
| \Gamma_\bullet |
~=~
\colim
\Bigl(
\bDelta^{\op}
\xra{  \Gamma_\bullet } \Quiv^{\op}
\Bigr)
\xra{~\simeq~}
\Gamma_\emptyset
~.
\]
Now, this geometric realization is a \bit{split geometric realization}. 
Indeed, the functor $\bDelta \xra{ [\bullet]^{\lcone \rcone} } \Quiv$ factors through the functor 
\[
\bDelta
\xra{~[\bullet]^{\lcone \rcone}~}
\bDelta
~\hookrightarrow~
\Quiv
~.
\]
It follows that the functor $\bDelta \xra{\Gamma_\bullet} \Quiv$ also factors through $\bDelta \xra{[\bullet]^{\lcone \rcone}} \bDelta$.
Therefore, the geometric realization $|\Gamma_\bullet|$ is also split.  
Consequently, for any functor $\Quiv^{\op} \xra{F} \cZ$ to an $\infty$-category, the canonical morphism in $\cZ$,
\[
\colim
\Bigl(
\bDelta^{\op}
\xra{~\Gamma_\bullet~}
\Quiv^{\op}
\xra{~F~}
\cZ
\Bigr)
\xra{~\simeq~}
F(\Gamma_\emptyset) 
~,
\]
is an equivalence.
In particular, there is an identification of the colimit in $\M$,
\[
\colim
\Bigl(
\bDelta^{\op}
\xra{~\Gamma_\bullet~}
\Quiv^{\op}
\xra{~\delta~}
\M
\Bigr)
\xra{~\simeq~}
\delta(\Gamma_\emptyset)
~=~
M
~.
\]

\end{itemize}

We now prove the general case, in which $M'\in \M$ is arbitrary.  
Denote $M'' := \colim\left( \bDelta^{\op} \xra{\Gamma_\bullet} \M \right)$ -- above, we argued that $M''$ exists, and each canonical morphism $\Gamma_p \to M''$ is a refinement.  
Using \Cref{t63}, choose a finite set $C$, a quiver $\Xi$, and an equivalence $M' \simeq (\SS^1)^{\sqcup C} \sqcup \Xi$ in $\M$.  
Then the functor $\bDelta^{\op} \xra{M_\bullet} \M$ is equivalent with the composite functor
\begin{equation}
\label{s1}
\bDelta^{\op}
\xra{\Gamma_\bullet \sqcup \Xi}
\Quiv^{\op}
\xra{\delta}
\M
\xra{-\sqcup (\SS^1)^{\sqcup C}}
\M
~.
\end{equation}
As the $\infty$-category $\bDelta^{\op}$ is sifted, colimits indexed by it distribute over products in $\infty$-categories of presheaves.  
Therefore, we have an identification of the colimit in $\M$:
\begin{equation}
\label{s3}
\colim\left( \bDelta^{\op}
\xra{\Gamma_\bullet \sqcup \Xi}
\Quiv^{\op}
\xra{\delta}
\M \right)
~\simeq~
\colim\left(\bDelta^{\op}
\xra{\Gamma_\bullet}
\Quiv^{\op}
\xra{\delta}
\M
\right)
\sqcup
\Xi
~\simeq~
M'' \sqcup \Xi
~.
\end{equation}
By definition of $\SS^1 \in \M$, each $\SS^1$ factor of the values of this functor is a paracyclic colimit.  
So consider the composite functor
\begin{equation}
\label{s2}
\bDelta^{\op}
\times
(\bDelta^{\op})^{\times C}
\xra{ \left( \Gamma_\bullet \sqcup \Xi, (\Cref{e21}\circ \Cref{e15})_{c\in C} \right) }
\Quiv^{\op} \times (\Quiv^{\op})^{\times C}
\xra{~\bigsqcup~}
\Quiv^{\op}
\xra{~\delta~}
\M
~.
\end{equation}
\Cref{t36} implies the left Kan extension of \Cref{s2} along the projection 
$
\bDelta^{\op}
\times
(\bDelta^{\op})^{\times C}
\xra{\pr_1}
\bDelta^{\op}
$
is the simplicial object \Cref{s1}, whose colimit we seek to prove exists and is $M'' \sqcup M'$.
We can compute this colimit, alternatively, as the colimit of the left Kan extension along the other projection 
$
\bDelta^{\op}
\times
(\bDelta^{\op})^{\times C}
\xra{\pr}
(\bDelta^{\op})^{\times C}
$.
The left Kan extension of \Cref{s2} along this other projection evaluates on an object as a colimit whose existence was argued above; the identification \Cref{s3} supplies an identification of the resulting functor as
\[
(\bDelta^{\op})^{\times C}
\xra{~(\Cref{e21}\circ \Cref{e15})_{c\in C}~}
(\Quiv^{\op})^{\times C}
\xra{~\bigsqcup~}
\Quiv^{\op}
\xra{\delta}
\M
\xra{~\sqcup M'' \sqcup \Xi}
\M
~.
\]
\Cref{t58'} ensures the colimit of this functor exists, and is $M'' \sqcup (\SS^1)^{\sqcup C} \sqcup \Xi \simeq M'' \sqcup M' =: M$.
Finally, because each of the canonical morphisms $\Gamma_p \to M''$ is a refinement, then each of the canonical morphisms $M_p = \Gamma_p \sqcup M' \to M'' \sqcup M' = M$ is a refinement.

\end{proof}

Let $(\w{\Gamma},S,\varphi, M')$ be an excision site for an object $M\in \M$.
Denote the base-changes among $\infty$-categories:
\[
\begin{tikzcd}
	{\Ar(\M)^{|\Quiv^{\op}}} & {\Ar(\M)} \\
	{\Quiv^{\op}} & \M
	\arrow[hook, from=1-1, to=1-2]
	\arrow[swap, "\ev_s", from=1-1, to=2-1]
	\arrow["{\ev_s}", from=1-2, to=2-2]
	\arrow["\delta", hook, from=2-1, to=2-2]
\end{tikzcd}
~.
\]
Denote the base-changes among $\infty$-categories:
\begin{equation}
\label{s13}
\begin{tikzcd}
	{\Ar(\M)^{|\Quiv^{\op}}_{|\bDelta^{\op}}} & {\Ar(\M)^{|\Quiv^{\op}}_{|(\bDelta^{\op})^{\rcone}}} & {\Ar(\M)^{|\Quiv^{\op}}} \\
	{\bDelta^{\op}} & {(\bDelta^{\op})^{\rcone}} & \M
	\arrow[hook, from=1-1, to=1-2]
	\arrow[swap, "{\ev_t}", from=1-1, to=2-1]
	\arrow[from=1-2, to=1-3]
	\arrow["{\ev_t}", from=1-2, to=2-2]
	\arrow["{\ev_t}", from=1-3, to=2-3]
	\arrow[hook, from=2-1, to=2-2]
	\arrow["{M_\bullet}", from=2-2, to=2-3]
\end{tikzcd}
~.
\end{equation}
The right vertical functor is a coCartesian fibration, and therefore all of the vertical functors are coCartesian fibrations.
The fiber of the right vertical functor over $N\in \M$ is $\Quiv^{\op}_{/N}$.
Therefore, the fiber of the middle vertical functor over the cone-point is $\Quiv^{\op}_{/M}$.
Because the cone-point in $(\bDelta^{\op})^{\rcone}$ is final, coCartesian monodromy functors assemble as a functor
\begin{equation}
\label{s14}
\Ar(\M)^{|\Quiv^{\op}}_{|\bDelta^{\op}}
\xra{~\rm monodromy~}
\Quiv^{\op}_{/M}
~.
\end{equation}
The fiber over each object $[p]\in \bDelta$ of the left vertical functor in \Cref{s13} is $\Quiv^{\op}_{/M_p}$.
We explain the composite functor:
\begin{equation}
\label{s19}
\Quiv(M')
=
\ast 
\times \Quiv(M')
\xra{ \langle \Gamma_p \rangle \times \id }
\Quiv(\Gamma_p) \times \Quiv(M')
~\simeq~
\Quiv(M)
\hookrightarrow
\Quiv^{\op}_{/M}
~,
\end{equation}
\[
(\w{M}' \xra{r} M')
\longmapsto
\left(
D_p
\sqcup
\w{M}' 
\xra{ \id \sqcup r} D_p \sqcup M' 
\xra{\rm refinement}
M
\right)
~.
\]
Using that $\Gamma_p \in \Quiv$, the category $\Quiv(\Gamma_p)$ has the identity morphism $(\Gamma_p \xra{\id} \Gamma_p)$ as a final object (see \Cref{r11}(3)).
The first functor is the product of the functor that selects this final object and an identity functor.
Being a product of final functors, the first functor is final.
Using the identification $M \simeq \Gamma_p \sqcup M'$, the middle equivalence follows from \Cref{t71}(3).
The last functor is the defining fully faithful inclusion, which \Cref{t72} states is final.
We have established the composite functor \Cref{s19}, and also its finality.
Now, as $[p] \in \bDelta$ varies, these final functors canonically organize as a final functor $\eta$ fitting into the diagram among $\infty$-categories:
\[\begin{tikzcd}
	{\bDelta^{\op} \times \Quiv(M')} && {\Ar(\M)^{|\Quiv^{\op}}_{|\bDelta^{\op}}} \\
	& {\bDelta^{\op}}
	\arrow["\eta", from=1-1, to=1-3]
	\arrow["\pr"', from=1-1, to=2-2]
	\arrow["{\ev_t}", from=1-3, to=2-2]
\end{tikzcd}
~.
\]
Because each of the canonical morphisms $M_p \to M$ in $\M$ is a refinement (\Cref{t4.5}), there is a (necessarily unique) factorization of the composite functor
\begin{equation}
\label{s9}
\begin{tikzcd}
	{\bDelta^{\op} \times \Quiv(M')} && {\Quiv(M)} \\
	{\Ar(\M)^{|\Quiv^{\op}}_{|\bDelta^{\op}}} && {\Quiv^{\op}_{/M}}
	\arrow[dashed, from=1-1, to=1-3]
	\arrow["\eta"', from=1-1, to=2-1]
	\arrow[hook, from=1-3, to=2-3]
	\arrow[swap, "{\Cref{s14}}"', from=2-1, to=2-3]
\end{tikzcd}
~.
\end{equation}

The next technical result is key for excision of factorization homology, as developed in the next section.
\begin{lemma}
\label{s10}
Each of the functors
\[
\bDelta^{\op} \times \Quiv(M') 
\xra{~\Cref{s9}~}
\Quiv(M)
\qquad\text{ and }\qquad
\Ar(\M)^{|\Quiv^{\op}}_{|\bDelta^{\op}}
\xra{~\Cref{s14}~}
\Quiv^{\op}_{/M}
\]
is final.

\end{lemma}

\begin{proof}
The functor $\eta$ constructed above was already observed to be final.  
\Cref{t72} asserts that the right vertical functor in \Cref{s9} is final.  
By the 2-out-of-3 properties of final functors, the dashed filler in \Cref{s9} is final if and only if \Cref{s14} is final. 
We prove the dashed filler in \Cref{s9} is final.

\Cref{t71}(3) supplies an identification $\Quiv(M) \xra{\simeq} \Quiv(M'') \times \Quiv(M')$, where $M'':= \left| \Gamma_p \right|$, which \Cref{t4.5} ensures exists.  
With respect to this identification, there is an identification as a product:
\[
\left(
\bDelta^{\op} \times \Quiv(M') 
\xra{~\Cref{s9}~}
\Quiv(M)
\right)
~\simeq~
\left(
\bDelta^{\op} \xra{\Gamma_\bullet} \Quiv(M'')
\right)
\times \Quiv(M')
~.
\]
We can therefore reduce to the case in which $M'=\emptyset$, which is to say $M_\bullet = \Gamma_\bullet$.
Using that $\bDelta^{\op}$ is sifted, we can reduce further to the case in which $S$ is a singleton.  
As in the proof of \Cref{t4.5}, there are then two cases to consider.  

In the first case, $\Gamma_\bullet \simeq \SS^1_\bullet \sqcup \Gamma'$.
In this case, $M = \SS^1 \sqcup \Gamma'$, and by \Cref{t71}(3) there is a canonical identification $\Quiv(M) \xra{\simeq} \Quiv(\SS^1) \times \Quiv(\Gamma')$.
Through this identification, 
the functor $\bDelta^{\op} \xra{\Cref{s9}}
\Quiv(M)$ is a composite 
\[
\Cref{s9}
\colon
\bDelta^{\op} \xra{\Cref{s9}} \Quiv(\SS^1) \xra{ \id \times \langle \Gamma' \xra{=} \Gamma' \rangle} \Quiv(\SS^1) \times \Quiv(\Gamma')
~\simeq~
\Quiv(M)
~,
\]
involving the functor \Cref{s9} applied to the case of $\SS^1_\bullet$.  
As $\Gamma'$ is itself a quiver, $\Quiv(\Gamma')$ has a final object, which is that selected by the second factor of the above composite.  
Through the identification $\Quiv(\SS^1) \simeq \para$ of \Cref{r11}(2), the finality of the above composite functor is therefore implied by \Cref{t36}.

In the second case, $\Gamma_\bullet \simeq [\bullet]^{\lcone \rcone}\underset{\partial_- \DD^1 \sqcup \partial_+ \DD^1} \bigsqcup \Gamma'$.
In this case, $M \simeq \delta(\Gamma_\emptyset)$ is in the image of $\Quiv^{\op} \xra{\delta} \M$, where the quiver $\Gamma_\emptyset$ is a pushout \Cref{s15} in $\digraphs$.
Base-change in $\Quiv$ of refinement morphisms along each of the morphsims in the diagram \Cref{s15} exist, and organize as a diagram among categories:
\[
\begin{tikzcd}
\Quiv\left(
\Gamma_\emptyset
\right)
\arrow{d}
\arrow{r}
&
\Quiv\left(
\Gamma'
\right)
\arrow{d}
\\
\Quiv\left(
\emptyset^{\lcone \rcone}
\right)
\arrow{r}
&
\Quiv\left(
\partial_- \DD^1
\amalg
\partial_+ \DD^1
\right)
\end{tikzcd}
~.
\]
Furthermore, as a refinement of the quiver $\Gamma_\emptyset$ is precisely a pair of refinements, one of $\emptyset^{\lcone \rcone}$ and one of $\Gamma'$, this diagram is a pullback:
\[
\Quiv(M)
~\simeq~
\Quiv\left(
\Gamma_\emptyset
\right)
\xra{~\simeq~}
\Quiv\left(
\emptyset^{\lcone \rcone}
\right)
\times
\Quiv\left(
\Gamma'
\right)
~.
\]
Through this identification,
the functor $\bDelta^{\op} \xra{\Cref{s9}}
\Quiv(M)$ is a composite 
\[
\Cref{s9}
\colon
\bDelta^{\op} \xra{\Cref{s9}} \Quiv(\emptyset^{\lcone \rcone}) \xra{ \id \times \langle \Gamma' \xra{=} \Gamma' \rangle} \Quiv(\emptyset^{\lcone \rcone}) \times \Quiv(\Gamma')
~\simeq~
\Quiv(M)
~,
\]
involving the functor \Cref{s9} applied to the case of $[\bullet]^{\lcone \rcone}$.  
As $\Gamma'$ is itself a quiver, $\Quiv(\Gamma')$ has a final object, which is that selected by the second factor of the above composite.  
We are therefore reduced to showing the functor 
$\bDelta^{\op} \xra{\Cref{s9}} \Quiv(\emptyset^{\lcone \rcone})$ 
is final.
Now, observe an identification $\Quiv(\emptyset^{\lcone \rcone}) \subset (\bDelta^{\emptyset^{\lcone \rcone}/})^{\op}$
as the full subcategory consisting of those $\emptyset^{\lcone \rcone} \to [p]$ that are active and injective.
In other words, $\Quiv(\emptyset^{\lcone \rcone})$ is the opposite of the category $\bDelta^{\emptyset^{\lcone \rcone}/^{\sf act,inj}}$ in which an object is a finite linearly ordered sets with distinct minimum and maximum, and a morphism is a map between linearly ordered sets that preserves extrema.  
This identification supplies an identification between categories under $\bDelta^{\op}$:
\[
\left(
\bDelta^{\op} \xra{ \Cref{s9} } \Quiv(\emptyset^{\lcone \rcone})
\right)
~\simeq~
\left(
\bDelta^{\op} \xra{ [\bullet]^{\lcone \rcone} } 
(\bDelta^{\emptyset^{\lcone \rcone}/^{\sf act,inj}})^{\op}
\right)
~.
\]
Let $\left( \emptyset^{\lcone \rcone} \xra{\sigma} [p] \right) \in \bDelta^{\emptyset^{\lcone \rcone}/^{\sf act,inj}}$.
Consider the overcategory
$
\bDelta_{/\sigma} 
:= 
\bDelta \underset{ 
\bDelta^{\emptyset^{\lcone \rcone}/^{\sf act,inj}})
}
\times
\left(
\bDelta^{\emptyset^{\lcone \rcone}/^{\sf act,inj}})
\right)_{/\sigma}
.
$
As every morphism $[q]\to [p]$ uniquely extends as an active morphism $[q]^{\lcone \rcone} \to [p]$,
restriction along $[\bullet] \hookrightarrow [\bullet]^{\lcone \rcone}$ defines an equivalence between categories:
$
\bDelta_{/\sigma}
\xra{\simeq}
\bDelta_{/[p]}
$.
The overcategory $\bDelta_{/[p]}$ has a final object, and therefore its $\infty$-groupoid-comletion is contractible.  
Quillen's Theorem A can therefore be applied to
reveal that the functor $\bDelta \xra{ [\bullet]^{\lcone \rcone} } 
\bDelta^{\emptyset^{\lcone \rcone}/^{\sf act,inj}}$ is initial.
Therefore, the functor $\bDelta^{\op} \xra{ \Cref{s9} } \Quiv(\emptyset^{\lcone \rcone})$ is final, as desired.

\end{proof}

The simplicial object $M_\bullet$ determined from an excision site, as in \Cref{d4.5} has a conceptual description as a cyclic bar construction.
\begin{lemma}
\label{t4.7}
Let $(\w{\Gamma},S,\varphi,M')$ be an excision site.
\begin{enumerate}
\item
The composite functor
\[
\cA
\colon
\bDelta^{\op}
\xra{~\rho~}
\Quiv^{\op}
\xra{~(-)^{\sqcup S} ~}
\Quiv^{\op}
\hookrightarrow
\M
~,\qquad
[p]
\longmapsto 
\rho(p)^{\sqcup S}
~,
\]
is a category-object.

\item
The object $\w{\Gamma} \sqcup M' \in \M$ has the structure of a $(\cA,\cA)$-bimodule.

\item
The simplicial object $\bDelta^{\op} \xra{M_\bullet} \M$ is the cyclic bar construction of $\cA$ with coefficients in the bimodule $\w{\Gamma}\sqcup M'$.

\end{enumerate}

\end{lemma}

\begin{proof}
The second functor in the definition of $\cA$ is given by product with the finite set $S$.
Because products with a finite set preserve basic closed cover diagrams in $\Quiv$ (which are certain pushouts in $\Cat_{(\infty,1)}$), this composite functor carries basic closed covers to basic closed covers.  
Consequently, $\cA$ is a category-object, which proves statement~(1).

Consider the functor 
\[
F
\colon
(\bDelta_{/[1]})^{\op}
\longrightarrow
\M
\]
given as follows.
Its restriction along both of the functors $\bDelta^{\op} = (\bDelta_{/\{0\}})^{\op} \hookrightarrow  (\bDelta_{/[1]})^{\op}$ and $\bDelta^{\op} = (\bDelta_{/\{1\}})^{\op} \hookrightarrow  (\bDelta_{/[1]})^{\op}$ is the category-object $\cA$;
its restriction along the functor $(\bDelta_{/^{\sf surj}[1]})^{\op} \hookrightarrow  (\bDelta_{/[1]})^{\op}$ is given by 
\[
\left(
[p]
\xra{\sigma}
[1]
\right)
\longmapsto
\cA(\sigma^{-1}(0))
\underset{S} \times
\left( \w{\Gamma}\sqcup M' \right)
\underset{S} \times
\cA(\sigma^{-1}(1))
~,
\]
where $S \la \w{\Gamma} \sqcup M' \to S$ are the two closed morphisms associated with the inclusion $\varphi$, and where $\cA(\sigma^{-1}(0)) \to \cA({\sf Max}(\sigma^{-1}(0)) = S$ and $\cA(\sigma^{-1}(1)) \to \cA({\sf Min}(\sigma^{-1}(1)) = S$.
These specifications define $F$ on a subcategory of $(\bDelta_{/[1]})^{\op}$ that contains all objects and all closed morphisms.  
Defining $F$ on the entirety of $(\bDelta_{/[1]})^{\op}$ can be achieved by hand, using that $\Quiv^{\op}$ is an ordinary category, or through a case-by-case analysis as in the proof of \Cref{t4.5}.  
Evidently, $F$ carries (the opposites of) closed covers over $[1]$ to closed covers in $\M$.
As so, $F$ codifies a $(\cA,\cA)$-bimodule structure on $\w{\Gamma} \sqcup M'$ in $\M$.
This establishes statement~(2).

For $[p]\in \bDelta$, there is a canonical morphism in $\bDelta$ from the join, $[p] \star [p] \xra{! \star !} \ast \star \ast = [1]$, which fits into a commutative diagram in $\bDelta$:
\[
\begin{tikzcd}
	{[p]} & {[p]\star [p]} & {[p]} \\
	{\{0\}} & {[1]} & {\{1\}}
	\arrow[from=1-1, to=1-2]
	\arrow[from=1-1, to=2-1]
	\arrow[from=1-2, to=2-2]
	\arrow[from=1-3, to=1-2]
	\arrow[from=1-3, to=2-3]
	\arrow[from=2-1, to=2-2]
	\arrow[from=2-3, to=2-2]
\end{tikzcd}
~,
\]
in which the upper horizontal morphisms are the respective inclusions of the left and right joinands.  
These data organize, as $[p]\in \bDelta$ varies, as a cospan in $\Fun(\bDelta, \bDelta_{/[1]} )$:
\[
\left(
[\bullet]
\da
\{0\}
\right)
\longrightarrow
\left(
[\bullet] \star [\bullet]
\da
[1]
\right)
\longleftarrow
\left(
[\bullet]
\da
\{1\}
\right)
~.
\]
Taking opposites then post-composing with the functor $F$ results in a span in $\Fun(\bDelta^{\op} , \M)$:
\[
\cA
\xla{~R~}
F\left(
[\bullet] \star [\bullet]
\da
[1]
\right)
\xra{~L~}
\cA
~,
\]
using the defining identifications 
$F\left(
[\bullet]
\da
\{0\}
\right)
=
\cA
=
F\left(
[\bullet]
\da
\{1\}
\right)$.

Inspecting the defining values of $F$, of $\cA$, and of $M_\bullet$, reveals that, for each $[p]\in \bDelta$, there is a canonical diagram in $\cM$,
\begin{equation}
\label{k20}
\begin{tikzcd}
	{M_p} && {\cA([p])} && {F([p] \star [p] \da [1])}
	\arrow["{{\sf cls}}", from=1-1, to=1-3]
	\arrow["R", shift left=2, from=1-3, to=1-5]
	\arrow["L"', shift right=2, from=1-3, to=1-5]
\end{tikzcd}
~.
\end{equation}
which in fact is $-\sqcup M'$ applied to such a diagram in $(\digraphs)^{\op} \subset \Quiv^{\op} \subset \M$.
Notice that this diagram in $(\digraphs)^{\op}$ witnesses an equalizer, and the diagram \Cref{k20} in $\M$ is also witnesses an equalizer.  
The diagram \Cref{k20} in $\M$ organizes as diagram in $\Fun(\bDelta^{\op},\M)$, which therefore also witnesses an equalizer.  
In other words, $M_\bullet$ is the cyclic bar construction of the $(\cA,\cA)$-bimodule codified by $F$.

\end{proof}

\section{Factorization homology}

In this section, we fix an $\infty$-category $\cX$ with the following properties.
\begin{enumerate}
\item
$\cX$ admits finite limits.

\item
$\cX$ admits geometric realizations.

\item
For each $X\in \cX$, the functor $X\times - \colon \cX \to \cX$ preserves geometric realizations.

\end{enumerate}
We implement a purely combinatorial version of factorization homology $\int_M \cC \in \cX$ of any category-object $\cC$ in $\cX$ over an object $M\in \M$.
As Appendix~\S\ref{sec.facts}, we show this combinatorial version of factorization homology agrees with the geometric version, defined in~\cite{AFR2}.

\subsection{Factorization homology}

Recall the \Cref{notation.brackets.functor.from.delta.op} of the functor $\bDelta \xra{\rho} \Quiv$, and the Definition~\ref{d7.2} of the functor $\Quiv^{\op}\xra{\delta} \M$.
\begin{definition}
\label{d21}
\bit{Combinatorial factorization homology} is the composite functor
\[
\int
\colon
\fCat_1[\cX]
\xra{~\rho_\ast~}
\Fun(\Quiv^{\op}, \cX)
\xra{~\delta_!~}
\Fun(\M , \cX)
~,\qquad
\cC
\longmapsto
\Bigl(
M
\mapsto
\int_M \cC
\Bigr)
~,
\]
of right Kan extension along $\bDelta^{\op} \xra{\rho} \Quiv^{\op}$ followed by left Kan extension along $\Quiv^{\op} \xra{\delta}\M$.
For $\cC\in \fCat_1[\cX]$ and $M\in \M$, the \bit{combinatorial factorization homology of $\cC$ over $M$} is the value
\[
\int_M \cC
~:=~
\colim
\Bigl(
\Quiv^{\op}_{/M}
\xra{\rm forget}
\Quiv^{\op}
\xra{\rho_\ast(\cC) \colon \Gamma
\mapsto 
{\sf Rep}_\cC(\Gamma)
}
\cX
\Bigr)
~\in
\cX
~.
\]

\end{definition}

\begin{observation}
\label{t74}
The assumptions on $\cX$ ensure combinatorial factorization homology $\int$ exists.
Indeed, by \Cref{t65}, these assumptions ensure that the right Kan extension $\rho_\ast$ exists; 
by \Cref{finality}, these assumptions ensure that the left Kan extension $\delta_!$ exists.

\end{observation}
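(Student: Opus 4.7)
The statement splits into two independent existence claims—the right Kan extension $\rho_\ast$ and the pointwise left Kan extension $\delta_!$—and I would handle them separately.

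For $\rho_\ast$, the plan is a direct invocation of \Cref{t65}: under hypothesis~(1) alone, that theorem asserts an equivalence $\rho^\ast \colon \Shv_\cX^{\sf cls}(\Quiv) \xra{\simeq} \fCat_1[\cX]$ whose inverse is precisely $\rho_\ast$, explicitly given by $\cC \mapsto \bigl(\Gamma \mapsto \Rep_\cC(\Gamma)\bigr)$. Since \Cref{t66} ensures the exit-path category $\sE(\Gamma)$ is a finite category, the value $\Rep_\cC(\Gamma)$ is a finite limit in $\cX$, which exists by hypothesis~(1). There is nothing further to verify at this step.

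For $\delta_!$, I would check pointwise existence of
\[
\int_M \cC := \colim\bigl(\Quiv^{\op}_{/M} \xra{\rm forget} \Quiv^{\op} \xra{\rho_\ast(\cC)} \cX\bigr)
\]
for each $M \in \M$. By \Cref{t69}, I may write $M \simeq (\SS^1)^{\sqcup C} \sqcup \Gamma$ for a finite set $C$ and a quiver $\Gamma$. \Cref{finality} then shows that the functor $(\para)^{\times C} \to \Quiv^{\op}_{/M}$ of \Cref{e72} is final, so the colimit reduces to a colimit indexed by $(\para)^{\times C}$. Because $\rho_\ast(\cC)$ is a closed sheaf on $\Quiv$ (by \Cref{t65}), it carries coproducts of quivers to products in $\cX$, so the integrand is the iterated finite product $\prod_{c \in C} \Rep_\cC(\overline{\lambda}_c) \times \Rep_\cC(\Gamma)$. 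Finally, \Cref{t36} (initiality of $\bDelta^{\op} \to \para$), combined with the fact that $\bDelta^{\op}$ is sifted and hence the diagonal $\bDelta^{\op} \to (\bDelta^{\op})^{\times C}$ is initial, reduces the expression to a single geometric realization in $\cX$ of a simplicial object whose values are iterated finite products of values of $\cC$.

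The main obstacle—and the sole reason hypotheses~(2) and~(3) appear—is confirming that this final geometric realization actually exists in $\cX$ and that the interchanges above are legitimate. Hypothesis~(2) supplies existence of geometric realizations, while hypothesis~(3) is exactly what permits sliding the finite products past the geometric realization and iterating across the $C$ factors. Once these reductions are justified, the pointwise colimit exists, establishing existence of $\delta_!$; as a by-product one obtains the identification $\int_M \cC \simeq \sHH(\cC)^{\times C} \times \Rep_\cC(\Gamma)$, though I would record only the existence claim to match the statement of \Cref{t74}.
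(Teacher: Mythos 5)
Your proposal is correct and takes essentially the same route as the paper: \Cref{t65} handles $\rho_\ast$, and \Cref{finality} together with hypotheses~(2) and~(3) handles the pointwise existence of $\delta_!$ exactly as the paper does (cf.\ \Cref{t64} and \Cref{r10}(4)). One terminological quibble: since you are computing colimits, you should invoke the \emph{finality} of $\bDelta^{\op}\to\para$ (the opposite of the initiality of $\bDelta\to\copara$ asserted in \Cref{t36}) and of the diagonal $\bDelta^{\op}\to(\bDelta^{\op})^{\times C}$, not their initiality.
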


Recall from \Cref{d21.1} the full $\infty$-subcategory $\Quiv(M) \subseteq \Quiv^{\op}_{/M}$ for each $M\in \M$.
The following is a direct consequence of \Cref{t72}, which states that this $\infty$-subcategory is final.
\begin{cor}
\label{t72.1}
Let $\cC\in \fCat_1[\cX]$ be a category-object in $\cX$.
For each $M\in \M$, combinatorial factorization homology can be computed as the colimit indexed by $\Quiv(M)$:
the canonical morphism in $\cX$,
\[
\underset{\Gamma \xra{\sf ref} M} \colim {\sf Rep}_\cC(\Gamma)
~=~
\colim
\Bigl(
\Quiv(M) 
\hookrightarrow
\Quiv^{\op}_{/M}
\xra{\rm forget}
\Quiv^{\op}
\xra{\rho_\ast(\cC) \colon \Gamma
\mapsto 
{\sf Rep}_\cC(\Gamma)
}
\cX
\Bigr)
\xra{~\simeq~}
\int_M \cC
~,
\]
is an equivalence.

\end{cor}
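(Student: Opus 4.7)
The plan is to invoke \Cref{t72} directly, since the corollary is essentially a reformulation of the finality statement recorded there.

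Concretely, I would begin by unpacking \Cref{d21}: the combinatorial factorization homology $\int_M \cC$ is the left Kan extension of $\rho_\ast(\cC)$ along $\delta$, evaluated at $M$. By the pointwise formula for left Kan extensions, this is computed as the colimit
\[
\int_M \cC \;\simeq\; \colim \Bigl( \Quiv^{\op}_{/M} \xra{\rm forget} \Quiv^{\op} \xra{\rho_\ast(\cC)} \cX \Bigr),
\]
which exists by \Cref{t74}. The canonical morphism in the statement is obtained from the inclusion $\Quiv(M) \hookrightarrow \Quiv^{\op}_{/M}$ of full subcategories by restricting the indexing diagram.

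Next I would apply \Cref{t72}, which asserts that this inclusion is final. A standard property of final functors is that precomposing a diagram with a final functor induces an equivalence between colimits (whenever either exists). Applying this to the composite $\Quiv^{\op}_{/M} \xra{\rm forget} \Quiv^{\op} \xra{\rho_\ast(\cC)} \cX$ yields the required equivalence
\[
\underset{\Gamma \xra{\sf ref} M}\colim \, {\sf Rep}_\cC(\Gamma) \xra{~\simeq~} \int_M \cC .
\]

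There is no real obstacle here: all the content has been carried by \Cref{t72}, whose proof in turn reduced finality of $\Quiv(M) \to \Quiv^{\op}_{/M}$ to \Cref{finality} via Observation~\ref{r11}(4) and the decomposition $M \simeq (\SS^1)^{\sqcup C} \sqcup \Gamma$ of \Cref{t69}. The only minor bookkeeping is to confirm that ${\sf Rep}_\cC(\Gamma)$ is well-defined and functorial on $\Quiv(M)$, which follows from \Cref{t65} and the fully faithfulness of $\Quiv^{\op} \xra{\delta} \M$ (\Cref{t40.2}(4)), so that the restriction of $\rho_\ast(\cC)$ along the forgetful functor $\Quiv(M) \to \Quiv^\op$ is the stated functor $\Gamma \mapsto {\sf Rep}_\cC(\Gamma)$.
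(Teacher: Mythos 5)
Your proposal is correct and matches the paper's own argument: the corollary is stated there as a direct consequence of \Cref{t72}, exactly as you argue, by combining the pointwise colimit formula for $\delta_!$ from \Cref{d21} with the finality of $\Quiv(M) \hookrightarrow \Quiv^{\op}_{/M}$. No further comment is needed.
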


\begin{remark}
\label{r50}
The full $\infty$-subcategory $\Quiv(M) \subseteq \Quiv^{\op}_{/M}$ consists of far fewer objects, and simpler morphisms, than its ambient $\infty$-category.  
In this way, \Cref{t72.1} offers a simpler colimit formula for $\int_M \cC$ than the defining formula of \Cref{d21}.  
\end{remark}

\begin{remark}
\Cref{r50} suggests that one might take the colimit formula of \Cref{t72.1} as the working definition of $\int_M \cC$.  
However, while $M\mapsto \Quiv^{\op}_{/M}$ assembles a functor $\M \to \Cat_{(\infty,1)}$, the association $M\mapsto \Quiv(M)$ is not functorial in $M\in \M$.  
The full functoriality of $\int_M \cC$ is therefore not (obviously) available were one to take the formula of \Cref{t72.1} as a definition of $\int_M \cC$. 
\end{remark}

After 
\Cref{t36},
the following is a direct consequence of Lemma~\ref{finality}, using the assumed properties of $\cX$.
\begin{cor}
\label{t64}
Combinatorial factorization homology $\int$ exists.
Furthermore, it preserves finite products in both variables: 
\[
\int_{M\sqcup N} \cC \xra{~\simeq~} \int_M \cC \times \int_N \cC
\qquad
\text{ and }
\qquad
\int_M \cC \times \cD \xra{~\simeq~} \int_M \cC \times \int_M \cD
~.
\]

\end{cor}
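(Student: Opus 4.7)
The plan is to verify the three claims in sequence: existence of the functor $\int$, and preservation of finite products in each of its two variables.

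\emph{Existence.} Let $M \in \M$. By \Cref{t69}, there is a (non-canonical) equivalence $M \simeq (\SS^1)^{\sqcup C} \sqcup \Gamma$ for some finite set $C$ and quiver $\Gamma$. \Cref{finality} supplies a final functor $(\para)^{\times C} \to \Quiv^{\op}_{/M}$, so the colimit defining $\int_M \cC$ reduces to one over $(\para)^{\times C}$. Since $\bDelta^{\op} \to \para$ is final (the opposite form of \Cref{t36}) and finite products of final functors are final (contractibility of undercategories is preserved by finite products), the colimit reduces further to one over $(\bDelta^{\op})^{\times C}$, an iterated geometric realization, which exists in $\cX$ by hypothesis (2) together with (3).

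\emph{Product preservation in $M$.} By \Cref{t72.1}, $\int_M \cC \simeq \colim_{\Quiv(M)} \rho_\ast(\cC)$, and by \Cref{t71}(3) the functor $\Quiv(M) \times \Quiv(N) \to \Quiv(M \sqcup N)$ is an equivalence, sending $(\Gamma_M \to M, \Gamma_N \to N)$ to $(\Gamma_M \amalg \Gamma_N \to M \sqcup N)$. Combining \Cref{t65} and \Cref{t35} exhibits $\rho_\ast(\cC)(\Gamma) \simeq \Rep_\cC(\Gamma)$ as the limit over $\sE(\Gamma)^{\op}$, and the evident identification $\sE(\Gamma \amalg \Gamma') \simeq \sE(\Gamma) \amalg \sE(\Gamma')$ splits this limit as $\rho_\ast(\cC)(\Gamma \amalg \Gamma') \simeq \rho_\ast(\cC)(\Gamma) \times \rho_\ast(\cC)(\Gamma')$. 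Hence the diagram whose colimit computes $\int_{M \sqcup N} \cC$ is a product diagram, and distributing the product over the colimit yields $\int_M \cC \times \int_N \cC$.

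\emph{Product preservation in $\cC$.} The full subcategory $\fCat_1[\cX] \subset \Fun(\bDelta^{\op}, \cX)$ is closed under finite products (the closed-cover condition of \Cref{d11} is preserved by products, which commute with pullbacks), and these are computed pointwise in $\cX$. Because $\rho_\ast(-)(\Gamma) = \Rep_{-}(\Gamma)$ is a pointwise limit and limits commute with products, we obtain $\rho_\ast(\cC \times \cD) \simeq \rho_\ast(\cC) \times \rho_\ast(\cD)$. Once more distributing products across the colimit yields $\int_M(\cC \times \cD) \simeq \int_M \cC \times \int_M \cD$.

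The main obstacle lies in justifying the distributivity of products across these colimits: hypothesis (3) supplies it only for geometric realizations, not for the $\para$- or $\Quiv^{\op}_{/M}$-shaped colimits that appear a priori. The tactic is therefore to commit first to the concrete reduction via \Cref{finality} and \Cref{t36} to an iterated colimit over $(\bDelta^{\op})^{\times C}$, and only then to commute products through, applying hypothesis (3) one layer of geometric realization at a time.
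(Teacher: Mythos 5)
Your proposal is correct and follows essentially the same route as the paper, which proves this corollary in one line by citing \Cref{t36}, \Cref{finality}, and the standing hypotheses on $\cX$; your write-up simply fills in the bookkeeping (reduction to iterated geometric realizations via finality and siftedness, then commuting products through one layer at a time), exactly as the paper does implicitly here and explicitly in the parallel computation of Example~\ref{r10}(4).
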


The values of combinatorial factorization homology are summarized as follows.
Recall the notation introduced in Definition~\ref{d7.2}.
\begin{example}
\label{r10}
Let $\cC\in \fCat_1[\cX]$ be a category-object in $\cX$.
Combinatorial factorization homology of $\cC$ takes the following values.
\begin{enumerate}

\item
For each finite directed graph $\Gamma$, the value 
\[
\int_{\delta(\Gamma)} \cC
~\simeq~
\rho_\ast(\cC) \bigl( \Gamma \bigr)
~=:~
{\sf Rep}_\cC(\Gamma)
~.
\]
Indeed, this is the fact that the unit of the $(\delta_!,\delta^\ast)$-adjunction is an equivalence, which is so because $\delta$ is fully faithful (see \Cref{t40.2}(4)).

\item
For each $[p]\in \bDelta$, the value 
\[
\int_{\rho(p)} \cC
~\simeq~
\cC([p])
~.
\]
Indeed, after the first identification is the previous point, this follows from the fact that the counit of the $(\rho^\ast  , \rho_\ast)$-adjunction is an equivalence, which is so because $\rho$ is fully faithful.
In particular, 
\[
\int_{\DD^0} \cC 
~\simeq~
\Obj(\cC)
\qquad
\text{ and }
\qquad
\int_{\DD^1} \cC 
~\simeq~
\Mor(\cC)
~.
\]

\item
The value on the oriented circle is Hochschild homology:
\[
\int_{\SS^1} \cC 
~\simeq ~
\sHH(\cC)
~.
\]
Indeed, this follows from the fact that the functor $\para \to \Quiv^{\op}_{/\SS^1}$ is final 
(see \Cref{finality}).

\item
Let $M \in \M$.
Through \Cref{t69} there is a finite set $C$ and a directed graph $\Gamma$ together with an identification $M \simeq (\SS^1)^{\sqcup C}\sqcup \Gamma$ in $\M$.
The value
\[
\int_M \cC
~\simeq~
\sHH(\cC)^{\times C}
\times
{\sf Rep}_{\cC}(\Gamma)
~.
\]
Indeed, after the above points, using the assumption that products in $\cX$ distribute over geometric realizations, this follows from the fact that $\para$ is sifted (\Cref{t62}) and that the functor $(\para)^{\times C} \to \Quiv^{\op}_{/(\SS^1)^{\sqcup C}\sqcup \Gamma}$ is final 
(see \Cref{finality}).

\end{enumerate}

\end{example}

Example~\ref{r10} reveals that the values of combinatorial factorization homology are all familiar.
What is novel is the functoriality among these values, and in particular the natural symmetries of these values, which combinatorial factorization homology succinctly codifies.  
This is the subject of \S\ref{sec.symmetries}.

\subsection{Excision}

The next result is a version of \bit{excision} for combinatorial factorization homology $\int$.  
It is a direct consequence of \Cref{s10}.

We have the following immediate consequence of \Cref{t4.7}.
\begin{cor}
\label{t4.6}
Let $(\w{\Gamma},S,\varphi,M')$ be an excision site.
Let $\cC\in \fCat_1[\cX]$ be a category-object in $\cX$.
\begin{enumerate}

\item
The object $\int_{\w{\Gamma} \sqcup M'} \cC \in \cX$ has the structure of a $(\cC^{\times S} , \cC^{\times S})$-bimodule.

\item
The simplicial object $\bDelta^{\op} \xra{\int_{M_\bullet} \cC} \cX$ is the cyclic bar construction of $\cC^{\times S}$ with coefficients in the bimodule $\int_{\w{\Gamma}\sqcup M'} \cC$.

\end{enumerate}

\end{cor}

\begin{prop}
\label{t4.4}
Let $\cC \in \fCat_1[\cX]$ be a category-object in $\cX$.
Let $(\w{\Gamma},S,\varphi, M')$ be an excision site for an object $M\in \M$.
The canonical morphism in $\cX$,
\[
\underset{[p]^\circ  \in \bDelta^{\op}}\colim
\left(
\int_{M_p} \cC
\right)
~=~
\left|
\int_{M_\bullet} \cC
\right|
\xra{~\simeq~}
\int_M \cC
\]
is an equivalence.

\end{prop}

\begin{proof}
We explain how the diagram \Cref{s13} fits into the diagram among $\infty$-categories:
\begin{equation}
\label{s8}
\begin{tikzcd}
	{\Ar(\M)^{|\Quiv^{\op}}_{|\bDelta^{\op}}} & {\Ar(\M)^{|\Quiv^{\op}}_{|(\bDelta^{\op})^{\rcone}}} & {\Ar(\M)^{|\Quiv^{\op}}} & {\Quiv^{\op}} & \cX \\
	{\bDelta^{\op}} & {(\bDelta^{\op})^{\rcone}} & \M
	\arrow[hook, from=1-1, to=1-2]
	\arrow[from=1-1, to=2-1]
	\arrow[from=1-2, to=1-3]
	\arrow[from=1-2, to=2-2]
	\arrow["{\ev_s}", from=1-3, to=1-4]
	\arrow["{\ev_t}", from=1-3, to=2-3]
	\arrow["{\rho_\ast \cC}", from=1-4, to=1-5]
	\arrow[hook, from=2-1, to=2-2]
	\arrow["{M_\bullet}", from=2-2, to=2-3]
	\arrow["{\int \cC}"', from=2-3, to=1-5]
	\arrow["\Leftarrow"{marking, allow upside down, pos=0.3}, shift left=5, draw=none, from=2-3, to=1-5]
\end{tikzcd}
~.
\end{equation}
The squares in \Cref{s8} are (defined as) pullbacks.
By definition of factorization homology, the inner lax-commutative triangle in \Cref{s8} witnesses a left Kan extension.
The right vertical functor in \Cref{s8} is a coCartesian fibration.
Therefore, so are the other vertical functors in \Cref{s8}
Using that left Kan extension along a coCartesian fibration is given by fiberwise colimit, the other resulting lax-commutative triangles in \Cref{s8} also witnesses a left Kan extension.  
In particular, the left Kan extension to $\bDelta^{\op}$, here, is the simplicial object $\bDelta^{\op} \xra{\int_{M_\bullet} \cC} \cX$.
Because the cone-point in $(\bDelta^{\op})^{\rcone}$ is final, the fiber $\Quiv^{\op}_{/M}$ of the middle vertical functor over this cone-point is final in $\Ar(\M)^{|\Quiv^{\op}}_{|(\bDelta^{\op})^{\rcone}}$.  
Therefore the left Kan extension to $(\bDelta^{\op})^{\rcone}$ evaluates on the cone-point as $\int_M \cC$, and the colimit of this left Kan extension to $(\bDelta^{\op})^{\rcone}$ is this value $\int_M \cC$.

Now, regard \Cref{s8} as a diagram among $\infty$-categories over $\cX$.
Taking colimits in $\cX$ produces a diagram in $\cX$:
\begin{equation}
\label{s7}
\begin{tikzcd}
	{\colim \left( \Ar(\M)^{|\Quiv^{\op}}_{|\bDelta^{\op}} \xra{\rho_\ast \cC} \cX \right)} & {\colim\left( \Quiv^{\op}_{/M} \xra{\rho_\ast \cC} \cX \right)} \\
	{\left| \int_{M_\bullet} \cC \right|} & {\int_M \cC}
	\arrow[from=1-1, to=1-2]
	\arrow[from=1-1, to=2-1]
	\arrow[from=1-2, to=2-2]
	\arrow[from=2-1, to=2-2]
\end{tikzcd}
~.
\end{equation}
The left vertical morphism is an equivalence because the colimit of a left Kan extension is a colimit.
The upper horizontal morphism is an equivalence because the functor
\[
\Ar(\M)^{|\Quiv^{\op}}_{|\bDelta^{\op}}
\longrightarrow
\Quiv^{\op}_{/M}
\]
is final (\Cref{s10}).  
The right vertical morphism is an equivalence by definition of factorization homology.  
The 2-out-of-3 property of equivalences ensures the lower horizontal morphism is an equivalence, as desired.

\end{proof}

\begin{example}
After \Cref{t4.6}, \Cref{t4.4} is a \bit{local-to-global formula} for combinatorial factorization homology.  
It gives a method to compute the value $\int_M \cC$ of combinatorial factorization homology on a general object $M\in \M$ in terms of simpler values.
Here are some such examples.
\begin{enumerate}
\item
In the case that $\Gamma_\bullet = \rho( [\bullet]^{\lcone \rcone} )$, \Cref{t4.4} implies the canonical morphism given by the composition rule of $\cC$,
\[
\circ
\colon
\Bigl|
\Mor(\cC)^{\underset{\Obj(\cC)} \times \bullet+1}
\Bigr|
\xra{~\simeq~}
\Mor(\cC)
~,
\]
is an equivalence.
Taking fibers over $(c,d)\in \Obj(\cC)^{\times 2}$ via the source-target morphism $\Mor(\cC) \to \Obj(\cC)^{\times 2}$ recovers the familiar identity: the coend of the left $\cC$-module $\Hom_\cC(c,\bullet)$ with the right $\cC$-module $\Hom_\cC(\bullet,d)$ is the object of morphisms in $\cC$ from $c$ to $d$:
\[
\Hom_\cC(c,\bullet) \underset{ \bullet \in \cC} \bigotimes \Hom_\cC(\bullet,d)
\xra{~\simeq~}
\Hom_\cC(c,d)
~.
\]

\item
In the case that $S = \{a,b\}$ and $\w{\Gamma}$ is the disconnected quiver
\[
\{-a\}
\longrightarrow
\{+a\}
\qquad
\{-b\}
\longleftarrow
\{+b\}
,
\]
\Cref{t4.4} implies the canonical morphism given by the composition rule of $\cC$,
\[
\circ
\colon
\Hom_\cC(\bullet,\bullet')
\underset{ (\bullet^\circ , \bullet') \in \cC^{\op} \times \cC} \bigotimes
\Hom_\cC(\bullet',\bullet)
\xra{~\simeq~}
\sHH(\cC)
~,
\]
is an equivalence, which
witnesses the (non-stable) Hochschild homology of $\cC$ as the coend of the identity $(\cC,\cC)$-bimodule $\cC$ with itself.

\item
In the case that $\w{\Gamma}$ of \Cref{d4.5} is the quiver
\[
\{-1\}
\longrightarrow
\{+1\}
\]
and $S$ is a singleton, 
\Cref{t4.4} implies the canonical morphism given by the composition rule of $\cC$,
\[
\circ
\colon
\Bigl|
{\sf Rep}_\cC( \chi_\bullet)
\Bigr|
\xra{~\simeq~}
\sHH(\cC)
~,
\]
is an equivalence.
Here, the cosimplicial quiver
$
\chi_\bullet \colon 
\bDelta 
\xra{~\Cref{e15}~}
\copara
\xra{~\Cref{e21}~} 
\Quiv
$ 
evaluates on $[p]$ as the cyclically-directed quiver whose (cyclically-directed) set of vertices is $\{0,1,\dots,p\}$.
\end{enumerate}

\end{example}

\subsection{Natural symmetries of Hochschild homology}
\label{sec.symmetries}
Example~\ref{r10}(4) suggests that the most interesting value of combinatorial factorization homology is that over the oriented circle, which is Hochschild homology.
In this subsection, we codify the natural symmetries of this value as a (\bit{non-stable}) \bit{cyclotomic object}.  
Namely, the value $\sHH(\cC)$ naturally has the structure of a \bit{proper-genuine $\TT$-module} that is fixed with respect to a natural $\NN^\times$-action on such.
We refer the reader to Appendix~\ref{sec.A} for definitions of these bolded concepts.

\begin{theorem}
\label{t73}
\begin{enumerate}

\item[]

\item\label{theorem A}
The Hochschild homology functor lifts:
\[
\begin{tikzcd}
&
{\sf Cyc}^\unst(\cX)
\arrow{d}{{\rm forget}}
\\
\fCat_1[\cX]
\arrow{r}[swap]{\sHH}
\arrow[dashed]{ru}
&
\cX
\end{tikzcd}
~.
\]
In other words, for each category-object $\cC$ in $\cX$, its Hochschild homology $\sHH(\cC)$ admits the structure of a non-stable cyclotomic object in $\cX$;
this structure is functorial in $\cC \in \fCat_1[\cX]$.

\item\label{theorem B}
There is a canonical natural transformation $\Obj \to \sHH$ between functors $\fCat_1[\cX] \to \cX$, which is invariant with respect to the above non-stable cyclotomic structure on $\sHH$:
\[
\Obj
\longrightarrow
\sHH^{\sf Cyc}
~.
\]

\end{enumerate}

\end{theorem}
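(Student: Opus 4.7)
The plan is to derive both parts from the canonical extension $\w{\Rep}_\cC : \M^{\con} \to \cX$ of \Cref{t15}, applied to $\Rep_\cC : (\Quiv^{\con})^{\op} \to \cX$. For part (A), the essential input is \Cref{t51'}, which identifies $\End_{\M^{\con}}(\SS^1) \simeq \WW^{\op}$. Applying the functor $\w{\Rep}_\cC$, one obtains a monoid homomorphism $\WW^{\op} \to \End_\cX(\sHH(\cC))$, equipping $\sHH(\cC) \simeq \w{\Rep}_\cC(\SS^1)$ with the structure of a left $\WW^{\op}$-module in $\cX$. Naturality in $\cC \in \fCat_1[\cX]$ is automatic because $\cC \mapsto \w{\Rep}_\cC$ is itself functorial. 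Composing with the $\cX$-valued analog of \Cref{mainthm.WWop.modules.are.genuine}, which provides an equivalence $\Mod_{\WW^{\op}}(\cX) \simeq {\sf Cyc}^\unst(\cX)$, yields the desired lift to ${\sf Cyc}^\unst(\cX)$.

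For part (B), first observe that \Cref{t51'}(3) applied with $M = \ast \in \Quiv^{\con}$ (the one-vertex quiver) and $N = \SS^1$ yields $\Hom_{\M^{\con}}(\ast, \SS^1) \simeq \ast$: the set of vertices of $\ast$ is a singleton, and the set of non-constant directed cycles in $\ast$ is empty. Applying $\w{\Rep}_\cC$ to the essentially unique morphism $\ast \to \SS^1$ in $\M^{\con}$ produces a natural map
\[
\Obj(\cC) \simeq \w{\Rep}_\cC(\ast) \longrightarrow \w{\Rep}_\cC(\SS^1) \simeq \sHH(\cC).
\]
Unwinding the presentation of $\SS^1$ as a colimit in $\M^{\con}$ and the factorization $\ast \to \chi([0]) \to \SS^1$, one verifies that this coincides with the trace map of \Cref{t251}. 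By \Cref{t51'}(4), the action of $\WW^{\op} \simeq \End_{\M^{\con}}(\SS^1)$ on $\Hom_{\M^{\con}}(\ast, \SS^1)$ is by post-composition; because the target is contractible, this action is canonically trivial. Hence the morphism $\Obj(\cC) \to \sHH(\cC)$ is $\WW^{\op}$-invariant and therefore, by the adjunction between endowing an object with trivial action and taking $(-)^{\htpy \WW^{\op}}$, it lifts canonically to a map $\Obj(\cC) \to \sHH(\cC)^{\htpy \WW^{\op}}$. Under the equivalence of part (A), this is the desired natural transformation $\Obj \to \sHH^{\sf Cyc}$.

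The main obstacle is the $\cX$-valued generalization of \Cref{mainthm.WWop.modules.are.genuine}: one needs that $\Mod_{\WW^{\op}}(\cX) \simeq {\sf Cyc}^\unst(\cX)$ for $\cX$ satisfying the hypotheses of this section, and that this equivalence matches the $\WW^{\op}$-module structure extracted from $\End_{\M^{\con}}(\SS^1)$ with the non-stable cyclotomic structure defined in the appendix. A subsidiary bookkeeping step is the identification of the map built from $\ast \to \SS^1$ with the concrete trace map of \Cref{t251}, which amounts to tracking the simplicial presentation of $\SS^1$ and should be routine once the universal property of $\M^{\con}$ is in hand.
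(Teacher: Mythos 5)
Your proposal is correct and follows essentially the same route as the paper: the paper restricts factorization homology to the full subcategory $\langle \DD^0, \SS^1\rangle \subset \M$, identifies it with $(\fB\WW^{\op})^{\lcone}$ via \Cref{t51} (the same computation of $\End(\SS^1)$ and of the contractible hom-space from the point that you invoke via \Cref{t51'}), and concludes via \Cref{t44}. The ``main obstacle'' you flag is not one: \Cref{t44} is already stated and proved in the appendix for an arbitrary $\infty$-category $\cX$, so the equivalence $\Mod_{\WW^{\op}}(\cX) \simeq {\sf Cyc}^\unst(\cX)$ is available exactly as needed.
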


\begin{proof}
Denote by $\langle \DD^0 , \SS^1 \rangle \subset \M$ the full $\infty$-subcategory consisting of the objects $\DD^0 , \SS^1 \in \M$.  
Note that the functor $\int_{\SS^1} \colon \fCat_1[\cX] \to \cX$ factors:
\begin{eqnarray*}
\int_{\SS^1}
\colon
\fCat_1[\cX]
&
\xra{~\int~}
&
\Fun(\M , \cX)
\\
&
\xra{~\rm restriction~}
&
\Fun\bigl(
\langle \DD^0 , \SS^1 \rangle
,
\cX
\bigr)
\\
&
\xra{~\rm restriction~}
&
\Fun\bigl(
\fB \End_{\M}(\SS^1)
,
\cX
\bigr)
=:
\Mod_{\End_{\M}(\SS^1)}(\cX)
\\
&
\xra{~\ev_{\SS^1}~}
&
\cX
~.
\end{eqnarray*}
\Cref{t51} specializes as an identification $(\BW^{\op})^{\lcone} \simeq \langle \DD^0 , \SS^1 \rangle$ under an identification $\WW^{\op} \simeq \End_{\M}(\SS^1)$.
Using this, \Cref{t44} gives an identification $\Mod_{\End_{\M}(\SS^1)}(\cX) \simeq \Mod_{\WW^{\op}}(\cX) \simeq {\sf Cyc}^\unst(\cX)$.
The first statement then follows from identification $\int_{\SS^1}(\cC)  \simeq \sHH(\cC)$ of \Cref{r10}(3) for each $\cC \in \fCat_1[\cX]$.

The second statement then follows upon observing that the $\infty$-category $\Fun( (\BW^{\op})^{\lcone} , \cX)$ over $\Fun(\BW^{\op},\cX) =: \Mod_{\WW^{\op}}(\cX)$ classifies a $\WW^{\op}$-module in $\cX$ equipped with a $\WW^{\op}$-invariant map to it.

\end{proof}

\begin{remark}
\label{r12}
\Cref{t73}(2) can be interpreted as a non-stable cyclotomic trace map.

\end{remark}

\appendix

\section{Non-stable cyclotomic objects}
\label{sec.A}

Here we introduce the notion of a (\bit{non-stable}) \bit{cyclotomic object} in $\cX$.  
The term \bit{non-stable} is used here to reflect that $\cX$ is not assumed to be a stable $\infty$-category, and that its symmetric monoidal structure is understood as the Cartesian one.  
The notion of a \bit{cyclotomic object} in $\Spectra$ is developed in~\cite{BluMan}.  
The work~\cite{cyclo} studies cyclotomic objects in some generality, and in particular explains how $\Spaces \xra{\Sigma^\infty_+} \Spectra$ carries (non-stable) cyclotomic objects to (stable) cyclotomic objects.  

In this section, we introduce (\bit{non-stable}) \bit{cyclotomic objects} in $\cX$ and establish a few equivalent definitions of such (\Cref{t44}).

\begin{definition}\label{d17}
\begin{itemize}
\item[]

\item
The poset $\Ndiv$ is that of natural numbers with partial order given by divisibility: the relation $r \leq s$ in $\Ndiv$ means $r$ divides $s$. We also denote this as $r|s$.

\item
The \bit{proper orbit category} (of $\TT$) is the $\infty$-category $\Orbit_{\TT}^{<}$ of transitive $\TT$-spaces with isotropy a proper (equivalently, finite) subgroup of $\TT$, and $\TT$-equivariant maps between them. 

\item
For $\cX$ an $\infty$-category, the $\infty$-category of \bit{proper-genuine $\TT$-modules (in $\cX$)} is
\[
\Mod_{\TT}^{\gen^<}(\cX)
~:=~
\Fun\bigl(
({\Orbit}_{\TT}^{<})^{\op}
,
\cX
\bigr)
~.
\]

\end{itemize}

\end{definition}

The action $\NN^\times \underset{(\ref{e45})}\lacts \TT$ as a topological group determines an action on the proper orbit $\infty$-category:
\begin{equation}
\label{e44}
\NN^\times \underset{\rm Obs~\ref{t43}}\simeq (\NN^\times)^{\op} \lacts {\Orbit}^{<}_{\TT}
~,\qquad
n\cdot (\TT \lacts T)
:=
(\TT \xra{z\mapsto z^n} \TT \lacts T)
~.
\end{equation}
Precomposition by this $\NN^\times$-action~\Cref{e44} defines an $\NN^\times = (\NN^\times)^{\op}$-action on the $\infty$-category $\Mod_{\TT}^{\gen^<}(\cX)$.

\begin{definition}
\label{d18}
The $\infty$-category of \bit{non-stable cyclotomic objects} in an $\infty$-category $\cX$ is that of the $\NN^\times$-invariant proper-genuine $\TT$-modules:
\[
{\sf Cyc}^\unst(\cX)
~:=~
\Mod_{\TT}^{\gen^<}(\cX)^{{\sf h}\NN^\times}
~.
\]

\end{definition}

\begin{remark}
Informally, a non-stable cyclotomic object in $\cX$ consists of the following.
\begin{itemize}
\item
A $\TT$-module $\left( \TT \underset{\alpha}\lacts X \right)$ in $\cX$~.

\item
For each $r\in \NN^\times$, a morphism between $\TT$-modules in $\cX$:
\[
r^\ast 
\left(\TT \underset{ \alpha} \lacts X \right)
~:=~
\left(\TT \xra{z\mapsto z^r} \TT \underset{ \alpha} \lacts X \right)
\xra{~c_{r}~}
\left(\TT \underset{ \alpha} \lacts X \right)
~.
\]

\item
For each pair $s,r\in \NN^\times$, 
a 2-cell witnessing commutativity among $\TT$-modules in $\cX$:
\[
\begin{tikzcd}
r^\ast 
s^\ast 
\left(\TT \underset{\alpha} \lacts X \right)
\arrow{r}{r^\ast c_{s}}
\arrow{d}[sloped, anchor=north]{\sim}
&
r^\ast \left(\TT \underset{\alpha} \lacts X \right)
\arrow{d}{c_{r}}
\\
(sr)^\ast
\left(\TT \underset{\alpha} \lacts X \right)
\arrow{r}[swap]{c_{sr}}
&
\left(\TT \underset{ \alpha} \lacts X \right)
\end{tikzcd}
~.
\]

\item
For each triple $r,s,t \in \NN^\times$, a similar commutative cube among $\TT$-modules in $\cX$ whose faces are (possibly pulled back from) the above commutative squares.  

\item
Etcetera.

\end{itemize}

\end{remark}

Restriction along ${\Orbit}_{\TT}^{<} \xra{!} \ast$, which is evidently $\NN^\times$-invariant, defines a functor
\[
{\sf triv}
\colon
\cX
\longrightarrow
{\sf Cyc}^\unst(\cX)
~.
\]

\begin{definition}
\label{d22}
The \bit{cyclotomic fixed points} functor is the right adjoint to ${\sf triv}$:
\[
(-)^{\sf Cyc}
\colon
{\sf Cyc}^\unst(\cX)
\longrightarrow
\cX
~.
\]

\end{definition}

\begin{observation}
\label{t46}
There is a functor
\[
{\Orbit}_{\TT}^{<}
\longrightarrow
\NN^{\sf div}
~,\qquad
(\TT \lacts T)
\mapsto | \TT_t |
\qquad
(~\text{ for some } t\in T~ )
~,
\]
whose value on a transitive $\TT$-space $T$ with proper isotropy is the order of the isotropy of some element in $T$.
Exploiting that the codomain of this functor is a poset, this functor is unique with the named values on objects.  
This functor has the following properties.
\begin{enumerate}

\item
Two subgroups of $\TT$ with the same cardinality are identical.
Therefore, the fiber of this functor over $r\in \NN^{\sf div}$ is the full $\infty$-subcategory of ${\Orbit}_{\TT}^{<}$ on $\frac{\TT}{\sC_r}$.  
It follows that the fiber of this functor over $r\in \NN^{\sf div}$ is the $\infty$-groupoid 
$
\sB \Bigl( 
\frac{\TT}{\sC_r}
\Bigr)\simeq \BT
$.
In particular, this functor is conservative.

\item
The space of morphisms in ${\Orbit}_{\TT}^{<}$ over a morphism $r | s$ in $\NN^{\sf div}$ is 
\[
_{\frac{\TT}{\sC_r}\backslash}
{\sf Map}^{\TT}\Bigl(\frac{\TT}{\sC_r} , \frac{\TT}{\sC_s} \Bigr)_{/\frac{\TT}{\sC_s}}
~,
\]
with the source-map an equivalence.
In particular, this functor is a left fibration.

\item
The straightening of this left fibration is the functor
\[
\sB\Bigl(
\frac{\TT}{\sC_\bullet}
\Bigr)
\colon
\NN^{\sf div}
\longrightarrow
\Spaces
\]
characterized by the following values on objects and generating morphisms:
\begin{itemize}

\item
the value of this functor on each $r\in \NN^{\sf div}$ is the space $\BT \simeq \sB\Bigl(
\frac{\TT}{\sC_r}
\Bigr)
$;

\item
the value of this functor on each morphism $(r | s)$ in $\NN^{\sf div}$ is the map $\BT \xra{\sB (z\mapsto z^{\frac{s}{r}})} \BT$.

\end{itemize}

\item
With respect to the $(\NN^\times)^{\op}$-action on the poset $\NN^{\sf div}$ given by $r^\circ \cdot d := dr$, 
this functor is canonically $(\NN^\times)^{\op}$-equivariant.

\end{enumerate}

\end{observation}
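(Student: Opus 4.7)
The plan is first to establish that the assignment $(\TT \lacts T) \mapsto |\TT_t|$ genuinely defines a functor, and to observe uniqueness. Since $\TT$ is abelian, all isotropies within a transitive $\TT$-space $T$ coincide, so $|\TT_t|$ is independent of $t \in T$. A $\TT$-equivariant morphism $f \colon T \to T'$ satisfies $\TT_t \subseteq \TT_{f(t)}$; as both are finite, $|\TT_t|$ divides $|\TT_{f(t)}|$, producing the required morphism in $\NN^{\sf div}$. Uniqueness is automatic: since $\NN^{\sf div}$ is a poset, any two functors agreeing on objects agree on morphisms.

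For properties (1) and (2), the key structural fact I would exploit is that $\TT$ admits a unique finite subgroup of each order $r$, namely $\sC_r$. Hence every object over $r$ is isomorphic to $\TT/\sC_r$, so the fiber consists of a single isomorphism class with automorphism space $\Aut^\TT(\TT/\sC_r) \simeq \TT/\sC_r \simeq \TT$, i.e., the fiber is $\sB(\TT/\sC_r) \simeq \BT$; conservativity follows at once, since a morphism mapping to an identity lies in this fiber groupoid. For the left fibration property, I would verify the standard characterization that $(\Orbit_\TT^<)^{T/} \to (\NN^{\sf div})^{r/}$ is a trivial fibration for each $T = \TT/\sC_r$: its fiber over each $s$ with $r \mid s$ is the homotopy quotient of $\Map^\TT(\TT/\sC_r, \TT/\sC_s) \simeq \TT/\sC_s$ by the free transitive post-composition action of $\Aut^\TT(\TT/\sC_s) \simeq \TT/\sC_s$, and is therefore contractible.

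For property (3), the fiber has already been identified with $\BT$. The straightening is then computed by cocartesian transport along the quotient $\TT/\sC_r \to \TT/\sC_s$: the automorphism $[z] \mapsto [\zeta z]$ of $\TT/\sC_r$ transports to the automorphism $[w] \mapsto [\zeta w]$ of $\TT/\sC_s$. Using the identification $\TT/\sC_r \simeq \TT$ induced in additive coordinates by $\RR/\frac{1}{r}\ZZ \xra{[x]\mapsto rx} \RR/\ZZ$ (and similarly for $s$), a direct computation shows the quotient map $\TT/\sC_r \to \TT/\sC_s$ becomes the power map $y \mapsto y^{s/r}$ on $\TT$. Consequently the induced map on classifying spaces of automorphisms is $\sB(z \mapsto z^{s/r})$, as claimed.

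Property (4) reduces to a direct isotropy calculation: reparametrizing the $\TT$-action by $z \mapsto z^n$ replaces $\sC_r$ by $\{z : z^n \in \sC_r\} = \sC_{nr}$, so $n$ sends the fiber over $r$ to the fiber over $nr$, matching the declared $(\NN^\times)^{\op}$-action $r^\circ \cdot n := rn$ on $\NN^{\sf div}$. The $\NN^\times$-equivariance of the functor then follows formally from uniqueness. I expect the main obstacle to be the bookkeeping in property (3)---specifically, keeping track of a consistent identification $\TT/\sC_r \simeq \TT$ so as to arrive at exactly the power map $z \mapsto z^{s/r}$ rather than its inverse or a conjugate. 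All remaining steps reduce to elementary group theory of $\TT$ combined with standard unstraightening.
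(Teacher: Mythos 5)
Your proposal is correct and follows essentially the same route as the paper, which records this as an observation whose justification is embedded in the statement itself: uniqueness of the finite subgroup $\sC_r \subset \TT$ of each order identifies the fibers with $\sB(\TT/\sC_r) \simeq \BT$, the free transitive action of $\Aut^\TT(\TT/\sC_s)$ on $\Map^\TT(\TT/\sC_r,\TT/\sC_s)$ gives the left-fibration property, and the transport/equivariance computations are exactly the ones you perform. Your write-up merely makes explicit the bookkeeping (the identification $\TT/\sC_r \simeq \TT$ via the $r$-th power map) that the paper leaves implicit.
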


Here is the main technical result in this subsection.
\begin{lemma}
\label{t68}
There is a canonical identification of the $\infty$-category of right-lax coinvariants with respect to the action~\Cref{e44}:
\[
\Bigl(
{\Orbit}^{<}_{\TT}
\Bigr)_{{\sf r.lax}\NN^\times}
\xra{~\simeq~}
\Ar(\BW^{\op})
~,
\]
where the codomain is regarded as a Cartesian fibration over $\BN$ via the composite functor
\begin{equation}
\label{e59}
\Ar(\BW^{\op})
\xra{\ev_s}
\BW^{\op}
\xra{\fB {\sf proj}}
\BN
~.
\end{equation}

\end{lemma}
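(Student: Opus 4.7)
The plan is to realize both sides as Cartesian fibrations over $\BN$ classified by the same functor $\BN \to \Cat$, namely the one encoding the action~\Cref{e44} of $\NN^\times$ on $\Orbit^<_\TT$. By the convention in force, the right-lax coinvariants $(\Orbit^<_\TT)_{{\sf r.lax}\NN^\times}$ is the Cartesian unstraightening of this functor (using that $\NN^\times$ is commutative, so that $\BN^\op \simeq \BN$ and the covariant action may be reinterpreted contravariantly); it therefore suffices to exhibit $\Ar(\BW^\op) \to \BN$ as a Cartesian fibration with the same straightening.

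First I will verify that $\Ar(\BW^\op) \to \BN$ is Cartesian: source evaluation $\ev_s \colon \Ar(\BW^\op) \to \BW^\op$ is a Cartesian fibration, its Cartesian morphisms being the commuting squares with invertible right edge; and $\BW^\op \to \BN$ is a right fibration, namely the opposite of the left fibration $\BW \to \BN$ classifying the $\NN^\times$-action $\sB(z \mapsto z^r)$ on $\BT$. The composite is then Cartesian.

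Next I will identify the fiber over $\ast \in \BN$: base-change yields $\Ar(\BW^\op) \times_{\BW^\op} \BT$, where $\BT$ is the fiber of $\BW^\op \to \BN$. Objects of this pullback are elements $w = (z,r) \in \WW^\op$, and morphisms $w_1 \to w_2$ are commuting squares $(a,b)$ with $a \in \TT$, $b \in \WW^\op$, and $b w_1 = w_2 a$. Solving this equation in $\WW^\op \simeq \NN^\times \ltimes \TT$ (\Cref{t96}) forces the $\NN^\times$-component of $b$ to equal $r_2/r_1$, so the square exists iff $r_1 \mid r_2$; the space of solutions is then a $\TT$-torsor, parameterized by the $\TT$-component of $b$. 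The resulting category --- $\NN^\times$-many isomorphism classes, each with automorphism space $\TT$, and a $\TT$-worth of morphisms between divisibility-comparable classes --- matches the description of $\Orbit^<_\TT$ in \Cref{t46} via the assignment $w = (z,r) \mapsto \TT/\sC_r$.

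Finally I will check that the monodromy matches \Cref{e44}. Given $r \in \End_\BN(\ast) = \NN^\times$ and a target $w = (z,r') \in \Ar(\BW^\op)_{|\ast}$, a Cartesian lift of $r$ at $w$ is the square with identity right edge and with bottom edge $(1,r) \in \WW^\op$ (which projects to $r \in \NN^\times$); unwinding the square condition, its source is $(z^r, rr') \in \WW^\op$, computed from the $\BW^\op$-composition of $w$ with $(1,r)$, with $\NN^\times$-component $rr'$. Under the fiber identification, the monodromy thus sends $\TT/\sC_{r'}$ to $\TT/\sC_{rr'} = r^*(\TT/\sC_{r'})$, matching \Cref{e44}. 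The straightenings of both Cartesian fibrations over $\BN$ therefore coincide, yielding the lemma. The main subtlety is maintaining full $(\infty,1)$-categorical coherence throughout the fiber identification, since $\WW^\op$ is a monoid-object in $1$-truncated spaces; the Grothendieck-construction description of $\BW$ as classifying $\NN^\times \lacts \BT$ supplies the requisite higher-coherence data.
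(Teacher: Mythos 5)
Your proposal is correct and follows the same overall strategy as the paper: exhibit $\Ar(\BW^{\op})\to\BN$ as a Cartesian fibration, identify its fiber over $\ast$ (which is exactly the paper's $\Ar(\BW^{\op})^{|\BT}$) with $\Orbit^{<}_{\TT}$ via \Cref{t46}, match the monodromy with \Cref{e44}, and conclude by (un)straightening. The one substantive difference is in how the fiber identification and its $\NN^\times$-equivariance are established: the paper fibers $\Ar(\BW^{\op})^{|\BT}$ further over $(\BN)^{\ast/}\simeq\NN^{\sf div}$ and compares straightenings of fibrations over that poset, which packages the fiber and the $\NN^\times$-action in a single coherent comparison; you instead compute objects and mapping spaces of the fiber by hand (correctly --- your solution of $b\,w_1=w_2\,a$ in $\NN^\times\ltimes\TT$ and the resulting source $(z^r,rr')$ of the Cartesian lift match the paper's data) and check the monodromy only on elements of $\NN^\times$, deferring the higher coherence over $\fB\NN^\times$ to a closing remark. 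That deferral is the thinnest point of your argument --- an identification of straightenings over $\fB\NN^\times$ genuinely requires coherence beyond fibers and generators --- but the paper's route through $\NN^{\sf div}$ is precisely the device that supplies it, so your remark points at the right fix.
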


\begin{proof}

Observe the unique functor between categories,
\begin{equation}
\label{e60}
(\BN)^{\ast/}
\longrightarrow
\NN^{\sf div}
~,\qquad
(\ast \xra{d} \ast)
\mapsto
d
~,
\end{equation}
whose value on each object is as depicted.  
Using that, for each $d\in \NN^\times$, the map $\NN^\times \xra{r\mapsto dr} \NN^\times$ is a monomorphism between spaces, the functor~\Cref{e60} is an equivalence.

Now, consider the diagram among $\infty$-categories:
\begin{equation}
\label{e61}
\begin{tikzcd}
\Ar(\BW^{\op})^{|\BT}
\arrow{rr}
\arrow{rd}
\arrow{dd}
&&
\BT
\arrow{dd}
\arrow{dr}
&
\\
&
\Ar(\BW^{\op})
\arrow[crossing over]{rr}[pos=0.3]{\ev_s}
&&
\BW^{\op}
\arrow{dd}{\fB {\sf proj}}
\\
(\BN)^{\ast/}
\arrow{rr}
\arrow{dr}
&&
\ast
\arrow{dr}
&
\\
&
\Ar(\BN)
\arrow{rr}[swap]{\ev_s}
\arrow[leftarrow, crossing over]{uu}[swap, pos=0.3]{\Ar(\fB {\sf proj})}
&&
\BN
\end{tikzcd}
~.
\end{equation}
By definition of $\infty$-undercategories, the bottom square is a pullback square.
The definition of the monoid $\WW$ is such that the right square is also a pullback.
It follows that the left square is a pullback.
Because $\BW^{\op} \xra{\fB {\sf proj}} \BN$ is a right fibration, so is $\Ar(\BW^{\op}) \xra{\Ar(\fB {\sf proj})} \Ar(\BN)$.
We conclude that the functor $\Ar(\BW^{\op})^{|\BT} \to (\BN)^{\ast/}$ is a right fibration.  
By direct inspection, the straightening of this right fibration is the functor
\[
(\NN^{\sf div})^{\op}
~
\underset{\Cref{e60}}\simeq
~
\bigl(
(\BN)^{\ast/}
\bigr)^{\op}
\longrightarrow
\Spaces
\]
characterized by the following values on objects and generating morphisms:
\begin{itemize}

\item
the value of this functor on each $d\in \NN^{\sf div}$ is the space $\BT \simeq \sB\Bigl(
\frac{\TT}{\sC_d}
\Bigr)
$;

\item
the value of this functor on each morphism $(d | k)$ in $\NN^{\sf div}$ is the map $\BT \xra{\sB (z\mapsto z^{\frac{k}{d}})} \BT$.

\end{itemize}
By \Cref{t46}, there results an equivalence over $\NN^{\sf div} \simeq (\BN)^{\ast/}$:
\begin{equation}
\label{e62}
{\Orbit}^{<}_{\TT}
~\simeq~
\Ar(\BW^{\op})^{|\BT}
~.
\end{equation}
By direct inspection, this equivalence~\Cref{e62} is canonically $(\NN^\times)^{\op}$-equivariant.
This, in turn, lends to a canonical equivalence between Cartesian fibrations over $\BN \simeq (\BN)^{\op}$:
\[
\Bigl(
{\Orbit}^{<}_{\TT}
\Bigr)_{{\sf r.lax} (\NN^\times)^{\op}}
~\simeq~
\Bigl(
\Ar(\BW^{\op})^{|\BT}
\Bigr)_{{\sf r.lax} (\NN^\times)^{\op}}
~.
\]
The diagram~\Cref{e61} witnesses an identification 
$
\Bigl(
\Ar(\BW^{\op})^{|\BT}
\Bigr)_{{\sf r.lax} (\NN^\times)^{\op}}
\simeq
\Ar(\BW^{\op})
$
as Cartesian fibrations over $\BN$.

\end{proof}

\begin{cor}
\label{t47}
There is a canonical identification of the $\infty$-category of coinvariants with respect to the action~\Cref{e44}:
\[
\Bigl(
{\Orbit}^{<}_{\TT}
\Bigr)_{{\sf h}\NN^\times}
\xra{~\simeq~}
\BW^{\op}
~.
\]

\end{cor}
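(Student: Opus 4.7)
The plan is to compute the honest coinvariants from the right-lax coinvariants identified in Lemma \ref{t68}, using the standard fact that for any Cartesian fibration $\pi\colon \cE \to \cB$ classifying a diagram $F\colon \cB^{\op} \to \Cat$, the colimit $\colim F$ is canonically identified with the localization of $\cE$ at its $\pi$-Cartesian morphisms. Combined with Lemma \ref{t68}, this yields a canonical equivalence
\[
\bigl({\Orbit}^{<}_{\TT}\bigr)_{{\sf h}\NN^{\times}} \;\simeq\; \Ar(\BW^{\op})\bigl[\{\pi\text{-Cartesian}\}^{-1}\bigr],
\]
where $\pi = \fB{\sf proj}\circ \ev_s$ as in Lemma \ref{t68}.

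Next, I would exhibit the target-evaluation functor $\ev_t\colon \Ar(\BW^{\op}) \to \BW^{\op}$ as the localization on the right-hand side. The identity section $\BW^{\op} \to \Ar(\BW^{\op})$, $w\mapsto \id_w$, is fully faithful (by a direct hom-space computation in the arrow category) and right adjoint to $\ev_t$; hence $\ev_t$ is a reflective localization, and it inverts exactly those morphisms $(\alpha,\beta)\colon w_0 \to w_1$ whose target-part $\beta\in \WW^{\op}$ is an equivalence. Invoking Observation \ref{t96} to write $\WW^{\op}\simeq \NN^{\times}\ltimes \TT$, this class is precisely $\{(\alpha,\beta) : \beta\in \TT\}$.

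The remaining step -- and the expected main obstacle -- is to verify that this $\ev_t$-inverted class coincides with the class of $\pi$-Cartesian morphisms. This is a direct calculation in coordinates for the semidirect product. Writing $\alpha = (a,x)$ and $\beta = (b,y)$ in $\NN^{\times}\ltimes \TT$, the universal factorization property defining $\pi$-Cartesian-ness demands, for each $(\alpha',\beta')\colon w_2 \to w_1$ and each factorization ${\sf proj}(\alpha') = {\sf proj}(\alpha)\cdot s$, a unique $(\alpha'',\beta'')$ with $\alpha\alpha'' = \alpha'$, $\beta\beta'' = \beta'$, and ${\sf proj}(\alpha'') = s$, together with the compatibility $\beta''w_2 = w_0 \alpha''$. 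The $\alpha''$-equation is always uniquely solvable because $\TT$ is a group; the $\beta''$-equation has a unique solution precisely when $b=1$ (so that divisibility in $\NN^{\times}$ imposes no obstruction), i.e., exactly when $\beta\in \TT$; and in that case the compatibility likewise follows by cancelling the invertible $\beta$. Hence $\ev_t$-inverted morphisms coincide with $\pi$-Cartesian morphisms, and $\ev_t$ witnesses the desired equivalence
\[
\Ar(\BW^{\op})\bigl[\{\pi\text{-Cartesian}\}^{-1}\bigr] \;\xra{~\sim~}\; \BW^{\op}.
\]
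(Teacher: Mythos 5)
Your proposal is correct and follows essentially the same route as the paper's proof: both reduce via \Cref{t68} to showing that the coinvariants --- the localization of the Cartesian fibration $\Ar(\BW^{\op}) \to \BN$ at its Cartesian morphisms --- is computed by $\ev_t$, and both exhibit $\ev_t$ as a localization using the fully faithful identity section of the arrow category. The only point of divergence is how the $\pi$-Cartesian morphisms are identified with the class $\{(\alpha,\beta) : \beta \in \TT\}$ inverted by $\ev_t$: the paper argues softly that, since $\fB{\sf proj}\colon \BW^{\op} \to \BN$ is conservative (indeed a right fibration), the $\pi$-Cartesian morphisms agree with the $\ev_s$-Cartesian ones, which are exactly the squares with invertible target leg; you instead verify the same identification by a direct computation in $\NN^\times \ltimes \TT$, which is correct (the key point being that the unknown factor is never raised to a power, so translation in the group $\TT$ suffices) but more laborious, and in the ``only if'' direction requires actually exhibiting test data violating the divisibility constraint, which you only gesture at.
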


\begin{proof}
Through \Cref{t68}, the corollary follows upon showing the functor
\[
\Ar(\BW^{\op})
\xra{\ev_t}
\BW^{\op}
\]
witnesses a localization on those morphisms in $\Ar(\BW^{\op})$ that are Cartesian with respect to the composite functor~\Cref{e59}.
Certainly, this functor witnesses a localization on those morphisms in $\Ar(\BW^{\op})$ that are Cartesian with respect to the functor $\Ar(\BW^{\op}) \xra{\ev_s} \BW^{\op}$.
Using that $\BT$ is an $\infty$-groupoid, thereby implying $\BW^{\op} \to \BN$ is conservative, this class of morphisms in $\Ar(\BW^{\op})$ is precisely the class of morphisms that are Cartesian with respect to the composite functor~\Cref{e59}.

\end{proof}

\begin{cor}
\label{t44}
Let $\cX$ be an $\infty$-category.
There are canonical equivalences among $\infty$-categories
\[
\Mod_{\TT}(\cX)^{{\sf r.lax}\NN^\times}
~\simeq~
\Mod_{\WW^{\op}}(\cX)
~\simeq~
\Mod_{\TT}^{\gen^<}(\cX)^{{\sf h}\NN^\times}
~=:~
{\sf Cyc}^\unst(\cX)
~,
\]
Furthermore, the equivalence $\Fun(\BW^{\op},\cX) =: \Mod_{\WW^{\op}}(\cX) \simeq {\sf Cyc}^\unst(\cX)$ extends as a canonically commutative diagram:
\[
\begin{tikzcd}[row sep=1.5cm]
\Fun(\BW^{\op},\cX)
\arrow{rd}[sloped, swap]{\lim}
\arrow{r}{\sim}
&
\Mod_{\WW^{\op}}(\cX) 
\arrow{d}{(-)^{{\sf h} \WW^{\op}}}
\arrow{r}{\sim}
&
{\sf Cyc}^\unst(\cX)
\arrow{ld}[sloped, swap]{(-)^{\sf Cyc}}
\\
&
\cX
\end{tikzcd}
~.
\]
\end{cor}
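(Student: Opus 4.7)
The plan is to prove the two claimed equivalences independently, each relating $\Mod_{\WW^\op}(\cX) = \Fun(\BW^\op, \cX)$ to a different incarnation of the $\NN^\times$-structure on $\TT$-modules via a different presentation of $\BW^\op$, and then deduce the commutativity of the triangle by uniqueness of right adjoints.

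For the first equivalence $\Mod_\TT(\cX)^{\text{r.lax}\NN^\times} \simeq \Mod_{\WW^\op}(\cX)$, I would exploit the semidirect product description $\WW^\op \simeq \NN^\times \ltimes \TT$ provided by \Cref{t96}. This realizes the canonical projection $\BW^\op \to \BN$ as a Cartesian fibration with fiber $\BT$, classifying the $\NN^\times$-action on $\BT$ coming from \Cref{e45}. Then the universal property of the Cartesian Grothendieck construction (cf.\ the analogous identification $(\copara)_{\text{r.lax}\WW} \simeq \w{\bLambda}$ of \Cref{t98}, which is a Cartesian fibration over $\BW$ by \Cref{t119}(2)) identifies functors out of the total space with right-lax $\NN^\times$-equivariant functors out of the fiber:
\[
\Fun(\BW^\op, \cX) \;\simeq\; \Fun(\BT, \cX)^{\text{r.lax}\NN^\times} \;=:\; \Mod_\TT(\cX)^{\text{r.lax}\NN^\times}.
\]

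For the second equivalence $\Mod_{\WW^\op}(\cX) \simeq \Mod^{\gen^<}_\TT(\cX)^{h\NN^\times}$, I would apply the functor $\Fun(-,\cX): \Cat^\op \to \Cat$ to the $\Cat$-level equivalence $\BW^\op \simeq (\Orbit^<_\TT)_{h\NN^\times}$ of \Cref{t47}. Since $\Fun(-,\cX)$ carries colimits in $\Cat$ to limits, we get $\Fun(\BW^\op, \cX) \simeq \lim_{\BN^\op} \Fun(\Orbit^<_\TT, \cX)$, where the $\BN^\op$-indexed diagram is induced by precomposition with the action \Cref{e44}. Invoking commutativity of $\NN^\times$ (combined with the $(\NN^\times)^\op$-equivariance of the projection to $\NN^{\sf div}$ recorded in \Cref{t46}(4), which pins down exactly how precomposition intertwines with the $\Fun(-,\cX)$-action), this limit is identified with the strict $\NN^\times$-invariants of the precomposition action on $\Mod^{\gen^<}_\TT(\cX) := \Fun((\Orbit^<_\TT)^\op, \cX)$; this is $\Mod^{\gen^<}_\TT(\cX)^{h\NN^\times}$ by definition.

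For the commutative triangle, note that the $\text{triv}$ functor from $\cX$ into each of $\Fun(\BW^\op, \cX)$, $\Mod_{\WW^\op}(\cX)$, and ${\sf Cyc}^\unst(\cX)$ is in each case pullback along the unique functor out to the terminal object ($\BW^\op \to \ast$ or $(\Orbit^<_\TT)^\op \to \ast$), and these trivial-module functors agree under the two equivalences above. The functors $\lim$, $(-)^{h\WW^\op}$, and $(-)^{\sf Cyc}$ are the right adjoints to these three trivial-module functors, and hence agree by the uniqueness of right adjoints. The main obstacle is bookkeeping of variance: one must pin down precisely which $(\NN^\times)^{(\op)}$-action on $\Fun((\Orbit^<_\TT)^\op, \cX)$ emerges from the colim-to-lim trick applied to \Cref{t47}, and verify that it agrees with the precomposition action inherited from \Cref{e44} under the opposite functor. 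This is handled cleanly by the factorization through $\NN^{\sf div}$ in \Cref{t46}, which makes all direction conventions explicit and matches them up with the semidirect product of \Cref{t96}.
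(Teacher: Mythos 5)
Your proposal is correct and follows essentially the same route as the paper: the paper's proof likewise splits into (i) identifying $\Fun(\BW^\op,\cX)$ with $\Mod_\TT(\cX)^{{\sf r.lax}\NN^\times}$ via the presentation $\BW^\op \simeq \BT_{{\sf r.lax}\NN^\times}$ (your Cartesian-fibration/Grothendieck-construction phrasing of the same fact), (ii) applying $\Fun(-,\cX)$ to the equivalence $(\Orbit^<_\TT)_{\htpy\NN^\times} \simeq \BW^\op$ of \Cref{t47} and converting coinvariants to invariants, and (iii) matching the trivial-module functors and invoking uniqueness of right adjoints for the triangle. Your explicit attention to the variance bookkeeping via \Cref{t46} is a reasonable refinement but does not change the argument.
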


\begin{proof}
The proof of the first statement is complete upon explaining the following sequences of equivalences among $\infty$-categories:
\begin{eqnarray*}
\Mod_{\TT}(\cX)^{{\sf r.lax}\NN^\times}
~\simeq~
\Fun\bigl( \BT , \cX \bigr)^{{\sf r.lax} \NN^\times}
&
~\simeq~
&
\Fun\bigl( \BT_{{\sf r.lax} \NN^{\times}} , \cX \bigr)
\\
&
~\simeq~
&
\Fun\bigl( \BW^{\op}, \cX \bigr)
~\simeq~
\Mod_{\WW^{\op}}(\cX)
\\
&
~\simeq~
&
\Fun\bigl( 
(
{\Orbit}^{<}_{\TT}
)_{{\sf h}\NN^\times}
,
\cX
\bigr)
\\
&
~\simeq~
&
\Fun\bigl( 
{\Orbit}^{<}_{\TT}
,
\cX
\bigr)^{{\sf h}\NN^\times}
~\simeq~
\Mod_{\TT}^{\gen^<}(\cX)^{{\sf h}\NN^\times}
~.
\end{eqnarray*}
The three equivalences that are not centered follow from the definition of $\Mod_-(\cX)$, and the \Cref{d17} of $\Mod_{\TT}^{\gen^<}(\cX)$.
It remains to prove the four aligned equivalences.
For the first aligned equivalence, recall that the $\NN^\times$-action on $\Mod_{\TT}(\cX)$ is pre-composition of the $\NN^\times$-action on $\TT$.  
So the right-lax invariants by this $\NN^\times$-action on $\Mod_{\TT}(\cX)$ is functors from the right-lax coinvariants, which explains the first centered equivalence.  
For the second aligned equivalence, the definition of the monoid $\WW := \TT \rtimes \NN^\times$ implies 
$\BW^{\op} \simeq \BT_{{\sf r.lax}\NN^\times}$.  
The third aligned equivalence is $\Fun\bigl( - , \cX\bigr)$ applied to \Cref{t47}.
The fourth aligned equivalence identifies functors from $\NN^\times$-coinvariants as 
$\NN^\times$-invariants of functors.  

The second statement follows upon observing that the functor $\cX \to \Mod_{\WW^{\op}}(\cX)$ given by restriction along $\BW^{\op} \to \ast$ is identified through the above sequence of equivalences with the functor ${\sf triv}\colon \cX \to {\sf Cyc}^\unst(\cX)$, then using that right adjoints are unique.

\end{proof}

\begin{remark}
A proper-genuine $\TT$-module in an $\infty$-category $\cX$ consists of a considerable amount of homotopy coherence data, and the structure of being $\NN^\times$-invariant consists of yet more.
Hence, one might expect it to be impractical to explicitly construct an object in $\Mod_{\TT}^{\gen^<}(\cX)^{{\sf h}\NN^\times}$.
To the contrary, \Cref{t44} states that the requisite homotopy coherence data actual cancel each other out, in a certain sense: an $\NN^\times$-invariant proper-genuine $\TT$-module in $\cX$ is simply a $\WW^{\op}$-module in $\cX$ (which entails substantially less homotopy coherence data).
\end{remark}

\section{Agreement of factorization homologies}
\label{sec.facts}

The main result in this section is \Cref{t90}, which articulates a precise sense in which combinatorial factorization homology as in \Cref{d21} agrees with the geometric version of factorization homology, as defined in~\cite{AFR2}.  
To make these two constructions comparable, we first identify $\Quiv^{\op}$ with $\Disk^{\sofr}$ and $\M$ with $\Mfld^{\sofr}$.

\subsection{Recollections from other works}
\label{sec.recollections}

We summarize some notions from~\cite{AFR1} \&~\cite{AFR2}.

\begin{enumerate}
\item
Constructed in~\S6.3 of~\cite{AFR1} is the $\infty$-category $\cBun$, which classifies proper constructible bundles between stratified spaces.  
This is to say, for $K$ a stratified space, the moduli space of proper constructible bundles over $K$ is identical with the space of functors to $\cBun$ from its exit-path $\infty$-category, $\Exit(K) \to \cBun$.
So, an object in $\cBun$ is a compact stratified space; 
a morphism from $X_0$ to $X_1$ is a proper constructible bundle $X \to \Delta^1$ (where the codomain is understood with the two strata $\Delta^{\{0\}}$ and $\Delta^1 \setminus \Delta^{\{0\}}$) equipped with identifications $X_0 \cong X_{|\Delta^{\{0\}}}$ and $X_1 \cong X_{|\Delta^{\{1\}}}$.

\item
Constructed in~\S6.4 of~\cite{AFR1} is the $\infty$-category $\cExit$, equipped with a functor $\cExit \to \cBun$.
For $K$ a stratified space, and for $\Exit(K) \xra{\langle X \to K \rangle} \cBun$ classifying a proper constructible bundle, there is a canonical identification of the base-change $\cExit_{|\Exit(K)} \simeq \Exit(X)$ over $\Exit(K)$.
In particular, the fiber of $\cExit \to \cBun$ over $K\in \cBun$ is the exit-path $\infty$-category $\Exit(K)$.

\item
Introduced in~\S2.1.2 of~\cite{AFR2} is the $\infty$-category $\cVect^{\sf inj}$.
An object in $\cVect^{\sf inj}$ is a finite-dimensional $\RR$-vector space; the space of morphisms from $V$ to $W$ is the Stiefel space of injections from $V$ to $W$.

\item
Constructed in~\S2.1.3\&2.1.4 of~\cite{AFR2} is a functor $\cExit \xra{\tau} \cVect^{\sf inj}$.
Its value on $x\in \Exit(X)$ is the $\RR$-vector space $\sT_x X$, which is the tangent space at $x$ of the stratum of $X$ in which $x\in X$ belongs.  
More generally, for $X \to K$ a proper constructible bundle, the resulting composite functor $\Exit(X) \to \cExit \xra{\tau} \cVect^{\sf inj}$ evaluates on $x\in \Exit(X)$ as the vertical tangent space $\sT^{\sf fib}_x X$ at $x$ of the constructible bundle $X \to K$.

\item
Defined in~\S2.4 of~\cite{AFR2} is the $\infty$-category
\[
\cMfd_1^{\sf sfr}
~,
\]
which is characterized by declaring, for $K$ a stratified space, the datum of a functor $\Exit(K) \to \cMfd_1^{\sf sfr}$ to be that of a proper constructible bundle $X \to K$ equipped with a (\bit{fiberwise}) \bit{solid 1-framing}, which is a lift
\[
\begin{tikzcd}
&
\cVect^{\sf inj}_{/\RR^1}
\arrow{d}{\rm forget}
\\
\Exit(X)
\arrow{r}[swap]{\sT^{\sf fib} X}
\arrow[dashed]{ru}
&
\cVect^{\sf inj}
\end{tikzcd}
~.
\]
Forgetting the solid $1$-framing defines a functor
\[
\cMfd_1^{\sf sfr}
\longrightarrow
\cBun
~.
\]
So, an object $M \in \cMfd_1^{\sf sfr}$, termed a \bit{solidly $1$-framed stratified space}, is a compact stratified space $X$ equipped with an injection $\sT_x X \overset{\varphi}\hookrightarrow \RR^n$ for each $x\in X$ compatibly.  
In particular, for $M =(X,\varphi) \in \cMfd_1^{\sf sfr}$ an object, the dimension of each stratum of $X$ is bounded above by $1$.
Consequently, an object in $\cMfd_1^{\sf sfr}$ is a finite disjoint union of oriented connected graphs and oriented circles.

The $\infty$-category $\cMfd_1^{\sf sfr}$ admits finite products, which are given by disjoint unions of solidly 1-framed stratified spaces.
Keeping with \Cref{d40}, for $M,N \in \cMfd_1^{\sf sfr}$, we denote their categorical product in $\cMfd_1^{\sf sfr}$ as $M \sqcup N \in \cMfd_1^{\sf sfr}$.

\item
Constructed in~\S1.4 of~\cite{AFR1} are surjective monomorphisms between $\infty$-categories,
\[
{\sf Cylr}
\colon
({\sf c}\cS{\sf trat}^{\sf p.cbl})^{\op}
\longrightarrow
\cBun
\longleftarrow
{\sf c}\cS{\sf trat}^{\sf ref}
\colon
{\sf Cylo}
~,
\]
where ${\sf c}\cS{\sf trat}^{\sf p.cbl}$ is an $\infty$-category in which an object is a compact stratified space and a morphism is a proper constructible bundle, and where ${\sf c}\cS{\sf trat}^{\sf ref}$ is an $\infty$-category in which an object is a compact stratified space and a morphism is a refinement.
Base-change of these monomorphisms along the forgetful functor $\cMfd_n^{\sf sfr} \to \cBun$ define $\infty$-subcategories
\[
\cMfd_n^{\sf sfr,cls}
~,~
\cMfd_n^{\sf sfr,cr}
~\subset~
\cMfd_n^{\sf sfr, idl}
~\subset~
\cMfd_n^{\sf sfr}
~\supset~
\cMfd_n^{\sf sfr, ref}
\]
consisting, respectively, of the images under ${\sf Cylr}$ of the proper constructible embeddings, of the surjective proper constructible bundles, of the proper constructible bundles, and the image under ${\sf Cylo}$ of the refinements.

\item
Introduced in~\S3.4 of~\cite{AFR2} is the notion of a \bit{closed cover}, which is a diagram in $\cBun$ that is the image under ${\sf Cylr}$ of (the opposite of) a finite colimit diagram in ${\sf c}\cS{\sf trat}^{\sf p.emb} \subset {\sf c}\cS{\sf trat}^{\sf p.cbl}$, the $\infty$-subcategory of proper constructible embeddings.
A \bit{closed cover} in $\cMfd_n^{\sf sfr}$ is a diagram that lies over a closed cover in $\cBun$.
Closed covers are, in particular, limit diagrams.

\item
\label{recollection.eight}
Defined in~\S3.5 of~\cite{AFR2} is the full $\infty$-subcategory 
\[
\cDisk_1^{\sf sfr}
~\subset~
\cMfd_1^{\sf sfr}
~,
\]
which is the smallest full $\infty$-subcategory that is closed under the formation of closed covers and that contains the closed disks $\DD^0$ and $\DD^1$ as they are endowed with their standard solid 1-framings.
In particular, an object in $\cDisk_1^{\sf sfr}$ is a finite oriented graph.  
Consequently, $\Exit(D)$ is a finite poset of depth 1.
For each object $D\in \cDisk_1^{\sf sfr}$, assigning to each point $d\in D$ the closure $\ol{D_d}$ of the stratum in $D$ containing $d\in D$ defines a functor 
\begin{equation}
\label{t79}
\Exit(D)^{\op} \longrightarrow (\cDisk_1^{\sf sfr})^{D/}
~,\qquad
(d\in D)
\mapsto
(D \xra{\sf cls} \ol{D_d})
~.
\end{equation}
The adjoint of this functor 
$(\Exit(D)^{\rcone})^{\op} \longrightarrow \cDisk_1^{\sf sfr}$
is a closed cover of $D$.
Lastly, contractibility of the space $\Aut_{\cDisk_1^{\sf sfr}}(\DD^1) \simeq \Diff^{\sf fr}(\DD^1)$ ultimately implies $\cDisk_1^{\sf sfr}$ is an ordinary category.

\item
\label{recollection.nine}
Constructed in~\S3.8 of~\cite{AFR2} is the \bit{cellular realization} functor
\[
\langle - \rangle
\colon
\bDelta^{\op}
\longrightarrow
\cDisk_1^{\sf sfr}
~,
\]
whose value on $[p]$ is $\DD^0$ if $p=0$ and is $\DD^1$ if $p=1$ and for $p > 1$ is a refinement of $\DD^1$ with a total of $p+1$ 0-dimensional strata two of which are the boundary points.
Lemma~3.51 of~\cite{AFR2} proves that the cellular realization functor $\langle - \rangle$ is fully faithful, and carries (the opposites of) Segal diagrams to closed covers.

Consider the restricted Yoneda functor along $\langle - \rangle$:
\[
(\cDisk_1^{\sf sfr})^{\op}
\longrightarrow
\PShv(\bDelta)
~,\qquad
D
\longmapsto
\Hom_{\cDisk_1^{\sf sfr}}( D , \langle - \rangle)
~.
\]
The above implies this functor is fully faithful, and factors through $\Cat_{(\infty,1)} \subset \PShv(\bDelta)$,
\[
\fC
\colon
(\cDisk_1^{\sf sfr})^{\op}
\longrightarrow
\Cat_{(\infty,1)}
~,
\]
such that the diagram among $\infty$-categories
\begin{equation}
\label{t82}
\begin{tikzcd}
\bDelta
\arrow{d}[swap]{\langle - \rangle}
\arrow{rd}[sloped]{\rm standard}
\\
(\Disk^{{\sf con},{\sofr}})^{\op}
\arrow{r}[swap]{\fC}
&
\Cat_{(\infty,1)}
\end{tikzcd}
\end{equation}
canonically commutes.

\item
\bit{Factorization homology} is defined in~\S4.3 of~\cite{AFR2} as the composite functor
\[
\int'
\colon
\Cat_{(\infty,1)}
\xra{ \langle - \rangle_\ast}
\Fun(\cDisk_1^{\sf sfr} , \Spaces)
\xra{ \iota_!}
\Fun(\cMfd_1^{\sf sfr} , \Spaces)
~,\qquad
\cC
\longmapsto
\bigl(
M
\mapsto \int'_M \cC
\bigr)
~,
\]
given by is right Kan extension along the cellular realization functor $\langle - \rangle$ followed by left Kan extension along the fully faithful inclusion $\cDisk_1^{\sf sfr} \xra{\iota} \cMfd_1^{\sf sfr}$.  
Explicitly, for $\cC$ an $(\infty,1)$-category, and for $M$ a solidly 1-framed stratified space, the factorization homology of $\cC$ over $M$ is the colimit
\[
\int'_M \cC
~\simeq~
\colim\Bigl(
\cDisk_{1/M}^{\sf sfr}
\xra{\rm forget}
\cDisk_1^{\sf sfr}
\xra{ D\mapsto \Hom_{\Cat_{(\infty,1)}}\bigl( \fC(D) , \cC\bigr) }
\Spaces
\Bigr)
~.
\]

\end{enumerate}

\begin{notation}
\label{d41}
In~\cite{AFR2}, factorization homology is simply denoted as $\int_M \cC$, without the ${~}'$.  
In this section, we use the notation $\int'_M \cC$ for geometric factorization homology defined in~\cite{AFR2} in effort to distinguish it from the combinatorial version of factorization homology (\Cref{d21}) defined in the body of this work.
The main theorem of this section (\Cref{t90}) articulates a precise sense in which that these two versions of factorization homology agree.
Therefore, this notation $\int'_M \cC$, in place of $\int_M \cC$, can be understood as temporary.

\end{notation}

\begin{definition}
Let $M\in \cMfd_1^{\sf sfr}$ be a solidly 1-framed stratified space.
The full $\infty$-subcategory 
\[
\bcD{\sf isk}(M)
~\subseteq~
\cDisk^{\sf sfr}_{1/M}
~:=~
{\cDisk}^{\sf sfr}_1
\underset{
\cMfd_1^{\sf sfr}
}
\times
\cMfd^{\sf sfr}_{1/M}
\]
consists of those objects $(D \to M)$ that are refinements.  

\end{definition}

We record a technical result concerning the $\infty$-category $\cMfd^{\sf sfr}_{1}$.
\begin{lemma}
\label{t88}
For each object $M\in \Mfld^{\sofr}$, the canonical functor
\[
\bcD{\sf isk}(M)
\longrightarrow
\cDisk^{\sf sfr}_{1/M}
\]
is final.

\end{lemma}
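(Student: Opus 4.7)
Since $\bcD{\sf isk}(M) \hookrightarrow \cDisk^{\sf sfr}_{1/M}$ is fully faithful, showing this inclusion is final is equivalent to showing the under-category $\bcD{\sf isk}(M)^{(D \xra{f} M)/}$ is weakly contractible for every $(D \xra{f} M) \in \cDisk^{\sf sfr}_{1/M}$. An object of this under-category is a factorization $D \xra{g} D' \xra{h} M = f$ with $h$ a refinement and $D' \in \cDisk_1^{\sf sfr}$. I aim to establish its contractibility by producing an essentially initial object (modulo arbitrary choices governing a contractible space of data), paralleling the combinatorial argument of~\Cref{t72}.

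\textbf{Construction.} Using the description of morphisms in $\cMfd_1^{\sf sfr}$ as solidly 1-framed proper constructible bundles $X \to \Delta^1$ with $X_0 \cong D$ and $X_1 \cong M$ (recollections~(1)--(5) above), the flow of the bundle underlying $f$ sends each 0-stratum of $D$ to a point of $M$. I define $D'$ by refining $M$ to adjoin each such image point as a new 0-stratum (when it is not already a 0-stratum of $M$); this yields a refinement $h\colon D' \to M$, and the bundle canonically factors through a bundle with generic fiber $D'$, producing $g\colon D \to D'$. That $D' \in \cDisk_1^{\sf sfr}$ follows because every connected component of a disk carries at least one 0-stratum, so any 1-stratum circle component of $M$ visited by $f$ becomes in $D'$ a cyclically-stratified circle with a vertex, i.e.\! a cyclic gluing of $\DD^1$'s along $\DD^0$'s; for circle components of $M$ disjoint from the image of $f$, one adjoins a single 0-stratum at an arbitrarily chosen point.

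\textbf{Contractibility and main obstacle.} To conclude contractibility of the under-category, I argue that (i) any two factorizations $(D'_i \to M)_{i=1,2}$ admit a common refinement within $\bcD{\sf isk}(M)$ compatible with the maps from $D$, obtained by adjoining the union of their new 0-strata to $M$, and (ii) the arbitrary choices at the ``bare-circle step'' are resolved by further common refinement, so that the associated space of choices contributes only a contractible factor to the under-category. Combined with nonemptiness, these imply weak contractibility. The principal technical obstacle is executing the above constructions rigorously at the level of solidly 1-framed proper constructible bundles: producing the canonical factorization $X \to X' \to \Delta^1$ of the bundle underlying $f$ compatibly with the solid 1-framing data, and verifying that the common-refinement construction yields well-defined morphisms in $\cMfd_1^{\sf sfr}$ preserving framings. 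Once these bundle-level constructions are established, the contractibility conclusion is essentially formal.
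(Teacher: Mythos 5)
Your setup (Quillen's Theorem A, identifying the comma category with factorizations $D \to D' \xra{\refi} M$) matches the paper's, and your reduction/factorization construction is in the right spirit. But the core of your argument has a genuine gap, concentrated in the case where $M$ has a circle component --- which is the only nontrivial case, since when $M \in \cDisk^{\sf sfr}_1$ the identity of $M$ is already a final object of both categories. The problem is that the relevant comma categories over $\SS^1$ have \emph{no} initial object, essentially or otherwise: they are (under-categories of) the paracyclic category $\copara$. For instance, taking $D = \DD^0$ mapping to $\SS^1$, your ``minimal'' candidate is the circle with one marked point $C_1$, but for any $k$-marked refinement $C_k \to \SS^1$ the space of morphisms $C_1 \to C_k$ over $\SS^1$ in the comma category has $k$ components (one for each vertex of $C_k$ the marked point can land on), so $C_1$ is not initial; no other object fares better. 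Your own flagged ``arbitrary choice at the bare-circle step'' is a symptom of exactly this absence of a canonical object. Consequently the strategy of exhibiting an (essentially) initial object cannot succeed.

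The fallback argument via common refinements is also insufficient: a zig-zag of spans connecting any two objects establishes only that $\pi_0$ of the classifying space is trivial. The entire difficulty is in the higher homotopy groups. Every object of $\bcD{\sf isk}(\SS^1)$ sits in a category whose classifying space is governed by $|\copara| \simeq \ast$ --- a genuine theorem (it follows from the initiality of $\bDelta \to \copara$, \Cref{t36}, via \Cref{t62}), not a formal consequence of connectivity; compare $|\bLambda| \simeq \sB\TT$, which is connected but far from contractible. Nothing in your proposal addresses why, say, the loops in the comma category arising from full rotations of a stratified circle become nullhomotopic after groupoid completion. The paper's proof confronts this head-on: it shows that \emph{every} map $S^{d-1} \to |{\text{comma category}}|$ is freely nullhomotopic, by representing such a map by a proper constructible bundle over $\cone(D^d)$ and explicitly building the nullhomotopy out of links and (reversed/open) cylinders of constructible bundles. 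Some argument of this kind --- or an appeal to the contractibility of $|\copara|$ together with a careful identification of the comma category with an under-category of $\para$ --- is unavoidable, and is the missing content of your proof.
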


\begin{proof}
We first reduce to the case in which $M$ is connected.  
Suppose $M \cong M_- \sqcup M_+$ is a product in $\cMfd^{\sf sfr}_{1}$.
Taking products in $\cMfd^{\sf sfr}_{1}$ defines the bottom horizontal functor in the solid diagram among $\infty$-categories:
\[
\begin{tikzcd}
\Disk(M_-) \times \Disk(M_+)
\arrow[dashed]{r}
\arrow{d}
&
\Disk(M_- \sqcup M_+)
\arrow{d}
\\
\cDisk^{\sf sfr}_{1/M_-}
\times
\cDisk^{\sf sfr}_{1/M_+}
\arrow{r}[swap]{\sqcup}
&
\cDisk^{\sf sfr}_{1/M_- \sqcup M_+}
\end{tikzcd}
~.
\]
Refinement morphisms in $\cMfd^{\sf sfr}_{1}$ are refinement morphisms of underlying stratified spaces.
Products in $\cMfd^{\sf sfr}_{1}$ are disjoint unions of underlying stratified spaces.  
So products of refinement morphisms are again refinement morphisms.  
This observation supplies the filler, which is necessarily unique, in the above diagram among $\infty$-categories. 
Now, for $(D \xra{\sf ref} M_- \sqcup M_+) \in \Disk(M_- \sqcup M_+)$, consider the active factors $(D_\pm \to M_\pm)$ of the composites $D \xra{\sf ref} M_- \sqcup M_+ \xra{\pr} M_\pm$.
In terms of stratified spaces, $D_\pm = (M_- \sqcup M_+) \cap M_\pm$ is the intersection of the refinement $D$ of $M_- \sqcup M_+$ with the union of components $M_\pm \subseteq M_- \sqcup M_+$.
In particular, the active morphisms $(D_\pm \to M_\pm)$ are, in fact, refinements.  
The assignment $(D \to M_- \sqcup M_+)\mapsto \bigl( (D_- \to M_-) , (D_+ \to M_+) \bigr)$ is an inverse to the top horizontal dashed functor above.  
In particular, the top horizontal functor is final. 
Next, because $\Disk^{\sofr}$ has finite products, the bottom horizontal functor is a right adjoint.
Therefore, finality of the left vertical functor implies finality of the right vertical functor.
Using that every object in $\cMfd^{\sf sfr}_{1}$ is a finite product of connected objects, the lemma is implied by its case in which $M$ is connected.

So assume $M$ is connected.
Then $M \in \cDisk^{\sf sfr}_{1}$ or $M = \SS^1$.  
In the former case, the identity morphism determines a final object in both $\bcD{\sf isk}(M)$ and $\cDisk^{\sf sfr}_{1/M}$, which is preserved by the inclusion.  
Thus, it remains to consider the case that $M = \SS^1$.

Now by Quillen's Theorem A, it suffices to show that for any object $(D \ra \SS^1) \in \cDisk^{\sf sfr}_{1/\SS^1}$, the $\infty$-groupoid completion of the $\infty$-category
\begin{equation}
\label{slice.category.from.D.of.to.D.over.under.a.chosen.object}
\Disk(\SS^1)
\underset{\cDisk^{\sf sfr}_{1/\SS^1}}{\times}
(\cDisk^{\sf sfr}_{1/\SS^1})^{(D \ra \SS^1)/}
\end{equation}
of factorizations
\begin{equation}
\label{arbitrary.object.of.slice.under.R.to.Sone.in.D.of}
\begin{tikzcd}
D
\arrow{rr}
\arrow[dashed]{rd}
&
&
\SS^1
\\
&
D'
\arrow[dashed]{ru}[sloped,swap,pos=0.3]{\refi}
\end{tikzcd}
\end{equation}
(where $D' \in \D$) is contractible.  Observe that in diagram \Cref{arbitrary.object.of.slice.under.R.to.Sone.in.D.of}, the closed-active factorization of the downwards morphism $D \ra D'$ must compose to the closed-active factorization
$
D
\xra{\cls}
D_0
\xra{\act}
\SS^1
$
of the (chosen) horizontal morphism, since such factorizations are unique.  So taking closed-active factorizations defines an equivalence between the category \Cref{slice.category.from.D.of.to.D.over.under.a.chosen.object} and the category
\begin{equation}
\label{slice.under.R.zero.by.factorizn.system}
\Disk(\SS^1)
\underset{\cDisk^{\sf sfr}_{1/\SS^1}}{\times}
(\cDisk^{\sf sfr}_{1/\SS^1})^{(D_0 \ra \SS^1)/^\act}
\end{equation}
of factorizations
\[ \begin{tikzcd}
D
\arrow{rr}
\arrow{d}[swap]{\cls}
&
&
\SS^1
\\
D_0
\arrow{rru}[sloped,pos=0.55]{\act}
\arrow[dashed]{r}[swap]{\act}
&
D'
\arrow[dashed]{ru}[sloped,swap,pos=0.3]{\refi}
\end{tikzcd} \]
(where $D' \in \D$).  Thus, it suffices to show that the groupoid completion of the category \Cref{slice.under.R.zero.by.factorizn.system} is contractible.

To check that the groupoid completion $|\Cref{slice.under.R.zero.by.factorizn.system}|$ is contractible, it suffices to check that its homotopy groups vanish, and for this it suffices to show that any map to it from a sphere is freely nullhomotopic.  We check this using the theory of stratified spaces.

Observe first that any map
\begin{equation}
\label{d.minus.one.sphere.in.gpd.compln.of.double.slice}
S^{d-1}
\longra
|\Cref{slice.under.R.zero.by.factorizn.system}|
\end{equation}
of $\infty$-groupoids is represented by a functor
\[
\Exit(S^{d-1})
\xlongra{\tilde{X}}
\Cref{slice.under.R.zero.by.factorizn.system}
\]
of $\infty$-categories, where we abuse notation by also writing $S^{d-1}$ for the $(d-1)$-sphere (thought of as a manifold) equipped with some stratification (e.g.\! a triangulation).\footnote{This can be seen through the model of $(\infty,1)$-categories as quasi-categories.
There, the $\infty$-groupoid completion appearing as the codomain of \Cref{d.minus.one.sphere.in.gpd.compln.of.double.slice} can be presented as ${\sf Ex}^\infty$ applied to the quasi-category \Cref{slice.under.R.zero.by.factorizn.system}, which is the directed colimit of finite-fold iterations of Kan's ${\sf Ex}$ functor.  
Meanwhile, the domain of \Cref{d.minus.one.sphere.in.gpd.compln.of.double.slice} can be presented by the simplicial set $\SS^{d-1}$ associated to any triangulation of the $(d-1)$-sphere. 
Using that such a triangulation is finite, the domain of \Cref{d.minus.one.sphere.in.gpd.compln.of.double.slice} is a finite simplicial set.
Therefore, such a map \Cref{d.minus.one.sphere.in.gpd.compln.of.double.slice} factors through ${\sf Ex}^N \Cref{slice.under.R.zero.by.factorizn.system}$.
Using that ${\sf sd}$ and ${\sf Ex}$ are adjoint to one another, the morphism \Cref{d.minus.one.sphere.in.gpd.compln.of.double.slice} is therefore presented by a map between quasi-categories ${\sf sd}^N( \SS^{d-1}) \to \Cref{slice.under.R.zero.by.factorizn.system}$.  
Finally, note that the (nerve of the) poset ${\sf sd}^N(\SS^{d-1})$ is the exit-path $(\infty,1)$-category associated to the $N$-fold iterated bary-centric subdivision of the triangulated sphere $\SS^{d-1}$. 
}
There exists a unique extension of this stratification of $S^{d-1}$ to a stratification of $D^d$ in which the interior is a single stratum; we likewise abuse notation by simply denoting this again by $D^d$.

Now, observe that the stratified space $\cone(D^d)$ -- the cone on $D^d$ -- has the property that $\Exit(\cone(D^d)) \simeq \Exit(S^{d-1})^{\lcone \rcone}$: the cone point over $D^d$ corresponds to the left cone point, while the interior of the disk corresponds to the right cone point.  Hence, the functor $\tilde{X}$ 
is equivalent data to a functor
\[
\Exit(\cone(D^d))
\simeq
\Exit(S^{d-1})^{\lcone \rcone}
\longra
\M
~,
\]
equipped with certain additional structures and satisfying certain conditions, which amount to the following on the corresponding proper constructible bundle $X \da \cone(D^d)$:
\begin{itemize}
\item its fiber over the cone point is identified with $D$,
\item its fiber over the interior of $D^d$ is identified with $\SS^1$,
\item its restriction to any exiting path starting at the cone point is an active morphism, and
\item its restriction to any exiting path from $S^{d-1}$ to the interior of $D^d$ is a refinement morphism.
\end{itemize}
We will use this proper constructible bundle to construct a nullhomotopy of the original map \Cref{d.minus.one.sphere.in.gpd.compln.of.double.slice}.

So, consider the link $\Link_D(X)$ of $D$ in $X$; this admits a composite proper constructible bundle map
\[
\Link_D(X)
\longra
D \times X_{|D^d}
\longra
D \times D^d
~.
\]
We then form the ``$D^d$-parametrized reversed cylinder'' of this map, namely the pushout
\begin{equation}
\label{pushout.to.get.Y.living.over.cone.on.d.disk}
\begin{tikzcd}
D \times D^d
\arrow{r}
\arrow{d}
&
\cylr \left( D \times D^d \longla \Link_D(X) \right)
\arrow{d}
\\
D
\arrow{r}
&
Y
\end{tikzcd}
\end{equation}
of stratified spaces, where the upper map is the injective constructible bundle given by the inclusion of the fiber over $\Delta^{\{0\}} \subset \Delta^1$ and the left map is the projection.  By construction, the pushout square \Cref{pushout.to.get.Y.living.over.cone.on.d.disk} maps to the defining pushout square
\[ \begin{tikzcd}
\Delta^{\{0\}} \times D^d
\arrow{r}
\arrow{d}
&
\Delta^1 \times D^d
\arrow{d}
\\
\Delta^{\{0\}}
\arrow{r}
&
\cone(D^d)
\end{tikzcd} \]
by proper constructible bundles, and hence in particular we obtain a proper constructible bundle map $Y \da \cone(D^d)$.  Also by construction, we see that $Y$ comes equipped with a canonical morphism $Y \ra X$ of proper constructible bundles over $\cone(D^d)$; unwinding the definitions, we see that the natural transformation
\[ \begin{tikzcd}[column sep=1.5cm]
\Exit(\cone(D^d))
\arrow[bend left]{r}[pos=0.42]{Y}[swap, transform canvas={yshift=-0.35cm}]{\Downarrow}
\arrow[bend right]{r}[swap,pos=0.44]{X}
&
\M
\end{tikzcd} \]
that this classifies determines a natural transformation
\[ \begin{tikzcd}[column sep=1.5cm]
\Exit(S^{d-1})
\arrow[bend left]{r}{\tilde{Y}}[swap, transform canvas={yshift=-0.35cm}]{\Downarrow}
\arrow[bend right]{r}[swap]{\tilde{X}}
&
\Cref{slice.under.R.zero.by.factorizn.system}
\end{tikzcd} \]
to our originally chosen functor $\tilde{X}$ representing our chosen map \Cref{d.minus.one.sphere.in.gpd.compln.of.double.slice}.

We complete the proof by constructing a nullhomotopy of the map $S^{d-1} \xra{|\tilde{Y}|} |\Cref{slice.under.R.zero.by.factorizn.system}|$ of spaces.  For this, observe first that the map $\Link_D(X) \ra X_{|D^d}$ of stratified spaces is a refinement, so that the map $Y \ra X$ is as well.  We take the open cylinder $\cylo(Y \longra X)$ of the latter, and then take the pushout
\begin{equation}
\label{push.out.to.get.Z.witnessing.nullhtpy.of.Y}
\begin{tikzcd}
D \times \Delta^1
\arrow{r}
\arrow{d}
&
\cylo(Y \longra X)
\arrow{d}
\\
D
\arrow{r}
&
Z
\end{tikzcd}
\end{equation}
of stratified spaces, where the upper map is the inclusion of the fiber over $\Delta^{\{0\}} \times \Delta^1$ (where $\Delta^{\{0\}} \subset \cone(D^d)$ denotes the cone point) and the left map is the projection.  By construction, the pushout square \Cref{push.out.to.get.Z.witnessing.nullhtpy.of.Y} maps to the pushout square
\[ \begin{tikzcd}
\Delta^{\{0\}} \times \Delta^1
\arrow{r}
\arrow{d}
&
\cone(D^d) \times \Delta^1
\arrow{d}
\\
\Delta^{\{0\}}
\arrow{r}
&
\cone(D^d \times \Delta^1)
\end{tikzcd} \]
by proper constructible bundles, and hence in particular we obtain a proper constructible bundle map $Z \da \cone(D^d \times \Delta^1)$, which is classified by a functor
\begin{equation}
\label{functor.classifying.proper.cbl.bdl.Z}
\Exit(\cone(D^d \times \Delta^1))
\longra
\M
~.
\end{equation}

We now construct three maps $\cone(D^d) \ra \cone(D^d \times \Delta^1)$ of stratified spaces:
\begin{itemize}
\item we write $i_0$ for the inclusion of $\cone(D^d \times \Delta^{\{0\}})$,
\item we write $i_1$ for any map that carries the interior to the interior, and whose restriction to $\cone(S^{d-1}) \subset \cone(D^d)$ is the composite map
\[
\cone(S^{d-1})
\longra
\cone(\DD^0)
\xlongra{\cone(p)}
\cone(D^d \times \Delta^1)
\]
in which $\DD^0 \xra{p} D^d \times \Delta^1$ selects an interior point, and
\item we write $i$ for any inclusion which extends the inclusion of $\cone(S^{d-1} \times \Delta^{\{0\}})$ and takes the interior of $D^d$ into the interior of $\cone(D^d \times \Delta^1)$.
\end{itemize}
On exit-path $\infty$-categories, these evidently participate in a diagram
\[ \begin{tikzcd}[column sep=1.5cm]
\Exit(\cone(D^d))
\arrow[bend left=60]{r}{\Exit(i_1)}[swap, transform canvas={yshift=-0.35cm}]{\Uparrow}
\arrow{r}[pos=0.6]{\Exit(i_0)}
\arrow[bend right=60]{r}[swap]{\Exit(i)}[transform canvas={yshift=0.35cm}]{\Downarrow}
&
\Exit(\cone(D^d \times \Delta^1))
\end{tikzcd} \]
of natural transformations; unwinding the definitions, we see that this precomposes with the functor \Cref{functor.classifying.proper.cbl.bdl.Z} to determine a span of diagrams
\[ \begin{tikzcd}[column sep=1.5cm]
\Exit(S^{d-1})
\arrow[bend left=60]{r}[swap, transform canvas={yshift=-0.25cm}]{\Uparrow}
\arrow{r}[pos=0.35]{\tilde{Y}}
\arrow[bend right=60]{r}[swap]{\tilde{X}}[transform canvas={yshift=0.25cm}]{\Downarrow}
&
\Cref{slice.under.R.zero.by.factorizn.system}
\end{tikzcd} \]
in which the upper functor is constant.  This completes the proof.

\end{proof}

\begin{lemma}
\label{t84}
There is a canonical identification between continuous monoids:
\begin{equation}
\label{e90}
\WW^{\op}
\xra{~\simeq~}
\End_{\Mfld^{\sofr}}(\SS^1)
~.
\end{equation}

\end{lemma}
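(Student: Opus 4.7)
The plan is to classify endomorphisms of $\SS^1$ in $\Mfld^{\sofr}$ directly from the geometric definitions and identify the resulting monoid with $\WW^{\op}$. By recollection~(5) of \S\ref{sec.recollections}, a morphism $\SS^1 \to \SS^1$ in $\cMfd^{\sf sfr}_1$ is the datum of a proper constructible bundle $X \to \Delta^1$ carrying a solid $1$-framing together with identifications $X|_{\Delta^{\{i\}}} \simeq \SS^1$ for $i = 0, 1$.

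First, I would show the moduli space of such bundles is naturally equivalent to the space of framed self-covering maps of $\SS^1$. Using the closed-active factorization of recollection~(6), such a bundle decomposes as a closed morphism $\SS^1 \hookrightarrow X'$ followed by an active morphism $X' \to \SS^1$; the solid $1$-framing condition, applied to the tangent functor $\tau$ of recollection~(4) on the generic $2$-dimensional stratum, forces this active morphism to be a framed self-cover of $\SS^1$. By classical differential topology, the space of framed self-covers of $\SS^1$ is $\TT \times \NN^\times$: the $\NN^\times$-component records the (positive) degree $n$, and each degree-$n$ component is the $\TT$-torsor of rotational moduli. This identifies the underlying space of $\End_{\cMfd^{\sf sfr}_1}(\SS^1)$ with the underlying space of $\WW$.

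Second, I would match the monoid structures. Composition in $\cMfd^{\sf sfr}_1$ is implemented by concatenating bundles over $\Delta^1$ (equivalently, by extending along $\Delta^2$ and restricting to the outer face). The convention for refinements via $\sf Cylo$ (recollection~(6)) reverses source and target, so composition of morphisms in $\cMfd^{\sf sfr}_1$ corresponds to opposite-order composition of the underlying framed covers. Explicitly, if $(z_f, n_f)$ and $(z_g, n_g)$ classify the covers $f(u) = z_f u^{n_f}$ and $g(u) = z_g u^{n_g}$, then the composite in $\cMfd^{\sf sfr}_1$ is classified by $u \mapsto z_f (z_g u^{n_g})^{n_f} = z_f z_g^{n_f} u^{n_f n_g}$, which matches multiplication in $\NN^\times \ltimes \TT \simeq \WW^{\op}$ recorded in \Cref{t96}.

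The main obstacle is the first step: carefully unwinding the solid $1$-framing on a proper constructible bundle with $\SS^1$-fibers and verifying that, together with the fiber identifications at $\Delta^{\{0\}}$ and $\Delta^{\{1\}}$, it encodes exactly the data of a framed self-cover --- including the full topological (rather than merely set-theoretic) moduli --- so that the equivalence \Cref{e90} is genuinely an equivalence between continuous monoids. This requires probing both moduli spaces by families parametrized by arbitrary stratified test spaces and comparing them, carefully tracking the interaction between the tangential data of $\tau$ and the standard framings on $\SS^1$.
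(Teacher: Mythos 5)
Your proposal follows essentially the same route as the paper: identify $\End_{\Mfld^{\sofr}}(\SS^1)$ with the (opposite of the) monoid of oriented/framed self-covers of the circle, stratify by degree over $\NN^\times$, identify each degree component with the $\TT$-torsor of rotations, and match the semidirect-product multiplication with the order reversal coming from the cylinder conventions. One small correction: the solid $1$-framing is a condition on the \emph{fiberwise} tangent spaces (which are at most $1$-dimensional), not on a ``generic $2$-dimensional stratum'' of the total space, and since $\SS^1$ has a single $1$-dimensional stratum every endomorphism is automatically a closed-creation morphism, so the closed-active factorization step is not needed.
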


\begin{proof}
The underlying stratified space of $\SS^1 \in \Mfld^{\sofr}$ has a single stratum, which has dimension 1.
It follows that each endomorphism $\SS^1 \to \SS^1$ in $\Mfld^{\sofr}$ is a idle morphism.
By definition of idle morphisms, this is to say that the continuous monoid $\End_{\Mfld^{\sofr}}(\SS^1)$ is the opposite of that of framed proper fiber bundle maps $\SS^1 \to \SS^1$ and composition between such.  
This latter monoid is evidently equivalent with that of oriented self-covering maps of $\SS^1$.
Reporting the degree of a self-covering map therefore defines a morphism between continuous monoids:
\begin{equation}
\label{e83}
{\rm degree}
\colon
\End_{\Mfld^{\sofr}}(\SS^1)
\longrightarrow
\NN^{\times}
~.
\end{equation}
For $r \in \NN^\times$, the fiber over $r$ is thusly identified as the space ${\sf Cov}^{\sf or}_r(\SS^1,\SS^1)$ of degree $r$ oriented self-covers of $\SS^1$.
Precomposing by oriented diffeomorphisms, $\Diff^{\sf or}(\SS^1)^{\op} \lacts {\sf Cov}^r(\SS^1,\SS^1)$ is a torsor.

Now, the morphism~\Cref{e83} between continuous monoids admits a standard section:
its value on $r\in \NN^\times$ is the self-cover $\SS^1 \xra{z\mapsto z^r} \SS^1$.
Together with the rotation action $\TT  \lacts \SS^1$, the section extends as a morphism between continuous monoids
\begin{equation}
\label{e84}
\WW^{\op}
~:=~
\NN^\times
\ltimes 
\TT
\longrightarrow
\End_{\Mfld^{\sofr}}(\SS^1)
\end{equation}
over $\NN^\times$.
For $r \in \NN^\times$, the fibers of this morphism over $r$ is the map between spaces
\[
\TT
\longrightarrow
{\sf Cov}^{\sf or}_r(\SS^1,\SS^1)
\]
selecting the $\TT$-orbit (via precomposition) of the rotation action on the domain $\SS^1$.
Such rotation action defines a map $\TT \to \Diff^{\sf or}(\SS^1)$.
It is routine to verify that this map is an equivalence.  
It follows that the map~\Cref{e84} is an equivalence between continuous monoids.

\end{proof}

\subsection{Comparing $\Disk^{\sofr}$ and $\Quiv$}

The cellular realization functor $\bDelta^{\op} \xra{ \langle - \rangle} \Disk^{\sofr}$ restricts as a functor $\bDelta_{\leq 1}^{\op} \xra{ \langle - \rangle} \Disk^{{\sf idl} , {\sofr}}$.  
Using~\S\ref{sec.recollections}\Cref{recollection.eight}, together with compactness of objects in $\Disk^{\sf sfr}$,
the associated restricted Yoneda functor factors through presheaves of finite sets:
\begin{equation}
\label{e78}
{
(\Disk^{{\sf idl} , {\sf sfr}})^{\op}
\longrightarrow
\digraphs
~\subset~
\PShv(\bDelta_{\leq 1})
}
~,\qquad
D
\mapsto
\Gamma_D
:=
\Hom_{\Disk^{{\sf idl} , {\sf sfr}}}\bigl( D , \langle [\bullet] \rangle \bigr)
~.
\end{equation}

\begin{lemma}
\label{t77'}
The functor \Cref{e78} is an equivalence between categories.  
In particular, it carries (the opposites of) closed cover diagrams to colimit diagrams.  

\end{lemma}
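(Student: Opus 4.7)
The plan is to establish \Cref{t77'} by a three-stage argument: (i) base case on $\bDelta_{\leq 1}$, (ii) essential surjectivity via a gluing construction, (iii) fully faithfulness via a closed-cover/sheaf argument. The ``in particular'' clause will then follow as an immediate consequence.

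First, I would pin down the base case. The cellular realization $\bDelta^\op \xra{\langle -\rangle} \Disk^{\sofr}$ is fully faithful (\S\ref{sec.recollections}\Cref{recollection.nine}); its restriction to $\bDelta_{\leq 1}^\op$ lands in $\Disk^{{\sf cls.cr},\sofr}$, with $\langle[0]\rangle = \DD^0$ and $\langle[1]\rangle = \DD^1$. Hence the composite $\bDelta_{\leq 1}^\op \hookrightarrow (\Disk^{{\sf cls.cr},\sofr})^\op \xra{\Cref{e78}} \digraphs$ is the standard (full) Yoneda embedding $\bDelta_{\leq 1}^\op \hookrightarrow \digraphs \subset \PShv(\bDelta_{\leq 1})$. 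In particular, $\Gamma_{\DD^0}$ is the single-vertex graph and $\Gamma_{\DD^1}$ is the single-edge graph.

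Next I would prove essential surjectivity. Given $\Gamma \in \digraphs$, construct $D_\Gamma$ as the colimit in $\Disk^{\sofr}$ of the diagram
\[
\sE(\Gamma)^\op \longrightarrow \Disk^{{\sf cls.cr},\sofr}, \qquad \begin{cases} v \in \Gamma^{(0)} \mapsto \DD^0 \\ e \in \Gamma^{(1)} \mapsto \DD^1 \end{cases}
\]
with source/target structure maps the boundary inclusions $\DD^0 \to \DD^1$. That this colimit exists in $\Disk^{\sofr}$ — and is itself in $\Disk^{\sofr}$ — follows from the characterization in \S\ref{sec.recollections}\Cref{recollection.eight}: $\Disk^{\sofr}$ is the smallest full subcategory of $\Mfld^{\sofr}$ closed under closed covers and containing $\DD^0, \DD^1$, and the diagram above is built as a finite iterated pushout of boundary inclusions. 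Inspection then gives $\Gamma_{D_\Gamma} \cong \Gamma$.

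For fully faithfulness, the key input is that the closed cover Eq.~(\ref{t79}) presents each $D \in \Disk^{\sofr,{\sf cls.cr}}$ as a colimit in $\Disk^{\sofr}$ of its stratum closures, each of which is $\DD^0$ or $\DD^1$. Representable Hom-functors on $\Disk^{\sofr,{\sf cls.cr}}$ send this colimit to a limit: for any $D'$,
\[
\Hom_{\Disk^{\sofr,{\sf cls.cr}}}(D, D') \xra{\sim} \lim_{d \in \Exit(D)^\op} \Hom_{\Disk^{\sofr,{\sf cls.cr}}}(\overline{D_d}, D').
\]
On the graph side, the corresponding diagram realizes $\Gamma_D$ as a basic closed cover colimit in $\digraphs$, so
\[
\Hom_{\digraphs}(\Gamma_D, \Gamma_{D'}) \simeq \lim_{d \in \Exit(D)^\op} \Hom_{\digraphs}(\Gamma_{\overline{D_d}}, \Gamma_{D'}).
\]
The canonical map between these two limits is an equivalence term-by-term (by the base case, since $\overline{D_d} \in \{\DD^0, \DD^1\}$), giving fully faithfulness. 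The ``in particular'' statement is then immediate: closed covers in $\Disk^{\sofr,{\sf cls.cr}}$ are, by construction (item~7 of \S\ref{sec.recollections}), images under ${\sf Cylr}$ of finite colimit diagrams of proper constructible embeddings, and under the equivalence just established these correspond precisely to the colimit diagrams of finite directed graphs obtained by gluing along subgraph inclusions.

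The main obstacle I anticipate is the limit-formula step in (iii), namely that $\Hom_{\Disk^{\sofr,{\sf cls.cr}}}(-, D')$ sends the cover Eq.~(\ref{t79}) to a limit diagram of sets. This is the sheaf property of representables for closed covers, and reduces to the general fact (item~7 of \S\ref{sec.recollections}) that closed covers in $\cBun$ are limit diagrams; one must check that intersecting with the solid-framing condition and restricting to closed-creation morphisms preserves this, which follows since both operations are defined by base-change along forgetful functors that preserve limits.
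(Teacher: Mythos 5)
Your overall strategy --- identify the restriction to $\bDelta_{\leq 1}^{\op}$ with the Yoneda embedding, prove essential surjectivity by realizing each graph geometrically, prove fully faithfulness by decomposing along the closed cover of \S\ref{sec.recollections}\Cref{recollection.eight} --- is the same as the paper's, but a consistent variance error breaks the key step of (iii) as written. A closed cover is a \emph{limit} diagram in $\Disk^{{\sf cls.cr},\sofr}$: the morphisms of \Cref{t79} run $D \xra{\cls} \ol{D_d}$, so the cover exhibits $D$ as $\lim_{d \in \Exit(D)^{\op}} \ol{D_d}$, not as a colimit of its stratum closures. Consequently your formula $\Hom(D,D') \simeq \lim_{d} \Hom(\ol{D_d},D')$ does not follow from anything stated; what does follow is the decomposition in the \emph{target} variable, $\Hom(D',D) \simeq \lim_{d}\Hom(D',\ol{D_d})$. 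The correct reduction (which is what the paper does) covers the target $E$ of $\Hom_{\Disk^{{\sf cls.cr},\sofr}}(D,E)$ and, on the graph side, uses that \Cref{e78} carries this limit diagram to a \emph{colimit} diagram in $\digraphs$, so that $\Hom_{\digraphs}(\Gamma_E,\Gamma_D) \simeq \lim_{e}\Hom_{\digraphs}(\Gamma_{\ol{E_e}},\Gamma_D)$ as well; one then reduces termwise to $E \simeq \langle[i]\rangle$, where the claim is the Yoneda lemma. Note that the fact that \Cref{e78} carries the cover to a colimit is itself a step you assert without proof; the paper establishes it by showing the comparison functor $\Exit(D) \to (\bDelta_{\leq 1})_{/\Gamma_D}$ is fully faithful and final and invoking density of the Yoneda embedding.

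The same confusion undermines (ii): the boundary inclusions $\DD^0 \to \DD^1$ are not morphisms of $\Disk^{{\sf cls.cr},\sofr}$ in that direction, and, more importantly, the closure of $\Disk^{\sofr}$ under closed covers does not \emph{create} objects --- it only says that the cone point of an already-existing closed cover in $\cMfd_1^{\sf sfr}$ whose remaining vertices lie in $\Disk^{\sofr}$ itself lies in $\Disk^{\sofr}$. To prove essential surjectivity you must actually exhibit, for each $\Gamma$, a compact solidly $1$-framed stratified space realizing it. The paper does this by taking the geometric realization $|\Gamma|$, stratified by its CW structure, and observing that the directions of the edges supply the solid $1$-framing; the isomorphism $\Gamma \cong \Gamma_{|\Gamma|}$ is then checked by inspection. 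That explicit construction is the actual content of the surjectivity step and cannot be replaced by an appeal to the closure property.
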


\begin{proof}
Right away, observe that the composite functor
$
\bDelta_{\leq 1}
\xra{\langle - \rangle} 
(\Disk^{{\sf idl} , {\sf sfr}})^{\op}
\xra{\Cref{e78}}
\digraphs
$
is identical with the Yoneda functor.
Therefore, for each $\Gamma \in \digraphs$, the canonical functor
\begin{equation}
\label{f30}
\bigl( (\bDelta_{\leq 1})_{/\Gamma_D} \bigr)^{\rcone}
\longrightarrow
\digraphs
\end{equation}
is a colimit diagram.

Let $D\in \Disk^{{\sf idl} , {\sf sfr}}$.
Consider the functor 
\begin{equation}
\label{f31}
\Exit(D) 
\longrightarrow 
(\bDelta_{\leq 1})_{/\Gamma_D}
\end{equation}
that is uniquely determined by declaring its value on the object $d\in \Exit(D)$ to be the canonical closed morphism from $D$ to the closure of the stratum in which $d\in D$ belongs, post-composed with the unique isomorphism 
\[
D \xra{~\rm cls~}\ol{D_d} \xra{~\cong~} \DD^i = \langle [i] \rangle
~.
\]
Observe that the functor \Cref{f31} is fully faithful, with image consisting of those idle morphisms $D \to \langle [i] \rangle$ that are, in fact, closed morphisms.  
In particular, the functor \Cref{f31} is final.  
Next, notice that the functor \Cref{f31} canonically fills the commutative diagram
\[
\begin{tikzcd}
\Exit(D)^{\rcone}
\arrow{d}[swap]{\Cref{t79}}
\arrow[dashed]{r}{\Cref{f31}}
&
\bigl( (\bDelta_{\leq 1})_{/\Gamma_D}\bigr)^{\rcone}
\arrow{d}{\Cref{f30}}
\\
(\Disk^{{\sf idl} , {\sf sfr}})^{\op}
\arrow{r}[swap]{\Cref{e78}}
&
\digraphs
\end{tikzcd}
~.
\]
By way of~\S\ref{sec.recollections}\Cref{recollection.nine}, 
it follows that the functor \Cref{e78} carries the (opposite of the) limit diagram $(\Exit(D)^{\op})^{\lcone} \to \Disk^{{\sf idl} , {\sf sfr}}$ to a colimit diagram.

We now prove the functor \Cref{e78} is fully faithful.
Let $D,E \in \Disk^{{\sf idl} , {\sf sfr}}$.
We must prove the map between sets,
\begin{equation}
\label{f32}
\Hom_{\Disk^{{\sf idl} , {\sf sfr}}}(D,E)
\xra{~\Cref{e78}~}
\Hom_{\digraphs}( \Gamma_E , \Gamma_D)
~,
\end{equation}
is a bijection.
Via the functor $\Exit(E)^{\rcone} \to (\Disk^{{\sf idl} , {\sf sfr}})^{\op}$ of~\S\ref{sec.recollections}\Cref{recollection.nine}, this map fits into a commutative diagram among sets
\[
\begin{tikzcd}[column sep=1.5cm]
\Hom_{\Disk^{{\sf idl} , {\sf sfr}}}(D,E)
\arrow{r}{\Cref{e78}}
\arrow{d}
&
\Hom_{\digraphs}( \Gamma_E , \Gamma_D)
\arrow{d}
\\
\underset{e \in \Exit(E)}\lim
\Hom_{\Disk^{{\sf idl} , {\sf sfr}}}(D, \ol{E_e})
\arrow{r}[swap]{\Cref{e78}}
&
\underset{e \in \Exit(E)}\lim
\Hom_{\digraphs}(\Gamma_{\ol{E_e}} , \Gamma_D)
\end{tikzcd}
~.
\]
Because $\Exit(E)^{\rcone} \to (\Disk^{{\sf idl} , {\sf sfr}})^{\op}$ is a colimit diagram, the left vertical map is a bijection.  
Established at the start of this proof is that the right vertical map is a bijection as well.
Therefore, the upper horizontal map is a bijection if and only if the lower horizontal map is a bijection.  
For each $e\in \Exit(E)$, there is an isomorphism $\ol{E_e} \cong \DD^i \cong \langle [i]\rangle$ for $i=0,1$.
Therefore, the lower horizontal map is a bijection provided the map~\Cref{f32} is a bijection in the cases that $E \cong \langle [i]\rangle$ for $i=0,1$.
In the case that $E \cong \langle [i]\rangle$, bijectivity of \Cref{f32} is an instance of the Yoneda lemma.

We now prove the functor \Cref{e78} is surjective on objects, which will complete this proof.
Let $\Gamma\in \digraphs$ be a finite directed graph.  
Its geometric realization $|\Gamma|$ is equipped with the structure of a 1-dimensional CW complex.  
Regard $|\Gamma|$ as a stratified space, in which a 0-dimensional stratum is a 0-cell of this CW structure, and a 1-dimensional stratum is the interior of a 1-cell.
The direction of the edges of $\Gamma$ supplies this stratified space with the structure of a solid 1-framing.  
In this way, we regard $|\Gamma|$ as a solidly 1-framed stratified space: $|\Gamma| \in \Mfld^{\sofr}$.  
Furthermore, as each stratum is a Euclidean space, $|\Gamma| \in \Disk^{\sf sfr} \subset \Mfld^{\sofr}$.  
Consider the morphism $\Gamma \to \Gamma_{|\Gamma|}$ in $\digraphs$ given as follows.
It is the natural transformation $\Gamma([\bullet]) \to \Hom_{\Disk^{{\sf idl} , {\sf sfr}})}\bigl( |\Gamma| , \langle [ \bullet ] \rangle \bigr)$ whose value on a vertex $v\in \Gamma([0])$ is the canonical closed morphism $|\Gamma| \xra{\sf cls} \{v\} \cong \langle [0] \rangle$, and whose value on a (possibly degenerate) edge $e \in \Gamma([1])$ is the canonical idle morphism $|\Gamma| \xra{\sf idl} \ol{|\Gamma|_e} \xra{!} \DD^1 \cong \langle [1]\rangle$.
By inspection, this morphism $\Gamma \to \Gamma_{|\Gamma|}$ in $\digraphs$ is an isomorphism.
Therefore, the functor \Cref{e78} is surjective on objects.

\end{proof}

\begin{lemma}
\label{t77}
The diagram among $\infty$-categories
\[
\begin{tikzcd}[column sep=1.5cm]
(\Disk^{\sf cls.cr , \sofr})^{\op}
\arrow{d}[swap]{\Cref{e78}}
\arrow{r}{\rm inclusion}
&
(\Disk^{{\sofr}})^{\op}
\arrow{d}{\fC}
&
\\
\digraphs
\arrow{r}[swap]{\Free}
&
\Cat_{(\infty,1)}
\end{tikzcd}
\]
canonically commutes.
In particular, for $D \in \Disk^{\sofr}$, the value $\fC(D) \simeq \Free(\Gamma_D)$ is the free category on the finite directed graph $\Gamma_D$.  

\end{lemma}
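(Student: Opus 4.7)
The plan is to exhibit a canonical natural equivalence $\alpha \colon \Free \circ \Cref{e78} \xra{\simeq} \fC \circ {\rm incl}$ of functors $(\Disk^{\sof cls.cr, \sofr})^{\op} \to \Cat_{(\infty,1)}$, proceeding in three stages: construct the comparison, verify it on the cellular subcategory, and promote to an equivalence by closed-cover descent. Throughout, both functors are viewed as landing in $\Cat_{(\infty,1)} \subset \PShv(\bDelta)$ via Rezk's embedding.

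First I will check that for $D \in \Disk^{\sof cls.cr, \sofr}$ and $i \in \{0,1\}$ one has $\Gamma_D([i]) \cong \fC(D)([i])$ as sets. By definition $\Gamma_D([i]) = \Hom_{\Disk^{\sof cls.cr, \sofr}}(D, \langle [i]\rangle)$, while $\fC(D)([i]) = \Hom_{\Disk^{\sofr}}(D, \langle [i]\rangle)$. The equality reduces to showing every morphism in $\Disk^{\sofr}$ with codomain $\langle [0]\rangle = \DD^0$ or $\langle [1]\rangle = \DD^1$ is automatically closed-creation: this uses the closed-active factorization on $\Disk^{\sofr}$ together with the observation that $\DD^i$ is ``atomic'' enough that any active morphism with codomain $\DD^i$ is a creation. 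By the universal property of $\Free$ as the left adjoint of restriction $\Cat_{(\infty,1)} \to \PShv(\bDelta_{\leq 1})$, the identification $\Gamma_D \cong \fC(D)|_{\bDelta_{\leq 1}}$ produces the canonical natural map $\alpha_D \colon \Free(\Gamma_D) \to \fC(D)$.

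Next I will verify $\alpha$ is an equivalence on the subcategory $\bDelta \subset (\Disk^{\sof cls.cr, \sofr})^{\op}$ embedded via $\langle - \rangle^{\op}$. By Lemma~\Cref{t77'} the composite $\bDelta \hookrightarrow (\Disk^{\sof cls.cr, \sofr})^{\op} \xra{\Cref{e78}} \digraphs$ lands in the full subcategory of linearly-directed graphs, and $\Free$ thereof recovers the standard $[p]$; on the other hand, the diagram $\Cref{t82}$ from recollection~\Cref{recollection.nine} identifies $\fC \circ \langle - \rangle$ with the standard inclusion $\bDelta \hookrightarrow \Cat_{(\infty,1)}$. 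So the two agree on cells, and under these identifications $\alpha_{\langle [p]\rangle}$ is the identity. Now for arbitrary $D$, I will use the canonical closed cover $\Exit(D)^{\rcone} \to \Disk^{\sof cls.cr, \sofr}$, whose non-cone vertices are cells in $\{\DD^0, \DD^1\}$. Lemma~\Cref{t77'} asserts that $\Cref{e78}$ carries (the opposite of) this cover to a colimit in $\digraphs$, and since $\Free$ preserves the relevant pushouts (by \Cref{t25} and the fact that $\Free$ is a composition of left adjoints into $\Cat_{(\infty,1)}$), we get $\Free(\Gamma_D) \simeq \colim_{d \in \Exit(D)^{\op}} \Free(\Gamma_{\overline{D_d}})$ in $\Cat_{(\infty,1)}$. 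The corresponding descent for $\fC(D)$ follows from the fact that cellular realization sends Segal diagrams to closed covers (recollection~\Cref{recollection.nine}), so that $\fC(D)$ satisfies the Segal condition and assembles from its cellular restriction via the same closed-cover diagram. Combining these two descent presentations with the cellular agreement yields that $\alpha_D$ is an equivalence.

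\textbf{Main obstacle.} The crux is the descent step for $\fC$: one must show that $\fC \colon (\Disk^{\sofr})^{\op} \to \Cat_{(\infty,1)}$ converts the closed-cover limit diagram of $D \in \Disk^{\sofr}$ into a colimit in $\Cat_{(\infty,1)}$. The naive pointwise check at each $[p]$ fails, because $\Hom_{\Disk^{\sofr}}(-, \langle [p]\rangle)$ is representable and thus carries colimits (not limits) in $\Disk^{\sofr}$ to limits of spaces; the salvage is that colimits in $\Cat_{(\infty,1)}$ are computed only after Segalification/completion of the pointwise colimit in $\PShv(\bDelta)$, and the Segal condition on $\fC(D)$ — which is exactly the content of recollection~\Cref{recollection.nine} that cellular realization carries Segal diagrams to closed covers — is precisely what aligns these two operations. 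Extracting this cleanly requires pairing Rezk's characterization of $\Cat_{(\infty,1)}$ as the Bousfield localization of $\PShv(\bDelta)$ with the interaction between the closed-cover descent for $D$ and the Segal-cover presentation of $[p] \in \bDelta$, which is where the subtlest part of the argument resides.
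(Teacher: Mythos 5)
Your first step contains a genuine error that undermines the rest of the argument. It is not true that every morphism in $\Disk^{\sofr}$ with codomain $\DD^1 = \langle [1]\rangle$ is closed-creation: for example, the refinement morphism $\langle [2]\rangle \to \DD^1$ (corresponding under the fully faithful cellular realization to the active, non-convex map $[1]\to[2]$ in $\bDelta$) lies in $\Disk^{\sofr}$ but not in $\Disk^{\sf cls.cr , \sofr}$. Consequently the comparison $\Gamma_D([1]) \to \fC(D)([1])$ is only a monomorphism, and fails to be an isomorphism as soon as $\Gamma_D$ has a composable pair of edges --- as it must, since $\Hom_{\Free(\Gamma_D)}(v,w)$ consists of directed \emph{paths}, not single edges (\Cref{t27}(2), \Cref{t28}). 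The paper constructs the natural transformation $\Free\circ\Cref{e78} \to \fC\circ{\rm inclusion}$ from precisely this monomorphism together with the adjunction defining $\Free$, so your comparison map survives; but the entire content of the lemma is that the gap between edges and paths on the combinatorial side exactly matches the gap between closed-creation morphisms and general morphisms $D\to\DD^1$ on the geometric side.

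That matching is what your third stage silently assumes and never proves. The descent statement for $\fC$ --- that $\fC$ carries the closed cover of $D$ to a colimit in $\Cat_{(\infty,1)}$ --- is essentially equivalent to the lemma itself (the colimit of the cells glued along their $\DD^0$'s is $\Free(\Gamma_D)$ by the other leg of your argument), and in the paper it appears as \Cref{t80}, which is \emph{deduced from} \Cref{t77}; invoking it here would be circular. The Segal condition on $\fC(D)$ that you cite only controls maps out of the cells $\langle[p]\rangle$, not maps out of a general $D$, so it cannot by itself produce the needed colimit comparison in $\Cat_{(\infty,1)}$. What actually closes the gap in the paper is a direct computation at $p=1$: every morphism $D\to\DD^1$ in $\Disk^{\sofr}$ not factoring through $\DD^0$ admits a \emph{unique} factorization $D \xra{{\sf Cylr}(\pi)} L \xra{{\sf Cylo}(\gamma)} \DD^1$ with $\pi$ a solidly $1$-framed proper constructible bundle carrying $1$-dimensional strata to $1$-dimensional strata and $\gamma$ a refinement; existence comes from the link construction applied to the underlying proper constructible bundle over $\Delta^1$, and uniqueness from full faithfulness of $\langle-\rangle$ together with gauntness of $\bDelta$. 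This identifies $\Hom_{\Disk^{\sofr}}(D,\DD^1)$ with the set of directed paths in $\Gamma_D$, which is exactly the step your proposal is missing.
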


\begin{proof}
Consider the natural transformation $\Cref{e78} \to \Bigl( \fC \circ {\rm inclusion} \Bigr)_{|\bDelta_{\leq 1}^{\op}}$ between functors from $(\Disk^{\sf cls.cr,\sofr})^{\op}$ to $\PShv(\bDelta_{\leq 1})$ given by the canonical monomorphism
\[
\Hom_{\Disk^{\sf cls.cr,\sofr}} 
\Bigl( - , \langle [\bullet] \rangle \Bigr)
~\hookrightarrow~
\Hom_{\Disk^{\sofr}} 
\Bigl(-,\langle [\bullet] \rangle \Bigr)
~.
\]
By definition of $\fC$, and using that $\Free$ is (the restriction of) a left adjoint, this supplies a natural transformation $\Free\circ \Cref{e78} \to \fC \circ {\rm inclusion}$.
We will show it is by equivalences in $\Cat_{(\infty,1)}$.

Let $D\in \Disk^{\sf cls.cr,\sofr}$.
Using that the functor $\Cat_{(\infty,1)} \hookrightarrow \PShv(\bDelta) \xra{\rm restriction} \PShv(\bDelta_{\leq 1})$ is conservative, we need only show the map between spaces
\begin{equation}
\label{e92}
\Hom_{\Cat_{(\infty,1)}}
\Bigl(
[p]
,
\Free(\Gamma_D)
\Bigr)
\longrightarrow
\Hom_{\Cat_{(\infty,1)}}
\Bigl(
[p]
,
\fC(D)
\Bigr)
:=
\Hom_{\Disk^{\sofr}} \Bigl(D, \langle [p] \rangle \Bigr)
\end{equation}
is an equivalence for $p=0,1$.  
Note that every morphism $D \to \DD^0=\langle [0]\rangle$ is a closed morphism, which is ${\sf Cylr}$ applied to an inclusion $\DD^0 \hookrightarrow D$ of a 0-dimensional stratum of $D$.
This, together with inspection of the space of objects of the values of $\Free$ from \Cref{t27}, reveals that this map~\Cref{e92} is an equivalence in the case that $p=0$.

It remains to show~\Cref{e92} is an equivalence in the case that $p=1$.
Note that $\langle [1] \rangle = \DD^1$ is the solidly 1-framed 1-disk.
Let $D \xra{f} \DD^1$ be a morphism in $\Disk^{\sofr}$.
Given the $p=0$ case above, we can assume this morphism does not factor through $\DD^0 \xra{!} \DD^1$, the unique morphism from the $0$-disk.
By inspection of the spaces of morphisms of the values of $\Free$ from \Cref{t27}, 
we must show the following:
\begin{itemize}
\item[{\bf Claim.}]
The morphism $f$ uniquely factors 
\[
D \xra{{\sf Cylr}(\pi)} L \xra{{\sf Cylo}(\gamma)} \DD^1
\] 
in $\Disk^{\sofr}$
in which $L \xra{\gamma} \DD^1$ is a solidly 1-framed refinement map between solidly 1-framed stratified spaces and $D \xla{\pi} L$ is a solidly 1-framed proper constructible bundle map between solidly 1-framed stratified spaces with the property that $\pi$ carries 1-dimensional strata to 1-dimensional strata.  
\end{itemize}
We first establish the existence of such a factorization.
By definition of the $\infty$-category $\Disk^{\sofr}$, the morphism $f$ is represented by a proper constructible bundle $X \to \Delta^1$ equipped with a fiberwise solid 1-framing, together with identifications as solidly 1-framed stratified spaces of the fibers: $D \cong X_{|\Delta^{\{0\}}}$ and $\DD^1 \cong X_{|\Delta^{\{1\}}}$.  
Consider the link $L':= {\sf Link}_{X_{|\Delta^{\{0\}}}}(X)$.
By construction, it fits into a span among stratified spaces
\[
D
\cong
X_{|\Delta^{\{0\}}}
\xla{~\pi'~}
L'
\xra{~\gamma'~}
X_{|\Delta^{\{1\}}}
\cong
\DD^1
~,
\]
in which $\gamma'$ is a refinement map and $\pi'$ is a proper constructible bundle map.  
Consider the solid 1-framing on $L'$ pulled back along $\gamma'$ from the given solid 1-framing of $\DD^1$.  
The fiberwise solid 1-framing on $X \to \Delta^1$ supplies a solid 1-framing structure on $\pi'$, which is to say an identification, for each 1-dimensional stratum $D_\alpha \subset D$, of the solid 1-framing pulled back along $\pi'$ from $D_\alpha$ to that on each (necessarily 1-dimensional) stratum of $L'$ over $D_\alpha$.
Next, consider the stratified space $L := L'{/\sim}$ obtained from $L'$ by collapsing each connected component of a preimage by $\pi'$ of a 0-dimensional stratum in $D$.  
By construction, $L$ inherits a solid 1-framing from that of $L'$.
Also, there remains a refinement morphism $L \xra{\gamma} \DD^1$ and the map $\pi'$ factors through $L$ as a proper constructible bundle: $\pi'\colon L' \xra{\rm quotient} L \xra{\pi} D$, both of which retain the structure of being solidly 1-framed.  
This supplies the existence of the sought factorization.

It remains to establish the uniqueness of such a factorization.
Let $D \xra{x} K \xra{y} \DD^1$ be another such factorization.
We must show there is a unique isomorphism $K \xra{h} L$ together with identifications $h \circ x \simeq {\sf Cylr}(\pi)$ and $y \simeq {\sf Cylo}(\gamma) \circ h$.  
Because $\bDelta^{\op} \xra{\langle - \rangle} \Disk^{\sofr}$ is fully faithful, and $\bDelta^{\op}$ is gaunt, if such an isomorphism $h$ exists then it is unique.  
Because $\Disk^{\sofr}$ is an ordinary category, if such identifications $h \circ x \simeq {\sf Cylr}(\pi)$ and $y \simeq {\sf Cylo}(\gamma) \circ h$ exist then they are unique.

Now, the origin $0\in \DD^1$ can be regarded as an object $0\in \Exit(\DD^1)  \simeq \Exit(X_{|\Delta^{\{1\}}}) \subset \Exit(X)$ in the exit-path $\infty$-category of the total space of the proper constructible bundle $X \to \Delta^1$ underlying the given morphism $f$.
With respect to the fully faithful functor $\Exit(D) \simeq \Exit(X_{|\Delta^{\{0\}}}) \hookrightarrow \Exit(X)$, consider the $\infty$-overcategory $\Exit(D)_{/0}$.
The conditions on each of the factorizations $D \xra{x} K \xra{y} \DD^1$ and $D \xra{{\sf Cylr}(\pi)} L \xra{{\sf Cylo}(\gamma)} \DD^1$ are such that each of the resulting canonical functors over $\Exit(D)$, 
\[
\Exit\bigl( \langle [q] \rangle \bigr)
~\simeq~
\Exit(K)
\longrightarrow
\Exit(D)_{/0}
\qquad
\text{ and }
\qquad
\Exit\bigl( \langle [p] \rangle \bigr)
~\simeq~
\Exit(L)
\longrightarrow
\Exit(D)_{/0}
~,
\]
are equivalences.  
Consequently, $p=q$.
Because the cellular realization functor $\bDelta^{\op} \xra{\langle - \rangle} \Disk^{\sofr}$ is fully faithful, there exists a unique isomorphism $K \xra{h} L$ in $\Disk^{\sofr}$, which lies over $\DD^1$ (ie, $y = {\sf Cylo}(\gamma) \circ h$).
Finally, because the equivalences $\Exit(K) \xra{\simeq} \Exit(D)_{/0} \xla{\simeq} \Exit(L)$ lie over $\Exit(D)$, it follows from \Cref{t77'} that $h\circ x = {\sf Cylr}(\pi)$.

\end{proof}

\begin{cor}
\label{t80}
The functor $(\Disk^{\sofr})^{\op} \xra{\fC} \fCat_{(\infty,1)}$ carries (the opposites of) closed covers to colimit diagrams.  

\end{cor}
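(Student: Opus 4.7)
The plan is to factor $\fC$ on closed morphisms through $\digraphs$ via the equivalence $\Cref{e78}$ and the free-category functor $\Free$, and then to invoke the closed-sheaf characterization of $\fCat_{(\infty,1)}$ from Theorem~\ref{t2}.

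First, every closed cover in $\Disk^{\sofr}$ takes values in $\Disk^{\sf cls,\sofr} \subseteq \Disk^{\sf cls.cr,\sofr}$, so by Lemma~\ref{t77} its image under $\fC$ (after taking opposites) factors as
\[
(\Disk^{\sf cls.cr,\sofr})^{\op} \xra{~\Cref{e78}~} \digraphs \xra{~\Free~} \fCat_{(\infty,1)}.
\]
By Lemma~\ref{t77'}, the first functor is an equivalence of categories that sends opposites of closed cover diagrams to colimit diagrams in $\digraphs$. Hence the corollary is reduced to the claim that $\Free$ carries these resulting colimit diagrams in $\digraphs$ to colimit diagrams in $\fCat_{(\infty,1)}$.

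To verify this reduced claim, I would test against an arbitrary $\cC \in \fCat_{(\infty,1)}$. By Observation~\ref{t30}, the corepresented presheaf $\Gamma \mapsto \Hom_{\fCat_{(\infty,1)}}(\Free(\Gamma), \cC)$ on $\digraphs$ is identified with the restriction of $\rho_\ast(\cC)$ along the composite $\digraphs \simeq \Quiv^{\sf cls.cr} \hookrightarrow \Quiv$. By Theorem~\ref{t2}, $\rho_\ast(\cC)$ is a closed sheaf on $\Quiv$. Provided that the colimit diagrams in $\digraphs$ produced by $\Cref{e78}$ correspond to closed covers in $\Quiv$ in the sense of Definition~\ref{dd2}, the closed-sheaf condition then forces $\rho_\ast(\cC)$ to send their opposites to limit diagrams in $\Spaces$, which by Yoneda is the required colimit preservation.

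The main obstacle is this last correspondence: recognizing the diagrams in $\digraphs$ produced by Lemma~\ref{t77'} as closed covers in $\Quiv$ per Definition~\ref{dd2}. For \emph{basic} closed covers in $\Disk^{\sofr}$ the identification is immediate, since under $\Cref{e78}$ they correspond to pushouts of monomorphisms in $\digraphs$, which are by definition the basic closed covers in $\Quiv$. For general closed covers in $\Disk^{\sofr}$---assembled in the framework of~\cite{AFR2} from opposites of finite colimits of proper constructible embeddings---one shows that $\Cref{e78}$ carries them to analogous finite colimits of monomorphisms in $\digraphs$, which may be built iteratively from basic closed covers. The stability of the closed-sheaf condition under such iteration, together with Observation~\ref{t25} (which ensures $\Free$ preserves pushouts along monomorphisms), then yields the desired conclusion.
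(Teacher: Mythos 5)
Your first half coincides with the paper's proof: by Lemma~\ref{t77} the restriction of $\fC$ to closed(-creation) morphisms factors as $\Free\circ\Cref{e78}$, and by Lemma~\ref{t77'} the functor \Cref{e78} carries the opposites of closed covers to colimit diagrams in $\digraphs$. The divergence is in the final step, and there the proposal has a genuine gap. Your plan is to verify colimit preservation by corepresentability, via \Cref{t30} and \Cref{t2}, ``provided that'' the resulting colimit diagrams in $\digraphs$ are closed covers in $\Quiv$ in the sense of \Cref{dd2}. But by \Cref{t2} the closed sheaves on $\Quiv$ are exactly the presheaves $\rho_\ast(\cC)$ for $\cC\in\fCat_{(\infty,1)}$, so ``this diagram is a closed cover in $\Quiv$'' is, by definition, equivalent to ``$\Hom_{\fCat_{(\infty,1)}}(\Free(-),\cC)$ carries its opposite to a limit for every $\cC$,'' i.e.\! to the very colimit-preservation statement you are trying to prove. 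The reduction is therefore a restatement of the goal, and all the content is pushed into the claim that a general finite colimit diagram of monomorphisms in $\digraphs$ ``may be built iteratively from basic closed covers.'' That claim is asserted, not proved, and it is not routine: basic closed covers are pushout squares of monomorphisms, whereas a closed cover in $\Disk_1^{\sofr}$ is the image of an arbitrary finite colimit diagram of proper constructible embeddings (e.g.\! the exit-path-indexed cover of \Cref{recollection.eight}), and reducing such a colimit to iterated basic pushouts, together with the stability of the sheaf condition under that iteration, is a nontrivial descent argument that your sketch does not supply.

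The step you are missing is much simpler and is what the paper does: $\Free$ is (the restriction of) a composite of left adjoints \Cref{e11}, so once Lemma~\ref{t77'} identifies the images under \Cref{e78} of the opposites of closed covers as colimit diagrams in $\digraphs\subset\PShv(\bDelta_{\leq 1})$, the functor $\Free$ preserves them outright, with no appeal to the closed-sheaf characterization of $\fCat_{(\infty,1)}$ at all. Replacing your final paragraph with this one-line observation closes the argument.
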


\begin{proof}
Observe that the functor~\Cref{e78} carries (the opposites of) closed covers to pushouts in $\digraphs \subset \PShv(\bDelta_{\leq 1})$.  
The result follows from the fact that the functor $\Free$, being a left adjoint, preserves colimits.

\end{proof}

\begin{prop}
\label{t75}
There is a canonical identification 
\[
\Disk^{\sofr} \xra{~\simeq~} \Quiv^{\op}
\]
between $(\infty,1)$-categories under $\bDelta^{\op}$.

\end{prop}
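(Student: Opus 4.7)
The plan is to promote the identification on closed-creation morphisms (\Cref{t77'} together with its compatibility with $\Free$, \Cref{t77}) to an equivalence on the full $\infty$-category $\Disk^{\sofr}$. The fully faithful functor $\fC \colon (\Disk^{\sofr})^{\op} \to \Cat_{(\infty,1)}$ of \S\ref{sec.recollections}\Cref{recollection.nine} reduces the task to two assertions: that the essential image of $\fC$ is contained in $\Quiv \subset \Cat_{(\infty,1)}$, and that every quiver appears in this image.

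First, I would verify that $\fC$ factors through $\Quiv$. For each $D \in \Disk^{\sofr}$, introduce the finite directed graph $\Gamma^{\sf str}_D$ whose vertex set is the set of $0$-dimensional strata of $D$ and whose edge set is the set of $1$-dimensional strata, oriented via the solid $1$-framing. For $D \in \Disk^{\sf cls.cr, \sofr}$ one has $\Gamma^{\sf str}_D \cong \Cref{e78}(D)$, and \Cref{t77} gives $\fC(D) \simeq \Free(\Gamma^{\sf str}_D) \in \Quiv$. For general $D$, I would use the exit-path closed cover $(\Exit(D)^{\rcone})^{\op} \to (\Disk^{\sf cls.cr, \sofr})^{\op}$ of \S\ref{sec.recollections}\Cref{recollection.eight}; by \Cref{t80}, $\fC$ sends this to a colimit diagram in $\Cat_{(\infty,1)}$. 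The resulting diagram $\Exit(D) \to \Cat_{(\infty,1)}$ sends each $0$-stratum to $[0]$ and each $1$-stratum to $[1]$, with the morphisms from $0$- to $1$-strata producing the two face maps $[0] \rightrightarrows [1]$ selecting endpoints. Under the canonical identification $\Exit(D) \simeq \sE(\Gamma^{\sf str}_D)$, this colimit diagram is exactly the one whose colimit in $\Cat_{(\infty,1)}$ computes $\Free(\Gamma^{\sf str}_D)$, per the definition \Cref{e1} of $\Free$ as left Kan extension from $\bDelta_{\leq 1}$. Hence $\fC(D) \simeq \Free(\Gamma^{\sf str}_D) \in \Quiv$.

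Next, essential surjectivity of the induced functor $(\Disk^{\sofr})^{\op} \to \Quiv$: every quiver is $\Free(\Gamma)$ for some finite directed graph $\Gamma$, and the solidly $1$-framed realization $|\Gamma|$ constructed in the final paragraph of the proof of \Cref{t77'} lies in $\Disk^{\sofr}$ and satisfies $\Gamma^{\sf str}_{|\Gamma|} \cong \Gamma$, so that $\fC(|\Gamma|) \simeq \Free(\Gamma)$. Fully faithfulness of the comparison is inherited from \S\ref{sec.recollections}\Cref{recollection.nine}. Finally, the diagram \Cref{t82}, combined with \Cref{t77} specialized to linearly-directed graphs, ensures that the resulting equivalence $\Disk^{\sofr} \simeq \Quiv^{\op}$ lies under $\bDelta^{\op}$ via the cellular realization and $\rho^{\op}$.

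The main obstacle is the colimit identification in the second step: one must carefully verify that the transition maps in the exit-path colimit $\Exit(D) \to (\Disk^{\sofr})^{\op} \xra{\fC} \Cat_{(\infty,1)}$ -- namely $\fC$ applied to the closed inclusions of boundary vertices into edge closures -- correspond, after the equivalence $\Exit(D) \simeq \sE(\Gamma^{\sf str}_D)$, to the source/target face maps $[0] \to [1]$ appearing in the left Kan extension formula for $\Free$. Once this matching is in place, the universal property of colimits in $\Cat_{(\infty,1)}$ delivers $\fC(D) \simeq \Free(\Gamma^{\sf str}_D)$, and the remainder of the argument follows formally.
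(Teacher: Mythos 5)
Your proposal is correct and follows essentially the same route as the paper: identify the image of $\fC$ with $\Quiv$ via \Cref{t77'} and \Cref{t77}, obtain essential surjectivity from the solidly $1$-framed realization $|\Gamma|$, and read off compatibility with $\bDelta^{\op}$ from \Cref{t82}. Two remarks. First, your second paragraph is superfluous: the subcategory $\Disk^{\sf cls.cr,\sofr} \subset \Disk^{\sofr}$ is wide (only the morphisms are restricted, not the objects), so \Cref{t77} already yields $\fC(D) \simeq \Free(\Gamma_D) \in \Quiv$ for \emph{every} object $D$; there is no ``general $D$'' left to handle by an exit-path colimit argument, and indeed the step you flag as the main obstacle is not needed at all. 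Second, you dispose of fully faithfulness by citing \S\ref{sec.recollections}\Cref{recollection.nine}; while the paper does assert it there, its proof of the proposition nonetheless supplies the argument, and that argument is where essentially all of the work lives: one reduces $\Hom_{\Disk^{\sofr}}(D,E)$ along the closed cover of $E$ by closures of strata (\Cref{t79}), uses \Cref{t80} to commute $\fC$ past this cover, and concludes in the base case $E = \langle[p]\rangle$ from the tautology $\fC(D)([p]) := \Hom_{\Disk^{\sofr}}(D,\langle[p]\rangle)$ together with \Cref{t82}. A self-contained write-up should include this reduction rather than defer it to the recollections.
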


\begin{proof}
By \Cref{d12}, the $\infty$-category $\Quiv \subset \Cat_{(\infty,1)}$ is a full $\infty$-subcategory on the image of the functor $\digraphs \xra{\Free} \Cat_{(\infty,1)}$.
By definition, the inclusion 
$
\Disk^{\sf cls.cr , \sofr}
\hookrightarrow
\Disk^{\sofr}
$
is surjective on objects.  
Therefore, \Cref{t77} implies $\Quiv$ is precisely the full $\infty$-subcategory of $\Cat_{(\infty,1)}$ on the image of the functor $(\Disk^{\sofr})^{\op} \xra{\fC} \Cat_{(\infty,1)}$.
This supplies a functor,
\[
\fC
\colon
\Disk^{\sofr}
\longrightarrow
\Quiv^{\op}
~,
\]
which is surjective on objects.
\Cref{t6} and commutativity of \Cref{t82} together imply this functor is canonically under $\bDelta^{\op}$.
So the proof is complete upon showing this functor $(\Disk^{\sofr})^{\op} \xra{\fC} \Cat_{(\infty,1)}$ is fully faithful.

So let $D,E \in \Disk^{\sofr}$.
We seek to show the map 
\begin{equation}
\label{e80}
\Hom_{\Disk^{\sofr}}( D , E) 
\xra{~\fC~} 
\Hom_{\Cat_{(\infty,1)}} \bigl( \fC(E) , \fC(D) \bigr)
\end{equation}
is an equivalence between spaces.
Using the limit diagram $\sE(E) \to (\Disk^{\sofr})^{E/}$ of \Cref{t79}, this map~\Cref{e80} fits into a commutative diagram among spaces:
\[
\begin{tikzcd}
\Hom_{\Disk^{\sofr}}( D , E)
\arrow{d}[swap]{\Cref{t79}}
\arrow{r}{\fC}
&
\Hom_{\Cat_{(\infty,1)}} \bigl( \fC(E) , \fC(D) \bigr)
\arrow{d}
\\
\underset{e \in \Exit(E)^{\op}}\lim
\Hom_{\Disk^{\sofr}}( D , \ol{E_e})
\arrow{r}[swap]{\fC}
&
\underset{e \in \Exit(E)^{\op}}\lim
\Hom_{\Cat_{(\infty,1)}} \bigl( \fC(\ol{E_e}) , \fC(D) \bigr)
\end{tikzcd}
~.
\]
Because closed covers in $\Disk^{\sofr}$ are, in particular, limit diagrams, the vertical map on the left is an equivalence.
Through \Cref{t80}, the fact that the diagram \Cref{t79} is a limit diagram also implies the vertical map on the right is an equivalence.
Therefore, because each value of the diagram $\sE(E) \to (\Disk^{\sofr})^{E/}$ of \Cref{t79} is $\langle [p] \rangle$ for $p=0$ or $p=1$, the horizontal map on the top is an equivalence provided it is in the case that $E \simeq \langle [p] \rangle$ for some $p\geq 0$.

So let $p \geq 0$ and assume $E = \langle [p] \rangle$.
In this case, the map~\Cref{e80} is identical with the map
\[
\fC(D)([p])
~:=~
\Hom_{\Disk^{\sofr}}( D , \langle [p] \rangle ) \xra{\fC} \Hom_{\Cat_{(\infty,1)}} \bigl( \fC( \langle [p] \rangle ) , \fC(D) \bigr) 
~.
\]
An instance of the commutativity of the diagram of \Cref{t82} implies this map is an equivalence, as desired.

\end{proof}

\begin{observation}
\label{t85}
Through the equivalence of \Cref{t75}, the opposite of the full $\infty$-subcategory $\w{\bLambda} \subset \Quiv$ of \Cref{t6'} is identified with the full $\infty$-subcategory of $\Disk^{\sofr}$ consisting of those objects whose underlying stratified space admits a refinement morphism to a circle.

\end{observation}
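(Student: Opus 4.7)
The plan is to reduce the claim, via the explicit form of the equivalence from \Cref{t75}, to a graph-theoretic characterization and then verify both directions. First, I would unpack the equivalence $\Disk^{\sofr} \simeq \Quiv^{\op}$: by \Cref{t77}, it sends an object $D \in \Disk^{\sofr}$ to $\fC(D) \simeq \Free(\Gamma_D)$, where $\Gamma_D := \Hom_{\Disk^{{\sf cls.cr},\sofr}}(D, \langle [\bullet] \rangle)$ is the finite directed graph of \Cref{e78}. By \Cref{d23} together with \Cref{t6'}, the objects of $\w{\bLambda} \subset \Quiv$ are precisely those $\Free(\Gamma)$ with $\Gamma$ cyclically-directed. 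Hence the task reduces to the following equivalence of conditions on $D \in \Disk^{\sofr}$: that $\Gamma_D$ is cyclically-directed, versus that the underlying stratified space of $D$ admits a refinement morphism to $S^1 \in \cMfd^{\sf sfr}_1$.

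Next, I would argue both implications. For the forward direction, assuming $\Gamma_D$ is cyclically-directed, the proof of \Cref{t77'} exhibits $D$ as the geometric realization $|\Gamma_D|$ with its canonical CW stratification and the solid 1-framing induced by the edge directions; cyclic-directedness then ensures $|\Gamma_D|$ is homeomorphic to $S^1$ with consistent orientation, and the map collapsing all $0$-strata defines the desired refinement morphism $D \to S^1$ in $\cMfd^{\sf sfr}_1$. For the converse, a refinement $D \to S^1$ in $\cMfd^{\sf sfr}_1$ is in particular a homeomorphism of underlying spaces, so $|D| \cong S^1$; moreover the solid 1-framing of $D$ is pulled back from the single oriented 1-stratum of $S^1$, which forces the two $1$-strata of $D$ adjacent to any $0$-stratum to meet with consistent orientations. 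Hence each vertex of $\Gamma_D$ has directed-valence $(1,1)$, and combined with connectivity of $|D|$, this makes $\Gamma_D$ cyclically-directed in the sense of \Cref{d26}.

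The main obstacle I anticipate is bookkeeping the solid 1-framing under the refinement morphism and verifying that compatibility at each $0$-stratum truly forces directed-valence $(1,1)$ rather than merely matching some number of edges on each side. A secondary subtlety is that \Cref{d23} defines morphisms in $\w{\bLambda}$ to be non-constant functors, while \Cref{t85} speaks of $\w{\bLambda}$ as a \emph{full} subcategory of $\Quiv$; I read \Cref{t85} primarily as an identification of objects, with morphism-level refinement afforded by \Cref{rmk.circle.moduli}, which already describes the fully faithful embedding $\w{\bLambda} \hookrightarrow \cMfd^{\sf sfr}_1$ as picking out framed refinements of $S^1$.
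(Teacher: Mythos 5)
Your argument is correct and is essentially the intended justification: the paper gives no proof for this Observation, and the natural route is exactly the one you take — pass through the combinatorial equivalence $D \mapsto \Gamma_D$ of \Cref{t77'} and \Cref{t77}, and verify in both directions that ``$\Gamma_D$ is cyclically-directed'' matches ``the underlying stratified space of $D$ refines to a framed circle'' (consistent edge-orientations at each $0$-stratum forcing directed-valence $(1,1)$, and conversely). Your decision to read the statement as primarily an object-level identification is also the right one; the tension you flag with the non-fullness of $\w{\bLambda}$ (constant functors are excluded by \Cref{d23}, yet both sides of \Cref{t85} are called full) is an imprecision in the paper's phrasing rather than a gap in your proof.
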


\subsection{Comparing $\Mfld^{\sofr}$ and $\M$}

Consider the full $\infty$-subcategories 
\[
\Disk^{{\sf con},{\sofr}} 
~\subset~ 
\Disk^{{\sofr}}
\qquad
\text{ and }
~\qquad~
\Mfld^{{\sf con},{\sofr}} \subset \Mfld^{{\sofr}}
\]
respectively consisting of those objects whose underlying stratified space is connected.
Recall the full $\infty$-subcategories $\Quiv^{\sf con} \subset \Quiv$ and $\M^{\sf con}\subset \M$ from \Cref{d9} and \Cref{d7}/\Cref{t49.2}.
Note the factorizations:
\[
\bDelta
\xra{~\rho~}
\Quiv^{\sf con}
\subset
\Quiv
\qquad
\text{ and }
\qquad
\bDelta^{\op}
\xra{~\langle-\rangle~}
\Disk^{\sf con,\sofr}
\subset
\Disk^{\sofr}
~.
\]
\Cref{t75} evidently restricts as an equivalence
\begin{equation}
\label{e82}
\Disk^{\sf con,\sofr}
\xra{~\simeq~}
(\Quiv^{\sf con})^{\op}
\end{equation}
between $\infty$-categories under $\bDelta^{\op}$.

Consider the full $\infty$-subcategory 
\[
\cA ~:=~\Ar^{\sf ref}(\Mfld^{\sf con,\sofr})^{|\Disk^{\sf con,\sofr}}_{|\fB \End_{\Mfld^{\sf con,\sofr}}(\SS^1)}
~\subset~
\Ar(\Mfld^{\sf con,\sofr})
\]
consisting of those arrows in $\Mfld^{\sf con,\sofr}$ that are refinements to a circle from an object in $\Disk^{\sf con,\sofr}\subset \Mfld^{\sf con,\sofr}$. 
Evaluation at source defines a functor 
\begin{equation}
\label{e86}
\cA 
\longrightarrow
\Disk^{\sf con,\sofr}
\end{equation}
to the full $\infty$-subcategory $\Disk^{\sf con,\sofr} \subset \Mfld^{\sf con,\sofr}$;
evaluation at target defines a functor
\begin{equation}
\label{e85}
\cA
\longrightarrow
\fB \End_{\Mfld^{\sf con,\sofr}}(\SS^1)
\end{equation}
to the full $\infty$-subcategory of $\Mfld^{\sf con,\sofr}$ consisting of the circle $\SS^1 \in \Mfld^{\sf con,\sofr}$.
The definition of the parametrized join as a colimit therefore supplies a canonical functor
\begin{equation}
\label{e87}
\Disk^{\sf con,\sofr}
\underset{
\cA 
}
\bigstar
\fB \End_{\Mfld^{\sf con,\sofr}}(\SS^1)
\longrightarrow
\Mfld^{\sf con,\sofr}
~.
\end{equation}

\begin{lemma}
\label{t87}
The functor~\Cref{e85} is a coCartesian fibration.

\end{lemma}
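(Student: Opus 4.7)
Plan: The functor in question is the restriction of the target-evaluation $\Ar(\Mfld^{\sf con,\sofr}) \to \Mfld^{\sf con,\sofr}$ to the indicated subcategories of source and base. Target evaluation is always a coCartesian fibration, and an ambient coCartesian lift over $f \colon \SS^1 \to \SS^1$ starting at $(D \xra{r} \SS^1)$ is the commutative square with left edge $\id_D$ and target edge $f$. However, this lift does not lie in $\cA$, since $f\circ r$ is typically not a refinement. The plan is therefore to replace this lift by factoring $f \circ r$ through a refinement constructed via a pullback of stratified spaces, and then verify that the resulting morphism in $\cA$ is coCartesian.

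Explicitly, using \Cref{t84} and the recollections of \S\ref{sec.recollections}, write $f = {\sf Cylr}(\tilde f)$ for a framed proper constructible bundle (a framed self-cover) $\tilde f \colon \SS^1 \to \SS^1$, and write $r = {\sf Cylo}(\gamma)$ for a refinement $\gamma \colon D \to \SS^1$ of stratified spaces. Form the pullback $D' := D \times_{\SS^1} \SS^1$ in solidly 1-framed stratified spaces along $\gamma$ and $\tilde f$. The top edge of this pullback square is a refinement $\gamma' \colon D' \to \SS^1$ (being the base change of a refinement along a proper constructible bundle), and the left edge is a framed proper constructible bundle $\tilde g \colon D' \to D$. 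Applying ${\sf Cylo}$ and ${\sf Cylr}$ respectively, we obtain in $\Mfld^{\sofr}$ a refinement $r' := {\sf Cylo}(\gamma') \colon D' \to \SS^1$ and a closed-creation morphism $g := {\sf Cylr}(\tilde g) \colon D \to D'$, which assemble into a commutative square providing a morphism $(D,r) \to (D',r')$ in $\cA$ projecting to $f$. Since $\tilde f$ is a connected cover and $D$ is connected, $D'$ remains connected; since $\tilde f$ preserves stratum dimension, $D'$ still has strata only of dimensions $0$ and $1$; hence $D' \in \Disk^{\sf con,\sofr}$, as required.

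To check that this lift is coCartesian, consider an arbitrary morphism $\psi \colon (D,r) \to (D'',r'')$ in $\cA$ whose projection to $\fB\End(\SS^1)$ equals $h \circ f$ for some $h \colon \SS^1 \to \SS^1$. Unwinding the definition of morphisms in $\cA$, the datum of $\psi$ is the datum of a cospan $D \to D'' \xleftarrow{\refi} \SS^1$ in $\Mfld^{\sofr}$ compatible with $h \circ f$. The universal property of the pullback $D' = D \times_{\SS^1} \SS^1$ in stratified spaces, together with the functoriality of ${\sf Cylr}$ and ${\sf Cylo}$, then gives a unique factorization of this cospan through $(D',r')$, producing the required unique filler over $h$.

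The main obstacle is promoting the pullback construction and its universal property from the level of underlying stratified spaces to the $\infty$-category $\Mfld^{\sofr}$, where morphisms are packaged via ${\sf Cylr}$ and ${\sf Cylo}$ applied to proper constructible bundles and refinements (rather than honest continuous maps). This is essentially folklore in the framework of \cite{AFR1,AFR2}, but requires a careful compatibility check; once carried out, the coCartesian property follows formally.
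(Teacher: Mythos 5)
Your proposal is correct and follows essentially the same route as the paper: identify the endomorphism of $\SS^1$ with a framed self-cover, form the pullback of the disk-refinement along it to produce the lift in $\cA$, and invoke the universal property of that pullback. The only (cosmetic) difference is that the paper first verifies \emph{local} coCartesianness over a fixed endomorphism and then upgrades to coCartesianness via associativity of pullbacks (closure of locally coCartesian morphisms under composition), whereas you attack the full coCartesian condition over composites $h \circ f$ directly; both arguments rest on the same compatibility of the stratified-space pullback with the ${\sf Cylr}$/${\sf Cylo}$ formalism, which each proof leaves largely implicit.
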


\begin{proof}
To see that the functor~\Cref{e85} is a locally coCartesian fibration, we must show each solid diagram in $\Mfld^{\sf con,\sofr}$,
\[
\begin{tikzcd}
D
\arrow[dashed]{r}
\arrow{d}[swap]{\sf ref}
&
E
\arrow[dashed]{d}{\sf ref}
\\
\SS^1
\arrow{r}[swap]{\pi}
&
\SS^1
\end{tikzcd}
~,
\]
admits an initial filler in which the vertical morphism on the right is a refinement, as indicated.  
Now, the endomorhpism $\pi$ is necessarily a idle morphism.  
Via Definition~1.24 of~\cite{AFR2}, the space of idle morphisms $\SS^1 \to \SS^1$ in $\Mfld^{\sf con,\sofr}$ is canonically identified as the space of oriented covering maps $\SS^1 \la \SS^1$ (in fact, which reverses the source and target, as indicated).  
A sought filler is then supplied via pullback of the downward disk-refinements along the covering map corresponding to $\pi$.
The universal property of pullbacks implies this filler is initial among all such fillers.
We conclude that~\Cref{e85} is a locally coCartesian fibration.
Finally, because taking pullback is associative, such locally coCartesian morphisms are closed under composition.
Therefore, the functor~\Cref{e85} is a coCartesian fibration, as asserted.

\end{proof}

\begin{lemma}
\label{t89}
The functor~\Cref{e87} is an equivalence between $\infty$-categories.

\end{lemma}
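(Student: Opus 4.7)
The plan is to mirror the proof of Proposition~\ref{t51} in the geometric setting, applying the characterization of parametrized joins from Lemma~\ref{t200}. This reduces the lemma to verifying three conditions: that the two full subcategory inclusions $\Disk^{\sf con,\sofr} \hookrightarrow \Mfld^{\sf con,\sofr}$ and $\fB\End_{\Mfld^{\sf con,\sofr}}(\SS^1) \hookrightarrow \Mfld^{\sf con,\sofr}$ are fully faithful (automatic) and jointly essentially surjective, that $\Hom_{\Mfld^{\sf con,\sofr}}(\SS^1, D) = \emptyset$ for $D \in \Disk^{\sf con,\sofr}$, and that the hom-spaces from $D$ to $\SS^1$ in the parametrized join agree with those in $\Mfld^{\sf con,\sofr}$.

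For joint essential surjectivity, each connected solidly 1-framed stratified space is (by dimension and connectivity constraints on the strata) either a connected graph in $\Disk^{\sf con,\sofr}$ or a circle with a single 1-dimensional stratum; the latter is equivalent to $\SS^1 \in \Mfld^{\sf con,\sofr}$, with the automorphism space being $\TT \subset \WW^{\op} \simeq \End_{\Mfld^{\sof}}(\SS^1)$ of Lemma~\ref{t84}. For the emptiness assertion, we argue as in the proof of Proposition~\ref{t51}: a morphism $\SS^1 \to D$ would be represented by a proper constructible bundle $X \to \Delta^1$ with fiberwise solid 1-framing, carrying $\SS^1$ to $D$; restricting to the equalizer of two rotations (pulled back from $\WW^{\op} \lacts \SS^1$) yields a limit diagram that, by direct inspection in the disk $D$, is empty, forcing the space of such morphisms to be empty.

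For the hom-space identification in the remaining direction, we invoke Lemma~\ref{t87} (which gives that the target-evaluation $\cA \to \fB\End_{\Mfld^{\sof}}(\SS^1)$ is a coCartesian fibration) and apply Lemma~\ref{t200}(3), identifying the hom-space $\Hom(D, \SS^1)$ in the parametrized join with $|\cA^{D/}_{|\SS^1}|$, the $\infty$-groupoid completion of the category of factorizations $D \to D' \xra{\sf ref} \SS^1$. It remains to show the canonical comparison map from this groupoid completion to $\Hom_{\Mfld^{\sof}}(D, \SS^1)$ is an equivalence. This will be accomplished via a factorization argument of the type used in the proof of Lemma~\ref{t77}: given any $f \colon D \to \SS^1$ in $\Mfld^{\sof}$, representing $f$ by a proper constructible bundle $X \to \Delta^1$ and forming the link $L = \Link_D(X)$, we produce an essentially unique factorization
\[
f \colon D \xra{{\sf Cylr}(\pi)} L \xra{{\sf Cylo}(\gamma)} \SS^1
\]
with $\gamma$ a solidly 1-framed refinement and $\pi$ a framed proper constructible bundle sending 1-strata to 1-strata.

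The main obstacle will be executing this last factorization step in full generality: unlike the setting of Lemma~\ref{t77}, where the target $\DD^1$ is contractible and $L$ is forced to be a refinement of a linearly directed quiver, here $L$ must refine $\SS^1$ and the uniqueness of factorization requires showing that the link construction is universal among all factorizations with refinement second factor. Once this is established, the factorization system $(\text{closed-creation}, \text{refinement})$ over $\SS^1$ shows that the groupoid completion $|\cA^{D/}_{|\SS^1}|$ maps by an equivalence onto $\Hom_{\Mfld^{\sof}}(D, \SS^1)$, completing the proof.
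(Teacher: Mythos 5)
Your reduction is set up correctly and matches the paper's: joint surjectivity on objects, emptiness of $\Hom_{\Mfld^{\sf con,\sofr}}(\SS^1,D)$, and the identification of the join's hom-space $\Hom(D,\SS^1)$ with the $\infty$-groupoid-completion of $(\cA_{|\SS^1})^{D/}$ via \Cref{t87} and \Cref{t200}(3). The gap is in how you propose to finish. You plan to produce, for each $f\colon D\to\SS^1$, an ``essentially unique'' factorization $D\to L\xra{\refi}\SS^1$ via the link, with the link construction ``universal among all factorizations with refinement second factor,'' and to conclude from there. No such universal factorization exists. What must be shown is that for each $f$ the fiber of $\bigl|(\cA_{|\SS^1})^{D/}\bigr|\to\Hom_{\Mfld^{\sf con,\sofr}}(D,\SS^1)$ over $f$ --- equivalently the classifying space of the category $\bcD{\sf isk}(\SS^1)^{f/}$ of factorizations of $f$ through refinements of $\SS^1$ --- is contractible. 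That category has neither an initial nor a terminal object: already for $D=\DD^0$ it is a variant of $\para$, whose contractible classifying space reflects (co)siftedness of the paracyclic category rather than the presence of a universal object. If an initial object existed, \Cref{t88} would be a triviality; instead its proof is a genuinely homotopical argument, nullhomotoping arbitrary maps from spheres into the classifying space by means of links and parametrized cylinders of proper constructible bundles over cones on disks. The link factorization you describe produces \emph{one} object of the factorization category, not an initial one, so your step cannot be carried out as stated. The correct completion is the paper's: identify the fiber with $\bcD{\sf isk}(\SS^1)^{f/}$ and invoke \Cref{t88} (finality of $\bcD{\sf isk}(M)\to\cDisk^{\sf sfr}_{1/M}$) together with Quillen's Theorem A.

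A secondary issue: your argument that $\Hom_{\Mfld^{\sf con,\sofr}}(\SS^1,D)=\emptyset$ transplants the equalizer-of-rotations trick from the proof of \Cref{t51}, but that trick is tailored to $\SS^1\in\M^{\sf con}$ being a \emph{formally adjoined} colimit over $\para$ (a morphism out of it is a cone, which one obstructs by an equalizer computation). In $\Mfld^{\sf con,\sofr}$ the circle is not presented as such a colimit, so ``restricting to the equalizer'' has no evident meaning for a single proper constructible bundle $X\to\Delta^1$. The paper's argument is elementary and different: if the fiber of a proper constructible bundle over $\Delta^{\{0\}}$ has no $0$-dimensional strata then no fiber does, whereas every object of $\Disk^{\sf con,\sofr}$ has at least one $0$-dimensional stratum.
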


\begin{proof}

Observe that each object in $\Mfld^{\sf con,\sofr}$ is either equivalent with $\SS^1$ or an object in $\Disk^{\sf con,\sofr} \subset \Mfld^{\sf con,\sofr}$.
Therefore, the functor~\Cref{e87} is surjective on objects.

It remains to show~\Cref{e87} is fully faithful.
Clearly, the restriction of~\Cref{e87} to $\Disk^{\sf con,\sofr}$ and to $\fB \End_{\Mfld^{\sf con,\sofr}}(\SS^1)$ is fully faithful.
Next, observe that $\Hom_{\Mfld^{\sofr}}(\SS^1 , D) = \emptyset$ for each $D\in \Disk^{\sofr}\subset \Mfld^{\sofr}$.
Indeed, for $X \to \Delta^1$ a proper constructible bundle, if its fiber over $\Delta^{\{0\}} \subset \Delta^1$ has no 0-dimensional strata, then its fiber over any point in $\Delta^1$ has no 0-dimensional strata.
Therefore, since $\SS^1$ has no 0-dimensional strata while each object $D\in \Disk^{\sf con,\sofr}$ has at least one 0-dimensional stratum, there are no proper constructible bundles $X \to \Delta^1$ for which $X_{|\Delta^{\{0\}}} \cong \SS^1$ and $X_{|\Delta^{\{1\}}} \in \Disk^{\sf con,\sofr}$.
It remains to show that, for $D\in \Disk^{\sf con,\sofr}$, the map between spaces of morphisms induced by~\Cref{e87},
\begin{equation}
\label{e89}
\Hom_{
\Disk^{\sf con,\sofr}
\underset{
\cA 
}
\bigstar
\fB \End_{\Mfld^{\sf con,\sofr}}(\SS^1)
}
\Bigl(
D
,
\SS^1
\Bigr)
\longrightarrow
\Hom_{\Mfld^{\sf con,\sofr}}(D,\SS^1)
~,
\end{equation}
is an equivalence.
By construction of the parametrized join, the domain of this map~\Cref{e89} is the $\infty$-groupoid-completion of the $\infty$-overundercategory
$
\cA^{D/}_{/\SS^1}
$.
\Cref{t87} implies the canonical functor $(\cA_{|\SS^1})^{D/} \to \cA^{D/}_{/\SS^1}$ is a right adjoint.
Therefore, the map~\Cref{e89} is an equivalence provided the resulting composite functor
\[
(\cA_{|\SS^1})^{D/}
\longrightarrow
\cA^{D/}_{/\SS^1}
\longrightarrow
\Hom_{\Mfld^{\sf con,\sofr}}(D,\SS^1)
~,
\]
witnesses an $\infty$-groupoid-completion.
Equivalently, for each point $(D \xra{f} \SS^1)$ in the codomain of this functor,
we must verify that the fiber $\bigl( (\cA_{|\SS^1})^{D/} \bigr)_{|f}$ has contractible $\infty$-groupoid-completion.
Now, recognize this $\infty$-category $\bigl( (\cA_{|\SS^1})^{D/} \bigr)_{|f} \simeq \bcD{\sf isk}(\SS^1)^{f/}$ as the $\infty$-undercategory with respect to the canonical functor $\bcD{\sf isk}(\SS^1) \to {\Disk^{\sofr}}_{/\SS^1}$.
Through Quillen's Theorem~A, \Cref{t88} ensures that this $\infty$-undercategory indeed has contractible classifying space.

\end{proof}

\begin{lemma}
\label{t86}
The functor~\Cref{e86} is fully faithful.
Its image consists of those objects $D \in \Disk^{\sf con,\sofr}$ that admit a refinement morphism $D\to \SS^1$ in $\Mfld^{\sf con,\sofr}$.
\end{lemma}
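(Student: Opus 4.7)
The plan is to handle the image characterization first (it is essentially immediate from the definition of $\cA$) and then focus the bulk of the work on full faithfulness. Since $\cA$ is a full $\infty$-subcategory of $\Ar(\Mfld^{\sf con,\sofr})$ whose objects are refinements $D \to \SS^1$ with $D \in \Disk^{\sf con,\sofr}$, the image of evaluation-at-source consists precisely of those $D \in \Disk^{\sf con,\sofr}$ that appear as the source of some refinement to $\SS^1$, which is exactly the claimed description.

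For full faithfulness, let $\phi = (f\colon D \to \SS^1)$ and $\phi' = (f'\colon D' \to \SS^1)$ be objects of $\cA$. Because $\cA$ sits inside an arrow $\infty$-category and $\Disk^{\sf con,\sofr} \subseteq \Mfld^{\sf con,\sofr}$ is a full $\infty$-subcategory, the morphism space is the pullback
\[
\Hom_{\cA}(\phi,\phi') \;\simeq\; \Hom_{\Mfld^{\sf con,\sofr}}(D,D') \underset{\Hom_{\Mfld^{\sf con,\sofr}}(D,\SS^1)}{\times} \End_{\Mfld^{\sf con,\sofr}}(\SS^1),
\]
with the two legs given by $g \mapsto f'\circ g$ and $\pi \mapsto \pi \circ f$. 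Under this description, \Cref{e86} induces on hom spaces the projection onto the first factor. Hence full faithfulness reduces to the assertion that, for every $g \in \Hom_{\Mfld^{\sf con,\sofr}}(D,D')$, there exists a \emph{unique} $\pi \in \End_{\Mfld^{\sf con,\sofr}}(\SS^1)$ with $\pi \circ f = f' \circ g$. This is the heart of the argument.

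To prove the key claim I would proceed geometrically, using two ingredients. First, $\End_{\Mfld^{\sf con,\sofr}}(\SS^1) \simeq \WW^{\op}$ by \Cref{t84}, and every such endomorphism is a closed-creation morphism (since $\SS^1$ has a unique, one-dimensional, stratum). Second, any refinement $f \colon D \to \SS^1$ is, by construction, the image under ${\sf Cylo}$ of a refinement of underlying stratified spaces, so that $|D| \simeq |\SS^1|$ compatibly with the solid framings. Combining these, I would argue that any morphism $h \colon D \to \SS^1$ in $\Mfld^{\sf con,\sofr}$ admits a canonical (closed-creation) factor through $f$: applying the closed-active factorization system (the $\Mfld^{\sofr}$-analog of \Cref{t41}) to $h$ yields $D \xra{\rm ref} \widetilde{D} \xra{\sf cls.cr} \SS^1$, and uniqueness of refinements with a prescribed source identifies $\widetilde{D}$ with $D$ and $D \to \widetilde{D}$ with $f$ up to a contractible choice; the closed-creation factor is then the sought $\pi$. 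Applying this with $h = f'\circ g$ produces the unique $\pi$.

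The main obstacle will be packaging the closed-active factorization and uniqueness of refinements cleanly in the framework of \cite{AFR2}, since the relevant factorization system on $\Mfld^{\sof,\sofr}$ is not stated explicitly in the excerpt. A tidier alternative, which I would pursue if the direct route becomes burdensome, is to use \Cref{t87}: it states that $\ev_t \colon \cA \to \fB\End_{\Mfld^{\sf con,\sofr}}(\SS^1) \simeq \fB\WW^{\op}$ is a coCartesian fibration, and unstraightening identifies $\cA$ with the $\WW^{\op}$-action on $\bcD{\sf isk}(\SS^1)$ given by pulling back refinements along endomorphisms of $\SS^1$. Combining with \Cref{t75} and \Cref{t85}, one recognizes $\cA$ as equivalent to $\w{\bLambda}^{\op}$ fibered over $\BW^{\op}$ (via the opposite of the Cartesian fibration of \Cref{t120}). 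Under this identification, \Cref{e86} corresponds to the opposite of the fully faithful inclusion $\w{\bLambda} \hookrightarrow \Quiv^{\sf con}$ of \Cref{t6'}, giving full faithfulness and the claimed image at once.
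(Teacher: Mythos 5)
Your reduction is the right one and agrees with the paper's: since $\cA$ is a full subcategory of $\Ar(\Mfld^{\sf con,\sofr})$, the hom-space is the pullback you write, and full faithfulness becomes the assertion that each square with sides $f$, $g$, $f'$ admits an essentially unique filler $\pi\in\End_{\Mfld^{\sf con,\sofr}}(\SS^1)$ (the paper's diagram \Cref{e88}; uniqueness there comes from refinements being homeomorphisms of underlying stratified spaces). The gap is in producing $\pi$. Route A appeals to a factorization of $h=f'\circ g$ as a refinement followed by a closed-creation morphism, but the factorization that actually exists on $\cBun$/$\Mfld^{\sofr}$ (via the link construction) goes in the opposite order: a general morphism factors as ${\sf Cylr}$ of a proper constructible bundle followed by ${\sf Cylo}$ of a refinement. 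Converting the link factorization $D\xla{\pi}\sL\xra{\gamma}\SS^1$ into the shape you want is the entire content of the paper's existence argument: one must collapse the components of $\pi^{-1}(d)$ over the $0$-strata $d$ of $D$ that refine onto $\DD^1$, and then recognize the resulting solidly framed proper constructible bundle $\sL'\to D$ as a covering space, hence base-changed from a covering $\hat{\sL'}\to\hat D$ of circles. None of this is replaced by citing a factorization system. Route B is circular: the identification of $\cA$ with $\w{\bLambda}^{\op}$ compatibly with $\ev_s$ is exactly what the paper \emph{deduces} from this lemma in the proof of \Cref{t83}; \Cref{t87} controls $\ev_t$, and \Cref{t75}, \Cref{t85} identify the ambient category and the objects in the image, but none of them compute the hom-spaces of $\cA$ or the effect of $\ev_s$ on them.

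Separately, your key claim is false as stated ``for every $g$'', and you should confront this. Take $g\colon D\to D'$ corresponding under \Cref{t75} to a constant functor $\Free(\Gamma_{D'})\to\Free(\Gamma_D)$, i.e.\ $g$ factoring through $\DD^0$. Then $f'\circ g$ lies in the constant component $\Gamma_D^{(0)}$ of $\Hom_{\M^{\sf con}}(\Gamma_D,\SS^1)\simeq\Gamma_D^{(0)}\amalg\bigl(\sZ^{\sf dir}(\Gamma_D)\setminus\Gamma_D^{(0)}\bigr)\times\WW$, whereas by \Cref{t51'}(4) every composite $\pi\circ f$ stays in the non-constant component containing $f$; so the fiber over such $g$ is empty, not contractible. (This is also where the paper's own construction degenerates: a whole circle in $\pi^{-1}(d)$ does not refine onto $\DD^0$ or $\DD^1$.) Consistently, the image your Route B would produce is $\w{\bLambda}^{\op}$, which by \Cref{d23} and \Cref{t6'} is a \emph{non-full} (monomorphic) subcategory of $(\Quiv^{\sf con})^{\op}$ --- constant functors between cyclic graphs are excluded. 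So any correct write-up must either restrict the fullness claim to the non-degenerate morphisms $g$ or weaken the conclusion to: $\ev_s$ is a monomorphism whose image is $\w{\bLambda}^{\op}$, which is what the proof of \Cref{t83} actually uses.
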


\begin{proof}
Such objects are clearly all of the objects in the image of~\Cref{e86}.
It remains to show this functor is fully faithful.
We first show the induced map $\Obj(\cA) \to \Obj(\Disk^{\sofr})$ is a monomorphism.
\begin{itemize}
\item[]
Let $D \in \Disk^{\sofr}$ be in the image of~\Cref{e86}.
Using that refinement morphisms in $\Mfld^{\sofr}$ are, in particular, homeomorphisms between underlying stratified spaces, observe that the post-composition witnesses the space 
$\Hom_{\Mfld^{{\sf ref},\sofr}}(D,\SS^1)$ as an $\Aut_{\Mfld^{\sf con,\sofr}}(\SS^1)$-torsor.  
Therefore, there is a unique object $(D \xra{{\sf ref}}\hat{D}) \in \bigl( \fB \End_{\Mfld^{\sofr}}(\SS^1) \bigr)^{D/^{{\sf ref}}} \subset (\Mfld^{\sf con,\sofr})^{D/^{{\sf ref}}}$.  
It follows that $\Obj(\cA) \to \Obj(\Disk^{\sofr})$ is a monomorphism.
\end{itemize}
Now, let $D \xra{f} E$ be a morphism in $\Disk^{\sofr}$ between two objects that are in the image of~\Cref{e86}.
To show~\Cref{e86} is fully faithful, we must show there is a unique lift along~\Cref{e86} of this morphism to $\cA$.
Using that $\Obj(\cA) \to \Obj(\Disk^{\sofr})$ is a monomorphism, this is to establish a unique filler in the diagram in $\Mfld^{\sofr}$:
\begin{equation}
\label{e88}
\begin{tikzcd}
D
\arrow{r}{f}
\arrow{d}[swap]{{\sf ref}}
&
E
\arrow{d}{{\sf ref}}
\\
\hat{D}
\arrow[dashed]{r}
&
\hat{E}
\end{tikzcd}
~.
\end{equation}
Using that refinement morphisms in $\Mfld^{\sofr}$ are, in particular, homeomorphisms between underlying stratified spaces, any such filler must be unique.  
So we need only ensure such a filler exists.  
By definition of $\Mfld^{\sofr}$, the composite morphism $D \xra{f} E \xra{\sf ref} \hat{E}$ is implemented by a proper constructible bundle $X \to \Delta^1$ with a fiberwise solid 1-framing, together with identifications $X_{|\Delta^{\{0\}}} \simeq D$ and $X_{|\Delta^{\{1\}}} \simeq \hat{E}$ in $\Mfld^{\sofr}$.  
Consider the link $\sL := {\sf Link}_{X_{|\Delta^{\{0\}}}}(X)$.
By construction, it fits into a span among stratified spaces $X_{|\Delta^{\{0\}}} \xla{\pi} \sL \xra{\gamma} X_{|\Delta^{\{1\}}}$ in which $\pi$ is a proper constructible bundle and $\gamma$ is a refinement.  
Pulling back the solid 1-framing of $X_{|\Delta^{\{1\}}}\simeq \hat{E}$ along $\gamma$ supplies a solid 1-framing on $\sL$ for which the maps $\pi$ and $\gamma$ are solidly 1-framed, and as so they factor the composite $D \xra{f} E \xra{\sf ref} \hat{E}$ in $\Mfld^{\sofr}$:
\[
\begin{tikzcd}[column sep=1.5cm]
D
\arrow{r}{f}
\arrow{d}[swap]{{\sf Cylr}(\pi)}
&
E
\arrow{d}{{\sf ref}}
\\
\sL
\arrow{r}[swap]{{\sf Cylo}(\gamma)}
&
\hat{E}
\end{tikzcd}
~.
\]
Using that $\pi$ respects solid 1-framings, for each $0$-dimensional stratum $d \in D^{(0)}\subset D$, each connected component of the constructible closed subspace $\pi^{-1}(d) \subset \sL$ admits a framed refinement onto $\DD^0$ or $\DD^1$.  
Collapsing to a point each such component that refines onto $\DD^1$ results in a solidly 1-framed stratified space $\sL'$, still fitting into a span among solidly 1-framed stratified spaces $D \xla{\pi'} \sL' \xra{\gamma'} \hat{E}$ in which $\gamma'$ is a solidly 1-framed refinement and $\pi'$ is a solidly 1-framed proper constructible bundle with the feature that $i$-dimensional strata are carried to $i$-dimensional strata.
By construction, these data fit into a diagram in $\Mfld^{\sofr}$:
\[
\begin{tikzcd}[column sep=1.5cm]
D
\arrow{r}{f}
\arrow{d}[swap]{{\sf Cylr}(\pi')}
&
E
\arrow{d}{{\sf ref}}
\\
\sL'
\arrow{r}[swap]{{\sf Cylo}(\gamma')}
&
\hat{E}
\end{tikzcd}
~.
\]
Using that $\Obj(\cA) \to \Obj(\Disk^{\sofr})$ is a monomorphism, there is an identification $\hat{\sL'} \xra{\simeq} \hat{E}$ in $\Mfld^{\sofr}$ under $\sL'$ from the underlying framed 1-manifold of $\sL'$.
Now, being a constructible bundle, the map $\pi'$ restricts over each 1-dimensional stratum of $D$ as a framed covering space.
Because $\pi'$ is solidly 1-framed, it follows that it restricts over a neighborhood of each 0-dimensional stratum as a solidly 1-framed local homeomorphism.
Since $\pi'$ is proper, we conclude that $\pi'$ is a solidly 1-framed covering space. 
In other words, $\pi'$ is is the base-change of a framed covering space, necessarily from $\hat{\sL'}$ to $\hat{D}$:
there is a filler among solidly 1-framed stratified spaces
\[
\begin{tikzcd}
\sL'
\arrow{d}[swap]{\pi'}
\arrow{r}{\sf ref}
&
\hat{\sL'}
\arrow[dashed]{d}{\hat{\pi'}}
\\
D
\arrow{r}[swap]{\sf ref}
&
\hat{D}
\end{tikzcd}
\]
witnessing a pullback.
The filler in this diagram among solidly 1-framed stratified spaces determines a filler in the diagram in $\Mfld^{\sofr}$:
\[
\begin{tikzcd}[column sep=2cm]
D
\arrow{rr}{f}
\arrow{dd}[swap]{{\sf ref}}
\arrow{rd}[sloped]{{\sf Cylr}(\pi')}
&
&
E
\arrow{dd}{{\sf ref}}
\\
&
\sL'
\arrow{d}[swap]{\sf ref}
\arrow{rd}[sloped]{{\sf Cylo}(\gamma')}
\\
\hat{D}
\arrow[dashed]{r}[swap]{{\sf Cylr}(\hat{\pi'})}
&
\hat{\sL'}
\arrow{r}
&
\hat{E}
\end{tikzcd}
~.
\]
The desired filler in~\Cref{e88} is the bottom horizontal composite.

\end{proof}

\begin{lemma}
\label{t83}
The identification~\Cref{e82} extends as an identification 
\[
\Mfld^{\sf con,\sofr}
~\simeq~ 
\M^{\sf con} 
\]
between $(\infty,1)$-categories under $\bDelta^{\op}$.

\end{lemma}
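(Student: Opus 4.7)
The plan is to compare the two parametrized-join presentations of these $\infty$-categories. On the combinatorial side, \Cref{t51} gives
\[
\M^{\sf con} \;\simeq\; (\Quiv^{\sf con})^{\op} \underset{\w{\bLambda}^{\op}} \bigstar \BW^{\op},
\]
while on the geometric side, \Cref{t89} gives
\[
\Mfld^{\sf con,\sofr} \;\simeq\; \Disk^{\sf con,\sofr} \underset{\cA} \bigstar \fB\End_{\Mfld^{\sf con,\sofr}}(\SS^1).
\]
It suffices to identify the spans underlying these two joins, compatibly with their shared cosimplicial structure under $\bDelta^{\op}$.

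First I would identify the outer terms. The restriction \Cref{e82} of \Cref{t75} supplies $\Disk^{\sf con,\sofr} \simeq (\Quiv^{\sf con})^{\op}$ canonically under $\bDelta^{\op}$. \Cref{t84} yields $\End_{\Mfld^{\sofr}}(\SS^1) \simeq \WW^{\op}$ as continuous monoids, hence $\fB\End_{\Mfld^{\sf con,\sofr}}(\SS^1) \simeq \BW^{\op}$. Next I would identify the middle term. By \Cref{t86}, the source-evaluation realizes $\cA$ as the full subcategory of $\Disk^{\sf con,\sofr}$ on those $D$ which admit a refinement to $\SS^1$; by \Cref{t85}, this subcategory corresponds under \Cref{e82} to the image of $\w{\bLambda}^{\op} \hookrightarrow (\Quiv^{\sf con})^{\op}$. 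Thus $\cA \simeq \w{\bLambda}^{\op}$ as categories under $(\Quiv^{\sf con})^{\op}$.

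It then remains to match the target-evaluation $\cA \to \fB\End_{\Mfld^{\sf con,\sofr}}(\SS^1)$ with the (opposite of the) functor $\w{\bLambda} \to \BW$ from \Cref{t95}(4) used in \Cref{e66}. Both functors are coCartesian fibrations: the geometric one by \Cref{t87}, and the combinatorial one by the opposite of the Cartesian fibration of \Cref{t120}. Having already matched the monoids of endomorphisms of $\SS^1$ via \Cref{t84}, the identification reduces to checking that their coCartesian-lift constructions agree. On the geometric side, a coCartesian lift is built by pulling back a refinement $D \to \SS^1$ along a framed self-cover $\SS^1 \to \SS^1$ (as in the proof of \Cref{t87}); on the combinatorial side, \Cref{d31} pulls back a cyclically-directed graph $\Gamma \to \fB \ZZ \simeq |\Gamma|$ along a morphism in $\BW^{\op}$. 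Under the identification $\WW^{\op} \simeq $ (framed self-covers of $\SS^1$) of \Cref{t84}, these two pullback operations are literally the same construction performed on two presentations of the same stratified-space datum, so the coCartesian-lift structures coincide.

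Once the three spans are matched, the universal property of parametrized joins (\Cref{d200}, \Cref{t200}) supplies the equivalence $\M^{\sf con} \simeq \Mfld^{\sf con,\sofr}$, and it is canonically under $\bDelta^{\op}$ because \Cref{e82} is. The main obstacle is the compatibility of the coCartesian-lift constructions in the previous paragraph: this is a careful unwinding of how \Cref{t84}'s identification of the Witt monoid as framed self-covers interacts with the pullback of refinements, and it is essentially the content of \Cref{rmk.circle.moduli}, which asserts precisely that $\w{\bLambda}^{\op}$ sits inside $\cMfd^{\sf sfr}_{1/\SS^1}$ as refinements to $\SS^1$, with the $\WW^{\op}$-action arising through $\WW^\op \simeq \End_{\cMfd^{\sf sfr}_1}(S^1)$.
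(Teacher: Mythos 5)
Your proposal is correct and follows essentially the same route as the paper's proof: the paper also matches the two parametrized-join presentations term by term via the diagram \Cref{e91}, using \Cref{e82}, \Cref{t84}, \Cref{t86}, and \Cref{t85} to identify the outer and middle terms of the spans, and then checks commutativity of the right-hand square by inspecting how $\WW^{\op}\simeq\End_{\Mfld^{\sf con,\sofr}}(\SS^1)$ is constructed over $\Nx$ and under $\TT$ (your comparison of the two pullback/coCartesian-lift constructions is the same verification). The only cosmetic difference is that the paper interposes the pullback $\cA_{|\BW^{\op}}$ before comparing with $\w{\bLambda}^{\op}$, whereas you identify $\cA\simeq\w{\bLambda}^{\op}$ directly.
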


\begin{proof}

We explain the diagram among $\infty$-categories:
\begin{equation}
\label{e91}
\begin{tikzcd}[column sep=1.5cm]
\Disk^{\sf con,\sofr}
&
\cA
\arrow{r}{\ev_t}
\arrow{l}[swap]{\ev_s}{\Cref{e86}}
&
\fB \End_{\Mfld^{\sf con,\sofr}}(\SS^1)
\\
\Disk^{\sf con,\sofr}
\arrow{u}[sloped, anchor=south]{=}
\arrow{d}[sloped, anchor=south]{\sim}[swap]{\Cref{e82}}
&
\cA_{|\BW^{\op}}
\arrow{d}[sloped, anchor=north]{\sim}
\arrow{u}[sloped, anchor=south]{\sim}
\arrow{r}
\arrow{l}
&
\BW^{\op}
\arrow{u}[sloped, anchor=south]{\sim}[swap]{\Cref{e90}}
\arrow{d}[sloped, anchor=south]{=}
\\
(\Quiv^{\sf con})^{\op}
&
\w{\bLambda}^{\op}
\arrow{r}{\Cref{paracyclic.cyclic.epicyclic}}[swap]{\rm localization}
\arrow{l}{\rm inclusion}
&
\BW^{\op}
\end{tikzcd}
~.
\end{equation}
The top-right square is defined as a pullback.
\Cref{t84} states that the functor~\Cref{e90} is an equivalence, which implies the upward functor $\cA_{|\BW^{\op}} \to \cA$ is an equivalence.
As noted earlier in the body, \Cref{t75} implies the functor $\Disk^{\sf con,\sofr} \xra{\Cref{e82}} (\Quiv^{\sf con})^{\op}$ is an equivalence.
\Cref{t86} thereafter implies the composite functor $\cA \to \Disk^{\sf con,\sofr} \xra{\Cref{e82}} (\Quiv^{\sf con})^{\op}$ is fully faithful.
\Cref{t86} implies the image of this fully faithful composite functor is $\w{\bLambda} \subset \Quiv^{\sf con}$ the cyclically finite directed graphs.  
This establishes the equivalence $\cA_{|\BW^{\op}} \xra{\simeq} (\w{\bLambda}^{\op})^{\lcone}$, as indicated.
Finally, inspecting the construction of the morphism $\NN^\times \ltimes \TT = \WW^{\op} \xra{\simeq} \End_{\Mfld^{\sf con,\sofr}}(\SS^1)$ over $\NN^\times$ and under $\TT$ reveals that the bottom right square canonically commutes.

Reading~\Cref{e91} as a (vertical) span by equivalences of (horizontal) spans, we obtain equivalences among parametrized joins as in this diagram among $\infty$-categories:
\[
\begin{tikzcd}[column sep=1.5cm, row sep=1.5cm]
\Disk^{\sf con,\sofr}
\underset{
\cA_{|\BW^{\op}}
}
\bigstar
\BW^{\op}
\arrow{r}{\sim}
\arrow{d}[sloped, anchor=north]{\sim}
&
\Disk^{\sf con,\sofr}
\underset{
\cA
}
\bigstar
\fB \End_{\Mfld^{\sf con,\sofr}}(\SS^1)
\arrow{r}{\Cref{e87}}[swap]{\simeq}
&
\Mfld^{\sf con,\sofr}
\arrow[dashed]{d}[sloped, anchor=south]{\sim}
\\
(\Quiv^{\sf con})^{\op}
\underset{
\w{\bLambda}^{\op}
}
\bigstar
\BW^{\op}
\arrow{rr}{\sim}[swap]{\Cref{f51}}
&
&
\M^{\sf con}
\end{tikzcd}
~.
\]
\Cref{t51} states that the functor~(\ref{f51}) is an equivalence.
\Cref{t89} states that the functor~\Cref{e87} is an equivalence.
We conclude a unique dashed filler, which is necessarily an equivalence.  

To finish the identification $\Mfld^{\sf con,\sofr} \simeq \M^{\sf con}$ indeed lies under $\bDelta^{\op}$ because, by construction, it extends the identification $\Disk^{\sf con,\sofr} \simeq (\Quiv^{\sf con})^{\op}$ of \Cref{t75}.

\end{proof}

\begin{prop}
\label{t76}
The identification of \Cref{t75} extends as an identification 
\[
\Mfld^{\sofr}
~\simeq~ 
\M
\]
between $(\infty,1)$-categories under $\bDelta^{\op}$.

\end{prop}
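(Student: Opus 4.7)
The plan is to apply the universal property of $\M$ recorded in \Cref{t15.2} to the composite $\Quiv^{\op} \simeq \Disk^{\sofr} \hookrightarrow \Mfld^{\sofr}$ in order to produce a finite-product-preserving extension $F \colon \M \longrightarrow \Mfld^{\sofr}$, and then to verify that $F$ is an equivalence by reducing to the already-established equivalence $\M^{\sf con} \xra{\simeq} \Mfld^{\sf con,\sofr}$ of \Cref{t83}. That $F$ lies under $\bDelta^{\op}$ will be automatic: by construction its restriction to $(\Quiv^{\sf con})^{\op}$ is the composite $(\Quiv^{\sf con})^{\op} \simeq \Disk^{\sf con,\sofr} \hookrightarrow \Mfld^{\sf con,\sofr} \hookrightarrow \Mfld^{\sofr}$, whose further restriction along $\rho^{\op}$ is precisely the cellular realization $\bDelta^{\op} \xra{\langle - \rangle} \Mfld^{\sofr}$ by \Cref{t75}.

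First I would verify the hypotheses of \Cref{t15.2}. The $\infty$-category $\Mfld^{\sofr}$ admits finite products, given by disjoint union $\sqcup$ of underlying solidly 1-framed stratified spaces, and the fully faithful inclusion $\Disk^{\sofr} \hookrightarrow \Mfld^{\sofr}$ preserves these products (both sides use the same disjoint-union operation). Moreover, the colimit of the composite $\para \xra{\Cref{e21}} (\Quiv^{\sf con})^{\op} \simeq \Disk^{\sf con,\sofr} \hookrightarrow \Mfld^{\sofr}$ exists and is equivalent to the circle $\SS^1 \in \Mfld^{\sofr}$: \Cref{t83} identifies this colimit computed in $\M^{\sf con} \simeq \Mfld^{\sf con,\sofr}$ with the circle, and since the circle already lies in $\Mfld^{\sf con,\sofr}$ the inclusion $\Mfld^{\sf con,\sofr} \hookrightarrow \Mfld^{\sofr}$ preserves this particular colimit. \Cref{t15.2} thus produces the desired functor $F \colon \M \to \Mfld^{\sofr}$ with $F(\SS^1) \simeq \SS^1$.

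To show $F$ is an equivalence, I would argue essential surjectivity using \Cref{t69}, which describes each object of $\M$ as a finite $\sqcup$-product of circles and connected quivers, combined with the geometric fact (from \S\ref{sec.recollections}) that each object of $\Mfld^{\sofr}$ is a finite disjoint union of connected solidly 1-framed stratified spaces, each of which is either a circle or an object of $\Disk^{\sf con,\sofr}$. For fully faithfulness, \Cref{t37.2} expresses $\Hom_{\M}$ between arbitrary objects as an iterated product-coproduct of hom spaces between connected objects. The analogous decomposition in $\Mfld^{\sofr}$ is what requires the main work: the product over connected summands of the target is automatic (since $\sqcup$ is the categorical product), but the coproduct decomposition
\[
\Hom_{\Mfld^{\sofr}}(M_- \sqcup M_+ , N)
~\simeq~
\Hom_{\Mfld^{\sofr}}(M_- , N)
\amalg
\Hom_{\Mfld^{\sofr}}(M_+ , N)
\]
for connected $N$ requires a geometric argument: given a proper constructible bundle $X \to \Delta^1$ with $X_{|\Delta^{\{0\}}} \cong M_- \sqcup M_+$ and $X_{|\Delta^{\{1\}}} \cong N$, the link $\sL := {\sf Link}_{X_{|\Delta^{\{1\}}}}(X) \to N$ is a refinement, hence a homeomorphism on underlying stratified spaces, so $\sL$ is connected; the proper constructible bundle $\sL \to M_- \sqcup M_+$ therefore factors through a unique cofactor $M_\pm$, determining the summand. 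Once this decomposition is in hand, fully faithfulness of $F$ reduces to fully faithfulness on connected objects, which is supplied by \Cref{t83}. I expect the main obstacle to be exactly this connected-component analysis via links; everything else is a formal matching of the two product-coproduct decompositions.
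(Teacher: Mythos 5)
Your proposal is correct and follows essentially the same route as the paper: the paper's proof simply observes that both $\M$ and $\Mfld^{\sofr}$ are freely generated under finite products by their connected full subcategories and then invokes \Cref{t83}, which is exactly the argument you unpack (your link-based verification of the decomposition $\Hom_{\Mfld^{\sofr}}(M_-\sqcup M_+,N)\simeq \Hom_{\Mfld^{\sofr}}(M_-,N)\amalg\Hom_{\Mfld^{\sofr}}(M_+,N)$ for connected $N$ supplies the detail the paper leaves implicit in the phrase ``freely generates via categorical products''). One small slip: the link you want is ${\sf Link}_{X_{|\Delta^{\{0\}}}}(X)$, taken of the fiber over $\Delta^{\{0\}}$, since that is the one fitting into the span you describe (proper constructible bundle to $M_-\sqcup M_+$, refinement to $N$).
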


\begin{proof}

\Cref{t23} states that $\M$ is freely generated from its full $\infty$-subcategory $\M^{\sf con}\subset \M$ via categorical products.
Using that coproducts of underlying stratified spaces implements products in $\Mfld^{\sofr}$, the full $\infty$-subcategory $\Mfld^{{\sf con},{\sofr}} \subset \Mfld^{{\sofr}}$ freely generates via categorical products.  
The result is therefore implied by \Cref{t83}.

\end{proof}

\begin{notation}
Denote the equivalence of \Cref{t76} as 
\[
\ul{(-)} \colon \Mfld^{\sofr} 
\longrightarrow
\M
~,\qquad
M
\longmapsto
\ul{M}
~.
\]
\end{notation}

We can now articulate, and prove, the sense in which combinatorial factorization homology agrees with factorization homology.
\begin{theorem}
\label{t90}
Let $\cX$ be an $\infty$-category that admits finite limits and geometric realizations such that, for each $X\in \cX$, the functor $X\times - \colon \cX \to \cX$ preserves geometric realizations.
The diagram among $\infty$-categories,
\[
\begin{tikzcd}
&
\fCat_1[\cX]
\arrow{rd}[sloped]{\int}
\arrow{ld}[sloped]{\int'}
\\
\Fun(\Mfld^{\sofr},\cX)
\arrow{rr}[swap]{\ul{(-)}^\ast}{\sim}
&&
\Fun(\M , \cX)
\end{tikzcd}
~,
\]
canonically commutes.  
In particular, for each category-object $\cC$ in $\cX$, and each solidly 1-framed stratified space $M$, there is a canonical identification in $\cX$:
\[
\int'_M \cC
~\simeq~
\int_{\ul{M}} \cC
~.
\]

\end{theorem}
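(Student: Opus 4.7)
The plan is to reduce the theorem to the equivalence $\ul{(-)} \colon \Mfld^{\sofr} \xra{\simeq} \M$ of \Cref{t76}, which by construction extends the equivalence of \Cref{t75} and lies under $\bDelta^{\op}$. The content is then formal: Kan extensions transport along equivalences of $\infty$-categories. Specifically, \Cref{t75} and \Cref{t76} yield a commutative ladder
\[
\begin{tikzcd}
\bDelta^{\op} \arrow{r}{\langle - \rangle} \arrow{d}{=} & \Disk^{\sofr} \arrow[hook]{r}{\iota} \arrow{d}{\simeq} & \Mfld^{\sofr} \arrow{d}{\simeq}[swap]{\ul{(-)}} \\
\bDelta^{\op} \arrow{r}[swap]{\rho} & \Quiv^{\op} \arrow{r}[swap]{\delta} & \M
\end{tikzcd}
\]
in which the two right vertical arrows are equivalences, and the goal is to show that for $\cC \in \fCat_1[\cX]$ there is a natural equivalence $(\delta_! \rho_\ast \cC) \circ \ul{(-)} \simeq \iota_! \langle - \rangle_\ast \cC$.

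Step one: I will verify that the two right Kan extensions agree under the equivalence on the left square, i.e.\! that for each $\cC \in \fCat_1[\cX]$ there is a canonical equivalence $\langle - \rangle_\ast \cC \simeq (\rho_\ast \cC) \circ \ul{(-)}_{|\Disk^{\sofr}}$ of functors $\Disk^{\sofr} \to \cX$. This follows formally from the universal property of right Kan extensions: because the left square of the ladder commutes with its left vertical arrow an identity and its right vertical arrow an equivalence, $(\rho_\ast \cC) \circ \ul{(-)}_{|\Disk^{\sofr}}$ satisfies the defining universal property of the right Kan extension of $\cC$ along $\langle - \rangle$.

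Step two: Similarly, I will verify that the two left Kan extensions agree under the full equivalence $\ul{(-)}$, i.e.\! for any functor $G \colon \Quiv^{\op} \to \cX$ there is a canonical equivalence $(\delta_! G) \circ \ul{(-)} \simeq \iota_!\bigl(G \circ \ul{(-)}_{|\Disk^{\sofr}}\bigr)$. Again this is a formal consequence of the universal property of left Kan extensions together with the commutativity of the right square and the fact that both of its vertical arrows are equivalences. Composing the two steps and specializing $G = \rho_\ast \cC$ yields the desired natural equivalence $\int_{\ul{M}} \cC \simeq \int'_M \cC$; naturality in $\cC$ then gives the commutativity of the full triangle of functors.

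The substantive content of the theorem is entirely contained in the earlier results \Cref{t75}, \Cref{t83}, and \Cref{t76}, which establish the equivalence $\ul{(-)}$ and its compatibility with the inclusions $\Disk^{\sofr} \hookrightarrow \Mfld^{\sofr}$ and $\bDelta^{\op} \hookrightarrow \Disk^{\sofr}$. There is no genuine mathematical obstacle remaining; the only work is the bookkeeping required to assemble the two Kan extension equivalences coherently, which is automatic since both Kan extensions are being transported along the very same equivalence $\ul{(-)}$ under $\bDelta^{\op}$.
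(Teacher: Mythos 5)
Your proposal is correct and follows essentially the same route as the paper: both arguments reduce the theorem to the ladder of equivalences supplied by \Cref{t75} and \Cref{t76} and then transport the right and left Kan extensions along those equivalences (the paper phrases this by applying $\Fun(-,\cX)$ to the ladder and passing to adjoints, which is the same universal-property argument you give square by square). No gap.
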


\begin{proof}

\Cref{t75} and \Cref{t76} supply a commutative diagram among $\infty$-categories:
\[
\begin{tikzcd}[column sep=1.5cm, row sep=1.5cm]
\bDelta^{\op}
\arrow{r}{\rho}
&
\Quiv^{\op}
\arrow{r}{\delta}
&
\M
\\
\bDelta^{\op}
\arrow{r}[swap]{\langle - \rangle}
\arrow{u}[sloped, anchor=north]{=}
&
\Disk^{\sofr}
\arrow{r}[swap]{\iota}
\arrow{u}[swap]{\ul{(-)}}
&
\Mfld^{\sofr}
\arrow{u}[swap]{\ul{(-)}}
\end{tikzcd}
~,
\]
in which the vertical functors are equivalences.
Applying $\Fun(-,\cX)$ to this diagram gives a commutative diagram among $\infty$-categories
\[
\begin{tikzcd}[column sep=1.5cm, row sep=1.5cm]
\Fun(\bDelta^{\op},\cX)
\arrow[leftarrow]{r}{\rho^*}
&
\Fun(\Quiv^{\op},\cX)
\arrow[leftarrow]{r}{\delta^*}
&
\Fun(\M,\cX)
\\
\Fun(\bDelta^{\op},\cX)
\arrow[leftarrow]{r}[swap]{\langle - \rangle^*}
\arrow[leftarrow]{u}[sloped, anchor=south]{=}
&
\Fun(\Disk^{\sofr},\cX)
\arrow[leftarrow]{r}[swap]{\iota^*}
\arrow[leftarrow]{u}[swap]{\ul{(-)}^*}
&
\Fun(\Mfld^{\sofr},\cX)
\arrow[leftarrow]{u}[swap]{\ul{(-)}^*}
\end{tikzcd}
\]
in which the vertical functors are equivalences.  
Taking right adjoints of the left horizontal functors, and left adjoints of the right horizontal functors, results in a commutative diagram among $\infty$-categories
\[
\begin{tikzcd}[column sep=1.5cm, row sep=1.5cm]
\Fun(\bDelta^{\op},\cX)
\arrow{r}{\rho_*}
&
\Fun(\Quiv^{\op},\cX)
\arrow{r}{\delta_!}
&
\Fun(\M,\cX)
\\
\Fun(\bDelta^{\op},\cX)
\arrow{r}[swap]{\langle - \rangle_*}
\arrow[leftarrow]{u}[sloped, anchor=south]{=}
&
\Fun(\Disk^{\sofr},\cX)
\arrow{r}[swap]{\iota_!}
\arrow[leftarrow]{u}[swap]{\ul{(-)}^*}
&
\Fun(\Mfld^{\sofr},\cX)
\arrow[leftarrow]{u}[swap]{\ul{(-)}^*}
\end{tikzcd}
\]
in which the vertical functors are equivalences.  
The result follows by \Cref{d21} of the functor $\int$ and by the definition of the functor $\int'$.  

\end{proof}

Recall from \Cref{d111} the notion of an excision site.  
The next explains how excision for factorization homology works intrinsic to $\Mfld^{\sofr}$.
\begin{prop}
\label{s100}
Through \Cref{t76}, an excision site $(\w{\Gamma},S,\varphi , M')$ corresponds to a pair $(M,S)$ in which $M \in \Mfld^{\sofr}$ and $S \subset M^{(1)}$ is a finite subset of its 1-dimensional strata.
Furthermore, such a pair $(M,S)$ determines a simplicial object 
\[
\bDelta^{\op}
\xra{~M_\bullet~}
\Mfld^{\sofr}
\]
whose colimit $|M_\bullet| \simeq M$, and such that each canonical morphism $M_p \to M$ is a refinement.
Moreover, for $\cC \in \fCat_1[\cX]$ a category-object in $\cX$, the canonical morphism in $\cX$,
\[
\left|
\int_{M_\bullet} \cC
\right|
\xra{~\simeq~}
\int_M \cC
~,
\]
is an equivalence.

\end{prop}

\begin{proof}
Let $(\w{\Gamma},S,\varphi, M')$ be an excision site in the sense of \Cref{d111}.
Take $M := \colim\left( \bDelta^{\op} \xra{M_\bullet} \M \right) \in \M$ as in \Cref{t4.5}.
Through \Cref{t76}, this object in $\M$ corresponds to an object $M\in \Mfld^{\sofr}$, which we give the same notation.  
The object $\Gamma_0 \sqcup M'\in \M$ corresponds to an object $\w{M} \in \Mfld^{\sofr}$, which is equipped with a refinement $\w{M} \to M$.  
The site of this refinement is a finite subset $S \subset M$.  
In this way, an excision site in the sense of \Cref{d111} determines a pair $(M,S)$ as in the statement of the proposition.

Next, let $(M,S)$ be as in the statement of the proposition.
Consider the coarsest refinement $\w{M} \xra{~r~} M$ such that there is containment 
$r^{-1}S \subset \w{M}^{(0)}$ in the subset of 0-dimensional strata.
Because refinements are homeomorphisms between underlying topological spaces, the map $r^{-1}S \xra{r_|} S$ is a bijection.
So let us simply denote $r^{-1}S = S$.
Now, consider the blow-up ${\sf Bl}_S(\w{M})$ of $\w{M}$ at $S$ -- it is a stratified space with boundary ${\sf Link}_S(\w{M})\subset {\sf Bl}_S(\w{M})$, which fits into a pushout diagram among stratified spaces,
\[
\xymatrix{
{\sf Link}_S(\w{M})
\ar[rr]
\ar[d]
&&
{\sf Bl}_S(\w{M})
\ar[d]
\\
S
\ar[rr]
&&
\w{M}
,
}
\]
in which the downward maps are proper constructible bundles. 
As so, ${\sf Bl}_S(\w{M})$ inherits a solid 1-framing.
Observe ${\sf Link}_S(\w{M}) \cong (\SS^0)^{\amalg S}$ is a disjoint union of $\SS^0$, one cofactor for each element in $S$.
Let $D \subset {\sf Bl}_S(\w{M})$ be the union of those connected components that contain ${\sf Link}_S(\w{M})$.
By design, each stratum of $D$ contains a 0-dimensional stratum in its closure.  
Therefore $D\in \Disk^{\sofr}$.  
Let $\w{\Gamma} \in \Quiv$ correspond to $D\in \Disk^{\sofr}$ through \Cref{t75}.
Let $(\SS^0)^{\amalg S} \xra{\varphi} \w{\Gamma}$ correspond through \Cref{t74} to $(\SS^0)^{\amalg S} \cong {\sf Link}_S(\w{M}) \hookrightarrow {\sf Bl}_S(\w{M})$.
Let $M'\in \M$ correspond to ${\sf Bl}_S(\w{M}) \setminus D \in \Mfld^{\sofr}$ through \Cref{t76}.
Then $(\w{\Gamma},S,\varphi,M')$ is an excision site.  

Observe that the composite association $(M,S) \mapsto (\w{\Gamma},S,\varphi,M') \mapsto (M,S)$ is the identity.  
Observe that the composite association $(\w{\Gamma},S,\varphi, M')\mapsto (M,S) \mapsto (\w{\Gamma}' , S , \varphi , M'')$ is not necessarily the identity.
However, the simplicial objects $\bDelta^{\op} \to \M$ determined by $(\w{\Gamma},S,\varphi,M')$ is identical with that determined by $(\w{\Gamma}',S,\varphi,M'')$.
This justifies the first statement of the proposition.

The second statement follows through \Cref{t76} from \Cref{t4.5}.
The third statement follows through \Cref{t90} from \Cref{t4.4}.

\end{proof}

\bibliographystyle{amsalpha}
\bibliography{circle-refs}{}

\end{document}